\title[K-moduli of quasimaps]{K-moduli of quasimaps and on quasi-projectivity of K-moduli of Calabi-Yau fibrations over curves}
\author{Kenta Hashizume and Masafumi Hattori}
\date{\today}
\keywords{klt-trivial fibration, adiabatically K-stable, coarse moduli space, quasimaps}
\subjclass[2020]{Primary 14J10; Secondary: 14J17, 14J27, 14J40}
\address{Kenta Hashizume \\ Department of 
Mathematics, Faculty of Science, Niigata University, Niigata 950-2181, Japan}
\address{Institute for Research Administration, Niigata University, Niigata 950-2181, Japan}
\email{hkenta@math.sc.niigata-u.ac.jp}
\address{Masafumi Hattori \\ School of 
Mathematical Sciences, University of Nottingham, Nottingham, England}
\email{masafumi.hattori@nottingham.ac.uk}
\newtheorem{thm}{Theorem}[section]
\newtheorem{thm2}[thm]{Theorem-Definition}
\newtheorem{lem}[thm]{Lemma}
\newtheorem{cor}[thm]{Corollary}
\newtheorem{prop}[thm]{Proposition}
\newtheorem{conj}[thm]{Conjecture}
\theoremstyle{definition}
\newtheorem{defn}[thm]{Definition}
\newtheorem{ex}[thm]{Example}
\newtheorem{rem}[thm]{Remark}
\newtheorem{note}[thm]{Notation}
\newtheorem*{ack}{Acknowledgments}
\newtheorem{setup}[thm]{Setup}
\newtheorem*{claim*}{Claim}
\begin{document}
\begin{abstract}
  We construct a projective K-moduli space of quasimaps with a certain log Fano condition, which is regarded as a rational map from $\mathbb{P}^1$ to a projective space.  
  Moreover, we investigate relationships between the K-moduli of quasimaps and the K-moduli of Calabi-Yau fibrations over curves of negative Kodaira dimension constructed by the authors when general fibers are Abelian varieties or irreducible holomorphic symplectic manifolds.
  As an application, we obtain the entire quasi-projectivity of the seminormalization and the ampleness of the CM line bundle on the normalization of the K-moduli space of Calabi-Yau fibrations in this case.
\end{abstract}
\maketitle

\setcounter{tocdepth}{3}
\tableofcontents

\section{Introduction}

%Updated proofs and statements after realizing that seminormalization is required; some results are weaker than in v1.

The aim of this paper is to address the problem of quasi-projectivity of the K-moduli of \cite{HH} in the negative Kodaira dimension case.

\subsection{Positivity of CM line bundles and K-moduli conjecture}

{\it K-stability}, originally introduced by Tian \cite{Tia97} for Fano manifolds and later reformulated by Donaldson \cite{Dn2} for polarized varieties, is expected to detect the existence of constant scalar curvature K\"ahler ({\it cscK}) metrics, which is known as the Yau-Tian-Donaldson conjecture.
On the other hand, 
K-stability is also considered to be closely related to moduli theory. Inspired by \cite{KSB,FS,O}, Odaka \cite{Oconj} proposed the following:

\begin{conj}[K-moduli conjecture]\label{conj--k-moduli} With fixed numerical data, there exists a moduli space of K-polystable polarized varieties 
$(X,L)$ as a quasi-projective scheme, and the CM line bundle on the moduli is ample. 
\end{conj}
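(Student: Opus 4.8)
The aim here is not to settle Conjecture \ref{conj--k-moduli} in full — that remains open in general — but to lay out the strategy (the ``K-moduli machine'') that has succeeded for Fano varieties and that this paper adapts to Calabi--Yau fibrations. First I would fix the numerical data: the dimension $n$, a polynomial $P$ to be realized as the Hilbert polynomial of $(X,L^{\otimes m})$, and the relevant intersection numbers, and establish \textbf{boundedness}, namely that the K-semistable polarized varieties $(X,L)$ with these invariants form a bounded family. In the Fano case this rests on the estimate of the stability threshold and the Borisov--Alexeev--Borisov theorem; in the polarized setting one expects to need an effective lower bound on the $\delta$-invariant together with a Matsusaka-type effective very-ampleness statement, so that every such $(X,L)$ embeds via $|L^{\otimes m}|$ into a fixed $\mathbb{P}^N$. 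This yields a locally closed, bounded parameter subscheme $Z\subset \mathrm{Hilb}_{P}(\mathbb{P}^N)$ carrying a $\mathrm{PGL}_{N+1}$-action whose orbits are the isomorphism classes.

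Next I would prove \textbf{openness of K-semistability}: over any base $T$, the locus of $t\in T$ with $(X_t,L_t)$ K-semistable is Zariski open. Together with boundedness this shows the moduli stack $\mathcal{M}^{\mathrm{Kss}}$ of K-semistable objects is an Artin stack of finite type with affine diagonal. The core algebraic step is then the \emph{good moduli space} theory of Alper--Halpern-Leistner--Heinloth: one verifies $\Theta$-reductivity and S-completeness of $\mathcal{M}^{\mathrm{Kss}}$, which encode respectively the uniqueness of optimal destabilizations and the uniqueness of polystable degenerations, and which in the K-stability context follow from properness of the space of test configurations and separatedness of the Donaldson--Futaki-minimizing degeneration. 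The existence theorem then produces a separated good moduli space $\mathcal{M}^{\mathrm{Kss}}\to M$ with $M$ an algebraic space whose closed points are the K-polystable $(X,L)$; reductivity of the automorphism groups of K-polystable objects (the analogue of the Alper--Blum--Halpern-Leistner--Xu result) is needed here as well.

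The \textbf{main obstacle} is \textbf{properness} of $M$, which I would attack via the valuative criterion in the guise of a semistable reduction theorem: any family of K-semistable polarized varieties over a punctured curve should admit, after finite base change, a unique extension to a family of K-polystable ones. In the Fano case this is run through the Minimal Model Program (an MMP with scaling produces the limit, and Li--Wang--Xu together with Xu--Zhuang give uniqueness); replicating this for general polarizations requires controlling the limiting polarization and the behavior of the CM degree along the degeneration, which is genuinely delicate outside the Calabi--Yau and Fano regimes — this is precisely why the present paper restricts to Calabi--Yau fibrations, where the adiabatic setup and the structure of klt-trivial fibrations supply the missing control.

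Finally, granting properness of $M$, \textbf{ampleness of the CM line bundle} $\lambda_{\mathrm{CM}}$ would follow in three moves. First, descent: $\lambda_{\mathrm{CM}}$ descends from $\mathcal{M}^{\mathrm{Kss}}$ to $M$ because the automorphisms of a K-polystable $(X,L)$ act with trivial weight on the CM fibre, by the Codogni--Patakfalvi / Xu--Zhuang weight computation. Second, nefness: the positivity of the CM line bundle over proper families with K-semistable fibers gives $\lambda_{\mathrm{CM}}$ nef on $M$. Third, strict positivity on curves: a non-isotrivial family of K-polystable varieties has positive CM degree — a ``twisted'' positivity theorem for the CM class — so that $\lambda_{\mathrm{CM}}$ is ample by the Nakai--Moishezon criterion (or, equivalently, by Koll\'ar's ampleness lemma applied to the $\mathrm{Hilb}$-presentation $Z\to M$). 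The remainder of the paper is devoted to making exactly this package work when the general fibers are Abelian varieties or irreducible holomorphic symplectic manifolds.
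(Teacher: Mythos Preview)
The statement you are addressing is a \emph{conjecture}, not a theorem, and the paper does not prove it; it is stated as motivation, with the paper explicitly noting that ``the conjecture is still open for general polarized log pairs.'' So there is no paper-proof to compare your proposal against.

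That said, your write-up is not a proof either, and you acknowledge this up front. What you have given is a reasonable survey of the K-moduli machine as it has been executed in the log Fano case (boundedness, openness, $\Theta$-reductivity and $\mathsf{S}$-completeness via \cite{AHLH}, properness, descent and ampleness of the CM line bundle), together with an honest identification of where the argument breaks down for general polarizations. This is consistent with the paper's own discussion in \S\ref{sec--related--works}. One clarification: the paper does \emph{not} carry out this package directly for Calabi--Yau fibrations. Rather, it constructs a separate projective K-moduli of log Fano \emph{quasimaps} (Theorem~\ref{main--thm--i}) and then exhibits a quasi-finite map from the seminormalization of the Calabi--Yau fibration moduli into it (Theorem~\ref{main--thm--ii}), pulling back ampleness of the CM line bundle. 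So your final sentence slightly mischaracterizes the paper's approach: the properness obstacle is not overcome on the fibration side, but bypassed by comparison with an auxiliary projective moduli space.
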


Here, the CM line bundle, introduced by Tian \cite{Tia97} and also considered in Fujiki--Schumacher \cite{FS}, is naturally defined on schemes parameterizing polarized varieties by using the Knudsen-Mumford expansion (\cite{PT,FR}). 
Fujiki--Schumacher \cite{FS} constructed a moduli space of non-uniruled projective manifolds with unique cscK metrics using an analytic method, and they showed the existence of a generalized Weil--Petersson metric on the moduli. 

Conjecture \ref{conj--k-moduli} has been settled in a few cases, including the canonically polarized case (cf.~\cite{kollar-moduli}) and the log Fano case (cf.~\cite{Xu}) due to dedications of many researchers. 
However, the conjecture is still open for general polarized log pairs.
We note that K-moduli in the sense of Conjecture \ref{conj--k-moduli} are not necessarily proper, in particular for the Calabi--Yau case.
See Subsection \ref{sec--related--works} for more details.

The authors \cite{HH} constructed K-moduli of {\it uniformly adiabatically K-stable klt-trivial fibrations over curves}. Here, klt-trivial fibrations are kinds of Calabi-Yau fibrations admitting the structure of Kawamata log terminal (klt) pairs (see Definition \ref{defn--can-bundle-formula}). Roughly speaking, uniform adiabatic K-stability is ``uniform" K-stability when the polarization is very close to that of the base, introduced by \cite{fibst} (see Definitions \ref{defn--unif--ad--Kst} and \ref{defn--unif--ad--Kst2}). Note that the moduli space of \cite{HH} is non-proper in general.
The second author \cite{Hat23} showed the positivity of the CM line bundle on proper subspaces of the K-moduli space of \cite{HH}.  The approach in \cite{Hat23} heavily relies on the Nakai--Moishezon criterion \cite{kollar-moduli-stable-surface-proj,FM3} for algebraic spaces, and that approach cannot deduce the entire positivity.

To overcome this non-properness issue, we first construct a projective K‑moduli space of base curves with additonal structure (Theorem \ref{main--thm--i}), and then relate it to the K‑moduli space of Calabi–Yau fibrations constructed in \cite{HH} (Theorem \ref{main--thm--ii}).

\subsection{Moduli of maps and fibered varieties}
The moduli space of maps from curves 
$C$ to a geometric object 
$X$, with additional information, is a significant topic in algebraic geometry because this notion is closely related to fibered varieties over curves. 
Abramovich--Vistoli \cite{AV} extend the notion of stable maps \cite{K,FP} to {\it twisted stable maps}, i.e. maps from stacky curves to Deligne--Mumford stacks.
In \cite{AV2}, they also applied the moduli space of twisted stable maps with fixed degree to construct a moduli space of surfaces fibered by curves of general type such that the degree of the map from the base curve to the moduli of fibers is fixed.
For rational elliptic surfaces, the degree of the moduli map can vary between 0 and 12 in irreducible families \cite[Table 6.1]{Mi}, illustrating the obstruction to fixed‑degree map moduli.

\subsection{Main results and key idea: Quasimaps}
The notion of {\it quasimaps} was defined by Givental \cite{givental} and also studied by Morison--Plesser \cite{DP2} as linear systems defining rational maps from $\mathbb{P}^1$ to toric varieties, and their compact moduli was constructed. 
Quasimaps absorb degree‑jumps of moduli maps by allowing basepoints; the jump corresponds to the change of the base locus of the associated linear system. We apply this idea—apparently new in the context of Calabi–Yau fibrations—to produce K‑semistable log Fano quasimaps and a projective K‑moduli that we compare to the CM positivity on the Calabi–Yau fibration side.

   In this paper, for any closed immersion $\iota\colon X\hookrightarrow \mathbb{P}^N$, we regard quasimaps from $\mathbb{P}^1$ to $X$ as $N$-dimensional linear systems defining rational maps to $X$ (see Definition \ref{defn-qmaps} for the precise definition). 
Then we define K-{\it semistable log Fano quasimaps} as Definition \ref{de--kst--qmaps}.

\begin{thm}
[{$=$ Theorem \ref{thm--K-moduli-of-log-Fano-quasimaps}}]\label{main--thm--i}
Fix a closed immersion $\iota\colon X\hookrightarrow \mathbb{P}^N$.
Then a moduli stack $\mathcal{M}^{\mathrm{Kss,qmaps}}_\iota$ of K-semistable log Fano quasimaps from $\mathbb{P}^1$ to $X$ with fixed numerical data is an Artin stack of finite type.
Furthermore, $\mathcal{M}^{\mathrm{Kss,qmaps}}_\iota$ has the projective good moduli space ${M}^{\mathrm{Kps,qmaps}}_\iota$ with the ample CM line bundle $\Lambda^{\mathrm{qmaps}}_{\mathrm{CM}}$.
\end{thm}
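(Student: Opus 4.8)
The plan is to run the standard K-moduli machine in the relative setting over the base $\mathbb{P}^1$, using the closed immersion $\iota\colon X\hookrightarrow\mathbb{P}^N$ to parametrize quasimaps as linear subsystems. First I would construct a parameter scheme: a quasimap from $\mathbb{P}^1$ to $X$ of the prescribed numerical type is an $N$-dimensional linear system, equivalently (after clearing basepoints) a quotient $H^0(\mathbb{P}^1,\mathcal{O}(d))^{\oplus(N+1)}\twoheadrightarrow \mathcal{F}$ together with the incidence condition imposed by the equations of $X\subseteq\mathbb{P}^N$. These are cut out inside a product of Quot schemes / relative Grassmannians, so the \emph{parameter space is a quasi-projective scheme of finite type}, with a natural linearized action of $\mathrm{PGL}_2\times(\text{automorphisms compatible with }\iota)$; the moduli stack $\mathcal{M}^{\mathrm{Kss,qmaps}}_\iota$ is then the quotient stack of the locus where the associated log Fano pair (the total space of the induced family over $\mathbb{P}^1$ together with its boundary coming from the base locus, cf. Definition~\ref{de--kst--qmaps}) is K-semistable. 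Finite type of $\mathcal{M}^{\mathrm{Kss,qmaps}}_\iota$ follows because the parameter scheme is of finite type and K-semistability is an open condition in families (boundedness of K-semistable log Fano pairs with fixed invariants, combined with the fact that everything is fibered over the fixed base $\mathbb{P}^1$ with fixed degree data).

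Next I would verify that $\mathcal{M}^{\mathrm{Kss,qmaps}}_\iota$ satisfies the hypotheses of the existence theorem for good moduli spaces (Alper--Halpern-Leistner--Heinloth): it is a finite-type algebraic stack with affine diagonal, and one must check the Θ-reductivity and S-completeness criteria. For these I would appeal to the analogous valuative criteria already established on the Calabi--Yau fibration side in \cite{HH} (and the log Fano theory of \cite{Xu} and references therein): a test configuration of a quasimap is, via $\iota$, a test configuration of the induced log Fano pair over $\mathbb{P}^1\times\mathbb{A}^1$, and separatedness/existence of limits of K-semistable objects reduces to the corresponding statements for uniformly adiabatically K-stable klt-trivial fibrations, the new point being only that the linear-system data varies in a proper (Quot-type) parameter space so no limit escapes. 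This yields a separated good moduli space $M^{\mathrm{Kps,qmaps}}_\iota$ parametrizing K-polystable quasimaps, which is of finite type; properness then follows from the valuative criterion (every one-parameter degeneration of a K-semistable quasimap has a K-polystable limit, unique up to $S$-equivalence), so $M^{\mathrm{Kps,qmaps}}_\iota$ is \emph{proper}.

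Finally, projectivity: I would construct the CM line bundle $\Lambda^{\mathrm{qmaps}}_{\mathrm{CM}}$ on $\mathcal{M}^{\mathrm{Kss,qmaps}}_\iota$ via the Knudsen--Mumford expansion applied to the universal family over $\mathbb{P}^1$ (as in \cite{PT,FR}), check that it descends to the good moduli space $M^{\mathrm{Kps,qmaps}}_\iota$ (no positive-weight obstruction at polystable points, by the standard CM-weight computation), and then prove ampleness. Since $M^{\mathrm{Kps,qmaps}}_\iota$ is already known to be proper, ampleness can be obtained by a Nakai--Moishezon-type argument as in \cite{Hat23,CP,XZ}: one tests positivity of $\Lambda^{\mathrm{qmaps}}_{\mathrm{CM}}$ against arbitrary subvarieties by relating its restriction to the CM degree of the corresponding family, which is strictly positive by the positivity of CM line bundles for K-stable families together with the adiabatic estimate making the base contribution dominate. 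Combining properness with this strict nef-plus-positivity gives that $\Lambda^{\mathrm{qmaps}}_{\mathrm{CM}}$ is ample, hence $M^{\mathrm{Kps,qmaps}}_\iota$ is projective.

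I expect the main obstacle to be the valuative/reductivity step: showing that a family of K-semistable quasimaps over a punctured curve admits a K-polystable limit \emph{as a quasimap}, i.e. that the limiting linear system still defines an object of the prescribed numerical type with a K-semistable total space — this is precisely where the quasimap formalism (allowing basepoints to absorb degree jumps) must be reconciled with the K-stability of the associated log Fano pair, and where one genuinely uses that the parameter space built from Quot schemes is proper rather than merely finite type.
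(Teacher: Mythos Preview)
Your outline has the right overall architecture (quotient-stack presentation, AHLH criteria, CM positivity via Nakai--Moishezon), but there is a genuine gap in how you propose to verify $\Theta$-reductivity, $\mathsf{S}$-completeness, and properness. You write that these ``reduce to the corresponding statements for uniformly adiabatically K-stable klt-trivial fibrations'' from \cite{HH}. This is backwards: the paper establishes the quasimap K-moduli theory \emph{first}, entirely independently of the Calabi--Yau fibration side, and only afterwards (Section~\ref{sec4}) constructs the comparison map from fibrations to quasimaps. Invoking \cite{HH} here would be circular.

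The mechanism the paper actually uses is different and more elementary. A log Fano quasimap is not a higher-dimensional object; it is $(\mathbb{P}^1,B)$ together with a linear system $|\mathscr{L}|_q$, and its K-stability is by definition the K-stability of the one-dimensional log Fano pair $(\mathbb{P}^1,B+uD)$ for general $D\in|\mathscr{L}|_q$ (Definition~\ref{de--kst--qmaps}). The key technical input is Lemma~\ref{lem--two--delta--invariants}: when the weight satisfies $0<u<1-\tfrac{v}{2}$, the $\delta$-invariant of the quasimap equals that of the associated log-twisted pair, so K-(semi)stability of $q$ is literally K-(semi)stability of $(\mathbb{P}^1,B+uD)$. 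With this in hand, $\mathsf{S}$-completeness and $\Theta$-reductivity (Theorem~\ref{thm--s-complete}) are proved by choosing a general section $f$, applying the log Fano pair results of \cite{ABHLX} to $(Y,B+u\,\mathrm{div}(f))$, and then observing that the quasimap structure extends uniquely across the codimension-two locus by normality. Properness (Theorem~\ref{thm--properness}) is handled by an explicit sequence of point blow-ups on $\mathbb{P}^1_R$, not by any abstract Quot-scheme properness. Note also Example~\ref{ex--S-compl--log--twisted}: log-twisted Fano pairs alone do \emph{not} satisfy $\mathsf{S}$-completeness, so the quasimap structure (the full linear system, not just its fixed part) is genuinely needed, and your sketch does not explain why.

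Two further points you miss: (i) the construction above only works for small weight $u<1-\tfrac{v}{2}$; the general case is reduced to this via the Veronese embedding (Lemma~\ref{lem--stack--closed--immersion}), which replaces $(u,m)$ by $(u/l,lm)$ and realizes the original moduli stack as a closed substack; (ii) for CM ampleness, the paper splits into constant and non-constant quasimap loci (Lemmas~\ref{lem--bigness-for-const-maps} and \ref{lem--bigness-for-nonconstant-maps}) and in the non-constant case passes, via Lemma~\ref{lem--maximal-variation-from-qmaps-to-pairs}, to a family of one-dimensional log Fano \emph{pairs} with maximal variation, invoking \cite{XZ1}. There is no ``adiabatic estimate'' in this part of the argument.
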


We call $M_\iota^{\mathrm{Kps,qmaps}}$ the K-{\it moduli space of log Fano quasimaps}.
Note that we can define the CM line bundle for families of log Fano quasimaps (see Definition \ref{defn--qmaps--CM--linebundle}).
To produce the moduli space, we apply the good moduli theory \cite{AHLH}.

Let $M^{\mathrm{CYfib}}$ be the moduli space of polarized klt-trivial fibrations $f\colon (X,A)\to \mathbb{P}^1$ constructed in \cite{HH} with several fixed invariants (see Definition \ref{defn--HH-moduli} for the precise definition).
Let $\Lambda_{\mathrm{CM},t}$ the CM line bundle with respect to the polarization $A-rtK_{X}$ (see Definition \ref{defn--limit--CM} for more details) for some $r>0$.
Then we obtain the following.

\begin{thm}[{cf.~Theorems \ref{thm--quasi--map--construction--moduli--map}, \ref{thm--quasi-finiteness--of--two--moduli} and Corollary \ref{cor--final}}]\label{main--thm--ii}
Let $V$ be the coarse moduli space of polarized Abelian varieties or symplectic varieties smoothable to projective irreducible holomorphic symplectic manifolds, and let $M^{\mathrm{CYfib}}_{V}\subset M^{\mathrm{CYfib}}$ be the open and closed subspace corresponding to fibrations whose general fibers belong to $V$.

Then there exists a quasi-finite morphism 
    \[
   \alpha\colon (M^{\mathrm{CYfib}}_{V})^{\mathrm{sn}}\to {M}^{\mathrm{Kps.qmaps}}_{\iota},
    \]
    where $\iota\colon \overline{V}^{\mathrm{BB}}\hookrightarrow\mathbb{P}^L$ is a certain closed immersion of the Baily--Borel compactification of $V$.
    Furthermore, $\nu^*\Lambda_{\mathrm{CM},t}$ converges to $\mu\beta^*\alpha^*\Lambda_{\mathrm{CM}}^{\mathrm{qmaps}}$ in $\mathrm{Pic}_{\mathbb{Q}}((M^{\mathrm{CYfib}}_{V})^\nu)$ for some $\mu\in\mathbb{Q}_{>0}$, where $\beta\colon (M^{\mathrm{CYfib}}_{V})^{\nu}\to(M^{\mathrm{CYfib}}_{V})^{\mathrm{sn}}$ and $\nu\colon(M^{\mathrm{CYfib}}_{V})^\nu\to  M^{\mathrm{CYfib}}_{V}$ are the normalization morphisms.

    In particular, the seminormalization $(M^{\mathrm{CYfib}}_{V})^{\mathrm{sn}}$ is quasi-projective and $\nu^*\Lambda_{\mathrm{CM,t}}$ is ample for any sufficiently large $t$.
\end{thm}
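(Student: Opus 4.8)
The plan is to derive the quasi-projectivity and ampleness assertions formally from Theorem \ref{main--thm--i}, once the three substantive parts of Theorem \ref{main--thm--ii} are in hand: the morphism $\alpha$, its quasi-finiteness, and the asymptotic identification of the CM class. I would establish these in turn and then assemble the conclusion; the quasi-finiteness will be the main obstacle.

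\emph{Step 1: the morphism $\alpha$.} Given a polarized klt-trivial fibration $f\colon(X,A)\to\mathbb{P}^1$ whose general fibers lie in $V$, its moduli (period) map $\mathbb{P}^1\dashrightarrow V\hookrightarrow\overline{V}^{\mathrm{BB}}\overset{\iota}{\hookrightarrow}\mathbb{P}^L$ is a rational map, which is exactly the datum of an $L$-dimensional linear system on $\mathbb{P}^1$, i.e.\ a quasimap to $\overline{V}^{\mathrm{BB}}$ in the sense of Definition \ref{defn-qmaps}. The decisive point, and the reason this yields a moduli \emph{morphism} and not merely a rational one, is that the base locus of the linear system is allowed to vary in families: it absorbs the jumps of the degree of the moduli map which, as recalled above, obstruct the fixed-degree approach of \cite{AV2}. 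One checks that, with its natural log structure coming from the canonical bundle formula for $f$, this quasimap is K-semistable log Fano in the sense of Definition \ref{de--kst--qmaps}; K-semistability should be inherited from the uniform adiabatic K-stability of $f$, since in the adiabatic regime the obstructing degenerations are pulled back from the base. Carrying this out in families and invoking the universal property of the good moduli space $M^{\mathrm{Kps,qmaps}}_\iota$ of Theorem \ref{main--thm--i} produces $\alpha$ over the seminormalization (the construction naturally takes place over $(M^{\mathrm{CYfib}}_V)^{\mathrm{sn}}$, where the classifying map is a morphism); this is Theorem \ref{thm--quasi--map--construction--moduli--map}.

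\emph{Step 2: quasi-finiteness of $\alpha$.} If two fibrations give rise to the same quasimap, they share the moduli map $\mathbb{P}^1\dashrightarrow\overline{V}^{\mathrm{BB}}$ and hence, over a dense open of $\mathbb{P}^1$, have isomorphic polarized general fibers. A global Torelli statement then pins the general fiber down up to finite ambiguity: this is immediate for principally polarized Abelian varieties, and for irreducible holomorphic symplectic manifolds it follows from Verbitsky's theorem together with Huybrechts' finiteness of birational models. Finally, a klt-trivial fibration over the fixed base $\mathbb{P}^1$ with fixed general fiber and fixed numerical invariants is itself determined up to finitely many possibilities, using the boundedness results underlying the construction of $M^{\mathrm{CYfib}}$ in \cite{HH}. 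Together these show that $\alpha$ has finite fibers, which is Theorem \ref{thm--quasi-finiteness--of--two--moduli}; I expect the passage from ``same moduli map'' to ``finitely many fibrations'' to be where the real work lies.

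\emph{Step 3: the CM comparison and the conclusion.} Writing the canonical bundle formula as $K_{X/\mathbb{P}^1}+A\sim_{\mathbb{Q}}f^*(D_{\mathrm{mod}}+D_{\mathrm{dis}})$, with $D_{\mathrm{mod}}$ and $D_{\mathrm{dis}}$ the moduli and discriminant parts, and using $K_X+A\sim_{\mathbb{Q},f}0$, one obtains $A-rtK_X\sim_{\mathbb{Q}}(1+rt)A-rt\,f^*(K_{\mathbb{P}^1}+D_{\mathrm{mod}}+D_{\mathrm{dis}})$, so that the leading term in $t$ of this polarization is pulled back from the base. The Knudsen--Mumford (adiabatic) expansion of the associated CM line bundle, along the lines of \cite{fibst} and \cite{Hat23}, then has leading coefficient the CM class of the base pair $(\mathbb{P}^1,D_{\mathrm{mod}}+D_{\mathrm{dis}})$; and since $D_{\mathrm{mod}}$ is, up to a positive rational multiple, the pullback under the moduli map of $\iota^*\mathcal{O}_{\mathbb{P}^L}(1)$ --- a known feature of the canonical bundle formula for families of Abelian varieties and of irreducible holomorphic symplectic manifolds --- this base CM class is identified, after pullback to $(M^{\mathrm{CYfib}}_V)^\nu$, with $\mu\,\beta^*\alpha^*\Lambda^{\mathrm{qmaps}}_{\mathrm{CM}}$ for a suitable $\mu\in\mathbb{Q}_{>0}$; this is the convergence statement, Corollary \ref{cor--final}. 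Now for the final conclusion: by Theorem \ref{main--thm--i}, $M^{\mathrm{Kps,qmaps}}_\iota$ is projective and $\Lambda^{\mathrm{qmaps}}_{\mathrm{CM}}$ is ample, so by Zariski's main theorem the quasi-finite separated morphism $\alpha$ factors as an open immersion of $(M^{\mathrm{CYfib}}_V)^{\mathrm{sn}}$ into a scheme $Z$ that is finite, hence projective, over $M^{\mathrm{Kps,qmaps}}_\iota$. Consequently $(M^{\mathrm{CYfib}}_V)^{\mathrm{sn}}$ is quasi-projective, and $\alpha^*\Lambda^{\mathrm{qmaps}}_{\mathrm{CM}}$, being the restriction to this open subscheme of the ample pullback of $\Lambda^{\mathrm{qmaps}}_{\mathrm{CM}}$ to $Z$, is ample. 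Pulling back along the finite surjective normalization $\beta$ preserves ampleness, so $\mu\,\beta^*\alpha^*\Lambda^{\mathrm{qmaps}}_{\mathrm{CM}}$ is ample on the quasi-projective scheme $(M^{\mathrm{CYfib}}_V)^\nu$; since $\nu^*\Lambda_{\mathrm{CM},t}$ converges to this ample class and ampleness is an open condition, $\nu^*\Lambda_{\mathrm{CM},t}$ is ample for all sufficiently large $t$.
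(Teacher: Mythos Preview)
Your overall architecture matches the paper's, but Step 2 has a genuine gap. Boundedness of the fibrations in $M^{\mathrm{CYfib}}$ does not by itself imply that only finitely many share a given quasimap: a bounded family may well contain positive-dimensional subfamilies with constant moduli map and constant discriminant. Your Torelli argument pins down the polarized general fiber, but the passage from ``fixed general fiber and fixed numerics'' to ``finitely many fibrations over $\mathbb{P}^1$'' is precisely where the content lies, and you have supplied nothing beyond an appeal to boundedness.

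The paper's route to quasi-finiteness is quite different and uses MMP. Rather than arguing pointwise, it shows (Proposition \ref{prop--isotrivial--fibers}) that any curve $C$ in a fiber of $\alpha$ is constant: if the associated family of quasimaps over $C$ is isotrivial, then so is the family $\mathcal{X}\to\mathbb{P}^1\times C$. Over the klt locus $U\subset\mathbb{P}^1$ one obtains, after a finite cover of $C$ and a principal-bundle trivialization (Lemma \ref{lem--principal-g-bundle-isotrivial}), a product $\mathcal{X}|_{U\times C}\cong \mathcal{X}_0|_U\times C$; then each $\mathcal{X}_c$ is recovered as the log canonical model over $\mathbb{P}^1$ of a fixed smooth model with boundary whose numerical class is independent of $c$, and Lemma \ref{lem--lc-model} (log canonical models depend only on the numerical class of $K+\Delta$) forces all $\mathcal{X}_c$ to be isomorphic. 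The global Torelli theorems do appear in the paper, but earlier --- they underlie the identification $V\cong D/\Gamma$ needed to construct $\alpha$ --- not in the quasi-finiteness proof. Two smaller remarks: in Step 3 your canonical bundle formula mixes up the polarization $A$ with the boundary (here $\Delta=0$, and the formula is for $K_X$ alone); and in the last paragraph ``ampleness is an open condition'' is delicate on a non-proper scheme --- the paper instead writes $\nu^*\Lambda_{\mathrm{CM},t}$ as an explicit polynomial with ample leading term and uses that an ample class plus a small multiple of any class remains ample.
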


As explained in Definition \ref{def-seminormal}, the seminormalization preserves the topological structure of the space.

\begin{rem}
    The quasi-projectivity of $(M^{\mathrm{CYfib}}_{V})^\nu$ or $(M^{\mathrm{CYfib}}_{V})^{\mathrm{sn}}$ does not necessarily imply the quasi-projectivity of the original $M^{\mathrm{CYfib}}_{V}$ a priori, as there exists an example for a non-ample line bundle on a non-proper scheme whose pullback to the normalization is ample \cite[Proposition 2]{Koex}.
    See Remark \ref{rem-Koex} for more details.
    
    We cannot expect the quasi-finiteness of $\alpha$ in Theorem \ref{main--thm--ii} for the moduli constructed in \cite{HH} in the case where the canonical divisors of klt--trivial fibrations are nef.
For details, see Remark \ref{rem--final}.
After this paper was put on arXiv, the second author showed the quasi-projectivity of the seminormalization of the K-moduli space in the case where the canonical divisors are nef in \cite{Hat26}.

In the proof of Theorem \ref{main--thm--ii}, we make use of the proof of the $\mathbf{B}$-semiampleness conjecture for klt--trivial fibrations with fibers Abelian varieties and primitive symplectic varieties by Fujino \cite{fujino--canonical-certain} and Kim \cite{kim-can-bundle-formula}.
\end{rem}

We explain how to associate K-stable quasimaps with uniformly adiabatically K-stable klt--trivial fibrations over curves $f\colon X\to \mathbb{P}^1$ of negative Kodaira dimension.
First of all, $f$ admits a moduli map $p\colon \mathbb{P}^1\to \overline{V}^{\mathrm{BB}}$. 
The associated maps $p$ do not behave well in deformation of $f$ because the degree of $p$ might jump as we explained earlier.
This phenomenon is not allowed in the moduli of stable maps \cite{K,FP} or twisted stable maps \cite{AV2,AV}. 
On the other hand, if you regard $p$ as a quasimap $q$, that is, a rational map with the fixed part being some multiple of the discriminant divisor $B:=\sum_{c\in\mathbb{P}^1}(1-\mathrm{lct}(X;f^{-1}(c)))[c]$, then the quasimaps $q$ behave well.
We can explain this phenomenon using the following example.

\begin{ex}
Let $f\colon X\to \mathbb{P}^1$ be a rational elliptic fibered surface with only canonical singularities.
Suppose that there exists a section $S$ of $f$ such that $S$ is $f$-ample.
As explained in \cite{Mi}, $X$ is a closed subscheme of $\mathbb{P}_{\mathbb{P}^1}(\mathcal{O}(-2)\oplus\mathcal{O}(-3)\oplus\mathcal{O})$ that is defined by the equation $y^2z=x^3+Axz^2+Bz^3$ for some $A\in H^0(\mathcal{O}_{\mathbb{P}^1}(4))$ and $B\in H^0(\mathcal{O}_{\mathbb{P}^1}(6))$.
Then we can define the moduli map $p\colon \mathbb{P}^1\to\mathbb{P}^1$ as $[A^3:4A^3+27B^2]$, eliminating the common zeros of $A^3$ and $4A^3+27B^2$, where $\mathbb{P}^1$ is the Baily--Borel compactification of the moduli of elliptic curves.
As \cite[Table 6.1]{Mi}, degree of $p$ varies from $0$ to $12$ in the irreducible family of $X$.
Hence, we cannot associate any family of maps in the sense of \cite{K,FP,AV} because they assumed the degree of maps to be fixed.

On the other hand, we can associate a family of quasimaps defined by $A^3$ and $4A^3+27B^2$ with the family of $f$. 
\end{ex}

Since $K_X$ is not nef, we see that the associated quasimap $q$ is log Fano. 
Moreover, we set the K-stability of $q$ so that it coincides with the uniform adiabatic K-stability of $f$ by \cite[Theorem 1.1]{Hat}.
This is the main philosophy of Theorem \ref{main--thm--ii}.

\begin{comment}

The {\it log--twisted K-stability}, which is a log generalization of twisted K-stability of Dervan \cite{De2}, is expected to be equivalent to the existence of special K\"ahler currents as the YTD conjecture.
In \cite{DS,RS,BBJ}, this problem was addressed in the log--twisted Fano case.
On the other hand, it is difficult to formulate K-moduli for log--twisted pairs.
In Example \ref{ex--S-compl--log--twisted}, we explain that there exists a family of K-semistable log-twisted Fano pairs that do not satisfy 
$\mathsf{S}$-completeness, which is necessary to apply good moduli theory (cf.~\cite{AHLH}).
This shows that we cannot expect to produce the K-moduli space of log--twisted Fano pairs in the usual sense as K-moduli (see \cite{Xu}).
Our K-moduli of quasimaps bypasses this subtlety.

We also note that the quasi-finiteness in Theorem \ref{main--thm--ii} is also a technical part. Although our proof uses the techniques of the minimal model program after \cite{BCHM}, this is inspired by Kodaira's result \cite{kodaira} that the structure of an arbitrary elliptic surface with a section is completely determined by the monodromy action and the moduli map from the base curve to $\mathbb{P}^1$.
This implies that elliptic surfaces with sections are determined by the moduli map up to discrete data.
\end{comment}

\subsection{Related works}\label{sec--related--works}

After the works in \cite{givental,DP2},
the notion of quasimaps has been generalized and studied by \cite{CFK,CFKM,CCFK} as a map to a GIT stack.
We do not deal with such generalized quasimaps in this paper.

Inspired by the works of \cite{KSB,Al} in the surface case, the moduli theory of canonically polarized schemes with only semi-log canonical singularities has been studied and established due to many dedications (see \cite{kollar-moduli}). This is one of the K-moduli by \cite{O}.
The ampleness of the CM line bundle was shown by Patakfalvi--Xu \cite{PX} by combining the works \cite{kollar-moduli-stable-surface-proj,fujino-semi-positivity,KP} on the projectivity of the moduli space.

The K-moduli space for log Fano pairs is constructed by the series of works \cite{BX,BLX,BHLLX,LXZ} etc, as a proper algebraic space.
The positivity of the CM line bundle was proved by Xu--Zhuang \cite{XZ1} after the work of \cite{CP,P} on the projectivity of proper subspaces in the uniform K-stable locus.

On the other hand, Viehweg \cite{viehweg95} constructed a moduli space of varieties with semiample canonical divisor and only canonical singularities as a quasi-projective scheme.
His moduli space can be regarded as one of the K-moduli spaces (\cite[Corollary 3.5]{Hat}).
Note that the ample line bundle, which he constructed, is not a CM line bundle if the log canonical divisor is not numerically trivial.

We note that K-moduli spaces of log Calabi--Yau pairs are not proper in general because log Calabi--Yau pairs with a semi-log canonical singularity are not K-polystable.
\cite{ABB+,BL24} constructed a projective moduli space of log Calabi--Yau surfaces with some conditions.
These moduli spaces are not K-moduli in the sense of Conjecture \ref{conj--k-moduli}.

On the K\"ahler geometric side, \cite{DN} extended the result of \cite{FS} to general K\"ahler manifolds with a cscK metric.

On the other hand, Ortu \cite{Ortb} constructed the moduli space of smooth fibrations with optimal symplectic connections, which was studied by \cite{DS1,DS2,Ort}.
Her moduli space also admits a positive current, and she did not put any assumption on the dimension and the K-stability on the base.
Note that \cite{Ortb} does not cover our case because our fibrations have singular fibers in general.

\subsection{Structure of the paper}
In Section \ref{sec--pre}, we collect fundamental notions of birational geometry, moduli theory, K-stability and klt--trivial fibration.
In Subsection \ref{subsec--moduli-pol}, we discuss on Abelian varieties and irreducible holomorphic symplectic manifolds.
We put fundamental results on deformation theory, period mapping theory, the Baily--Borel compactification of the moduli of them and the relationship between the moduli part and the Hodge line bundle on the Baily--Borel compactification, which is important to correspond a family of klt--trivial fibrations to the associated family of quasimaps.
On the other hand, we discuss the CM line bundle on the moduli of uniformly adiabatically K-stable klt--trivial fibrations over curves in Subsection \ref{subsec--k-moduli--klt--trivial}.
We show that if we take the polarization closer to a line bundle of the base, then the CM line bundle on a family over a normal base converges to some $\mathbb{Q}$-line bundle.
We define this as the {\em adiabatic limit of the CM line bundle}.

In Section \ref{sec3}, we define K-stability of log Fano quasimaps.
First, we observe that if the weight is sufficiently small and we regard a quasimap as a curve with a linear system, K-stability of the log Fano quasimap is equivalent to K-stability of the curve with the base point and the section.
By using this fact, we can detect K-stability of log Fano quasimaps by using the $\delta$-invariant.
On the other hand, we show that if the weight is large, the moduli stack of K-semistable log Fano quasimaps is embedded into the moduli stack with a sufficiently small weight.
By this fact, we show that the K-moduli theory of log Fano quasimaps is unconditional, and we show Theorem \ref{main--thm--i}.

Finally, in section \ref{sec4}, we show Theorem \ref{main--thm--ii}.
We can divide its proof into two parts, one part is construction of the correspondence and the other part is the quasi-finiteness.
For the first part (Theorem \ref{thm--quasi--map--construction--moduli--map}), we will use the period mapping theory explained in Section \ref{sec--pre}.
For the second part (Theorem \ref{thm--quasi-finiteness--of--two--moduli}), we apply stack theory and MMP.
\begin{ack} 
We thank Harold Blum, Makoto Enokizono, Osamu Fujino, Dai Imaike, Eiji Inoue, Kentaro Inoue, Yuki Koto, and Daisuke Matsushita for valuable discussions.
We are especially grateful to Yuki Koto for letting us know about quasimaps.
We thank Yuji Odaka for kindly informing us that Yoshiki Oshima and he showed a statement similar to Theorem \ref{thm--period--mapping--HK}. 
The second author also thanks Hyunsuk Kim for answering his questions.
We are also grateful to Tetsushi Ito for organizing schools ``Moduli of Hyper-K\"ahler manifolds and associated problems'', which gave us opportunities to learn irreducible holomorphic symplectic manifolds.
A large part of the work on this paper was completed when M.H.~was visiting Chalmers University of Technology.
This visit was partially supported by JSPS Overseas Challenge Program for Young Researchers.
 He thanks the university for the wonderful environment and Lars Martin Sektnan for his hospitality.

K.H. was partially supported by JSPS KAKENHI Grant Number JP22K13887. 
    M.H. was partially supported by JSPS KAKENHI 	22J20059  
(Grant-in-Aid for JSPS Fellows PD) and Royal Society International Collaboration Award ICA\textbackslash1\textbackslash231019.
\end{ack}
\section{Preliminaries}\label{sec--pre}

We work over an algebraically closed field $\mathbbm{k}$ of characteristic zero throughout this paper unless otherwise stated.

\subsection*{Notations and conventions} We collect the following terminologies. 
\begin{enumerate}
\item In this paper, we assume that all schemes are locally Noetherian and defined over $\mathbbm{k}$.
If we say that $S$ is a variety, we assume $S$ to be separated, of finite type over $\mathbbm{k}$, irreducible and reduced.
If we say that $S$ is a curve, we further assume $S$ to be one-dimensional.
\item Let $S$ and $T$ be schemes over $\mathbbm{k}$.
We write $S\times_{\mathrm{Spec}\mathbbm{k}}T$ as $S\times T$ for simplicity. 
We denote $\mathbb{A}^N_{\mathrm{Spec}\mathbbm{k}}\times S$ by $\mathbb{A}^N_{S}$. 
When $S=\mathrm{Spec}R$, we write $\mathbb{A}^N_{\mathrm{Spec}\mathbbm{k}}\times S$ as $\mathbb{A}^N_{R}$.
For $\mathbb{P}^N_{\mathrm{Spec}\mathbbm{k}}$ and $(\mathbb{G}_m)_{\mathrm{Spec}\mathbbm{k}}$, we will write $\mathbb{P}^N_{R}$,  $\mathbb{P}^N_{T}$, $\mathbb{G}_{m,R}$ and $\mathbb{G}_{m,T}$ in the same way.
If $R=\mathbbm{k}$ and there is no fear of confusion, we will omit $\mathbbm{k}$ and we write $\mathbb{A}^N$, $\mathbb{P}^N$, and so on.
    \item Let $S$ be a scheme and let $\mathsf{Sch}_S$ denote the category of all locally Noetherian schemes over $S$.
    Let $\mathsf{Sets}$ denote the category of sets.
    Let $S_{\mathrm{red}}$ denote the reduced scheme structure of $S$.
    Let $s\in S$ be a point.
    Then, let $\kappa(s)$ denote the residue field $\mathcal{O}_{S,s}/\mathfrak{m}$, where $\mathfrak{m}$ is the maximal ideal.
    \item Let $S$ be a scheme. Then a morphism $\mathrm{Spec}(\Omega)\to S$ from a spectrum of an algebraically closed field over $\mathbbm{k}$ is called a {\it geometric point}.
    If the image is $s\in S$, we write this as $\bar{s}\in S$ for simplicity.
    \item $\mathrm{Art}_{\mathbbm{k}}$ is the category of all Artinian local rings over $\mathbbm{k}$ with the residue field $\mathbbm{k}$. 
    \item Let $f\colon X\to Y$ be a morphism of schemes over $S$ and $\varphi\colon T\to S$ a morphism of schemes. 
    Let $L$ be a Cartier divisor on $X$. 
    Then, we will write the induced morphism $X\times_ST\to Y\times_{S}T$ from $f$ and $\varphi$ as $f_T\colon X_T\to Y_T$ and let $L_S$ denote the pullback of $L$ to $X_T$.
    If $s\in S$ is a point and $\varphi$ is $\mathrm{Spec}(\kappa(s))\to S$, then we write $f_T$ as $f_s\colon X_s\to Y_s$ and $L_T$ as $L_s$.
    If $\bar{s}$ is a geometric point of $S$ induced by $T=\mathrm{Spec}(\overline{\kappa(s)})$, then we will write $f_T$ as $f_{\bar{s}}\colon X_{\bar{s}}\to Y_{\bar{s}}$ and $L_T$ as $L_{\bar{s}}$.
    \item We say that a morphism $f\colon X\to Y$ is a {\it contraction} if $f$ is projective and $f_*\mathcal{O}_X\cong\mathcal{O}_Y$.
    We say that a birational map $f\colon X\dashrightarrow Y$ is a {\it birational contraction} if there is no $f^{-1}$-exceptional divisor.
    \item Let $f\colon X\to S$ be a morphism of schemes.
    Let $D_1$ and $D_2$ be $\mathbb{R}$-Weil divisors.
    If $D_1-D_2$ is an $\mathbb{R}$-linear (resp.~$\mathbb{Q}$-linear, $\mathbb{Z}$-linear) combination of principal divisors, then we say that $D_1$ and $D_2$ are $\mathbb{R}$ (resp.~$\mathbb{Q}$, $\mathbb{Z}$)-linearly equivalent over $S$.
For simplicity, if $D_1$ and $D_2$ are $\mathbb{Z}$-linearly equivalent, then we say that $D_1$ and $D_2$ are linearly equivalent.
We write them as $D_1\sim_{\mathbb{R},S}D_2$, $D_1\sim_{\mathbb{Q},S}D_2$ and $D_1\sim_{\mathbb{Z},S}D_2$ or simply $D_1\sim_{S}D_2$.
If $S=\mathrm{Spec}\mathbbm{k}$, we omit $S$.
On the other hand, if $f$ is projective, $D_1-D_2$ is $\mathbb{R}$-Cartier and $(D_1-D_2)\cdot C=0$ for any proper curve $C\subset X$ such that $f(C)$ is a point, we say that $D_1$ and $D_2$ are numerically equivalent over $S$ and write $D_1\equiv_S D_2$.
If $S=\mathrm{Spec}\mathbbm{k}$, then we say that $D_1$ and $D_2$ are numerically equivalent and write $D_1\equiv D_2$. 
\end{enumerate}

\subsection{Log pairs and klt-trivial fibration}

We collect fundamental terminologies of log pairs.
\begin{defn}[Singularities of log pairs]
    A {\it sublog pair} $(X,\Delta)$ consists of a normal variety $X$ and a $\mathbb{Q}$-divisor $\Delta$ on $X$ such that $K_X+\Delta$ is $\mathbb{Q}$-Cartier.
    If $\Delta$ is effective, we say that $(X,\Delta)$ is a {\em log pair}.

Let $(X,\Delta)$ be a log pair.
We say that $F$ is a {\em prime divisor over $X$} if there exists a projective birational morphism $\pi\colon Y\to X$ such that $F$ is a prime divisor on $Y$.

    Let $(X,\Delta)$ be a sublog pair.
    For any prime divisor $F$ over $X$, set the {\it log discrepancy} with respect to $F$ as 
    \[
    A_{(X,\Delta)}(F):=1+\mathrm{ord}_F(K_Y-\pi^*(K_X+\Delta)),
    \]
    where $\pi\colon Y\to X$ a projective birational morphism and $\mathrm{ord}_F$ is the divisorial valuation on $X$ associated with $F$.
    It is easy to see that $A_{(X,\Delta)}(F)$ is independent of the choice of $\pi\colon Y\to X$. 

    If $A_{(X,\Delta)}(F)>0$ (resp.~$\ge0$) for any $F$, we say that $(X,\Delta)$ is {\it subklt} (resp.~{\it sublc}). 
    If $\Delta$ is further effective, we say that $(X,\Delta)$ is  {\it Kawamata log terminal (klt)} (resp.~{\it log canonical (lc)}).
    We say that $(X,\Delta)$  has only terminal (resp.~canonical) singularities if $A_{(X,\Delta)}(F)>1$ (resp.~$\ge1$) for any prime divisor $F$ over $X$ but not on $X$.
    For more details, see \cite{KM}.
\end{defn}

\begin{defn}[Relative Mumford divisor \cite{kollar-moduli}]
    Let $f\colon X\to S$ be a flat projective morphism such that any geometric fiber is a deminormal scheme of dimension $n$.
    A subscheme $D$ is called an {\it effective relative Mumford divisor} if there exists an open subset $U\subset X$ that satisfies the following.
    \begin{enumerate}
        \item for any geometric point $\bar{s}\in S$, $\mathrm{codim}_{X_{\bar{s}}}(X_{\bar{s}}\setminus U)\ge2$,
        \item $D|_U$ is a relative Cartier divisor (i.e.~$D|_U$ is a Cartier divisor flat over $S$),
  \item $D$ is the closure of $D|_U$,
  \item $X_{\bar{s}}$ is smooth at any generic point of $D_{\bar{s}}$.
  \end{enumerate}
  If $\Delta$ is a $\mathbb{Z}$-linear combination of effective relative Mumford divisors, then we also say that $\Delta$ is a {\it relative Mumford divisor}.
  If $\Delta$ is a $\mathbb{Q}$-linear combination of relative Mumford divisors, we say that $\Delta$ is a relative Mumford $\mathbb{Q}$-divisor.
We note here that if $S$ is reduced and $K_{X/S}+\Delta$ is $\mathbb{Q}$-Cartier, for any morphism $T\to S$, we define a relative Mumford $\mathbb{Q}$-divisor $\Delta_T$ on $X\times_ST$ such that $\Delta_T|_{U\times_ST}$ is the pullback of $\Delta|_U$ as a $\mathbb{Q}$-Cartier divisor, where $U\subset X$ is an open subset such that for any geometric point $\bar{s}\in S$, $\mathrm{codim}_{X_{\bar{s}}}(X_{\bar{s}}\setminus U)\ge2$ and $\Delta|_U$ is a $\mathbb{Q}$-Cartier divisor (cf.~\cite[4.3]{kollar-moduli}).
\end{defn}

\begin{defn}[Log canonical threshold]
    Let $(X,\Delta)$ be a sublc pair and $D$ an effective $\mathbb{Q}$-Cartier $\mathbb{Q}$-divisor on $X$.
    We define the {\it log canonical threshold} of $(X,\Delta)$ with respect to $D$ by
    \[
    \mathrm{lct}(X,\Delta;D):=
    {\rm inf}\{t \in \mathbb{R}_{\geq 0} \, | \, \text{$(X,\Delta + tD)$ is sublc}\}.
    \]
\end{defn}

\begin{defn}[Log minimal model, log canonincal model]
Let $X \to Z$ be a projective morphism from a normal variety to a variety, and let $(X,\Delta)$ be an lc pair. 
Let $X' \to Z$ be a projective morphism from a normal variety $X'$ and let $\phi \colon X \dashrightarrow X'$ be a birational contraction over $Z$. 
Put $\Delta'=\phi_{*}\Delta$. 
Then $(X',\Delta')$ is called a {\em log minimal model} of $(X,\Delta)$ over $Z$ if 
\begin{itemize}
\item
$K_{X'}+\Delta'$ is $\mathbb{R}$-Cartier and nef over $Z$, and 
\item
for any $\phi$-exceptional prime divisor $E$ on $X$, we have $A_{(X,\Delta)}(E)<A_{(X',\Delta')}(E)$.
\end{itemize}
We say that $(X',\Delta')$ is a {\em log canonical model} of $(X,\Delta)$ over $Z$ if 
\begin{itemize}
\item
$K_{X'}+\Delta'$ is $\mathbb{R}$-Cartier and ample over $Z$, and 
\item
for any $\phi$-exceptional prime divisor $E$ on $X$, we have $A_{(X,\Delta)}(E)\leq A_{(X',\Delta')}(E)$.
\end{itemize}
For more details of the minimal model program (MMP), see \cite{BCHM}.
\end{defn}

The following lemma states that the log canonical models depend only on the numerical class of $K_X+\Delta$.
We will apply this in Section \ref{sec4}.

\begin{lem}\label{lem--lc-model}
Let $X \to Z$ be a projective morphism of normal quasi-projective varieties. 
Let $(X,\Delta_{1})$ and $(X,\Delta_{2})$ be lc pairs such that $K_{X}+\Delta_{1} \equiv_{Z} K_{X}+\Delta_{2}$. 
Suppose that $(X,\Delta_{1})$ has the log canonical model $(Y_{1},\Gamma_{1})$ over $Z$ (see \cite[Definition 3.50]{KM}). Then $(X,\Delta_{2})$ has the log canonical model $(Y_{2},\Gamma_{2})$ over $Z$ and $Y_{2}$ is isomorphic to $Y_{1}$ over $Z$. 
\end{lem}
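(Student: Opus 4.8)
The plan is to build a log canonical model of $(X,\Delta_{2})$ over $Z$ directly out of the one of $(X,\Delta_{1})$, and then invoke uniqueness of log canonical models to conclude $Y_{2}\cong Y_{1}$ over $Z$. Let $\phi\colon X\dashrightarrow Y_{1}$ be the birational contraction with $\Gamma_{1}=\phi_{*}\Delta_{1}$, $K_{Y_{1}}+\Gamma_{1}$ ample over $Z$, and $A_{(X,\Delta_{1})}(E)\le A_{(Y_{1},\Gamma_{1})}(E)$ for every $\phi$-exceptional prime divisor $E$ on $X$. Put $\Gamma_{2}:=\phi_{*}\Delta_{2}$, which is effective, and $N:=\Delta_{1}-\Delta_{2}$, which by hypothesis is $\mathbb{Q}$-Cartier with $N\equiv_{Z}0$. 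Fix a common log resolution $p\colon W\to X$, $q\colon W\to Y_{1}$ of $(X,\Delta_{1}+\Delta_{2})$ and of $\phi$; since $\phi$ is a birational contraction, every prime divisor on $Y_{1}$ is the strict transform of a prime divisor on $X$, so every $p$-exceptional divisor on $W$ is $q$-exceptional. From the discrepancy inequalities defining a log canonical model together with the negativity lemma (using that $q^{*}(K_{Y_{1}}+\Gamma_{1})$ is $p$-nef because $K_{Y_{1}}+\Gamma_{1}$ is ample over $Z$), one gets $p^{*}(K_{X}+\Delta_{1})=q^{*}(K_{Y_{1}}+\Gamma_{1})+E$ with $E\ge 0$ and $q$-exceptional.

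The crux is to descend $p^{*}N$ along $q$: I claim that $M:=\phi_{*}N$ is $\mathbb{Q}$-Cartier on $Y_{1}$, that $q^{*}M=p^{*}N$, and that $M\equiv_{Z}0$. This is exactly the assertion that the relative log canonical (ample) model is unaffected by replacing the boundary with a numerically equivalent one, and it is where one genuinely uses that $N$ is numerically trivial over $Z$, not merely $\mathbb{Q}$-Cartier: since $p^{*}N\equiv_{Z}0$ it is in particular numerically trivial over $Y_{1}$, and a $\mathbb{Q}$-Cartier divisor on $W$ numerically trivial over $Y_{1}$ is the $q$-pullback of a $\mathbb{Q}$-Cartier divisor on $Y_{1}$ (a relative form of the descent of Cartier divisors numerically trivial on the exceptional locus of a birational morphism, invoking the negativity lemma); the descended divisor is necessarily $q_{*}(p^{*}N)=\phi_{*}N=M$, and $M\equiv_{Z}0$ follows since curves over $Z$ in $Y_{1}$ lift to $W$ and $q^{*}M=p^{*}N\equiv_{Z}0$. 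Granting this, $\Gamma_{1}-M=\phi_{*}\Delta_{1}-\phi_{*}N=\phi_{*}\Delta_{2}=\Gamma_{2}$, so subtracting $p^{*}N=q^{*}M$ from the identity above yields $p^{*}(K_{X}+\Delta_{2})=q^{*}(K_{Y_{1}}+\Gamma_{2})+E$ with the \emph{same} effective $q$-exceptional $E$. In particular $K_{Y_{1}}+\Gamma_{2}$ is $\mathbb{Q}$-Cartier and $K_{Y_{1}}+\Gamma_{2}\equiv_{Z}K_{Y_{1}}+\Gamma_{1}$, hence ample over $Z$ by Kleiman's criterion.

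For any prime divisor $F$ over $X$ the last identity gives $A_{(Y_{1},\Gamma_{2})}(F)-A_{(X,\Delta_{2})}(F)=\operatorname{ord}_{F}(E)\ge 0$. Since $(X,\Delta_{2})$ is lc this shows $A_{(Y_{1},\Gamma_{2})}(F)\ge 0$ for all such $F$, so $(Y_{1},\Gamma_{2})$ is lc (and the coefficients of $\Gamma_{2}$ are automatically $\le 1$ because those of $\Delta_{2}$ are); taking $F$ to be a $\phi$-exceptional prime divisor on $X$ gives $A_{(X,\Delta_{2})}(F)\le A_{(Y_{1},\Gamma_{2})}(F)$. Thus $(Y_{1},\Gamma_{2})$ is a log canonical model of $(X,\Delta_{2})$ over $Z$. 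Finally, log canonical models over $Z$ are unique up to isomorphism over $Z$ (for two of them, the negativity lemma applied on a common resolution forces the pullbacks of the two ample-over-$Z$ log canonical divisors to coincide, hence an isomorphism over $Z$), so the log canonical model $(Y_{2},\Gamma_{2}')$ of $(X,\Delta_{2})$ is isomorphic over $Z$ to $(Y_{1},\Gamma_{2})$; in particular $Y_{2}\cong Y_{1}$ over $Z$. The only nontrivial input is the descent step — that $\phi_{*}(\Delta_{1}-\Delta_{2})$ is $\mathbb{Q}$-Cartier and compatible with pullback — and everything else is bookkeeping with log discrepancies; that step may alternatively be cited as the numerical-class-independence of relative log canonical models.
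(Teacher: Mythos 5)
Your overall architecture (transport $\Delta_{2}$ to $Y_{1}$, check it is again a log canonical model, invoke uniqueness) is the right one, and the bookkeeping with log discrepancies, the Kleiman argument for ampleness, and the uniqueness of lc models are all fine. The problem is the step you yourself flag as the crux: the descent of $p^{*}N$ along $q$. The negativity lemma does \emph{not} say that a $\mathbb{Q}$-Cartier divisor $D$ on $W$ with $D\equiv_{q}0$ is the pullback of a $\mathbb{Q}$-Cartier divisor on $Y_{1}$; it only says that \emph{if} $q_{*}D$ is already known to be $\mathbb{Q}$-Cartier, then $D=q^{*}q_{*}D$. Producing the $\mathbb{Q}$-Cartierness of $q_{*}(p^{*}N)=K_{Y_{1}}+\Gamma_{2}-(K_{Y_{1}}+\Gamma_{1})$ is exactly the hard point, and it genuinely fails for arbitrary $q$-numerically-trivial divisors when $Y_{1}$ is not $\mathbb{Q}$-factorial (which log canonical models typically are not). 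Concretely, let $Y_{1}$ be the cone over a polarized abelian surface $(A,L)$ (an lc singularity), $q\colon W\to Y_{1}$ the blow-up of the vertex, and $D$ the pullback to $W$ of a numerically trivial non-torsion divisor class on $A$: then $D\equiv_{q}0$ but $q_{*}D$ is not $\mathbb{Q}$-Cartier. So numerical triviality over $Y_{1}$ plus negativity cannot be the reason the descent works, and your fallback citation ("numerical-class-independence of relative log canonical models") is circular, since that is the statement being proved.

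The paper closes this gap differently, and the difference is essential. It first runs a $(K_{X}+\Delta_{1})$-MMP over $Z$ to a good minimal model $X'$ (using \cite[Theorem 1.7]{hashizumehu}); along each step $K+\Delta_{2}$ remains $\mathbb{R}$-Cartier because it is numerically equivalent over $Z$ to $K+\Delta_{1}$ and hence trivial on each contracted extremal ray, so the cone theorem lets it descend. On $X'$ one has $K_{X'}+\Delta'_{2}\equiv_{Y_{1}}0$ with $(X',\Delta'_{2})$ lc, and the paper then invokes semi-ampleness over $Y_{1}$ (relative abundance for numerically trivial lc log canonical divisors) to conclude $K_{X'}+\Delta'_{2}=\phi^{*}\phi_{*}(K_{X'}+\Delta'_{2})$ with $\phi_{*}(K_{X'}+\Delta'_{2})$ $\mathbb{R}$-Cartier. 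That abundance-type input — applied to a genuine lc log canonical divisor, not to the bare difference $N$ — is what replaces your unjustified descent claim; without it, or the MMP intermediary, the argument does not go through.
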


\begin{proof}
By the definition of log canonical model, $K_{X}+\Delta_{1}$ is big over $Z$, and therefore $K_{X}+\Delta_{2}$ is big over $Z$. 
By \cite[Theorem 1.7]{hashizumehu}, there exists a sequence of steps of a $(K_{X}+\Delta_{1})$-MMP over $Z$
$$(X,\Delta_{1})\dashrightarrow (X',\Delta'_{1})$$
over $Z$ to a good minimal model $(X',\Delta'_{1})$ over $Z$. 
Then there is a birational morphism $\phi \colon X' \to Y_{1}$ such that $K_{X'}+\Delta'_{1}=\phi^{*}(K_{Y_{1}}+\Gamma_{1})$. 
Let $\Delta'_{2}$ be the birational transform of $\Delta_{2}$ on $X'$. 
By construction of MMP (see also \cite[Remark 6.1 (2)]{hashizumehu}), we see that $K_{X'}+\Delta'_{2}$ is $\mathbb{R}$-Cartier, and we have $K_{X'}+\Delta'_{1} \equiv_{Z} K_{X'}+\Delta'_{2}$. 
Then 
$$K_{X'}+\Delta'_{2}\equiv_{Z}K_{X'}+\Delta'_{1}=\phi^{*}(K_{Y_{1}}+\Gamma_{1})\equiv_{Y_{1}}0.$$
Hence, $K_{X'}+\Delta'_{2}$ is semi-ample over $Y_{1}$, in particular, $\phi_{*}(K_{X'}+\Delta'_{2})$ is $\mathbb{R}$-Cartier and $K_{X'}+\Delta'_{2}=\phi^{*}\phi_{*}(K_{X'}+\Delta'_{2})$. 
Then $\phi_{*}(K_{X'}+\Delta'_{2})\equiv_{Z}\phi_{*}(K_{X'}+\Delta'_{1})=K_{Y_{1}}+\Gamma_{1}$. 
Thus, $\phi_{*}(K_{X'}+\Delta'_{2})$ is ample over $Z$, and therefore, $K_{X'}+\Delta'_{2}$ is semi-ample over $Z$ and $Y_{1}$ is the contraction induced by $K_{X'}+\Delta'_{2}$. 
By construction, $(Y_{1},\phi_{*}\Delta'_{2})$ is the log canonical model of $(X,\Delta_{2})$ over $Z$. 
Therefore, Lemma \ref{lem--lc-model} holds. 
\end{proof}

We also give a remark on relative log MMP over more general rings.

\begin{rem}[Relative log MMP over a Dedekind domain]\label{rem-mmp-over-local-rings}
Let $R$ be a Dedekind domain essentially of finite type over $\mathbbm{k}$, and set $Z= \mathrm{Spec}\, R$. 
Let $\pi \colon X \to Z$ be a projective morphism from a normal variety $X$, and let $(X,\Delta)$ be an lc pair. 
Note that $X$ is not necessarily of finite type over $\mathbbm{k}$. 
We discuss how to run a $(K_{X}+\Delta)$-MMP over $Z$. 

Since $R$ is a regular ring of dimension one, there exists a smooth variety $V$ and a morphism $Z \to V$ corresponding to the localization of $V$ at a $({\rm dim}\,V-1)$-dimensional point. 
We fix an embedding $X \hookrightarrow \mathbb{P}^{N}_{Z}$ for some $N$ and the defining ideal sheaf $\mathcal{I}_{X}$ of $X$ in $\mathbb{P}^{N}_{Z}$. 
By considering the generators of $\mathcal{I}_{X}$, after shrinking $V$, we may find an ideal sheaf $\mathcal{J} \subset \mathcal{O}_{\mathbb{P}^{N}_{V}}$ such that the pullback of $\mathcal{J}$ by $\mathbb{P}^{N}_{Z} \to \mathbb{P}^{N}_{V}$ is equal to $\mathcal{I}_{X}$. 
By defining $Y$ to be the scheme defined by $\mathcal{J}$, we get a projective morphism $Y \to V$ such that the base change of the morphism by $Z \to V$ is $\pi \colon X \to Z$. 
By shrinking $V$, we may assume that $Y$ is a normal variety. 

Let $\Delta=\sum_{i}d_{i}D_{i}$ be the irreducible decomposition. 
By the same argument as above, after shrinking $V$, for every $i$ we get a prime divisor $G_{i}$ whose pullback by $X \to Y$ as a divisorial scheme is $D_{i}$. 
Set $\Gamma=\sum_{i}d_{i}G_{i}$. 
By shrinking $V$, we may assume that $K_{Y}+\Gamma$ is $\mathbb{R}$-Cartier, and the pullback of $K_{Y}+\Gamma$ by $X \to Y$ is $K_{X}+\Delta$. 
Since $(X,\Delta)$ is lc, shrinking $Z$ again, we may assume that $(Y,\Gamma)$ is lc. 

Now we have a projective morphism $(Y,\Gamma) \to V$ whose pullback by $Z \to V$ is equal to $(X,\Delta) \to Z$. 
For any sequence of steps of a $(K_{Y}+\Gamma)$-MMP over $V$, the base change of the MMP by $Z \to V$ is a sequence of steps of a $(K_{X}+\Delta)$-MMP over $Z$.  
\end{rem}

Now, we recall some notions concerned with the canonical bundle formula.
\begin{defn}\label{defn--can-bundle-formula}
    Let $(X,\Delta)$ be a sublc pair and $f\colon  X\to S$ a contraction of normal varieties.
    We define {\em the discriminant $\mathbb{Q}$-divisor} $B$ on $S$ with respect to $f\colon (X,\Delta)\to S$ as follows.
    For any point $\eta\in S$ of codimension one, let $F_{\eta}$ be the Zariski closure of $\eta$ in $S$, which is a prime divisor, and let $\gamma_\eta$ be the log canonical threshold of $f^{*}F_{\eta}$ with respect to $(X,\Delta)$ over $\eta$. 
Note that $\gamma_{\eta}$ is well defined because $F_{\eta}$ is Cartier on an open neighborhood of $\eta$.   
    Then, the discriminant $\mathbb{Q}$-divisor $B$ is defined by 
    \[
    B:=\sum_{\eta}(1-\gamma_\eta)F_{\eta},
    \]
    where $\eta$ runs over all points $\eta\in S$ of codimension one.
    We note that the above is a finite sum.

  A {\em sublc--trivial fibration}, denoted by $f\colon (X,\Delta)\to S$, consists of a sublc pair $(X,\Delta)$ and a projective contraction $f \colon X \to S$ satisfying the following conditions.
    \begin{enumerate}
        \item $K_X+\Delta\sim_{\mathbb{Q}}0$, and
        \item $\mathrm{rank}\,f_*\mathcal{O}_X(\lceil \mathbf{A}^*(X,\Delta)\rceil)=1$.
    \end{enumerate}
    Here, the coherent sheaf $\mathcal{O}_X(\lceil \mathbf{A}^*(X,\Delta)\rceil)$ is defined as in \cite[Lemma 3.22]{fujino-bpf}. 
    In other words, for any log resolution $\pi\colon Y\to X$ of $(X,\Delta)$, if $K_Y=\pi^*(K_X+\Delta)+\sum_ia_iE_i$ and $\pi_*(\sum_ia_iE_i)=-\Delta$, then we set
 \[
\mathcal{O}_X(\lceil \mathbf{A}^*(X,\Delta)\rceil)=\pi_*\mathcal{O}_Y\left(\sum_{a_i\ne-1}\lceil a_i\rceil E_i\right).
\]
The coherent sheaf $\mathcal{O}_X(\lceil \mathbf{A}^*(X,\Delta)\rceil)$ is independent of $\pi$. 
The definition is the same as that of lc-trivial fibrations in \cite[Section 2]{FG} (see also \cite{A}). 
When $(X,\Delta)$ is further lc (resp.~klt, subklt), we say that $f$ is an {\em lc-trivial fibration} (resp.~{\em klt-trivial fibration}, {\em subklt--trivial fibration}). 
 Note that the base varieties of sublc--trivial fibrations are not assumed to be projective in this paper. 
 A {\em polarized sublc-trivial fibration} is a pair of a sublc-trivial fibration $f\colon (X,\Delta)\to S$ and an $f$-ample $\mathbb{Q}$-divisor (or an $f$-ample $\mathbb{Q}$-line bundle) $A$ on $X$, and we denote it by $f\colon (X,\Delta,A)\to S$. 
Similarly, we define a {\em polarized lc-trivial fibration}, and so on.  

Let $f\colon (X,\Delta)\to S$ be a sublc--trivial fibration and $b\in\mathbb{Z}_{>0}$ the smallest positive integer such that $H^0(F,\mathcal{O}_F(b(K_F+\Delta_F)))\ne0$, where $(F,\Delta_F)$ is the geometric generic fiber of $f$.
Then, we define the {\em moduli $\mathbb{Q}$-divisor} $M$ on $S$ so that the following so-called {\em canonical bundle formula}
\[
K_X+\Delta+\frac{1}{b}{\rm div}(\varphi)=f^*(K_S+M+B),
\]
 where ${\rm div}(\varphi)$ is a principal divisor determined by a rational function $\varphi$ of $X$.
 
Let $f\colon (X,\Delta)\to S$ be a sublc-trivial fibration. 
Fix a $\mathbb{Q}$-Cartier divisor $D$ on $S$ such that $K_{X}+\Delta \sim_{\mathbb{Q}}f^{*}D$.  
Let $S' \to S$ be a projective birational morphism from a normal variety $S'$, let $X' \to X\times_{S}S'$ be a resolution of the main component of $X \times_{S}S'$, and let $\Delta'$ be an $\mathbb{R}$-divisor on $X'$ such that $K_{X'}+\Delta'$ is the pullback of $K_{X}+\Delta$ to $X'$. 
Then $(X',\Delta') \to S'$ is a sublc-trivial fibration, and therefore we may define the associated discriminant $\mathbb{Q}$-divisor $B'$ and moduli $\mathbb{Q}$-divisor $M'$. 
Since we fix $D$, by considering all projective birational morphism to $S$ we can define the {\em discriminant $\mathbb{Q}$-b-divisor} $\boldsymbol{\rm B}$ and the {\em moduli $\mathbb{Q}$-b-divisor} $\boldsymbol{\rm M}$ (for the definition of $b$-divisor, see \cite{A}). 
By \cite[Theorem 1.2]{fujino-slc-trivial}, there exists a projective birational morphism $S_{0} \to S$ such that for any projective birational morphism $\phi \colon S'' \to S_{0}$, the following holds: Putting $B_{0}$, $M_{0}$, $B''$, and $M''$ as the traces of $\boldsymbol{\rm B}$ and $\boldsymbol{\rm M}$ on $S_{0}$ and $S''$, then $K_{S''}+B''=\phi^{*}(K_{S_{0}}+B_{0})$, and there exists an open immersion $S_{0} \hookrightarrow \overline{S}_{0}$ and a nef $\mathbb{Q}$-divisor $\overline{M}_{0}$ on $\overline{S}_{0}$ such that $M_{0}=\overline{M}_{0}|_{S_{0}}$. 
We call such $S_{0}$ an {\em Ambro model}. 
We note that Ambro model is not unique. 
\end{defn}

\begin{defn}[Generalized log pair, {\cite{bz}}]
 A {\em generalized log pair} $(X,B+M)$ consists of 
\begin{itemize}
\item
a normal variety $X$ equipped with a projective morphism $X \to Z$, 
\item
an $\mathbb{R}$-divisor $B \geq 0$ on $X$, and 
\item
an $\mathbb{R}$-Cartier divisor $M'$ on a normal variety $X'$ with a projective birational morphism $\phi \colon X'\to X$
\end{itemize}
such that $M'$ is nef over $Z$ and $K_{X}+B+M$ is $\mathbb{R}$-Cartier, where $M:=\phi_{*}M'$. 
When $Z$ is a point, we drop it and we say that $(X,B+M)$ is a projective generalized log pair. 
When $M'=0$, we may identify $(X,B+M)$ with the log pair. 

Although we do not use in this paper, singularities of generalized log pairs can be defined as in the case of usual pairs. For details, see \cite[Section 4]{bz}. 
\end{defn}

\begin{ex}
Let $f\colon (X,\Delta)\to S$ be an lc-trivial fibration and $S \to Z$ a projective morphism. 
It is known that the discriminant $\mathbb{Q}$-b-divisor $\boldsymbol{\rm B}$ and the moduli $\mathbb{Q}$-b-divisor $\boldsymbol{\rm M}$ form a generalized log pair $(S, \boldsymbol{\rm B}_{S} + \boldsymbol{\rm M}_{S})$ with the data $S_{0} \to S \to Z$, where $S_{0}$ is an Ambro model.  
\end{ex}

\begin{lem}\label{lem--compactification-lc-trivial-fib}
Let $f\colon (X,\Delta,A)\to S$ be a polarized lc-trivial fibration such that $S$ is quasi-projective, $A$ is an effective $\mathbb{Q}$-Cartier Weil divisor, and the pair $(X,\Delta+tA)$ is lc for some $t \in \mathbb{R}_{>0}$. 
Let $S \hookrightarrow \overline{S}$ be an open immersion to a normal projective variety $\overline{S}$. 
Then there exist an open immersion $X \hookrightarrow \overline{X}'$ to a normal projective variety $\overline{X}'$, a polarized lc-trivial fibration $\overline{f}'\colon (\overline{X}',\overline{\Delta}',\overline{A}') \to \overline{S}'$ to a normal projective variety $\overline{S}'$, and a birational morphism $\overline{S}' \to \overline{S}$ such that $\overline{S}' \to \overline{S}$ is an isomorphism over $S \subset \overline{S}$, $\overline{f}'|_{X}=f$, $\overline{\Delta}'|_{X}=\Delta$, $\overline{A}'$ is an effective Weil divisor and $\overline{A}'|_{X}=A$, and the pair $(\overline{X}',\overline{\Delta}'+t\overline{A}')$ is lc for some $t \in \mathbb{R}_{>0}$.  
In particular, we have the following diagram:
$$
\xymatrix
{
(X,\Delta) \ar[d]_{f} \ar@{^{(}->}[r] & (\overline{X}',\overline{\Delta}') \ar[d]^{\overline{f}'} 
\\
S \ar@{^{(}->}[r] & \overline{S}'. 
}
$$
Furthermore, any lc center of $(\overline{X}',\overline{\Delta}')$ and component of $\overline{\Delta}'$ intersects $X$. 
\end{lem}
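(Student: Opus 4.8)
The plan is to compactify $X$ and $S$ in the crudest possible way, and then to repair the total space over the newly added boundary by a relative minimal model program, keeping everything over $S$ rigidly fixed. First I would produce a crude compactification: since $f$ is a projective contraction I may fix a closed immersion $X\hookrightarrow\mathbb{P}^{N}_{S}$ and let $\overline{X}_{0}$ be the scheme-theoretic closure of $X$ in $\mathbb{P}^{N}_{\overline{S}}\supset\mathbb{P}^{N}_{S}$, so that $\overline{X}_{0}\times_{\overline{S}}S=X$ with $X$ open and dense. Normalizing $\overline{X}_{0}$ (an isomorphism over the normal scheme $X$), taking the Stein factorization over $\overline{S}$, normalizing the base, and iterating finitely often—each operation being an isomorphism over $S$, because $f$ is a contraction and $S$ is normal, so that every finite birational morphism of bases restricting to the identity over $S$ is an isomorphism there—I obtain a contraction $\overline{f}_{1}\colon\overline{X}_{1}\to\overline{S}_{1}$ of normal projective varieties with a projective birational morphism $\overline{S}_{1}\to\overline{S}$ that is an isomorphism over $S$, satisfying $\overline{X}_{1}\times_{\overline{S}_{1}}S=X$ and $\overline{f}_{1}|_{X}=f$; after one more blow-up I may assume $\overline{S}_{1}\setminus S$ is a reduced divisor. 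Taking $\overline{\Delta}_{1}$ and $\overline{A}_{1}$ to be the Zariski closures of $\Delta$ and $A$ gives correctly restricting extensions with $\overline{A}_{1}$ effective Weil and every component of $\overline{\Delta}_{1}+\overline{A}_{1}$ meeting $X$.

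Next I would pass to a model that is an isomorphism over $X$ but behaves well over the boundary; note that the requirement $\overline{f}'|_{X}=f$, $\overline{\Delta}'|_{X}=\Delta$ forbids resolving $X$ itself, so this model—and the MMP below—must be run on a pair that is only lc. Choose $\mu\colon Y\to\overline{X}_{1}$ that is an isomorphism over $X$ and such that over $\overline{X}_{1}\setminus X$ the divisor $\mu^{-1}_{*}(\overline{\Delta}_{1}+\overline{A}_{1})+\Exc(\mu)+\mu^{-1}((\overline{X}_{1}\setminus X)_{\mathrm{red}})$ is simple normal crossing and, in addition, the strict transforms of the coefficient-one components of $\overline{\Delta}_{1}$ have each mutual intersection stratum with no component contained in $\mu^{-1}(\overline{X}_{1}\setminus X)$; such $\mu$ exists by Hironaka since all resolving and separating is done over the boundary. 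With $\Gamma_{Y}:=\mu^{-1}_{*}\overline{\Delta}_{1}$ and $A_{Y}:=\mu^{-1}_{*}\overline{A}_{1}$, the pair $(Y,\Gamma_{Y})$ is lc with $K_{Y}+\Gamma_{Y}$ $\mathbb{Q}$-Cartier—it equals $(X,\Delta)$ over $X$ and is log smooth over the boundary—and likewise $(Y,\Gamma_{Y}+tA_{Y})$ is lc; moreover every lc center of $(Y,\Gamma_{Y})$, and hence of $(Y,\Gamma_{Y}+\varepsilon A_{Y})$ for $0<\varepsilon<t$, meets $X$, since the coefficient-one locus of $\Gamma_{Y}$ is the strict transform of the closure of the coefficient-one locus of $\Delta$ and its strata were arranged to meet $X$.

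Then I would fix small $\varepsilon\in(0,t)$ and run a $(K_{Y}+\Gamma_{Y}+\varepsilon A_{Y})$-MMP over $\overline{S}_{1}$. This divisor is big over $\overline{S}_{1}$ because on the general fiber $F$ of $\overline{f}_{1}\circ\mu$ we have $K_{F}+\Delta_{F}+\varepsilon A_{F}\sim_{\mathbb{Q}}\varepsilon A_{F}$, which is ample; so by \cite[Theorem 1.7]{hashizumehu} the MMP terminates with a good minimal model, and I pass to its relative log canonical model $(\overline{X}',\overline{\Delta}'+\varepsilon\overline{A}')$ over $\overline{S}_{1}$, on which $K+\overline{\Delta}'+\varepsilon\overline{A}'$ is ample over $\overline{S}_{1}$; by finiteness of log canonical models this model is the same for all small $\varepsilon>0$. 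Over $S$ the divisor $K_{X}+\Delta+\varepsilon A$ is already ample over $S$, so the whole procedure is an isomorphism over $S$: thus $X\hookrightarrow\overline{X}'$ is an open immersion, $\overline{\Delta}'|_{X}=\Delta$, $\overline{A}'|_{X}=A$ with $\overline{A}'$ effective Weil, and $\overline{f}'|_X=f$ once $\overline{f}'$ is constructed. Because MMP steps raise log discrepancies and the log canonical model is crepant, every lc center of $(\overline{X}',\overline{\Delta}'+\varepsilon\overline{A}')$—hence every lc center of $(\overline{X}',\overline{\Delta}')$ and every component of $\overline{\Delta}'$—meets $X$; and $\Supp(\overline{A}')$ contains no lc center of $(\overline{X}',\overline{\Delta}')$, for otherwise its trace on $X$ would be an lc center of $(X,\Delta)$ lying in $\Supp(A)$, contradicting that $(X,\Delta+tA)$ is lc; consequently $\mathrm{lct}(\overline{X}',\overline{\Delta}';\overline{A}')>0$ and $(\overline{X}',\overline{\Delta}'+t_{0}\overline{A}')$ is lc for some $t_{0}>0$. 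Finally $K_{\overline{X}'}+\overline{\Delta}'$ is nef over $\overline{S}_{1}$, being a limit of the divisors $K+\overline{\Delta}'+\varepsilon\overline{A}'$ that are ample over $\overline{S}_{1}$, and it is numerically trivial on $F$; by relative log abundance, applicable since the log canonical divisor is trivial on the general fiber, it is semi-ample over $\overline{S}_{1}$, and its relative ample model $\overline{S}'\to\overline{S}_{1}$—projective birational and an isomorphism over $S$—yields $\overline{f}'\colon\overline{X}'\to\overline{S}'$ with $K_{\overline{X}'}+\overline{\Delta}'\sim_{\mathbb{Q},\overline{S}'}0$. Since $\overline{f}'$ factors through $\overline{S}_{1}$ and $K+\overline{\Delta}'+\varepsilon\overline{A}'$ is ample over $\overline{S}_{1}$, we get $\overline{A}'\equiv_{\overline{S}'}\varepsilon^{-1}(K+\overline{\Delta}'+\varepsilon\overline{A}')$, which is $\overline{f}'$-ample, so $\overline{A}'$ is $\overline{f}'$-ample; the generic fiber of $\overline{f}'$ equals that of $f$, and composing $\overline{S}'\to\overline{S}_{1}\to\overline{S}$ gives the diagram with all the asserted properties.

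The hard part will be the last step: carrying out the relative MMP on an lc (not dlt, not $\mathbb{Q}$-factorial) pair—forced by the rigidity over $S$—while citing the two nontrivial MMP inputs, namely existence of good minimal models for the relatively big $\varepsilon$-perturbed pair and relative semi-ampleness of $K_{\overline{X}'}+\overline{\Delta}'$, and doing the bookkeeping—via the ``discrepancies increase under MMP'' principle together with the choice of $\mu$ separating the coefficient-one components over the boundary—that rules out any new divisorial component of $\overline{\Delta}'$ or any lc center of $(\overline{X}',\overline{\Delta}')$ over $\overline{S}'\setminus S$.
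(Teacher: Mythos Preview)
Your overall plan—compactify crudely and repair by MMP over the boundary—is exactly the paper's strategy, but two of your steps do not go through as written, and in both places the paper invokes results of \cite{has-mmp} that do real work.

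First, the partial resolution $\mu\colon Y\to\overline{X}_{1}$ you describe cannot exist in general. You require $\mu$ to be an isomorphism over $X$ and $(Y,\Gamma_{Y})$ to be log smooth at every point of $Y\setminus X$; in particular $Y$ must be smooth along $Y\setminus X$. But if the singular locus of $X$ has closure meeting $\overline{X}_{1}\setminus X$—which nothing in the hypotheses prevents—then, since $\mu$ is an isomorphism over $X$, the singular locus of $Y$ contains $\mathrm{Sing}(X)$, and its closure in $Y$ (which is closed, hence contained in $\mathrm{Sing}(Y)$) meets $Y\setminus X$. Thus $Y$ is singular at some boundary point no matter how you blow up along $\overline{X}_{1}\setminus X$, and your argument that $(Y,\Gamma_{Y})$ is lc with all lc centers realized as snc strata (hence meeting $X$) collapses. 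The paper does not attempt this construction; it cites \cite[Corollary~1.3]{has-mmp}, which directly produces an lc compactification $(X^{c},\Delta^{c})\to\overline{S}$ with every lc center and every component of $\Delta^{c}$ meeting $X$. That result is not elementary—its proof itself uses dlt modifications and MMP—and cannot be replaced by a naive Hironaka argument restricted to the boundary.

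Second, after your single $(K+\Gamma_{Y}+\varepsilon A_{Y})$-MMP you must show that $K_{\overline{X}'}+\overline{\Delta}'$ is semi-ample over $\overline{S}_{1}$ to obtain the contraction $\overline{f}'$; nefness (as a limit of amples) is not enough, and ``relative log abundance for lc pairs with log canonical class trivial on the general fiber'' is not a citable statement in this generality. The paper avoids this by separating the two MMPs: it first runs a $(K_{X^{c}}+\Delta^{c})$-MMP over $\overline{S}$, and here \cite[Theorem~1.2]{has-mmp} guarantees termination in a \emph{good} minimal model precisely because over the open subset $S$ one already has the good minimal model $(X,\Delta)$ itself. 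This yields the lc-trivial fibration $\overline{f}\colon\overline{X}\to\overline{S}'$ with $K_{\overline{X}}+\overline{\Delta}\sim_{\mathbb{Q},\overline{S}'}0$ directly, with no abundance input. Only then does the paper take a dlt blow-up, form $\tilde{\Gamma}+\epsilon\tilde{A}$, and run a second MMP \emph{over $\overline{S}'$}—so that $K_{\overline{X}'}+\overline{\Delta}'\sim_{\mathbb{Q},\overline{S}'}0$ is automatic on the output and $\overline{A}'$ is $\overline{f}'$-ample for free.
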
 

\begin{proof}
By \cite[Corollary 1.3]{has-mmp}, there exist an open immersion $X \hookrightarrow X^{c}$ to a normal projective variety $X^{c}$, an lc pair $(X^{c},\Delta^{c})$, and a projective morphism $f^{c} \colon (X^{c},\Delta^{c}) \to \overline{S}$ such that $f^{c}|_{X}=f$, and $\Delta^{c}|_{X}=\Delta$, and any lc center of $(X^{c},\Delta^{c})$ intersects $X$. 
By construction of $(X^{c},\Delta^{c})$ (see \cite[Proof of Corollary 1.3]{has-mmp}),  any component of $\Delta^{c}$ also intersects $X$. 
Since $K_{X}+\Delta \sim_{\mathbb{R},\,S}0$, the lc pair $(X,\Delta)$ itself is a good minimal model of $(X,\Delta)$ over $S$. 
By \cite[Theorem 1.2]{has-mmp} and running a $(K_{X^{c}}+\Delta^{c})$-MMP over $S$, we get a good minimal model $(\overline{X},\overline{\Delta})$ of $(X^{c},\Delta^{c})$ over $\overline{S}$. 
Since $(K_{\overline{X}}+\overline{\Delta})|_{X}$ is in particular nef over $S$, the MMP does not modify $f^{c}|_{X}=f$. 
Let $\overline{f} \colon \overline{X} \to \overline{S}'$ be the contraction over $\overline{S}$ defined by $K_{\overline{X}}+\overline{\Delta}$. 
Then $\overline{f}|_{X}=f$ since $(K_{\overline{X}}+\overline{\Delta})|_{X} \sim_{\mathbb{R},\,S}0$. 
By construction, it is easy to check that the lc-trivial fibration $\overline{f}\colon (\overline{X},\overline{\Delta}) \to \overline{S}'$ and the morphism $\overline{S}' \to \overline{S}$ satisfy the conditions of Lemma \ref{rem--compactification-lc-trivial-fib}. 

Let $g \colon (\tilde{Y},\tilde{\Gamma}) \to (\overline{X},\overline{\Delta})$ be a dlt blow-up of $(\overline{X},\overline{\Delta})$, and put $Y = g^{-1}(X)$. 
Let $\tilde{A}$ be the closure of $(g|_{Y})^{*}A$. 
Since $(X,\Delta+tA)$ is lc for some $t>0$ and any lc center of $(\overline{X},\overline{\Delta})$ intersects $X$, we can find $\epsilon>0$ such that $(\tilde{Y},\tilde{\Gamma}+\epsilon \tilde{A})$ is an lc pair whose lc centers intersect $Y$.  
By construction, $(K_{\tilde{Y}}+\tilde{\Gamma}+\epsilon \tilde{A})|_{Y}$ is big and semi-ample over $S$. 
Hence, \cite[Theorem 1.2]{has-mmp} implies that $(\tilde{Y},\tilde{\Gamma}+\epsilon \tilde{A})$ has the lc model $(\overline{X}',\overline{\Delta}'+\epsilon\overline{A}')$ over $\overline{S}'$. 
Let $\overline{f}'\colon \overline{X}' \to \overline{S}'$ be the induced morphism. 
Since $K_{\overline{X}}+\overline{\Delta} \sim_{\mathbb{R},\,\overline{S}'}0$, we have $K_{\tilde{Y}}+\tilde{\Gamma} \sim_{\mathbb{R},\,\overline{S}'}0$, and therefore $K_{\overline{X}'}+\overline{\Delta}' \sim_{\mathbb{R},\,\overline{S}'}0$. 
From this, we see that $\overline{f}'\colon (\overline{X}',\overline{\Delta}') \to \overline{S}'$ is an lc-trivial fibration. 
Since $K_{X}+\Delta+\epsilon A$ is ample over $S$, it follows that $(X,\Delta+\epsilon A)$ is the lc model of $(Y, (\tilde{\Gamma}+\epsilon \tilde{A})|_{Y})$ over $S$. 
This fact and the construction of $(\overline{X}',\overline{\Delta}'+\epsilon\overline{A}') \to \overline{S}'$ show that $X \subset \overline{X}'$, $\overline{\Delta}'|_{X}=\Delta$, and $\epsilon \overline{A}'|_{X}= \epsilon A$. 
We can easily check that $\overline{f}'\colon (\overline{X}',\overline{\Delta}',\overline{A}') \to \overline{S}'$ and $\overline{S}' \to \overline{S}$ are what we wanted to construct. 
\end{proof}

The following corollary is used in the proof of Theorem \ref{thm--Fujino's--period--mapping--theory}. 
The difference between Lemma \ref{lem--compactification-lc-trivial-fib} and Corollary \ref{cor--compactification-lc-trivial-fib-2} below is that $A$ in Corollary \ref{cor--compactification-lc-trivial-fib-2} does not have any condition but $S$ is assumed to be smooth. 

\begin{cor}\label{cor--compactification-lc-trivial-fib-2}
Let $f\colon (X,\Delta,A)\to S$ be a polarized lc-trivial fibration such that $S$ is smooth and quasi-projective. 
Let $S \hookrightarrow \overline{S}$ be an open immersion to a smooth projective variety $\overline{S}$. 
Then there exist an open immersion $X \hookrightarrow \overline{X}'$ to a normal projective variety $\overline{X}'$, a polarized lc-trivial fibraton $\overline{f}'\colon (\overline{X}',\overline{\Delta}', \overline{A}') \to \overline{S}'$ to a normal projective variety $\overline{S}'$, and a birational morphism $\overline{S}' \to \overline{S}$ such that $\overline{S}' \to \overline{S}$ is an isomorphism over $S \subset \overline{S}$, $\overline{f}'|_{X}=f$, $\overline{\Delta}'|_{X}=\Delta$, and $\overline{A}'|_{X}=A$.  
Furthermore, any lc center of $(\overline{X}',\overline{\Delta}')$ and component of $\overline{\Delta}'$ intersects $X$. 
\end{cor}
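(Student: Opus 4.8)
The plan is to derive this from Lemma~\ref{lem--compactification-lc-trivial-fib}. That lemma already produces the desired compactification, but it requires its polarization to be an \emph{effective} $\mathbb{Q}$-Cartier Weil divisor $A$ with $(X,\Delta+tA)$ lc for some $t>0$, whereas here $A$ is an arbitrary $f$-ample $\mathbb{Q}$-divisor with no extra hypothesis. So the strategy is to first replace $A$ by an auxiliary effective polarization $D$ of the type Lemma~\ref{lem--compactification-lc-trivial-fib} can handle, apply that lemma to $f\colon(X,\Delta,D)\to S$, and then twist the resulting compactified polarization back down by a pullback from the base so that it restricts to the original $A$.

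Concretely, I would first fix an ample divisor $\overline{H}$ on $\overline{S}$ and set $H:=\overline{H}|_{S}$. Since $A$ is $f$-ample, $A+mf^{*}H$ is ample on $X$ for $m\gg0$; fixing such an $m$ and then $N\gg0$ with $N(A+mf^{*}H)$ Cartier and very ample, I would take a general member $D\in|N(A+mf^{*}H)|$. Then $D$ is an effective Cartier divisor, in particular an effective $\mathbb{Q}$-Cartier Weil divisor, $D$ is $f$-ample, and since $(X,\Delta)$ is lc a standard Bertini argument (pulling back the base-point-free system along a log resolution of $(X,\Delta)$) shows that $(X,\Delta+tD)$ is lc for $0<t\ll1$. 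Thus $f\colon(X,\Delta,D)\to S$ is a polarized lc-trivial fibration satisfying all hypotheses of Lemma~\ref{lem--compactification-lc-trivial-fib}. (When $\dim S=0$ the statement is trivial with $\overline{X}'=X$, so one may assume $\dim X\ge1$, where the Bertini argument is available.) Applying Lemma~\ref{lem--compactification-lc-trivial-fib} to $f\colon(X,\Delta,D)\to S$ and $S\hookrightarrow\overline{S}$ yields an open immersion $X\hookrightarrow\overline{X}'$, a polarized lc-trivial fibration $\overline{f}'\colon(\overline{X}',\overline{\Delta}',\overline{D}')\to\overline{S}'$, and a birational morphism $\rho\colon\overline{S}'\to\overline{S}$ which is an isomorphism over $S$, with $\overline{f}'|_{X}=f$, $\overline{\Delta}'|_{X}=\Delta$, $\overline{D}'|_{X}=D$, and every lc center of $(\overline{X}',\overline{\Delta}')$ and every component of $\overline{\Delta}'$ meeting $X$.

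It then remains to adjust the polarization. Writing $\Phi:=D-NA-Nmf^{*}H$, a $\mathbb{Q}$-divisor on $X$ with $\Phi\sim_{\mathbb{Q}}0$, I would pull back to $\overline{X}'$ the rational function(s) witnessing $\Phi\sim_{\mathbb{Q}}0$ to obtain a $\mathbb{Q}$-divisor $\overline{\Phi}$ on $\overline{X}'$ with $\overline{\Phi}\sim_{\mathbb{Q}}0$ and $\overline{\Phi}|_{X}=\Phi$, put $\overline{H}':=\rho^{*}\overline{H}$, and set
\[
\overline{A}':=\tfrac{1}{N}\overline{D}'-m(\overline{f}')^{*}\overline{H}'-\tfrac{1}{N}\overline{\Phi}.
\]
Since $\rho$ is an isomorphism over $S$ we have $\overline{H}'|_{S}=H$, and since $\overline{f}'|_{X}=f$ this gives $\overline{A}'|_{X}=\tfrac{1}{N}D-mf^{*}H-\tfrac{1}{N}\Phi=A$. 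Moreover $\overline{A}'\sim_{\mathbb{Q}}\tfrac{1}{N}\overline{D}'-m(\overline{f}')^{*}\overline{H}'$ with $\overline{D}'$ being $\overline{f}'$-ample and $(\overline{f}')^{*}\overline{H}'$ numerically trivial over $\overline{S}'$, so $\overline{A}'$ is $\overline{f}'$-ample; hence $\overline{f}'\colon(\overline{X}',\overline{\Delta}',\overline{A}')\to\overline{S}'$ is a polarized lc-trivial fibration whose underlying lc-trivial fibration $(\overline{X}',\overline{\Delta}')\to\overline{S}'$ is unchanged. The remaining assertions — $\rho$ an isomorphism over $S$, $\overline{f}'|_{X}=f$, $\overline{\Delta}'|_{X}=\Delta$, and every lc center of $(\overline{X}',\overline{\Delta}')$ and component of $\overline{\Delta}'$ meeting $X$ — are then inherited directly from Lemma~\ref{lem--compactification-lc-trivial-fib}.

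The substance of the argument is thus entirely contained in Lemma~\ref{lem--compactification-lc-trivial-fib}; beyond invoking it, the only points that require care are the Bertini step producing $D$ with $(X,\Delta+tD)$ lc and the bookkeeping of $\mathbb{Q}$-linear equivalences ensuring that $\overline{A}'$ restricts to exactly $A$ rather than merely to a divisor $\mathbb{Q}$-linearly equivalent to it. I do not expect either to present a genuine obstacle.
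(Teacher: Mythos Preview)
Your proposal is correct and follows essentially the same approach as the paper: replace $A$ by an effective $f$-ample divisor $\mathbb{Q}$-linearly equivalent to $A$ plus a pullback from $S$, apply Lemma~\ref{lem--compactification-lc-trivial-fib}, then twist the compactified polarization back by the pullback of a divisor on $\overline{S}'$ and a principal divisor to recover $A$ exactly. The paper's version is slightly terser---it writes the effective replacement directly as $E=A+f^{*}H+q\cdot\mathrm{div}(\psi)$ without spelling out the Bertini step, and extends $H$ to $\overline{S}$ by taking its closure (using smoothness of $\overline{S}$ for $\mathbb{Q}$-Cartierness) rather than starting from an ample $\overline{H}$ on $\overline{S}$---but the substance is identical.
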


\begin{proof}
Since $A$ is $f$-ample, there exist a rational number $q$, a rational function $\psi$ on $X$, and a $\mathbb{Q}$-Cartier divisor $H$ on $S$ such that
$$E:=A+f^{*}H+q \cdot {\rm div}(\psi)$$
is effective and $(X,\Delta+tE)$ is lc for some $t>0$. 
By Lemma \ref{lem--compactification-lc-trivial-fib}, we can find an open immersion $X \hookrightarrow \overline{X}'$ to a normal projective variety $\overline{X}'$, a polarized lc-trivial fibraton $\overline{f}'\colon (\overline{X}',\overline{\Delta}', \overline{E}') \to \overline{S}'$ to a normal projective variety $\overline{S}'$, and a projective birational morphism $g \colon \overline{S}' \to \overline{S}$ such that $g \colon \overline{S}' \to \overline{S}$ is an isomorphism over $S \subset \overline{S}$, $\overline{f}'|_{X}=f$, $\overline{\Delta}'|_{X}=\Delta$, and $\overline{E}'|_X=E$. 
Let $\overline{H}$ be the closure of $H$ in $\overline{S}$, and we put $\overline{H}':=g^{*}\overline{H}$. 
Note that $\overline{H}$ is $\mathbb{Q}$-Cartier since $\overline{S}$ is smooth. 
By thinking $\psi$ of a rational function on $\overline{X}'$, we define
$$\overline{A}':=\overline{E}'-\overline{f}'^{*}\overline{H}'-q \cdot {\rm div}(\psi).$$
Then $\overline{A}'$ is $\mathbb{Q}$-Cartier and $\overline{f}'$-ample, and $\overline{A}'|_{X}=E-f^{*}H-q \cdot {\rm div}(\psi)=A$ by the fact $\overline{f}'|_{X}=f$ and the definition of $\overline{H}'$. 
The polarized lc-trivial fibration $\overline{f}'\colon (\overline{X}',\overline{\Delta}', \overline{A}') \to \overline{S}'$ and the morphism $\overline{S}' \to \overline{S}$ are what we wanted.
\end{proof}

\begin{rem}\label{rem--compactification-lc-trivial-fib}
Let $f\colon (X,\Delta, A)\to S$ be a polarized lc-trivial fibration such that $S$ is quasi-projective, and let   
$$
\xymatrix
{
(X,\Delta) \ar[d]_{f} \ar@{^{(}->}[r] & (\overline{X}',\overline{\Delta}') \ar[d]^{\overline{f}'} 
\\
S \ar@{^{(}->}[r] & \overline{S}'. 
}
$$
be the diagram in Lemma \ref{lem--compactification-lc-trivial-fib} or Corollary \ref{cor--compactification-lc-trivial-fib-2}. 
Let $\boldsymbol{\rm B}$ and $\boldsymbol{\rm M}$ (resp.~$\overline{\boldsymbol{\rm B}}'$ and $\overline{\boldsymbol{\rm M}}'$) be the discriminant $\mathbb{Q}$-b-divisor and the moduli $\mathbb{Q}$-b-divisor with respect to $f$ (resp.~$\overline{f}'$). 
Then $\overline{\boldsymbol{\rm B}}'|_{S}=\boldsymbol{\rm B}$ and $\overline{\boldsymbol{\rm M}}'|_{S}=\boldsymbol{\rm M}$ by definition.
\end{rem}

We can easily check that Lemma \ref{lem--compactification-lc-trivial-fib}, Corollary \ref{cor--compactification-lc-trivial-fib-2}, and Remark \ref{rem--compactification-lc-trivial-fib} hold even if $\Delta$ and $A$ are not necessarily a $\mathbb{Q}$-divisor. 

\subsection{K-stability}

We say that a triple $(X,\Delta,L)$ is a {\it polarized normal pair} if $(X,\Delta)$ is a log pair and $L$ is an ample $\mathbb{Q}$-line bundle on $X$.
The following concept is necessary to define K-stability.
\begin{defn}[Test configuration]
    We say that $(\mathcal{X},\mathcal{L})$ is a {\it (semi)ample test configuration} for $(X,\Delta,L)$ if $(\mathcal{X},\mathcal{L})$ satisfies the following conditions.
    \begin{enumerate}
        \item There exists a projective flat morphism $\pi\colon \mathcal{X}\to\mathbb{A}^1$ and $\mathcal{L}$ is a $\pi$-(semi)ample $\mathbb{Q}$-line bundle on $\mathcal{X}$,
        \item $(\mathcal{X},\mathcal{L})$ admits a $\mathbb{G}_m$-action such that $\pi$ is $\mathbb{G}_m$-equivariant, where $\mathbb{A}^1$ admits the natural action by multiplication,
        \item $(\pi^{-1}(1),\mathcal{L}|_{\pi^{-1}(1)})$ is isomorphic to $(X,L)$.
    \end{enumerate}
    If $\mathcal{X}\cong X\times\mathbb{A}^1$, we say that $\mathcal{X}$ is {\it of product type}.
    If $\mathcal{X}\cong X\times\mathbb{A}^1$ with the trivial $\mathbb{G}_m$-action on the first component $X$, then we say that $\mathcal{X}$ is {\it trivial}.
    $X_{\mathbb{A}^1}$ denotes the trivial test configuration and $L_{\mathbb{A}^1}:=p^*L$, where $p\colon X_{\mathbb{A}^1}\to X$ is the natural projection.
    If $\mathcal{X}$ is further normal, we say that $(\mathcal{X},\mathcal{L})$ is a {\it normal (semi)ample test configuration}.
    Then, gluing $(X_{\mathbb{A}^1},L_{\mathbb{A}^1})$ with $(\mathcal{X},\mathcal{L})$, we obtain the natural $\mathbb{G}_m$-equivariant compactification $(\overline{\mathcal{X}},\overline{\mathcal{L}})$ of $(\mathcal{X},\mathcal{L})$ over $\mathbb{P}^1$ such that the restriction of $(\overline{\mathcal{X}},\overline{\mathcal{L}})$ to $\mathbb{P}^1\setminus\{0\}$ is isomorphic to $(X_{\mathbb{A}^1},L_{\mathbb{A}^1})$.
    Let $\mathrm{dim}\,X=d$.
    If $\mathcal{X}$ is normal, we set the {\it Donaldson--Futaki invariant} as
    \[
    \mathrm{DF}_{\Delta}(\mathcal{X},\mathcal{L}):=\frac{1}{L^d}\left((K_{\overline{\mathcal{X}}/\mathbb{P}^1}+\mathcal{D})\cdot \overline{\mathcal{L}}^{d}-\frac{d(K_{X}+\Delta)\cdot L^{d-1}}{(d+1)L^{d}}\cdot \overline{\mathcal{L}}^{d+1}\right),
    \]
    where $\mathcal{D}$ is the $\mathbb{Q}$-divisor, which is obtained as the Zariski closure of $\Delta\times \mathbb{G}_m$ in $\overline{\mathcal{X}}$.

    Furthermore, we set the following invariant.
    We note that there exists the canonical $\mathbb{G}_m$-equivariant birational map $\mathcal{X}\dashrightarrow X_{\mathbb{A}^1}$.
    Consider the normalization $\mathcal{Y}$ of the birational map.
    If we let $\sigma\colon \mathcal{Y}\to\mathcal{X}$ be the natural projection, then $(\mathcal{Y},\sigma^*\mathcal{L})$ is also a normal semiample test configuration.
    Let $q\colon \overline{\mathcal{Y}}\to X$ be the natural projection.
    Then, we set
    \[
    J^{\mathrm{NA}}(\mathcal{X},\mathcal{L})=\frac{1}{L^d}(\overline{\sigma^*\mathcal{L}}-q^*L)\cdot \overline{\sigma^*\mathcal{L}}^d-\frac{1}{(d+1)L^{d}}\cdot \overline{\mathcal{L}}^{d+1}.
    \]
    We note that $J^{\mathrm{NA}}(\mathcal{X},\mathcal{L})>0$ for all ample normal test configuration except trivial test configurations by \cite[Proposition 7.8]{BHJ}.
    On the other hand, we can set the following invariant for any $\mathbb{Q}$-line bundle $T$ on $X$:
    \begin{equation*}
        \mathcal{J}^{T,\mathrm{NA}}(\mathcal{X},\mathcal{L}):=\frac{1}{L^d}\left(q^*L\cdot\overline{\sigma^*\mathcal{L}}^d-\frac{dT\cdot L^{d-1}}{(d+1)L^d}\overline{\sigma^*\mathcal{L}}^{d+1} \right),
    \end{equation*}
    which we call the non--Archimedean $\mathrm{J}^T$-functional.
    For more details, see \cite{BHJ}.
\end{defn}
Now, we can define K-stability for polarized log pairs.
\begin{defn}[K-stability]
    Let $(X,\Delta,L)$ be a polarized log pair.
    We say that $(X,\Delta,L)$ is {\it K-semistable} if $\mathrm{DF}_{\Delta}(\mathcal{X},\mathcal{L})\ge0$ for any ample test configuration.
    $(X,\Delta,L)$ is called {\it uniformly K-stable} if there exists $\delta>0$ such that $\mathrm{DF}_{\Delta}(\mathcal{X},\mathcal{L})\ge\delta J^{\mathrm{NA}}(\mathcal{X},\mathcal{L})$ for any ample normal test configuration.
    Furthermore, $(X,\Delta,L)$ is called {\it K-stable} if $\mathrm{DF}_{\Delta}(\mathcal{X},\mathcal{L})>0$ for any ample non trivial normal test configuration.
    Finally, we say that $(X,\Delta,L)$ is {\it K-polystable} if $(X,\Delta,L)$ is K-semistable and for any ample normal test configuration $(\mathcal{X},\mathcal{L})$, $\mathrm{DF}_{\Delta}(\mathcal{X},\mathcal{L})=0$ if and only if $\mathcal{X}$ is of product type.
\end{defn}

We note that if $(X,\Delta,L)$ is K-semistable, then $(X,\Delta)$ has only lc singularities by \cite{GIToda} and \cite[Theorem 6.1]{OS}.
If it further holds that $L=-K_X-\Delta$, then $(X,\Delta)$ is klt.
We say that $(X,\Delta,L)$ is a {\it log Fano pair} if $L=-K_X-\Delta$ and $(X,\Delta)$ is klt.

Now, we set the $\delta$-invariant, which is first introduced by \cite{FO} and detects completely K-stability of log Fano pairs. 
\begin{defn}\label{defn--delta}
    Let $(X,\Delta,L)$ be  a polarized klt pair of dimension $d$.
    For any prime divisor $E$ over $X$, we choose a projective birational morphism $\pi\colon Y\to X$ from a smooth variety such that $E$ is a prime divisor on $Y$.
    Then, we set
    \[
    S_L(E):=\frac{1}{L^d}\int^{\infty}_0\mathrm{vol}_Y(\pi^*L-xE)dx.
    \]
    The above invariant does not depend on the choice of $\pi$.
    Now, we set the {\it $\delta$-invariant} as
    \[
    \delta(X,\Delta,L):=\inf_{E}\frac{A_{(X,\Delta)}(E)}{S_L(E)},
    \]
    where $E$ runs over all prime divisors over $X$.
\end{defn}

\begin{thm}[{\cite[Theorem B]{BlJ}}]
Let $(X,\Delta,-K_X-\Delta)$ be a log Fano pair.
Then the following hold.
\begin{enumerate}
    \item $(X,\Delta,-K_X-\Delta)$ is K-semistable if and only if $\delta(X,\Delta,-K_X-\Delta)\ge1$, and 
    \item $(X,\Delta,-K_X-\Delta)$ is uniformly K-stable if and only if $\delta(X,\Delta,-K_X-\Delta)>1$.
\end{enumerate}
\end{thm}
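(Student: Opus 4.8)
Since this is \cite[Theorem B]{BlJ}, I only outline the approach I would take. Write $L=-K_{X}-\Delta$, and for $m$ sufficiently divisible put $R_{m}=H^{0}\bigl(X,\mathcal{O}_{X}(-m(K_{X}+\Delta))\bigr)$ and $N_{m}=\dim R_{m}$. The plan is to route K-stability through Fujita's valuative criterion and the $\delta$-invariant through the Fujita--Odaka approximation by basis-type divisors, and then to match the two; the real content is to see that the infimum over \emph{all} ample test configurations agrees with the infimum over divisorial valuations that defines $\delta$.

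First I would set up the basis-type picture. Call a $\mathbb{Q}$-divisor $D$ on $X$ of \emph{$m$-basis type} if $D=\tfrac{1}{mN_{m}}\sum_{i=1}^{N_{m}}\{s_{i}=0\}$ for some basis $(s_{i})$ of $R_{m}$, and put $\delta_{m}(X,\Delta)=\inf_{D}\mathrm{lct}(X,\Delta;D)$. Using Newton--Okounkov bodies and a Fekete-type superadditivity for the volumes $\mathrm{vol}_{Y}(\pi^{*}L-xE)$, one proves, as initiated in \cite{FO}, that $\delta_{m}(X,\Delta)\to\delta(X,\Delta)$, that for every prime divisor $E$ over $X$ the normalized maximal vanishing $S_{m}(E):=\tfrac{1}{mN_{m}}\max\sum_{i}\mathrm{ord}_{E}(s_{i})$ tends to $S_{L}(E)$ with enough uniformity in $E$, and that $\delta_{m}(X,\Delta)=\inf_{E}A_{(X,\Delta)}(E)/S_{m}(E)$. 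Combined with the definition of $\delta$, this already yields that $\delta(X,\Delta)\ge1$ (resp.\ $>1$) if and only if $A_{(X,\Delta)}(E)\ge S_{L}(E)$ (resp.\ $\ge(1+\varepsilon)S_{L}(E)$ for a fixed $\varepsilon>0$) for every prime divisor $E$ over $X$.

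Next I would translate K-stability into valuations. Given a normal ample test configuration $(\mathcal{X},\mathcal{L})$ for $(X,\Delta,L)$, after perturbing $\mathcal{L}$ so that it is $\mathbb{Q}$-linearly equivalent over $\mathbb{A}^{1}$ to $-(K_{\mathcal{X}/\mathbb{A}^{1}}+\mathcal{D})$, attach to it the decreasing $\mathbb{Z}$-filtration $\mathcal{F}$ of $R=\bigoplus_{m}R_{m}$ given by the $\mathbb{G}_{m}$-weights on the central fibre. Following \cite{BHJ}, the non-Archimedean Ding functional decomposes as $\mathrm{D}^{\mathrm{NA}}(\mathcal{X},\mathcal{L})=\mathrm{L}^{\mathrm{NA}}(\mathcal{F})-\mathrm{E}^{\mathrm{NA}}(\mathcal{F})$, where $\mathrm{E}^{\mathrm{NA}}(\mathcal{F})$ is the average of the successive minima of $\mathcal{F}$ (the filtration analogue of $S_{m}$) and $\mathrm{L}^{\mathrm{NA}}(\mathcal{F})$ is a log-canonical-threshold term; moreover $\mathrm{DF}_{\Delta}(\mathcal{X},\mathcal{L})\ge\mathrm{D}^{\mathrm{NA}}(\mathcal{X},\mathcal{L})$, with equality on special test configurations, and $J^{\mathrm{NA}}(\mathcal{X},\mathcal{L})$ is comparable to $\mathrm{E}^{\mathrm{NA}}(\mathcal{F})-\lambda_{\min}(\mathcal{F})$ up to universal constants on normalized test configurations. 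The decisive ingredient is an approximation: every such filtration can be approximated, in all the functionals that matter, by \emph{divisorial} filtrations $\mathcal{F}_{E}^{\lambda}R_{m}=\{s\in R_{m}\mid\mathrm{ord}_{E}(s)\ge\lambda\}$, and for $\mathcal{F}_{E}$ the quantities $\mathrm{E}^{\mathrm{NA}}$ and $\mathrm{L}^{\mathrm{NA}}$ recover $S_{L}(E)$ and $A_{(X,\Delta)}(E)$ in the limit over $m$, by the first step.

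Finally I would assemble the pieces. Since K-stability may be tested on special test configurations (the MMP reduction of Li--Xu), K-semistability is equivalent to $\mathrm{D}^{\mathrm{NA}}(\mathcal{X},\mathcal{L})\ge0$ for all test configurations, hence to $A_{(X,\Delta)}(E)\ge S_{L}(E)$ for all $E$, i.e.\ to $\delta(X,\Delta)\ge1$. For the uniform statement, $\mathrm{DF}_{\Delta}\ge\varepsilon J^{\mathrm{NA}}$ translates, via $\mathrm{D}^{\mathrm{NA}}=\mathrm{L}^{\mathrm{NA}}-\mathrm{E}^{\mathrm{NA}}$ and $J^{\mathrm{NA}}\asymp\mathrm{E}^{\mathrm{NA}}$, into the existence of $\varepsilon'>0$ with $A_{(X,\Delta)}(E)\ge(1+\varepsilon')S_{L}(E)$ for all $E$, i.e.\ $\delta(X,\Delta)>1$; because the infimum defining $\delta$ need not be attained, one runs this through $\delta_{m}$, using that $\delta(X,\Delta)>1$ forces $\delta_{m}\ge1+\varepsilon'$ for $m\gg0$, which promotes to a bound uniform over all test configurations by the approximation argument. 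The hard part is the first step: the convergence $\delta_{m}\to\delta$ together with control of $S_{m}(E)\to S_{L}(E)$ uniform in $E$, and---inseparable from it---the approximation of the possibly non--finitely-generated filtrations arising from general test configurations by divisorial ones, which is precisely what licenses the passage from ``all test configurations'' to ``all divisorial valuations''.
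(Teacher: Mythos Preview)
The paper does not give its own proof of this theorem; it simply cites \cite[Theorem B]{BlJ} as an external result and moves on. Your outline is a reasonable sketch of the Blum--Jonsson argument itself---basis-type divisors and the convergence $\delta_m\to\delta$ from \cite{FO,BlJ}, the non-Archimedean Ding/energy formalism from \cite{BHJ}, and the reduction to special test configurations---so there is nothing to compare against in the paper, and your proposal is consistent with the source the paper invokes.
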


\begin{rem}
The $\delta$-invariant is first introduced by Fujita--Odaka \cite{FO} and Blum--Jonsson reformulated it into the form as in Definition \ref{defn--delta} by showing \cite[Theorem 4.4]{BlJ}.
    For log Fano pairs, the K-stability and the uniform K-stability are equivalent by \cite[Theorem 1.6]{LXZ}.
\end{rem}

We introduce the following K-stability for klt--trivial fibrations.

\begin{defn}\label{defn--unif--ad--Kst}
    Let $f\colon (X,\Delta)\to S$ be a klt--trivial fibration of proper normal varieties.
    Let $A$ be an $f$-ample $\mathbb{Q}$-line bundle on $X$ and $L$ an ample $\mathbb{Q}$-line bundle on $S$.
    We call $f\colon (X,\Delta,A)\to (S,L)$ a polarized klt--trivial fibration.
    If there exist real numbers $\delta>0$ and $\epsilon_0>0$ such that  
    \[
    \mathrm{DF}_{\Delta}(\mathcal{X},\mathcal{M})\ge\delta J^{\mathrm{NA}}(\mathcal{X},\mathcal{M})
    \]
    for any $\epsilon\in(0,\epsilon_0)\cap\mathbb{Q}$ and ample normal test configuration $(\mathcal{X},\mathcal{M})$ for $(X,\epsilon A+f^*L)$, then we say that $f\colon (X,\Delta,A)\to (S,L)$ is {\em uniformly adiabatically K-stable}.
\end{defn}

If $S=\mathbb{P}^1$ and $\Delta=0$, we choose $L$ to be $\mathcal{O}(1)$ and we simply write $f\colon (X,A)\to \mathbb{P}^1$. 

\begin{thm}[{\cite[Theorem 1.1]{Hat}}]
    Let $f\colon (X,A)\to \mathbb{P}^1$ be a polarized klt--trivial fibration over $\mathbb{C}$ such that $-K_X$ is nef but not numerically trivial.
    Then, $f$ is uniformly adiabatically K-stable if and only if $\delta(\mathbb{P}^1,B,\mathcal{O}(1))>\mathrm{deg}(-K_{\mathbb{P}^1}-B-M)$, where $B$ and $M$ are the discriminant and the moduli $\mathbb{Q}$-divisors respectively.
\end{thm}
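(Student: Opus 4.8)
The statement to prove is Theorem \cite[Theorem 1.1]{Hat} (the excerpt ends just before its proof): for a polarized klt-trivial fibration $f\colon (X,A)\to\mathbb{P}^1$ over $\mathbb{C}$ with $-K_X$ nef but not numerically trivial, uniform adiabatic K-stability is equivalent to $\delta(\mathbb{P}^1,B,\mathcal{O}(1))>\deg(-K_{\mathbb{P}^1}-B-M)$. The plan is to reduce K-stability on the total space $X$, in the adiabatic regime $\epsilon\to 0$, to K-stability data on the base curve $(\mathbb{P}^1,B)$ together with the moduli $\mathbb{Q}$-divisor $M$. The starting point is the canonical bundle formula $K_X+\Delta\sim_{\mathbb{Q}}f^*(K_{\mathbb{P}^1}+B+M)$ (here $\Delta=0$), so that $-K_X\sim_{\mathbb{Q}}f^*(-K_{\mathbb{P}^1}-B-M)$; since $-K_X$ is nef and not numerically trivial, $-K_{\mathbb{P}^1}-B-M$ has positive degree, and the relevant polarization $\epsilon A+f^*\mathcal{O}(1)$ becomes, after a further normalization, comparable to $\epsilon A - rtK_X + f^*\mathcal{O}(1)$-type classes appearing in \S\ref{subsec--k-moduli--klt--trivial}.

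First I would recall the adiabatic expansion of the Donaldson--Futaki invariant: for a test configuration $(\mathcal{X},\mathcal{M})$ for $(X,\epsilon A+f^*L)$, the invariant $\mathrm{DF}_0(\mathcal{X},\mathcal{M})$ admits an asymptotic expansion in $\epsilon$ whose leading terms are controlled, on the one hand, by the Donaldson--Futaki invariant of the induced test configuration on the base $(\mathbb{P}^1,B,L)$ and, on the other, by fiberwise terms that vanish because the general fiber is Calabi--Yau (so $K_X$ is $f$-trivial up to the discriminant). This is exactly the mechanism of \cite{fibst} and of the second author's work; I would invoke it to write $\mathrm{DF}_0(\mathcal{X},\mathcal{M}) = c\,\epsilon^{n}\big(\mathrm{DF}_{B}(\mathcal{X}_{\mathbb{P}^1},\mathcal{L}) - \text{(correction from }M\text{)}\big) + O(\epsilon^{n+1})$, and similarly for $J^{\mathrm{NA}}$. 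The generalized-pair structure $(\mathbb{P}^1, B+M)$ from the canonical bundle formula is what packages the correction term: the moduli divisor $M$ (nef, represented on an Ambro model as $\overline{M}_0$) shifts the ``Fano threshold'' from $\deg(-K_{\mathbb{P}^1})=2$ down to $\deg(-K_{\mathbb{P}^1}-B-M)$.

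Next I would translate the inequality $\mathrm{DF}_0\ge\delta\,J^{\mathrm{NA}}$ uniformly over small $\epsilon$ into a statement purely about the base. Because $\dim S=1$, all test configurations of $(\mathbb{P}^1,B,\mathcal{O}(1))$ come (up to normalization) from $\mathbb{G}_m$-actions on $\mathbb{P}^1$, i.e. from a single divisorial valuation — a point of $\mathbb{P}^1$ — so the relevant stability is governed by the $\delta$-invariant $\delta(\mathbb{P}^1,B,\mathcal{O}(1))=\inf_{p}\frac{A_{(\mathbb{P}^1,B)}(p)}{S_{\mathcal{O}(1)}(p)}$, with the twist by $M$ entering through the log-Fano normalization: the leading coefficient of the DF expansion, divided by that of $J^{\mathrm{NA}}$, is precisely $\delta(\mathbb{P}^1,B,\mathcal{O}(1))-\deg(-K_{\mathbb{P}^1}-B-M)$ after clearing denominators (one uses that for the generalized log Fano curve $(\mathbb{P}^1,B+M)$ with polarization $\mathcal{O}(1)$ the DF invariant equals $A_{(\mathbb{P}^1,B)}(p) - \deg(-K_{\mathbb{P}^1}-B-M)\,S_{\mathcal{O}(1)}(p)$ up to a positive factor, by a direct computation with the two-term DF formula on a curve). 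Hence uniform adiabatic K-stability ($\mathrm{DF}_0\ge\delta_0 J^{\mathrm{NA}}$ for some $\delta_0>0$, all small $\epsilon$) holds iff this leading ratio is bounded below by a positive constant, iff $\inf_p \frac{A_{(\mathbb{P}^1,B)}(p)}{S_{\mathcal{O}(1)}(p)} > \deg(-K_{\mathbb{P}^1}-B-M)$, which is the asserted inequality.

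The main obstacle is the passage from the \emph{asymptotic leading term} back to a \emph{uniform} estimate valid for all sufficiently small $\epsilon$ simultaneously and over \emph{all} test configurations of $X$ at once — in particular controlling test configurations of $\mathcal{X}$ that do not arise as pullbacks of test configurations of the base, and showing their DF/$J^{\mathrm{NA}}$ ratio is no worse. For this I would argue that any normal ample test configuration of $(X,\epsilon A+f^*\mathcal{O}(1))$ can, after running a relative MMP over $\mathbb{A}^1$ (using that $-K_X$ is $f$-semiample via the $\mathbf{B}$-semiampleness input, and using \cite{BCHM}), be dominated by one adapted to the fibration structure $X\to\mathbb{P}^1\times\mathbb{A}^1$, reducing to the base case, while the $J^{\mathrm{NA}}$-term only decreases under such modifications (monotonicity of non-Archimedean functionals, \cite{BHJ}); the fiber direction contributes nonnegatively to DF because the fibers are K-semistable Calabi--Yau varieties (DF $=0$ in the fiber direction). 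Combining the base computation with these monotonicity and positivity facts yields the equivalence in both directions.
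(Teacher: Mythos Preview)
This theorem is not proved in the present paper: it is stated with the attribution \cite[Theorem 1.1]{Hat} and no proof is given here. The authors import it as an external input (to motivate Definition \ref{defn--unif--ad--Kst2} and the quasimap framework), so there is no ``paper's own proof'' to compare your proposal against.

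That said, your outline is in the right spirit for how the result is actually proved in \cite{Hat}: one does expand $\mathrm{DF}$ and $J^{\mathrm{NA}}$ in $\epsilon$ and identify the leading term with a log-twisted stability condition on $(\mathbb{P}^1,B,M)$. The place where your sketch is genuinely incomplete is the final paragraph. Reducing an arbitrary test configuration of $X$ to one ``adapted to the fibration'' via relative MMP, and asserting that $J^{\mathrm{NA}}$ only decreases while the fiber direction contributes nonnegatively, is not something you can get from \cite{BHJ} and \cite{BCHM} alone; in \cite{Hat} this step goes through a careful analysis of $\mathrm{J}$-stability and slope-type inequalities (and in particular uses that the base is a curve in an essential way), not a monotonicity-under-MMP argument. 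Your claim that ``the fiber direction contributes nonnegatively to DF because the fibers are K-semistable Calabi--Yau'' is also too quick: the fibers being Calabi--Yau gives $\mathrm{DF}=0$ fiberwise, but to get a uniform lower bound $\delta J^{\mathrm{NA}}$ one needs more, since $J^{\mathrm{NA}}$ in the fiber direction is not zero. So the equivalence ``$\Leftarrow$'' (stability of the base implies uniform adiabatic K-stability of the total space) is where the real work lies, and your proposal does not yet supply it.
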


The latter condition is nothing but the K-stability of the log Fano pair $(\mathbb{P}^1,B+D,-K_{\mathbb{P}^1}-B-D)$, where $D$ is a general effective $\mathbb{Q}$-divisor $\mathbb{Q}$-linearly equivalent to $M$.
Here, we note that $M$ is semiample by \cite[Theorem 0.1]{A}.

\begin{defn}\label{defn--unif--ad--Kst2}
For general $\mathbbm{k}$, we say that a polarized klt--trivial fibration $f\colon (X,A)\to\mathbb{P}^1$ such that $-K_X$ is not nef, is {\it uniformly adiabatically K-stable} if $\delta(\mathbb{P}^1,B,\mathcal{O}(1))>\mathrm{deg}(-K_{\mathbb{P}^1}-B-M)$, where $B$ and $M$ are the discriminant and the moduli $\mathbb{Q}$-divisors respectively.   
\end{defn}

\subsection{Moduli schemes and functors}\label{Subsection--Hilb}

We collect the following notions about moduli schemes.

Let $f\colon \mathscr{X}\to S$ be a projective morphism with an $f$-ample line bundle $\mathscr{A}$.
Fix a polynomial $p$ with rational coefficients and a coherent sheaf $\mathscr{F}$ on $\mathscr{X}$.
Consider the following functor $\mathfrak{Quot}^{p,\mathscr{A}}_{\mathscr{X}/S,\mathscr{F}}\colon(\mathsf{Sch}_S)^{\mathrm{op}}\to \mathsf{Sets}$: \[
\mathfrak{Quot}^{p,\mathscr{A}}_{\mathscr{X}/S,\mathscr{F}}(T):=\left\{
\begin{array}{l}
q\colon \mathscr{F}_T\to\mathcal{G}
\end{array}
\middle|
\begin{array}{l}
\text{$\mathcal{G}$ is a coherent sheaf of $\mathscr{X}_T$ flat over $T$}\\
\text{whose fibers have the same Hilbert}\\
\text{polynomial $p$ with respect to $\mathscr{A}$ and $q$}\\
\text{is a surjective $\mathcal{O}_{\mathscr{X}_T}$-linear map.}
\end{array}\right\}/\sim,
\]
where $\sim$ is an equivalence relation such that $q\sim q'$ if $\mathrm{Ker}\,q=\mathrm{Ker}\,q'$.
If $\mathscr{A}$ is $f$-very ample, this functor is represented by the scheme $\mathbf{Quot}^{p,\mathscr{A}}_{\mathscr{X}/S,\mathscr{F}}$ projective over $S$ unique up to isomorphism. 
Consider the following functor:
\[
\mathfrak{Hilb}^{p,\mathscr{A}}_{\mathscr{X}/S}(T):=\left\{
\begin{array}{l}
\mathcal{Z}\subset \mathscr{X}_T
\end{array}
\middle|
\begin{array}{l}
\text{$\mathcal{Z}$ is a closed subscheme of $\mathscr{X}_T$ flat over}\\
\text{$T$ whose fibers have the same Hilbert}\\
\text{polynomial $p$ with respect to $\mathscr{A}$}
\end{array}\right\}.
\]
This functor is represented by the scheme $\mathbf{Hilb}^{p,\mathscr{A}}_{\mathscr{X}/S}$ isomorphic to $\mathbf{Quot}^{p,\mathscr{A}}_{\mathscr{X}/S,\mathcal{O}_{\mathscr{X}}}$ by \cite[Theorem 5.20]{FGA}. See \cite[\S5]{FGA} for more details.
If $S=\mathrm{Spec}\mathbbm{k}$, then we will write $\mathbf{Hilb}^{p,\mathscr{A}}_{\mathscr{X}}$ for simplicity.
Next, consider a subfunctor $\mathfrak{Div}^{p,\mathscr{A}}_{\mathscr{X}/S}$ of $\mathfrak{Hilb}^{p,\mathscr{A}}_{\mathscr{X}/S}$ as follows:
\[
\mathfrak{Div}^{p,\mathscr{A}}_{\mathscr{X}/S}(T):=\left\{
\begin{array}{l}
\mathcal{Z}\subset \mathscr{X}_T
\end{array}
\middle|
\begin{array}{l}
\text{$\mathcal{Z}$ is an effective relative Cartier divisor of $\mathscr{X}_T$}\\
\text{flat over $T$ whose fibers have the same Hilbert}\\
\text{polynomial $p$ with respect to $\mathscr{A}$}
\end{array}\right\}.
\]
We say that $D$ is an {\em effective relative Cartier divisor on $\mathscr{X}$ over $S$} if $D$ is an effective Cartier divisor that is flat over $S$. 
If $\mathscr{X}$ is flat over $S$, then the functor $\mathfrak{Div}^{p,\mathscr{A}}_{\mathscr{X}/S}$ is represented by an open subscheme $\mathbf{Div}^{p,\mathscr{A}}_{\mathscr{X}/S}$ of $\mathbf{Hilb}^{p,\mathscr{A}}_{\mathscr{X}/S}$ by \cite[Theorem 9.3.7]{FGA}.
We set $\mathbf{Hilb}_{\mathscr{X}/S}=\sqcup_{p}\mathbf{Hilb}^{p,\mathscr{A}}_{\mathscr{X}/S}$ and $\mathbf{Div}_{\mathscr{X}/S}=\sqcup_{p}\mathbf{Div}^{p,\mathscr{A}}_{\mathscr{X}/S}$.
Let $g\colon\mathscr{Y}\to S$ be another projective morphism with a $g$-ample line bundle $\mathscr{B}$.
We set the functor
\[
\mathfrak{Isom}_S(\mathscr{X},\mathscr{Y})(T):=\left\{
\begin{array}{l}
\varphi\colon \mathscr{X}_T\to\mathscr{Y}_T
\end{array}
\middle|
\begin{array}{l}
\text{$\varphi$ is an isomorphism over $T$}
\end{array}\right\}.
\]
If $f$ is flat, this functor is represented by a scheme $\mathbf{Isom}_S(\mathscr{X},\mathscr{Y})$ (cf.~\cite[Theorem 5.23]{FGA}).
Moreover, we define the following subfunctor
\[
\mathfrak{Isom}_S((\mathscr{X},\mathscr{A}),(\mathscr{Y},\mathscr{B}))(T):=\left\{
\begin{array}{l}
\varphi\colon \mathscr{X}_T\to\mathscr{Y}_T
\end{array}
\middle|
\begin{array}{l}
\text{$\varphi$ is an isomorphism over}\\
\text{$T$ such that $\varphi^*\mathscr{B}\sim\mathscr{A}$}
\end{array}\right\},
\]
which is represented by a closed scheme $\mathbf{Isom}_S((\mathscr{X},\mathscr{A}),(\mathscr{Y},\mathscr{B}))$ of $\mathbf{Isom}_S(\mathscr{X},\mathscr{Y})$ (see \cite[\S2.2]{HH}).
For a linear algebraic group $G$ and principal $G$-bundles $\mathscr{Z}$ and $\mathscr{W}$ over $S$, we define
\[
\mathfrak{Isom}_S^G(\mathscr{Z},\mathscr{W})(T):=\left\{
\begin{array}{l}
\varphi\colon \mathscr{Z}_T\to\mathscr{W}_T
\end{array}
\middle|
\begin{array}{l}
\text{$\varphi$ is an isomorphism over $T$}\\
\text{preserving $G_T$-bundle structure}
\end{array}\right\}.
\]
This functor is also represented by a principal $G$-bundle $\mathbf{Isom}^G_S(\mathscr{Z},\mathscr{W})$.

Next, we define the Picard scheme.
Let $f\colon \mathscr{X}\to S$ be a projective morphism with an $f$-very ample line bundle $\mathscr{A}$. 
Suppose that every geometric fiber of $f$ is irreducible and reduced.
We set $\mathfrak{Pic}_{\mathscr{X}/S}$ as follows:  
\[
\mathfrak{Pic}_{\mathscr{X}/S}(T)=\{\text{line bundle on $\mathscr{X}_T$}\}/\sim_T,
\]
where we take the quotient by the relative linear equivalence relation.
Note that if there exists a section $\iota\colon S\to \mathscr{X}$, then $\mathfrak{Pic}_{\mathscr{X}/S}$ is an \'etale sheaf.
It is known that the \'etale sheafification of $\mathfrak{Pic}_{\mathscr{X}/S}$ is represented by a separated scheme $\mathbf{Pic}_{\mathscr{X}/S}$ locally of finite type over $S$ (cf.~\cite[\S9]{FGA}).

Let $f\colon X\to S$ be a projective morphism of locally Noetherian schemes and $\mathcal{E}$ and $\mathcal{F}$ coherent sheaves on $X$.
Then, there exists a contravariant functor $\mathfrak{Hom}(\mathcal{E},\mathcal{F})$  from $\mathsf{Sch}_S$ to the category of sets such that for any $S$-scheme $T\to S$, the functor associates $\mathrm{Hom}_{X_T}(\mathcal{E}_T,\mathcal{F}_T)$.
Suppose that $\mathcal{F}$ is flat.
As in \cite[Theorem 5.8]{FGA}, there exists the scheme $\mathbf{Hom}_S(\mathcal{E},\mathcal{F})$ representing $\mathfrak{Hom}(\mathcal{E},\mathcal{F})$.
Take a element $g\in \mathrm{Hom}_{X}(\mathcal{E},\mathcal{F})$.
As remarked in \cite[Remark 5.9]{FGA}, there exists a closed subscheme $S'\subset S$ such that for any morphism $\mu\colon T\to S$ from a scheme, $\mu$ factors through $S'$ if and only if $g_{T}=0$.

The following lemmas are used in Subsection \ref{subsec3.1}.
\begin{lem}\label{lem--closed--cone}
    Let $f\colon X\to S$ be a projective and flat morphism of schemes of finite type over $\mathbbm{k}$ and $\mathscr{L}$ a line bundle on $X$.
    Let $Z\subset \mathbb{A}^{N+1}_{\mathbbm{k}}$ be a closed subscheme whose ideal is generated by finitely many homogeneous polynomials $g_1,\ldots, g_l$.
    Suppose that there exists a $\mathcal{O}_{X}$-linear morphism $h\colon\mathcal{O}_{X}^{\oplus N+1}\to \mathscr{L}$.
    Let $h'\colon \mathbb{A}_{X}(\mathscr{L})\to \mathbb{A}_{X}^{N+1}$ be the morphism induced by $h$.

    Then, there exists a closed subscheme $S'\subset S$ satisfying the following.
For any morphism $\mu\colon T\to S$ from a scheme, $\mu$ factors through $S'$ if and only if $h'_T$ factors through $Z_T$.
\end{lem}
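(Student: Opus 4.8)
The plan is to reduce the condition that $h'_T$ factor through $Z_T$ to the simultaneous vanishing of finitely many morphisms of coherent sheaves on $X$, and then to apply the representability of $\mathfrak{Hom}$-functors recalled immediately before the lemma, taking a scheme-theoretic intersection at the end.

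First I would make $h'$ explicit. By construction $h'$ is the morphism of relative affine $X$-schemes $\mathrm{Spec}_X\bigl(\mathrm{Sym}^{\bullet}_{\mathcal{O}_X}\mathscr{L}\bigr)\to\mathrm{Spec}_X\bigl(\mathrm{Sym}^{\bullet}_{\mathcal{O}_X}\mathcal{O}_X^{\oplus N+1}\bigr)$ induced by the graded $\mathcal{O}_X$-algebra homomorphism $\mathrm{Sym}^{\bullet}h$. Writing $e_0,\dots,e_N$ for the standard basis of $\mathcal{O}_X^{\oplus N+1}$ and $s_j:=h(e_j)\in\Gamma(X,\mathscr{L})$, for each $g_i$ (say homogeneous of degree $d_i$, viewed as an element of $\mathrm{Sym}^{d_i}(\mathcal{O}_X^{\oplus N+1})$) its image under $\mathrm{Sym}^{\bullet}h$ lies in the single graded piece $\mathrm{Sym}^{d_i}\mathscr{L}=\mathscr{L}^{\otimes d_i}$ and equals
\[
\sigma_i:=g_i(s_0,\dots,s_N)\in\Gamma\bigl(X,\mathscr{L}^{\otimes d_i}\bigr),
\]
the section obtained by the evident substitution. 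Since the ideal of $Z\times X\subset\mathbb{A}_X^{N+1}$ in the coordinate ring $\mathrm{Sym}^{\bullet}_{\mathcal{O}_X}\mathcal{O}_X^{\oplus N+1}$ is generated by $g_1,\dots,g_l$, the morphism $h'$ factors through $Z\times X$ if and only if $\sigma_1=\dots=\sigma_l=0$. Applying the same description after base change along an arbitrary $\mu\colon T\to S$, one sees that $h'_T$ factors through $Z_T=Z\times X_T\subset\mathbb{A}_{X_T}^{N+1}$ if and only if the corresponding sections $\sigma_{i,T}\in\Gamma\bigl(X_T,\mathscr{L}_T^{\otimes d_i}\bigr)$ all vanish; and since the formation of $\sigma_i$ by substitution is compatible with pullback, $\sigma_{i,T}$ is the pullback of $\sigma_i$ along $X_T\to X$.

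Next, for each $i$ I would view $\sigma_i$ as an element of $\mathrm{Hom}_{\mathcal{O}_X}(\mathcal{O}_X,\mathscr{L}^{\otimes d_i})$. As $f$ is flat and $\mathscr{L}^{\otimes d_i}$ is a line bundle on $X$, the sheaf $\mathscr{L}^{\otimes d_i}$ is flat over $S$; as $f$ is also projective, the functor $\mathfrak{Hom}(\mathcal{O}_X,\mathscr{L}^{\otimes d_i})$ is represented by $\mathbf{Hom}_S(\mathcal{O}_X,\mathscr{L}^{\otimes d_i})$ by \cite[Theorem 5.8]{FGA}. Then \cite[Remark 5.9]{FGA}, applied to the section $\sigma_i$, yields a closed subscheme $S'_i\subset S$ such that a morphism $\mu\colon T\to S$ factors through $S'_i$ if and only if $\sigma_{i,T}=0$.

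Finally, I would set $S':=\bigcap_{i=1}^{l}S'_i$, the scheme-theoretic intersection, which is again a closed subscheme of $S$; a morphism $\mu\colon T\to S$ factors through $S'$ if and only if it factors through each $S'_i$. Combining the two preceding steps, $\mu$ factors through $S'$ if and only if $\sigma_{i,T}=0$ for all $i$, if and only if $h'_T$ factors through $Z_T$, as desired. The only point requiring genuine care is the first step: translating ``$h'_T$ factors through $Z_T$'' into the vanishing of the explicit sections $\sigma_{i,T}$, and observing that this is manifestly stable under base change. Once that is in place, the statement is a formal consequence of the representability results recalled just before the lemma — the line-bundle nature of each $\mathscr{L}^{\otimes d_i}$, hence its flatness over $S$, being all that is used of $\mathscr{L}$.
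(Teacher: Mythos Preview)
Your proof is correct and follows essentially the same approach as the paper's: both reduce the factoring condition to the vanishing of the sections $\sigma_i=g_i(s_0,\dots,s_N)\in\Gamma(X,\mathscr{L}^{\otimes d_i})$ (the paper writes these as $h_{m_i}(g_i)$) and then invoke \cite[Remark 5.9]{FGA} to produce the closed subscheme. The only difference is that you spell out the scheme-theoretic intersection $S'=\bigcap_i S'_i$ explicitly, whereas the paper leaves this implicit.
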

\begin{proof}
    We first note that $h$ induces the canonical morphism $h_m\colon\mathbf{Sym}^m(\mathcal{O}_{X}^{\oplus N+1})\to \mathscr{L}^{\otimes m}$.
    For any morphism $\mu\colon T\to S$ of schemes, $h'_T$ factors through $Z_T$ if and only if $h_{m_i,T}(g_i)=0$ for all $g_i$, where $m_i=\mathrm{deg}\,g_i$.
    On the other hand, we can regard $h_{m_i}(g_i)$ as a section of $\mathbf{Hom}_S(\mathcal{O}_X,\mathscr{L}^{\otimes m_i})\to S$.
    Applying \cite[Remark 5.9]{FGA} to $h_{m_i}(g_i)$, we obtain the desired closed subscheme $S'$.
\end{proof}

\begin{lem}\label{lem--openness--criterion}
Let $S$ be a scheme of finite type over $\mathbbm{k}$ and $Z$ a subset of $S$.
Then the following are equivalent.
\begin{enumerate}
    \item $Z$ is open,
    \item the following two conditions are satisfied,
    \begin{enumerate}
        \item let $S'$ be a closed irreducible subscheme in $S$ and assume that the generic point $g_{S'}$ of $S'$ is contained in $Z$. Then, there exists an open subset $V$ of $S'$ such that $V$ contains $g_{S'}$ and is contained in $Z$,
        \item let $R$ be a discrete valuation ring essentially of finite type and $\phi\colon\mathrm{Spec}\, R\to S$ a morphism.
        Let $0\in \mathrm{Spec}\, R$ be the closed point and suppose that $\phi(0)\in Z$.
        Then, $\phi(\mathrm{Spec}\, R)\subset Z$.
    \end{enumerate}
\end{enumerate}
\end{lem}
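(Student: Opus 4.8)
The implication $(1)\Rightarrow(2)$ is immediate: if $Z$ is open, then $V:=Z\cap S'$ is the open subset of $S'$ required in (a), and in (b) the preimage $\phi^{-1}(Z)$ is an open subset of $\mathrm{Spec}\,R$ containing the closed point, hence equal to $\mathrm{Spec}\,R$ because a discrete valuation ring has exactly two prime ideals. So the work is in $(2)\Rightarrow(1)$. The plan is to first upgrade (b) to genuine stability of $Z$ under generization, and then to deduce openness by Noetherian induction on closed subschemes of $S$; the key structural remark is that conditions (a) and (b) are inherited by closed subschemes, so such an induction makes sense.

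\textbf{Step 1 (generization).} Suppose $x'\in Z$ and $x'\in\overline{\{x\}}$; I claim $x\in Z$, and may assume $x\neq x'$. Since the reduced scheme $\overline{\{x\}}$ is a variety over $\mathbbm{k}$, hence catenary, there is a chain $x=y_{0},y_{1},\dots,y_{m}=x'$ with $y_{\ell+1}\in\overline{\{y_{\ell}\}}$ of codimension one for each $\ell$. If $\nu_{\ell}\colon Y_{\ell}\to\overline{\{y_{\ell}\}}$ is the normalization and $z_{\ell}\in Y_{\ell}$ lies over $y_{\ell+1}$, then $R_{\ell}:=\mathcal{O}_{Y_{\ell},z_{\ell}}$ is a discrete valuation ring essentially of finite type over $\mathbbm{k}$, and the composite $\mathrm{Spec}\,R_{\ell}\to Y_{\ell}\to\overline{\{y_{\ell}\}}\hookrightarrow S$ sends the generic point to $y_{\ell}$ and the closed point to $y_{\ell+1}$. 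By (b), $y_{\ell+1}\in Z$ forces $y_{\ell}\in Z$; running this from $\ell=m-1$ down to $\ell=0$ and using $y_{m}=x'\in Z$, we get $x\in Z$. Hence $Z$ is stable under generization.

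\textbf{Step 2 (Noetherian induction).} Let $C_{1},\dots,C_{n}$ be the irreducible components of $S$ with generic points $\eta_{1},\dots,\eta_{n}$. If $\eta_{i}\notin Z$, then $Z\cap C_{i}=\emptyset$, since every point of $C_{i}$ admits $\eta_{i}$ as a generization and $Z$ is stable under generization by Step 1. Reorder so that $\eta_{1},\dots,\eta_{k}\in Z$ and $\eta_{k+1},\dots,\eta_{n}\notin Z$. If $k=0$, then $Z=\bigcup_{i}(Z\cap C_{i})=\emptyset$ is open. If $k\geq 1$, apply (a) with $S'=C_{i}$ for $i\leq k$ to obtain an open $W_{i}\subseteq C_{i}$ with $\eta_{i}\in W_{i}\subseteq Z$; writing $W_{i}=C_{i}\cap O_{i}$ with $O_{i}$ open in $S$ and setting $O_{i}':=O_{i}\setminus\bigcup_{j\neq i}C_{j}$, each $O_{i}'$ is open in $S$, contains $\eta_{i}$, and satisfies $O_{i}'\subseteq C_{i}\cap O_{i}=W_{i}\subseteq Z$. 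Thus $V:=\bigcup_{i\leq k}O_{i}'$ is a nonempty open subset of $S$ with $V\subseteq Z$, so $T:=S\setminus V$ is a \emph{proper} closed subscheme. Now $Z\cap T$ satisfies (a) and (b) inside $T$: an irreducible closed subset of $T$ with generic point in $Z\cap T$ is also one in $S$, so (a) for $S$ supplies the required open subset, which automatically lies in $T$; and a morphism $\mathrm{Spec}\,R\to T$ whose closed point lands in $Z\cap T$, composed with $T\hookrightarrow S$, has image in $Z$ by (b) for $S$, hence in $Z\cap T$. By the inductive hypothesis, $Z\cap T$ is open in $T$, say $Z\cap T=T\cap O$ with $O$ open in $S$; since $V\subseteq Z$ and $Z\setminus V=Z\cap T$,
\[
Z=V\cup(Z\cap T)=V\cup\big((S\setminus V)\cap O\big)=V\cup O,
\]
which is open in $S$. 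This closes the induction and proves $(2)\Rightarrow(1)$.

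\textbf{Main obstacle.} The only non-formal points are in Step 1 and in the descent of (a), (b) to $T=S\setminus V$: one must realize an arbitrary specialization $x'\in\overline{\{x\}}$ by a morphism from the spectrum of a discrete valuation ring that is \emph{essentially of finite type} over $\mathbbm{k}$ (this is exactly the existence of divisorial valuations with prescribed center, the same mechanism behind the definition of log discrepancy used earlier), and one must choose the modification from $O_{i}$ to $O_{i}'$ so that the open set $V$ extracted from (a) is simultaneously nonempty and contained in $Z$. The remainder is bookkeeping with the component decomposition and the subspace topology.
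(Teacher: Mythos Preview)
Your argument is correct. The implication $(1)\Rightarrow(2)$ is trivial, and for $(2)\Rightarrow(1)$ your two-step strategy is sound: Step~1 correctly realizes any specialization as a chain of codimension-one specializations and, via normalization, produces discrete valuation rings essentially of finite type over $\mathbbm{k}$ with the prescribed generic and closed points, so (b) indeed yields stability under generization. In Step~2 the passage from $O_i$ to $O_i'=O_i\setminus\bigcup_{j\neq i}C_j$ is exactly what is needed to force $O_i'\subseteq C_i\cap O_i=W_i\subseteq Z$, and the identity $V\cup\big((S\setminus V)\cap O\big)=V\cup O$ together with $O\cap T\subseteq Z$ and $O\setminus T\subseteq V\subseteq Z$ gives $Z=V\cup O$ as claimed. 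The inheritance of (a) and (b) by the proper closed subscheme $T$ is immediate, so the Noetherian induction terminates.

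By way of comparison, the paper does not prove this lemma at all: it simply records that the statement follows from \cite[10.14]{kollar-moduli}. Your write-up is thus a self-contained proof of the cited fact. What your approach buys is independence from the reference; what the paper's citation buys is brevity. The underlying mechanism (constructibility via generic openness plus stability under generization via DVRs) is the same as in Koll\'ar's treatment, so there is no essential mathematical divergence.
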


\begin{proof}
    This immediately follows from \cite[10.14]{kollar-moduli}.
\end{proof}

\subsection{Artin stacks and good moduli spaces}

We refer to \cite{Ols} and \cite{HH} for fundamental notations of algebraic spaces, Deligne--Mumford stacks, Artin stacks, and coarse moduli spaces.
See also \cite[Remark 2.18]{HH}.
Let $X$ be a scheme of finite type over $S$ and $G$ a smooth group scheme over $S$.
Then, let $[X/G]$ denote the {\it quotient stack}.
Note that $[X/G]$ is an Artin stack.

\begin{defn}
Let $\mathcal{M}$ be an Artin stack of finite type over $\mathbbm{k}$ and $\pi\colon \mathcal{M}\to M$ a morphism to an algebraic space over $\mathbbm{k}$.
We say that $M$ is a {\em good moduli space} and $\pi$ is the {\em canonical morphism} of the good moduli space if the following conditions hold.
\begin{enumerate}
    \item $\pi$ is {\it cohomologically affine}, i.e., $\pi$ is quasi-compact and the induced functor $\pi_*\colon \mathrm{QCoh}(\mathcal{M})\to\mathrm{QCoh}(M)$ of the categories of quasi-coherent sheaves is exact.
    \item the natural morphism $\mathcal{O}_{M}\to\pi_*\mathcal{O}_{\mathcal{M}}$ is an isomorphism.
\end{enumerate}
We note that this $\pi$ is unique up to isomorphisms if it exists (\cite[Theorem 6.6]{alper}).
Since we work over an algebraically closed field of characteristic zero, the coarse moduli spaces of Deligne--Mumford stacks are good moduli spaces (see \cite[Definition 11.1.1]{Ols}).
See \cite{alper} for more details of good moduli spaces.
\end{defn}

We prove the following lemma, which we use to deduce the positivity of the CM line bundle of the K-moduli scheme of log Fano quasimaps in Section \ref{subsection--projectivity}.

\begin{lem}\label{lem--good-moduli-lift}
    Let $\mathcal{M}$ be an Artin stack of finite type with affine diagonal over $\mathbbm{k}$.
    Suppose that there exists a good moduli space $\pi\colon\mathcal{M}\to M$ of $\mathcal{M}$ such that $M$ is proper over $\mathbbm{k}$.
    Let $g\colon S\to M$ be a morphism from a projective normal variety.
    Then, there exist a generically finite morphism $h\colon T\to S$ from a projective normal variety and a non-empty open subset $T^\circ\subset T$ such that $g\circ h|_{T^\circ}$ lifts up to $f^\circ\colon T^\circ\to \mathcal{M}$ with respect to $\pi$ such that $f^\circ$ maps every geometric point $\bar{t}\in T^\circ$ to the unique closed point in $\pi^{-1}(g\circ h(\bar{t}))$. 

    If $S$ is further a curve, then we can choose $T^\circ$ as above to be $T$.
\end{lem}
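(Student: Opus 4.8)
The plan is to reduce the problem to a question about generically lifting a map into the algebraic space $M$ along the good moduli morphism $\pi\colon \mathcal{M}\to M$, and to solve that by first passing to the good moduli space $M$ viewed as a stack quotient étale-locally. First I would exploit that $M$ is proper and that $\pi$ is cohomologically affine with affine diagonal to invoke the local structure theorems for good moduli spaces (Alper--Hall--Rydh): étale-locally on $M$, the pair $(\mathcal{M},\pi)$ looks like $([\operatorname{Spec}A/\mathrm{GL}_n],\operatorname{Spec}A^{\mathrm{GL}_n})$. Over such a chart, the closed point in each fiber of $\pi$ corresponds to the unique closed $\mathrm{GL}_n$-orbit, and one can construct a section of $\pi$ over a suitable locally closed stratum (e.g. the open locus where $\pi$ is an isomorphism, after stratifying $M$ by the stabilizer type). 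The point is that away from a proper closed subset of $S$, the image $g(S)$ meets the locus where $\pi$ admits a section étale-locally, and étale descent plus a generically finite base change $h\colon T\to S$ by a normal variety dominating the relevant étale chart gives the desired lift $f^\circ\colon T^\circ\to \mathcal{M}$ on a dense open $T^\circ$.

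More concretely, I would proceed as follows. First, stratify $M$ into finitely many locally closed subschemes so that over each stratum $\mathcal{M}\times_M M_i \to M_i$ becomes a gerbe over $M_i$ banded by a finite or linearly reductive group (this uses that residual gerbes over closed points of a good moduli space have linearly reductive stabilizers, together with generic flatness arguments). Take $M_0$ to be the stratum containing the generic point of $g(S)$ (shrinking $S$ and replacing $g(S)$ by its closure in $M_0$ if needed so that $g$ factors through $\overline{M_0}$ generically). Over $M_0$ the stack $\mathcal{M}_0$ is a gerbe, which becomes trivial after an étale base change $M_0'\to M_0$; then $\mathcal{M}_0\times_{M_0}M_0' \cong [M_0'/G]$ for a linearly reductive $G$ acting, and the closed-point section corresponds to choosing the stratum where the orbit is a single reduced point — giving an actual section $M_0'\to \mathcal{M}_0\times_{M_0}M_0'$. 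Now set $T$ to be the normalization of the main component of $S\times_{M_0} M_0'$ (after discarding the bad locus), which is generically finite over $S$ since $M_0'\to M_0$ is étale, hence quasi-finite; take $T^\circ$ to be the preimage of the good locus. Composing the section with the map $T^\circ\to M_0'$ yields $f^\circ\colon T^\circ\to \mathcal{M}$ lifting $g\circ h|_{T^\circ}$ and hitting the unique closed point of each fiber by construction. Finally, normalize $T$ and extend $h$ to a projective normal variety using that $S$ is projective and $h$ is generically finite (take the normalization of $S$ in the function field of $T$).

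For the last assertion, when $S$ is a curve: here $T$ is a smooth projective curve, $h\colon T\to S$ is finite, and $g\circ h$ maps the generic point of $T$ into the good stratum $M_0$; I would argue that since $\pi$ is universally closed and the lift $f^\circ$ is defined on a dense open of the curve $T$, the valuative criterion for the good moduli space together with the existence of the closed point in each fiber (which is $\mathrm{GL}$-unique, hence the limit is forced and unique up to the $\mathbb{G}_m$- or $G$-action fixing it) lets one extend $f^\circ$ over all of $T$. In fact, after a further finite base change one can always arrange the lift to extend, and since we are already allowed to replace $T$ by a finite cover of a normal projective curve, absorbing this into $h$ gives $T^\circ = T$.

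The main obstacle I expect is making the stratification and the descent of the gerbe precise while keeping track of \emph{which} lift is chosen — namely that $f^\circ$ hits the \emph{unique closed point} in the fiber of $\pi$ and not some other point of the orbit closure. Ensuring that the section one extracts from the local structure theorem is the "closed-orbit section" rather than an arbitrary one, and that this is compatible across étale charts so that étale descent applies, is the delicate part; it hinges on the uniqueness of the closed point in $\pi^{-1}(m)$ for each $m\in M$, which is a standard property of good moduli spaces but must be threaded carefully through the base change by $h$.
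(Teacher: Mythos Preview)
Your stratification step contains a genuine error. You claim that one can stratify $M$ so that over each stratum $M_i$ the pullback $\mathcal{M}\times_M M_i\to M_i$ is a gerbe. This is false for good moduli spaces: the fibers of $\pi$ are typically \emph{not} single points with automorphisms, but rather stacks with many points, exactly one of which is closed. The model example is $[\mathbb{A}^1/\mathbb{G}_m]\to\mathrm{Spec}\,\mathbbm{k}$: the base has a single point, so there is no nontrivial stratification, yet $[\mathbb{A}^1/\mathbb{G}_m]$ has two points (the origin and the open orbit) and is not a gerbe. Consequently there is no section of $\pi$ to extract, even \'etale-locally on $M$, and your construction of $f^\circ$ breaks down. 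What \emph{is} true is that the locus of closed points in fibers (the ``polystable locus'') can be organized into something tractable, but this is a locally closed substack of $\mathcal{M}$, not the full fiber product, and identifying it requires exactly the work you are trying to avoid.

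The paper's argument proceeds differently and avoids this trap. One first lifts only the geometric generic point $\bar\eta$ of $S$ to the unique closed point of $\pi^{-1}(g(\bar\eta))$ (this exists by \cite[Proposition 9.1]{alper}), then spreads out by a limit argument to a lift $f_U\colon U\to\mathcal{M}$ on some affine $U$ generically finite over $S$. The lift $f_U$ need not land in closed points away from the generic point; to fix this, one invokes the local structure theorem \cite[Proposition 2.10]{AHLH} to present $\mathcal{M}$ \'etale-locally as $[\mathrm{Spec}\,A/G]$ with $G$ reductive, and then uses the GIT fact \cite[\S0.2, Amplification 1.3]{GIT} that the union of closed $G$-orbits in $\mathrm{Spec}\,A$ is \emph{constructible}. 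Since $f_U$ hits this constructible set at the generic point, its preimage contains a dense open $T^\circ$, and there $f_U$ lands in closed points. The constructibility of the closed-orbit locus is the key technical input that replaces your nonexistent gerbe section.

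For the curve case, your instinct to use a valuative-type extension after finite base change is correct; the paper simply cites \cite[Theorem A.8]{AHLH}, which is precisely the semistable-reduction statement needed.
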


\begin{proof}
    We only need to treat the first assertion because the second assertion follows from \cite[Theorem A.8]{AHLH}.
    Let $\eta$ be the generic point of $S$. 
    Take the fiber product $\mathcal{M}\times_M\mathrm{Spec}\,\overline{\kappa(\eta)}$ of $\pi$ and $g$. 
    By \cite[Proposition 9.1]{alper}, there exists a unique closed point in $\mathcal{M}\times_M\mathrm{Spec}\,\overline{\kappa(\eta)}$.
    By the limit argument, we can find an affine scheme $U$, a generically finite morphism $h_U\colon U\to S$, and a morphism $f_U\colon U\to \mathcal{M}$ such that $g\circ h_U=\pi\circ f_U$.
   Then $U$ can be embedded into a normal variety $T$ that is projective and generically finite over $S$.
    Let $h\colon T\to S$ be the induced morphism and $\eta'$ the generic point of $T$.
    Then $f_U(\eta')$ is the unique closed point of $\mathcal{M}\times_M\mathrm{Spec}\,\kappa(\eta')$.
    By \cite[Proposition 2.10]{AHLH} for $\mathcal{M}$ and the fact that the closed point of $\mathcal{M}\times_M\mathrm{Spec}\,\kappa(\eta')$ has a reductive stabilizer (cf.~\cite[Proposition 12.14]{alper}), there exists an \'{e}tale neighborhood $\mathrm{Spec}\,B\to M$ of $g(\eta)$ such that $\mathrm{Spec}\,B\times_M\mathcal{M}\cong [\mathrm{Spec}\,A/G]$ for some $\mathbbm{k}$-algebra $A$ of finite type and reductive group $G$ acting on $A$ satisfying $B=A^G$, where $A^G$ denotes the $G$-invariant subring. 
    By \cite[Remark 5 of \S0.2, Amplification 1.3, and the argument in p.~37]{GIT},  the closed orbits of $\mathrm{Spec}\,A$ under $G$ form a constructible subset $Z$. 
    Let $\mu\colon \mathrm{Spec}\,A\to[\mathrm{Spec}\,A/G]$ be the canonical morphism and let $\mu(Z)$ be the image.
    Note that $\mu(Z)$ is constructible.
    Since $f_U^{-1}(\mu(Z))$ is also constructible and dense, $f_U^{-1}(\mu(Z))$ contains a non-empty open subset $S^\circ$.
    Then, we see that $f_U(u)$ is the unique closed point of $\mathrm{Spec}\,\kappa(g\circ h_U(u))\times_{M}\mathcal{M}$ for any point $u\in S^\circ$.
    We complete the proof of the first assertion.
\end{proof}

\subsection{Moduli stacks and spaces of polarized varieties}\label{subsec--moduli-pol}

In this subsection, we discuss some moduli stacks and their basic properties. 

In the rest of this paper, we deal with the following moduli functors, which turn out to be stacks over the base field. 

\begin{itemize}
\item
$\mathfrak{Pol}$, which is the pseudo-functor of polarized normal varieties (Definition \ref{defn--pol}). 
The pseudo-functor $\mathfrak{Pol}$ is a stack over the base field (\cite[Lemma 2.15]{HH}). 

\item
$\mathcal{M}^{\text{klt,CY}}_{d,v}$, which is the moduli stack of polarized klt Calabi--Yau varieties $(X,L)$ of dimension $d$ and $\mathrm{vol}(L)=v$ (Theorem  \ref{thm--klt--CY--moduli}). 
By Theorem  \ref{thm--klt--CY--moduli}, $\mathcal{M}^{\text{klt,CY}}_{d,v}$ is a substack of $\mathfrak{Pol}$. 

\item
$\mathcal{M}^{\text{klt,CY}}_{d,v,P}$, which is the moduli stack of polarized klt Calabi--Yau varieties $(X,L)$ of dimension $d$, $\mathrm{vol}(L)=v$, and the Hilbert polynomial is $P$.

\item
$\mathcal{M}^{\mathrm{Ab}}_{d,v}$, which is a substack of $\mathcal{M}^{\text{klt,CY}}_{d,v}$, is the moduli stack of polarized Abelian varieties (Proposition \ref{prop--smoothness--of--Abelian--moduli}). 

\item
$\mathcal{M}^{\mathrm{Symp}}_{d,v}$, which is a substack of $\mathcal{M}^{\text{klt,CY}}_{d,v}$, is the moduli stack of polarized symplectic varieties smoothable to polarized irreducible holomorphic symplectic manifolds whose second Betti number is at least five (Definition \ref{defn--symp-moduli}). 

\item
$\mathcal{M}^{\mathrm{Kss}}_{c,d,v}$, which is the moduli stack of K-semistable log Fano pairs with some fixed invariants (Definition \ref{defn--Kss-moduli}). 

\item
$\mathcal{M}_{d, v, u, r}$, which is the moduli stack of polarized klt-trivial fibrations over curves with several fixed invariants (Definition \ref{defn--HH-moduli}, \cite[Theorem 1.3]{HH}). 
\end{itemize}

The following is the first moduli functor in the above list.
\begin{defn}\label{defn--pol}
Let $\mathfrak{Pol}$ be the category fibered in groupoids over $\mathsf{Sch}_{\mathbbm{k}}$ such that the collection of objects is
\[
\left\{
\begin{array}{l}
f\colon(\mathcal{X},\mathscr{A})\to S
\end{array}
\;\middle|
\begin{array}{rl}
\bullet\!\!&\text{$f$ is a surjective proper flat morphism of schemes whose}\\
&\text{fibers are geometrically normal and connected,}\\
\bullet\!\!&\text{$\mathscr{A}\in \mathbf{Pic}_{\mathcal{X}/S}(S)$ such that there exists an \'{e}tale covering}\\
&\text{$S'\to S$ by which the pullback of $\mathscr{A}$ to $\mathcal{X}\times_SS'$ is}\\
&\text{represented by a relatively ample line bundle over $S'$}
\end{array}\right\},
\]
and an arrow $(g,\alpha)\colon(f\colon(\mathcal{X},\mathscr{A})\to S)\to (f'\colon(\mathcal{X}',\mathscr{A}')\to S')$ is defined in the way that $\alpha\colon S\to S'$ is a morphism and $g\colon\mathcal{X}\to \mathcal{X}'\times_{S'}S$ is an isomorphism such that $g^*\alpha_{\mathcal{X}'}^*\mathscr{A}'=\mathscr{A}$ as elements of $\mathbf{Pic}_{\mathcal{X}'/S'}(S)$. 
\end{defn}

We note that $\mathfrak{Pol}$ is indeed a stack over $\mathbbm{k}$ by \cite[Lemma 2.15]{HH}.
We will express moduli stacks of certain polarized varieties as substacks of $\mathfrak{Pol}$.

\subsubsection{CM line bundle}

In this subsubsection, we define CM line bundles on certain moduli spaces. 

Let $f\colon X\to S$ be a projective flat morphism such that any geometric fiber of $f$ is a normal variety of dimension $n$.
Let $L$ be an $f$-ample line bundle on $X$.
By \cite[Theorem 4]{KnMu}, there exist line bundles $\lambda_{i}$ for $0\le i\le n$ such that the following holds for any sufficiently large integer $m>0$.
\[
\mathrm{det}f_*(\mathcal{O}_X(mL))\sim \bigotimes_{i=0}^{n+1}\lambda_i^{\otimes \binom{m}{i}}.
\]
We call this the {\it Knudsen--Mumford expansion}.
Such line bundles $\lambda_i$ are uniquely determined only by $f$ and $L$ up to linear equivalence.
Hence, for any morphism $g\colon T\to S$ of schemes and the Knudsen--Mumford expansion
\[
\mathrm{det}f_{T*}(\mathcal{O}_{X_T}(mL_T))\sim \bigotimes_{i=0}^{n+1}\lambda_{i,T}^{\otimes \binom{m}{i}}, 
\]
we have $g^*\lambda_i\sim \lambda_{i,T}$. 
Furthermore, the isomorphisms $\alpha_i\colon g^*\lambda_i\to \lambda_{i,T}$ that induce the canonical isomorphism $g^*\mathrm{det}f_*(\mathcal{O}_X(mL))\overset{\cong}{\longrightarrow}\mathrm{det}f_{T*}(\mathcal{O}_{X_T}(mL_T))$ for any $m \gg 0$ are uniquely determined.
Indeed, if $g$ is the identity, then we can regard $\alpha_i\in H^0(S,\mathcal{O}_S^\times)$, and $\alpha_i$ should satisfy
\[
\prod_{i=0}^{n+1}\alpha_i^{\binom{m}{i}}=1
\]
for any sufficiently large $m$ and hence $\alpha_i=1$ for any $i$.
We always fix such $\alpha_i$ and identify $g^*\lambda_i\sim \lambda_{i,T}$.
This shows that if $G$ is a linear algebraic group that acts on $(X,L)\to S$ equivariantly, then $\lambda_i$ also admits the natural $G$-linearlization for any $i$.

\begin{defn}\label{defn--CM--line--bundle}
    Let $f\colon X\to S$ be a projective flat morphism such that any geometric fiber of $f$ is a normal variety of dimension $n$.
Let $L$ be an $f$-ample line bundle on $X$ and 
\[
\mathrm{det}f_*(\mathcal{O}_X(mL))\sim \bigotimes_{i=0}^{n+1}\lambda_i^{\otimes \binom{m}{i}}
\] 
the Knudsen--Mumford expansion.
Let $p(m):=\chi(X_t,\mathcal{O}_{X_t}(mL_t))$ be the Hilbert polynomial for general $t\in S$. 
We write 
\[
p(m)=a_0m^n+a_1m^{n-1}+O(m^{n-2}).
\]
for some rational numbers $a_{0}$ and $a_{1}$. 
Then, we define a $\mathbb{Q}$-line bundle, called the {\em CM line bundle}, to be
$$\lambda_{\mathrm{CM},f,(X,L)}:=\lambda_{n+1}^{\otimes\frac{\mu+n(n+1)}{a_0(n+1)!}}\otimes\lambda_n^{\otimes -\frac{2}{a_0n!}},$$ where $\mu:=\frac{2a_1}{a_0}$.
\end{defn}

When $\omega_{X/S}\sim_{\mathbb{Q},S}0$, the CM line bundle is called the {\em Hodge line bundle}.

\subsubsection{Klt Calabi-Yau varieties}

In this paper, let {\it klt Calabi-Yau variety} $X$ only mean that $X$ is a projective klt variety with $\mathcal{O}_X\sim K_X$.
We note that such varieties have only canonical singularities.
Define the following set for any $d,v\in\mathbb{Z}_{>0}$;
$$\mathfrak{F}^{\text{klt,CY}}_{d,v}:=\left\{
 (X,L)
\;\middle|
\begin{array}{l}
\text{$X$ is a klt projective variety of dimension $d$ such that}\\
\text{$K_X\sim 0$ and $L$ is an ample line bundle with $\mathrm{vol}(L)=v$.}
\end{array}\right\}.$$

Then, the following result is well-known. 

\begin{thm}\label{thm--klt--CY--moduli}
    Let $\mathcal{M}^{\mathrm{klt,CY}}_{d,v}$ be a substack of $\mathfrak{Pol}$ such that the objects $\mathcal{M}^{\mathrm{klt,CY}}_{d,v}(S)$ for any scheme $S$ is the following collection.
    \[
\left\{
 f\colon(\mathcal{X},\mathcal{L})\to S
\;\middle|
\begin{array}{l}
\text{$f$ is a projective and flat family such}\\
\text{that $(\mathcal{X}_{\bar{s}},\mathcal{L}_{\bar{s}})$ belongs to $\mathfrak{F}^{\text{klt,CY}}_{d,v}$ for any}\\
\text{geometric point $\bar{s}\in S$}
\end{array}\right\}.
    \]
    Then $\mathcal{M}^{\mathrm{klt,CY}}_{d,v}$ is a separated Deligne--Mumford stack of finite type over $\mathbbm{k}$.  

    Furthermore, for the canonical morphism $\beta\colon \mathcal{M}^{\mathrm{klt,CY}}_{d,v}\to M^{\mathrm{klt,CY}}_{d,v}$ to the coarse moduli space $M^{\mathrm{klt,CY}}_{d,v}$ of $\mathcal{M}^{\mathrm{klt,CY}}_{d,v}$, there exists an ample $\mathbb{Q}$-line bundle $\Lambda_{\mathrm{Hodge}}$ on $M^{\mathrm{klt,CY}}_{d,v}$ such that $\beta^*\Lambda_{\mathrm{Hodge}}\sim_{\mathbb{Q}}\lambda_{\mathrm{Hodge}}$ unique up to $\mathbb{Q}$-linear equivalence, where $\lambda_{\mathrm{Hodge}}$ is the Hodge line bundle canonically defined on $\mathcal{M}^{\mathrm{klt,CY}}_{d,v}$. 
\end{thm}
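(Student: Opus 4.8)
The plan is to realize $\mathcal{M}^{\mathrm{klt,CY}}_{d,v}$ as a finite disjoint union of quotient stacks attached to Hilbert schemes, deduce that it is a separated Deligne--Mumford stack of finite type by standard boundedness, openness and valuative-criterion arguments, and then descend the Hodge line bundle to the coarse space and invoke Viehweg's positivity for ampleness.

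\textbf{Stack structure.} First I would use that $\mathfrak{F}^{\mathrm{klt,CY}}_{d,v}$ is bounded (well known): every $(X,L)$ in it has $K_X\sim 0$, so $X$ is Gorenstein canonical with $K_X\cdot L^{d-1}=0$ and $L^{d}=v$, and a Matsusaka-type effective very-ampleness statement for canonical varieties yields an $m_0=m_0(d,v)$ with $m_0L$ very ample; hence only finitely many Hilbert polynomials $P$ occur. For each $P$, embedding by $|m_0L|$ identifies families of polarized klt Calabi--Yau varieties with Hilbert polynomial $P$ with a locally closed $\mathrm{PGL}$-invariant subscheme $H_P$ of a Hilbert scheme — here one uses that klt is an open condition in families with $\mathbb{Q}$-Cartier relative canonical class, that $K_{\mathcal{X}/S}\sim_{S}0$ is open and closed, and that $\mathrm{vol}$ is locally constant. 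Thus $\mathcal{M}^{\mathrm{klt,CY}}_{d,v,P}\cong[H_P/\mathrm{PGL}_{N_P+1}]$ and $\mathcal{M}^{\mathrm{klt,CY}}_{d,v}=\bigsqcup_P\mathcal{M}^{\mathrm{klt,CY}}_{d,v,P}$ is an algebraic stack of finite type, and a substack of $\mathfrak{Pol}$ by construction. It is Deligne--Mumford because $\mathrm{Aut}(X,L)$ is finite for each member: it is proper since $L$ is fixed, and a positive-dimensional abelian variety cannot act faithfully on a projective variety while preserving an ample linear-equivalence class, so $\mathrm{Aut}^{0}(X,L)$ is trivial; in characteristic zero this finite group scheme is \'etale.

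\textbf{Separatedness and the coarse space.} Next I would verify the valuative criterion. Given a DVR $R$ and two families $f_i\colon(X_i,L_i)\to\mathrm{Spec}\,R$ in the stack together with a polarization-preserving isomorphism over the generic point, the fibers being Gorenstein with $K\sim 0$ forces $\omega_{X_i/R}\cong\mathcal{O}_{X_i}$; resolving the closure of the graph of $X_1\dashrightarrow X_2$ by $p_i\colon Y\to X_i$ and writing $K_Y=p_i^{*}K_{X_i/R}+E_i$ with $E_i\ge 0$ exceptional (canonical singularities), the negativity lemma applied to $E_1\sim_{\mathbb{Q},R}E_2$ shows that $X_{1,0}$ and $X_{2,0}$ are isomorphic in codimension one; carrying the polarizations across $Y$, the identification respects ample classes, hence is an isomorphism, and $X_1\cong X_2$ over $R$ since each is $\mathrm{Proj}_R$ of the section ring of its polarization. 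By the Keel--Mori theorem there is then a coarse moduli space $\beta\colon\mathcal{M}^{\mathrm{klt,CY}}_{d,v}\to M^{\mathrm{klt,CY}}_{d,v}$, a separated algebraic space of finite type.

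\textbf{The Hodge line bundle and ampleness.} Since $K_{\mathcal{X}/S}\sim_{S}0$ throughout, the CM line bundle of Definition \ref{defn--CM--line--bundle} is the Hodge line bundle $\lambda_{\mathrm{Hodge}}$ on $\mathcal{M}^{\mathrm{klt,CY}}_{d,v}$. A finite automorphism of $(X,L)$ acts on the fiber of $\lambda_{\mathrm{Hodge}}$ through a root of unity whose order divides $|\mathrm{Aut}(X,L)|$, and these orders are bounded on the finite-type stack, so some power $\lambda_{\mathrm{Hodge}}^{\otimes N}$ has trivial stabilizer action everywhere and descends, by Kollár's descent criterion \cite{kollar-moduli}, to $M^{\mathrm{klt,CY}}_{d,v}$; I take $\Lambda_{\mathrm{Hodge}}$ to be its $N$-th root, which is unique up to $\mathbb{Q}$-linear equivalence because $\beta$ is a coarse space. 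Finally, for ampleness I would invoke Viehweg's construction \cite{viehweg95} of a quasi-projective moduli space of polarized varieties with semiample canonical class and canonical singularities, together with its ample line bundle built from $\det f_{*}\omega_{\mathcal{X}/S}^{\otimes\nu}$; since $K_X\sim 0$ on every member, this ample line bundle is a positive rational multiple of $\Lambda_{\mathrm{Hodge}}$ (cf.~\cite[Corollary 3.5]{Hat}). The main obstacle is exactly this last input — the positivity of the Hodge line bundle, equivalently the quasi-projectivity of $M^{\mathrm{klt,CY}}_{d,v}$ together with the identification of its polarization — which rests on Viehweg's weak positivity machinery (or on semipositivity theorems for Hodge bundles combined with an amplitude statement); the stack-theoretic Steps 1 and 2 are by now routine.
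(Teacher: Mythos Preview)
Your overall strategy matches the paper's, but the quotient-stack description has a real gap. Embedding by $|m_0L|$ alone only remembers $m_0L$, not $L$: the map $(X,L)\mapsto [X\hookrightarrow\mathbb{P}^{N_P}]$ factors through $(X,L)\mapsto(X,m_0L)$, so $[H_P/\mathrm{PGL}_{N_P+1}]$ is (at best) the moduli of $(X,m_0L)$, not of $(X,L)$. These differ already for Abelian varieties, where an ample class has $m_0^{2g}$ ample $m_0$-th roots. The paper avoids this by the standard double-embedding trick: it takes $m$ with both $mL$ and $(m+1)L$ very ample, embeds into $\mathbb{P}^{P(m)-1}\times\mathbb{P}^{P(m+1)-1}$, and realizes $\mathcal{M}^{\mathrm{klt,CY}}_{d,v,P}$ as $[Z/(\mathrm{PGL}(P(m))\times\mathrm{PGL}(P(m+1)))]$ for a locally closed $Z$ in the corresponding Hilbert scheme; the polarization is then recovered as $\mathcal{O}(-1,1)|_{\mathcal{U}}$. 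Without this (or an equivalent device, e.g.\ passing to a finite cover of $H_P$ parametrizing $m_0$-th roots of $\mathcal{O}(1)$), your claimed isomorphism fails and you have not yet exhibited $\mathcal{M}^{\mathrm{klt,CY}}_{d,v}$ as an algebraic stack.

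A smaller point: your finiteness-of-automorphisms argument is garbled. The group $\mathrm{Aut}(X,L)$ is affine, not proper (it sits in some $\mathrm{PGL}$), so ``proper since $L$ is fixed'' is the wrong premise. The correct statement is that $K_X\sim 0$ is pseudoeffective, hence $X$ is not uniruled, hence $\mathrm{Aut}^0(X)$ is an abelian variety; then $\mathrm{Aut}^0(X,L)=\mathrm{Aut}^0(X)\cap\mathrm{Aut}(X,L)$ is simultaneously proper and affine, so trivial. (The paper instead cites Odaka's K-stability result \cite{O2} for both the Deligne--Mumford property and separatedness.) Your separatedness argument via the negativity lemma and your descent/ampleness argument for $\Lambda_{\mathrm{Hodge}}$ via Viehweg are fine and agree with the paper's references.
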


We briefly recall how to construct this moduli stack. 
For any member $(X,L)$ of $\mathfrak{F}^{\text{klt,CY}}_{d,v}$, there exists a positive number $m\in\mathbb{Z}_{>0}$ such that $\mathcal{O}_X(mL)$ and $\mathcal{O}_{X}((m+1)L)$ are very ample.
This follows from \cite[Theorem 1.1]{kollar-eff-basepoint-free} and \cite[Lemma 7.1]{fujino-eff-slc}.
Furthermore, we see that there are only finitely many polynomials $P_1,\ldots,P_k$ such that for any object $(X,L)\in \mathcal{M}^{\textrm{klt,CY}}_{d,v}$, there exists $i\in\{1,\ldots,k\}$ such that $P_i(l)=\chi(X_{\bar{s}},L^{\otimes l})$.
Fix $P=P_i$ for some $i$ and consider a substack $\mathcal{M}^{\textrm{klt,CY}}_{d,v,P}$ of $\mathcal{M}^{\textrm{klt,CY}}_{d,v}$ such that the collection of objects of $\mathcal{M}^{\textrm{klt,CY}}_{d,v,P}(S)$ is the following for any scheme $S$:
 \[
\left\{
 f\colon(\mathcal{X},\mathcal{L})\to S
\;\middle|
\begin{array}{l}
\text{$f$ is a projective and flat family such that $(\mathcal{X}_{\bar{s}},\mathcal{L}_{\bar{s}})$}\\
\text{belongs to $\mathfrak{F}^{\text{klt,CY}}_{d,v}$ and $\chi(\mathcal{X}_{\bar{s}},\mathcal{O}_{\mathcal{X}_{\bar{s}}}(l\mathcal{L}_{\bar{s}}))=P(l)$}\\
\text{for any geometric  point $\bar{s}\in S$ and $l\in\mathbb{Z}$.}
\end{array}\right\}.
    \]
It is easy to see that $\mathcal{M}^{\textrm{klt,CY}}_{d,v,P}$ is an open and closed substack of $\mathcal{M}^{\textrm{klt,CY}}_{d,v}$.
Thus, it suffices to show how to construct $\mathcal{M}^{\textrm{klt,CY}}_{d,v,P}$ as a quotient stack.
Consider $\mathbf{Hilb}_{\mathbb{P}^{P(m)-1}\times\mathbb{P}^{P(m+1)-1}/\mathbbm{k}}^{Q,p_1^*\mathcal{O}(1)\otimes p_2^*\mathcal{O}(1)}$, where $p_1\colon \mathbb{P}^{P(m)-1}\times\mathbb{P}^{P(m+1)-1}\to \mathbb{P}^{P(m)-1}$ and $p_2\colon \mathbb{P}^{P(m)-1}\times\mathbb{P}^{P(m+1)-1}\to \mathbb{P}^{P(m+1)-1}$ are natural projections and $Q(k):=P((2m+1)k)$.
Let 
$$\iota\colon \mathcal{U}\hookrightarrow \mathbb{P}^{P(m)-1}\times\mathbb{P}^{P(m+1)-1}\times \mathbf{Hilb}_{\mathbb{P}^{P(m)-1}\times\mathbb{P}^{P(m+1)-1}/\mathbbm{k}}^{Q,p_1^*\mathcal{O}(1)\otimes p_2^*\mathcal{O}(1)}$$
 be the universal object, and we put $\mathcal{A}:=p_1^*\mathcal{O}(-1)\otimes p_2^*\mathcal{O}(1)|_{\mathcal{U}}$.
By the same argument as in the proof of \cite[Proposition 5.6]{HH}, there exists a locally closed subscheme $Z_1 \subset \mathbf{Hilb}_{\mathbb{P}^{P(m)-1}\times\mathbb{P}^{P(m+1)-1}/\mathbbm{k}}^{Q,p_1^*\mathcal{O}(1)\otimes p_2^*\mathcal{O}(1)}$ such that for any morphism $T\to \mathbf{Hilb}_{\mathbb{P}^{P(m)-1}\times\mathbb{P}^{P(m+1)-1}/\mathbbm{k}}^{Q,p_1^*\mathcal{O}(1)\otimes p_2^*\mathcal{O}(1)}$ from a scheme, $T$ factors through $Z_1$ if and only if the following conditions are satisfied.
\begin{enumerate}
    \item For every geometric point $\bar{t}\in T$, $\mathcal{U}_{\bar{t}}$ is normal, connected and Cohen--Macaulay,
    \item \label{item--Z--required}$\mathcal{A}^{\otimes m}\sim_Tp_{1,T}^*\mathcal{O}(1)|_{\mathcal{U}_T}$ and $\mathcal{A}^{\otimes m+1}\sim_Tp_{2,T}^*\mathcal{O}(1)|_{\mathcal{U}_T}$,
    \item for any point $t\in T$, $p_{1,t}\circ\iota_t$ and $p_{2,t}\circ\iota_t$ are closed immersion,
    \item for every point $t\in T$, the natural morphisms $\mathcal{O}_{T}^{\oplus  P(m)}\to H^0(\mathcal{U}_t,\mathcal{A}^{\otimes m}_{t})$ and $\mathcal{O}_{T}^{\oplus P(m+1)}\to H^0(\mathcal{U}_t,\mathcal{A}^{\otimes P(m+1)}_{t})$ induced by (\ref{item--Z--required}) are surjective, and
    \item for every point $t\in T$, $\mathcal{A}_t$ is ample and the equalities $h^0(\mathcal{U}_t,\mathcal{A}^{\otimes m}_{t})=P(m)$, $h^0(\mathcal{U}_t,\mathcal{A}^{\otimes m+1}_{t})=P(m+1)$ and $H^i(\mathcal{U}_t,\mathcal{A}^{\otimes j}_{t})=0$ hold for any $i,j>0$.
\end{enumerate} 
Let $f\colon \mathcal{U}_{Z_1}\to Z_1$ be the natural morphism.
Since every fiber of $f$ is Cohen--Macaulay, \cite[2.68.5]{kollar-moduli} implies that $\omega_{\mathcal{U}_{Z_1}/Z_1}$ is flat over $Z_1$, and furthermore, for any morphism $h\colon T\to Z_1$ from a scheme, there exists a natural isomorphism $h_{Z_1}^*\omega_{\mathcal{U}_{Z_1}/Z_1}\cong \omega_{\mathcal{U}_{T}/T}$.
Thus, we conclude that there exists an open subset $Z_2\subset Z_1$ such that for any morphism $T\to Z_1$ from a scheme, $T$ factors through $Z_2$ if and only if $\omega_{\mathcal{U}_{T}/T}$ is locally free. 
We note that there exists a locally closed subscheme $Z_3\subset Z_2$ such that for any morphism $g\colon T\to Z_2$, $g$ factors through $Z_3$ such that $\omega_{\mathcal{U}_T/T}\sim_T0$ by \cite[Proposition 9.42]{kollar-moduli}.
Thus, the subset
\[
Z:=\{t\in Z_3| \text{ $\mathcal{U}_{\bar{t}}$ is klt}\}
\]
is open (cf.~\cite[Corollary 4.10]{kollar-mmp}).
By the same argument of the proof of \cite[Theorem 5.1, Proposition 5.6]{HH}, we have
\[
\mathcal{M}^{\textrm{klt,CY}}_{d,v,P}\cong [Z/PGL(P(m))\times PGL(P(m+1))].
\]
By \cite[Theorem 4.2]{O2} and the argument as in the proof of \cite[Theorem 5.1]{HH}, we see that $\mathcal{M}^{\textrm{klt,CY}}_{d,v}$ is a separated Deligne--Mumford stack of finite type.
By \cite{KeM}, it follows that $\mathcal{M}^{\textrm{klt,CY}}_{d,v}$ admits the coarse moduli space. 
We can define $\lambda_{\mathrm{Hodge}}$ as the argument of \cite[Subsection 3.2]{Hat23}.
The existence of $\Lambda_{\mathrm{Hodge}}$ in Theorem \ref{thm--klt--CY--moduli} follows from the standard argument (see also the proof of Lemma \ref{lem--CM--descent} in Section \ref{sec3}).
Finally, the ampleness of $\Lambda_{\mathrm{Hodge}}$ follows from \cite[Theorem 8.23]{viehweg95}.

\subsubsection{Abelian varieties}

To begin with, we state notions of the deformation theory.

\begin{defn}[Prorepresentability]
For any complete local $\mathbbm{k}$-algebra $R$ with the residue field $\mathbbm{k}$, let $h_R\colon \mathrm{Art}_{\mathbbm{k}}\to\mathsf{Sets}$ denote the following functor $$h_R(A):=\mathrm{Hom}_{\mathrm{loc.}\mathbbm{k}\textrm{-alg.}}(R,A),$$ where $\mathrm{Hom}_{\mathrm{loc.}\mathbbm{k}\textrm{-alg.}}(R,A)$ denotes the set of all homomorphisms of the local $\mathbbm{k}$-algebras.
 Let $F$ and $G\colon \mathrm{Art}_{\mathbbm{k}}\to\mathsf{Sets}$ be functors.
 We say that a morphism $\eta\colon F\to G$ is {\it formally smooth} if $F(B)\to F(A)\times_{G(A)}G(B)$ for any surjection $B\to A$ in $\mathrm{Art}_{\mathbbm{k}}$.
 If $F$ or $h_R$ is formally smooth over $h_{\mathbbm{k}}$, we simply say that $F$ or $R$ is formally smooth.
 We say that a complete local $\mathbbm{k}$-algebra $R$ {\em prorepresents} a functor $F$ if there exists an isomorphism $F\cong h_R$.
 In this case, we also say that $F$ is {\it universal} or {\it prorepresentable} by $R$.
 Furthermore, let $R$ be a complete local $\mathbbm{k}$-algebra with the maximal ideal $\mathfrak{m}$.
 Take an object $\xi\in \varprojlim_n F(R/\mathfrak{m}^n)$.
 We say that a pair $(R,\xi)$ is a {\it hull} of $F$ if $\xi$ induces a formally smooth morphism $h_R\to F$ such that $h_R(\mathbbm{k}[\varepsilon]/(\varepsilon^2))\to F(\mathbbm{k}[\varepsilon]/(\varepsilon^2))$ is bijective.
 In the case when such $(R,\xi)$ exists, we say that $F$ is {\it semiversal} or has a {\it prorepresentable hull} $R$.
We set the deformation functor of $X$ as
\[
    \mathrm{Def}(X)(A):=
    \left\{
\begin{array}{l}
\text{isomorphism classes of $\mathcal{X}$ with $\iota\colon X\to \mathcal{X}$, where}\\
\text{$\mathcal{X}$ is a deformation of $X$ and $\iota$ induces an }\\
\text{isomorphism of $\mathcal{X}\times_{\mathrm{Spec}\,A}\mathrm{Spec}\,\kappa(A)$ and $X$}
\end{array}\right\}
    \]
    for $A\in\mathrm{Art}_{\mathbbm{k}}$ with the residue field $\kappa(A)$.
    $\mathrm{Def}(X)$ is semiversal if $X$ is projective by \cite[Proposition 3.10]{Schlessinger}.
\end{defn}

\begin{defn}[Abelian variety]
We call a projective variety $X$ an {\em Abelian variety} if $X$ has a group scheme structure.
\end{defn}
If $X$ is an Abelian variety, it is well known that $X$ is smooth and $K_X\sim\mathcal{O}_X$.
For more details, refer to \cite{Ab}.
The following fact is well known to experts.

\begin{lem}\label{lem--deformation--invariant--abelian}
    Let $\pi\colon\mathcal{X}\to C$ be a projective morphism of normal varieties over a smooth curve $C$.
    Fix a closed point $c\in C$.
    Suppose that $K_{\mathcal{X}}\sim_{C,\mathbb{Q}}0$ and for some closed point $t\in C\setminus\{c\}$, $\mathcal{X}_t$ is an Abelian variety.
If $\mathcal{X}_c$ is a normal variety with only klt singularities, then $\mathcal{X}_c$ is also an Abelian variety.
\end{lem}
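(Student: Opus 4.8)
The plan is to show that $\mathcal{X}_c$ is isomorphic to its Albanese variety. First, since $c$ is a smooth point of $C$ the fibre $\mathcal{X}_c$ is a Cartier divisor on $\mathcal{X}$, and because $\mathcal{X}_c$ is normal, inversion of adjunction shows that $\mathcal{X}$ is klt in a neighbourhood of $\mathcal{X}_c$ (hence Cohen--Macaulay there) and that adjunction gives $K_{\mathcal{X}_c}=K_{\mathcal{X}}|_{\mathcal{X}_c}$ with no different term; together with $K_{\mathcal{X}}\sim_{\mathbb{Q},C}0$ this yields $K_{\mathcal{X}_c}\sim_{\mathbb{Q}}0$. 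Thus $\mathcal{X}_c$ is a klt projective variety with numerically trivial canonical class, of dimension $n:=\dim\mathcal{X}-1=\dim\mathcal{X}_t$ (the case $n=0$ is immediate, so assume $n\ge1$). Next I would compute its irregularity. As $\mathcal{X}$ is irreducible and $\pi$ is non-constant, $\pi$ is flat, so $s\mapsto h^1(\mathcal{X}_s,\mathcal{O}_{\mathcal{X}_s})$ is upper semicontinuous; since $\mathcal{X}_t$ is an Abelian variety this gives $h^1(\mathcal{O}_{\mathcal{X}_c})\ge h^1(\mathcal{O}_{\mathcal{X}_t})=n$. On the other hand $\mathcal{X}_c$ has rational singularities, so for any resolution $\rho\colon W\to\mathcal{X}_c$ we have $h^1(\mathcal{O}_{\mathcal{X}_c})=h^1(\mathcal{O}_W)=\dim\mathrm{Alb}(W)\le\dim W=n$. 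Hence $h^1(\mathcal{O}_{\mathcal{X}_c})=n=\dim\mathcal{X}_c$.

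The core of the argument is then to feed this into Kawamata's characterisation of Abelian varieties. With $\rho\colon W\to\mathcal{X}_c$ as above, $\kappa(W)=\kappa(\mathcal{X}_c)=0$ (because $K_{\mathcal{X}_c}\sim_{\mathbb{Q}}0$ and $\mathcal{X}_c$ is klt) while the irregularity $q(W)=h^1(\mathcal{O}_W)=n=\dim W$ is maximal, so $W$ — and hence $\mathcal{X}_c$ — is birational to the Abelian variety $A:=\mathrm{Alb}(W)=\mathrm{Alb}(\mathcal{X}_c)$. Because $\mathcal{X}_c$ is klt, the fibres of $\rho$ are rationally chain connected (Hacon--McKernan), hence contracted by $\mathrm{alb}_W$, so the Albanese map of $\mathcal{X}_c$ is a genuine morphism $a\colon\mathcal{X}_c\to A$, which is birational by construction. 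To conclude that $a$ is an isomorphism I would argue as follows: the $\mathbb{Q}$-Cartier divisor $\Theta:=K_{\mathcal{X}_c}-a^{*}K_A$ is $a$-exceptional and numerically trivial over $A$, so $-\Theta\ge0$ by the negativity lemma; but $\Theta$ is a combination of the $a$-exceptional prime divisors whose coefficients are the discrepancies over the smooth variety $A$, hence all $\ge1$, so $\Theta\ge0$. Therefore $\Theta=0$, $a$ contracts no divisor, and $a$ is small; since $A$ is smooth, hence $\mathbb{Q}$-factorial, a small birational morphism onto $A$ contracts no curve (pull back an ample $H$ from $\mathcal{X}_c$ and use $a^{*}a_{*}H=H$), so $a$ is finite and birational, therefore an isomorphism, and $\mathcal{X}_c\cong A$ is an Abelian variety. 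As an alternative to invoking the Albanese morphism of a singular variety, one can pass to a $\mathbb{Q}$-factorial terminalisation $\mathcal{Y}\to\mathcal{X}_c$ (which exists by \cite{BCHM}), note that $\mathcal{Y}$ and $A$ are $\mathbb{Q}$-factorial terminal minimal models in one birational class, hence connected by flops, observe that an Abelian variety admits no flopping contraction, conclude $\mathcal{Y}\cong A$, and then run the same negativity/rational-curve argument on $\mathcal{Y}=A\to\mathcal{X}_c$.

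I expect the main obstacle to be this last step, the passage from ``$\mathcal{X}_c$ is birational to an Abelian variety'' to ``$\mathcal{X}_c$ \emph{is} an Abelian variety'': the difficulty is that $\mathcal{X}_c$ is a priori only klt, possibly non-canonical, so neither the morphism property of its Albanese map nor the structure of its exceptional loci over $A$ come for free, and both must be controlled through minimal model program inputs — rational chain connectedness of exceptional fibres over a klt base, the negativity lemma, and $\mathbb{Q}$-factoriality of the smooth target $A$ — all of which ultimately rest on the elementary fact that an Abelian variety contains no rational curves. The irregularity computation and Kawamata's theorem, by contrast, I would treat as quotable inputs.
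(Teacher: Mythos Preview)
Your proof is correct and follows essentially the same route as the paper: irregularity equals dimension, Kawamata's characterisation gives birationality of the Albanese map of a resolution, the Albanese descends to a morphism from $\mathcal{X}_c$, and one then upgrades this birational morphism to an isomorphism.

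One small wrinkle: upper semicontinuity compares a point to its generisations, not two closed points directly, so as written ``$h^1(\mathcal{O}_{\mathcal{X}_c})\ge h^1(\mathcal{O}_{\mathcal{X}_t})$'' is not quite justified. The fix is immediate --- since $\mathcal{X}_t$ is smooth, the general fibre is smooth, so $h^1(\mathcal{O})$ is locally constant near $t$ and in particular equals $n$ at the generic point; then specialise to $c$. The paper sidesteps this by citing \cite[Corollary~2.64]{kollar-moduli}, which gives constancy of $h^i(\mathcal{O})$ directly.

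For what you flag as the main obstacle, the paper is lighter than your treatment. For the descent of the Albanese to $\mathcal{X}_c$ it invokes \cite[Lemma~8.1]{kawamata}, which needs only rational singularities (the fibres of the resolution have trivial $H^1$, so maps to abelian varieties factor); Hacon--McKernan is correct but heavier. For the final isomorphism it observes that $K_{\mathcal{X}_c}=\alpha^*K_{\mathrm{Alb}(X)}$ forces $\alpha$ to be small (the target is terminal) and then cites \cite[Lemma~2.62]{KM}, which is precisely the statement that a proper birational morphism to a $\mathbb{Q}$-factorial variety contracting no divisor is an isomorphism --- your negativity-lemma and pull-back-of-ample argument reproves exactly this, so the obstacle you anticipated is a one-line citation rather than the crux.
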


\begin{proof}
Let $f\colon X\to\mathcal{X}_c$ be a resolution of singularities. 
Let $\mu\colon X\to \mathrm{Alb}(X)$ be a natural morphism to the Albanese variety of $X$.
    Since $\mathcal{X}_c$ has only klt singularities and hence only rational singularities, it follows from the proof of \cite[Lemma 8.1]{kawamata} that there exists a unique morphism $\alpha\colon \mathcal{X}_c\to \mathrm{Alb}(X)$ such that $\alpha\circ f=\mu$.
    By \cite[Corollary 2.64]{kollar-moduli} and the assumption, we see that $\mathrm{dim}(\mathcal{X}_c)=h^1(\mathcal{X}_c,\mathcal{O}_{\mathcal{X}_c})$.
    Since $\mathcal{X}_c$ has only rational singularites, we have that $h^1(\mathcal{X}_c,\mathcal{O}_{\mathcal{X}_c})=h^1(X,\mathcal{O}_{X})$ by using the following Leray spectral sequence
    \[
    E_2^{ij}=H^j(\mathcal{X}_c,R^if_*\mathcal{O}_{X})\Rightarrow H^{i+j}(X,\mathcal{O}_{X})
    \]
    and $R^if_*\mathcal{O}_{X}=0$ for any $i>0$.
    By \cite[Corollary 2]{kawamata-chara} and the fact that $h^1(X,\mathcal{O}_{X})=h^0(X,\Omega_X^1)$, we see that $\mu$ is birational.
    Therefore, $\alpha$ is also birational.
    Since $\mathrm{Alb}(X)$ is smooth and $K_{\mathcal{X}_c}\sim_{\mathbb{Q}}0$, we have $K_{\mathcal{X}_c}=\alpha^*K_{\mathrm{Alb}(X)}$ and $\alpha$ is isomorphic in codimension one.
    Thus, it follows from \cite[Lemma 2.62]{KM} that $\alpha$ is an isomorphism.
\end{proof}

By Lemma \ref{lem--deformation--invariant--abelian}, we see that any klt Calabi--Yau variety deformation equivalent to an Abelian variety is also an Abelian variety.

Now, we describe the moduli space of polarized Abelian varieties by using the period mapping theory.
Our moduli space is slightly different from the usual one but we show that the two coarse moduli spaces coincide with each other.
Before this, we explain the following object.
\begin{defn}[Abelian scheme]\label{defn--abelscheme}
    Let $\pi\colon X\to S$ be a proper smooth morphism such that all geometric fibers are connected, and let $\varepsilon \colon S\to X$ be a section.
   We say that $(\pi\colon X\to S,\varepsilon)$ is an {\em Abelian scheme} if $\pi$ is a group scheme with a unit section $\varepsilon$.
    Let $\mu\colon X\times_SX\to X$ be the multiplication and $p_1,p_2$ the first and second projection, respectively.
    Fix a $\pi$-ample line bundle $L$ on $X$, and we consider the following line bundle
    \[
    M:=\mu^*L\otimes p_1^*L^{-1}\otimes p_2^*L^{-1}.
    \]
    $M$ is algebraically equivalent to zero, and thus $M$ defines the natural $S$-morphism
    \[
    \Lambda(L)\colon X\to \mathbf{Pic}^0_{X/S}.
    \]
    We note that $\mathbf{Pic}^0_{X/S}$ exists as $\mathbf{Pic}^0_{X/S}=\mathbf{Pic}^\tau_{X/S}$ and it is shown by \cite[Corollary 6.8]{GIT} that $\mathbf{Pic}^0_{X/S}$ is also an Abelian scheme over $S$ in a natural way. For the definitions of $\mathbf{Pic}^0_{X/S}$ and $\mathbf{Pic}^\tau_{X/S}$, see \cite[Proposition 9.5.20 and Definition 9.6.8]{FGA}

    We say that an $S$-morphism $\lambda\colon X\to \mathbf{Pic}^0_{X/S}$ is a {\em map polarization} if for any geometric point $\bar{s}\in S$, we have $\lambda_{\bar{s}}=\Lambda(L)$ for some ample line bundle $L$ on $X_{\bar{s}}$. 
    When $\lambda$ is a map polarization, $(X,\varepsilon,\lambda)$ is called a {\em map polarized Abelian scheme} over $S$ (with a level $1$-structure).
     We define the {\em degree} $r$ of $\lambda$ to be the positive integer such that $\lambda$ is a finite morphism of degree $r^2$ (cf.~\cite[Lemma 6.12]{GIT}).
     If $X$ is an Abelian variety and $L$ is an ample line bundle such that $\Lambda(L)$ is of degree $r$, then $\mathrm{dim}\,H^0(X,L)=r$ but $H^i(X,L)=0$ for any $i>0$ (cf.~\cite[Proposition 6.13]{GIT}).
     Combining this with \cite[Proposition 6.10]{GIT}, we obtain
     \[
     \chi(X,L^{\otimes k})=r\cdot k^g
     \]
     for any $k\in\mathbb{Z}$.

Suppose that $\mathrm{dim}\,X_{\bar{s}}=g$ for any geometric point $\bar{s}\in S$.
For an $n\in\mathbb{Z}_{>1}$, we say that $2g$-sections $\varepsilon_1,\ldots,\varepsilon_{2g}$ form a {\em level $n$-structure} of $(X,\varepsilon)$ if $\varepsilon_1,\ldots,\varepsilon_{2g}$ generate the kernel of $\mu_n\colon X\to X$, which is the multiplication by $n$.
Note that the kernel of $\mu_n$ is isomorphic to $(\mathbb{Z}/n\mathbb{Z})^{2g}$ as an abstract group.
In that situation, $(X,\varepsilon,\lambda,\varepsilon_1,\ldots,\varepsilon_{2g})$ is called a {\em map polarized Abelian scheme with a level $n$-structure}.
\end{defn}

\begin{thm2}[{\cite{GIT}}]\label{thm--abel-scheme}
Fix $n,g,d\in\mathbb{Z}_{>0}$.
We set the functor 
$$\mathscr{A}_{g,d,n}\colon(\mathsf{Sch}_\mathbbm{k})^{\mathrm{op}}\to\mathsf{Sets}$$ such that $\mathscr{A}_{g,d,n}(S)$ is the set of isomorphic classes of polarized Abelian schemes with level $n$-structures of dimension $g$ and degree $d$.
Then, for any $n$, there exists a coarse moduli scheme $A_{g,d,n}$ of $\mathscr{A}_{g,d,n}$ (see \cite[Definition 5.6]{GIT}), which is quasi-projective over $\mathbbm{k}$.
Furthermore, if $n\ge 3$, then $A_{g,d,n}$ represents $\mathscr{A}_{g,d,n}$. 
\end{thm2}

\begin{proof}
    This immediately follows from \cite[Theorem 7.10]{GIT} and \cite[p.~191, Theorem 5]{Ab}.
\end{proof}

It is shown in \cite[Propositions 2.2.3.7 and 2.2.4.4]{Lan} that the deformation functor of map polarized Abelian schemes is prorepresented and formally smooth over $\mathbb{C}$.
These facts imply the smoothness of $A_{g,d,n}$ for any $n\ge3$ and  the normality of $A_{g,d,1}$ (see the argument of the proof of Proposition \ref{prop--smoothness--of--Abelian--moduli}).
This is the well--known fact and we leave its proof to the reader.

Let $\pi\colon X\to S$ be an Abelian scheme with the section $\varepsilon$.
 Let $\mathcal{L}$ be the universal line bundle on $X\times_S\mathbf{Pic}^0_{X/S}$.
    By replacing $\mathcal{L}$ with 
    $$\mathcal{L}\otimes(\pi\times\mathrm{id}_{\mathbf{Pic}^0_{X/S}})^*(\varepsilon\times\mathrm{id}_{\mathbf{Pic}^0_{X/S}})^*\mathcal{L}^{-1},$$ we assume $(\varepsilon\times\mathrm{id}_{\mathbf{Pic}^0_{X/S}})^*\mathcal{L}\cong\mathcal{O}_{\mathbf{Pic}^0_{X/S}}$.
    We set $L^\Delta(\lambda)$ as the pullback of $\mathcal{L}$ under the morphism $(\mathrm{id}_{X},\lambda)\colon X\to X\times_S \mathbf{Pic}^0_{X/S}$.
Let $\sigma\colon X\to \mathbf{Pic}_{X/S}$ be the morphism corresponding to $L^\Delta(\lambda)$ and let $\psi_2\colon \mathbf{Pic}_{X/S}\to \mathbf{Pic}_{X/S}$ be the mulitiplication of two.
Then $\psi_2$ is an \'etale finite morphism.
Indeed, it is easy to see that $\psi_2$ is quasi-finite and proper.
Therefore, $\psi_2$ is finite.
It is easy to see that $\psi_2$ is \'etale by \cite[Theorem 3.1 (iii)]{SGA} and the following fact:
Let $A$ be an Artinian local ring finite over $\mathbbm{k}$ with the maximal ideal $\mathfrak{m}$ and an ideal $I$ such that $I\cdot \mathfrak{m}=0$. 
Let $b\colon\mathrm{Spec}(A/I)\to \mathbf{Pic}_{X/S}$ be a morphism and suppose that $\psi_2\circ b$ is extended to $b'\colon \mathrm{Spec}(A)\to \mathbf{Pic}_{X/S}$.
Then, there exists a morphism $\hat{b}\colon \mathrm{Spec}(A)\to \mathbf{Pic}_{X/S}$ that is an extension of $b$ such that $\psi_2\circ \hat{b}=b'$.
This follows from the fact that if we let $\mathscr{L}$ be a line bundle of $X\times_S\mathrm{Spec}(A/I)$ and if the deformation obstruction of $\mathscr{L}^{\otimes 2}$ vanishes, then the obstruction of $\mathscr{L}$ also vanishes.

Let $\varepsilon'\colon S\to \mathbf{Pic}_{X/S}$ be the zero section.
Then, $\psi_2^{-1}(\varepsilon'(S))$ is isomorphic to $S\times (\mathbb{Z}/2\mathbb{Z})^{\oplus 2g}$ as group schemes with the identity section $\varepsilon$.
Via this identification, we see that $\psi_2^{-1}(\sigma(S))$ is a principal $(\mathbb{Z}/2\mathbb{Z})^{\oplus 2g}$-bundle over $S$.
We call $\psi_2^{-1}(\sigma(S))$ constructed as above the {\it principal $(\mathbb{Z}/2\mathbb{Z})^{\oplus 2g}$-bundle over $S$ associated with the Abelian scheme $\pi$ and the map polarization $\lambda$}.     
By using this notion, we show the following.

\begin{prop}\label{prop--smoothness--of--Abelian--moduli}
    Let $\mathcal{M}^{\mathrm{Ab}}_{d,v}\subset\mathcal{M}^{\mathrm{klt,CY}}_{d,v}$ be a substack such that 
     \[\mathcal{M}^{\mathrm{Ab}}_{d,v}(S)=
\left\{
 f\colon(\mathcal{X},\mathcal{L})\to S
\;\middle|
\begin{array}{l}
\text{$f\in\mathcal{M}^{\mathrm{klt,CY}}_{d,v}$ and for any geometric point $\bar{s}\in S$,}\\
\text{$\mathcal{X}_{\bar{s}}$ is an Abelian variety.}
\end{array}\right\}.
    \]
    Then, $\mathcal{M}_{d,v}^{\mathrm{Ab}}\subset\mathcal{M}^{\mathrm{klt,CY}}_{d,v}$ is smooth and an open and closed substack of $\mathcal{M}^{\mathrm{klt,CY}}_{d,v}$.

    In particular, the coarse moduli space $M_{d,v}^{\mathrm{Ab}}$ is normal and has only quotient singularities.
\end{prop}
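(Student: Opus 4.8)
The plan is to deduce everything from three ingredients already available: Lemma~\ref{lem--deformation--invariant--abelian}, the presentation $\mathcal{M}^{\mathrm{klt,CY}}_{d,v,P}\cong[Z/(PGL(P(m))\times PGL(P(m+1)))]$ recalled above, and the formal smoothness of the deformation functor of map polarized Abelian schemes (\cite[Propositions 2.2.3.7 and 2.2.4.4]{Lan}).

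First I would show that $\mathcal{M}^{\mathrm{Ab}}_{d,v}$ is open and closed in $\mathcal{M}^{\mathrm{klt,CY}}_{d,v}$, which it suffices to check on the atlas $Z$, over which $\omega_{\mathcal{U}_Z/Z}\sim_Z 0$ and every geometric fiber is klt. Let $Z^{\mathrm{sm}}\subset Z$ be the open locus of smooth fibers. In characteristic zero the Hodge number $h^1(\mathcal{X}_{\bar s},\mathcal{O}_{\mathcal{X}_{\bar s}})$ is locally constant on $(Z^{\mathrm{sm}})_{\mathrm{red}}$, and there a fiber is an Abelian variety precisely when this number equals $d$: if $h^1=d$, then $\kappa(\mathcal{X}_{\bar s})=0$ (as $K_{\mathcal{X}_{\bar s}}\sim 0$) forces the Albanese morphism of $\mathcal{X}_{\bar s}$ to be birational, and being crepant with smooth source it is an isomorphism, exactly as in the proof of Lemma~\ref{lem--deformation--invariant--abelian}. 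Hence the abelian locus $W$ is a union of connected components of $Z^{\mathrm{sm}}$, so it is open and closed in $Z^{\mathrm{sm}}$. Moreover $W$ is dense in its closure $\overline{W}\subset Z$, so every point of $\overline{W}$ is a specialization of a point of $W$; applying Lemma~\ref{lem--deformation--invariant--abelian} over a curve through such a point (the total space being normal over a smooth curve since the fibers are normal) shows that it parametrizes an Abelian variety, hence lies in $Z^{\mathrm{sm}}$, so $\overline{W}\subset Z^{\mathrm{sm}}$ and therefore $\overline{W}=W$. Thus $\mathcal{M}^{\mathrm{Ab}}_{d,v}$ is open and closed, in particular a separated Deligne--Mumford stack of finite type over $\mathbbm k$; write $Z^{\mathrm{Ab}}\subset Z$ for the corresponding open and closed subscheme, so $\mathcal{M}^{\mathrm{Ab}}_{d,v,P}\cong[Z^{\mathrm{Ab}}/(PGL(P(m))\times PGL(P(m+1)))]$.

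Next I would prove smoothness of $Z^{\mathrm{Ab}}$, which (as it is of finite type over $\mathbbm k$) amounts to showing that the deformation functor of each closed point is formally smooth. Such a point is a polarized Abelian variety $(X,L)$ with $L$ ample and $L^d=v$, together with bases of $H^0(X,mL)$ and $H^0(X,(m+1)L)$; since those spaces are free over an Artinian base and higher cohomology vanishes, the bases deform freely, so it is enough to show that $\mathrm{Def}(X,L)$ is formally smooth. I would obtain this through a chain of formally smooth surjective forgetful morphisms: (i) any Artinian deformation of $X$ is smooth over its base, so sections lifting the origin $e$ deform without obstruction, giving $\mathrm{Def}(X,e,L)\to\mathrm{Def}(X,L)$; (ii) by rigidity of the group law, a pointed Artinian deformation of $(X,e)$ carries a unique Abelian scheme structure with zero section $e$, so $\mathrm{Def}(X,e,L)$ is the deformation functor of the polarized Abelian scheme; (iii) since the relative Picard scheme of an Abelian scheme is smooth and its Néron--Severi group scheme is étale over the base, a line bundle deforms as soon as its $\mathrm{NS}$-class---equivalently the associated map polarization $\lambda=\Lambda(L)$---is fixed, giving $\mathrm{Def}(X,e,L)\to\mathrm{Def}(X,e,\lambda)$; and (iv) $\mathrm{Def}(X,e,\lambda)$, the deformation functor of the map polarized Abelian scheme, is formally smooth by \cite[Propositions 2.2.3.7 and 2.2.4.4]{Lan}. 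Composing, $\mathrm{Def}(X,L)$ is formally smooth, so $Z^{\mathrm{Ab}}$ is smooth and hence $\mathcal{M}^{\mathrm{Ab}}_{d,v}$ is smooth.

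Finally, $\mathcal{M}^{\mathrm{Ab}}_{d,v}$ is a smooth separated Deligne--Mumford stack over a field of characteristic zero, hence étale-locally of the form $[U/\Gamma]$ with $U$ a smooth affine scheme and $\Gamma$ a finite group; its coarse moduli space $M^{\mathrm{Ab}}_{d,v}$ is then étale-locally $U/\Gamma$, which is normal with only quotient singularities, giving the last assertion. I expect the main obstacle to be the smoothness step: one must verify carefully that each of the forgetful functors in (i)--(iii) is genuinely formally smooth, which rests on the standard but non-formal properties of Abelian schemes (unobstructedness of sections, rigidity of the group law, smoothness of the relative Picard scheme, discreteness of Néron--Severi) and on correctly matching the target of (iii) with the functor treated in \cite{Lan}; the open-and-closed step is more routine but requires the slightly delicate use of local constancy of Hodge numbers together with Lemma~\ref{lem--deformation--invariant--abelian}.
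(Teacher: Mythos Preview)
Your overall strategy coincides with the paper's: both reduce the smoothness of $Z_{\mathrm{Ab}}$ to Lan's formal smoothness of the deformation functor of map polarized Abelian schemes, by first choosing a section, endowing the family with a group law, and then passing from the line bundle $L$ to the map polarization $\lambda=\Lambda(L)$. Your open-and-closed argument is fine and somewhat more detailed than the paper's one-line citation.

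The gap is in step (iii). The assertions ``the relative Picard scheme of an Abelian scheme is smooth'' and ``its N\'eron--Severi group scheme is \'etale over the base'' are not correct in the generality you need. Over an Artinian base $A$ with closed point $0$, the obstruction to deforming a line bundle $\mathcal{L}_0$ on $X_0$ lies in $H^2(X_0,\mathcal{O}_{X_0})\otimes I$, which is nonzero for $g\geq 2$; correspondingly, $\mathrm{NS}_{\mathcal{X}/A}$ is only unramified over $\mathrm{Spec}\,A$, not \'etale, and the component of $\mathbf{Pic}_{\mathcal{X}/A}$ through $\mathcal{L}_0$ need not be flat over $A$. So from ``$\lambda$ deforms'' you cannot conclude ``$\mathcal{L}_0$ deforms'' by invoking smoothness of $\mathbf{Pic}$.

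What you must actually show is: given $(\overline{\mathcal{X}},\overline{\varepsilon},\lambda)$ over $A$, there exists a line bundle $\overline{\mathcal{N}}$ on $\overline{\mathcal{X}}$ with $\Lambda(\overline{\mathcal{N}})=\lambda$. This is precisely the content of the paragraph the paper places immediately before the Proposition: one forms $L^\Delta(\lambda)$, observes that the square roots of $L^\Delta(\lambda)$ form a principal $(\mathbb{Z}/2\mathbb{Z})^{2g}$-bundle over $\mathrm{Spec}\,A$, and uses that this torsor is trivial over the Henselian base to produce $\overline{\mathcal{N}}$ with $\overline{\mathcal{N}}^{\otimes 2}\sim L^\Delta(\lambda)$, hence $\Lambda(\overline{\mathcal{N}})=\lambda$. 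The paper then fixes the discrepancy between $\overline{\mathcal{N}}|_{A/I}$ and the original $\mathcal{L}$ by a translation (using that the map $a\mapsto t_a^*\mathcal{L}\otimes\mathcal{L}^{-1}$ onto $\mathbf{Pic}^0$ is \'etale surjective), and finally lifts the projective embedding using smoothness of $PGL(P(m))\times PGL(P(m+1))$. Once you insert this torsor-plus-translation argument in place of your step (iii), your proof becomes essentially identical to the paper's.
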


\begin{proof}
By Lemma \ref{lem--deformation--invariant--abelian} and \cite[Corollary 2]{kawamata-chara}, we can check that $\mathcal{M}_{d,v}^{\mathrm{Ab}}\subset\mathcal{M}^{\text{klt,CY}}_{d,v}$ is an open and closed substack of $\mathcal{M}^{\text{klt,CY}}_{d,v}$.
Therefore, it suffices to prove the smoothness of $\mathcal{M}^{\text{klt,CY}}_{d,v}$. 
    Put $\mathcal{M}^\mathrm{Ab}_{d,v,P}=\mathcal{M}^{\text{klt,CY}}_{d,v,P}\cap\mathcal{M}^\mathrm{Ab}_{d,v}$ for a polynomial $P$.
    It suffices to show the smoothness of $\mathcal{M}^\mathrm{Ab}_{d,v,P}$ assuming $\mathcal{M}^\mathrm{Ab}_{d,v,P}\ne\emptyset$. 
    Take $Z$ as just after Theorem \ref{thm--klt--CY--moduli} and $Z_{\mathrm{Ab}}$ as the open subset of $Z$, which is the inverse image of $\mathcal{M}^\mathrm{Ab}_{d,v,P}$ under the natural morphism $Z\to\mathcal{M}^{\text{klt,CY}}_{d,v,P}$.
    Since the natual morphism $Z\to\mathcal{M}^{\text{klt,CY}}_{d,v,P}$ is smooth, it suffices to show that $Z_{\mathrm{Ab}}$ is smooth.
    For this, take an Artinian local ring $A$ with the maximal ideal $\mathfrak{m}$ and an ideal $I$ such that $I\cdot \mathfrak{m}=0$.
    To show that $Z_{\mathrm{Ab}}$ is smooth, we only need to show that the natural map
    \[
    Z_{\mathrm{Ab}}(\mathrm{Spec}(A))\to Z_{\mathrm{Ab}}(\mathrm{Spec}(A/I))
    \]
    is surjective by \cite[Theorem 3.1 (iii)]{SGA}.

    Let 
    $$\iota_0\colon(\mathcal{X},\mathcal{L})\hookrightarrow \mathbb{P}^{P(m)-1}\times\mathbb{P}^{P(m+1)-1}\times \mathrm{Spec}(A/I)$$
     be a family of embedded polarized Abelian varieties.
    Since there is no obstruction for the deformation of a point of smooth varieties, there exists a section $\varepsilon\colon \mathrm{Spec}(A/I)\to \mathcal{X}$. 
    By \cite[Proposition 6.15]{GIT}, we can regard $\mathcal{X}$ as a projective smooth group scheme over $\mathrm{Spec}(A/I)$ with the unit section $\varepsilon$. 
    Now we consider the natural map $\Lambda(\mathcal{L})$ in Definition \ref{defn--abelscheme}.
    By \cite[Proposition 2.2.4.4]{Lan}, there exists a deformation $(\bar{\pi}\colon \overline{\mathcal{X}}\to \mathrm{Spec}(A),\overline{\varepsilon},\lambda)$ of a map polarized Abelian scheme $(\pi\colon X\to \mathrm{Spec}(A/I),\varepsilon,\Lambda(\mathcal{L}))$.
    Consider the principal $(\mathbb{Z}/2\mathbb{Z})^{\oplus 2g}$-bundle $S$ over $\mathrm{Spec}(A)$ associated to the Abelian scheme $\overline{\pi}$ and the map polarization $\lambda$.
    By the Henselian lemma, $S$ is trivial as a principal bundle.
    Therefore, there exists a line bundle $\overline{\mathcal{N}}$ on $\overline{\mathcal{X}}$ such that $\overline{\mathcal{N}}^{\otimes 2}\sim L^\Delta(\lambda)$.
    This means $\Lambda(\overline{\mathcal{N}})=\lambda$.
    Let $\mathcal{N}$ be the restriction of $\overline{\mathcal{N}}$ to $\mathcal{X}$.
    By the proof of \cite[Proposition 6.10]{GIT} and the fact that numerically trivial line bundles on an Abelian variety are algebraically equivalent to the trivial line bundle, we see that $\mathcal{N}$ and $\mathcal{L}$ are algebraically equivalent to each other.
    By the Yoneda lemma, we get a morphism of Abelian schemes
    \[
    \psi_{\mathcal{L}}\colon \mathcal{X}\to \mathbf{Pic}^0_{\mathcal{X}/\mathrm{Spec}(A/I)}
    \]
    such that for any morphism $S\to \mathrm{Spec}(A/I)$, $\psi$ induces a natural map of the sets of $S$-valued points
    \[
    \mathcal{X}(S)\ni a\longmapsto t_a^*\mathcal{L}_S\otimes\mathcal{L}_S^{-1}\in\mathbf{Pic}^0_{\mathcal{X}/\mathrm{Spec}(A/I)}(S),
    \]
    where $t_a$ is the translation defined by $a$.
    It is well-known that this morphism is an \'etale surjection (cf.~the proof of \cite[Lemma 6.2]{Hat23}).
    Therefore, by the deformation theory and the property of \'etale morphisms, there exists a section $a\colon \mathrm{Spec}(A/I)\to \mathcal{X}$ such that $t_a^*\mathcal{L}\cong\mathcal{N}$.
    Let $\bar{a}\colon \mathrm{Spec}(A)\to \overline{\mathcal{X}}$ be a section that is an extension of $a$.
    Then, $\overline{\mathcal{L}}:=t_{\bar{a}}^{-1*}\overline{\mathcal{N}}$ is an extension of $\mathcal{L}$.

We note that $\overline{\mathcal{L}}^{\otimes m}$ and $\overline{\mathcal{L}}^{\otimes m+1}$ are very ample by the assumption on $m$.
Since $A$ is an Artinian local ring, by using the two very ample line bundles, we obtain an embedding $\bar{\iota}\colon\overline{\mathcal{X}}\subset \mathbb{P}^{P(m)-1}\times\mathbb{P}^{P(m+1)-1}\times\mathrm{Spec}(A)$.
Let $\iota$ be the restriction of $\bar{\iota}$ to $\mathcal{X}$. 
Then there exists an element $\gamma\in PGL(P(m))\times PGL(P(m+1))(\mathrm{Spec}(A/I))$ such that $\iota_0=\gamma\cdot \iota$.
Since $PGL(P(m))\times PGL(P(m+1))$ is smooth, there exists an extension $\bar{\gamma}\in PGL(P(m))\times PGL(P(m+1))(\mathrm{Spec}(A))$ of $\gamma$.
Set $\overline{\iota_0}:=\bar{\gamma}\cdot \bar{\iota}$.
Then, $\overline{\iota_0}\colon(\overline{\mathcal{X}},\overline{\mathcal{L}})\hookrightarrow\mathbb{P}^{P(m)-1}\times\mathbb{P}^{P(m+1)-1}\times \mathrm{Spec}(A)$ defines an element of $Z_{\mathrm{Ab}}(\mathrm{Spec}(A))$, which is mapped to the element of $Z_{\mathrm{Ab}}(\mathrm{Spec}(A/I))$ defined by $\iota_0$.
This shows that $Z_{\mathrm{Ab}}$ is smooth.

The last part of the assertion follows from \cite[Theorem 11.3.1]{Ols}.
We complete the proof.
\end{proof}

The following theorem unifies the notion of moduli space of map polarized Abelian varieties in \cite{GIT} into our notion of $M^{\text{klt,CY}}_{d,v}$.

\begin{thm}\label{thm--abelian--analytic}
Fix $d,g\in\mathbb{Z}_{>0}$.
Let $M^{\mathrm{Ab}}_{g,g!\cdot d}$ be the coarse moduli space of $\mathcal{M}^{\mathrm{Ab}}_{g,g!\cdot d}$ as in Proposition \ref{prop--smoothness--of--Abelian--moduli}.
Then there exists a natural isomorphism $M^{\mathrm{Ab}}_{g,g!\cdot d}\cong A_{g,d,1}$ that maps a $\mathbbm{k}$-valued point of $M^{\mathrm{Ab}}_{g,g!\cdot d}$ corresponding to $(X,A)$ to a point of $A_{g,d,1}$ corresponding to $(X,\Lambda(A),p)$, where $X$ is an Abelian variety, $A$ is an ample line bundle on $X$, and $p\in X$ is a closed point.
\end{thm}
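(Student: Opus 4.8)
The plan is to construct morphisms in both directions between the two coarse moduli spaces and to check that they are mutually inverse, all the genuine content being local constructions over an \'etale cover which are then descended via the universal property of coarse spaces. First observe that if $(X,L)$ lies in $\mathfrak{F}^{\mathrm{klt,CY}}_{g,\,g!\cdot d}$ with $X$ an Abelian variety, then $\Lambda(L)$ is an isogeny and the Riemann--Roch computation recalled in Definition \ref{defn--abelscheme} gives $\chi(X,L^{\otimes k})=d\,k^{g}$, so that $\Lambda(L)$ has degree $d$ and only the Hilbert polynomial $P(k)=d\,k^{g}$ occurs. Hence $\mathcal{M}^{\mathrm{Ab}}_{g,\,g!\cdot d}=\mathcal{M}^{\mathrm{Ab}}_{g,\,g!\cdot d,P}$, and we may work with the presentation $[Z_{\mathrm{Ab}}/(PGL(P(m))\times PGL(P(m+1)))]$ and the universal family $(\mathcal{X},\mathcal{L})\to Z_{\mathrm{Ab}}$ from the proof of Proposition \ref{prop--smoothness--of--Abelian--moduli}.

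To construct $\Phi\colon M^{\mathrm{Ab}}_{g,\,g!\cdot d}\to A_{g,d,1}$, let $(\mathcal{X},\mathcal{L})\to S$ be an object of $\mathcal{M}^{\mathrm{Ab}}_{g,\,g!\cdot d}(S)$ and pass to an \'etale cover $S'\to S$ over which $\mathcal{X}_{S'}\to S'$ has a section $\varepsilon'$. Since $\mathcal{X}_{S'}\to S'$ is smooth projective with geometrically connected Abelian fibres and a section, it is an Abelian scheme with unit $\varepsilon'$ (cf.~\cite[Proposition 6.15]{GIT}), the map polarization $\Lambda(\mathcal{L}_{S'})\colon\mathcal{X}_{S'}\to\mathbf{Pic}^{0}_{\mathcal{X}_{S'}/S'}$ formed with respect to $\varepsilon'$ has degree $d$ (each $\Lambda(\mathcal{L}_{\bar s})$ is finite of degree $d^{2}$), and we obtain an object $(\mathcal{X}_{S'},\varepsilon',\Lambda(\mathcal{L}_{S'}))$ of $\mathscr{A}_{g,d,1}(S')$, hence a morphism $S'\to A_{g,d,1}$. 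This morphism is independent of $\varepsilon'$: a second section $\varepsilon''$ differs from $\varepsilon'$ by a section $a$ of the Abelian scheme, and translation by $a$ is an isomorphism in $\mathscr{A}_{g,d,1}$ between the map-polarized Abelian schemes obtained from $\varepsilon'$ and $\varepsilon''$, because it intertwines the two group structures and the map polarization formed with respect to a unit depends only on the algebraic equivalence class of $\mathcal{L}_{S'}$. The same remark applied on $S'\times_{S}S'$ shows the two pullbacks of $S'\to A_{g,d,1}$ coincide, so by the universal property of the coarse space $A_{g,d,1}$ the morphism descends to $S\to A_{g,d,1}$; applying this to the universal family over $Z_{\mathrm{Ab}}$ and passing to the quotient yields $\Phi$, whose effect on $\mathbbm{k}$-points is the assignment in the statement, the chosen unit playing the role of $p$.

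For the inverse $\Psi\colon A_{g,d,1}\to M^{\mathrm{Ab}}_{g,\,g!\cdot d}$, given $(X,\varepsilon,\lambda)\in\mathscr{A}_{g,d,1}(S)$ form $L^{\Delta}(\lambda)$ as in Definition \ref{defn--abelscheme}; trivializing the associated principal $(\mathbb{Z}/2\mathbb{Z})^{\oplus 2g}$-bundle over an \'etale cover $S'\to S$ produces a line bundle $\mathcal{N}$ on $X_{S'}$ with $\mathcal{N}^{\otimes 2}\sim L^{\Delta}(\lambda)_{S'}$. On each geometric fibre $L^{\Delta}(\lambda)_{\bar s}\sim L^{\otimes 2}$ for an ample $L$ with $\Lambda(L)=\lambda_{\bar s}$, hence $\mathcal{N}_{\bar s}$ differs from $L$ by a $2$-torsion line bundle, so it is numerically equivalent to an ample class and therefore ample on the Abelian variety $X_{\bar s}$ (the index of a nondegenerate line bundle on an Abelian variety depends only on its numerical class, so that of $\mathcal{N}_{\bar s}$ is $0$; cf.~\cite{Ab}), with $\Lambda(\mathcal{N}_{\bar s})=\lambda_{\bar s}$ and $\mathrm{vol}(\mathcal{N}_{\bar s})=g!\cdot d$. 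Thus $(X_{S'},\mathcal{N})\to S'$ is an object of $\mathcal{M}^{\mathrm{Ab}}_{g,\,g!\cdot d}(S')$, and since any two such square roots differ by a $2$-torsion twist, i.e.~by a translation, the induced morphism $S'\to M^{\mathrm{Ab}}_{g,\,g!\cdot d}$ descends to $S\to M^{\mathrm{Ab}}_{g,\,g!\cdot d}$ just as before; this defines $\Psi$.

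Finally, starting from $(X,L)$ one gets $\Lambda(L)$ and then $L^{\Delta}(\Lambda(L))\sim L^{\otimes 2}$, whose square roots are exactly the $2$-torsion twists of $L$, all translation-equivalent to $L$; starting from $(X,\varepsilon,\lambda)$ one may take any ample $L$ with $\Lambda(L)=\lambda$, and (choosing the given unit $\varepsilon$ in the construction of $\Phi$) $\Lambda$ recovers $\lambda$. Hence $\Psi\circ\Phi$ and $\Phi\circ\Psi$ are induced by families \'etale-locally isomorphic to the given ones, so by the universal property of coarse spaces they are the respective identities and $\Phi$ is an isomorphism with the stated description on points. The step that needs the most care is the descent of both constructions from $S'$ to $S$: one must verify that the auxiliary choices---a unit section in one direction, a square root of $L^{\Delta}(\lambda)$ in the other---are absorbed by isomorphisms in the respective moduli problems (ultimately by translations), so that the coarse-space universal property applies; the fibrewise ampleness and volume bookkeeping via Riemann--Roch and Mumford's index theorem is then routine.
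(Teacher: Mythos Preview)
Your argument is correct in substance and takes a genuinely different route from the paper.  The paper constructs only one map, namely $\mu\colon A_{g,d,1}\to M^{\mathrm{Ab}}_{g,g!\cdot d}$, by passing through the \emph{fine} moduli scheme $A_{g,d,n}$ for $n\ge 3$ (which carries a universal family), producing the square root of $L^{\Delta}(\lambda)$ over the associated $(\mathbb{Z}/2\mathbb{Z})^{\oplus 2g}$-torsor, and then descending first along this torsor and then along the $GL(2g,\mathbb{Z}/n\mathbb{Z})$-quotient $A_{g,d,n}\to A_{g,d,1}$.  It then concludes that $\mu$ is an isomorphism by checking bijectivity on $\mathbbm{k}$-points (via \cite[Lemma~6.2]{Hat23}) together with the normality of both sides (Proposition~\ref{prop--smoothness--of--Abelian--moduli} and \cite{Lan}).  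You instead build both maps $\Phi$ and $\Psi$ directly from the functorial descriptions and verify they are mutual inverses, which avoids invoking normality altogether; your $\Phi$ is in the opposite direction to the paper's $\mu$ and is arguably the more natural forgetful map.

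One technical point deserves a sentence of justification: in your construction of $\Psi$ you produce a natural transformation $\mathscr{A}_{g,d,1}\to h_{M^{\mathrm{Ab}}_{g,g!\cdot d}}$ and then assert that this yields a morphism $A_{g,d,1}\to M^{\mathrm{Ab}}_{g,g!\cdot d}$.  The coarse moduli property of $A_{g,d,1}$ as stated in \cite[Definition~5.6]{GIT} is universal only among \emph{schemes}, whereas $M^{\mathrm{Ab}}_{g,g!\cdot d}$ is a priori only an algebraic space.  This is easily handled---either note that $A_{g,d,1}$ is also the coarse space of the Deligne--Mumford stack of map-polarized Abelian schemes and invoke \cite{KeM}, whose universal property is for algebraic spaces, or simply run your $\Psi$-construction on the universal family over $A_{g,d,n}$ and descend along $GL(2g,\mathbb{Z}/n\mathbb{Z})$, which is exactly the paper's manoeuvre---but you should say which route you take.
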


\begin{proof}
For $n\ge3$, we take the universal Abelian scheme $\pi\colon \mathcal{U}\to A_{g,d,n}$ and the universal map polarization $\lambda$.
Let $S$ be the principal $(\mathbb{Z}/2\mathbb{Z})^{\oplus 2g}$-bundle over $A_{g,d,n}$ associated with the Abelian scheme $\pi$ and the map polarization $\lambda$.
Let $\lambda'$ be the map polarization of $\mathcal{U}\times_{A_{g,d,n}}S$ defined by using the base change of $\lambda$ by $S\to A_{g,d,n}$.
By the definition of $S$, we can take a line bundle $\mathcal{N}$ such that $L^\Delta(\lambda')=\mathcal{N}^{\otimes 2}$.
This shows $\Lambda(\mathcal{N})=\lambda'$.
By using the data $(\mathcal{U}\times_{A_{g,d,n}}S\to S, \mathcal{N})$, we obtain the corresponding morphism $\tilde{\mu}\colon A_{g,d,n}\to M^{\mathrm{Ab}}_{g,g!\cdot d}$.
By \cite[Lemma 6.2]{Hat23}, for any two closed points $s_1,s_2\in S$, if $\mathcal{U}_{s_1}=\mathcal{U}_{s_2}$ and $\mathcal{N}_{s_1}\equiv \mathcal{N}_{s_2}$ then $\tilde{\mu}(s_1)=\tilde{\mu}(s_2)$.
Thus, $\tilde{\mu}$ is $(\mathbb{Z}/2\mathbb{Z})^{\oplus 2g}$-invariant and it induces $\hat{\mu}\colon A_{g,d,n}\to M^{\mathrm{Ab}}_{g,g!\cdot d}$.
It is well known that the forgetful functor $A_{g,d,n}\to A_{g,d,1}$ is a geometric quotient by $GL(2g,\mathbb{Z}/n\mathbb{Z})$ (cf.~\cite[Lemmas 7.11 and 7.12]{GIT}).
It is easy to see that $\hat{\mu}$ is $GL(2g,\mathbb{Z}/n\mathbb{Z})$-invariant and induces $\mu\colon A_{g,d,1}\to M^{\mathrm{Ab}}_{g,g!\cdot d}$. 
Now recall the fact that if two line bundles $A_1$ and $A_2$ over an Abelian variety $X$ are numerically equivalent, then $\Lambda(A_1)=\Lambda(A_2)$ (cf.~\cite[Section 6.2]{GIT}). 
By this fact and \cite[Lemma 6.2]{Hat23}, we see that $\mu$ induces a bijection of the set of $\mathbbm{k}$-valued points.
Since $A_{g,d,1}$ and $M^{\mathrm{Ab}}_{g,g!\cdot d}$ are normal by \cite[Propositions 2.2.3.7 and 2.2.4.4]{Lan} and Proposition \ref{prop--smoothness--of--Abelian--moduli}, we see that $\mu$ is an isomorphism.
Thus, $\mu$ is the desired isomorphism.
\end{proof}
By applying Theorem \ref{thm--abelian--analytic}, we obtain a description of $M^{\mathrm{Ab}}_{g,g!\cdot d}$ as a quotient of symmetric space by a discrete group.
We use the following remark in the proof of Theorem \ref{thm--Fujino--moduli}.
\begin{rem}\label{rem--abelian--period}
When $\mathbbm{k}=\mathbb{C}$, it is well known that each irreducible component of $A_{g,d,1}$ admits the structure of some quotient of the $g$-dimensional Siegel upper halfspace $\mathfrak{h}_g$ by a certain arithmetic subgroup of the automorphism group of $\mathfrak{h}_g$.
We will explain this here.
Let $\delta_1,\ldots,\delta_g$ be a sequence of positive integers such that $\prod_{j=1}^g\delta_j=d$ and 
\[
\delta_1|\delta_2|\ldots|\delta_g.
\]
Then, for any irreducible component $V$ of $A_{g,d,1}$, there exist $\delta_1,\ldots,\delta_g$ as above such that $V\cong \mathfrak{h}_g/\Gamma_\delta$ by regarding $V$ as a complex analytic space
(cf.~\cite[Appendix to Chap.~7, A]{GIT} and \cite[Section 2]{fujino--canonical-certain}).
Here, 
\begin{align*}
\Gamma_\delta&:=\Bigg\{g\in GL(2g,\mathbb{Z})\Bigg|\text{}^{\mathrm{t}}g\begin{pmatrix}
0 & J_{\delta} \\
-J_{\delta} & 0 \\
\end{pmatrix}g=\begin{pmatrix}
0 & J_{\delta} \\
-J_{\delta} & 0 \\
\end{pmatrix}
\Bigg\}, \text{ and}\\
J_\delta&:=\begin{pmatrix}
\delta_1 & 0 & \cdots & 0 \\
0 & \delta_2 &    0    &    0    \\
\vdots &  0  & \ddots &    \vdots \\
0 &     0   &   \cdots     & \delta_g
\end{pmatrix}.
\end{align*}
It is well known that $\mathfrak{h}_g/\Gamma_\delta$ corresponds to the coarse moduli space whose $\mathbb{C}$-valued points are parameterizing $(X,\lambda,\epsilon)$ such that $\mathrm{Ker}\,\lambda\cong \prod_{j=1}^g(\mathbb{Z}/\delta_j\mathbb{Z})^{\oplus 2}$.

By Theorem \ref{thm--abelian--analytic}, we see that any irreducible component of $M^{\mathrm{Ab}}_{g,g!\cdot d}$ is also expressed as $\mathfrak{h}_g/\Gamma_\delta$ for some $\delta_1,\ldots,\delta_g$.
\end{rem}

\subsubsection{Irreducible holomorphic symplectic varieties}

In this subsection, we work over $\mathbb{C}$.

\begin{defn}[Irreducible holomorphic symplectic manifold]
    Let $X$ be a proper K\"{a}hler manifold such that ${\rm dim}\,X$ is even, say $2n$ for some $n$, and $\pi_1(X)=\{\mathrm{id}\}$.
    We say that $X$ is an {\it irreducible holomorphic symplectic manifold} if $H^{0}(X,\Omega^2_X)$ is generated by an element $\sigma$ such that $\sigma^n\in H^0(X,\omega_X)$ is nowhere vanishing.
    We call this $\sigma$ a {\it holomorphic symplectic form} on $X$.
\end{defn}

\begin{defn}[Symplectic variety]
    Let $Y$ be a projective normal variety with only canonical singularities of dimension $2n$.
    We say that $Y$ is a {\it symplectic variety} if the smooth locus $Y_{\mathrm{reg}}$ of $Y$ admits a holomorphic two-form $\sigma$ such that $\sigma^n$ is nowhere vanishing on $Y_{\mathrm{reg}}$ and for any resolution $f\colon Z\to Y$, $f^*\sigma|_{f^{-1}(Y_{\mathrm{reg}})}$ can be extended to a holomorphic two-form $\tilde{\sigma}$ on $Z$ entirely. 
If $Y$ further admits a resolution $f\colon Z\to Y$ as above such that $\tilde{\sigma}$ defines the structure of $Z$ as an irreducible holomorphic symplectic variety, then we say that $f$ is a {\it symplectic resolution}.
\end{defn}

We recall the following useful result by Koll\'{a}r--Laza--Sacc\`{a}--Voisin.

\begin{thm}[{\cite[Theorem 3.6, Corollary 5.2]{KLSV}}]\label{thm--klsv}
Let $\pi\colon\mathcal{X}\to \Delta$ be a projective morphism of complex analytic spaces where $\Delta$ is the one dimensional unit disk.
Suppose that $\mathcal{X}_0$ is slc, $\mathcal{X}_t$ is irredcuible holomorphic symplectic manifold for any $t\in\Delta\setminus\{0\}$ and $K_{\mathcal{X}}\sim_{\mathbb{Q}}0$.
Then the following are equivalent.
\begin{enumerate}
    \item The monodromy action on $H^2(\mathcal{X}_t)$ is finite for any $t\in\Delta\setminus\{0\}$,
    \item $\mathcal{X}_0$ is klt.
\end{enumerate}
Furthermore, if one of the above conditions holds, then $\mathcal{X}_0$ is a symplectic variety and there exist a morphism $\Delta\ni t\mapsto t^n\in\Delta$ for some $n\in\mathbb{Z}_{>0}$ and a small resolution $f\colon \widetilde{\mathcal{X}}\to \mathcal{X}\times_\Delta\Delta$ such that $f_{0}\colon \widetilde{\mathcal{X}}_{0}\to \mathcal{X}_0$ is a symplectic resolution.
Furthermore, $\widetilde{\mathcal{X}}_0$ is a projective irreducible holomorphic symplectic manifold.
\end{thm}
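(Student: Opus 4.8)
The plan is to follow the argument of \cite{KLSV}. Write $T$ for the monodromy operator on $H^2(\mathcal{X}_t;\mathbb{Q})$ for $0<|t|\ll 1$, let $T=T_sT_u$ be its Jordan decomposition, and set $N:=\log T_u$; since $T$ is quasi-unipotent, $T$ has finite order if and only if $N=0$. Let $H^2_{\mathrm{lim}}$ denote the limit mixed Hodge structure with its monodromy weight filtration $W_\bullet$ centred at $2$. The first point is that for families of irreducible holomorphic symplectic manifolds the Looijenga--Lunts--Verbitsky description of $H^\ast$ shows that the whole limit mixed Hodge structure on $H^\ast(\mathcal{X}_t)$ is controlled by $H^2_{\mathrm{lim}}$, so the ``type'' of the degeneration is detected by the nilpotency of $N$ on $H^2$ alone.

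For the equivalence, the idea is to compare $\mathcal{X}_0$ with the central fibre of a relative minimal model. First one makes a base change $t\mapsto t^m$ and a semistable reduction to obtain a smooth model $\mathcal{Y}\to\Delta'$ with reduced simple normal crossing central fibre, and then runs a relative minimal model program over $\Delta'$; since $K_{\mathcal{Y}/\Delta'}$ is numerically trivial on the general fibre and supported over $0$, it terminates with a relative minimal model $\mathcal{X}'\to\Delta'$ satisfying $K_{\mathcal{X}'/\Delta'}\sim_{\mathbb{Q}}0$. On the Hodge-theoretic side the Clemens--Schmid exact sequence matches the combinatorics of $\mathcal{X}'_0$ (number of irreducible components, dimension of the dual complex) with the nilpotency index of $N$; in particular $N=0$ exactly when $\mathcal{X}'_0$ is irreducible, in which case $(\mathcal{X}',\mathcal{X}'_0)$ is plt and $\mathcal{X}'_0$ is a normal variety with at worst canonical singularities. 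On the birational side, $K_{\mathcal{X}/\Delta}\sim_{\mathbb{Q}}0$ implies that $\mathcal{X}'$ is crepant-birational to $\mathcal{X}$ over $\Delta'$, so $\mathcal{X}_0$ and $\mathcal{X}'_0$ are klt or non-klt simultaneously; combining the two gives $N=0\Longleftrightarrow\mathcal{X}_0$ klt. For the implication $(2)\Rightarrow(1)$ one may also argue directly: $\mathcal{X}_0$ is Cartier in $\mathcal{X}$, so $\mathcal{X}_0$ klt forces $\mathcal{X}$ klt by inversion of adjunction, hence Du Bois and rational Gorenstein with $K_{\mathcal{X}}\sim_{\mathbb{Q}}0$, and Steenbrink's computation of $H^2_{\mathrm{lim}}$ then shows $W_\bullet$ is pure, i.e.\ $N=0$.

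For the structural statement one assumes $N=0$ and, after the base change $t\mapsto t^n$, reduces to $T=\mathrm{id}$. Triviality of $T$ makes $\mathcal{X}'_0$ irreducible for the relative minimal model above, so $\mathcal{X}'$ is klt with $K_{\mathcal{X}'}\sim_{\mathbb{Q}}0$ and the holomorphic symplectic form on the general fibre extends across $\mathcal{X}'_0$, exhibiting $\mathcal{X}'_0$ as a symplectic variety. One then takes a relative crepant resolution $\widetilde{\mathcal{X}}\to\mathcal{X}'$; using that $T$ is trivial and the general fibre is hyperk\"ahler, one shows that this resolution exists and is small, so that $\widetilde{\mathcal{X}}$ is a smooth family. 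Since $\widetilde{\mathcal{X}}\to\Delta$ is smooth proper with hyperk\"ahler general fibre, deformation invariance of $h^{2,0}$ lets the symplectic form extend to a nowhere-degenerate holomorphic $2$-form on $\widetilde{\mathcal{X}}_0$, so $\widetilde{\mathcal{X}}_0\to\mathcal{X}_0$ is a symplectic resolution; moreover $\widetilde{\mathcal{X}}_0$ is projective (an ample class propagates from nearby fibres) and, being a smooth limit of irreducible holomorphic symplectic manifolds with trivial monodromy, is itself irreducible holomorphic symplectic by deformation invariance of $\pi_1$ and of the Hodge decomposition.

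The main obstacle will be this structural statement, and within it the fact that the modification of the (base-changed) family can be chosen \emph{small} with \emph{smooth} central fibre: this requires the full relative minimal model program for semistable degenerations, the Hodge-theoretic input that finiteness of the monodromy on $H^2$ controls the entire limit mixed Hodge structure of a hyperk\"ahler degeneration, and a deformation argument upgrading ``symplectic variety sitting in a smooth family'' to ``smooth''.
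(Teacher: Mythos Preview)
The paper does not actually prove this theorem: it is stated with attribution to \cite[Theorem 3.6, Corollary 5.2]{KLSV}, and the only argument the paper supplies is the one-line remark immediately following the statement, namely that $f_0^*K_{\mathcal{X}_0}=K_{\widetilde{\mathcal{X}}_0}$ and that the projectivity of $\widetilde{\mathcal{X}}_0$ follows from \cite[Proposition 26.13]{GHJ}. Your proposal is a reasonable sketch of the argument in \cite{KLSV} itself (semistable reduction, relative MMP, Clemens--Schmid / LLV control of the limit MHS by $H^2$, and the construction of the simultaneous symplectic resolution), so in that sense you are reconstructing the cited proof rather than diverging from the paper.

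One small point of comparison: for the projectivity of $\widetilde{\mathcal{X}}_0$ you argue by propagating an ample class from nearby fibres, whereas the paper explicitly invokes \cite[Proposition 26.13]{GHJ}, i.e.\ Huybrechts' projectivity criterion for irreducible holomorphic symplectic manifolds. Since $\widetilde{\mathcal{X}}\to\Delta$ need not itself be projective (only $\mathcal{X}\to\Delta$ is), the naive ``propagate an ample class'' argument is not quite enough on its own; the actual input is that $\widetilde{\mathcal{X}}_0$ carries a big and nef line bundle (the pullback of a relatively ample bundle on $\mathcal{X}$), and one then uses the hyperk\"ahler projectivity criterion to conclude. This is presumably what you had in mind, but it is worth being precise about, and it is exactly the content of the paper's remark.
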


Note that $f_0^*K_{\mathcal{X}_0}=K_{\widetilde{\mathcal{X}}_0}$ and the projectivity of $\widetilde{\mathcal{X}}_0$ follows from \cite[Proposition 26.13]{GHJ}.

We collect some fundamental properties and results on irreducible holomorphic symplectic varieties.

\begin{defn}
    Let $X$ be an irreducible holomorphic symplectic manifold of dimension $2n$. Then there exists the following integral symmetric bilinear pairing $q_X$ on $H^2(X,\mathbb{Z})$, which is called the {\it Beauville--Bogomolov--Fujiki form}.
    First, we normalize a holomorphic symplectic form $\sigma$ as $\int_X(\sigma\overline{\sigma})^n=1$.
    If $\alpha=\lambda\sigma +\mu\overline{\sigma}+\beta\in H^{2}(X,\mathbb{C})$, where $\lambda$ and $\mu\in\mathbb{C}$ and $\beta\in H^{1,1}(X,\mathbb{C})$, then we set 
    \[
    q_X(\alpha,\alpha)=\lambda\mu+\frac{n}{2}\int_X\beta^2(\sigma\overline{\sigma})^{n-1}.
    \]
    It is well-known that the restriction of $q_X$ to $H^{2}(X,\mathbb{R})$ is a real bilinear form with index $(3,b_2-3)$, where $b_2$ is the second Betti number of $X$.  
    Furthermore, $q_X$ is well-defined as a integral quadratic form on $H^2(X,\mathbb{Z})$ by \cite[Th\'{e}or\`{e}me 5]{beauville}.
    We simply call $q_X$ the BBF form.
    This form is independent from the choice of $\sigma$.
\end{defn}

Now, we explain the results of Verbitsky and Bakker--Lehn on the global Torelli theorem and the period mapping theory on irreducible holomorphic symplectic manifolds.
Fix $b_2$ as an integer satisfying $b_2\ge3$.
Let $\Lambda$ be a free $\mathbb{Z}$-module of rank $b_2$ and suppose that there exists a primitive integral quadratic form $q$ on $\Lambda$. 
Then a {\it marked irreducible holomorphic symplectic manifold} $(X,\alpha)$ consists of an irreducible holomorphic symplectic manifold $X$ and an isomorphism $\alpha\colon H^2(X,\mathbb{Z})\to \Lambda$ such that $q$ coincides with $q_X$ under $\alpha$.
We call this $\alpha$ a {\it marking} of $X$.
Let $\Omega_{\Lambda}$ be a complex submanifold of $\mathbb{P}(\Lambda\otimes\mathbb{C})$ such that
\[
\Omega_{\Lambda}:=\{[p]\in\mathbb{P}(\Lambda\otimes\mathbb{C})|q(p,p)=0, \textrm{ and }q(p,\bar{p})>0\}.
\]
By the Bogomolov--Tian--Todorov theorem (cf.~\cite[Theorem 22.5]{GHJ}), for any compact K\"{a}hler manifold $Y$ with a trivial canonical bundle, the deformation spaces of $Y$ are unobstructed. 
Thus, for any marked irreducible holomorphic symplectic manifold $(X,\alpha)$, the germ of semiversal deformation spaces $\mathrm{Def}(X)$ is smooth.
By gluing $\mathrm{Def}(X)$ in the same way as \cite[\S25.2]{GHJ}, we obtain the coarse moduli space $\mathfrak{M}_{\lambda}$ parametrizing all marked irreducible holomorphic symplectic manifolds as a (possibly non-Hausdorff) complex manifold.
We fix a connected component $\mathfrak{M}_{\lambda}^\circ$ of $\mathfrak{M}_{\lambda}$.  
Then there exists a correspondence $\mathcal{P}\colon \mathfrak{M}_{\lambda}^\circ\to \Omega_{\Lambda}$, called the {\it period map}, which maps an isomorphic class $(X,\alpha)$ to $[\alpha(\sigma_X)]$, where $\sigma_X$ is a non-zero holomorphic symplectic form.
By \cite[Th\'{e}or\`{e}me 5]{beauville} and \cite[Theorem 8.1]{huybrechts}, we know that $\mathcal{P}$ is a surjective and locally biholomorphic map. 

To state the global Torelli theorem, we define the following notion.
\begin{defn}[{\cite[Definition 1.1]{MarkmanSurvey}}]
Let $X_1$ and $X_2$ be irreducible holomorphic symplectic manifolds.
We say that an isomorphism $f\colon H^2(X_1,\mathbb{Z})\to H^2(X_2,\mathbb{Z})$ is a {\it parallel-transport operator} if $f=g_1\circ \ldots\circ g_l$, where $g_j\colon H^2(Y_j,\mathbb{Z})\to H^2(Y_{j+1},\mathbb{Z})$ and $Y_j$'s are irreducible holomorphic symplectic manifolds, such that
\begin{itemize}
\item each $g_{j}$ is induced from an isomorphism $h_{j} \colon Y_{j+1}\to Y_j$, and
    \item there exist a smooth and proper morphism $\pi\colon \mathcal{X}\to B$ of analytic spaces whose fibers are all irreducible holomorphic symplectic manifolds, $b_1$ and $b_2\in B$ such that $Y_{j}=\mathcal{X}_{b_1}$ and $Y_{j+1}=\mathcal{X}_{b_2}$, and a continuous path $\gamma\colon [0,1]\to B$ satisfying $\gamma(0)=b_1$ and $\gamma(0)=b_2$ such that the parallel transport along $\gamma$ induces $g$.
\end{itemize}
We note that parallel-transport operators preserve the BBF forms. 

We define $\mathrm{Mon}(X)\subset O(H^2(X,\mathbb{Z}))$ as the subgroup of parallel-transport operators whose target and source are $H^2(X,\mathbb{Z})$. 
Here, for any free abelian group $L$ of finite rank with a symmetric $\mathbb{Z}$-bilinear form $q$, let $O(L,q_L)$ denote
\[
\{g\in GL(L)\,|\,\text{$q_L(g\cdot x)=q_L(x)$ for any $x\in L$}\}, 
\]
and we will simply write $O(L)$ if there is no fear of confusion.

    Let $\mathfrak{M}_{\lambda}^\circ$ be as above, i.e., a connected component of the coarse moduli space $\mathfrak{M}_{\lambda}$ parametrizing all marked irreducible holomorphic symplectic manifolds. 
    Fix a marked irreducible holomorphic symplectic manifold $(X,\alpha)$ and we regard $(X,\alpha)$ as a point of $\mathfrak{M}_{\lambda}^\circ$.
    We set the monodromy group $\mathrm{Mon}(\mathfrak{M}_{\lambda}^\circ):=\alpha(\mathrm{Mon}(X))$ as a subgroup of $O(\Lambda)$. 
Note that $\mathrm{Mon}(\mathfrak{M}_{\lambda}^\circ)$ is independent from the choice of $(X,\alpha)$.
\end{defn}

\begin{thm}[cf.~{\cite[Proposition 25.14]{GHJ}}]\label{thm--birational--symplectic}
    Let $X_1$ and $X_2$ be irreducible holomorphic symplectic manifolds, and let $f \colon X_{1} \dashrightarrow X_{2}$ be a bimeromorphic map.
    Let $p_1\colon Z\to X_1$ and $p_2\colon Z\to X_2$ be the canonical morphisms, where $Z\subset X_1\times X_2$ is the graph of $f$. 
    We define a map $[Z]_*\colon H^2(X_2,\mathbb{Z})\to H^2(X_1,\mathbb{Z})$ by $[Z]_*(\alpha):=(p_1)_*p_2^*\alpha$ for any $\alpha\in H^2(X_2,\mathbb{Z})$. 
Then $[Z]_*$ is a parallel transport operator.
\end{thm}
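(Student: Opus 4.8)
The plan is to deduce the statement from Huybrechts' deformation theorem for bimeromorphic irreducible holomorphic symplectic manifolds, combined with the monodromy invariance of cycle classes in a family over a disk. To set up, I would first recall the structural fact that a bimeromorphic map $f\colon X_1\dashrightarrow X_2$ of irreducible holomorphic symplectic manifolds is an isomorphism in codimension one: both $X_i$ are compact K\"ahler with $K_{X_i}\cong\mathcal{O}_{X_i}$, hence ``minimal'', and a bimeromorphic map between such manifolds neither contracts nor extracts a divisor (a standard discrepancy/negativity argument). Thus $p_1$ and $p_2$ restrict to isomorphisms over open subsets $U_i\subset X_i$ with $\mathrm{codim}_{X_i}(X_i\setminus U_i)\ge 2$; by purity this already makes $[Z]_*$ a well-defined isomorphism of integral weight-$2$ Hodge structures, and the Fujiki relation evaluated on the common open set makes it an isometry for the BBF forms. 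These facts are not yet the conclusion, but they let us identify $[Z]_*$ with a genuine parallel transport.

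The engine is the deformation theorem underlying the cited \cite[Proposition 25.14]{GHJ}: there exist smooth proper families $\pi_i\colon\mathcal{X}_i\to\Delta$ over a small disk $\Delta$, isomorphisms $(\mathcal{X}_i)_0\cong X_i$, all fibres irreducible holomorphic symplectic, and a bimeromorphic map $F\colon\mathcal{X}_1\dashrightarrow\mathcal{X}_2$ over $\Delta$ with $F_0=f$ and with $F_t\colon\mathcal{X}_{1,t}\xrightarrow{\sim}\mathcal{X}_{2,t}$ an isomorphism for every $t\ne 0$. Its existence rests on the unobstructedness of deformations (Bogomolov--Tian--Todorov, already invoked in the excerpt) together with the fact that the $f$-pullback of a K\"ahler class on $X_2$ lies in the closure of the K\"ahler cone of $X_1$ and deforms into the K\"ahler cone of the nearby fibres, which forces $f$ itself to deform to isomorphisms.

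Now I would spread out the graph. Set $\mathcal{Z}:=\overline{\mathrm{graph}(F)}\subset\mathcal{X}_1\times_\Delta\mathcal{X}_2$; it is irreducible and dominates $\Delta$, hence flat over $\Delta$, with $\mathcal{Z}_t=\mathrm{graph}(F_t)$ for $t\ne 0$, while as a cycle $\mathcal{Z}_0=Z+\sum_i m_iW_i$ with $Z=\mathrm{graph}(f)$ and with each extra component $W_i$ collapsed by the first projection to a subvariety of $X_1$ of codimension at least two, where $d:=\dim_{\mathbb{C}}X_i$ (this is exactly where ``$f$ is an isomorphism in codimension one'' is used). The relative fundamental class of $\mathcal{Z}$ is a global section over the simply connected $\Delta$ of the local system with fibre $H^{2d}(\mathcal{X}_{1,t}\times\mathcal{X}_{2,t},\mathbb{Z})$, hence flat, and so is its K\"unneth $(2d-2,2)$-component, which via Poincar\'e duality on the first factor is precisely the correspondence $[\mathcal{Z}_t]_*\colon H^2(\mathcal{X}_{2,t},\mathbb{Z})\to H^2(\mathcal{X}_{1,t},\mathbb{Z})$. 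Since each $W_i$ maps to a subvariety of $X_1$ of codimension at least two, a degree count gives $[W_i]_*=0$, so $[\mathcal{Z}_0]_*=[Z]_*$. Choosing a path $\gamma$ in $\Delta$ from $0$ to some $t_0\ne 0$ and writing $T_{\gamma,i}$ for parallel transport in $\mathcal{X}_i$ along $\gamma$, flatness yields
\[
[Z]_*\;=\;[\mathcal{Z}_0]_*\;=\;T_{\gamma,1}^{-1}\circ[\mathcal{Z}_{t_0}]_*\circ T_{\gamma,2}\;=\;T_{\gamma,1}^{-1}\circ F_{t_0}^{*}\circ T_{\gamma,2},
\]
the last equality because $\mathcal{Z}_{t_0}$ is the graph of the isomorphism $F_{t_0}$. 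Each factor is either parallel transport along a path in a smooth proper family of irreducible holomorphic symplectic manifolds ($T_{\gamma,i}^{\pm 1}$) or a map induced by an isomorphism of such ($F_{t_0}^{*}$), so $[Z]_*$ is a parallel-transport operator; recasting this threefold composition into the precise form of the definition of a parallel-transport operator (absorbing the transports into the families $\mathcal{X}_i$, and noting an isomorphism-induced map is parallel transport in the constant family) is routine bookkeeping.

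I expect the real obstacle to be the deformation theorem of the second paragraph: although citable, it is the substantive geometric input, and it must be paired with the cycle-theoretic care of the third paragraph --- that the correspondence action extends flatly across $0\in\Delta$ and that the extra components $W_i$ of the central fibre disappear from the $(2d-2,2)$-K\"unneth slot precisely because $f$ is an isomorphism in codimension one. By contrast the Hodge-isometry observations of the first paragraph are comparatively routine.
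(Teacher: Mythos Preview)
The paper does not supply its own proof of this theorem: it is stated with the attribution ``cf.~\cite[Proposition 25.14]{GHJ}'' and used as a black box. Your proposal is a correct sketch of exactly the standard argument behind that cited result --- Huybrechts' deformation theorem producing families $\mathcal{X}_i\to\Delta$ with $F_t$ an isomorphism for $t\neq 0$, flatness of the relative graph cycle class, and the vanishing of the extra central-fibre components on the relevant K\"unneth piece because $f$ is an isomorphism in codimension one. So there is nothing to compare: you have reconstructed the proof the paper is citing rather than giving.

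One small remark on presentation: in your dimension count for $[W_i]_*=0$, you only stated that $p_1(W_i)$ has codimension $\geq 2$ in $X_1$, which is what you need for the map $H^2(X_2)\to H^2(X_1)$; it may be worth noting (as in Huybrechts' argument) that the same holds for $p_2(W_i)\subset X_2$, since the extra components lie over the common indeterminacy locus, and this symmetry is what also makes $[Z]_*$ an isomorphism rather than merely a map.
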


\begin{thm}[\cite{Ver}, \cite{BLhk}]\label{thm--verbitsky}
    Fix $\mathfrak{M}_{\lambda}^\circ$ and $\Omega_{\Lambda}$ as above.
    Then the following properties hold.
    \begin{enumerate}
        \item $\mathrm{Mon}(\mathfrak{M}_{\lambda}^\circ)\subset O(\Lambda)$ is of finite index, and 
        \item For any two points $p_1$ and $p_2$ of $\mathfrak{M}_{\lambda}^\circ$ that are mapped to the same point under the period map $\mathcal{P}$, the corresponding marked irreducible holomorphic symplectic manifolds $(X_1,\alpha_1)$ and $(X_2,\alpha_2)$ corresponding to $p_1$ and $p_2$ respectively are bimeromorphic to each other.
Moreover, if $Z\subset X_1\times X_2$ is the graph of the bimeromorphic map as above from $X_1$ to $X_2$, then $\alpha_1\circ [Z]_*=\alpha_2$. 
    \end{enumerate}
\end{thm}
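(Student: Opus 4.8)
The plan is to obtain the statement by assembling Verbitsky's global Torelli theorem for irreducible holomorphic symplectic manifolds \cite{Ver} with the refinements on the monodromy group and on the behaviour of markings due to Bakker--Lehn \cite{BLhk} (see also \cite{MarkmanSurvey}); the two items are handled separately, and the first is essentially used as a black box.

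For (1): by definition $\mathrm{Mon}(X)$ is a subgroup of $O(H^2(X,\mathbb{Z}),q_X)$, hence of an arithmetic subgroup of the real orthogonal group of signature $(3,b_2-3)$, so the only content is that the index is finite. I would invoke Verbitsky's theorem that $\mathrm{Mon}(X)$ has finite index in $O(H^2(X,\mathbb{Z}))$, whose proof proceeds via the action of the mapping class group on the Teichm\"uller space of complex structures and the ergodicity of the induced action on the period domain. Transporting by the marking $\alpha$, which identifies $q_X$ with $q$, gives that $\mathrm{Mon}(\mathfrak{M}_\lambda^\circ)=\alpha(\mathrm{Mon}(X))\subset O(\Lambda)$ is of finite index; independence of the chosen $(X,\alpha)$ holds because $\mathfrak{M}_\lambda^\circ$ is connected and parallel transport along paths in $\mathfrak{M}_\lambda^\circ$ conjugates one monodromy group into another.

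For (2): recall that $\mathcal{P}\colon\mathfrak{M}_\lambda^\circ\to\Omega_\Lambda$ is surjective and locally biholomorphic, so the fiber $\mathcal{P}^{-1}([p])$ is discrete. Verbitsky's global Torelli theorem says that any two points of this fiber are inseparable points of the (non-Hausdorff) complex manifold $\mathfrak{M}_\lambda^\circ$, and Huybrechts' analysis of inseparable points in the moduli of marked manifolds (cf.~\cite[\S25]{GHJ}, \cite{huybrechts}) identifies inseparable marked manifolds $(X_1,\alpha_1)$, $(X_2,\alpha_2)$ with bimeromorphic $X_1\dashrightarrow X_2$. This produces the bimeromorphic map $f$; by Theorem \ref{thm--birational--symplectic} its graph $Z$ induces a parallel-transport operator $[Z]_*\colon H^2(X_2,\mathbb{Z})\to H^2(X_1,\mathbb{Z})$. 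For the normalization $\alpha_1\circ [Z]_*=\alpha_2$ I would argue that, since $f$ is an isomorphism in codimension one (as bimeromorphic maps of hyperk\"ahler manifolds are), $[Z]_*$ is an isomorphism of integral Hodge structures sending $[\sigma_{X_2}]$ to $[\sigma_{X_1}]$; hence $g:=\alpha_1\circ [Z]_*\circ \alpha_2^{-1}$ lies in $\mathrm{Mon}(\mathfrak{M}_\lambda^\circ)$ (using the description of the monodromy group via parallel transport together with a path in $\mathfrak{M}_\lambda^\circ$ joining the two points) and fixes the period line $[p]$. A monodromy operator fixing a period point of $\Omega_\Lambda$ must be the identity; this is precisely the injectivity half of the refined global Torelli theorem of \cite{BLhk}, \cite{MarkmanSurvey}, and it gives $g=\mathrm{id}$, i.e.\ $\alpha_1\circ [Z]_*=\alpha_2$.

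The main obstacle is that item (1) and the core of item (2) are genuinely deep results — Verbitsky's finite-index theorem and global Torelli theorem — which are being cited rather than reproved; within our argument the only real work is the bookkeeping that upgrades ``the two markings differ by a Hodge isometry fixing $[p]$'' to the exact identity $\alpha_1\circ[Z]_*=\alpha_2$, for which one must use the precise (not merely Hodge-theoretic) form of global Torelli, controlling inseparable points in $\mathfrak{M}_\lambda^\circ$.
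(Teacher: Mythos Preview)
The paper does not prove this theorem; it is stated as a black-box citation of \cite{Ver} and \cite{BLhk}, with the remark that Bakker--Lehn established it for $b_2\ge 5$ without hyperk\"ahler rotation. Your proposal is therefore not competing with any argument in the paper, and items (1) and the first half of (2) are, like the paper, simply invoking the cited sources.

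However, your bookkeeping in the second half of (2) contains a genuine error. You assert that ``a monodromy operator fixing a period point of $\Omega_\Lambda$ must be the identity'' and call this the injectivity half of global Torelli. This is false: for instance $-\mathrm{id}\in O(\Lambda)$ is a monodromy operator for K3 surfaces and fixes every line in $\mathbb{P}(\Lambda_{\mathbb{C}})$, hence every period point, without being trivial. More generally the stabiliser of a period point in the monodromy group can be any finite subgroup arising from automorphisms or birational self-maps of the corresponding manifold. The injectivity of global Torelli concerns the Hausdorff reduction of $\mathfrak{M}_\lambda^\circ$, not triviality of period stabilisers.

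The correct route to $\alpha_1\circ[Z]_*=\alpha_2$ is not to compare two abstract Hodge isometries but to use how the bimeromorphic map is actually produced. Huybrechts' analysis of inseparable points shows that $(X_1,\alpha_1)$ and $(X_2,\alpha_2)$ arise as two limits of a single family $(\mathcal{X}_t,\alpha_t)$ over a punctured disc, with the graph $Z$ obtained as the limit of the graphs of the identity maps $\mathcal{X}_t\to\mathcal{X}_t$. The induced map $[Z]_*$ is then \emph{by construction} the parallel transport along this degeneration, and since the markings $\alpha_t$ are locally constant one reads off $\alpha_1\circ[Z]_*=\alpha_2$ directly, without any appeal to stabilisers.
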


Under the assumption that the second Betti number is at least five, Bakker--Lehn \cite{BLhk} proved the above theorem without using the technique of the hyper-K\"{a}hler rotation. 

In this paper, we mainly deal with projective irreducible holomorphic symplectic manifolds whose second Betti number is greater than or equal to five.

\begin{defn}\label{defn--symp-moduli}
Fix $n\in\mathbb{Z}_{>0}$.
    Let $\mathcal{M}^{\mathrm{symp}}_{2n,v}\subset\mathcal{M}^{\text{klt,CY}}_{2n,v}$ be a substack defined as follows: For any scheme $S$, the collection $\mathcal{M}^{\mathrm{symp}}_{2n,v}(S)$ of objects is defined by
     \[\mathcal{M}^{\mathrm{symp}}_{2n,v}(S)=
\left\{
 f\colon(\mathcal{X},\mathcal{L})\to S
\;\middle|
\begin{array}{l}
\text{$f\in\mathcal{M}^{\text{klt,CY}}_{2n,v}$ and for any geometric point $\bar{s}\in S$,}\\
\text{$\mathcal{X}_{\bar{s}}$ is smoothable to a projective irreducible}\\
\text{holomorphic symplectic manifold $Y$}\\
\text{such that $b_2(Y)\ge5$.}
\end{array}\right\}.
    \]
Note that $\mathcal{X}_{\bar{s}}$ is a symplectic variety by Theorem \ref{thm--klsv}.
\end{defn}
Currently, there are only four known examples of deformation types of irreducible holomorphic symplectic manifolds (so-called the $K3^{[n]}$, generalized Kummer, OG10 and OG6-types by \cite{beauville,OG10,OG6}).
They have the second Betti number at least five (cf.~\cite{beauville,dCRS,MRS}).
This is why the above condition on $b_2(Y)\ge5$ is reasonable.
We make use of this condition to apply Fujino's theory in Theorem \ref{thm--fujino-kim}.

Any polarized smoothable symplectic variety has a smooth semiversal deformation space. 
Although this fact is well known to experts, we give the details for the reader's convenience.

\begin{lem}\label{lem--smoothness--hk--moduli}
    Let $X$ be a symplectic variety smoothable to a projective irreducible holomorphic symplectic manifold. Fix an ample line bundle $L$ on $X$.
    Let $\mathrm{Def}(X,L)$ be a functor on $\mathrm{Art}_{\mathbb{C}}$ such that 
    \[
    \mathrm{Def}(X,L)(A):=
    \left\{
\begin{array}{l}
\!\!\text{isomorphism classes of $(\mathcal{X},\mathcal{L})$ with $\iota\colon X\to \mathcal{X}$, where $\mathcal{X}$ is}\\
\!\!\text{a deformation of $X$ and $\mathcal{L}$ is a line bundle and $\iota$ induces}\\
\!\!\text{an isomorphism of $(\mathcal{X},\mathcal{L})\times_{\mathrm{Spec}\,A}\mathrm{Spec}\,\kappa(A)$ and $(X,L)$.}
\end{array}\right\},
    \]
    where $\kappa(A)$ is the residue field.
 Then, $ \mathrm{Def}(X,L)$ is semiversal and formally smooth.

    Furthermore, a local deformation of a symplectic variety smoothable to a projective irreducible holomorphic symplectic manifold is agian a symplectic variety smoothable to a projective irreducible holomorphic symplectic manifold.
\end{lem}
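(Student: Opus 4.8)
The plan is to combine the unobstructedness of $\mathrm{Def}(X)$ with the fact that imposing an ample class cuts out a smooth divisor, both read off from a symplectic resolution of $X$, and then to deduce the last assertion from smoothness of the resulting semiversal family.

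\emph{Set-up.} By Theorem \ref{thm--klsv}, $X$ carries a symplectic resolution $g\colon Y\to X$ with $Y$ a projective irreducible holomorphic symplectic manifold and $b_{2}(Y)\ge 5$. Since $X$ is klt it has rational singularities, so $R^{i}g_{*}\mathcal{O}_{Y}=0$ for $i>0$ and the Leray spectral sequence gives $H^{j}(X,\mathcal{O}_{X})\cong H^{j}(Y,\mathcal{O}_{Y})$; in particular $h^{1}(\mathcal{O}_{X})=0$, so $X$ is a primitive symplectic variety and $\mathrm{Pic}^{0}_{X}=0$. Because $X$ is projective, the deformation functors $\mathrm{Def}(X)$ and $\mathrm{Def}(X,L)$ have finite-dimensional tangent spaces and satisfy Schlessinger's conditions, hence admit prorepresentable hulls; this is the semiversality statement.

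\emph{Formal smoothness.} First $\mathrm{Def}(X)$ is smooth: by the Bogomolov--Tian--Todorov theorem $\mathrm{Def}(Y)$ is smooth, and by Namikawa's comparison theorem the natural map $\mathrm{Def}(Y)\to\mathrm{Def}(X)$ exhibits $\mathrm{Def}(X)$ as the quotient $\mathrm{Def}(Y)/W$ by the Namikawa Weyl group $W$, which acts on $\mathrm{Def}(Y)$ as a finite complex reflection group, so $\mathrm{Def}(X)$ is smooth by Chevalley--Shephard--Todd (equivalently one may cite Namikawa's unobstructedness theorem for projective symplectic varieties directly). Next, since $\mathrm{Pic}^{0}_{X}=0$ a line bundle lifts to an infinitesimal deformation in at most one way, so $\mathrm{Def}(X,L)\to\mathrm{Def}(X)$ is a closed immersion; I claim its image is a smooth divisor. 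The class $c_{1}(g^{*}L)$ is numerically $g$-trivial, hence fixed by every reflection in $W$ (the reflecting hyperplanes of $W$ are orthogonal to the $g$-contracted curve classes, to which $c_{1}(g^{*}L)$ is orthogonal); consequently the preimage of $\mathrm{Def}(X,L)$ in $\mathrm{Def}(Y)$ is the single subspace $\mathrm{Def}(Y,g^{*}L)$ of deformations of $Y$ along which $g^{*}L$ stays algebraic. Since $g^{*}L$ is big and nef, $c_{1}(g^{*}L)\ne 0$, so by the standard theory for irreducible holomorphic symplectic manifolds $\mathrm{Def}(Y,g^{*}L)\subsetneq\mathrm{Def}(Y)$ is a smooth divisor; it is $W$-stable, and $W$ still acts on it as a reflection group (each reflecting hyperplane of $W$ meets it transversally, being distinct from it because $c_{1}(g^{*}L)$ is not proportional to any $g$-contracted curve class). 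Hence $\mathrm{Def}(X,L)=\mathrm{Def}(Y,g^{*}L)/W$ is again smooth by Chevalley--Shephard--Todd. (Alternatively, one can use the $T^{1}$-lifting criterion: lift the total space using smoothness of $\mathrm{Def}(X)$, then lift $L$ after adjusting the lift within its $H^{1}(X,T_{X})$-torsor, which is possible once the Atiyah-class obstruction map $H^{1}(X,T_{X})\to H^{2}(X,\mathcal{O}_{X})$ is known to be nonzero --- precisely the nonvanishing extracted above.) I expect this second step, the interplay between the polarization and the $W$-action, to be the main obstacle.

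\emph{Local deformations.} Let $(X',L')$ be a local deformation of the polarized pair $(X,L)$, so that it corresponds to a $\mathbbm{k}$-point $x'$ of the smooth germ $\mathrm{Def}(X,L)$ with smooth semiversal family $(\mathscr{X},\mathscr{L})$. Since $X$, with the polarization $L$, is smoothable to a projective irreducible holomorphic symplectic manifold, this smoothing is an arc in $\mathrm{Def}(X,L)$; hence the locus $U\subset\mathrm{Def}(X,L)$ over which the fibers are smooth is Zariski open and nonempty, hence dense, as the germ is smooth and irreducible. Its fibers carry the $\mathscr{L}$-polarization and are smooth deformations of $Y$, hence projective irreducible holomorphic symplectic manifolds. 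A general arc through $x'$ meets the proper closed complement $\mathrm{Def}(X,L)\setminus U$ only at $x'$, and pulling back $(\mathscr{X},\mathscr{L})$ along it produces a one-parameter deformation of $X'$ whose general fiber lies over $U$, i.e.\ a smoothing of $X'$ to a projective irreducible holomorphic symplectic manifold. Applying Theorem \ref{thm--klsv} to that smoothing shows that $X'$ is again a symplectic variety, completing the proof.
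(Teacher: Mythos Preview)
Your approach is essentially correct and reaches the same conclusion, but the route to formal smoothness of $\mathrm{Def}(X,L)$ is genuinely different from the paper's. The paper cites Namikawa \cite{Nam2} only for the statement that $\mathrm{Def}(Y)\to\mathrm{Def}(X)$ is a finite morphism between smooth germs of the \emph{same} dimension; it then checks explicitly (using global generation of $l\,g^{*}L$ and $H^{1}$-vanishing, so that $\mathcal{X}=\mathbf{Proj}\bigoplus H^{0}(\widetilde{\mathcal{X}},l\widetilde{\mathcal{L}})$ and the line bundle descends) that this restricts to a finite morphism $\mathrm{Def}(Y,g^{*}L)\to\mathrm{Def}(X,L)$. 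Since the source is a smooth hypersurface, the target has dimension at least $\dim\mathrm{Def}(X)-1$; a separate tangent-space inequality, obtained from \cite[Lemma~4.13]{BLhk}, gives $\dim\mathrm{Def}(X,L)(\mathbb{C}[\epsilon]/(\epsilon^{2}))\le\dim\mathrm{Def}(X)(\mathbb{C}[\epsilon]/(\epsilon^{2}))-1$, and the two together force smoothness. No Weyl group enters.

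Your Chevalley--Shephard--Todd argument instead invokes the finer statement that $\mathrm{Def}(Y)\to\mathrm{Def}(X)$ is a Galois quotient by the Namikawa Weyl group acting by reflections. This is established for affine conic symplectic singularities in Namikawa's later work, but \cite{Nam2} (the reference the paper uses) does not contain it, and you should supply a precise reference covering the projective case before relying on it. You also assert without proof that the preimage of $\mathrm{Def}(X,L)$ is exactly $\mathrm{Def}(Y,g^{*}L)$; the nontrivial direction is that a line bundle on $\mathcal{Y}$ deforming $g^{*}L$ descends to $\mathcal{X}$, which the paper verifies carefully. Your alternative $T^{1}$-lifting sketch needs the nonvanishing of the contraction map $H^{1}(X,T_{X})\to H^{2}(X,\mathcal{O}_{X})$ for singular $X$, which is precisely \cite[Lemma~4.13]{BLhk} --- so that route collapses back to the paper's key input. (Minor point: you invoke $b_{2}(Y)\ge 5$, which is neither assumed in the lemma nor used.) Your treatment of the last assertion via a generic arc and Theorem~\ref{thm--klsv} matches the paper's. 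In sum, the paper's argument is more self-contained; yours is more structural but imports machinery that needs careful citation.
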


\begin{proof}
    When $X$ is smooth, we see from \cite[1.14]{huybrechts} that $ \mathrm{Def}(X,L)$ is universal and we can regard it as a smooth hypersurface of $\mathrm{Def}(X)$. 
    We note that this fact holds not only for ample line bundles but also for non-trivial line bundles.
    Thus, it suffices to show Lemma \ref{lem--smoothness--hk--moduli} when $X$ is singular.
    By \cite[Corollary 2.64]{kollar-moduli}, we see that $H^1(X,\mathcal{O}_X)=0$ since $H^1(Y,\mathcal{O}_Y)=0$ for any irreducible holomorphic symplectic manifold $Y$.
    This fact and \cite[III, Theorem 12.11]{Ha} imply $H^1(\mathcal{X},\mathcal{O}_{\mathcal{X}})=0$ for any $\mathcal{X}\in\mathrm{Def}(X)(A)$ and Artinian ring $A$.
    By this fact and \cite[III, Theorem 12.11]{Ha}, for any two line bundles $\mathcal{L}_1$ and $\mathcal{L}_2$ on $\mathcal{X}$, if $\mathcal{L}_1|_{X}\sim\mathcal{L}_2|_{X}$ then $\mathcal{L}_1\sim\mathcal{L}_2$.
    Thus, the natural map
    \[
    \mathrm{Def}(X,L)(A)\longrightarrow\mathrm{Def}(X)(A)
    \]
is injective. 
    Thus, $\mathrm{Def}(X,L)(\mathbb{C}[\epsilon]/(\epsilon^2))$ is a finite dimensional vector space over $\mathbb{C}$.

    By Schlessinger's theorem (see \cite[Theorem 2.11]{Schlessinger}), to show that $\mathrm{Def}(X,L)$ is semiversal, it suffices to check the conditions $\overline{H}$ and $H_{\epsilon}$ of \cite[Theorem 2.3.2]{Sernesi}.
    Thus, it follows that $\mathrm{Def}(X,L)$ is semiversal from the same argument of the proof of \cite[Theorem 3.3.11]{Sernesi} and the fact that $\mathrm{Def}(X,L)(\mathbb{C}[\epsilon]/(\epsilon^2))$ is a finite dimensional vector space over $\mathbb{C}$. 

    In this paragraph, we show the inequality
    \begin{equation*}
    \mathrm{dim}\, \mathrm{Def}(X,L)(\mathbb{C}[\epsilon]/(\epsilon^2))\le \mathrm{dim}\, \mathrm{Def}(X)(\mathbb{C}[\epsilon]/(\epsilon^2))-1.\label{eq--dim--deformation}
    \end{equation*}
    Note that $\mathrm{Def}(X)(\mathbb{C}[\epsilon]/(\epsilon^2))\cong\mathrm{Ext}^1(\Omega_X,\mathcal{O}_X)$ by \cite[Proposition 1.2.9]{Sernesi}.
    By the local-to-global spectral sequence, we have an inclusion $H^1(X,T_X)\subset \mathrm{Ext}^1(\Omega_X,\mathcal{O}_X)$, where $T_X:=\mathscr{H}om_{\mathcal{O}_X}(\Omega_X,\mathcal{O}_X)$.
    If the above inequality does not hold, then we have $H^1(X,T_X)\subset\mathrm{Def}(X,L)(\mathbb{C}[\epsilon]/(\epsilon^2))$.
    This contradicts \cite[Lemma 4.13]{BLhk}.
    Thus, the inequality holds.

    Finally, take a symplectic resolution $\pi\colon \tilde{X}\to X$ by Theorem \ref{thm--klsv}.
    By \cite[Theorem 1]{Nam2}, both $\mathrm{Def}(\tilde{X})$ and $\mathrm{Def}(X)$ are formally smooth of the same dimension and there exists a natural finite morphism $\pi_*\colon \mathrm{Def}(\tilde{X}) \to \mathrm{Def}(X)$.
    We can regard $\mathrm{Def}(\tilde{X},\pi^*L)\subset\mathrm{Def}(\tilde{X})$ as a smooth hypersurface by \cite[1.14]{huybrechts}.
    We see that the restriction of $\pi_*$ induces a finite morphism $\mathrm{Def}(\tilde{X},\pi^*L)\to \mathrm{Def}(X,L)$.
    Indeed, let $A$ be an arbitrary Artinian local ring and $(\widetilde{\mathcal{X}},\widetilde{\mathcal{L}})\in \mathrm{Def}(\tilde{X},\pi^*L)(A)$. 
    We know that $l \pi^*L$ is globally generated and $H^1(\tilde{X},\mathcal{O}_{\tilde{X}}(l\pi^*L))=0$ for any sufficiently large $l\in\mathbb{Z}$ by \cite[Theorem 3.1]{KM}.
    By \cite[III, Theorem 12.11]{Ha}, we see that $l\widetilde{\mathcal{L}}$ is also globally generated.
    If $\mathcal{X}$ is the image of $\widetilde{\mathcal{X}}$ under $\pi_*$, then we see that by the definition of $\pi_*$ that $\mathcal{X}=\mathbf{Proj}_{\mathrm{Spec}(A)}(\bigoplus_{l\ge0}H^0(\widetilde{\mathcal{X}},l\widetilde{\mathcal{L}}))$ (cf.~\cite[Lemma 1.2]{Wahl}).
    Thus, if we let $\Pi\colon \widetilde{\mathcal{X}}\to\mathcal{X}$ be the canonical morphism, then there exists a line bundle $\mathcal{L}$ on $\mathcal{X}$ such that $\Pi^*\mathcal{L}\sim\widetilde{\mathcal{L}}$.
    Therefore, the inequality $\mathrm{dim}\, \mathrm{Def}(X,L)(\mathbb{C}[\epsilon]/(\epsilon^2))\le \mathrm{dim}\, \mathrm{Def}(X)(\mathbb{C}[\epsilon]/(\epsilon^2))-1$
    shows that $\mathrm{Def}(X,L)$ is also smooth.
    We complete the proof of the first assertion.

    The last assertion follows from the proof of the first assertion, Theorem \ref{thm--klsv} and \cite[Proposition 22.7]{GHJ}.
\end{proof}

\begin{cor}\label{cor--hk--moduli--normal}
    $\mathcal{M}^{\mathrm{symp}}_{2d,v}$ is smooth and an open and closed substack of $\mathcal{M}^{\mathrm{klt,CY}}_{2d,v}$.
    In particular, the coarse moduli space $M^{\mathrm{symp}}_{2d,v}$ of $\mathcal{M}^{\mathrm{symp}}_{2d,v}$ is normal with only quotient singularities.
\end{cor}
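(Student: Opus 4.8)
The plan is to argue exactly as in Proposition~\ref{prop--smoothness--of--Abelian--moduli}, feeding in Lemma~\ref{lem--smoothness--hk--moduli} and Theorem~\ref{thm--klsv} in place of the Abelian-specific deformation theory. There are three points to check: that $\mathcal{M}^{\mathrm{symp}}_{2d,v}$ is open in $\mathcal{M}^{\mathrm{klt,CY}}_{2d,v}$, that it is smooth, and that it is closed; the statement about $M^{\mathrm{symp}}_{2d,v}$ then follows formally. Openness is immediate: if $f\colon(\mathcal{X},\mathcal{L})\to S$ lies in $\mathcal{M}^{\mathrm{klt,CY}}_{2d,v}$ and $\mathcal{X}_{\bar{s}_0}$ is smoothable to a projective irreducible holomorphic symplectic manifold of second Betti number at least five, then by the last assertion of Lemma~\ref{lem--smoothness--hk--moduli} every fiber over a Zariski neighborhood of $s_0$ is again a symplectic variety smoothable to a projective irreducible holomorphic symplectic manifold, and since (again by Lemma~\ref{lem--smoothness--hk--moduli}) the semiversal deformation space of such a variety is smooth, hence irreducible, the second Betti number of the smoothing is constant along it, so the condition $b_2\ge 5$ propagates too.

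For smoothness, fix a Hilbert polynomial $P$ and use the presentation $\mathcal{M}^{\mathrm{klt,CY}}_{2d,v,P}\cong[Z/(PGL(P(m))\times PGL(P(m+1)))]$ from the discussion after Theorem~\ref{thm--klt--CY--moduli}, with $Z\to\mathcal{M}^{\mathrm{klt,CY}}_{2d,v,P}$ smooth; let $Z_{\mathrm{symp}}\subset Z$ be the open subscheme lying over $\mathcal{M}^{\mathrm{symp}}_{2d,v,P}$. As in Proposition~\ref{prop--smoothness--of--Abelian--moduli}, by \cite[Theorem~3.1~(iii)]{SGA} it is enough to show that $Z_{\mathrm{symp}}(\mathrm{Spec}\,A)\to Z_{\mathrm{symp}}(\mathrm{Spec}\,A/I)$ is surjective for every surjection $A\to A/I$ in $\mathrm{Art}_{\mathbbm{k}}$ with $I\cdot\mathfrak{m}=0$. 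Given an embedded polarized family over $\mathrm{Spec}\,A/I$ whose closed fiber $(X,L)$ is a polarized symplectic variety smoothable to a projective irreducible holomorphic symplectic manifold, the formal smoothness of $\mathrm{Def}(X,L)$ from Lemma~\ref{lem--smoothness--hk--moduli} yields a deformation over $\mathrm{Spec}\,A$, and then very ampleness of the $m$-th and $(m+1)$-st powers of the extended line bundle together with the smoothness of $PGL$ upgrades it to an embedded family lifting the original one, as in the Abelian case. Hence $Z_{\mathrm{symp}}$, and therefore $\mathcal{M}^{\mathrm{symp}}_{2d,v}$, is smooth; in particular its connected components are irreducible.

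For closedness, observe first that the open substack $\mathcal{M}^{\mathrm{IHS}}_{2d,v}\subset\mathcal{M}^{\mathrm{symp}}_{2d,v}$ parametrizing smooth projective irreducible holomorphic symplectic manifolds is dense: by Lemma~\ref{lem--smoothness--hk--moduli} and its proof, a neighborhood of any member of $\mathcal{M}^{\mathrm{symp}}_{2d,v}$ is a finite quotient of the smooth deformation space of a symplectic resolution constructed there, whose general member represents a smooth projective irreducible holomorphic symplectic manifold (the pulled-back polarization becoming ample under a general deformation). Consequently the closure of $\mathcal{M}^{\mathrm{symp}}_{2d,v}$ in $\mathcal{M}^{\mathrm{klt,CY}}_{2d,v}$ coincides with that of $\mathcal{M}^{\mathrm{IHS}}_{2d,v}$, and (by a standard trait argument, using that $\mathcal{M}^{\mathrm{klt,CY}}_{2d,v}$ is of finite type) it suffices to prove: if $\mathcal{X}\to\mathrm{Spec}\,R$ is a family over a discrete valuation ring lying in $\mathcal{M}^{\mathrm{klt,CY}}_{2d,v}$ whose generic fiber is a projective irreducible holomorphic symplectic manifold with $b_2\ge 5$, then the special fiber $\mathcal{X}_0$ lies in $\mathcal{M}^{\mathrm{symp}}_{2d,v}$. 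Here $\mathcal{X}_0$ is klt and $\omega_{\mathcal{X}/\mathrm{Spec}\,R}$ is $\mathbb{Q}$-linearly trivial over $\mathrm{Spec}\,R$, so Theorem~\ref{thm--klsv}, applied to an associated analytic family over a disk, shows that $\mathcal{X}_0$ is a symplectic variety and that, after a finite base change, the family provides a smoothing of $\mathcal{X}_0$ to a projective irreducible holomorphic symplectic manifold deformation equivalent to the generic fiber, hence of second Betti number at least five. Therefore $\mathcal{X}_0\in\mathcal{M}^{\mathrm{symp}}_{2d,v}$, and $\mathcal{M}^{\mathrm{symp}}_{2d,v}$ is closed.

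Finally, a smooth Deligne--Mumford stack over a field of characteristic zero has a coarse moduli space that is \'etale-locally a quotient of a smooth scheme by a finite group, so $M^{\mathrm{symp}}_{2d,v}$ has only quotient singularities and in particular is normal, cf.\ \cite[Theorem~11.3.1]{Ols} as in Proposition~\ref{prop--smoothness--of--Abelian--moduli}. The step I expect to be the main obstacle is closedness: Theorem~\ref{thm--klsv} does not apply to a general degenerating family in $\mathcal{M}^{\mathrm{symp}}_{2d,v}$, whose generic fibers are only singular symplectic varieties, so one must first use the smoothness just established together with the density of $\mathcal{M}^{\mathrm{IHS}}_{2d,v}$ to reduce to a degeneration of genuine irreducible holomorphic symplectic manifolds, and even then one has to track carefully that the resulting smoothing of $\mathcal{X}_0$ remains projective with $b_2\ge 5$.
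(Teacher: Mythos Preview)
Your argument is correct and follows the paper's approach: the paper's proof is a single sentence citing Lemma~\ref{lem--smoothness--hk--moduli} for the open-and-closed assertion and deferring smoothness and the coarse-moduli statement to the proof of Proposition~\ref{prop--smoothness--of--Abelian--moduli}. Your expanded treatment of closedness (density of the smooth IHS locus, then specialization) unpacks exactly what that terse citation leaves to the reader; one small simplification is that once you have reduced to a family over a DVR with smooth projective IHS generic fiber, that family itself already witnesses that $\mathcal{X}_0$ is smoothable to a projective IHS with $b_2\ge 5$, so Theorem~\ref{thm--klsv} is not strictly needed at that final step (membership in $\mathcal{M}^{\mathrm{symp}}_{2d,v}$ requires only smoothability, and being a symplectic variety is recorded in Definition~\ref{defn--symp-moduli} as a consequence, not a hypothesis).
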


\begin{proof}
By Lemma \ref{lem--smoothness--hk--moduli}, we see that $\mathcal{M}^{\mathrm{symp}}_{2d,v}$ is an open and closed substack, and other assertions follow from the same argument as in the proof of Proposition \ref{prop--smoothness--of--Abelian--moduli}.
\end{proof}

 We fix a projective irreducible holomorphic symplectic manifold $X$ of dimension $2n$ with the BBF form $q_X$ and an ample line bundle $L$ on $X$. 
 Suppose $b_2(X)\ge 5$.
Let $(\Lambda,q)$ be a lattice isomorphic to $(H^2(X,\mathbb{Z}),q_X)$. 
Fix the canonical isomorphism $\alpha\colon H^2(X,\mathbb{Z})\to \Lambda$ preserving the quadratic forms, and put $h:=\alpha(\mathrm{c}_1(L))$.
We define a complex manifold with two connected components, denoted by $\Omega_{h^{\perp}}$, to be
\[
\Omega_{h^{\perp}}:=\{[p]\in\mathbb{P}(\Lambda\otimes\mathbb{C})|q(p,p)=q(p,h)=0, \textrm{ and }q(p,\bar{p})>0\}.
\]
We denote by $\Omega_{h^{\perp}}^+$ the connected component of $\Omega_{h^{\perp}}$ containing $[\alpha(\sigma)]$, where $\sigma$ is a holomorphic symplectic form on $X$.  
For details, see \cite[\S4]{MarkmanSurvey}.
 
We set $\Gamma$ as the subgroup of $\mathrm{Mon}(\mathfrak{M}_{\Lambda}^\circ)$ stabilizing $h$. 
 Let 
 \[
 \Lambda_h^\perp:=\{\lambda: q(\lambda,h)=0\}
 \]
 a sublattice of $\Lambda$ of index $(2,b_2-3)$.
 Since $\mathrm{Mon}(\mathfrak{M}_{\Lambda}^\circ)$ is a subgroup of $O(\Lambda)$ of finite index, $\Gamma$ is an arithmetic subgroup of $O(\Lambda_h^\perp)$.
 Here, we regard $O(\Lambda_h^\perp)$ as a subgroup of $O(\Lambda)$ of elements fixing $h$.
 We see that $\Omega_{h^{\perp}}^+/\Gamma$ is a normal quasi-projective variety by \cite{BB}. 
 
Let $\mathcal{V}^\circ$ be an irreducible component of $\mathcal{M}^{\mathrm{symp}}_{2n,v}$ and $V^\circ$ its coarse moduli space.
We set $\mathcal{V}^\circ_{\mathrm{sm}}$ as the open substack of $\mathcal{V}^\circ$ parametrizes only smooth varieties and $V^\circ_{\mathrm{sm}}$ as its coarse moduli space.
Suppose that $\mathcal{V}^\circ$ contains a point that corresponds to $(X,L)$.
 Then, we define a natural map 
 \[
\varphi\colon V^\circ_{\mathrm{sm}}\to \Omega_{h^{\perp}}^+/\Gamma
 \]
as follows.  Firstly, we note that $\mathcal{P}^{-1}(\Omega_{h^\perp}^+)$ admits the natural $\Gamma$-action and $\mathcal{P}|_{\mathcal{P}^{-1}(\Omega_{h^\perp}^+)}$ is $\Gamma$-equivariant.
 Since $V^\circ_{\mathrm{sm}}$ is connected, for any point $p$ of $V^\circ_{\mathrm{sm}}$ corresponding to $(Y,M)$, we can find a marking $\beta\colon H^2(Y,\mathbb{Z})\to \Lambda$ and a parallel-transport operator $\gamma\colon H^2(Y,\mathbb{Z})\to H^2(X,\mathbb{Z})$ such that $\beta(\mathrm{c}_1(M))=h$ and $\beta=\alpha \circ \gamma$.
 Then, we can regard $(Y,\beta)$ as an element of $\mathcal{P}^{-1}(\Omega_{h^\perp}^+)$. 
 We set $\varphi(p)$ as the image of $\mathcal{P}(Y,\beta)$ by $\Omega_{h^{\perp}}^+\to\Omega_{h^{\perp}}^+/\Gamma$.
 Then it follows that $\varphi(p)$ is independent of the choice of $(Y,\beta)$ (see \cite[Section 8]{MarkmanSurvey} for details). 
 By \cite[Theorem 8.4]{MarkmanSurvey}, $\varphi$ is an open immersion in the category of algebraic varieties.

\begin{thm}\label{thm--period--mapping--HK}
    There exists an extension $\overline{\varphi}$ of $\varphi$ to $V^\circ$ such that $\overline{\varphi}$ is an isomorphism from $V^\circ$ to $\Omega_{h^{\perp}}^+/\Gamma$ as quasi-projective varieties.
\end{thm}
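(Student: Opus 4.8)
The statement asserts that the period map $\varphi\colon V^\circ_{\mathrm{sm}}\to \Omega_{h^{\perp}}^+/\Gamma$, already known to be an open immersion by \cite[Theorem 8.4]{MarkmanSurvey}, extends to an isomorphism $\overline{\varphi}\colon V^\circ\xrightarrow{\sim}\Omega_{h^{\perp}}^+/\Gamma$. The plan is to first extend $\varphi$ as a morphism over all of $V^\circ$, then show that the extended morphism is bijective on closed points, and finally invoke normality of both sides (Corollary \ref{cor--hk--moduli--normal} together with the Baily--Borel quasi-projectivity) to conclude that it is an isomorphism via Zariski's main theorem.

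For the extension, I would argue as follows. The complement $V^\circ\setminus V^\circ_{\mathrm{sm}}$ is a closed subset parametrizing singular (symplectic) varieties; by the local structure of $\mathcal{M}^{\mathrm{symp}}_{2n,v}$ near such a point and Lemma \ref{lem--smoothness--hk--moduli}, every singular $(X_0,L_0)\in V^\circ$ sits in a smooth one-parameter family $(\mathcal{X},\mathcal{L})\to \Delta$ whose general fiber is a projective irreducible holomorphic symplectic manifold (indeed the semiversal deformation is smooth and the smoothable locus is open). Applying Theorem \ref{thm--klsv} we may, after a finite base change $t\mapsto t^n$, take a symplectic resolution $f\colon\widetilde{\mathcal{X}}\to\mathcal{X}\times_\Delta\Delta$ with $\widetilde{\mathcal{X}}_0$ a projective irreducible holomorphic symplectic manifold and $f_0^*K_{\mathcal{X}_0}=K_{\widetilde{\mathcal{X}}_0}$. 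Because $H^2$ and the BBF form are unchanged under a small resolution and the monodromy is finite, the limiting Hodge structure on $H^2(\widetilde{\mathcal{X}}_0)$ has a well-defined period point in $\Omega_{h^\perp}^+$, giving a value of $\overline{\varphi}$ at $[X_0]$; the valuative criterion and properness of $\Omega_{h^\perp}^+/\Gamma$'s Baily--Borel-type compactification (or just the fact that $\varphi$ is already an open immersion onto a subvariety of a separated scheme) show that this assignment is independent of all choices and glues to a morphism $\overline{\varphi}\colon V^\circ\to\Omega_{h^\perp}^+/\Gamma$ of quasi-projective varieties extending $\varphi$.

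Next I would show $\overline{\varphi}$ is bijective on $\mathbb{C}$-points. Surjectivity: the image of the open immersion $\varphi$ is dense and the target is irreducible, so $\overline{\varphi}$ is dominant; since $\overline{\varphi}$ is proper onto its image when restricted appropriately (or more directly, since $V^\circ$ is obtained by gluing semiversal deformation spaces and $\mathcal{P}$ is surjective and locally biholomorphic by \cite[Th\'eor\`eme 5]{beauville} and \cite[Theorem 8.1]{huybrechts}), every period point is hit. Injectivity on the singular locus: if $[X_0]$ and $[X_0']$ in $V^\circ\setminus V^\circ_{\mathrm{sm}}$ map to the same point of $\Omega_{h^\perp}^+/\Gamma$, then after choosing markings their symplectic resolutions $\widetilde{X_0},\widetilde{X_0'}$ have period points in the same $\Gamma$-orbit; by Theorem \ref{thm--verbitsky}(2) (global Torelli for $b_2\ge5$, where we use $b_2(\widetilde{X_0})\ge5$) they are bimeromorphic, and by Theorem \ref{thm--birational--symplectic} the induced cohomology isomorphism is a parallel-transport operator compatible with the polarization class $h$. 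A bimeromorphic map between irreducible holomorphic symplectic manifolds that respects an ample class is an isomorphism, so $\widetilde{X_0}\cong\widetilde{X_0'}$; taking the canonical model (the relative $\mathrm{Proj}$ of sections of multiples of the polarization, which recovers $X_0$ from $\widetilde{X_0}$ since $f_0$ is crepant) yields $X_0\cong X_0'$ as polarized varieties, hence $[X_0]=[X_0']$ in $V^\circ$.

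Finally, $\overline{\varphi}$ is a bijective morphism between normal quasi-projective varieties over $\mathbb{C}$: $V^\circ$ is normal by Corollary \ref{cor--hk--moduli--normal} and $\Omega_{h^\perp}^+/\Gamma$ is normal and quasi-projective by \cite{BB}, and $\overline{\varphi}$ is quasi-finite and birational (it is an isomorphism over the dense open $V^\circ_{\mathrm{sm}}$). By Zariski's main theorem $\overline{\varphi}$ is an open immersion, and being bijective it is an isomorphism. \textbf{Main obstacle.} The delicate point is the construction of the extension across the singular locus: one must verify that the limiting period point lands in $\Omega_{h^\perp}^+$ (not merely in some partial compactification) and is independent of the smoothing, the finite base change, and the choice of symplectic resolution. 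This requires knowing that the monodromy on $H^2$ is finite precisely because $\mathcal{X}_0$ is klt (Theorem \ref{thm--klsv}), that a small resolution does not alter $(H^2,q)$ or the Hodge structure, and that different symplectic resolutions of $X_0$ are related by parallel-transport operators fixing $h$ (again Theorem \ref{thm--birational--symplectic}); pinning down these compatibilities carefully is where the real work lies. Everything else is a formal consequence of global Torelli plus normality.
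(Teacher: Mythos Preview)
Your overall strategy---extend $\varphi$ across the singular locus using periods of symplectic resolutions, then prove bijectivity and invoke Zariski's main theorem on normal varieties---matches the paper's. However, your surjectivity argument has a genuine gap, and there is a smaller issue in the injectivity step.

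\textbf{Surjectivity.} Citing that the unpolarized period map $\mathcal{P}\colon\mathfrak{M}_\Lambda^\circ\to\Omega_\Lambda$ is surjective does not imply that the \emph{polarized} period map hits every point of $\Omega_{h^\perp}^+/\Gamma$: a period in $\Omega_{h^\perp}^+$ corresponds to some marked irreducible holomorphic symplectic manifold $(Y,\beta)$ with $\beta^{-1}(h)$ of type $(1,1)$ and positive square, but this class need not be ample on $Y$, so $Y$ need not represent a point of $V^\circ$. Likewise, your alternative ``proper onto its image'' claim is unjustified, since $V^\circ$ is not proper. The paper closes this gap by a genuinely different argument: given $s''\in\Omega_{h^\perp}^+/\Gamma$, choose a curve $C$ in $\Omega_{h^\perp}^+/\Gamma$ through $s''$ meeting $\varphi(V^\circ_{\mathrm{sm}})$; over the punctured curve one has (after a finite cover) a family of polarized irreducible holomorphic symplectic manifolds. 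The key input you are missing is Griffiths' observation \cite[Remark of Thm.~4.11]{Griffiths} that extendability of the period map forces \emph{finite} monodromy around the puncture; then Theorem~\ref{thm--klsv} fills the family with a polarized symplectic variety, which lies in $V^\circ$ and maps to $s''$.

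\textbf{Injectivity.} You assert that a bimeromorphic map between irreducible holomorphic symplectic manifolds respecting an \emph{ample} class is an isomorphism, but the class $f_0^*L_0$ on the resolution $\widetilde{X_0}$ is only big and nef, not ample (the resolution contracts something). The correct statement, which the paper uses, is that a birational map between irreducible holomorphic symplectic manifolds is small, so the strict transform identifies the big-and-nef classes and hence their $\mathrm{Proj}$'s; this gives $(X_0,L_0)\cong(X_0',L_0')$ directly (cf.\ \cite[Theorem 11.39]{kollar-moduli}) without first concluding $\widetilde{X_0}\cong\widetilde{X_0'}$.

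\textbf{Extension.} Your sketch of the extension (well-definedness of the limiting period via Theorem~\ref{thm--klsv}) is morally right, and you correctly flag it as the delicate point. The paper organizes this differently: it takes the closure $W$ of the graph of $\varphi$ in $V^\circ\times(\Omega_{h^\perp}^+/\Gamma)$, normalizes, and shows the projection to $V^\circ$ is bijective (hence an isomorphism by Zariski's main theorem and Corollary~\ref{cor--hk--moduli--normal}); this yields the extension as a morphism without having to separately verify continuity/algebraicity of the pointwise assignment.
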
 

\begin{proof}
    For the reader's convenience, we give the proof here.
    Let $W\subset V^\circ\times (\Omega_{h^{\perp}}^+/\Gamma)$ be the Zariski closure of the graph of the birational map defined by $\varphi$.
    Set $\nu\colon\overline{W}\to W$ as the normalization.
    Let $p\colon W\to V^\circ$ be the canonical morphism.
    To see that there exists a morphism $\overline{\varphi}\colon V^\circ\to\Omega_{h^{\perp}}^+/\Gamma$ that is an extension of $\varphi$, it suffices to show that $p\circ\nu$ is an isomorphism.
    Note that $p\circ\nu$ is birational.
    Therefore, by the Zariski's main theorem (cf.~\cite[Theorem 12.73]{gortz-wedhorn}) and Corollary \ref{cor--hk--moduli--normal}, it suffices to show that $p$ is bijective. 
    
    Firstly, we show the surjectivity.
    For any $s\in V^{\circ}$, we can find a morphism $g\colon C\to V^\circ$ such that $g(C)$ containing both of a closed point of $V^\circ_{\mathrm{sm}}$ and $s$.
    By \cite[Theorem 11.4.1]{Ols}, taking a finite covering of $C$, we may assume that there exists a morphism $h\colon C\to \mathcal{V}^\circ$ such that $\pi\circ h=g$, where $\pi\colon\mathcal{V}^\circ\to V^\circ$ is the morphism of the coarse moduli space.
    Take the family of polarized symplectic varieties $(\mathcal{X},\mathcal{L})\to C$ corresponding to $h$.
    By Theorem \ref{thm--klsv} and replacing $C$ with a finite cover, we may assume that there exists a simultaneous resolution $f\colon\widetilde{\mathcal{X}}\to\mathcal{X}$ from a family of irreducible holomorphic symplectic manifolds.
   Then, the period of each fiber of $\widetilde{\mathcal{X}}$ induces a morphism $\mathcal{P}_C\colon C\to \Omega_{h^{\perp}}^+/\Gamma$ in the sense of complex analytic spaces.  
   It is easy to see that if we let $\Gamma_{\mathcal{P}_C}$ be the graph of $\mathcal{P}_C$, then the image of the natural morphism of $\Gamma_{\mathcal{P}_C}\to V^\circ\times (\Omega_{h^{\perp}}^+/\Gamma)$ is contained in $W$. 
   Hence, the image of $p$ contains the point $s$.

   Secondary, we show that $p$ is injective. Assume the contrary.
   Then, there exist a point $s'\in V^\circ$ and two distinct points $d_1,d_2\in\Omega_{h^{\perp}}^+/\Gamma$ such that $(s',d_1),(s',d_2)\in W$.
   It is easy to see that $p$ is quasi-finite over $V^\circ_{\mathrm{sm}}$ and hence $s'\not\in V^\circ_{\mathrm{sm}}$.
Then, we can take two curves $C_1$ and $C_2$ in $W$ such that $C_i$ passes through $(s',d_i)$ for $i=1,2$ and $p(C_i)\cap V^\circ_{\mathrm{sm}}\ne\emptyset$.
Then there exist a finite cover $\nu_i\colon D_i\to C_i$ and a family $(\mathcal{X}_i,\mathcal{L}_i)\to D_i$ which corresponds to the morphisms $p\circ \nu_i$ for each $i=1,2$.
Take points $0_i\in D_i$ such that $\nu_i(0_i)=(s',d_i)$ for $i=1,2$. 
Now, we may assume that $D_i$ is smooth and there exists a simultaneous resolution $f_i\colon \widetilde{\mathcal{X}_i}\to \mathcal{X}_i$ from a family of irreducible holomorphic symplectic varieties.
Note by the definition of $W$ that $(\widetilde{\mathcal{X}}_1)_{0_1}$ and $(\widetilde{\mathcal{X}}_2)_{0_2}$ have markings $\alpha_1$ and $\alpha_2$ such that $((\widetilde{\mathcal{X}}_1)_{0_1},\alpha_1)$ and $((\widetilde{\mathcal{X}}_2)_{0_2},\alpha_2)$ are deformation equivalent as marked irreducible holomorphic symplectic manifolds, and that $d_1$ and $d_2$ are determined by the periods of $((\widetilde{\mathcal{X}}_1)_{0_1},\alpha_1)$ and $((\widetilde{\mathcal{X}}_2)_{0_2},\alpha_2)$ respectively.
On the other hand, $(\widetilde{\mathcal{X}}_1)_{0_1}$ and $(\widetilde{\mathcal{X}}_2)_{0_2}$ are birational to each other since the ample models of them with respect to $(f_1^*\mathcal{L}_{1})_{0_1}$ and $(f_2^*\mathcal{L}_{2})_{0_2}$ respectively are canonically isomorphic.
Under the birational map $\xi\colon (\widetilde{\mathcal{X}}_1)_{0_1}\dashrightarrow (\widetilde{\mathcal{X}}_2)_{0_2}$, we have $\xi_*((f_1^*\mathcal{L}_{1})_{0_1})=(f_2^*\mathcal{L}_{2})_{0_2}$.
By $\xi$ and Theorem \ref{thm--birational--symplectic}, $((\widetilde{\mathcal{X}}_1)_{0_1},\alpha_1)$ and $((\widetilde{\mathcal{X}}_2)_{0_2},\alpha_2)$ have the same period modulo $\Gamma$.
This shows that $d_1$ coincides with $d_2$, and we get a contradiction. 
Therefore, $p$ is injective, and thus $p\circ\nu$ is an isomorphism. 
As argued above, we obtain an extension $\overline{\varphi}\colon V^\circ\to \Omega^{+}_{h^\perp}/\Gamma$ of $\varphi$ as a morphism of algebraic varieties.

In this paragraph, we write down a fundamental property of $\overline{\varphi}$. For any closed point $s\in V^\circ$, take the corresponding symplectic variety $X$.
Then, there exists a symplectic resolution $\psi\colon \tilde{X}\to X$ and $\overline{\varphi}(s)$ is defined by the period of $\tilde{X}$.
This fact immediately follows from the discussion of the previous paragraph.
Furthermore, we note that $\tilde{X}$ is deformation equivalent to $X$ itself by construction and Theorem \ref{thm--klsv}.

From now on, we show that $\overline{\varphi}$ is an isomorphism. By the Zariski's main theorem, it suffices to show that $\overline{\varphi}$ is bijective.
Firstly, we prove the injectivity.
Take two points $s_1$ and $s_2$ of $V^\circ$ such that $\overline{\varphi}(s_1)=\overline{\varphi}(s_2)$. 
Let $(X_1,L_1)$ and $(X_2,L_2)$ be the polarized symplectic varieties corresponding to $s_{1}$ and $s_{2}$, respectively, and let $f_1\colon\tilde{X}_1\to X_1$ and $f_2\colon\tilde{X}_2\to X_2$ be their symplectic resolutions. 
Then there exists a parallel-transport operator 
$\psi\colon H^2(\tilde{X}_1,\mathbb{Z})\to H^2(\tilde{X}_2,\mathbb{Z})$ 
mapping $\mathrm{c}_1(f_1^*L_1)$ to $\mathrm{c}_1(f_2^*L_2)$.
By Theorem \ref{thm--verbitsky}, there exists a birational map $\mu\colon \tilde{X}_2\dashrightarrow\tilde{X}_1$ such that $\mu^*=\psi$.
We note that $\mu$ is small.
Thus, $(X_1,L_1)$ and $(X_2,L_2)$ are isomorphic (cf.~\cite[Theorem 11.39]{kollar-moduli}).
This means $s_1=s_2$.

Secondary, we treat the surjectivity.
Take a point $s''\in \Omega_{h^{\perp}}^+/\Gamma$.
If $s''$ does not belong to the image of $\varphi$, then there exists a morphism $q\colon C\to \Omega_{h^{\perp}}^+/\Gamma$ such that $q(C)$ contains $s''$ and $q(C)\cap \varphi(V^\circ_{\mathrm{sm}})\ne\emptyset$ since we know that $\Omega_{h^{\perp}}^+/\Gamma$ is a quasi-projective variety. 
Let $c\in C$ be a closed point such that $r(c)=s''$.
By taking a finite cover of $C$, we may assume that over $C\setminus\{c\}$, there exists a family of polarized irreducible holomorphic symplectic varieties $f^\circ\colon (\mathcal{X}^\circ,\mathcal{L}^\circ)\to C\setminus\{c\}$ that corresponds to $\varphi^{-1}\circ q|_{C\setminus\{c\}}$.
By \cite[Remark of Thm.~4.11]{Griffiths}, we see that the monodoromy operator around $c$ with respect to the family $f^\circ$ is of finite index.
By \cite[Corollary 5.2]{KLSV} and replacing $C$ with its finite cover, we may assume that there exists an extended family $f\colon (\mathcal{X},\mathcal{L})\to C$ of $f^\circ$ such that $\mathcal{X}_c$ is a symplectic variety, $\mathcal{L}$ is $f$-ample, $f|_{C\setminus\{c\}}$ coincides with $f^\circ$, and $f$ admits a simultaneous resolution $\mu\colon\widetilde{\mathcal{X}}\to\mathcal{X}$ as in Theorem \ref{thm--klsv}. 
We note that $\mathcal{L}$ is also a line bundle by applying \cite[Lemma 3.1]{has-lc-trivial-fib} to $(\mathcal{X}_c,\mathcal{L}_c)$.
By the property of $\overline{\varphi}$ explained in the third paragraph of this proof, we see that $\overline{\varphi}(r(c))=s''$.
We complete the proof.
\end{proof}

\begin{rem}
    Odaka--Oshima showed the above theorem after taking some exponent of $L$ in \cite[Theorem 8.3]{OO}.
    In their terminology, Theorem \ref{thm--period--mapping--HK} states that $M^{(1)}$ has already been isomorphic to $\Omega_{h^{\perp}}^+/\Gamma$.
\end{rem}

\subsubsection{Baily--Borel compactification}\label{subsec--baily--borel}
We work over $\mathbb{C}$ throughout this subsubsection. 
We extend the result of Fujino \cite{fujino--canonical-certain} to the case when general fibers are symplectic varieties smoothable to projective irreducible holomorphic symplectic manifolds. 
We also show Theorem \ref{thm--Fujino--moduli} below, which is useful to correspond a family of klt--trivial fibrations over curves to a family of quasimaps in Section \ref{sec4}.

\begin{defn}\label{defn--fujino--mori}
    Let $f\colon X\to S$ be a contraction from a normal variety with only canonical singularities to a smooth variety.
    Suppose that the geometric generic fiber $X_{\bar{\eta}}$ of $f$ satisfies $\kappa(X_{\bar{\eta}})=0$ and $H^0(X_{\bar{\eta}},\mathcal{O}_{X_{\bar{\eta}}}(K_{X_{\bar{\eta}}}))\ne0$.
By \cite[Proposition 2.2]{FM2}, we have that there exists a unique $\mathbb{Q}$-divisor $D$ on $S$ up to linear equivalence with the canonical  isomorphism
\[
\bigoplus_{i\ge0}\mathcal{O}_S(\lfloor iD\rfloor)\cong \bigoplus_{i\ge0}(f_*\mathcal{O}_X(iK_{X/S}))^{\vee\vee},
\]
as graded algebras, where $(f_*\mathcal{O}_X(iK_{X/S}))^{\vee\vee}$ means the double dual.
 Furthermore, there exist effective $\mathbb{Q}$-divisors $B_{+}$ and $B_{-}$, which have no common components, on $X$ such that
$
 K_X \sim f^*(K_S+D)+B_{+} - B_{-},
$
$f_*\mathcal{O}_X(\lfloor iB_+ \rfloor)\cong\mathcal{O}_S$ for any $i\ge0$, and $\mathrm{codim}_Sf(\mathrm{Supp}\, B_-)\geq 2$. 
Let $B_S$ be the discriminant $\mathbb{Q}$-divisor with respect to $f$ (see Definition \ref{defn--can-bundle-formula}).
 We define $M_{S,f}:=D-B_S$. 
 If $K_{X} \sim_{\mathbb{Q},\,S}0$, in other words, $f\colon (X,0)\to S$ is a klt-trivial fibration, then $M_{S,f}$ coincides with the moduli $\mathbb{Q}$-divisor defined in Definition \ref{defn--can-bundle-formula}. 
 Hence, we also call the divisor $M_{S,f}$ the {\em moduli $\mathbb{Q}$-divisor} with respect to $f$. 
\end{defn}

We say that a contraction $f\colon X\to Y$ of smooth projective varieties is {\it semistable in codimension one} if for any prime divisor $F\subset Y$, then the fiber $f^{-1}(F)$ is simple normal crossing on a neighborhood of $f^{-1}(\eta_F)$, where $\eta_F$ is the generic point of $F$.

\begin{lem}\label{lem--coincidence--moduli}
     Let $f\colon Y\to S$ be a projective morphism from a normal variety with only canonical singularities to a smooth projective variety.
     Suppose that there exist a birational morphism $\pi\colon Y\to X$ to a normal variety $X$ and a contraction $g\colon X\to S$ such that $g\circ \pi=f$.
     Suppose further that there exists an open subset $U\subset S$ such that $K_{f^{-1}(U)} = (\pi|_{f^{-1}(U)})^*K_{g^{-1}(U)} \sim_{U} 0$.
     Let $\Delta$ be a $\mathbb{Q}$-divisor on $X$ such that $\Delta\cap g^{-1}(U)=\emptyset$ and $g\colon (X,\Delta)\to S$ is a subklt--trivial fibration. 
     Let $M_{S,f}$ and $M_{S,g}$ be the $\mathbb{Q}$-divisor defined in Definition \ref{defn--fujino--mori} and in Definition \ref{defn--can-bundle-formula} respectively.
Then, $M_{S,g}\sim M_{S,f}$.
\end{lem}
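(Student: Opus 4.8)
The plan is to pull the moduli divisor of $(X,\Delta)$ back to $Y$ along $\pi$ and then identify it with Fujino--Mori's moduli divisor of $Y\to S$, exploiting that the moduli $\mathbb{Q}$-b-divisor is unchanged under a crepant modification of the total space and under adding an $f$-vertical boundary.

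\textbf{Step 1 (reduction to a statement on $Y$).} Since $Y$ has only canonical singularities, $K_Y$ is $\mathbb{Q}$-Cartier, so we may set $\Delta_Y:=\pi^{*}(K_X+\Delta)-K_Y$ on $Y$. Then $K_Y+\Delta_Y=\pi^{*}(K_X+\Delta)\sim_{\mathbb{Q}}0$ and $\pi$ is crepant for $(X,\Delta)$ and $(Y,\Delta_Y)$, so $(Y,\Delta_Y)$ is subklt. Restricting the crepant identity over $U$ and using $K_{f^{-1}(U)}=(\pi|_{f^{-1}(U)})^{*}K_{g^{-1}(U)}$ and $\Delta\cap g^{-1}(U)=\emptyset$ gives $\Delta_Y|_{f^{-1}(U)}=0$; as $U\subseteq S$ is dense, $\Delta_Y$ has no $f$-horizontal component, i.e.\ $\Delta_Y$ is $f$-vertical. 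On a common log resolution $\rho_Y\colon W\to Y$, $\rho_X\colon W\to X$ of the two pairs one has $K_W-\rho_Y^{*}(K_Y+\Delta_Y)=K_W-\rho_X^{*}(K_X+\Delta)$, so $\mathcal{O}_Y(\lceil\mathbf{A}^{*}(Y,\Delta_Y)\rceil)$ and $\mathcal{O}_X(\lceil\mathbf{A}^{*}(X,\Delta)\rceil)$ pull back to the same sheaf on $W$, and since $f\circ\rho_Y=g\circ\rho_X$ their pushforwards to $S$ coincide; hence $\mathrm{rank}\,f_{*}\mathcal{O}_Y(\lceil\mathbf{A}^{*}(Y,\Delta_Y)\rceil)=1$ and $f\colon(Y,\Delta_Y)\to S$ is a subklt--trivial fibration. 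Applying the same crepant comparison to $(Y,\Delta_Y+tf^{*}F_\eta)$ and $(X,\Delta+tg^{*}F_\eta)$ shows that the discriminant and moduli $\mathbb{Q}$-b-divisors of $f\colon(Y,\Delta_Y)\to S$ and of $g\colon(X,\Delta)\to S$ agree; in particular, writing $M'$ for the moduli $\mathbb{Q}$-divisor of $f\colon(Y,\Delta_Y)\to S$ in the sense of Definition \ref{defn--can-bundle-formula}, we get $M'\sim M_{S,g}$. Thus it remains to prove $M_{S,f}\sim M'$.

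\textbf{Step 2 (comparison of $M_{S,f}$ with $M'$).} Now $f\colon Y\to S$ is a contraction from a normal variety with only canonical singularities to a smooth projective variety, and $K_{Y_{\bar\eta}}\sim0$ on the geometric generic fiber (so $b=1$ in Definition \ref{defn--fujino--mori}); hence $K_Y\sim f^{*}(K_S+D)+B_{+}-B_{-}$ with $M_{S,f}=D-B_{S,f}$, where $B_{S,f}$ is the discriminant of $(Y,0)\to S$, $B_{\pm}\ge0$ have no common component, $f_{*}\mathcal{O}_Y(\lfloor iB_{+}\rfloor)\cong\mathcal{O}_S$, and $\mathrm{codim}_S f(\mathrm{Supp}\,B_{-})\ge2$. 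Restricting this identity to $Y_{\bar\eta}$ and using $K_{Y_{\bar\eta}}\sim0$, $B_{-}|_{Y_{\bar\eta}}=0$, and $B_{+}\ge0$ forces $B_{+}|_{Y_{\bar\eta}}=0$, so $B_{+}$ and $B_{-}$ are $f$-vertical. From $K_Y+\Delta_Y\sim_{\mathbb{Q}}0$ we obtain $f^{*}(K_S+D)\sim_{\mathbb{Q}}-\Delta_Y-B_{+}+B_{-}$; as all terms are $f$-vertical and a rational function on $Y$ with $f$-vertical divisor is pulled back from $S$, after replacing $D$ inside its linear equivalence class this becomes an equality $f^{*}(K_S+D)=-\Delta_Y-B_{+}+B_{-}$. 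Comparing with the canonical bundle formula $K_Y+\Delta_Y+\mathrm{div}(\varphi)=f^{*}(K_S+M'+B')$ (where $B'$ is the discriminant of $(Y,\Delta_Y)\to S$) and using $M'\sim_{\mathbb{Q}}-K_S-B'$, the identity $M_{S,f}\sim M'$ is equivalent to $B'-B_{S,f}\sim_{\mathbb{Q}}-(K_S+D)$. Pulling back and working near a codimension-one point $\eta\in S$, where $B_{-}$ is absent and $\Delta_Y+B_{+}=n_\eta f^{*}F_\eta$ with $n_\eta=\mathrm{mult}_{F_\eta}(-K_S-D)$, one evaluates the relevant log canonical thresholds as $\gamma^{0}_\eta=\min_E A_Y(E)/\mathrm{ord}_E(f^{*}F_\eta)$ and $\gamma^{\Delta}_\eta=\min_E\bigl(A_Y(E)+\mathrm{ord}_E(B_{+})\bigr)/\mathrm{ord}_E(f^{*}F_\eta)-n_\eta$, the minima over divisors $E$ over $Y$ centered over $F_\eta$; the required relation reduces to $\gamma^{0}_\eta-\gamma^{\Delta}_\eta=n_\eta$, i.e.\ to the equality of the two minima, which holds because the Fujino--Mori normalization $f_{*}\mathcal{O}_Y(\lfloor iB_{+}\rfloor)\cong\mathcal{O}_S$ guarantees that some component of $f^{*}F_\eta$ — and in fact the divisor computing the left-hand minimum — is not contained in $B_{+}$. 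This yields $M_{S,f}\sim M'\sim M_{S,g}$.

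\textbf{Main obstacle.} The crux is the last codimension-one comparison in Step 2: showing that the moduli $\mathbb{Q}$-b-divisor of $(Y,\Delta_Y)\to S$ coincides with Fujino--Mori's moduli $\mathbb{Q}$-b-divisor of $Y\to S$, equivalently that the moduli part is insensitive to the $f$-vertical boundary $\Delta_Y$ while only the discriminant absorbs it. Making this precise amounts to controlling how the Fujino--Mori positive part $B_{+}$ meets the divisor computing $\mathrm{lct}(Y;f^{*}F_\eta)$ over each codimension-one point, which relies on the normalization of $B_{\pm}$ (the conditions $f_{*}\mathcal{O}_Y(\lfloor iB_{+}\rfloor)\cong\mathcal{O}_S$ and $\mathrm{codim}_S f(\mathrm{Supp}\,B_{-})\ge2$) and on the birational invariance of the moduli $\mathbb{Q}$-b-divisor together with its dependence only on the variation of the geometric generic fibers with their horizontal boundary (here $0$), as recorded after \cite{A} and \cite{FM2} in Definition \ref{defn--can-bundle-formula}; the remainder is routine bookkeeping with log canonical thresholds.
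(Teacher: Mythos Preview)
Your Step~1 is sound: crepant birational invariance of the moduli $\mathbb{Q}$-b-divisor reduces the lemma to comparing the Fujino--Mori moduli divisor $M_{S,f}$ of $Y\to S$ with the canonical-bundle-formula moduli divisor $M'$ of $(Y,\Delta_Y)\to S$ for the $f$-vertical $\Delta_Y$ you constructed.

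Step~2, however, does not close. You correctly reduce to the identity
\[
\min_E \frac{A_Y(E)}{\mathrm{ord}_E(f^*F_\eta)}\;=\;\min_E \frac{A_Y(E)+\mathrm{ord}_E(B_+)}{\mathrm{ord}_E(f^*F_\eta)},
\]
but your justification---that the divisor computing the left minimum is not contained in $\mathrm{Supp}\,B_+$, by the normalization $f_*\mathcal{O}_Y(\lfloor iB_+\rfloor)\cong\mathcal{O}_S$---is not a proof. That normalization does force \emph{some} component of $f^{-1}(F_\eta)$ on $Y$ to lie outside $\mathrm{Supp}\,B_+$ (take $i\to\infty$), but it says nothing about the specific valuation $E$, possibly exceptional on a blow-up of $Y$, that actually computes $\mathrm{lct}(Y,0;f^*F_\eta)$; there is no reason given why $\mathrm{ord}_E(B_+)=0$ for \emph{that} $E$. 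Your ``main obstacle'' paragraph then invokes ``dependence only on the variation of the geometric generic fibers'', which is precisely the statement you are trying to prove rather than an input you may cite.

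The paper takes a completely different route and avoids this codimension-one lct comparison. It passes to a finite Kummer cover $\mu\colon S'\to S$ with Galois group $G$ so that a resolution $h\colon Z\to S'$ of the pullback is semistable in codimension one. There, Ambro \cite[Lemmas~5.1,~5.2]{A} gives a $G$-equivariant isomorphism $\mathcal{O}_{S'}(\mu^*M_{S,g})\cong h_*\omega_{Z/S'}$, while Fujino--Mori \cite[Corollary~2.5]{FM2} gives $\mathcal{O}_{S'}(\mu^*M_{S,f})\cong f'_*\omega_{Y'/S'}\cong h_*\omega_{Z/S'}$ (using that $Y'$ has canonical singularities). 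Both moduli parts are thus identified with the same Hodge line bundle upstairs, and $G$-equivariance descends the linear equivalence to $S$.
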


\begin{proof}
As explained in \cite{A} and \cite{fujino--canonical-certain}, there exists a finite Kummer covering $\mu\colon S'\to S$ with Galois group $G$ such that the normalization $Y'$ of irreducible component of $Y\times_SS'$ dominant to $S'$ admits a resolution of singularities of $Z\to Y'$ such that $h\colon Z\to S'$ is semistable in codimension one.
Let $f'\colon Y'\to S'$ be the canonical morphism.
We note that $Y'$ has only canonical singularities by \cite[Proposition 5.20]{KM}.
Therefore, $h_*\omega_{Z/S'}\cong f'_*\omega_{Y/S'}$ and they admit the natural $G$-action. 
By \cite[Lemmas 5.1, 5.2]{A}, $M_{S',g}:=\mu^*M_{S,g}$ satisfies that there exists a $G$-equivariant isomorphism $\mathcal{O}_{S'}(M_{S',g})\cong h_*\omega_{Z/S'}$.
On the other hand, set $M_{S',f}:=\mu^*M_{S,f}$.
By \cite[Corollary 2.5]{FM2}, we see that there exists a $G$-equivariant isomorphism $\mathcal{O}_{S'}(M_{S',f}):=f'_*\omega_{Y/S'}\cong h_*\omega_{Z/S'}$.
Thus, there exists a $G$-equivariant isomorphism $\mathcal{O}_{S'}(M_{S',f})\cong \mathcal{O}_{S'}(M_{S',g})$.
This isomorphism descends to $M_{S,f}\sim M_{S,g}$. 
\end{proof}

We follow \cite[Section 2]{fujino--canonical-certain} for fundamental notions of polarized variation of Hodge structures and its canonical extension.

\begin{note}\label{setup--modulipart}
Fix $d\in\mathbb{Z}_{>0}$ and $v\in\mathbb{R}_{>0}$. 
We consider $\mathcal{M}_{d,v}^{\mathrm{Ab}}\sqcup\mathcal{M}_{d,v}^{\mathrm{symp}}$, where $\mathcal{M}_{d,v}^{\mathrm{Ab}}$ and $\mathcal{M}_{d,v}^{\mathrm{symp}}$ were defined in Proposition \ref{prop--smoothness--of--Abelian--moduli} and Definition \ref{defn--symp-moduli}, respectively.
It follows from Lemma \ref{lem--deformation--invariant--abelian} and Corollary \ref{cor--hk--moduli--normal} that $\mathcal{M}_{d,v}^{\mathrm{Ab}}\sqcup\mathcal{M}_{d,v}^{\mathrm{symp}}$ is an open and closed substack of $\mathcal{M}_{d,v}^{\mathrm{klt},\mathrm{CY}}$.
Moreover, $M_{d,v}^{\mathrm{Ab}}\sqcup M_{d,v}^{\mathrm{symp}}$ is the coarse moduli space of $\mathcal{M}_{d,v}^{\mathrm{Ab}}\sqcup\mathcal{M}_{d,v}^{\mathrm{symp}}$.
    Fix an irreducible component $\mathcal{V}_{d,v}$ of $\mathcal{M}_{d,v}^{\mathrm{Ab}}\sqcup\mathcal{M}_{d,v}^{\mathrm{symp}}$, and let $V_{d,v}$ be the coarse moduli space of $\mathcal{V}_{d,v}$.  
It is known that $V_{d,v} \cong D/\Gamma$ for some bounded symmetric domain $D$ of type III or IV and arithmetic subgroup $\Gamma$ of the automorphism group of $D$ (cf.~\cite[Section 2]{fujino--canonical-certain}).
The canonical divisor $K_D$ of $D$ admits the natural action of $\Gamma$. 
We say that a section $s\in H^0(D,\mathcal{O}_D(lK_D))$ is an {\it automorphic form} of weight $l$ if $s$ is invariant with respect to $\Gamma$.
For the precise definition of integral automorphic forms, see \cite[Subsection 8.5]{BB}.
By \cite{BB}, there exist finitely many integral automorphic forms of the same weight $l$ for some $l\in\mathbb{Z}_{>0}$ such that they induce a closed embedding $ D/\Gamma\hookrightarrow \mathbb{P}^N$ whose closure $\overline{ D/\Gamma}^{\mathrm{BB}}$ is a normal variety.
As in \cite[Subsection 2.5]{fujino--canonical-certain}, the inclusion $ D/\Gamma\subset\overline{ D/\Gamma}^{\mathrm{BB}}$ is uniquely determined up to isomorphism. 
The closure $\overline{ D/\Gamma}^{\mathrm{BB}}$ is called the {\it Baily--Borel compactification} of $ D/\Gamma$. 

We take such an $l\in\mathbb{Z}_{>0}$ as above which is the smallest, and we denote it by $k$.
Let $\iota\colon \overline{ D/\Gamma}^{\mathrm{BB}}\hookrightarrow \mathbb{P}^N$ be the inclusion for this $k$.
We define positive integers $g$ and $d'$ as follows.
If $\mathcal{V}_{d,v}$ parametrizes Abelian varieties, then we set $g:=d+1$ and $d':=1$.
If $\mathcal{V}_{d,v}$ parametrizes irreducible holomorphic symplectic varieties, then we set $g:=b_2(X)-3$ and $d':=\frac{d}{2}$, where $X$ corresponds to a general point of $\mathcal{V}$.
Note that we have $b_2(X)\ge5$ by our assumption (see Definition \ref{defn--symp-moduli}). 
\end{note}

From now on, we prove several results.

\begin{thm}[cf.~{\cite[Theorem 1.2]{fujino--canonical-certain}, \cite[Theorem 1.1]{kim-can-bundle-formula}}]\label{thm--fujino-kim}
We use the notations in Notation \ref{setup--modulipart}.
Let $f\colon X\to S$ be a contraction of smooth projective varieties, and let $A$ be an $f$-ample $\mathbb{Q}$-line bundle on $X$.
Suppose that there exists a non-empty open subset $U\subset S$ such that $S\setminus U$ is a simple normal crossing divisor, $A|_{U}$ is a line bundle and $f^{-1}(u)$ is a smooth variety for any closed point $u\in U$.
Suppose in addition that $(f^{-1}(u),A|_{f^{-1}(u)})$ belongs to $\mathcal{V}_{d,v}$.
We note that then there exists a natural morphism $\mathcal{P}\colon U\to V_{d,v}(\cong D/\Gamma)$.
Then, there exists a morphism $\overline{\mathcal{P}}\colon S\to \overline{ D/\Gamma}^{\mathrm{BB}}$ that is an extension of $\mathcal{P}$, and 
\begin{equation*}
(\iota\circ\overline{\mathcal{P}})^*\mathcal{O}(d')\sim \mathcal{O}_{S}(gkM_{S,f}),
\end{equation*}
where $M_{S,f}$ is the moduli $\mathbb{Q}$-divisor on $S$ with respect to $f$.
In particular, $gkM_{S,f}$ is a globally generated Cartier divisor.
\end{thm}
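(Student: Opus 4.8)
The plan is to deduce the statement from Fujino's canonical bundle formula for fibre spaces of abelian type, \cite[Theorem 1.2]{fujino--canonical-certain}, when $\mathcal{V}_{d,v}$ parametrizes Abelian varieties, and from Kim's extension of it to the K3/symplectic case, \cite[Theorem 1.1]{kim-can-bundle-formula}, when $\mathcal{V}_{d,v}$ parametrizes symplectic varieties, after extending the period map to all of $S$ and after matching the numerical constants $g,d',k$ of Notation \ref{setup--modulipart} with intrinsic invariants of the period domain $D$ and of the Baily--Borel embedding $\iota$. I write $\mathcal{H}$ for the Hodge line bundle on $D$, regarded as a $\Gamma$-equivariant line bundle whose descent to $D/\Gamma$ (with its Baily--Borel extension) is the standard positive automorphic line bundle: in the Abelian case $D=\mathfrak{h}_{d}$ is of type III and $\mathcal{H}=\det\mathbb{E}$ for the weight one variation on the $H^{1}$ of the fibres, while in the symplectic case $D=\Omega_{h^{\perp}}^{+}$ is of type IV of dimension $b_{2}-3$ and $\mathcal{H}=\mathcal{O}_{\mathbb{P}}(-1)|_{D}$, whose fibre is the line $H^{2,0}$ of the primitive weight two variation on the $H^{2}$ of the fibres orthogonal to $\mathrm{c}_{1}(A)$. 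The first step is to note that over $U$ the fibres of $f$ carry a polarized $\mathbb{Z}$-variation of Hodge structure whose induced map $U\to V_{d,v}$ is, under the isomorphisms $V_{d,v}\cong D/\Gamma$ of Theorem \ref{thm--abelian--analytic} and Theorem \ref{thm--period--mapping--HK}, precisely $\mathcal{P}$; since $S$ is smooth projective and $S\setminus U$ is simple normal crossing, Borel's extension theorem then produces a holomorphic extension $\overline{\mathcal{P}}\colon S\to\overline{D/\Gamma}^{\mathrm{BB}}$, which is a morphism of varieties because $\overline{D/\Gamma}^{\mathrm{BB}}$ is projective by \cite{BB}. This yields the required $\overline{\mathcal{P}}$.

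The second step is to compute $(\iota\circ\overline{\mathcal{P}})^{*}\mathcal{O}(d')$. By construction $\iota$ is defined by $\Gamma$-invariant sections of $\mathcal{O}_{D}(kK_{D})$, so $\iota^{*}\mathcal{O}_{\mathbb{P}^{N}}(1)$ is the Baily--Borel extension to $\overline{D/\Gamma}^{\mathrm{BB}}$ of the descent of $K_{D}^{\otimes k}$. The key identity is $K_{D}\cong\mathcal{H}^{\otimes g}$ as $\Gamma$-equivariant line bundles: for $\mathfrak{h}_{d}$ the holomorphic tangent representation is the second symmetric power of the standard representation, whence $K_{\mathfrak{h}_{d}}\cong(\det\mathbb{E})^{\otimes(d+1)}$ with $g=d+1$; for the type IV domain of dimension $b_{2}-3$, realized as an open subset of a smooth quadric, one finds $K\cong\mathcal{H}^{\otimes(b_{2}-3)}$ with $g=b_{2}-3$. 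Hence $\iota^{*}\mathcal{O}(d')$ is the Baily--Borel extension of the descent of $\mathcal{H}^{\otimes gkd'}$.

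The third step is to identify $M_{S,f}$ near $U$ and to conclude. Over $U$ every fibre is a smooth klt Calabi--Yau variety, so the discriminant part vanishes there, and the construction of Definition \ref{defn--fujino--mori} shows that $M_{S,f}|_{U}$ is, up to linear equivalence, $f_{*}\omega_{X/S}|_{U}$; this sheaf equals $\mathcal{P}^{*}\det\mathbb{E}$ with $d'=1$ in the Abelian case, and has fibre $\mathbb{C}\cdot\sigma^{d'}$ with $d'=d/2$ in the symplectic case, so in both cases $M_{S,f}|_{U}\sim\mathcal{P}^{*}\mathcal{H}^{\otimes d'}$. Together with the second step this gives $(\iota\circ\overline{\mathcal{P}})^{*}\mathcal{O}(d')|_{U}\sim\mathcal{O}_{U}(gkM_{S,f}|_{U})$. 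To promote this to an equality of $\mathbb{Q}$-divisor classes on all of $S$ I would invoke \cite[Theorem 1.2]{fujino--canonical-certain} in the Abelian case and \cite[Theorem 1.1]{kim-can-bundle-formula} in the symplectic case (whose hypotheses are met thanks to the condition $b_{2}\ge 5$ built into Definition \ref{defn--symp-moduli}), which assert exactly that, with the present normalization, $gkM_{S,f}$ equals the pull-back by $\overline{\mathcal{P}}$ of the Baily--Borel extension of the corresponding automorphic line bundle, that is, of $(\iota\circ\overline{\mathcal{P}})^{*}\mathcal{O}(d')$. The final assertion follows at once: $\mathcal{O}(d')$ is globally generated on $\mathbb{P}^{N}$ and $\iota\circ\overline{\mathcal{P}}$ is a morphism, so $(\iota\circ\overline{\mathcal{P}})^{*}\mathcal{O}(d')$ is a globally generated Cartier divisor, and hence so is $gkM_{S,f}$.

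The main obstacle is the boundary analysis hidden in the third step: one must know that the moduli $\mathbb{Q}$-divisor $M_{S,f}$ detects the Baily--Borel (ample) extension of the automorphic line bundle and not merely the Deligne canonical extension of the Hodge bundle, which differs from it along $S\setminus U$, and that no spurious multiples of the boundary components intervene; this is precisely the content of the cited theorems of Fujino and Kim. The genuinely new work left to us is then only the parallel treatment of the two cases and the identification of the constants $(g,d',k)$ sketched above, together with verifying that Kim's hypotheses apply to the irreducible component $\mathcal{V}_{d,v}$, which is where the assumption $b_{2}\ge 5$ is used.
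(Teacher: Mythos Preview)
Your outline is essentially the same strategy as the paper's: extend the period map to the Baily--Borel compactification, identify $K_D\cong\mathcal{H}^{\otimes g}$ on the period domain, match $M_{S,f}|_U$ with the pull-back of the Hodge line bundle, and then appeal to Fujino and Kim for the boundary behaviour. For the Abelian and K3 cases the paper does exactly this, invoking \cite[Theorem 1.2]{fujino--canonical-certain} directly.

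The one place where the paper proceeds differently is the higher-dimensional symplectic case. There it does \emph{not} cite \cite[Theorem 1.1]{kim-can-bundle-formula} as a black box; instead it first reduces to the case where $f$ is semistable in codimension one (via a finite Kummer cover, followed by Galois descent), then applies Fujino's extension theorem \cite[Theorem 2.10]{fujino--canonical-certain} to the primitive weight-two variation $\mathcal{H}$ to get both $\overline{\mathcal{P}}$ and the identification $(\iota\circ\overline{\mathcal{P}})^*\mathcal{O}(1)\sim\mathscr{F}^2(\overline{\mathcal{H}})^{\otimes gk}$, and only then imports Kim's idea---the splitting $\mathrm{Sym}^{d'}\mathcal{H}\oplus\mathrm{Coker}\cong\mathcal{H}'$ of polarized variations, with $\mathrm{Coker}$ having no $(2d',0)$-part---to conclude $\mathscr{F}^{2}(\overline{\mathcal{H}})^{\otimes d'}\cong f_*\omega_{X/S}$. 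The paper also explicitly handles the passage from a $\mathbb{Q}$-line bundle $A$ to a genuine line bundle by checking that the statement is invariant under $A\mapsto mA$ (using the isomorphism $V_{d,v}\cong V_{d,m^dv}$). Your sketch treats all of this as contained in Kim's theorem; whether that is literally true depends on how Kim's result is stated, and the paper's choice to reproduce the argument suggests the match is not immediate. None of this is a gap in your reasoning, but you should be prepared to unpack the semistable reduction and the $\mathrm{Sym}^{d'}$ argument rather than assume they come packaged.
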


\begin{proof}
    We first deal with the case where $A$ is a line bundle. 
    In the case where the geometric generic fiber of $f$ is a K$3$-surface (i.e.~$2$-dimensional irreducible holomorphic symplectic manifold) or an Abelian variety, the assertion immediately follows from \cite[Theorem 1.2]{fujino--canonical-certain}.
    Thus, it suffices to deal with the case where the geometric generic fiber of $f$ is a higher dimensional irreducible holomorphic symplectic manifold of dimension $d=2d'$.

   Here, we further assume that $f$ is semistable in codimension one.
   Then, we can define the natural polarized variations of Hodge structures on $\mathcal{H}:=\mathcal{O}_{U}\otimes(R^{2}(f|_{f^{-1}(U)})_*\mathbb{Z})_{\mathrm{prim}}$ and $\mathcal{H}':=\mathcal{O}_{U}\otimes (R^{2d'}(f|_{f^{-1}(U)})_*\mathbb{Z})$, where ${\rm prim}$ means the primitive part with respect to $A|_U$.
   We note here that we can easily check that $\mathcal{H}'$ indeed has a non-canonical polarization by the Lefschetz decomposition (cf.~\cite[The third comment]{kim-can-bundle-formula}). 
   By \cite[Theorem 2.10]{fujino--canonical-certain} and the assumption that $f$ is semistable in codimension one, there exists an extension $\overline{\mathcal{P}}\colon S\to \overline{ D/\Gamma}^{\mathrm{BB}}$ of $\mathcal{P}$ and 
   \begin{equation*}\tag{$*$}\label{eq--linear--equiv--Fujino-Kim-1}
  (\iota\circ\overline{\mathcal{P}})^*\mathcal{O}(1)\sim \mathscr{F}^2(\overline{\mathcal{H}})^{\otimes gk},
   \end{equation*}
   where $\overline{\mathcal{H}}$ is the canonical extension of $\mathcal{H}$ (see \cite{fujino--canonical-certain}) and $\mathscr{F}^\bullet$ is the Hodge filtration.
   Now we apply the idea of the proof of \cite[Theorem 1.1]{kim-can-bundle-formula}. 
   We see that $\mathrm{Sym}^{d'}\mathcal{H}$ is polarized and there exists an injection of variation of Hodge structures
   \[
  \alpha\colon  \mathrm{Sym}^{d'}\mathcal{H}\hookrightarrow \mathcal{H}'.
   \]
   By \cite[Theorem 10.13]{PS} and the fact that $\mathcal{H}'$ is polarized, there exists a polarized variation of Hodge structures ${\rm Coker}(\alpha)$ such that 
    \[
   \mathrm{Sym}^{d'}\mathcal{H}\oplus {\rm Coker}(\alpha)\cong \mathcal{H}'.
   \]
   Then, it is easy to see that ${\rm Coker}(\alpha)$ has no $(2d',0)$-part.
   By the uniqueness of the canonical extensions (cf.~\cite{fujino--canonical-certain}) and \cite[Lemma 3.1]{kim-can-bundle-formula}, putting $\overline{\mathcal{H}}'$ (resp.~$\overline{\mathrm{Sym}^{d'}\mathcal{H}}$) as the canonical extension of $\mathcal{H}'$ (resp.~$\mathrm{Sym}^{d'}\mathcal{H}$), we see that 
    \[
   \mathscr{F}^{2}(\overline{\mathcal{H}})^{\otimes d'}\cong \mathscr{F}^{2d'}(\overline{\mathrm{Sym}^{d'}\mathcal{H}})\cong \mathscr{F}^{2d'}(\overline{\mathcal{H}}')=f_*\omega_{X/S}.
   \]
  By \cite[Proposition 3.6]{fujino--canonical-certain} and combining \eqref{eq--linear--equiv--Fujino-Kim-1} with the above isomorphism, we have
  \begin{equation*}
(\iota\circ\overline{\mathcal{P}})^*\mathcal{O}(d')\sim \mathcal{O}_{S}(gkM_{S,f}).
\end{equation*}
In particular, $S$ is an Ambro model in this case.

   From now on, we deal with the general case, in other words, the case where $A$ is not necessarily a line bundle and $f$ is not necessarily semistable in codimension one.  
   By definition (see Notation \ref{setup--modulipart}), $V_{d,v}$ is an irreducible component of $M_{d,v}^{\mathrm{Ab}}\sqcup M_{d,v}^{\mathrm{symp}}$. 
   Pick an arbitrary $m\in\mathbb{Z}_{>0}$. 
   Since $V_{d,v}$ and $V_{d,m^{d} v}$ have the same dimension, Remark \ref{rem--abelian--period} and Corollary \ref{thm--period--mapping--HK} imply that
    \begin{equation*}\tag{$**$}\label{eq--mulitiple--isom}
    V_{d,v} \cong V_{d,m^d v}    
    \end{equation*}
    by sending a $\mathbb{C}$-valued point of $V_{d,v}$ corresponding to $(Y,L)$ to the $\mathbb{C}$-valued point corresponding to $(Y,mL)$, where $V_{d,m^d\cdot v}$ is the suitable connected component of $M_{d,m^d\cdot v}^{\mathrm{Ab}}\sqcup M_{d,m^d\cdot v}^{\mathrm{symp}}$. 
     Hence, neither \eqref{eq--linear--equiv--Fujino-Kim-1}, $k$, $d'$ nor $g$ changes by replacing $A$ with $mA$ for any $m\in\mathbb{Z}_{>0}$.
Thus, we may freely replace $A$ with $mA$ for any $m\in\mathbb{Z}_{>0}$.
Take a finite Kummer covering $\pi\colon S'\to S$ with Galois group $G$ such that $S'$ is smooth, $S'\setminus\pi^{-1}(U)$ is a simple normal crossing and there exists an algebraic fiber space $f'\colon X'\to S'$ such that $X'$ is smooth, $f'$ is semistable in codimension one, $X'\times_SU\cong X\times_S\pi^{-1}(U)$, and there exists a morphism $X'\to X\times_SS'$ by \cite[Theorem 4.3]{A}.
We may assume that there exists a $f'$-ample $\mathbb{Q}$-line bundle $A'$ such that $A'|_{X'\times_SU}= A|_{X\times_S\pi^{-1}(U)}$.
Since we may freely replace $A$ with $mA$ for any $m\in\mathbb{Z}_{>0}$, we may further assume that $A'$ is a line bundle.
By what we have shown, there exists a morphism $\widetilde{\mathcal{P}}\colon S'\to \overline{ D/\Gamma}^{\mathrm{BB}}$ that is an extension of $\mathcal{P}\circ\pi$ and $(\iota\circ\widetilde{\mathcal{P}})^*\mathcal{O}(d')\sim \mathcal{O}_{S'}(gkM_{S'})$.
We note that there exists the canonical isomorphism 
\begin{equation*}\tag{$*\!*\!*$}\label{eq--G-equiv--moduli}
    f'_*\omega_{X'/S'}^{\otimes gk}\cong \mathcal{O}_{S'}(gkM_{S'}).
\end{equation*}
Over $X'\times_SU$, this isomorphism is the pullback of $$(f|_{X\times_SU})_*\omega_{X\times_SU/U}^{\otimes gk}\cong \mathcal{O}_{U}(gkM_{S}|_{U}),$$
which is obtained by the same argument in \cite[2.21]{fujino--canonical-certain}.
Thus, \eqref{eq--G-equiv--moduli} is $G$-invariant.
This shows the existence of $\overline{\mathcal{P}}$ as in Theorem \ref{thm--fujino-kim}. 
We obtain the proof.
\end{proof}

The above theorem only deals with the case when the generic fiber is smooth.
The following also deals with the case where the generic fiber is a symplectic variety smoothable to irreducible holomorphic symplectic manifolds.

\begin{thm}\label{thm--Fujino--moduli}
We use the notations in Notation \ref{setup--modulipart}.
Let $f\colon (X,0,A)\to S$ be a polarized klt-trivial fibration between normal projective varieties.
Fix $d\in\mathbb{Z}_{>0}$ and $v\in\mathbb{Z}_{>0}$.
Suppose that there exists a projective birational morphism $\mu\colon \tilde{S}\to S$ such that $\tilde{S}$ is an Ambro model of $f$ and a non-empty open subset $U\subset S$ such that $\mu|_{\mu^{-1}U}$ is isomorphic, $A|_{f^{-1}U}$ is a line bundle and $(f^{-1}(u),A|_{f^{-1}(u)})$ belongs to $\mathcal{V}_{d,v}$ for any closed point $u\in U$.
We note that then there exists the natural morphism $\mathcal{P}\colon U\to V_{d,v}(\cong D/\Gamma)$.

Then, there is a morphism $\widetilde{\mathcal{P}}\colon\tilde{S}\to \overline{D/\Gamma}^{\mathrm{BB}}$ that is an extension of $\mathcal{P}$.
Furthermore, \begin{equation*}
(\iota\circ\widetilde{\mathcal{P}})^*\mathcal{O}(d')\sim \mathcal{O}_{\tilde{S}}(gkM_{\tilde{S}}),
\end{equation*}
where $M_{\tilde{S}}$ is the moduli $\mathbb{Q}$-divisor on $\tilde{S}$ with respect to $f$.
In particular, $d'gkM_{\tilde{S}}$ is a globally generated Cartier divisor.
\end{thm}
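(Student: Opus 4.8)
The strategy is to reduce to Theorem~\ref{thm--fujino-kim} by replacing $f$ with a birational-and-finite modification whose fibres over (a shrinking of) $U$ become \emph{smooth} irreducible holomorphic symplectic manifolds, to run the Fujino--Kim Hodge-theoretic argument there, and then to descend the resulting period morphism and linear equivalence to $\tilde S$. First, harmless reductions. If $\mathcal V_{d,v}$ parametrises Abelian varieties then, by Lemma~\ref{lem--deformation--invariant--abelian}, every fibre of $f$ over $U$ is already a smooth Abelian variety, and after the bookkeeping below one is reduced directly to Theorem~\ref{thm--fujino-kim}; so we may assume $\mathcal V_{d,v}$ is of symplectic type. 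Since the moduli $\mathbb Q$-divisor of a klt-trivial fibration does not involve the polarisation, and since $V_{d,v}\cong V_{d,m^{d}v}$ for every $m\in\mathbb Z_{>0}$ as in the proof of Theorem~\ref{thm--fujino-kim}, none of $\mathcal P$, $M_{\tilde S}$, $k$, $d'$, $g$ changes when $A$ is replaced by $mA$, so we may arrange that every pullback of $A$ considered below is a line bundle. Replacing $\tilde S$ by a log resolution (again an Ambro model, since any higher birational model of an Ambro model is one) we may assume $\tilde S$ is smooth projective with $\tilde S\setminus\mu^{-1}(U)$ a simple normal crossing divisor; shrinking $U$, we may also assume the trace of the discriminant $\mathbb Q$-b-divisor $\boldsymbol{\rm B}$ on $\tilde S$ has support disjoint from $\mu^{-1}(U)$. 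At the end we pass from this log resolution back to the given $\tilde S$: the extended period morphism contracts the exceptional curves because the line bundle $(\iota\circ\widetilde{\mathcal P})^{*}\mathcal O(d')$ is, by what we prove, pulled back from $\tilde S$, so it descends by Zariski's main theorem and the normality of $\tilde S$.

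\textbf{A simultaneous symplectic resolution over a finite cover.} Let $X_{\tilde S}$ be the normalisation of the main component of $X\times_{S}\tilde S$; over $\mu^{-1}(U)$ it is isomorphic to $f^{-1}(U)$, whose fibres are symplectic, hence canonical, varieties, and (being smoothable to projective irreducible holomorphic symplectic manifolds via a $K$-trivial degeneration) admit symplectic resolutions by Theorem~\ref{thm--klsv}. Using Theorem~\ref{thm--klsv} together with \cite[Theorem 4.3]{A} (semistable-in-codimension-one reduction and resolution of the boundary), we obtain a finite Galois morphism $\tau\colon S'\to\tilde S$ with group $G$, with $S'$ smooth projective and $S'\setminus\tau^{-1}\mu^{-1}(U)$ simple normal crossing, a $G$-equivariant contraction $f'\colon X'\to S'$ of smooth projective varieties that is semistable in codimension one, and a $G$-equivariant birational morphism $\pi\colon X'\to X_{\tilde S}\times_{\tilde S}S'$ which over $\tau^{-1}\mu^{-1}(U)$ is a simultaneous symplectic resolution; in particular $K_{X'}=\pi^{*}K_{X_{\tilde S}\times_{\tilde S}S'}$ there and the fibres of $f'$ over $\tau^{-1}\mu^{-1}(U)$ are smooth irreducible holomorphic symplectic manifolds (lying in the same connected component of the moduli of marked such manifolds as the symplectic resolutions prescribed by Theorem~\ref{thm--period--mapping--HK}). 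Let $A'$ be the pullback of $A$ to $X'$, a line bundle after the reduction above; over $\tau^{-1}\mu^{-1}(U)$ it restricts on fibres to a big and nef line bundle whose first Chern class is the monodromy-invariant polarisation class. By Theorem~\ref{thm--period--mapping--HK}, the morphism $\mathcal P\colon U\to V_{d,v}\cong\Omega^{+}_{h^{\perp}}/\Gamma$ is computed fibrewise precisely through symplectic resolutions carrying such big and nef classes, so the period morphism attached to the variation of Hodge structure $\mathcal H'$ on the second cohomology, primitive with respect to $c_{1}(A')$, of the smooth fibration $f'$ over $\tau^{-1}\mu^{-1}(U)$ agrees with $\mathcal P\circ\tau$ there.

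\textbf{Hodge theory on the cover and comparison of moduli divisors.} Now $f'$ meets all the hypotheses of the argument in the proof of Theorem~\ref{thm--fujino-kim} for higher-dimensional symplectic fibres, the only difference being that $A'$ is merely big and nef on the smooth fibres; but in that argument $A'$ serves only to fix the primitive sub-variation $\mathcal H'$ and a marking, which remains legitimate here since $c_{1}(A')$ has positive Beauville--Bogomolov--Fujiki square. Hence \cite[Theorem 2.10]{fujino--canonical-certain} together with Kim's Lefschetz-decomposition argument \cite{kim-can-bundle-formula} produce a morphism $\widetilde{\mathcal P}'\colon S'\to\overline{D/\Gamma}^{\mathrm{BB}}$ extending $\mathcal P\circ\tau$ on $\tau^{-1}\mu^{-1}(U)$, together with a $G$-equivariant isomorphism
\[
(\iota\circ\widetilde{\mathcal P}')^{*}\mathcal O(d')\;\cong\;\mathcal O_{S'}\bigl(gkM_{S',f'}\bigr),
\]
where $M_{S',f'}$ is the moduli $\mathbb Q$-divisor of $f'$ in the sense of Definition~\ref{defn--fujino--mori}. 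Endow $X_{\tilde S}\times_{\tilde S}S'$ with the $\mathbb Q$-divisor $\Delta'$ making it a subklt-trivial fibration over $S'$; by our shrinking of $U$ we have $\Delta'\cap g^{-1}(\tau^{-1}\mu^{-1}(U))=\emptyset$, and $\pi$ is crepant there, so Lemma~\ref{lem--coincidence--moduli} gives $M_{S',f'}\sim M_{S',g}$, while $M_{S',g}=\tau^{*}M_{\tilde S}$ by base-change invariance of the moduli $\mathbb Q$-b-divisor, both $\tilde S$ and $S'$ being Ambro models. Therefore $(\iota\circ\widetilde{\mathcal P}')^{*}\mathcal O(d')\cong\tau^{*}\mathcal O_{\tilde S}(gkM_{\tilde S})$, $G$-equivariantly.

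\textbf{Descent and the main obstacle.} Applying \cite[Theorem 2.10]{fujino--canonical-certain} to $\mathcal P\colon U\to D/\Gamma$ (using that $\tilde S$ is smooth and $\mu^{-1}(\tilde S\setminus U)$ is simple normal crossing) the period morphism extends to $\widetilde{\mathcal P}\colon\tilde S\to\overline{D/\Gamma}^{\mathrm{BB}}$; by separatedness $\widetilde{\mathcal P}\circ\tau=\widetilde{\mathcal P}'$, so the $G$-equivariant isomorphism of line bundles on $S'$ obtained above descends along $S'\to S'/G=\tilde S$ to an isomorphism $(\iota\circ\widetilde{\mathcal P})^{*}\mathcal O(d')\cong\mathcal O_{\tilde S}(gkM_{\tilde S})$. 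In particular $gkM_{\tilde S}$, hence also $d'gkM_{\tilde S}$, is a Cartier divisor that is globally generated, being a (positive multiple of a) pullback of the globally generated $\mathcal O(1)$ on $\overline{D/\Gamma}^{\mathrm{BB}}\subset\mathbb P^{N}$; passing back to the original $\tilde S$ as in the first paragraph finishes the proof. The main obstacle is exactly the passage through \emph{singular} symplectic fibres: to reach the Hodge theory one must take a simultaneous symplectic resolution, which forces a finite base change and on which the polarisation degenerates to a merely big and nef class, so one has to verify that Fujino--Kim's construction of the period morphism and of the linear equivalence survives in this setting (in particular that the primitive and Lefschetz decompositions still apply), and that the moduli $\mathbb Q$-b-divisor is correctly tracked through the crepant birational morphism $\pi$ (via Lemma~\ref{lem--coincidence--moduli}) and the finite Galois morphism $\tau$, all at the level of $G$-equivariant sheaves.
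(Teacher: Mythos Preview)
Your overall strategy—pass to a simultaneous symplectic resolution, run the Fujino--Kim Hodge theory there, and descend—coincides with the paper's only over the open set $U$. For the extension across $\tilde S\setminus\mu^{-1}(U)$ the paper takes a different route: it uses the symplectic resolution solely over a finite Galois cover $U'\to U$ (of a shrinking of $U$) to establish the identification $((f|_{f^{-1}U})_*\omega_{f^{-1}(U)/U})^{\otimes gk}\cong(\iota\circ\mathcal P|_U)^*\mathcal O(d'gk)$ there, and then, for the extension, introduces an auxiliary proper Deligne--Mumford stack $\mathcal M^{\mathrm{KX}}_{d,v,\epsilon}$ of slc pairs (via Lemma~\ref{lem--kollar-xu}) together with a scheme $R$ mapping finitely onto it and carrying a family $\varphi\colon Y\to R$ with $\varphi$-\emph{ample} polarisation and generically \emph{smooth} fibre. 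Theorem~\ref{thm--fujino-kim} applies to $\varphi$ without modification, yielding $\psi\colon R\to\overline{D/\Gamma}^{\mathrm{BB}}$ and $\mathcal O_R(gkM_R)\sim(\iota\circ\psi)^*\mathcal O(d')$, and one pulls everything back along a classifying map $S'\to R$ and descends via the Galois group.

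The gap in your argument is the construction of a single model $f'\colon X'\to S'$ that is semistable in codimension one over all of $S'$ \emph{and} a fibrewise symplectic resolution over $\tau^{-1}\mu^{-1}(U)$. Theorem~\ref{thm--klsv} yields such resolutions only over one-dimensional bases, while \cite[Theorem 4.3]{A} produces semistable-in-codimension-one models by resolving the pulled-back total space with no control over crepancy on fibres; splicing the two (resolve symplectically over a cover of $U$, compactify, resolve only over the boundary, then apply semistable reduction) is plausible but requires an argument you do not give, and in particular the birational map $\pi\colon X'\dashrightarrow X_{\tilde S}\times_{\tilde S}S'$ need not be a morphism globally, so ``pullback of $A$ to $X'$'' is not defined as stated. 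Even granting the construction, your $A'$ is at best big and nef on fibres over the open set and has no positivity at the boundary, so Theorem~\ref{thm--fujino-kim} (which requires $A$ to be $f$-ample) does not apply; you would need to rerun its proof under this weaker hypothesis, including the polarised-VHS splitting from \cite{kim-can-bundle-formula}. The paper's moduli-theoretic detour through $R$ is designed precisely to sidestep both difficulties.
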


Before giving a proof of Theorem \ref{thm--Fujino--moduli}, we put the following lemma.

\begin{lem}[{\cite[Lemma 7]{KX}}]\label{lem--kollar-xu}
Let $S$ be a projective smooth variety and $U\subset S$ a non-empty open subset.
Suppose that there exists a flat contraction $f\colon X\to U$ such that $K_X\sim_{\mathbb{Q},\,U}0$ and $(X_u,0)$ is klt for any $u\in U$. 
Let $H$ be an $f$-ample effective $\mathbb{Q}$-Cartier relative Mumford divisor on $X$.
Then, after shrinking $U$ suitably, there exist a generically finite morphism $h\colon \tilde{S}\to S$ from a projective smooth variety $\tilde{S}$, a flat contraction $g\colon \tilde{X}\to \tilde{S}$ from a normal variety $\tilde{X}$, and a $g$-ample effective $\mathbb{Q}$-Cartier relative Mumford divisor $\tilde{H}$ on $\tilde{X}$ such that $K_{\tilde{X}}\sim_{\mathbb{Q},\,\tilde{S}}0$, $(\tilde{X},\tilde{H})\times_{\tilde{S}}h^{-1}(U)\cong(X,H)\times_Uh^{-1}(U)$, and there exists an $\epsilon>0$ such that $(\tilde{X}_s,\epsilon\tilde{H}_s)$ is slc for any $s\in \tilde{S}$.
\end{lem}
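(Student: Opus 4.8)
The plan is to follow the strategy of \cite[Lemma 7]{KX} essentially verbatim, taking advantage of the fact that the hypotheses here (a flat contraction $f\colon X\to U$ with $K_X\sim_{\mathbb{Q},U}0$, klt fibers, and an $f$-ample effective relative Mumford divisor $H$) are exactly of the form treated there, once $U$ is shrunk appropriately. First I would shrink $U$ so that $f$ becomes as nice as possible: by generic flatness and generic smoothness of the boundary locus, we may assume that $X\to U$ is a fiberwise lc (indeed klt) Calabi--Yau family and that $H$ is fiberwise a Mumford divisor whose support avoids the relevant singular strata, so that $(X_u,\epsilon H_u)$ is klt, hence in particular slc, for a fixed small $\epsilon>0$ over (the shrunken) $U$. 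This uses the openness of klt and slc loci in families, e.g.\ \cite[Corollary 4.10]{kollar-mmp} and \cite[Corollary 4.10, 4.45]{kollar-moduli}.

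Next I would produce the generically finite cover $h\colon\tilde S\to S$ and the family $g\colon\tilde X\to\tilde S$. The point is that the moduli map $U\to M^{\mathrm{klt,CY}}_{d,v}$ (or more precisely to the moduli of polarized klt Calabi--Yau pairs with boundary $\epsilon H$, which is a separated Deligne--Mumford stack by Theorem \ref{thm--klt--CY--moduli} together with the boundary analogue) lifts, after a generically finite base change, to a map into the moduli \emph{stack}; pulling back the universal family and compactifying the base to a smooth projective $\tilde S$ gives $\tilde X\to\tilde S$ together with $\tilde H$ over $h^{-1}(U)$. One then spreads out: $K_{\tilde X}\sim_{\mathbb{Q},\tilde S}0$ holds over the open locus and, since $\tilde X$ is normal and $\tilde S$ smooth, extends over all of $\tilde S$ after possibly shrinking and re-compactifying; flatness of $g$ and the relative Mumford property of $\tilde H$ are again arranged by shrinking $U$ and taking closures, invoking \cite[Theorem 1.2]{has-mmp} or the properness of the relevant Hilbert/$\mathbf{Div}$ schemes to extend $\tilde H$ to an effective $\mathbb{Q}$-Cartier divisor on $\tilde X$. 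The condition $(\tilde X_s,\epsilon\tilde H_s)$ slc \emph{for all} $s\in\tilde S$ — not just over $h^{-1}(U)$ — is then obtained by running the compactification through Lemma \ref{lem--compactification-lc-trivial-fib} applied to the polarized lc-trivial fibration $(\tilde X,0,\tilde H)\to h^{-1}(U)$, which guarantees an lc (hence, after lowering $\epsilon$, slc-fiberwise) model over a projective base with the required agreement over the open set.

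The key step, and the main obstacle, is the extension of the family together with the boundary divisor across the boundary $\tilde S\setminus h^{-1}(U)$ while preserving \emph{both} $K_{\tilde X}\sim_{\mathbb{Q},\tilde S}0$ \emph{and} the fiberwise slc condition for $(\tilde X_s,\epsilon\tilde H_s)$ at every point of $\tilde S$. A naive closure of $X\times_U h^{-1}(U)$ need not have either property at boundary fibers. The resolution is to invoke the minimal model program results already available in the paper: applying Lemma \ref{lem--compactification-lc-trivial-fib} (or Corollary \ref{cor--compactification-lc-trivial-fib-2}) to $(\tilde X,0,\tilde H)\to h^{-1}(U)\hookrightarrow \tilde S$ produces a normal projective $\overline X'$ with an lc-trivial fibration $\overline f'\colon(\overline X',\overline\Delta',\overline A')\to \overline S'$ and a birational $\overline S'\to\tilde S$ that is an isomorphism over $h^{-1}(U)$; after replacing $\tilde S$ by $\overline S'$ (still generically finite over $S$) and re-resolving to keep $\tilde S$ smooth, we obtain exactly the desired $g\colon\tilde X\to\tilde S$ with $K_{\tilde X}\sim_{\mathbb{Q},\tilde S}0$. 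Finally I would invoke \cite[Lemma 7]{KX} itself for the precise assertion — in effect the statement here is a restatement of that lemma adapted to the present notation, so the honest work is checking that the hypotheses of \cite[Lemma 7]{KX} are met after the shrinking of $U$ described in the first step, and that the output data $(\tilde S,g,\tilde H,\epsilon)$ satisfies the claimed compatibilities over $h^{-1}(U)$.
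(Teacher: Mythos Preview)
Your final paragraph lands on the right idea: this lemma is essentially a direct corollary of \cite[Lemma~7]{KX}, and the real work is translating its output into the stated form. The paper's proof does exactly this, but more directly than you outline: it applies \cite[Lemma~7]{KX} to the pair over the \emph{function field} $K$ of $S$ (equivalently, to the generic fiber of $f$), which yields a family $(\overline X,\overline H)\to S'$ over a smooth projective $S'$ whose function field is finite over $K$ and whose fibers $(\overline X_s,\epsilon\overline H_s)$ are all slc. Then $\tilde S$ is taken to be a resolution of the indeterminacy of the resulting dominant rational map $S'\dashrightarrow S$, and $(\tilde X,\tilde H)$ is simply the pullback of $(\overline X,\overline H)$ along $\tilde S\to S'$. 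No moduli stacks, no compactification lemmas, no spreading out.

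Your elaborate middle strategy---lifting to a moduli stack, compactifying, and then applying Lemma~\ref{lem--compactification-lc-trivial-fib}---has a genuine gap. That lemma produces an lc-trivial fibration $(\overline X',\overline\Delta')\to\overline S'$ in which $\overline\Delta'$ may be nonzero along the boundary fibers, so $K_{\overline X'}\sim_{\mathbb Q,\,\overline S'}0$ need not hold; you only get $K_{\overline X'}+\overline\Delta'\sim_{\mathbb Q,\,\overline S'}0$. More seriously, an lc total space $(\overline X',\overline\Delta'+t\overline A')$ does not give slc \emph{fibers}: closing that gap while keeping the boundary trivial is precisely the hard content of \cite[Lemma~7]{KX} (ultimately the properness result \cite[Theorem~2]{KX}). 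So the detour through Lemma~\ref{lem--compactification-lc-trivial-fib} does not get you the fiberwise slc conclusion, and it is both cleaner and correct to invoke \cite[Lemma~7]{KX} at the function-field level from the outset, as the paper does.
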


\begin{proof}
We note that there is $\epsilon_{0} \in \mathbb{R}_{>0}$ such that $(X_u,\epsilon_{0} H_u)$ is klt for any $u\in U$. 
Applying \cite[Lemma 7]{KX} to the function field $K$ of $S$, we obtain a projective smooth variety $S'$ whose function field $L$ is finite over $K$, a flat contraction of normal projective varieties $\overline{f}\colon \overline{X}\to S'$, and an effective $\overline{f}$-ample $\mathbb{Q}$-Cartier divisor $\overline{H}$ such that $(\overline{X},\overline{H})\times_{S'}\mathrm{Spec}\,(L)\cong (X,H)\times_{U}\mathrm{Spec}\,(L)$ and there exists an $\epsilon>0$ such that $(\overline{X}_s,\epsilon\overline{H}_s)$ is slc for any $s\in S'$. 
Let $\tilde{S} \to S$ be the resolution of the indeterminacy of the generically finite dominant rational map $S'\dashrightarrow S$. 
We define $(\tilde{X}, \tilde{H})$ to be the pullback of $(\overline{X},\overline{H})$ by $\tilde{S} \to S'$. 
Then $\tilde{X}\to \tilde{S}$ and $\tilde{H}$ are what we wanted. 
\end{proof}

\begin{proof}[Proof of Theorem \ref{thm--Fujino--moduli}]
First, we deal with the case when $f^{-1}(u)$ is smooth for some closed point $u\in U$.
Then, by shrinking $U$, we may assume that $f^{-1}(u)$ is smooth for every closed point $u\in U$.
Take a projective birational morphism $\alpha\colon S'\to \tilde{S}$ such that $(\mu\circ\alpha)^{-1}(U)\to U$ is isomorphic and $S'\setminus(\mu\circ\alpha)^{-1}(U)$ is a simple normal crossing divisor.
Let $X'$ be the normalization of the main component of $X\times_SS'$, and let $\Delta'$ be a $\mathbb{Q}$-divisor on $X'$ such that 
$K_{X'}+\Delta'$ is the pullback of $K_X$ to $X'$.
Let $f' \colon X'\to S'$ be the induced morphism. 
Then $f' \colon(X',\Delta')\to S'$ is a subklt-trivial fibration. 
Since $\tilde{S}$ is an Ambro model of $f$, putting $M_{S'}$ as the moduli $\mathbb{Q}$-divisor with respect to $f'$ then we have $M_{S'}=\alpha^*M_{\tilde{S}}$. 
We take a resolution of singularites $\beta\colon X''\to X'$ such that $\beta$ is an isomorphism over $(\mu\circ\alpha\circ f')^{-1}(U)$.
By Lemma \ref{lem--coincidence--moduli}, $M_{S'}$ is the moduli $\mathbb{Q}$-divisor with respect to $ f'\circ\beta$.
Furthermore, there exists a morphism $\mathcal{P}'\colon S'\to \overline{D/\Gamma}^{\mathrm{BB}}$ that is an extension of $\mathcal{P}$ and 
\[
(\iota\circ\mathcal{P}')^*\mathcal{O}(d')\sim \mathcal{O}_{S'}(gkM_{S'})
\]
by Theorem \ref{thm--fujino-kim}.
Since $M_{S'}=\alpha^*M_{\tilde{S}}$, we have the assertion in this case.

Therefore, we only need to deal with the case where $f^{-1}(u)$ is not smooth for any general closed point $u\in U$.
In this case, $2d'=d$ and $(f^{-1}(u),A|_{f^{-1}(u)})$ belongs to $\mathcal{M}_{2d',v}^{\mathrm{symp}}$ for any general closed point $u\in U$ (cf.~Lemma \ref{lem--deformation--invariant--abelian}).
Since $\mathcal{M}_{2d',v}^{\mathrm{symp}}$ is of finite type, there exists $n\in\mathbb{Z}_{>0}$ such that $nL$ is very ample and $H^1(Y,nL)=0$ for any polarized variety $(Y,L)$ in $\mathcal{M}_{2d',v}^{\mathrm{symp}}(\mathbb{C})$.
We may further assume that there exists an effective relative $\mathbb{Q}$-divisor $\mathcal{D}$ on $X$ such that $\mathcal{D}|_{f^{-1}(U)}$ is relative Mumford over $U$,  $n\mathcal{D}$ is a Cartier divisor and $n\mathcal{D}\sim_{S}nA$.
We fix this $n$ throughout the proof. 
By Theorem \ref{thm--klsv} and shrinking $U$, we see that every fiber of $f^{-1}(U) \to U$ has a symplectic resolution. 
Then we may assume that there exist a finite Galois cover $\zeta\colon U'\to U$ with $G'$ the Galois group and a projective birational morphism $\pi\colon \widetilde{W}\to f^{-1}(U)\times_UU'$ such that $\pi|_{\widetilde{W}\times_{U'}\{u'\}}$ is a symplectic resolution for any closed point $u'\in U'$.
Note that the natural morphism $\tilde{f}\colon\widetilde{W}\to U'$ is smooth and has connected fibers and $K_{\widetilde{W}}\sim_{U'}0$.
We set
$$\mathscr{V}:=R^{2}\tilde{f}_*\mathbb{Z}^\perp\otimes\mathcal{O}_{U'} \quad {\rm and} \quad \mathscr{V}':=R^{2d'}\tilde{f}_*\mathbb{Z}\otimes\mathcal{O}_{U'},$$ where $\perp$ denotes the orthogonal complementary part with respect to $\pi^*A$. 
Then we can define the natural variations of Hodge structures on $\mathscr{V}$ and $\mathscr{V}'$ such that the latter is a polarized variation of Hodge structures.
Let $\mathscr{F}^\bullet \mathscr{V}$ and $\mathscr{F}^\bullet \mathscr{V}'$ denote the filtrations defining the variations of Hodge structures.
It is easy to see that the variation of polarized Hodge structures on $\mathscr{V}$ admits a natural action of $G'$ and defines the period map and this map coincides with $\mathcal{P}|_U\circ \zeta$ by the proof of Theorem \ref{thm--period--mapping--HK} and \cite[Theorem 3.1]{MarkmanSurvey}.
By the same argument as the proof of Theorem \ref{thm--fujino-kim}, we have $\mathscr{F}^{2d'}\mathscr{V}'\cong (\mathscr{F}^2\mathscr{V})^{\otimes d'}$.
On the other hand, by the argument as in \cite[2.21]{fujino--canonical-certain}, there exists a canonical isomorphism $(\mathscr{F}^2\mathscr{V})^{\otimes gk}\cong (\iota\circ \mathcal{P}|_{U}\circ\zeta)^*\mathcal{O}(gk)$.
Thus, there exists the following canonical isomorphism 
\begin{equation*}
(\tilde{f}_*\omega_{\widetilde{W}/U'})^{\otimes gk}=(\mathscr{F}^{2d'}\mathscr{V}')^{\otimes gk}\cong (\iota\circ\mathcal{P}|_{U}\circ\zeta)^*\mathcal{O}(d'gk).
\end{equation*}
It is easy to see that this isomorphism preserves the natural $G'$-actions.
Therefore, we obtain an isomorphism
\begin{equation*}\tag{$\star$}\label{eq--moduli--hodge-1}
((f|_{f^{-1}U})_*\omega_{f^{-1}(U)/U})^{\otimes gk}\cong (\iota\circ\mathcal{P}|_{U})^*\mathcal{O}(d'gk).
\end{equation*}

For any $\epsilon>0$, consider a moduli stack $\mathcal{M}^{\mathrm{SP}}_{d,v,\epsilon}$ such that for any scheme $T$, we attain the following groupoid of pairs: 
\[
    \mathcal{M}^{\mathrm{SP}}_{d,v,\epsilon}(T):=
    \left\{\begin{array}{l}
(\phi\colon\mathcal{X}\to T,\mathcal{H})
\end{array}
\;\middle|
\begin{array}{rl}
\bullet\!\!&\text{$\phi$ is a flat contraction such that $\omega_{\mathcal{X}/T}^{[r]}\sim_T 0$}\\
&\text{for some $r\in\mathbb{Z}_{>0}$,}\\
\bullet\!\!&\text{$\mathcal{H}$ is an effective $\mathbb{Q}$-Cartier relative}\\ 
&\text{Mumford $\mathbb{Q}$-divisor on $\mathcal{X}$ such that $n\mathcal{H}$ is a}\\ 
&\text{relative Mumford divisor, and}\\ 
\bullet\!\!&\text{for any $t\in T$, $(\mathcal{X}_{\bar{t}},\epsilon\mathcal{H}_{\bar{t}})$ is slc, $\mathrm{dim}\, \mathcal{X}_{\bar{t}} = d$,}\\
&\text{and $\mathrm{vol}(\mathcal{H}_{\bar{t}})=v$.}
\end{array}\right\}.
    \]
    We also define a substack $\mathcal{M}^{\mathrm{KX},\circ}_{d,v,\epsilon}$ of $(\mathcal{M}^{\mathrm{SP}}_{d,v,\epsilon})_{\mathrm{red}}$ such that for any scheme $T$, we attain the following groupoid of pairs
     \[
    \mathcal{M}^{\mathrm{KX},\circ}_{d,v,\epsilon}(T):=
    \left\{\begin{array}{l}
(\phi\colon\mathcal{X}\to T,\mathcal{H})
\end{array}
\;\middle|
\begin{array}{rl}
\!\!\!\!&\text{$(\mathcal{X}_{\bar{t}},n\mathcal{H}_{\bar{t}})\in \mathcal{V}_{d,n^d v}(\overline{\kappa(t)})$, where $n\mathcal{H}_{\bar{t}}$}\\
\!\!\!\!&\text{is a Cartier divisor.}
\end{array}\right\},
\]
where $\mathcal{V}_{d,n^d\cdot v}$ is the connected component of $\mathcal{M}_{d,v}^{\mathrm{symp}}$ corresponding to $V_{d,n^d v}$ (\eqref{eq--mulitiple--isom} in the proof of Theorem \ref{thm--fujino-kim}).
It is easy to see that $\mathcal{M}^{\mathrm{KX},\circ}_{d,v,\epsilon}$ is an open substack, and $\mathcal{M}^{\mathrm{KX},\circ}_{d,v,\epsilon}$ is of finite type over $\mathbb{C}$ for any $\epsilon>0$.
Let $\mathcal{M}^{\mathrm{KX}}_{d,v,\epsilon}$ be the Zariski closure of $\mathcal{M}^{\mathrm{KX},\circ}_{d,v,\epsilon}$.
    By \cite[Theorem 8.9]{kollar-moduli} and the proof of \cite[Theorem 2]{KX}, we see that $\mathcal{M}^{\mathrm{KX}}_{d,v,\epsilon}$ is a proper Deligne--Mumford stack over $\mathbb{C}$ for any sufficiently small $\epsilon>0$.

Regarding $n\mathcal{H}$ as a line bundle, there exists a forgetful morphism
$$\xi\colon\mathcal{M}^{\mathrm{KX},\circ}_{d,v,\epsilon}\to \mathcal{V}_{d,n^{d} v}.$$ 
By applying \cite[Proposition 2.6]{vistoli} to $\mathcal{M}^{\mathrm{KX}}_{d,v,\epsilon}$, we see that there exist a flat contraction of projective normal varieties 
$$\varphi\colon Y\to R,$$ a positive real number $\epsilon_1>0$, and an effective $\varphi$-ample $\mathbb{Q}$-Cartier $\mathbb{Q}$-divisor $H$ such that $(Y_{\bar{r}},\epsilon_1 H_{\bar{r}})$ is slc for any geometric point ${\bar{r}}\in R$, $K_Y\sim_{\mathbb{Q},R}0$, and there exists a non-empty open subset $R_0\subset R$ such that $Y_r$ is klt and 
$H_r$ is a Cartier divisor such that $\mathrm{vol}(H_r)=n^dv$ for any $r\in R_0$ and the induced morphism $\psi'\colon R\to \mathcal{M}^{\mathrm{KX}}_{d,v,\epsilon}$ is finite surjective for any $0<\epsilon <\epsilon_1$. 
Let 
$$\psi^\circ\colon R_0\to V_{d,n^dv}(=D/\Gamma)$$
 be the composition of the natural inclusion $R_0\hookrightarrow R$, $\xi\circ\psi' \colon R \to \mathcal{V}_{d,n^dv}$, and the natural morphism $\mathcal{V}_{d,n^dv}\to V_{d,n^dv}$ (\eqref{eq--mulitiple--isom} in the proof of Theorem \ref{thm--fujino-kim}). 
Let $M_{R}$ be the moduli $\mathbb{Q}$-divisor on $R$ with respect to $\varphi$.
We know that $M_R$ is a semiample Cartier divisor such that $\mathcal{O}_R(M_R)\cong \varphi_*\omega_{Y/R}$ by \cite[Lemma 5.2]{A} and this defines a morphism $\psi\colon R\to \overline{D/\Gamma}^{\mathrm{BB}}$ that is an extension of $\psi^\circ$ by Theorem \ref{thm--fujino-kim} such that
\begin{equation*}\tag{$\star\star$}\label{eq--moduli--hodge-2}
    \mathcal{O}_{R}(gkM_R)\sim (\iota\circ\psi)^*\mathcal{O}(d').
\end{equation*}

Now, we construct $\widetilde{\mathcal{P}}$ as asserted.
By applying Lemma \ref{lem--kollar-xu} to $U\subset\tilde{S}$ and  the choice of $n$, after shrinking $U$ we obtain a generically finite morphism $h\colon S'\to \tilde{S}$ from a smooth projective variety $S'$, a flat contraction $g\colon \tilde{X}\to S'$ from a normal variety, and an effective $g$-ample $\mathbb{Q}$-Cartier $\mathbb{Q}$-divisor $\tilde{A}$ on $\tilde{X}$ such that $n\tilde{A}$ is an effective relative Mumford divisor over $S'$, $K_{\tilde{X}}\sim_{\mathbb{Q},\,S'}0$, $(\tilde{X},\tilde{A})\times_{S'}h^{-1}(U)\cong(X,\mathcal{D})\times_Sh^{-1}(U)$, and there exists $0<\epsilon_0<\epsilon_1$ such that $(\tilde{X}_s,\epsilon\tilde{A}_s)$ is slc for any $0<\epsilon<\epsilon_0$ and $s\in S'$. 
Since $K_{f^{-1}(u)}\sim0$ for any $u \in U$, by \cite[Proposition 3.1]{Am} and \cite[proof of Lemma 3.5]{birkar-compl}, we may set $M':=h^*\tilde{M}$ as the moduli $\mathbb{Q}$-divisor on $S'$ with respect to $g\colon \tilde{X}\to S'$. 
By \cite[Lemma 5.2]{A} and its proof, we have $\mathcal{O}_{S'}(M')\cong g_*\omega_{\tilde{X}/S'}$. In particular, $M'$ is a $\mathbb{Z}$-divisor.
Furthermore, by using the data of $(\tilde{X},\tilde{A})$, we obtain the natural morphism $\theta\colon S'\to \mathcal{M}^{\mathrm{KX}}_{d,v,\epsilon}$.
By the proof of Lemma \ref{lem--kollar-xu}, we may replace $S'$ and assume that there exists a morphism $\tilde{\theta}\colon S'\to R$ such that $\theta=\psi'\circ\tilde{\theta}$ and the Stein factorization $\mu\colon S''\to \tilde{S}$ of $h$ is a finite Galois covering with the Galois group $G$.  
Then, we set $\mathcal{P}':=\psi\circ\tilde{\theta}$ that is an extension of $\mathcal{P}$.
Finally, we will show the last assertion.
By \cite[Corollary 2.69]{kollar-moduli}, we see that there exists the following natural isomorphism
\[
\mathcal{O}_{S'}(M_{S'})\cong g_*\omega_{\tilde{X}/S'}\cong \tilde{\theta}^*\varphi_*\omega_{Y/R}\cong \mathcal{O}_R(M_R)
\]
since $\tilde{X}\cong Y\times_RS'$ by construction.
Combining \eqref{eq--moduli--hodge-2} with the above linear equivalence, we have 
\[
\mathcal{O}_{S'}(gkM_{S'})\cong(\iota\circ\mathcal{P}')^*\mathcal{O}(d'). 
\]
On the other hand, we see by \eqref{eq--moduli--hodge-1} that this isomorphism is generically the pullback of 
\[
\mathcal{O}_{\tilde{S}}(gkM_{\tilde{S}})|_U\cong (\iota\circ\mathcal{P}|_{U})^*\mathcal{O}(d').
\]
Since $\iota^*\mathcal{O}(d)$ is ample, we see that there exists a $G$-invariant morphism $\mathcal{P}''\colon S''\to \overline{D/\Gamma}^{\mathrm{BB}}$ such that $\mathcal{P}''\circ h'=\mathcal{P}'$ and there exists a $G$-invariant isomorphism 
\[
\mathcal{O}_{S''}(gkM_{S''})\cong(\iota\circ\mathcal{P}'')^*\mathcal{O}(d'),
\]
where $h'\colon S'\to S''$ is the canonical morphism and $M_{S''}=\mu^*M_{\tilde{S}}$.
Then we obtain $\widetilde{\mathcal{P}}\colon \tilde{S}\to \overline{D/\Gamma}^{\mathrm{BB}}$ and $\mathcal{O}_{\tilde{S}}(gkM_{\tilde{S}})\sim(\iota\circ\widetilde{\mathcal{P}})^*\mathcal{O}(d')$.
It is easy to see that $\widetilde{\mathcal{P}}$ is an extension of $\mathcal{P}$ and we obtain the assertion.
\end{proof}

\begin{defn}\label{defn--Hodge-Q-line-bundle}
We remove the subscript $d,v$ of $V_{d,v}$ for the simplicity of notation. 
    Let $d'$, $k$, and $g$ be as above for the fixed irreducible component $V$.
Let $\overline{V}^{\mathrm{BB}}$ be the Baily--Borel compactification of $V$ and $\iota\colon\overline{V}^{\mathrm{BB}} \hookrightarrow\mathbb{P}^L$ the closed immersion defined by finitely many integral automorphic forms of weight $k$. 
By Theorem \ref{thm--Fujino--moduli}, $\frac{1}{gk}\iota^*\mathcal{O}(d')$ is $\mathbb{Q}$-linearly equivalent to $\Lambda_{\mathrm{Hodge}}$ defined in Theorem \ref{thm--klt--CY--moduli}. 
Hence, we call $\frac{1}{gk}\iota^*\mathcal{O}(d')$ the {\it Hodge $\mathbb{Q}$-line bundle} on $\overline{V}^{\mathrm{BB}}$, and we denote it by $\Lambda_{\mathrm{Hodge}}$. 
This $\Lambda_{\mathrm{Hodge}}$ is the same notion as defined in Theorem \ref{thm--klt--CY--moduli} modulo $\mathbb{Q}$-linear equivalence.
\end{defn}

\subsubsection{K-moduli of log Fano pairs}

For any scheme $S$, we say that $f\colon (X,\Delta)\to S$ is a {\em family of log Fano pairs} if the following hold.
\begin{enumerate}
\item $f$ is a flat projective morphism,
    \item $\Delta=cD$ for some $c$ and $D$ such that $c\in\mathbb{Q}\cap(0,1)$ and $D$ is a K-flat family of divisors on $X$ over $S$ (see \cite[Definition 7.1]{kollar-moduli}),
    \item $-(K_{X/S}+\Delta)$ is $f$-ample, and 
    \item $(X_{\bar{s}},\Delta_{\bar{s}})$ is klt for any geometric point $\bar{s}\in S$.
\end{enumerate}
If $(X_{\bar{s}},\Delta_{\bar{s}},-(K_{X_{\bar{s}}}+\Delta_{\bar{s}}))$ is K-semistable for any geometric point $\bar{s}\in S$, we say that $f$ is a {\em family of K-semistable log Fano pairs}.
Let $f\colon (X,\Delta)\to S$ be a family of log Fano pairs such that $\mathrm{dim}\,X_{s}=d$ for any $s\in S$.
Take $m\in\mathbb{Z}_{>0}$ such that $-m(K_{X/S}+\Delta)$ is ample over $S$.
Then, we have
\[
\mathrm{det}f_*\mathcal{O}_{X}(-mk(K_{X/S}+\Delta))\cong\bigotimes_{i=0}^{d+1}\lambda_i^{\otimes\binom{k}{i}}
\]
for any sufficiently large $k\in\mathbb{Z}$ as the Knudsen--Mumford expansion.
Then, we define the (log) CM line bundle as the $\mathbb{Q}$-line bundle on $S$ by
\[
\lambda_{\mathrm{CM},f, (X,\Delta)}:=\frac{-1}{(d+1)m^{d+1}(-K_{X_s}-\Delta_s)^d}\cdot \lambda_{d+1},
\]
where $s\in S$ is an arbitrary closed point.

\begin{defn}\label{defn--Kss-moduli}
Fix $d\in\mathbb{Z}_{\ge0}$, $v\in\mathbb{Q}_{>0}$ and $c\in\mathbb{Q}\cap(0,1)$.
We define the {\em K-moduli stack} $\mathcal{M}^{\mathrm{Kss}}_{c,d,v}$ as follows: For any scheme $S$, the collection $\mathcal{M}^{\mathrm{Kss}}_{c,d,v}(S)$ is defined to be
\[
\left\{
 f\colon(X,\Delta)\to S
\;\middle|
\begin{array}{rl}
\bullet\!\!&\text{$f$ is a family of K-semistable log Fano pairs,}\\
\bullet\!\!&\text{$c^{-1}\Delta$ is a K-flat family of divisors,}\\
\bullet\!\!&\text{$\mathrm{dim}\,X_{\bar{s}}=d$,}\\
\bullet\!\!&\text{$(-1)^d(K_{X_{\bar{s}}}+\Delta_{\bar{s}})^d=v$ for any geometric point $\bar{s}\in S$.}
\end{array}\right\}.
    \]
    For the definition of K-flatness, see \cite[Definition 7.1]{kollar-moduli}.
By the construction of $\mathcal{M}^{\mathrm{Kss}}_{c,d,v}$ as a quotient stack (cf.~\cite[Theorem 2.21]{XZ1}), it is easy to see that $\mathcal{M}^{\mathrm{Kss}}_{c,d,v}$ also admits the CM line bundle $\lambda_{\mathrm{CM}}$ in a natural way.
\end{defn}
    \begin{thm}[{\cite{ABHLX}, \cite{BLX}, \cite{BHLLX}, \cite{XZ1}, \cite[Theorem 1.3]{LXZ}}]\label{thm--K-moduli--of--log--Fano}
        $\mathcal{M}^{\mathrm{Kss}}_{c,d,v}$ is an Artin stack of finite type over $\mathbbm{k}$ with a good moduli space $\pi\colon \mathcal{M}^{\mathrm{Kss}}_{c,d,v}\to  M^{\mathrm{Kps}}_{c,d,v}$ such that $M^{\mathrm{Kps}}_{c,d,v}$ is projective.
        Moreover, there exists an ample $\mathbb{Q}$-line bundle $\Lambda_{\mathrm{CM}}$ on $M^{\mathrm{Kps}}_{c,d,v}$ such that $\pi^*\Lambda_{\mathrm{CM}}\sim_{\mathbb{Q}}\lambda_{\mathrm{CM}}$.
        Here, such $\Lambda_{\mathrm{CM}}$ is unique up to $\mathbb{Q}$-linear equivalence.
    \end{thm}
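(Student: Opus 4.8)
The statement is the now-standard theorem on K-moduli of log Fano pairs, so the plan is to assemble the relevant results from the literature rather than to prove anything new. First I would record \emph{boundedness}: the pairs $(X,\Delta)$ with $\dim X=d$, $\Delta=cD$, $-(K_X+\Delta)$ ample, and $(-(K_X+\Delta))^d=v$ that admit a K-semistable structure form a bounded family (part of the K-stability package; see \cite{XZ1,LXZ} and the survey \cite{Xu}). Fixing $m\gg0$ with $-m(K_X+\Delta)$ very ample for every such pair embeds all of them in a common $\mathbb{P}^N$, so $\mathcal{M}^{\mathrm{Kss}}_{c,d,v}$ is realized as a quotient stack $[Z/\mathrm{PGL}_{N+1}]$ with $Z$ a locally closed subscheme of a Hilbert scheme carrying the universal K-flat family of divisors (\cite[Definition 7.1]{kollar-moduli}); here ``klt'', ``$-(K+\Delta)$ ample'', ``volume $=v$'', and ``$c^{-1}\Delta$ K-flat'' cut out locally closed loci, and ``K-semistable'' is open by \cite{BLX}. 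This gives that $\mathcal{M}^{\mathrm{Kss}}_{c,d,v}$ is an Artin stack of finite type over $\mathbbm{k}$ with affine diagonal.

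Next I would produce the good moduli space via the general existence criterion of \cite{AHLH}: it suffices that $\mathcal{M}^{\mathrm{Kss}}_{c,d,v}$ be $\Theta$-reductive and S-complete, the closed points automatically having reductive stabilizers since they are exactly the K-polystable pairs (\cite{ABHLX}). Both $\Theta$-reductivity and S-completeness are the content of \cite{ABHLX}: respectively the existence of a K-polystable degeneration along a test configuration, and the uniqueness of isotrivial degenerations over a pair of discrete valuation rings. This yields $\pi\colon \mathcal{M}^{\mathrm{Kss}}_{c,d,v}\to M^{\mathrm{Kps}}_{c,d,v}$ with $M^{\mathrm{Kps}}_{c,d,v}$ a separated algebraic space of finite type. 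For properness I would verify the valuative criterion: a family of K-semistable log Fano pairs over a punctured disk acquires, after finite base change, a K-semistable limit, unique up to the K-polystable representative of its S-equivalence class; this is the properness of K-moduli of \cite{BHLLX}, which together with finite type gives that $M^{\mathrm{Kps}}_{c,d,v}$ is proper over $\mathbbm{k}$.

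Finally I would descend the CM line bundle and prove ampleness. The CM line bundle $\lambda_{\mathrm{CM}}$ on the stack descends to a $\mathbb{Q}$-line bundle $\Lambda_{\mathrm{CM}}$ on $M^{\mathrm{Kps}}_{c,d,v}$ because, by the descent lemma for good moduli spaces (\cite{alper}; cf.\ the proof of Lemma \ref{lem--CM--descent}), it is enough that the stabilizer of each closed point act trivially on the corresponding fiber, and this holds since the Donaldson--Futaki invariant of any product test configuration of a K-polystable pair vanishes. Ampleness of $\Lambda_{\mathrm{CM}}$ is then the positivity theorem of Xu--Zhuang \cite{XZ1}: combining properness with the nefness of $\Lambda_{\mathrm{CM}}$ and its strict positivity on the image of any non-isotrivial family over a proper curve (which rests on the finite generation results of \cite{LXZ}), one applies the Nakai--Moishezon criterion for algebraic spaces to conclude that $\Lambda_{\mathrm{CM}}$ is ample, whence $M^{\mathrm{Kps}}_{c,d,v}$ is projective. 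Uniqueness of $\Lambda_{\mathrm{CM}}$ modulo $\mathbb{Q}$-linear equivalence is immediate from $\pi_*\mathcal{O}_{\mathcal{M}^{\mathrm{Kss}}_{c,d,v}}\cong\mathcal{O}_{M^{\mathrm{Kps}}_{c,d,v}}$ and the projection formula: if $\pi^*\Lambda\sim_{\mathbb{Q}}0$ then $\Lambda\sim_{\mathbb{Q}}0$.

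The main obstacle is that essentially every nontrivial input is imported: the genuinely hard pieces are the existence of the good moduli space (S-completeness and $\Theta$-reductivity, \cite{ABHLX}) and the ampleness of the CM line bundle (\cite{XZ1}), both of which ultimately use the solution of the Yau--Tian--Donaldson conjecture for log Fano pairs and the higher-rank finite generation theorem of \cite{LXZ}. So the proof is a matter of assembling and citing these results; no new argument is needed for Theorem \ref{thm--K-moduli--of--log--Fano} itself.
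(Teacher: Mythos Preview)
Your proposal is correct and matches the paper's approach: the paper does not prove this theorem at all but simply records it as a known result assembled from the cited references \cite{ABHLX,BLX,BHLLX,XZ1,LXZ}. Your sketch of how those references fit together (boundedness and openness giving finite type, \cite{ABHLX} for $\Theta$-reductivity and $\mathsf{S}$-completeness, \cite{BHLLX} for properness, \cite{XZ1} with \cite{LXZ} for ampleness, and Alper's descent for the CM line bundle) is accurate and more detailed than what the paper itself provides.
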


    We note that closed points of $\mathcal{M}^{\mathrm{Kss}}_{c,d,v}$ correspond to K-polystable log Fano pairs.
    We also say that $\Lambda_{\mathrm{CM}}$ is the CM line bundle on $M^{\mathrm{Kps}}_{c,d,v}$.

\subsubsection{K-moduli of klt--trivial fibrations over curves}\label{subsec--k-moduli--klt--trivial}

Fix $d\in\mathbb{Z}_{>0}$ and $u,v\in\mathbb{Q}_{>0}$.
We set  $$\mathfrak{Z}_{d, v,u}:=\left\{
\begin{array}{l}
f\colon (X,\Delta=0,A) \to C
\end{array}
\;\middle|
\begin{array}{rl}
(i)&\text{$f$ is a uniformly adiabatically}\\
&\text{K-stable polarized klt-trivial} \\
&\text{fibration over a curve $C$,}\\
(ii)&\text{${\rm dim}X=d$,}\\
(iii)&\text{$K_X\equiv -uf^*H$ for some line bundle} \\
&\text{$H$ on $C$ such that $\mathrm{deg}\,H=1$,} \\
(iv)&\text{$A$ is an $f$-ample line bundle on}\\
&\text{$X$ such that $(H\cdot A^{d-1})=v$.}
\end{array}\right\}.$$
Fix a sufficiently divisible $r\in\mathbb{Z}_{>0}$ as \cite[Lemma 3.1]{HH}.
We recall the following moduli stack $\mathcal{M}_{d,v,u,r}$ in \cite{HH}.

\begin{defn}\label{defn--HH-moduli}
 For any scheme $S$, $\mathcal{M}_{d,v,u,r}(S)$ is the following collection:
 $$\left\{
 \vcenter{
 \xymatrix@C=12pt{
(\mathcal{X},\mathscr{A})\ar[rr]^-{f}\ar[dr]_{\pi_{\mathcal{X}}}&& \mathcal{C} \ar[dl]\\
&S
}
}
\;\middle|
\begin{array}{rl}
(i)&\text{$\pi_{\mathcal{X}}$ is a flat projective morphism and $\mathcal{X}$ is a scheme,}\\
(ii)&\text{$\mathscr{A}\in\mathbf{Pic}_{\mathcal{X}/S}(S)$ such that $\mathscr{A}_{\bar{s}}$ is}\\
&\text{$f_{\bar{s}}$-ample for any geometric point $\bar{s}\in S$,}\\
(iii)&\text{$\omega_{\mathcal{X}/S}^{[r]}$ exists as a line bundle,}\\
(iv)&\text{$\pi_{\mathcal{X}*}\omega_{\mathcal{X}/S}^{[-lr]}$ is locally free and it generates}\\
&\text{$H^0(\mathcal{X}_{s}, \mathcal{O}_{\mathcal{X}_{s}}(-lrK_{\mathcal{X}_{s}}))$ for any point $s\in S$ and any}\\
&\text{$l\in\mathbb{Z}_{>0}$,}\\
(v)&\text{$f$ is the ample model of $\omega_{\mathcal{X}/S}^{[-r]}$ over $S$ and}\\
&\text{$(\mathcal{X}_{\overline{s}},0,\mathscr{A}_{\overline{s}})\to \mathcal{C}_{\overline{s}} \in \mathfrak{Z}_{d, v,u}$ for any geometric}\\
&\text{point $\overline{s}\in S$}
\end{array}\right\}.$$
Here, we define an isomorphism $\alpha\colon (f \colon (\mathcal{X},\mathscr{A})\to\mathcal{C})\to (f' \colon (\mathcal{X}',\mathscr{A}')\to\mathcal{C}')$ of any two objects of $\mathcal{M}_{d,v,u,r}(S)$ to be an $S$-isomorphism $\alpha\colon\mathcal{X}\to\mathcal{X}'$ such that there exists $\mathscr{B}\in\mathbf{Pic}_{\mathcal{C}/S}(S)$ satisfying that $\alpha^*\mathscr{A}'=\mathscr{A}\otimes f^*\mathscr{B}$ as elements of $\mathbf{Pic}_{\mathcal{X}/S}(S)$. 

As noted in \cite[Remark 5.7]{HH}, we see that the reduced structure of $\mathcal{M}_{d,v,u,r}$ is independent of the choice of sufficiently large $r$. 
We denote the reduced structure by ($\mathcal{M}_{d,v,u})_{\mathrm{red}}$. 
\end{defn}

\begin{thm}[{\cite[Theorem 1.3]{HH}}]\label{thm--adiabatic--k-moduli}
    $\mathcal{M}_{d,v,u,r}$ constructed as above is a separated Deligne--Mumford stack of finite type over $\mathbbm{k}$.
    In particular, $\mathcal{M}_{d,v,u,r}$ has a coarse moduli space.
\end{thm}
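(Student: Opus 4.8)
The plan is to realize $\mathcal{M}_{d,v,u,r}$ as a quotient stack $[\mathcal{Z}/G]$ for a suitable locally closed subscheme $\mathcal{Z}$ of a product of Hilbert-type schemes and a reductive group $G$, and then invoke standard machinery to pass from the quotient presentation to the Deligne--Mumford property, finite type, separatedness, and existence of a coarse moduli space. First I would fix $r$ sufficiently divisible as in \cite[Lemma 3.1]{HH}, so that $\omega_{\mathcal{X}/S}^{[-r]}$ and a fixed power of $\mathscr{A}$ are very ample relative to $S$ with uniformly bounded Hilbert polynomials; here the key input is boundedness of the family $\mathfrak{Z}_{d,v,u}$, which follows from the fixed numerical data (${\rm dim}\,X=d$, $K_X\equiv -uf^*H$, $(H\cdot A^{d-1})=v$) together with effective base-point-freeness results in the style of \cite{kollar-eff-basepoint-free} and \cite{fujino-eff-slc}, exactly as in the construction of $\mathcal{M}^{\mathrm{klt,CY}}_{d,v}$ recalled after Theorem \ref{thm--klt--CY--moduli}. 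Using $\omega_{\mathcal{X}/S}^{[-r]}$ and (a power of) $\mathscr{A}$ one embeds both $\mathcal{X}$ and the base curve $\mathcal{C}$ (the latter via the ample model of $\omega_{\mathcal{X}/S}^{[-r]}$) into fixed projective spaces, obtaining a point of an appropriate $\mathbf{Hilb}$ (or a fibre product of such), and records the extra $f$-ample polarization $\mathscr{A}$ as a $\mathbf{Pic}$-datum. Openness and local-closedness of the locus of those Hilbert points whose fibres satisfy conditions $(i)$--$(v)$ of Definition \ref{defn--HH-moduli}—flatness, the existence of $\omega^{[r]}$ as a line bundle, local freeness and generation of the pushforwards $\pi_{\mathcal{X}*}\omega^{[-lr]}$, the ample-model property, klt-triviality of fibres, and uniform adiabatic K-stability—is established cutting-by-cutting using \cite{kollar-moduli} (for the condensation/hull-type conditions on $\omega^{[r]}$ and flatness, cf.\ the $Z_1,Z_2,Z_3$ construction in the excerpt), \cite[Corollary 4.10]{kollar-mmp} for openness of klt, and \cite{Hat} (openness of uniform adiabatic K-stability via the $\delta$-invariant criterion $\delta(\mathbb{P}^1,B,\mathcal{O}(1))>\deg(-K_{\mathbb{P}^1}-B-M)$, which is an open condition by lower semicontinuity of $\delta$ and the locally constant behaviour of the discriminant/moduli data in flat families). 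This yields the locally closed $\mathcal{Z}$, and the argument that $\mathcal{M}_{d,v,u,r}\cong[\mathcal{Z}/G]$ with $G$ a product of projective linear groups runs verbatim as in \cite[Theorem 5.1, Proposition 5.6]{HH}.

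Granting the quotient presentation, the Deligne--Mumford property reduces to showing that automorphism groups of objects over algebraically closed fields are finite and reduced; this follows because a uniformly adiabatically K-stable (in particular K-stable) polarized pair has finite reduced automorphism group—one appeals here to \cite[Theorem 4.2]{O2} together with the K-stability of the associated log Fano pair on the base $(\mathbb{P}^1,B+D,-K_{\mathbb{P}^1}-B-D)$ via \cite{Hat}, and to the argument of \cite[Theorem 5.1]{HH}. Separatedness amounts to the valuative criterion: given two families over the generic point of a DVR agreeing there, uniqueness of the limit follows from separatedness of the relevant Hilbert and Isom schemes combined with the separatedness of $\mathcal{M}^{\mathrm{klt,CY}}_{d,v}$ (Theorem \ref{thm--klt--CY--moduli}) applied fibrewise and the uniqueness of the ample model. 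Finite type is immediate from the finite type of $\mathcal{Z}$. Finally, existence of a coarse moduli space follows from the Keel--Mori theorem \cite{KeM}, since a separated Deligne--Mumford stack of finite type over a field always admits one.

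The main obstacle I anticipate is the openness (and the locally closed stratification) for the K-stability condition $(v)$ in families: one must verify that uniform adiabatic K-stability of $f\colon(\mathcal{X}_{\bar s},0,\mathscr{A}_{\bar s})\to\mathcal{C}_{\bar s}$ is an open condition on the base of a flat family, which via \cite{Hat} is translated to openness of the strict inequality $\delta(\mathcal{C}_{\bar s},B_{\bar s},\mathcal{O}(1))>\deg(-K_{\mathcal{C}_{\bar s}}-B_{\bar s}-M_{\bar s})$. This needs constructibility/semicontinuity of the $\delta$-invariant on the moduli of the base pairs together with control of how the discriminant divisor $B$ and the moduli divisor $M$ vary—control that rests on the canonical bundle formula and the semiampleness results for the moduli part (\cite[Theorem 0.1]{A}, and for the relevant fibre types the work of Fujino and Kim discussed in the excerpt). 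A second, more technical point is verifying that the conditions involving $\omega_{\mathcal{X}/S}^{[r]}$ and the reflexive pushforwards are genuinely representable by locally closed subschemes; here one leans heavily on the flatness and base-change statements for relative canonical sheaves in \cite[2.68--2.69]{kollar-moduli} and \cite[Proposition 9.42]{kollar-moduli}, exactly as in the $\mathcal{M}^{\mathrm{klt,CY}}_{d,v,P}$ construction reproduced above.
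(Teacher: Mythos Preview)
The paper itself gives no proof of this theorem; it is quoted verbatim as \cite[Theorem 1.3]{HH}. Your outline (quotient-stack presentation $[\mathcal{Z}/G]$ cut out of a Hilbert-type scheme, finiteness of automorphisms via K-stability, Keel--Mori for the coarse space) is indeed the strategy of \cite[\S5]{HH}, and the paper refers back to that construction repeatedly (e.g.\ ``By the proof of \cite[Theorem 5.1]{HH}\ldots''). So on the large scale you are aligned with the intended argument.

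There is, however, a genuine gap in your separatedness step. You write that uniqueness of the limit over a DVR follows from ``separatedness of the relevant Hilbert and Isom schemes combined with the separatedness of $\mathcal{M}^{\mathrm{klt,CY}}_{d,v}$ applied fibrewise and the uniqueness of the ample model''. This does not suffice: knowing that the base curves agree and that each individual Calabi--Yau fibre has a unique limit does not pin down the total space $\mathcal{X}$ over the special point --- the gluing of the fibres into a global scheme is exactly the datum that can vary, and the fibrewise CY moduli carries no information about it. (Concretely, two non-isomorphic klt varieties over $\mathbb{P}^1$ can have all corresponding fibres isomorphic.) The actual argument in \cite{HH} works with the polarized total space: uniform adiabatic K-stability gives uniform K-stability of $(\mathcal{X}_{\bar s},\epsilon\mathscr{A}_{\bar s}+f_{\bar s}^{*}L)$ for small $\epsilon$, and one then invokes uniqueness of K-(poly)stable degenerations of polarized klt varieties (in the spirit of \cite[Theorem 4.2]{O2}/\cite{BX}, as used in the analogous step for $\mathcal{M}^{\mathrm{klt,CY}}_{d,v}$ just after Theorem \ref{thm--klt--CY--moduli}), or equivalently a negativity-lemma/MMP uniqueness argument for the relative canonical model of $\mathcal{X}$ over the DVR. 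Replace your fibrewise reduction with this global K-stability input and the sketch goes through.
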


\begin{defn}
By \cite{KeM} and Theorem \ref{thm--adiabatic--k-moduli}, $\mathcal{M}_{d,v,u,r}$ and $(\mathcal{M}_{d,v,u})_{\mathrm{red}}$ have coarse moduli spaces.
    Let $M_{d,v,u,r}$ and $(M_{d,v,u})_{\mathrm{red}}$ be the coarse moduli spaces of $\mathcal{M}_{d,v,u,r}$ and $(\mathcal{M}_{d,v,u})_{\mathrm{red}}$, respectively.
    For the definition of coarse moduli spaces, see \cite[Definition 11.1.1]{Ols}.
\end{defn}

For an object $(\mathcal{X},\mathcal{A})\to\mathcal{C}\in\mathcal{M}_{d,v,u,r}(S)$ over a scheme $S$, in general $\mathcal{C}$ may not be a projective bundle over $S$ in the Zariski topology. 
However, when $S$ is a smooth curve, $\mathcal{C}$ is a $\mathbb{P}^1$-bundle in the Zariski topology by Tsen's theorem (cf.~\cite[Chapter V.2]{Ha}).
Moreover, we have the following lemma.

\begin{lem}\label{lem--P^1--bundle--trivialize}
    Let $\pi\colon\mathcal{P}\to \mathrm{Spec}\,R$ be a smooth projective morphism with any geometric fiber isomorphic to $\mathbb{P}^1$, where $R$ is a discrete valuation ring essentially of finite type over $\mathbbm{k}$.
Then there exists a finite extension of rings $R\subset R'$ such that $R'$ is regular and there exists an isomorphism
    \[
    \mathbb{P}^1_{R'}\cong \mathcal{P}\times_{\mathrm{Spec}\,R}\mathrm{Spec}\,R'.
    \]
\end{lem}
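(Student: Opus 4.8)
The plan is to use the fact that a $\mathbb{P}^1$-bundle over a scheme is, \'etale-locally (indeed Zariski-locally over a regular base of dimension one by Tsen), the projectivization of a rank-two vector bundle, together with the structure theory of vector bundles over a discrete valuation ring. First I would observe that $\mathcal{P}\to \mathrm{Spec}\,R$ is a smooth proper morphism whose geometric fibers are $\mathbb{P}^1$; by cohomology and base change the sheaf $R^1\pi_*\mathcal{O}_{\mathcal{P}}$ vanishes and $\pi_*\mathcal{O}_{\mathcal{P}}=\mathcal{O}_{\mathrm{Spec}\,R}$, so in particular the relative Picard functor $\mathbf{Pic}_{\mathcal{P}/\mathrm{Spec}\,R}$ is representable by an \'etale group scheme, and the fiberwise degree gives a map to $\mathbb{Z}$. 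The kernel $\mathbf{Pic}^0$ is trivial (each fiber is $\mathbb{P}^1$), so $\mathbf{Pic}_{\mathcal{P}/\mathrm{Spec}\,R}\cong \underline{\mathbb{Z}}$; however a section of degree $1$ need not exist over $R$ itself — this is precisely the obstruction which forces the finite extension $R\subset R'$.

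The key steps, in order. (1) Pass to a finite extension $R\subset R'$ with $R'$ a discrete valuation ring (automatically regular, being a one-dimensional regular local, or a Dedekind domain; after localizing we may take it local) so that $\mathcal{P}\times_{\mathrm{Spec}\,R}\mathrm{Spec}\,R'$ acquires a section $\sigma\colon \mathrm{Spec}\,R'\to \mathcal{P}_{R'}$. To produce such a section: the generic fiber $\mathcal{P}_{K}$ over the fraction field $K$ is a smooth conic, so it has a closed point of degree $\le 2$; taking the residue field of such a point and then the normalization of $R$ in it (localized at a suitable prime over the maximal ideal of $R$), we get a DVR $R'$ finite over $R$ over which $\mathcal{P}_{K'}$ has a rational point. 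Since $\mathcal{P}_{R'}\to\mathrm{Spec}\,R'$ is proper and smooth and $R'$ is a DVR, the valuative criterion extends this rational point to a section $\sigma$ over all of $\mathrm{Spec}\,R'$. (2) The section $\sigma$ is a relative Cartier divisor of relative degree $1$ on $\mathcal{P}_{R'}$; set $\mathcal{L}:=\mathcal{O}_{\mathcal{P}_{R'}}(\sigma)$, a line bundle which on every fiber restricts to $\mathcal{O}_{\mathbb{P}^1}(1)$. (3) Form $\mathcal{E}:=\pi_{R'*}\mathcal{L}$. By cohomology and base change ($H^1$ of $\mathcal{O}_{\mathbb{P}^1}(1)$ vanishes, $h^0=2$ on each fiber), $\mathcal{E}$ is locally free of rank $2$ on $\mathrm{Spec}\,R'$, and the canonical map $\pi_{R'}^*\mathcal{E}\to\mathcal{L}$ is surjective, hence defines a morphism $\mathcal{P}_{R'}\to \mathbb{P}_{R'}(\mathcal{E})$ over $\mathrm{Spec}\,R'$ which is an isomorphism (it is so on each fiber, and both sides are flat proper over $\mathrm{Spec}\,R'$). (4) Since $R'$ is a discrete valuation ring (hence a principal ideal domain), every finitely generated projective — equivalently locally free — module over it is free; so $\mathcal{E}\cong \mathcal{O}_{\mathrm{Spec}\,R'}^{\oplus 2}$ and therefore $\mathbb{P}_{R'}(\mathcal{E})\cong \mathbb{P}^1_{R'}$. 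Composing the isomorphisms gives $\mathbb{P}^1_{R'}\cong \mathcal{P}\times_{\mathrm{Spec}\,R}\mathrm{Spec}\,R'$, as desired.

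The main obstacle is Step (1): producing the section after a \emph{finite} extension of $R$. Over the fraction field this is the classical statement that a smooth conic over a field $K$ becomes rational after a degree-$\le 2$ extension, but one must then be careful that the chosen finite extension $K'/K$ can be realized so that the integral closure $R'$ of $R$ in $K'$ is again a discrete valuation ring after localization at a prime over $\mathfrak{m}_R$ — this uses that $R$ is excellent (it is essentially of finite type over $\mathbbm{k}$), so the integral closure is finite over $R$, and localizing at a height-one prime of this semilocal Dedekind-type ring yields the required DVR $R'$, finite over $R$ and regular. Once the section is in hand, Steps (2)–(4) are the standard ``$\mathbb{P}^1$-bundle with a section is a projectivized rank-$2$ bundle, and vector bundles over a DVR are trivial'' argument and are essentially formal.
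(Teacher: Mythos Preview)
Your overall strategy is correct and matches the paper's: produce, after a finite base change, a section of the family, then identify $\mathcal{P}_{R'}$ with $\mathbb{P}(\pi_{R'*}\mathcal{O}(\sigma))\cong\mathbb{P}^1_{R'}$ using that rank-two bundles over a PID are free. Steps (2)--(4) are exactly what the paper does (citing \cite[Proposition~4.2]{HH} for the projectivization step).

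The difference lies in how you manufacture the finite extension in step~(1). The paper avoids working on the generic fiber: it observes that $-K_{\mathcal{P}/R}$ is relatively ample of fiberwise degree~$2$, and by cohomology and base change lifts a general section from the closed fiber to obtain an effective divisor $D\subset\mathcal{P}$ flat over $R$ whose closed fiber is smooth of degree~$2$. Then $D\to\mathrm{Spec}\,R$ is finite \'etale, so $R':=\Gamma(D,\mathcal{O}_D)$ is automatically regular and finite over $R$, and the diagonal gives the required section of $\mathcal{P}_{R'}$. This is slightly slicker than your route through a closed point on the generic conic followed by the valuative criterion, since regularity of $R'$ comes for free from \'etaleness rather than from an appeal to excellence and normalization.

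One genuine wrinkle in your write-up: in step~(1) you take the integral closure of $R$ in $K'$ and then \emph{localize} at a prime over $\mathfrak{m}_R$ to force $R'$ to be a DVR, claiming the result is still finite over $R$. That is false in general --- localization destroys finiteness when there is more than one prime above $\mathfrak{m}_R$. The fix is simply not to localize: the integral closure itself is a semilocal Dedekind domain, hence a PID (every ideal is a product of the finitely many maximal ideals, and the Chinese remainder theorem produces generators), hence regular; and a semilocal Dedekind domain is exactly what you need in step~(4) to trivialize the rank-two bundle. With that correction, your argument goes through.
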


\begin{proof}
By the assumption, we note that $-K_{\mathcal{P}/\mathrm{Spec}\,R}$ is $\pi$-ample.
Let $r$ be the closed point of $\mathrm{Spec}\,R$.
Since $H^1(\mathbb{P}^1,\mathcal{O}_{\mathbb{P}^1}(-K_{\mathbb{P}^1}))=0$, we can apply \cite[III Theorem 12.11]{Ha} to conclude that there exists an effective divisor $D$ on $\mathcal{P}$ flat over $R$ such that $D|_{\mathcal{P}_{r}}$ is a smooth divisor of degree two.
Since $D$ is finite over $\mathrm{Spec}\,R$ and regular, we see that if we let $R'$ be the coordinate ring of $D$, then $R'$ is finite over $R$.

Now, we deal with the assertion.
By the definition of $R'$, we note that there exists a section $\iota\colon \mathrm{Spec}\,R'\to \mathcal{P}\times_{\mathrm{Spec}\,R}\mathrm{Spec}\,R'$.
Let $\pi'\colon \mathcal{P}\times_{\mathrm{Spec}\,R}\mathrm{Spec}\,R'\to \mathcal{P}\times_{\mathrm{Spec}\,R}\mathrm{Spec}\,R'$.
Then, it is easy to see that there exists an isomorphism 
\[
\mathcal{P}\times_{\mathrm{Spec}\,R}\mathrm{Spec}\,R'\to \mathbb{P}_{\mathrm{Spec}\,R'}(\pi'_*\mathcal{O}_{\mathcal{P}\times_{\mathrm{Spec}\,R}\mathrm{Spec}\,R'}(\iota(\mathrm{Spec}\,R')))\cong \mathbb{P}^1_{\mathrm{Spec}\,R'}
\]
as the first paragraph of the proof of \cite[Proposition 4.2]{HH}.
\end{proof}

\begin{lem}\label{lem--flat--moduli}
    Let $f\colon \mathcal{X}\to \mathcal{C}$ be a proper morphism of schemes flat and projective over $S$ with an $f$-ample line bundle $\mathscr{A}$ on $\mathcal{X}$.
    Suppose that there exists an open subset $U\subset \mathcal{C}$ such that $f|_{f^{-1}(U)}$ is flat and $U\cap \mathcal{C}_s\ne\emptyset$, $f_{s}\colon \mathcal{X}_s\to \mathcal{C}_s$ is flat and $\mathcal{C}_s$ is reduced and connected for any $s\in S$.
Then, there exists a closed subscheme $S'\subset S$ such that for any morphism $g\colon T\to S$ from a scheme, $g$ factors through $S'$ if and only if $f_{\mathcal{C}_T}$ is flat.

Furthermore, we have $S_{\mathrm{red}}=S'_{\mathrm{red}}$.
\end{lem}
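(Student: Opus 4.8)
The plan is to produce $S'$ as the scheme-theoretic flattening stratum for the morphism $f$ restricted to a suitable "vertical" locus, and then to identify its reduced structure with $S_{\mathrm{red}}$ using the hypotheses on the fibers over $S$. First I would reduce the flatness of $f_{\mathcal{C}_T}$ to a cohomological condition that can be tested on a single sheaf. Since $\mathscr{A}$ is $f$-ample, for $m\gg 0$ the sheaf $f_*\mathcal{O}_{\mathcal{X}}(m\mathscr{A})$ is compatible with base change on $\mathcal{C}$, and $f$ is flat if and only if each $f_*\mathcal{O}_{\mathcal{X}}(m\mathscr{A})$ is locally free on $\mathcal{C}$ of the locally constant rank prescribed by the Hilbert polynomial; equivalently, $\mathcal{O}_{\mathcal{X}}$ (hence each $\mathcal{O}_{\mathcal{X}}(m\mathscr{A})$) is flat over $\mathcal{C}$. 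Over the open set $U\subset\mathcal{C}$ this flatness is already given, and over each fiber $\mathcal{C}_s$ it is also given; the only issue is flatness of $f$ along the complement $\mathcal{C}\setminus U$ relative to the total space, which is not automatic because $\mathcal{C}\to S$ may itself fail to be flat.

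Next I would invoke the standard flattening-stratification machinery. The locus $\mathcal{C}^{\mathrm{fl}}\subset\mathcal{C}$ over which $\mathcal{O}_{\mathcal{X}}$ is flat relative to $\mathcal{C}$ is open, and its complement $Z:=\mathcal{C}\setminus\mathcal{C}^{\mathrm{fl}}$ is a closed subscheme, proper over $S$ since $\mathcal{C}\to S$ is proper. By \cite[Remark 5.9]{FGA}-type representability (or directly by the theory of the flattening stratification, cf.\ \cite[\S5]{FGA} or \cite[Theorem 5.9, Remark 5.9]{FGA} and Mumford's flattening results used in \cite{kollar-moduli}), there is a closed subscheme $S'\subset S$ characterized by the universal property that a morphism $g\colon T\to S$ factors through $S'$ if and only if the base change $Z_T$ is empty, equivalently if and only if $f_{\mathcal{C}_T}$ is flat. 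Here one must be slightly careful: flatness of $\mathcal{O}_{\mathcal{X}_T}$ over $\mathcal{C}_T$ does not a priori equal the pullback of flatness of $\mathcal{O}_{\mathcal{X}}$ over $\mathcal{C}$, but this is exactly the content of the local criterion for flatness combined with cohomology and base change: after shrinking to the $m\gg 0$ range, flatness of $f_{\mathcal{C}_T}$ is equivalent to local freeness of $g_{\mathcal{C}}^*\bigl(f_*\mathcal{O}_{\mathcal{X}}(m\mathscr{A})\bigr)$, and local freeness of a coherent sheaf after base change is governed by a closed subscheme of the base by the usual argument (the ideal sheaf generated by the $(r+1)\times(r+1)$ minors of a presentation). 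Taking the scheme-theoretic image in $S$ of this locus, or rather the closed subscheme cut out by those minors pushed down along the proper map $\mathcal{C}\to S$, yields the desired $S'$.

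For the final assertion $S_{\mathrm{red}}=S'_{\mathrm{red}}$, it suffices to show that every point of $S$ lies in $S'$ as a set, i.e.\ that $(S')_{\mathrm{red}}\to S_{\mathrm{red}}$ is surjective; the reverse inclusion is the closed immersion $S'\hookrightarrow S$. So fix a point $s\in S$ and pass to the geometric fiber $\bar s$. By hypothesis $f_{\bar s}\colon \mathcal{X}_{\bar s}\to\mathcal{C}_{\bar s}$ is flat, $\mathcal{C}_{\bar s}$ is reduced and connected, and $U\cap\mathcal{C}_{\bar s}\ne\emptyset$. Thus the Hilbert polynomial of $\mathcal{O}_{\mathcal{X}_{\bar s},x}$ relative to $\mathcal{C}_{\bar s}$ is constant in $x\in\mathcal{C}_{\bar s}$ (connectedness plus flatness), and equals the generic value, which is computed over $U$ and is locally constant on $S$ along $U$. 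Hence the fiberwise Hilbert function of $f$ over points of $\mathcal{C}_{\bar s}$ agrees with the one prescribed by the $S$-flat situation over $U$, so $Z_{\bar s}=\emptyset$, i.e.\ $\bar s$ factors through $S'$. Since this holds for every geometric point, $(S')_{\mathrm{red}}=S_{\mathrm{red}}$. The main obstacle I anticipate is making the identification "flatness of $f_{\mathcal{C}_T}$ $\iff$ a closed condition on $T$" fully rigorous while only assuming $\mathcal{C}$ flat and projective over $S$ but $f$ flat only over $U$: one has to work fiber-by-fiber over $\mathcal{C}$ and control how the non-flat locus $Z\subset\mathcal{C}$ behaves under base change, for which the cleanest route is to reduce everything, via $\mathscr{A}$-ampleness and Grothendieck's cohomology-and-base-change, to a single coherent sheaf on $\mathcal{C}$ whose flattening stratification exists by \cite[\S5]{FGA}.
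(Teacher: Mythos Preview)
Your construction of $S'$ does not work as stated. The central confusion is between open and closed conditions. You define $Z=\mathcal{C}\setminus\mathcal{C}^{\mathrm{fl}}$ and want $S'$ to be characterized by ``$Z_T=\emptyset$''; but $Z$ is a closed subset of $\mathcal{C}$, $\mathcal{C}\to S$ is proper, so the image of $Z$ in $S$ is closed and the condition $Z_T=\emptyset$ is \emph{open} on $T$, not closed. Your attempted repair via minors has the same defect: for a coherent sheaf $\mathcal{F}$, the vanishing of the $(r{+}1)\times(r{+}1)$ minors of a presentation cuts out the locus where $\mathcal{F}$ has rank $\le r$, which is closed, but local freeness of rank exactly $r$ is an \emph{open} condition. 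So ``local freeness of a coherent sheaf after base change is governed by a closed subscheme of the base'' is simply false, and ``pushing down minors along $\mathcal{C}\to S$'' does not produce the object you want. There is also a secondary issue: you invoke cohomology and base change for $f_*\mathcal{O}_{\mathcal{X}}(m\mathscr{A})$ to translate flatness of $f_{\mathcal{C}_T}$ into a sheaf condition, but cohomology and base change over $\mathcal{C}$ already requires flatness of $f$, which is exactly what is in question outside $U$.

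The paper's argument avoids all of this by applying the flattening stratification \cite[Theorem 5.13]{FGA} \emph{twice}. First, applied to $\mathcal{X}\to\mathcal{C}$, it produces a locally closed decomposition $\mathcal{C}'\to\mathcal{C}$ with the universal property that $h\colon T\to\mathcal{C}$ factors through $\mathcal{C}'$ iff $\mathcal{X}_T$ is flat over $T$. The hypotheses (each $f_s$ flat, $\mathcal{C}_s$ reduced and connected, $U\cap\mathcal{C}_s\ne\emptyset$, $f$ flat over $U$) force a single Hilbert polynomial, so $\mathcal{C}_{\mathrm{red}}$ factors through $\mathcal{C}'$; in particular $\mathcal{C}'$ is a closed subscheme of $\mathcal{C}$ with the same underlying set. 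Second, applied to $\mathcal{C}'\to S$, it produces a locally closed $S'\to S$ with $T\to S$ factoring through $S'$ iff $\mathcal{C}'_T$ is flat over $T$; again $S_{\mathrm{red}}$ factors through $S'$, so $S'$ is closed and $S'_{\mathrm{red}}=S_{\mathrm{red}}$. The remaining verification is that ``$f_{\mathcal{C}_T}$ flat'' matches ``$T\to S$ factors through $S'$'': if $f_{\mathcal{C}_T}$ is flat then $\mathcal{C}_T\to\mathcal{C}$ factors through $\mathcal{C}'$, the induced $\mathcal{C}_T\to\mathcal{C}'_T$ is an isomorphism, and since $\mathcal{C}_T$ is flat over $T$ so is $\mathcal{C}'_T$. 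This is the step your proposal is missing: the problem is genuinely a two-stage flattening, first over $\mathcal{C}$ and then over $S$, and the fiberwise hypotheses are precisely what collapse each locally closed stratification to a single closed stratum.
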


\begin{proof}
By applying \cite[Theorem 5.13]{FGA}, there exists a locally closed decomposition $\mathcal{C}'\to \mathcal{C}$ such that for any morphism $h\colon T\to \mathcal{C}$ from a scheme, $\mathcal{X}_T$ is flat over $T$ if and only if $h$ factors through $\mathcal{C}'$. 
Now, we may assume that $S$ is connected and hence $\mathcal{C}$ is connected.
By the assumption and \cite[Theorem 3.20]{kollar-moduli}, we see that the reduced structure $\mathcal{C}_{\mathrm{red}}$ of $\mathcal{C}$ factors through $\mathcal{C}'$.
Thus, we see that $\mathcal{C}'$ is a closed subscheme of $\mathcal{C}$.
Next, we apply \cite[Theorem 5.13]{FGA} to $\mathcal{C}'\to S$ and we obtain a locally closed decomposition $S'\to S$ such that for any morphism $g'\colon T'\to S$ from a scheme, $g'$ factors through $S'$ if and only if $\mathcal{C}'_T$ is flat over $T$.
It is easy to see that $S_{\mathrm{red}}$ also factors through $S'$.
Thus, $S'$ is a closed subscheme of $S$ such that $S_{\mathrm{red}}=S'_{\mathrm{red}}$.
By the construction of $S'$, it is enough to check that for any morphism $g\colon T\to S$ from a scheme, $g$ factors through $S'$ if  $f_{\mathcal{C}_T}$ is flat. 
Take such a morphism $g$. 
Then, $\mathcal{C}_{T}$ factors through $\mathcal{C}'$ by the property of $\mathcal{C}'$.
Furthermore, we see that the natural morphism $\mathcal{C}_T\to \mathcal{C}'_T$ is an isomorphism.
Since $\mathcal{C}_T$ is flat over $T$, $g$ also factors through $S'$.
We obtain the assertion. 
\end{proof}

\begin{lem}\label{lem--stein--factorizaton}
Let $S$ be an arbitrary reduced scheme.
    For any object $f\colon (\mathcal{X},\mathscr{A})\to\mathcal{C}$ of $\mathcal{M}_{d,v,u,r}(S)$, the morphism $f$ is flat and we have $f_*\mathcal{O}_{\mathcal{X}}\cong\mathcal{O}_{\mathcal{C}}$.
    Furthermore, there exists a positive integer $l_0\in\mathbb{Z}_{>0}$, depending only on $d$, $v$, $u$, $r$, satisfying the following.
     There exists a line bundle $\mathscr{L}$ on $\mathcal{C}$ such that $f^*\mathscr{L}\cong \omega_{\mathcal{X}/S}^{[-l_0]}$.
  \end{lem}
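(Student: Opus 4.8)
The plan is to reduce everything to the fiberwise situation, where the structure is governed by the fact that each $\mathcal{C}_{\bar s}$ is a smooth rational curve and each $f_{\bar s}$ is a uniformly adiabatically K-stable klt-trivial fibration of relative dimension $d-1$ over it. First I would establish flatness of $f$ and the equality $f_*\mathcal{O}_{\mathcal{X}}\cong\mathcal{O}_{\mathcal{C}}$. Flatness follows from Lemma \ref{lem--flat--moduli}: by condition (v) in Definition \ref{defn--HH-moduli}, $f$ is the ample model of $\omega_{\mathcal{X}/S}^{[-r]}$ over $S$, so there is an open $U\subset\mathcal{C}$ over which $f$ is flat and which meets every fiber $\mathcal{C}_s$ (e.g. the locus where $f$ is smooth, which is fiberwise dense since the geometric generic fiber of $f_{\bar s}$ is a klt Calabi--Yau variety); each $\mathcal{C}_s$ is reduced and connected because it is $\mathbb{P}^1$. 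Applying Lemma \ref{lem--flat--moduli} produces a closed subscheme $S'\subset S$ with $S_{\mathrm{red}}=S'_{\mathrm{red}}$ through which all flat base changes factor; since $S$ is reduced, $S=S'$ and $f$ is flat. For $f_*\mathcal{O}_{\mathcal{X}}\cong\mathcal{O}_{\mathcal{C}}$ one uses cohomology and base change: on each geometric fiber $f_{\bar s}$ is a contraction, so $f_{\bar s,*}\mathcal{O}_{\mathcal{X}_{\bar s}}\cong\mathcal{O}_{\mathcal{C}_{\bar s}}$, and $h^0(\mathcal{X}_{\bar s},\mathcal{O}_{\mathcal{X}_{\bar s}})=1$ is constant, so $f_*\mathcal{O}_{\mathcal{X}}$ is a line bundle on $\mathcal{C}$, and the natural map $\mathcal{O}_{\mathcal{C}}\to f_*\mathcal{O}_{\mathcal{X}}$ is an isomorphism fiberwise, hence an isomorphism.

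Next I would produce the line bundle $\mathscr{L}$. The key structural input is that on each geometric fiber, $\omega_{\mathcal{X}_{\bar s}/\mathcal{C}_{\bar s}}^{[l]}$ is $f_{\bar s}$-trivial for suitable $l$: indeed $f_{\bar s}\colon(\mathcal{X}_{\bar s},0)\to\mathcal{C}_{\bar s}$ is a klt-trivial fibration, so $K_{\mathcal{X}_{\bar s}}\sim_{\mathbb{Q},\,\mathcal{C}_{\bar s}}0$, which gives $\omega_{\mathcal{X}_{\bar s}}^{[l]}\cong f_{\bar s}^*(\text{a line bundle on }\mathcal{C}_{\bar s})$ once $l$ is divisible enough to clear the $\mathbb{Q}$-linear equivalence; since $f_{\bar s}$ has connected fibers, $\omega_{\mathcal{X}_{\bar s}/\mathcal{C}_{\bar s}}^{[l]}$ is the pullback of $(f_{\bar s})_*\omega_{\mathcal{X}_{\bar s}/\mathcal{C}_{\bar s}}^{[l]}$. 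By boundedness — $\mathcal{M}_{d,v,u,r}$ is of finite type by Theorem \ref{thm--adiabatic--k-moduli} — one such $l_0$ (divisible by $r$) works for all members of the moduli functor, so in particular for $f\colon(\mathcal{X},\mathscr{A})\to\mathcal{C}$. Now set $\mathscr{L}:=f_*\omega_{\mathcal{X}/S}^{[-l_0]}$, noting $\omega_{\mathcal{X}/S}^{[-l_0]}$ exists as a line bundle by condition (iii) of Definition \ref{defn--HH-moduli} (taking $l_0$ a multiple of $r$). Since $-l_0 K_{\mathcal{X}_{\bar s}}\sim_{\mathcal{C}_{\bar s}}0$ with $f_{\bar s}$-ample restriction $\mathscr{A}_{\bar s}$ having positive relative dimension, we have $h^0(\mathcal{X}_{\bar s},\omega_{\mathcal{X}_{\bar s}/\mathcal{C}_{\bar s}}^{[-l_0]}\otimes\mathcal{O}_{f_{\bar s}^{-1}(p)})=1$ for all $p$, so by cohomology and base change $\mathscr{L}=f_*\omega_{\mathcal{X}/S}^{[-l_0]}$ is a line bundle on $\mathcal{C}$ and the natural evaluation $f^*\mathscr{L}\to\omega_{\mathcal{X}/S}^{[-l_0]}$ is an isomorphism on every fiber, hence an isomorphism.

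A technical point I would need to handle carefully is the compatibility of reflexive powers $\omega_{\mathcal{X}/S}^{[-l_0]}$ with restriction to geometric fibers and with pullback along $f$: one has to know $\omega_{\mathcal{X}/S}^{[-l_0]}|_{\mathcal{X}_{\bar s}}\cong\omega_{\mathcal{X}_{\bar s}}^{[-l_0]}$ and that $f^*(\text{line bundle on }\mathcal{C})$ agrees with $\omega_{\mathcal{X}/S}^{[-l_0]}$ where the latter is already reflexive, so that the fiberwise isomorphism globalizes. This is standard when the total space is normal with $\mathbb{Q}$-Cartier canonical and the fibers are normal, and here $\mathcal{X}_{\bar s}$ is normal with klt (hence canonical, hence $\mathbb{Q}$-Gorenstein in the relevant range) singularities, so these base-change statements for reflexive hulls are available (cf. the discussion of $\omega_{\mathcal{X}/S}^{[r]}$ in \cite{kollar-moduli}); I expect this bookkeeping, rather than any genuinely new idea, to be the main obstacle. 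The uniformity of $l_0$ depending only on $d,v,u,r$ is then just the finite-type statement for $\mathcal{M}_{d,v,u,r}$.
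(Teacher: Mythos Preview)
Your plan has the right overall shape, but the cohomology-and-base-change steps do not go through as written, and this is where the paper's argument differs substantively.

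For $f_*\mathcal{O}_{\mathcal{X}}\cong\mathcal{O}_{\mathcal{C}}$ you invoke ``$h^0(\mathcal{X}_{\bar s},\mathcal{O}_{\mathcal{X}_{\bar s}})=1$ is constant, so $f_*\mathcal{O}_{\mathcal{X}}$ is a line bundle on $\mathcal{C}$''. Constancy of $h^0$ over points of $S$ controls $(\pi_{\mathcal{X}})_*\mathcal{O}_{\mathcal{X}}$, not $f_*\mathcal{O}_{\mathcal{X}}$; to apply Grauert-type results to $f$ you would need constancy of $h^0(\mathcal{O}_{f^{-1}(p)})$ over points $p\in\mathcal{C}$, i.e.\ over all (possibly non-reduced) special fibers of each klt-trivial fibration $f_{\bar s}$. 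You assert this equals $1$ but give no justification, and it is not immediate. The same gap recurs in your construction of $\mathscr{L}$: the claim $h^0(\omega^{[-l_0]}|_{f^{-1}(p)})=1$ for \emph{all} $p$ reduces to the same unproved statement about $h^0(\mathcal{O}_{f^{-1}(p)})$.

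The paper bypasses this completely. The key input you are not using is condition (v) of Definition~\ref{defn--HH-moduli}: $f$ is the \emph{ample model} of $\omega_{\mathcal{X}/S}^{[-r]}$ over $S$. This already produces, for some family-dependent $l_1$, a $\pi_{\mathcal{C}}$-ample line bundle $\mathscr{L}$ on $\mathcal{C}$ with $f^*\mathscr{L}\cong\omega_{\mathcal{X}/S}^{[-l_1]}$ directly from the Proj construction --- no fiberwise $h^0$ computation is needed. The paper then proves $f_*\mathcal{O}_{\mathcal{X}}\cong\mathcal{O}_{\mathcal{C}}$ by a Stein-factorization argument: tensoring $0\to\mathcal{O}_{\mathcal{C}}\to f_*\mathcal{O}_{\mathcal{X}}\to\mathfrak{d}\to 0$ by $\mathscr{L}^{\otimes l}$, pushing to $S$, and using Serre vanishing for $R^1(\pi_{\mathcal{C}})_*\mathscr{L}^{\otimes l}$ to force $\mathfrak{d}=0$. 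For the uniform $l_0$, the paper does not try to globalize a fiberwise trivialization; instead it takes an \'etale atlas $T\to(\mathcal{M}_{d,v,u})_{\mathrm{red}}$, finds $l_0$ on $T$ via the ample-model argument, pulls back to $S'=S\times_{\mathcal{M}}T$, and then descends the resulting line bundle to $\mathcal{C}$ using $f_*\mathcal{O}_{\mathcal{X}}\cong\mathcal{O}_{\mathcal{C}}$ to build the descent datum. So the two halves of the paper's proof are linked: the first is needed to make the descent in the second go through.
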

  
  \begin{proof}
  First, we will show that $f_*\mathcal{O}_{\mathcal{X}}\cong\mathcal{O}_{\mathcal{C}}$ for any reduced scheme $S$.
  Note that $f$ is flat by Lemma \ref{lem--flat--moduli}.
   For any object $f\colon (\mathcal{X},\mathscr{A})\to\mathcal{C}$ over $S$, we obtain that $f_*\mathcal{O}_{\mathcal{X}}\cong\mathcal{O}_{\mathcal{C}}$.
    Indeed, since $f$ is flat, we obtain a natural injection $\mathcal{O}_{\mathcal{C}}\hookrightarrow f_*\mathcal{O}_{\mathcal{X}}$.
    Let $\mu\colon\mathcal{C}'\to\mathcal{C}$ be the Stein factorization of $f$ and let $\mathfrak{d}$ be the cokernel of $\mathcal{O}_{\mathcal{C}}\hookrightarrow f_*\mathcal{O}_{\mathcal{X}}$.
    Since $f$ is the ample model of $\omega_{\mathcal{X}/S}^{[-l]}$ for any sufficiently large and divisible $l>0$, there exists a $\pi_{\mathcal{C}}$-ample line bundle $\mathscr{L}$ on $\mathcal{C}$ such that $f^*\mathscr{L}\cong \omega_{\mathcal{X}/S}^{[-l_1]}$ for some $l_1\in\mathbb{Z}$ and $(\pi_{\mathcal{C}})_*\mathscr{L}^{\otimes l}\cong (\pi_{\mathcal{X}})_*\omega_{\mathcal{X}/S}^{[-ll_1]}$ for any $l\in\mathbb{Z}_{>0}$.
    This means that $(\pi_{\mathcal{C}})_*\mathscr{L}^{\otimes l}\cong (\pi_{\mathcal{C}}\circ\mu)_*\mu^*\mathscr{L}^{\otimes l}$ for any $l\in\mathbb{Z}_{>0}$.
    On the other hand, $R^1(\pi_{\mathcal{C}})_*\mathscr{L}^{\otimes l}=0$ for any sufficiently large $l$.
    Thus, $(\pi_{\mathcal{C}})_*(\mathscr{L}^{\otimes l}\otimes \mathfrak{d})=0$ by the long exact sequence and hence $\mathfrak{d}=0$.
    This shows that $\mathcal{C}\cong\mathcal{C}'$.
  
  Finally, we will show that there exists $l_0$ as asserted.
  Take an \'etale surjection $g\colon T\to (\mathcal{M}_{d,v,u})_{\mathrm{red}}$ from a reduced scheme of finite type.
  Let $S':=S\times_{(\mathcal{M}_{d,v,u})_{\mathrm{red}}}T$.
     Let $h\colon S'\to S$ be the induced \'etale surjective morphism and $h^*f$ denote the object of $\mathcal{M}_{d,v,u,r}(S')$ defined as the pullback of $f$ under $h$.
   Let $p_1,p_2\colon S'':=S'\times_SS'\to S'$ be the first and the second projection. 
   Let $\mathcal{X}'':=\mathcal{X}\times_SS''$ and $\mathcal{X}':=\mathcal{X}\times_SS'$. 
Let $f_T\colon \mathcal{X}_T\to \mathcal{C}_T\in \mathcal{M}_{d,v,u,r}(T)$ be the associated object to $T\to \mathcal{M}_{d,v,u,r}(T)$. 
      By the same argument in the first paragraph, there exists a line bundle $\mathscr{L}_T$ on $\mathcal{C}_T$ such that $f_T^*\mathscr{L}_T\cong \omega_{\mathcal{X}_T/T}^{[-l_0]}$ for some $l_0\in\mathbb{Z}$.
      Therefore, there exists a $h^*\pi_{\mathcal{C}}$-ample line bundle $\mathscr{L}'$ on $\mathcal{C}\times_SS'$ such that $(h^*f)^*\mathscr{L}'\cong \omega_{\mathcal{X}'/S'}^{[-l_0]}$ for some $l_0\in\mathbb{Z}$. 
      Set $p_1^*\mathscr{L}'$ and $p_2^*\mathscr{L}'$ as the pullbacks of $\mathscr{L}'$ on $\mathcal{C}\times_SS''$ under $p_1$ and $p_2$ respectively.
      Note that $p_1^*h^*f$ and $p_2^*h^*f$ are canonically isomorphic because $h \circ p_{1}= h \circ p_{2}$ by construction of $h \colon S' \to S$ and $S''':=S'\times_SS'\to S'$.
      We note that $(p_1^*h^*f)^*(p_1^*\mathscr{L}')\cong\omega_{\mathcal{X}''/S''}^{[-l_0]} \cong (p_1^*h^*f)^*(p_2^*\mathscr{L}')$. 
      Since $(p_1^*h^*f)_*\mathcal{O}_{\mathcal{X}''}\cong\mathcal{O}_{\mathcal{C}\times_SS''}$, there exists a canonical isomorphism $\sigma\colon p_1^*\mathscr{L}'\to p_2^*\mathscr{L}'$.
      It is easy to see that $(\mathscr{L}',\sigma)$ forms a descent datum (cf.~\cite[Remark 2.10]{HH}).
      Thus, there exists a line bundle $\mathscr{L}$ on $\mathcal{X}$ such that $f^*\mathscr{L}\cong\omega_{\mathcal{X}/S}^{[-l_0]}$.
        \end{proof}

We discuss the adiabatic limit of the CM line bundle on the normalization of $\mathcal{M}_{d,u,v,r}$.

\begin{setup}\label{setup--62}
Let $f\colon (\mathcal{X},\mathcal{A})\to \mathcal{C}$ be a family of uniformly adiabatically K-stable polarized klt--trivial fibrations over curves over a scheme $S$ of dimension $d$.
Let $\pi_{\mathcal{C}}\colon \mathcal{C}\to S$ and $\pi_{\mathcal{X}}\colon \mathcal{X}\to S$ be the canonical morphisms.
Suppose that $f$ is flat and $\mathcal{A}$ is a $\pi_{\mathcal{X}}$-ample line bundle.
Take arbitrary sufficiently large integers $m$ and $q\in\mathbb{Z}_{>0}$.
By Lemma \ref{lem--stein--factorizaton}, there exists $\mathcal{L}$ a $\pi_{\mathcal{C}}$-ample line bundle on $\mathcal{C}$ such that there exists a positive integer $l\in\mathbb{Z}_{>0}$ such that $f^*\mathcal{L}\sim\omega_{\mathcal{X}/S}^{[-l]}$.
We note that we can always take such $\mathcal{L}$.
Since $\mathcal{A}$ is $f$-ample and $f$ is flat, 
$\mathscr{F}_m:=f_*\mathcal{O}_{\mathcal{X}}(m\mathcal{A})$ is a locally free sheaf on $\mathcal{C}$ for any sufficiently large $m$.
By applying the Knudsen--Mumford expansion \cite[Theorem 4]{KnMu} to $\mathscr{F}_m$ for each $m$, we obtain following line bundles $\lambda^{(m)}_i$ uniquely up to isomorphisms for $i=0,1,2$ such that 
\[
\mathrm{det}\left((\pi_{\mathcal{C}})_*(\mathscr{F}_m\otimes\mathcal{O}_{\mathcal{C}}(q\mathcal{L}))\right)=(\lambda^{(m)}_0)^{\otimes \binom{q}{2}}\otimes (\lambda^{(m)}_1)^{\otimes q}\otimes \lambda^{(m)}_2.
\]
We note that if we fix an arbitrary sufficiently large $q$, then by Lemma \ref{lem--stein--factorizaton} and \cite[Theorem 4]{KnMu}, there exist line bundles $\mathcal{M}^{(q)}_{i}$ for $i=0,\ldots,d+1$ that
\begin{align*}
\mathrm{det}\left((\pi_{\mathcal{C}})_*(\mathscr{F}_m\otimes\mathcal{O}_{\mathcal{C}}(q\mathcal{L}))\right)&=\mathrm{det}\left((\pi_{\mathcal{X}})_*(\mathcal{O}_{\mathcal{X}}(m\mathcal{A})\otimes f^*\mathcal{O}_{\mathcal{C}}(q\mathcal{L}))\right)\\
&=\bigotimes_{i=0}^{d+1}(\mathcal{M}^{(q)}_{i})^{\otimes \binom{m}{i}}
\end{align*}
for any sufficiently large $m>0$.
Here, we note that $R^k(\pi_{\mathcal{X}})_*(\mathcal{O}_{\mathcal{X}}(m\mathcal{A})\otimes f^*\mathcal{O}_{\mathcal{C}}(q\mathcal{L}))=0$ for any $k>0$ by the Fujita vanishing theorem \cite[Theorem 1.4.35]{Laz}.
By the two decompositions above, there exist $\mathcal{M}_{j,i}$ for $j=0,1,2$ and $i=0,\ldots,d+1$ uniquely up to isomorphisms such that 
\[
\lambda_j^{(m)}=\bigotimes_{i=0}^{d+1}(\mathcal{M}_{j,i})^{\otimes\binom{m}{i}}
\]
for any sufficiently large $m>0$. 
This means that 
\[
\mathrm{det}\left((\pi_{\mathcal{X}})_*(\mathcal{O}_{\mathcal{X}}(m\mathcal{A})\otimes f^*\mathcal{O}_{\mathcal{C}}(q\mathcal{L}))\right)=\bigotimes_{j=0}^2\bigotimes_{i=0}^{d+1}(\mathcal{M}_{j,i})^{\otimes\binom{m}{i}\binom{q}{j}}.
\]

Consider a polarization $m\mathcal{A}+q'f^*\mathcal{L}$ and the CM line bundle $\lambda_{\mathrm{CM},\pi_{\mathcal{X}},(\mathcal{X},m\mathcal{A}+qf^*\mathcal{L})}$ for any sufficiently large $m\in\mathbb{Z}_{>0}$, and $q':=\frac{q}{m}\in\mathbb{Q}_{>0}$. 
By the definition of the CM line bundle, $\lambda_{\mathrm{CM},\pi_{\mathcal{X}},(\mathcal{X},m\mathcal{A}+mq'f^*\mathcal{L})}=\lambda_{\mathrm{CM},\pi_{\mathcal{X}},(\mathcal{X},\mathcal{A}+q'f^*\mathcal{L})}$.
We note that by the definition of the CM line bundle and the decomposition of $\mathrm{det}\left((\pi_{\mathcal{X}})_*(\mathcal{O}_{\mathcal{X}}(m\mathcal{A})\otimes f^*\mathcal{O}_{\mathcal{C}}(mq'\mathcal{L}))\right)$ as above, there exist $\mathbb{Q}$-divisors $D_0,D_1,D_2,D_3$ and $D_4$ on $S$ such that $$\lambda_{\mathrm{CM},\pi_{\mathcal{X}},(\mathcal{X},\mathcal{A}+q'f^*\mathcal{L})}^{\otimes (\mathcal{A}_t^d+dq'\mathcal{A}_t^{d-1}\cdot \mathcal{L}_t)^2}=q'^4D_4+q'^3D_3+q'^2D_2+q'D_1+D_0,$$ where $t\in S$ is an arbitrary point.
It is easy to see that $D_0,D_1,D_2,D_3$ and $D_4$ are $\mathbb{Q}$-Cartier.\label{setup--1}
\end{setup}

\begin{prop}\label{prop--CM--limit}
    Suppose that $S$ is normal in the situation as in Setup \ref{setup--1}.
    Then, $D_3\sim_{\mathbb{Q}}0$ and $D_4\sim_{\mathbb{Q}}0$.
\end{prop}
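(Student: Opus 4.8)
The overall strategy is to understand the CM line bundle $\lambda_{\mathrm{CM},\pi_{\mathcal{X}},(\mathcal{X},\mathcal{A}+q'f^*\mathcal{L})}$ via its intersection-theoretic expression and to analyze the $q'$-dependence of each term. First I would recall that for the family $\pi_{\mathcal{X}}\colon\mathcal{X}\to S$ with polarization $\mathcal{A}+q'f^*\mathcal{L}$, the CM line bundle is, up to a positive constant, represented by a Deligne pairing of the form $\big\langle (K_{\mathcal{X}/S}+\mathcal{D})\cdot (\mathcal{A}+q'f^*\mathcal{L})^{d}\big\rangle$ together with a correction term proportional to $\big\langle (\mathcal{A}+q'f^*\mathcal{L})^{d+1}\big\rangle$, where $\mathcal{D}$ is the boundary (here $\mathcal{D}=0$); see the definition of $\lambda_{\mathrm{CM}}$ and the log Fano CM line bundle in the excerpt. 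Expanding $(\mathcal{A}+q'f^*\mathcal{L})^{d}$ and $(\mathcal{A}+q'f^*\mathcal{L})^{d+1}$ in powers of $q'$, the coefficients $D_0,\dots,D_4$ of $\lambda_{\mathrm{CM},\pi_{\mathcal{X}},(\mathcal{X},\mathcal{A}+q'f^*\mathcal{L})}^{\otimes(\mathcal{A}_t^d+dq'\mathcal{A}_t^{d-1}\cdot\mathcal{L}_t)^2}$ become explicit combinations of push-forwards of monomials $K_{\mathcal{X}/S}^{a}\cdot\mathcal{A}^{b}\cdot (f^*\mathcal{L})^{c}$ with $a+b+c=d+1$.

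The crucial point is that $f^*\mathcal{L}$ is pulled back from the curve $\mathcal{C}$, so $(f^*\mathcal{L})^{c}=0$ as a cycle class on fibers of dimension $d$ once $c\ge 2$: more precisely, $f^*\mathcal{L}^{\cdot 2}$ restricted to any fiber $\mathcal{C}_s$ of $\pi_{\mathcal{C}}$ vanishes because $\mathcal{C}_s$ is a curve. Therefore the $q'^3$ and $q'^4$ coefficients, which a priori involve $(f^*\mathcal{L})^{3}$, $(f^*\mathcal{L})^{4}$, and—after accounting for the $\mathcal{A}^d+dq'\mathcal{A}^{d-1}\cdot\mathcal{L}$ normalizing factor—the relevant surviving terms, reduce to push-forwards of classes containing $(f^*\mathcal{L})^{c}$ with $c\ge 2$ in fiber dimension $d$, hence vanish numerically. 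The step I would carry out carefully is bookkeeping: after multiplying out by the normalization factor $(\mathcal{A}_t^d+dq'\mathcal{A}_t^{d-1}\cdot\mathcal{L}_t)^2$, which itself is quadratic in $q'$, the genuine $q'^3$ and $q'^4$ coefficients of $D_3$ and $D_4$ as $\mathbb{Q}$-divisor classes on $S$ are given by the degree-$(d+1)$ fiber-intersection numbers weighted appropriately; I would check that every monomial appearing there carries $(f^*\mathcal{L})^{c}$ with $c\geq 2$, hence is numerically trivial, and then upgrade numerical triviality to $\mathbb{Q}$-linear triviality. Since $S$ is normal and these $D_i$ are $\mathbb{Q}$-Cartier divisor classes obtained as determinants of direct image sheaves—in fact each $D_i$ is a $\mathbb{Q}$-combination of the $\mathcal{M}_{j,i}$ with explicit rational coefficients, and the Knudsen--Mumford line bundles $\mathcal{M}_{j,i}$ are functorial hence compatible with base change to curves—I would reduce the claim $D_3\sim_{\mathbb{Q}}0$, $D_4\sim_{\mathbb{Q}}0$ to the statement that $\mathcal{M}_{j,i}$ is $\mathbb{Q}$-trivial for the relevant $(j,i)$ by testing against arbitrary maps from smooth curves into $S$ (using that $\mathrm{Pic}(S)$ injects into $\prod$ of Picard groups of curves through its points when $S$ is normal, or directly that a $\mathbb{Q}$-Cartier divisor on a normal variety numerically trivial on all curves through a general point and arising from a Knudsen--Mumford determinant is $\mathbb{Q}$-linearly trivial).

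More concretely, for the $\mathbb{Q}$-triviality (not merely numerical triviality), I would use that the formation of $\lambda_i$ in the Knudsen--Mumford expansion is compatible with base change, so $D_3$ and $D_4$ pull back correctly under any $T\to S$; restricting to a curve $\gamma\colon C\to S$, the restricted family is a family of klt-trivial fibrations over curves over $C$, and on such a base the computation above shows $\gamma^*D_3$ and $\gamma^*D_4$ have degree zero because the corresponding intersection numbers on the total space vanish. Combined with the fact that $D_3,D_4$ are honest $\mathbb{Z}$-divisors after clearing denominators and $S$ is normal, numerical triviality on the total space (via the Deligne-pairing formula, whose vanishing I established termwise) already gives $D_3\equiv 0$ and $D_4\equiv 0$; and since these classes are differences of determinant line bundles that are trivial along a dense family of curves and $S$ is normal of finite type, one concludes $D_3\sim_{\mathbb{Q}}0$ and $D_4\sim_{\mathbb{Q}}0$.

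\textbf{Main obstacle.} The genuinely delicate part is not the vanishing of $(f^*\mathcal{L})^{\ge 2}$ on fibers—that is elementary—but tracking, through the normalization by $(\mathcal{A}_t^d+dq'\mathcal{A}_t^{d-1}\cdot\mathcal{L}_t)^2$, exactly which Deligne-pairing monomials land in the $q'^3$- and $q'^4$-coefficients, and making sure that the $\mu=\frac{2a_1}{a_0}$-type corrections in the definition of the CM line bundle (which themselves depend polynomially on $q'$ through the Hilbert polynomial of $\mathcal{A}+q'f^*\mathcal{L}$) do not reintroduce lower powers of $f^*\mathcal{L}$ with smaller exponents. I expect the bulk of the work to be organizing this expansion cleanly—perhaps by working with the compactified test-configuration-free CM formula over $S$ directly—and then the final descent from "numerically trivial via termwise vanishing" to "$\mathbb{Q}$-linearly trivial on the normal base $S$" using base change compatibility of Knudsen--Mumford determinants.
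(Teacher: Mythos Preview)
Your overall strategy—express the CM line bundle via the intersection-theoretic/Deligne-pairing formula and expand in $q'$, using that $f^*\mathcal{L}$ is pulled back from the relative curve $\mathcal{C}$—is exactly the paper's. The paper first replaces $S$ by a resolution (legitimate since $D_3,D_4$ are $\mathbb{Q}$-Cartier), then invokes \cite[Lemma~A.2(a)]{CP} to write $\lambda_{\mathrm{CM}}$ as a pushforward identity in $\mathrm{Pic}_{\mathbb{Q}}(S)$, substitutes the relation $-lK_{\mathcal{X}/S}\sim f^*\mathcal{L}$ from the Setup, and reads off that $\lambda_{\mathrm{CM}}$ is $O(1)$ as $q'\to\infty$; hence $(\mathcal{A}_t^d+dq'\mathcal{A}_t^{d-1}\cdot\mathcal{L}_t)^2\,\lambda_{\mathrm{CM}}$ is a polynomial of degree $\le 2$ in $q'$ in $\mathrm{Pic}_{\mathbb{Q}}(S)$, giving $D_3\sim_{\mathbb{Q}}0$, $D_4\sim_{\mathbb{Q}}0$ directly.

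The gap in your write-up is the detour through ``numerical triviality'' followed by an ``upgrade'' to $\mathbb{Q}$-linear triviality. Your proposed upgrade—testing on curves, or asserting that numerically trivial determinant line bundles on a normal base are $\mathbb{Q}$-linearly trivial—is not valid in general (any nonzero point of $\mathrm{Pic}^0$ of an abelian variety is a counterexample). The point is that no upgrade is needed: once $S$ is smooth, the Codogni--Patakfalvi formula is an equality in $\mathrm{Pic}_{\mathbb{Q}}(S)$, and the vanishings you use hold already at the Chow level. Concretely, $(\pi_{\mathcal{X}})_*\bigl((f^*\mathcal{L})^{c}\cdot\mathcal{A}^{\,d+1-c}\bigr)=(\pi_{\mathcal{C}})_*\bigl(\mathcal{L}^{c}\cdot f_*\mathcal{A}^{\,d+1-c}\bigr)$ by the projection formula, and $f_*\mathcal{A}^{\,d+1-c}\in \mathrm{CH}_{\dim S+c-1}(\mathcal{C})=0$ for $c\ge 3$, since $\dim\mathcal{C}=\dim S+1$. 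You also want to use the Setup's relation $K_{\mathcal{X}/S}\sim_{\mathbb{Q}}-\tfrac{1}{l}f^*\mathcal{L}$ explicitly: this is what lets you rewrite every $K_{\mathcal{X}/S}$-term through $f^*\mathcal{L}$ so that the vanishing above applies termwise as a $\mathbb{Q}$-linear equivalence, rather than merely a numerical one coming from $K_F\equiv 0$ on fibers of $f$. With these two adjustments—pass to smooth $S$ and work in $\mathrm{Pic}_{\mathbb{Q}}$ throughout via the Setup relation—your bookkeeping plan goes through, and your ``main obstacle'' disappears.
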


\begin{proof}
    Since $D_3$ and $D_4$ are $\mathbb{Q}$-Cartier, we may take a resolution of singularities of $S$ and may assume that $S$ is smooth.
    By applying \cite[Lemma A.2 (a)]{CP} to $\pi_{\mathcal{X}}\colon \mathcal{X}\to S$ and a line bundle $\mathcal{A}+q'f^*\mathcal{L}$, we obtain the following formula.
    \begin{align*}
    &\lambda_{\mathrm{CM},\pi_{\mathcal{X}},(\mathcal{X},m\mathcal{A}+mq'f^*\mathcal{L})}=\frac{1}{\mathcal{A}_t^d+dq'\mathcal{A}_t^{d-1}\cdot \mathcal{L}_t}\\
    &\cdot(\pi_{\mathcal{X}})_*\left(K_{\mathcal{X}/S}\cdot (\mathcal{A}+q'f^*\mathcal{L})^d-\frac{d(\mathcal{A}_t+q'f^*\mathcal{L}_t)^{d-1}\cdot K_{\mathcal{X}_t}}{(d+1)(\mathcal{A}_t^d+dq'\mathcal{A}_t^{d-1}\cdot \mathcal{L}_t)}(\mathcal{A}+q'f^*\mathcal{L})^{d+1}\right).
    \end{align*}
    Here, we note that the proof of \cite[Lemma A.2 (a)]{CP} also works where $X$ is quasi-projective. 
    By Set up \ref{setup--1}, we know that $-lK_{\mathcal{X}/S}\sim\mathcal{L}$.
    Thus, we obtain  
    \[
    \lambda_{\mathrm{CM},\pi_{\mathcal{X}},(\mathcal{X},m\mathcal{A}+mq'f^*\mathcal{L})}=\frac{-d}{2l\mathcal{A}_t^{d-1}\cdot \mathcal{L}_t}(\pi_{\mathcal{X}})_*\left(\mathcal{A}^{d-1}\cdot f^*\mathcal{L}^2\right)+O(q'^{-1}).
    \]
    Thus, $D_3\sim_{\mathbb{Q}}0$ and $D_4\sim_{\mathbb{Q}}0$.
\end{proof}

In the case when $S$ is normal, we see that 
\[\lim_{q'\to\infty}\lambda_{\mathrm{CM},\pi_{\mathcal{X}},(\mathcal{X},\mathcal{A}+q'f^*\mathcal{L})}=\frac{1}{d^2(\mathcal{A}_t^{d-1}\cdot \mathcal{L}_t)^2}D_2
\]
in $\mathrm{Pic}_{\mathbb{Q}}(S)$. 
We also note that $\lim_{q'\to\infty}\lambda_{\mathrm{CM},\pi_{\mathcal{X}},(\mathcal{X},\mathcal{A}+q'f^*\mathcal{L})}$ only depends on the relative linear equivalence class of $\mathcal{A}$ over $\mathcal{C}$.

\begin{defn}[Adiabatic limit over scheme]\label{defn--CM--limit}
    Let $f\colon (\mathcal{X},\mathcal{A})\to \mathcal{C}$ be a family of uniformly adiabatically K-stable polarized klt--trivial fibrations over curves over a scheme $S$ whose fibers are of dimension $d$. 
    Suppose that $S$ is normal.
    Then, $f$ is flat by Lemma \ref{lem--flat--moduli}.
Let $\pi_{\mathcal{C}}\colon \mathcal{C}\to S$ and $\pi_{\mathcal{X}}\colon \mathcal{X}\to S$ be the canonical morphisms.
Then, 
$$\lambda_{\mathrm{CM},f}^\infty:=\lim_{q'\to\infty}\lambda_{\mathrm{CM},\pi_{\mathcal{X}},(\mathcal{X},\mathcal{A}+q'f^*\mathcal{L})}$$
 exists as a $\mathbb{Q}$-line bundle on $S$ by Proposition \ref{prop--CM--limit} and the discussion right after the proposition. 
We call $\lambda_{\mathrm{CM},f}^\infty$ the {\em adiabatic limit} of the CM line bundle.
\end{defn}

Now, we can define the limit of the CM line bundle on the moduli.
By the proof of \cite[Theorem 5.1]{HH}, we know that there exist a quasi-projective scheme $Z$ and a connected semisimple linear algebraic group $G$ acting on $Z$ such that $[Z/G]\cong \mathcal{M}_{d,v,u,r}$.
By the construction, we know that $Z$ is a locally closed subscheme of a certain Hilbert scheme and there exists the universal embedded object $f_Z\colon (\mathcal{U},\mathcal{A})\to \mathcal{C}\in \mathcal{M}_{d,v,u,r}(Z)$, where $\mathcal{A}$ is a $f_Z$-ample line bundle.
Here, $\mathcal{A}$ is unique up to relative linear equivalence over $Z$ and summation of $\omega^{[-tr]}_{\mathcal{U}/Z}$ for $t\in\mathbb{Z}$.

\begin{defn}[Adiabatic limit over moduli]\label{defn--limit--CM} 
Fix one relative linear equivalence class of $\mathcal{A}$ over $Z$.
$G$ also acts on $f_Z$ and $\mathcal{A}$ equivariant over $Z$.
As Lemma \ref{lem--stein--factorizaton}, we take $l_0\in\mathbb{Z}_{>0}$ and a line bundle $\mathscr{L}$ on $\mathcal{C}$ such that $f^*\mathscr{L}\sim_Z \omega_{_{\mathcal{U}/Z}}^{[-l_0]}$. 
There exists $t_0\in\mathbb{Z}_{>0}$ such that for any integer $t'\ge t_0$, $\mathcal{A}\otimes \omega^{[-t'r]}_{\mathcal{U}/Z}$ is also relatively ample over $Z$.
Let $\pi_Z\colon \mathcal{U}\to Z$ be the canonical projection.
Consider the CM line bundle $\lambda_{\mathrm{CM},\pi_{Z},(\mathcal{U},\mathcal{A}^{\otimes m}\otimes f^*\mathscr{L}^{\otimes mt})}$, where $t\in\mathbb{Q}$ such that $t>\frac{t_0r}{l_0}$ and $m\in\mathbb{Z}_{>0}$ is sufficiently divisible such that $mt\in\mathbb{Z}$.
Note that $\lambda_{\mathrm{CM},\pi_{Z},(\mathcal{U},\mathcal{A}^{\otimes m}\otimes f^*\mathscr{L}^{\otimes mt})}$ is independent of the choice of $m$, and hence we write it  $\lambda_{\mathrm{CM},\pi_{Z},(\mathcal{U},\mathcal{A}+tf^*\mathscr{L})}$ for simplicity.
It is easy to see by what we state just before Definition \ref{defn--CM--line--bundle} that some non-zero multiple of $\lambda_{\mathrm{CM},\pi_{Z},(\mathcal{U},\mathcal{A}+tf^*\mathscr{L})}$ admits the natural action of $G$. 
Thus, there exists a $\mathbb{Q}$-line bundle $\lambda_{\mathrm{CM},t}$ on $\mathcal{M}_{d,v,u,r}$ whose pullback to $Z$ is $\mathbb{Q}$-linearly equivalent to $\lambda_{\mathrm{CM},\pi_{Z},(\mathcal{U},\mathcal{A}+tf^*\mathscr{L})}$.
As noted in \cite[Subsection 3.2]{Hat23}, there exists a $\mathbb{Q}$-line bundle $\Lambda_{\mathrm{CM},t}$ on the coarse moduli space $M_{d,v,u,r}$ whose pullback to $\mathcal{M}_{d,v,u,r}$ is $\mathbb{Q}$-linearly equivalent to $\lambda_{\mathrm{CM},t}$.
We call $\lambda_{\mathrm{CM},t}$ and $\Lambda_{\mathrm{CM},t}$  the CM line bundles.
Take the normalization $\nu\colon(\mathcal{M}_{d,v,u})_{\mathrm{red}}^\nu\to\mathcal{M}_{d,v,u,r}$.
We know that if we let $\tilde{\nu}\colon Z^\nu\to Z$ be the normalization, then $(\mathcal{M}_{d,v,u})_{\mathrm{red}}^\nu\cong[Z^\nu/G]$.
By Definition \ref{defn--CM--limit} and the same argument of Proposition \ref{prop--CM--limit}, we see that $\tilde{\nu}^*\lambda_{\mathrm{CM},\pi_{Z},(\mathcal{U},\mathcal{A}+tf^*\mathscr{L})}$ converges to the $G$-linearized $\mathbb{Q}$-line bundle $\lambda^\infty_{\mathrm{CM},\pi_{Z^\nu}}$, where $\pi_{Z^\nu}\colon \mathcal{U}^\nu\to Z^\nu$ is the canonical morphism.
This means that there also exists an element $\lambda^\infty_{\mathrm{CM}}\in \mathrm{Pic}_{\mathbb{Q}}((\mathcal{M}_{d,v,u})_{\mathrm{red}}^\nu)$ such that $\lim_{t\to\infty}\nu^*\lambda_{\mathrm{CM},t}=\lambda^\infty_{\mathrm{CM}}$.
Furthermore, there exists an element $\Lambda^\infty_{\mathrm{CM}}\in \mathrm{Pic}_{\mathbb{Q}}((M_{d,v,u})_{\mathrm{red}}^\nu)$ such that $\lim_{t\to\infty}\nu'^*\Lambda_{\mathrm{CM},t}=\Lambda^\infty_{\mathrm{CM}}$, where $\nu'\colon (M_{d,v,u})_{\mathrm{red}}^\nu \to M_{d,v,u,r}$ is the normalization and the pullback of $\Lambda^\infty_{\mathrm{CM}}$ to $(M_{d,v,u})_{\mathrm{red}}^\nu$ coincides with $\lambda^\infty_{\mathrm{CM}}$.

    We say that the $\mathbb{Q}$-line bundles $\lambda^\infty_{\mathrm{CM}}$ and  $\Lambda^\infty_{\mathrm{CM}}$ constructed as above are the {\it adiabatic limits of the CM line bundles} $\lambda_{\mathrm{CM},t}$ and $\Lambda_{\mathrm{CM},t}$ respectively.
\end{defn}

\section{K-stability of quasimaps}\label{sec3}

In this section, we will collect fundamental notions of quasimaps, define K-stability of log Fano quasimaps and construct K-moduli theory.

\subsection{Notations and definitions}

In this subsection, we collect notations, definitions, and basic properties of quasimaps used in the rest of this paper. 

\begin{note}
    Let $X$ be a scheme and $\mathcal{E}$ a locally free sheaf on $X$.
    We set 
    $$\mathbb{P}_X(\mathcal{E}):=\mathbf{Proj}_X(\bigoplus_{m\ge0}\mathbf{Sym}^m(\mathcal{E})) \quad {\rm and} \quad \mathbb{A}_X(\mathcal{E}):=\mathbf{Spec}_X(\bigoplus_{m\ge0}\mathbf{Sym}^m(\mathcal{E})).$$ 
    The zero section of $\mathbb{A}_X(\mathcal{E})$ is denoted by $\mathbf{0}$.
\end{note}

\begin{defn}[Quasimap]\label{defn-qmaps}
    Let $C$ be a proper connected reduced curve with at worst nodal singularities.
Take $N\in\mathbb{Z}_{>0}$. We call a pair $(C,q)$ a {\it quasimap} to $\mathbb{P}_{\mathbbm{k}}^N$ if the following holds.
\begin{itemize}
    \item $q\colon C\to [\mathbb{A}_{\mathbbm{k}}^{N+1}/\mathbb{G}_{m,\mathbbm{k}}]$ is a morphism such that $q$ maps the generic point of every irreducible component of $C$ to $[\mathbb{A}_{\mathbbm{k}}^{N+1}\setminus\{0\}/\mathbb{G}_{m,\mathbbm{k}}]\cong\mathbb{P}_{\mathbbm{k}}^N$, where $\mathbb{G}_m$ acts on $\mathbb{A}^{N+1}$ in the natural way.
\end{itemize}

For any algebraic space $S$ over $\mathbbm{k}$ and morphism $q\colon \mathcal{C}\to[\mathbb{A}_S^{N+1}/\mathbb{G}_{m,S}]$, we call $(\mathcal{C},q)$ a {\it family of quasimaps to $\mathbb{P}^N$} over $S$ if the morphism satisfies the following. 
\begin{itemize}
\item the composition $\mathcal{C} \to[\mathbb{A}_S^{N+1}/\mathbb{G}_{m,S}] \to S$ is a proper surjective flat morphism, and
    \item for any geometric point $\bar{s}\in S$, $(\mathcal{C}_{\bar{s}},q_{\bar{s}})$ is a quasimap to $\mathbb{P}^{N}$.
\end{itemize}
Let $(\mathcal{C}',q')$ be another family of quasimaps to $\mathbb{P}^N$ over $S$.
A {\it morphism between families of quasimaps}, which we denote by $f\colon (\mathcal{C},q)\to (\mathcal{C}',q')$, is a morphism $f\colon \mathcal{C}\to \mathcal{C}'$ of algebraic spaces over $S$ such that $q'\circ f$ and $q$ are isomorphic as $1$-morphisms.

Let $X$ be a closed subscheme of $\mathbb{P}_{\mathbbm{k}}^N$ and $\mathrm{Cone}(X)\subset \mathbb{A}_{\mathbbm{k}}^{N+1}$ the affine cone of $X$.
Suppose that $X$ is normal and irreducible.
For a family of quasimaps $(\mathcal{C},q)$ to $\mathbb{P}_S^{N}$ over $S$, if $q$ factors through $[\mathrm{Cone}(X)\times S/\mathbb{G}_{m,S}]$, we say that a family of quasimaps $(\mathcal{C},q)$ {\it factors through} $X\subset \mathbb{P}^N$.

We define the category fibered in groupoids $\mathsf{Qmaps}_{X\subset\mathbb{P}^N}$ as follows.
For any scheme $S$, we set $\mathsf{Qmaps}_{X\subset\mathbb{P}^N}(S)$ as the category whose objects are families of quasimaps $(\mathcal{C},q)$ that factor through $X\subset \mathbb{P}^N$ over $S$, and morphisms are morphisms of families of quasimaps.
We can easily check that $\mathsf{Qmaps}_{X\subset\mathbb{P}^N}$ is a category fibered in groupoids by defining the pullback of $(\mathcal{C},q)$ as $(\mathcal{C}_T,q_T)$ for any morphism of schemes $T\to S$, where $q_T$ is the composition of the natural morphism $\mathcal{C}_T\to \mathcal{C}$ and $q$.
If $T$ is the spectrum of $\kappa(s)$ (resp.~$\overline{\kappa(s)}$), then we write $q_s$ (resp.~$q_{\bar{s}}$) as $q_T$.
It is also easy to see that $\mathsf{Qmaps}_{X\subset\mathbb{P}^N}$ is naturally a stack (cf.~\cite[Subsection 2.3]{CFKM}).
\end{defn}

We define the following fundamental notions of quasimaps.

\begin{defn}[Module of sections]\label{defn--linebundle-seq-quasimap}
Let $S$ be a scheme and let $q\colon \mathcal{C}\to [\mathbb{A}_S^{N+1}/\mathbb{G}_{m,S}]$ be a family of quasimaps to $\mathbb{P}^N$ over $S$. 
By the definition of quotient stacks, $q$ corresponds to an isomorphic class of principal $\mathbb{G}_{m, S}$-bundles $\mu\colon\mathcal{P}\to\mathcal{C}$ which has a $\mathbb{G}_{m, S}$-equivariant morphism $p\colon \mathcal{P}\to \mathbb{A}_S^{N+1}$. 
By \cite{Ser}, for any point $c\in\mathcal{C}$, there exists a Zariski open subset $U\subset\mathcal{C}$ such that $c\in U$ and $\mathcal{P}|_U\cong\mathbb{G}_{m,U}$ as principal $\mathbb{G}_{m, U}$-bundles.
Thus, there exists a line bundle $\mathscr{L}$ on $\mathcal{C}$ such that $\mathcal{P}\cong\mathbb{A}_{\mathcal{C}}(\mathscr{L})\setminus\mathbf{0}$ as principal $\mathbb{G}_{m,S}$-bundles. 
We say that such $\mathscr{L}$ is a {\em line bundle associated with $q$}.
$\mathscr{L}$ is uniquely determined up to relative linear equivalence over $S$.

We now define $\mathrm{Sec}_q(\mathscr{L})$ and $|\mathscr{L}|_q$. 
The morphism $p\colon \mathcal{P}\to \mathbb{A}_S^{N+1}$ induces 
$$\bigoplus_{m\in\mathbb{Z}_{\ge0}}\mathbf{Sym}^m\mathcal{O}_{\mathcal{C}}^{\oplus N+1} \to \mu_*\mathcal{O}_{\mathcal{P}}=\bigoplus_{m \in \mathbb{Z}}\mathscr{L}^{\otimes m},$$ where $\mu\colon \mathcal{P}\to \mathcal{C}$ is the canonical morphism and $\mathscr{L}^{\otimes m}$ is the dual of $\mathscr{L}^{\otimes (-m)}$ when $m<0$. 
By the $\mathbb{G}_{m, S}$-equivariance of $p$, the above morphism preserves the grading. 
Hence, we get a morphism $\mathcal{O}_{\mathcal{C}}^{\oplus N+1}\to\mathscr{L}$, which induces 
\[
p^* \colon H^0(\mathcal{O}_{\mathcal{C}})^{\oplus N+1}\to H^0(\mathscr{L}). 
\]
We call ${\rm Im}(p^*) \subset H^0(\mathscr{L})$ the {\it module of sections} and we denote it by $\mathrm{Sec}_q(\mathscr{L})$. 
We also define 
$$|\mathscr{L}|_q:=(\mathrm{Sec}_q(\mathscr{L})\setminus\{0\})/\mathbbm{k}^\times.$$
For any $f\in \mathrm{Sec}_q(\mathscr{L})$, $\mathrm{div}_{\mathscr{L}}(f)$ denotes the zero locus of $f$.
For any morphism $T\to S$ from a scheme, let $f_T\in\mathrm{Sec}_{q_T}(\mathscr{L}_T)$ denote the pullback of $f$.
If $T$ is the spectrum of $\kappa(s)$ (resp.~$\overline{\kappa(s)}$), then we write $f_s$ (resp.~$f_{\bar{s}}$) as $q_T$.
If $f_{s}\ne0$ for any $s\in S$, or $S$ is normal and irreducible and $f\ne0$, then we see that $\mathrm{div}_{\mathscr{L}}(f)$ is a Cartier divisor.
If there is no fear of confusion, we will simply write $\mathrm{div}_{\mathscr{L}}(f)$ as $\mathrm{div}(f)$.
We note that we can regard $|\mathscr{L}|_q$ as a projective space and it has the Zariski topology.

Finally, in this paper we always fix a basis $\{e_i\}_{i=0}^N$ of $\mathbb{A}^{N+1}_{\mathbbm{k}}$ and write $f_i=p^*e_i$. 
We call $\{f_i\}_{i=0}^N$ {\it the sections corresponding to the basis of $\mathbbm{k}^{N+1}$}. 
If $\mathrm{deg}\,\mathscr{L}_{s}$ is independent of the choice of $s$, we define $\mathrm{deg}\,q$ to be
$$\mathrm{deg}\,q :=\mathrm{deg}\,\mathscr{L}_{s}$$
 for a point $s\in S$. 
Note that $s\mapsto \mathrm{deg}\,\mathscr{L}_{s}$ is locally constant because the morphism $\mathcal{C} \to S$ is flat and the fibers are proper curves. 

\end{defn}

\begin{defn}[Constant quasimap]\label{defn--quasi--map--const}
   Let $q\colon \mathbb{P}^1\to [\mathrm{Cone}(X)/\mathbb{G}_{m,\mathbbm{k}}]$ be a  quasimap with a closed embedding $X\hookrightarrow \mathbb{P}^N$.
    We say that $q$ is {\it constant} if there exist an open dense subset $U\subset \mathbb{P}^1$ such that $q(U)$ is a closed point of $X\subset [\mathrm{Cone}(X)/\mathbb{G}_{m,\mathbbm{k}}]$.
\end{defn}

For a quasimap $q\colon \mathbb{P}^1\to [\mathbb{A}^{N+1}/\mathbb{G}_{m,\mathbbm{k}}]$, choose a basis $\{e_i\}_{i=0}^N$ of $\mathbb{A}^{N+1}_{\mathbbm{k}}$ and let $\{f_i\}_{i=0}^N$ be the sections corresponding to the canonical basis of $\mathbbm{k}^{N+1}$ for $k=1,2$.
Since we are working over the field $\mathbbm{k}$ of characteristic $0$, if $q$ is non-constant, then $\mathrm{Sec}_q(\mathscr{L})$ is a vector space of at least dimension two and hence we may replace $e_i$ with a general one such that $f_j\ne0$ for any $j$ and $\mathrm{div}(f_0)\ne\mathrm{div}(f_i)$ for any $i\ge1$.
By the following lemma, if a quasimap $q\colon \mathbb{P}^1\to [\mathbb{A}^{N+1}/\mathbb{G}_{m,\mathbbm{k}}]$ is non-constant then $q$ is completely determined by  $|\mathscr{L}|_q$, where $\mathscr{L}$ is the line bundle associated to $q$.
\begin{lem}\label{lem}
    Let $q_1,\,q_2\colon \mathbb{P}^1\to [\mathbb{A}_{\mathbbm{k}}^{N+1}/\mathbb{G}_{m,\mathbbm{k}}]$ be quasimaps.
    Fix a basis $\{e_i\}_{i=0}^N$ of $\mathbb{A}^{N+1}_{\mathbbm{k}}$ and let $\{f^{(k)}_i\}_{i=0}^N$ be the sections corresponding to the canonical basis of $\mathbbm{k}^{N+1}$ for $k=1,2$.
    Suppose that $q$ is non-constant, the line bundles associated with $q_1$ and $q_2$ coincide, say $\mathscr{L}$, and $f^{(k)}_i\ne0$ and $f^{(k)}_0+f^{(k)}_i\ne0$ for any $i$ and $k=1,2$.
    Put $D_i:=\mathrm{div}(f^{(1)}_i)\in |\mathscr{L}|_q$ for $0\le i\le N$ and $D'_{j}:=\mathrm{div}(f^{(1)}_0+ f^{(1)}_j)$ for $1\le j\le N$.
    Suppose that $D_0\ne D_i$ for any $i\ge1$.
    
    If $D_i=\mathrm{div}(f^{(2)}_i)$ for $0\le i\le N$ and $D'_{j}=\mathrm{div}(f^{(2)}_0+ f^{(2)}_j)$ for $1\le j\le N$, then $q_1=q_2$.
\end{lem}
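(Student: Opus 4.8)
The statement says that a non-constant quasimap $q\colon \mathbb{P}^1 \to [\mathbb{A}^{N+1}/\mathbb{G}_m]$ is determined up to equality by the associated line bundle $\mathscr{L}$ together with the divisors $\mathrm{div}(f_i)$ ($0 \le i \le N$) and $\mathrm{div}(f_0 + f_j)$ ($1 \le j \le N$). The point is that a quasimap is the data of $\mathscr{L}$ and the $(N+1)$-tuple of sections $(f_0, \dots, f_N)$ up to simultaneous scaling by $\mathbbm{k}^\times$ (recall $q$ corresponds to the principal $\mathbb{G}_m$-bundle $\mathbb{A}(\mathscr{L})\setminus\mathbf{0}$ with its equivariant map to $\mathbb{A}^{N+1}$, which is exactly the $\mathcal{O}$-linear map $\mathcal{O}^{\oplus N+1}\to\mathscr{L}$, i.e.\ the sections $f_i$). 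So to prove $q_1 = q_2$ it suffices to find a single $\lambda \in \mathbbm{k}^\times$ with $f^{(2)}_i = \lambda f^{(1)}_i$ for all $i$.

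\textbf{Key steps.} First, I would fix $i$ and compare $f^{(1)}_i$ with $f^{(2)}_i$: since $\mathrm{div}(f^{(1)}_i) = D_i = \mathrm{div}(f^{(2)}_i)$ as effective Cartier divisors on $\mathbb{P}^1$ (both sections being nonzero, so their divisors are honest divisors, as noted in Definition~\ref{defn--linebundle-seq-quasimap}), and both are global sections of the \emph{same} line bundle $\mathscr{L}$, there is a scalar $c_i \in \mathbbm{k}^\times$ with $f^{(2)}_i = c_i f^{(1)}_i$; this is because $H^0(\mathbb{P}^1,\mathscr{L})$ has no nonconstant global units, so two sections with the same zero divisor differ by a constant. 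Second, I would exploit the hypothesis $\mathrm{div}(f^{(1)}_0 + f^{(1)}_j) = \mathrm{div}(f^{(2)}_0 + f^{(2)}_j)$ for $1\le j \le N$: writing $f^{(2)}_0 + f^{(2)}_j = c_0 f^{(1)}_0 + c_j f^{(1)}_j$, we know this has the same divisor as $f^{(1)}_0 + f^{(1)}_j$, so there is $c'_j \in \mathbbm{k}^\times$ with $c_0 f^{(1)}_0 + c_j f^{(1)}_j = c'_j (f^{(1)}_0 + f^{(1)}_j)$. Rearranging gives $(c_0 - c'_j) f^{(1)}_0 = (c'_j - c_j) f^{(1)}_j$ in $H^0(\mathbb{P}^1,\mathscr{L})$. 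Third, I would use the hypothesis $D_0 \ne D_i$ for $i \ge 1$ (hence $D_0 \ne D_j$ for the relevant $j$): since $f^{(1)}_0$ and $f^{(1)}_j$ have distinct zero divisors, they are linearly independent in $H^0(\mathbb{P}^1,\mathscr{L})$, so both coefficients must vanish: $c_0 = c'_j = c_j$. This holds for every $j \in \{1,\dots,N\}$, so $c_0 = c_1 = \dots = c_N =: \lambda$, giving $f^{(2)}_i = \lambda f^{(1)}_i$ for all $i$, i.e.\ the two $\mathbb{G}_m$-equivariant maps to $\mathbb{A}^{N+1}$ agree up to the $\mathbb{G}_m$-action, whence $q_1 = q_2$ as $1$-morphisms.

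\textbf{Main obstacle.} None of the individual steps is deep; the slightly delicate point is the bookkeeping with the stacky definition of a quasimap — I would need to be careful that "$q_1 = q_2$" really is the statement that the two tuples of sections agree up to a common scalar, i.e.\ that the isomorphism class of the principal $\mathbb{G}_m$-bundle together with its equivariant map to $\mathbb{A}^{N+1}$ is exactly captured by $(\mathscr{L}; f_0,\dots,f_N)/\mathbbm{k}^\times$, and that the identification of the associated line bundles of $q_1$ and $q_2$ is part of the hypothesis (so we may genuinely compare sections in one and the same $H^0(\mathscr{L})$). The other subtlety worth a sentence is why nonzero sections of $\mathscr{L}$ on $\mathbb{P}^1$ with equal divisors differ by a constant, and why two sections with distinct divisors are linearly independent — both follow because $H^0(\mathbb{P}^1,\mathcal{O}_{\mathbb{P}^1}) = \mathbbm{k}$ and $\mathbb{P}^1$ is an integral projective curve, so ratios of such sections that are regular everywhere are constants. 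Once these are pinned down, the argument is the short chain above.
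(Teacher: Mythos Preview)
Your proposal is correct and follows essentially the same argument as the paper: obtain scalars $c_i$ from $\mathrm{div}(f^{(1)}_i)=\mathrm{div}(f^{(2)}_i)$, then use the equality of divisors of $f_0+f_j$ together with the linear independence of $f^{(1)}_0,f^{(1)}_j$ (coming from $D_0\ne D_j$) to force all $c_i$ to coincide. Your write-up is in fact slightly more careful than the paper's in unpacking the stacky bookkeeping and justifying why equal divisors imply proportionality.
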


\begin{proof}
It suffices to show that there exists $c\in\mathbbm{k}\setminus\{0\}$ such that $f^{(1)}_i=cf^{(2)}_i$ for all $0\le i\le N$.
Since $\mathrm{div}(f^{(1)}_i)=\mathrm{div}(f^{(2)}_i)$, we see that there exist non-zero constants $c_i\in\mathbbm{k}\setminus\{0\}$ such that $f^{(1)}_i=c_if^{(2)}_i$.
We may assume $c_0=1$, and it is enough to show $c_i=1$ for all $i\ge1$.
Since $\mathrm{div}(f^{(1)}_0+f^{(1)}_i)=\mathrm{div}(f^{(2)}_0+f^{(2)}_i)$, we see that there exist non-zero constants $d_i\in\mathbbm{k}\setminus\{0\}$ such that $f^{(1)}_0+f^{(1)}_i=d_i(f^{(2)}_0+f^{(2)}_i)=d_i(f^{(1)}_0+c_if^{(1)}_i)$ for all $i\ge1$.
By the relation $D_0\ne D_j$, we see that $f^{(1)}_0$ and $f^{(1)}_j$ are linearly independent over $\mathbbm{k}$, and thus $c_i=d_i=1$.
Therefore, we have the assertion.
\end{proof}

The following lemma is easy but useful. 

\begin{lem}\label{lem--quasi--map--S_2}
    Let $X\hookrightarrow \mathbb{P}^N$ be an inclusion of a closed normal variety, let $\mathcal{C}\to S$ be a projective flat morphism to a normal variety with one-dimensional smooth geometric fibers, and let $q_1,q_2\colon\mathcal{C}\to[\mathrm{Cone}(X)\times S/\mathbb{G}_{m,S}]$ be two families of quasimaps such that $q_1$ is isomorphic to $q_2$ outside a closed subset $W\subset \mathcal{C}$ of codimension at least two.
    Then, $q_1$ and $q_2$ are globally isomorphic. 
\end{lem}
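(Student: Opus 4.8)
The plan is to reduce the statement to the separatedness of the stack of quasimaps together with a codimension-one extension argument. First I would set up the relevant line bundles: choose line bundles $\mathscr{L}_1$ and $\mathscr{L}_2$ on $\mathcal{C}$ associated with $q_1$ and $q_2$ respectively, in the sense of Definition \ref{defn--linebundle-seq-quasimap}, so that $\mathcal{P}_k \cong \mathbb{A}_{\mathcal{C}}(\mathscr{L}_k)\setminus\mathbf{0}$ as principal $\mathbb{G}_{m,S}$-bundles, together with $\mathbb{G}_{m,S}$-equivariant morphisms $p_k\colon \mathcal{P}_k \to \mathrm{Cone}(X)\times S \hookrightarrow \mathbb{A}^{N+1}_S$. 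Over the open set $\mathcal{C}\setminus W$ the isomorphism $q_1|_{\mathcal{C}\setminus W}\cong q_2|_{\mathcal{C}\setminus W}$ provides an isomorphism $\varphi\colon \mathscr{L}_1|_{\mathcal{C}\setminus W}\xrightarrow{\sim}\mathscr{L}_2|_{\mathcal{C}\setminus W}$ matching the induced maps $\mathcal{O}_{\mathcal{C}}^{\oplus N+1}\to\mathscr{L}_k$, i.e.\ identifying the corresponding sections $f_i^{(1)}$ and $f_i^{(2)}$.

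Next I would extend $\varphi$ over all of $\mathcal{C}$. Since $\mathcal{C}$ is flat over the normal variety $S$ with smooth one-dimensional fibers, $\mathcal{C}$ is itself normal (indeed regular in codimension one and $S_2$), and $W$ has codimension at least two in $\mathcal{C}$. An isomorphism of line bundles defined on the complement of a codimension-two subset of a normal scheme extends uniquely to an isomorphism on the whole scheme, because $j_*\mathscr{L}_1 = \mathscr{L}_1$ for the open immersion $j\colon \mathcal{C}\setminus W\hookrightarrow\mathcal{C}$ (reflexivity of line bundles on normal schemes, Hartogs). Thus $\varphi$ extends to $\tilde\varphi\colon\mathscr{L}_1\xrightarrow{\sim}\mathscr{L}_2$. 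The sections $f_i^{(1)}$ and $\tilde\varphi^{-1}(f_i^{(2)})$ of $\mathscr{L}_1$ agree on $\mathcal{C}\setminus W$, hence agree everywhere since $\mathscr{L}_1$ is torsion-free on the reduced scheme $\mathcal{C}$. Therefore $\tilde\varphi$ intertwines $p_1$ and $p_2$, i.e.\ it furnishes an isomorphism of the principal bundles-with-equivariant-map $(\mathcal{P}_1,p_1)\cong(\mathcal{P}_2,p_2)$, which is precisely an isomorphism of the quasimaps $q_1\cong q_2$ over $S$ in the sense of Definition \ref{defn-qmaps}.

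The main subtlety to watch is compatibility of $\tilde\varphi$ with the $\mathbb{G}_{m,S}$-actions and with the factorization through $[\mathrm{Cone}(X)\times S/\mathbb{G}_{m,S}]$: an isomorphism of line bundles automatically induces a $\mathbb{G}_m$-equivariant isomorphism of the associated $\mathbb{G}_m$-torsors, so once $\tilde\varphi$ matches the sections $f_i^{(k)}$, the equivariant maps to $\mathbb{A}^{N+1}_S$ match, and since $p_1$ already lands in $\mathrm{Cone}(X)\times S$ so does the identified $p_2$; no extra argument is needed there. One should also check that extending $\varphi$ does not require $W$ to avoid the whole fiber over some point of $S$ — it does not, since normality of $\mathcal{C}$ and codimension $\geq 2$ of $W$ are all that enter. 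The only genuinely delicate point is verifying that $\mathcal{C}$ is normal under the stated hypotheses (smooth fibers over a normal base), which follows from the fact that a flat morphism with regular fibers over a normal base has normal total space; this is where I would be most careful, but it is standard (e.g.\ via the local criterion for normality combined with Serre's conditions).
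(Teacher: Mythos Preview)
Your proposal is correct and follows essentially the same approach as the paper: take the associated line bundles, extend the isomorphism $\mathscr{L}_1|_{\mathcal{C}\setminus W}\cong\mathscr{L}_2|_{\mathcal{C}\setminus W}$ across $W$ using normality of $\mathcal{C}$ and $\mathrm{codim}_{\mathcal{C}}W\ge2$, and then conclude that the canonical sections $f_i^{(1)}$ and $f_i^{(2)}$ agree globally by Hartogs. The only cosmetic difference is that the paper first reduces to $X=\mathbb{P}^N$, whereas you keep $X$ general and observe that factorization through $\mathrm{Cone}(X)$ is automatic once the sections match; these are equivalent.
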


\begin{proof}
 It suffices to show the assertion under the assumption that $X=\mathbb{P}^N$.
    Let $\mathscr{L}_1$ and $\mathscr{L}_2$ be the associated line bundles with $q_1$ and $q_2$, respectively.
    Since $q_1$ and $q_2$ are isomorphic on $\mathcal{C}\setminus W$, we see that 
    \[
    \mathscr{L}_1|_{\mathcal{C}\setminus W}\cong \mathscr{L}_2|_{\mathcal{C}\setminus W}.
    \]
 Since $\mathrm{codim}_{\mathcal{C}}(W)\ge 2$, we have $\mathscr{L}_1\cong \mathscr{L}_2$ globally.
 Take the sections corresponding to the canonical basis $\{ f^{(1)}_i\}_{i=0}^N\subset \mathrm{Sec}_{q_1}(\mathscr{L}_1)$ and $\{ f^{(2)}_i\}_{i=0}^N\subset \mathrm{Sec}_{q_2}(\mathscr{L}_2)$. 
Then we can identify $f^{(1)}_i|_{\mathcal{C}\setminus W}=f^{(2)}_i|_{\mathcal{C}\setminus W}$ via the isomorphism $\mathscr{L}_1\cong \mathscr{L}_2$.
 Since $\mathrm{codim}_{\mathcal{C}}(W)\ge 2$ and $\mathcal{C}$ is normal, $f^{(1)}_i=f^{(2)}_i$ holds globally.
\end{proof}

\begin{defn}[Lc Fano quasimap, log Fano quasimap]\label{defn--lcFanoquasimap}
Let $X$ be a projective normal variety with a closed embedding $X \hookrightarrow \mathbb{P}^N$. 
   Let $q\colon\mathbb{P}^1\to [\mathrm{Cone}(X)/\mathbb{G}_{m, \mathbbm{k}}]$ be a quasimap of degree $m$ (see Definition \ref{defn--linebundle-seq-quasimap}), and let $B$ be an effective $\mathbb{Q}$-divisor on $\mathbb{P}^1$. 
    Let $u$ be a positive rational number.
We say that 
$q\colon(\mathbb{P}^1,B)\to [\mathrm{Cone}(X)/\mathbb{G}_{m, \mathbbm{k}}]$ is a {\it lc Fano quasimap of degree $m$ and of weight $u$} if $(\mathbb{P}^1,B+uD)$ is lc for any general divisor $D\in|\mathscr{L}|_q$ and the inequality $\mathrm{deg}(B)+mu<2$ holds.
If $(\mathbb{P}^1,B+uD)$ is further klt for any general $D\in|\mathscr{L}|_q$, then we say that $q\colon(\mathbb{P}^1,B)\to [\mathrm{Cone}(X)/\mathbb{G}_{m, \mathbbm{k}}]$ is a {\it log Fano quasimap}. 

Let $X$, $q\colon\mathbb{P}^1\to [\mathrm{Cone}(X)/\mathbb{G}_{m, \mathbbm{k}}]$, $B$, and $u$ be as above.
We define the {\it delta invariant} $\delta(q)$ by
  \begin{equation*}
    \delta(q)=\frac{2\min_{p\in\mathbb{P}^1}(1-\mathrm{mult}_p(B+uD))}{\mathrm{deg}(-K_{\mathbb{P}^1}-B-uD)}.\label{eq--delta--quasi--map}
\end{equation*}
    for any general $D\in|\mathscr{L}|_q$ if $q\colon(\mathbb{P}^1,B)\to [\mathrm{Cone}(X)/\mathbb{G}_{m, \mathbbm{k}}]$ is a log Fano quasimap of degree $m$ and of weight $u$, and otherwise we set $\delta(q)=0$. It is easy to see that $\delta(q)$ is independent from the choice of  general $D$.    
\end{defn}

Next, we define the K-stability of log Fano quasimaps.

\begin{defn}[K-stability of log Fano quasimap]\label{de--kst--qmaps}
    Let $q\colon(\mathbb{P}^1,B)\to [\mathrm{Cone}(X)/\mathbb{G}_{m, \mathbbm{k}}]$ be a lc Fano quasimap of degree $m$ and of weight $u$ with respect to a closed embedding $X\subset\mathbb{P}^N$.
Take a (semi)ample test configuration $(\mathcal{C},\mathcal{L})$ for $(\mathbb{P}^1,(2-mu-\mathrm{deg}(B))\mathcal{O}(1))$. 
We say that such $(\mathcal{C},\mathcal{L})$ is a {\it (semi)ample test configuration} for $q$.
    Then, the {\it Donaldson--Futaki invariant} is defined to be
    $$\mathrm{DF}_q(\mathcal{C},\mathcal{L})=\mathrm{DF}_{B+uD}(\mathcal{C},\mathcal{L})$$
    for any  general $D\in|\mathscr{L}|_q$. 
    By the argument of \cite[Proposition 3.8]{BLZ} (cf.~\cite{Hat}), the invariant is independent from the choice of $D$. 
We say that $q$ is
\begin{itemize}
    \item {\it K-semistable} if $\mathrm{DF}_q(\mathcal{C},\mathcal{L})\ge0$ for any ample test configuration for $q$,
    \item {\it K-stable}
 if $\mathrm{DF}_q(\mathcal{C},\mathcal{L})>0$ for any normal nontrivial ample test configuration $(\mathcal{C},\mathcal{L})$ for $q$.
\end{itemize}
\end{defn}

\begin{defn}[Family of K-semistable log Fano quasimap]
Let $X$ be a projective normal variety with a closed embedding $X \hookrightarrow \mathbb{P}^N$, and let $S$ be an algebraic space over $\mathbbm{k}$. 
    Let $q\colon \mathcal{C}\to [\mathrm{Cone}(X)\times S/\mathbb{G}_{m,S}]$ be a family of quasimaps of degree $m$.
    Let $\mathcal{B}$ be a $\mathbb{Q}$-linear combination of Cartier divisors on $\mathcal{C}$ such that every irreducible component of $\mathcal{B}$ is flat over $S$.
    Fix $u\in\mathbb{Q}_{>0}$. 
    We say that $q\colon (\mathcal{C},\mathcal{B})\to[\mathrm{Cone}(X)\times S/\mathbb{G}_{m,S}]$ is a {\it family of K-semistable log Fano quasimap of degree $m$ and of weight $u$} if any geometric point $\bar{s}\in S$ satisfies the conditions that $\mathcal{B}_{\bar{s}}$ is effective and $q_{\bar{s}}\colon (\mathcal{C}_{\bar{s}},\mathcal{B}_{\bar{s}})\to [\mathrm{Cone}(X)/\mathbb{G}_{m}]$ is K-semistable as a log Fano quasimap degree $m$ and of weight $u$. 
\end{defn}

The following notion is closely related to the K-stability of log Fano quasimaps.

\begin{defn}[Log--twisted Fano pair]
    Let $(X,B)$ be a klt log pair. If there exists a semiample $\mathbb{Q}$-line bundle $T$ such that $-(K_X+B+T)$ is ample, then we say that $(X,B,T)$ is a {\it log--twisted Fano pair}.
    For any semiample test configuration $(\mathcal{X},\mathcal{L})$ of $(X,-(K_X+B+T))$, we define the {\it log--twisted Donaldson--Futaki invariant} to be
    \[
    \mathrm{DF}_{(B,T)}(\mathcal{X},\mathcal{L}):=\mathrm{DF}_{B}(\mathcal{X},\mathcal{L})+\mathcal{J}^{T,\mathrm{NA}}(\mathcal{X},\mathcal{L}).
    \]
    We say that $(X,B,T)$ is
\begin{itemize}
    \item {\it K-semistable} if we have $\mathrm{DF}_{(B,T)}(\mathcal{X},\mathcal{L})\ge0$ for any ample test configuration for $(X,-(K_X+B+T))$,
    \item {\it uniformly K-stable} if there exists $\delta>0$ such that $\mathrm{DF}_{(B,T)}(\mathcal{X},\mathcal{L})\ge\delta J^{\mathrm{NA}}(\mathcal{X},\mathcal{L})$ for any normal nontrivial ample test configuration $(\mathcal{X},\mathcal{L})$ for $(X,-(K_X+B+T))$.
    \item {\it K-stable} if $\mathrm{DF}_{(B,T)}(\mathcal{X},\mathcal{L})>0$ for any normal nontrivial ample test configuration $(\mathcal{X},\mathcal{L})$ for $(X,-(K_X+B+T))$.
\end{itemize}
\end{defn}

The log--twisted K-stability is well studied in \cite{BLZ,Hat}.
We construct K-moduli theory of log Fano quasimaps by relating the log twisted K-stability with the K-stability of quasimaps.
To do so, we introduce the following notion.

\begin{defn}[Fixed part and movable part]\label{defn--fixed-movable-part-quasimap}
Let $q\colon(\mathbb{P}^1,B)\to [\mathrm{Cone}(X)/\mathbb{G}_{m, \mathbbm{k}}]$ be a lc Fano quasimap of degree $m$ and of weight $u$ with respect to a closed embedding $X\subset\mathbb{P}^N$. 
Let $\mathscr{L}$ be the line bundle on $\mathbb{P}^{1}$ associated to $q$.  
Then the {\it fixed part} of $q$ is defined to be the fixed divisorial part  of $|\mathscr{L}|_q$. 
Let $B'$ be the fixed part of $q$. 
Then the {\it movable part} of $q$ is defined to be $\mathscr{L}-B'$. 

Let $M$ be the movable part of $q$. 
By definition, $\mathscr{L}=B'+M$, and $(\mathbb{P}^1,B+uB',uM)$ is a log-twisted Fano pair (cf.~\cite{Hat}).
In this paper, $(\mathbb{P}^1,B+uB',uM)$ is called the {\it log--twisted Fano pair associated with $q$}. 
\end{defn}

For the log--twisted Fano pair $(\mathbb{P}^1,B+uB',uM)$, K-stability is completely determined by the following invariant, 
\begin{equation*}
    \delta(\mathbb{P}^1,B+uB',uM):=\frac{2\min_{p\in\mathbb{P}^1}(1-\mathrm{mult}_p(B+uB'))}{\mathrm{deg}(-K_{\mathbb{P}^1}-B)-um}\label{eq--delta--log--twisted--map},
\end{equation*}
by \cite[Theorem A.16]{Hat}.
More precisely, $\delta(\mathbb{P}^1,B+uB',uM)\ge 1$ (resp.~$>1$) if and only if $(\mathbb{P}^1,B+uB',uM)$ is log--twisted K-semistable (resp.~K-stable). We note that the K-stability and the uniform K-stability are equivalent in this case.
We can compare $\delta(q)$ by using $\delta(\mathbb{P}^1,B+uB',uM)$ when $u$ is sufficiently small.

\begin{lem}\label{lem--two--delta--invariants}
    Let $q\colon(\mathbb{P}^1,B)\to [\mathrm{Cone}(X)/\mathbb{G}_{m, \mathbbm{k}}]$ be a log Fano quasimap of degree $m$ and of weight $u$ with respect to a closed embedding $\iota\colon X\subset\mathbb{P}^N$, and let $B'$ and $M$ be the fixed part and the movable part of $q$ as in Definition \ref{defn--fixed-movable-part-quasimap}, respectively.
    We put $v:=\mathrm{deg}(-K_{\mathbb{P}^1}-B)-um$.
Then the inequality 
$$\delta(q)\le \delta(\mathbb{P}^1,B+uB',uM)$$
holds. 
    If further $0<u<1-\frac{v}{2}$ and $\delta(q)\le1$, then $\delta(q)=\delta(\mathbb{P}^1,B+uB',uM)$.

    In particular, $q$ is K-(semi)stable if and only if $(\mathbb{P}^1,B+uD)$ is K-(semi)stable for any general $D\in|\mathscr{L}|_q$, where $\mathscr{L}$ is the line bundle associated with $q$.
\end{lem}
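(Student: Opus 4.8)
The plan is to compare the three invariants $\delta(q)$, $\delta(\mathbb{P}^1,B+uB',uM)$, and (implicitly) the $\delta$-invariant of the log pair $(\mathbb{P}^1,B+uD)$ for general $D$, by unwinding the definitions and exploiting the fact that in dimension one the relevant volumes and valuations are completely explicit. Writing $\mathscr{L}=B'+M$ with $B'$ the fixed part and $M$ the movable part, and taking $D\in|\mathscr{L}|_q$ general, we have $D = B' + D_0$ where $D_0\in|M|$ is a general member; since $M$ is a globally generated (indeed base-point-free) divisor class on $\mathbb{P}^1$ of degree $m-\deg(B')$, a general $D_0$ is reduced and disjoint from $\operatorname{Supp}(B+uB')$. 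Hence $\operatorname{mult}_p(B+uD) = \operatorname{mult}_p(B+uB')$ for every $p$ except at the (general, hence harmless) points of $D_0$, where the multiplicity is $u<1$. Therefore $\min_p(1-\operatorname{mult}_p(B+uD)) = \min\{\,1-u,\ \min_p(1-\operatorname{mult}_p(B+uB'))\,\}$. Combined with $\deg(-K_{\mathbb{P}^1}-B-uD) = \deg(-K_{\mathbb{P}^1}-B)-um = v$, this immediately gives
\[
\delta(q) = \frac{2\min\{1-u,\ \min_p(1-\operatorname{mult}_p(B+uB'))\}}{v} \le \frac{2\min_p(1-\operatorname{mult}_p(B+uB'))}{v} = \delta(\mathbb{P}^1,B+uB',uM),
\]
which is the first inequality.

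For the second assertion, suppose $0<u<1-\tfrac{v}{2}$ and $\delta(q)\le 1$. The hypothesis $u<1-\tfrac v2$ rearranges to $\tfrac{2(1-u)}{v} > 1 \ge \delta(q)$; but by the displayed formula $\delta(q) = \min\{\tfrac{2(1-u)}{v},\ \delta(\mathbb{P}^1,B+uB',uM)\}$, so the minimum cannot be achieved by the term $\tfrac{2(1-u)}{v}$, and hence $\delta(q) = \delta(\mathbb{P}^1,B+uB',uM)$, as claimed. The only point requiring a little care here is the claim that $\delta(q)$ (defined via the general member $D$) really equals that minimum formula rather than something smaller; this is where one uses that the two divisors $B+uB'$ and $uD_0$ have disjoint supports for general $D_0$, so the "one-point" computation of $(1-\operatorname{mult}_p)$ decouples.

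Finally, for the "in particular" clause, one invokes \cite[Theorem A.16]{Hat}, which says that $(\mathbb{P}^1,B+uB',uM)$ is log-twisted K-semistable (resp. K-stable) if and only if $\delta(\mathbb{P}^1,B+uB',uM)\ge 1$ (resp. $>1$), together with Definition \ref{de--kst--qmaps} and Definition \ref{defn--fixed-movable-part-quasimap}: by construction $\mathrm{DF}_q(\mathcal{C},\mathcal{L}) = \mathrm{DF}_{B+uD}(\mathcal{C},\mathcal{L})$ and, writing $D=B'+D_0$, one has $\mathrm{DF}_{B+uD}(\mathcal{C},\mathcal{L}) = \mathrm{DF}_{B+uB'}(\mathcal{C},\mathcal{L}) + \mathcal{J}^{uM,\mathrm{NA}}(\mathcal{C},\mathcal{L}) = \mathrm{DF}_{(B+uB',uM)}(\mathcal{C},\mathcal{L})$ for general $D_0$, because the movable part $uD_0$ contributes exactly the non-Archimedean $\mathrm{J}^{uM}$-functional (this is the content of the identification of $(\mathbb{P}^1,B+uB',uM)$ as the associated log-twisted Fano pair, cf.\ \cite{Hat,BLZ}). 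Thus K-semistability (resp. K-stability) of $q$ coincides with that of the log-twisted pair, which in turn — since $\delta(q)\le 1$ forces equality of the two $\delta$'s in the relevant regime, and on the stable side one checks $\delta(q)>1 \iff \delta(\mathbb{P}^1,B+uB',uM)>1$ directly from $\delta(q)=\min\{\tfrac{2(1-u)}{v},\delta(\text{pair})\}$ and $\tfrac{2(1-u)}{v}>1$ — coincides with K-(semi)stability of $(\mathbb{P}^1,B+uD)$ for general $D$. The main obstacle, though a minor one, is bookkeeping: making sure that "general $D\in|\mathscr{L}|_q$" can simultaneously be taken to have its movable component reduced, disjoint from $\operatorname{Supp}(B+uB')$, and generic enough that the Donaldson--Futaki invariant is insensitive to the choice (the last point being exactly \cite[Proposition 3.8]{BLZ} / \cite{Hat}, already cited in Definition \ref{de--kst--qmaps}).
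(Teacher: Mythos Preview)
Your proof is correct and follows essentially the same route as the paper's: decompose a general $D\in|\mathscr{L}|_q$ as $B'+D_0$ with $D_0$ reduced and disjoint from $\operatorname{Supp}(B+uB')$, then compare multiplicities pointwise to obtain $\delta(q)=\min\{2(1-u)/v,\ \delta(\mathbb{P}^1,B+uB',uM)\}$ and use $2(1-u)/v>1$ under the hypothesis $u<1-\tfrac{v}{2}$. The paper's proof stops after establishing the equality and leaves the ``in particular'' clause implicit (it is picked up again in Remark~\ref{rem--K-stability--coincidence} and Theorem~\ref{thm--delta--quasimap}); your explicit treatment of that clause via $\mathrm{DF}_q=\mathrm{DF}_{(B+uB',uM)}$ and \cite[Theorem~A.16]{Hat} is fine and matches what the paper does in the surrounding text.
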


\begin{proof}
    By the definition, it is easy to see $\delta(q)\le \delta(\mathbb{P}^1,B+uB',uM)$.
    Thus, it suffices to show the equality under the assumptions $0<u<1-\frac{v}{2}$ and $\delta(q)\le1$.
    Take a general member $D\in |\mathscr{L}|_{q}$ so that $D-B'$ is reduced and the support of $D-B'$ is disjoint from $\mathrm{Supp}(B+uB')$.
    If $p'\in \mathrm{Supp}(D-B')$, then 
    \[
    \frac{2(1-\mathrm{mult}_{p'}(B+uB'+u(D-B')))}{\mathrm{deg}(-K_{\mathbb{P}^1}-B-uD)}=2\frac{1-u}{v}>1.
    \]
    By the definition and the assumption that $\delta(q)\le 1$,  
    \begin{align*}
        \delta(q)&=\min_{p\in \mathbb{P}^1}\frac{2(1-\mathrm{mult}_{p}(B+uB'+u(D-B')))}{\mathrm{deg}(-K_{\mathbb{P}^1}-B-uD)}\\
        &=\min_{p\not\in \mathrm{Supp}(D-B')}\frac{2(1-\mathrm{mult}_{p}(B+uB'))}{\mathrm{deg}(-K_{\mathbb{P}^1}-B-uD)}\\
        &=\delta(\mathbb{P}^1,B+uB',uM).
    \end{align*}
    We are done.
\end{proof}

\begin{rem}\label{rem--K-stability--coincidence}
It is easy to see by \cite[Lemma 2.22]{Hat} that  
\[
\mathrm{DF}_{(B+uB',uM)}(\mathcal{C},\mathcal{L})=\mathrm{DF}_{q}(\mathcal{C},\mathcal{L})
\]
for any normal nontrivial ample test configuration $(\mathcal{C},\mathcal{L})$ for $q$.
Thus, the log--twisted K-stability of $(\mathbb{P}^1,B+uB',uM)$ and the K-stability of $q$ are equivalent.
   This shows that the K-stability of a quasimap $q\colon(\mathbb{P}^1,B)\to[\mathrm{Cone}(X)/\mathbb{G}_{m, \mathbbm{k}}]$ is equivalent to the K-stability of $[\mathrm{Cone}(\iota)]\circ q\colon(\mathbb{P}^1,B)\to[\mathbb{A}^{N+1}/\mathbb{G}_{m, \mathbbm{k}}]$, where $[\mathrm{Cone}(\iota)]\colon [\mathrm{Cone}(X)/\mathbb{G}_{m, \mathbbm{k}}]\hookrightarrow[\mathbb{A}^{N+1}/\mathbb{G}_{m, \mathbbm{k}}]$ is the morphism induced by a closed immersion $\iota\colon X\hookrightarrow \mathbb{P}^N_{\mathbbm{k}}$.
\end{rem}

By \cite[Theorem A.16]{Hat} and Lemma \ref{lem--two--delta--invariants}, K-(semi)stability of the log--twisted pair $(\mathbb{P}^1,B+uB',uM)$ is equivalent to $\delta(q)>1$ (resp.~$\ge1$) when $u$ is sufficiently small.
Thus, we get the following result.

\begin{thm}\label{thm--delta--quasimap}
    Let $q\colon(\mathbb{P}^1,B)\to [\mathrm{Cone}(X)/\mathbb{G}_{m, \mathbbm{k}}]$ be a lc Fano quasimap of degree $m$ and of weight $u$ with respect to a closed embedding $X\subset\mathbb{P}^N$.
    Suppose that $0<u<1-\frac{v}{2}$, where $v:=\mathrm{deg}(-K_{\mathbb{P}^1}-B)-um$.
    Then, $q$ is K-stable (resp.~K-semistable) if and only if $\delta(q)>1$ (resp.~$\delta(q)\ge1$).
    In particular, if $q$ is K-semistable, then $q$ is a log Fano quasimap.
\end{thm}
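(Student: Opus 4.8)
The plan is to reduce the K-stability of $q$ directly to the log--twisted K-stability of the associated log--twisted Fano pair $(\mathbb{P}^1,B+uB',uM)$, and then invoke the known numerical criterion for log--twisted pairs. First I would recall, from Remark \ref{rem--K-stability--coincidence} and \cite[Lemma 2.22]{Hat}, that for any normal nontrivial ample test configuration $(\mathcal{C},\mathcal{L})$ for $q$ one has $\mathrm{DF}_q(\mathcal{C},\mathcal{L})=\mathrm{DF}_{(B+uB',uM)}(\mathcal{C},\mathcal{L})$; since the class of test configurations is the same in both settings (both are test configurations for $(\mathbb{P}^1,(2-mu-\deg B)\mathcal{O}(1))$, and $-K_{\mathbb{P}^1}-B-uB'-uM$ has the same degree $2-\deg B-um$), this shows that $q$ is K-semistable (resp.\ K-stable) if and only if $(\mathbb{P}^1,B+uB',uM)$ is log--twisted K-semistable (resp.\ K-stable).

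Next I would apply \cite[Theorem A.16]{Hat}, which says that the log--twisted Fano pair $(\mathbb{P}^1,B+uB',uM)$ is log--twisted K-semistable (resp.\ K-stable) precisely when $\delta(\mathbb{P}^1,B+uB',uM)\ge 1$ (resp.\ $>1$), recalling that here $\deg(-K_{\mathbb{P}^1}-B)-um=v$ by hypothesis. It then remains to convert the inequality on $\delta(\mathbb{P}^1,B+uB',uM)$ into the claimed inequality on $\delta(q)$. This is exactly where Lemma \ref{lem--two--delta--invariants} enters: we always have $\delta(q)\le\delta(\mathbb{P}^1,B+uB',uM)$, and under the hypothesis $0<u<1-\tfrac{v}{2}$ the two invariants agree whenever $\delta(q)\le 1$. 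I would argue by cases: if $\delta(\mathbb{P}^1,B+uB',uM)>1$ and $q$ were not K-stable, then by the equivalence above $\delta(\mathbb{P}^1,B+uB',uM)\le 1$, a contradiction; conversely if $\delta(q)>1$, then (since $\delta(q)\le\delta(\mathbb{P}^1,B+uB',uM)$ always) the log--twisted pair is uniformly K-stable, hence $q$ is K-stable. For K-semistability, if $\delta(q)\ge 1$ but $q$ is not K-semistable, then $\delta(\mathbb{P}^1,B+uB',uM)<1$, forcing $\delta(q)\le 1$, hence $\delta(q)=\delta(\mathbb{P}^1,B+uB',uM)<1$ by the equality case of Lemma \ref{lem--two--delta--invariants}, a contradiction; the reverse implication is immediate since $\delta(q)\le\delta(\mathbb{P}^1,B+uB',uM)$ and $\delta(\mathbb{P}^1,B+uB',uM)\ge 1$ gives K-semistability.

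For the final sentence, I would observe that if $q$ is K-semistable then $\delta(q)\ge 1>0$, so in particular $\delta(q)$ is computed by the formula in Definition \ref{defn--lcFanoquasimap} rather than being set to $0$; by the definition of $\delta(q)$ this forces $(\mathbb{P}^1,B+uD)$ to be klt for general $D\in|\mathscr{L}|_q$ (the multiplicity bound $\mathrm{mult}_p(B+uD)<1$ holds at every point), and together with $\deg(B)+mu<2$ inherited from the lc Fano quasimap hypothesis, this is precisely the definition of a log Fano quasimap.

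\textbf{Main obstacle.} The substantive content is entirely packaged into the two black boxes \cite[Theorem A.16]{Hat} (the $\delta$-criterion for log--twisted Fano pairs on $\mathbb{P}^1$) and Lemma \ref{lem--two--delta--invariants}; the delicate point that actually requires care is the equality $\delta(q)=\delta(\mathbb{P}^1,B+uB',uM)$ under the regime $0<u<1-\tfrac{v}{2}$ and $\delta(q)\le 1$, which relies on the fact that the contribution of a point on the movable part $D-B'$ to the $\delta$-invariant of $q$ is $2(1-u)/v>1$ and hence never achieves the infimum --- this is where the smallness hypothesis on $u$ is genuinely used. Given that lemma, the theorem is a short logical assembly.
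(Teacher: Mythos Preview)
Your approach is essentially the paper's: reduce K-stability of $q$ to log--twisted K-stability of $(\mathbb{P}^1,B+uB',uM)$ via Remark~\ref{rem--K-stability--coincidence}, then compare $\delta$-invariants through Lemma~\ref{lem--two--delta--invariants}. The paper phrases the final step slightly differently---it uses the ``In particular'' clause of Lemma~\ref{lem--two--delta--invariants} to pass directly to the ordinary log Fano pair $(\mathbb{P}^1,B+uD)$ for general $D$ and then cites \cite[Corollary~9.6]{BHJ} and \cite[Theorem~B]{BlJ} rather than \cite[Theorem~A.16]{Hat}---but this is cosmetic, since $\delta(q)$ is by definition the $\delta$-invariant of $(\mathbb{P}^1,B+uD)$.

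However, your case analysis has the two directions swapped. The implication ``$\delta(q)\ge 1\Rightarrow q$ K-semistable'' is the \emph{easy} direction: the bare inequality $\delta(q)\le\delta(\mathbb{P}^1,B+uB',uM)$ already gives $\delta(\mathbb{P}^1,B+uB',uM)\ge 1$, hence K-semistability; your appeal to the equality clause there is superfluous. The implication ``$q$ K-semistable $\Rightarrow\delta(q)\ge 1$'' is the one that genuinely requires the equality clause of Lemma~\ref{lem--two--delta--invariants}: K-semistability gives $\delta(\mathbb{P}^1,B+uB',uM)\ge 1$, and if $\delta(q)<1$ then in particular $\delta(q)\le 1$, so the lemma forces $\delta(q)=\delta(\mathbb{P}^1,B+uB',uM)\ge 1$, a contradiction. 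Your sentence ``the reverse implication is immediate since $\delta(q)\le\delta(\mathbb{P}^1,B+uB',uM)$ and $\delta(\mathbb{P}^1,B+uB',uM)\ge 1$ gives K-semistability'' goes the wrong way---that inequality alone does not bound $\delta(q)$ from below. The K-stable case has the same gap: you argue $\delta(q)>1\Rightarrow$ K-stable correctly, but never establish K-stable $\Rightarrow\delta(q)>1$. The fix in both cases is exactly the contradiction argument just described.
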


\begin{proof}
Let $(\mathbb{P}^1,B+uB',uM)$ be the log--twisted Fano pair associated with $q$.
   As noted in Remark \ref{rem--K-stability--coincidence}, the log--twisted K-stability of $(\mathbb{P}^1,B+uB',uM)$ and the K-stability of $q$ are equivalent.
   By Lemma \ref{lem--two--delta--invariants}, if $0<u<1-\frac{v}{2}$, then we further see that the K-stability of $q$ is equivalent to the K-stability of the pair $(\mathbb{P}^1,B+uD)$ for any general $D\in|\mathscr{L}|_q$.
Thus, the assertion follows from \cite[Corollary 9.6]{BHJ} and \cite[Theorem B]{BlJ}.
\end{proof}

\begin{rem}\label{rem--large--u--1}
    We note that if $B=B'=0$, $m=1$ and $u=1$, then $q$ is always K-stable but $\delta(q)<1$. 
    This means that Theorem \ref{thm--delta--quasimap} does not hold in general without condition $u<1-\frac{v}{2}$. 
\end{rem}

As the K-moduli theory of log Fano pairs, we have to consider K-polystability of log Fano quasimaps.
To define the K-polystability, we recall the following notion.

\begin{defn}[Special test configuration]\label{defn--special-test-config}
    Let $q\colon\mathbb{P}^1 \to [\mathrm{Cone}(X)/\mathbb{G}_{m, \mathbbm{k}}]$, $B$, and $\mathscr{L}$ be as in Definition \ref{defn--lcFanoquasimap}. 
Let $(\mathcal{C},\mathcal{L})$ an ample normal test configuration for $q$.
    We set $\mathcal{D}=\overline{(uD+B)\times\mathbb{G}_{m, \mathbbm{k}}}$ for any  $D\in |\mathscr{L}|_q$. 
    Here, we consider $(uD+B)\times\mathbb{G}_{m, \mathbbm{k}}$ as a subset of $\mathbb{P}^1 \times (\mathbb{A}^1 \setminus \{0\})$ and $\overline{\cdot}$ means the Zariski closure in $\mathcal{C}$.
    We say that $(\mathcal{C},\mathcal{L})$ is a {\it special test configuration} for $q$ if $(\mathcal{C},\mathcal{C}_0+\mathcal{D})$ is plt and $\mathcal{L}\sim_{\mathbb{Q}}-(K_{\mathcal{C}}+\mathcal{D})$ for any general divisor $D\in |\mathscr{L}|_q$.
\end{defn}

We have the following analogous result to \cite[Corollary A.14]{Hat}.

\begin{thm}[{\cite[Corollary 1]{LX}, \cite[Corollary 6.11]{Fjt}, \cite[Theorem 3.12]{BLZ}}]\label{thm--lixu--analog}
    Let $q$ be a log Fano quasimap and $(\mathcal{C},\mathcal{L})$ a normal ample test configuration for $q$.
    Then, there exist $r\in\mathbb{Z}_{>0}$ and a special test configuration $(\mathcal{C}^{\mathrm{s}},\mathcal{L}^{\mathrm{s}})$ such that 
    \[
    \mathrm{DF}_q(\mathcal{C}^{\mathrm{s}},\mathcal{L}^{\mathrm{s}})-\delta J^{\mathrm{NA}}(\mathcal{C}^{\mathrm{s}},\mathcal{L}^{\mathrm{s}})\le r(\mathrm{DF}_q(\mathcal{C},\mathcal{L})-\delta J^{\mathrm{NA}}(\mathcal{C},\mathcal{L}))
    \]
    for any $\delta\in [0,1)$. 
    Furthermore, if $\delta=0$, then equality holds if and only if  $(\mathcal{C},\mathcal{L})$ is a special test configuration.
\end{thm}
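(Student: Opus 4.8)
The statement is the quasimap analogue of the Li--Xu type reduction to special test configurations, and the natural strategy is to transport the known result for log-twisted Fano pairs, \cite[Corollary A.14]{Hat}, across the dictionary established in Remark \ref{rem--K-stability--coincidence}. Concretely, let $(\mathbb{P}^1,B+uB',uM)$ be the log-twisted Fano pair associated with $q$ as in Definition \ref{defn--fixed-movable-part-quasimap}, so that $-(K_{\mathbb{P}^1}+B+uB'+uM)$ is ample and, by Remark \ref{rem--K-stability--coincidence}, $\mathrm{DF}_{(B+uB',uM)}(\mathcal{C},\mathcal{L})=\mathrm{DF}_q(\mathcal{C},\mathcal{L})$ for every normal nontrivial ample test configuration for $q$. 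First I would observe that an ample test configuration for $q$ in the sense of Definition \ref{de--kst--qmaps} is exactly an ample test configuration for $(\mathbb{P}^1,-(K_{\mathbb{P}^1}+B+uB'+uM))$, since the polarization $(2-mu-\mathrm{deg}(B))\mathcal{O}(1)$ is a positive multiple of $-(K_{\mathbb{P}^1}+B+uB'+uM)=-K_{\mathbb{P}^1}-B-u\mathscr{L}$ on $\mathbb{P}^1$ (both have degree $2-mu-\mathrm{deg}(B)>0$, which is positive precisely by the lc Fano condition). Thus the two $J^{\mathrm{NA}}$-functionals also agree.

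Next I would invoke the log-twisted version of the Li--Xu theorem. The references cited in the statement — \cite[Corollary 1]{LX}, \cite[Corollary 6.11]{Fjt}, \cite[Theorem 3.12]{BLZ} — provide, for any normal ample test configuration $(\mathcal{C},\mathcal{L})$ of a log-twisted Fano pair, a special test configuration $(\mathcal{C}^{\mathrm{s}},\mathcal{L}^{\mathrm{s}})$ and an integer $r\in\mathbb{Z}_{>0}$ with
\[
\mathrm{DF}_{(B+uB',uM)}(\mathcal{C}^{\mathrm{s}},\mathcal{L}^{\mathrm{s}})-\delta J^{\mathrm{NA}}(\mathcal{C}^{\mathrm{s}},\mathcal{L}^{\mathrm{s}})\le r\bigl(\mathrm{DF}_{(B+uB',uM)}(\mathcal{C},\mathcal{L})-\delta J^{\mathrm{NA}}(\mathcal{C},\mathcal{L})\bigr)
\]
for all $\delta\in[0,1)$, with equality at $\delta=0$ iff $(\mathcal{C},\mathcal{L})$ was already special. (Here one uses that the $\mathcal{J}^{uM,\mathrm{NA}}$-term is linear in the central-fiber data in the sense required by those references; this is the role of the twisting divisor being semiample. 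For the one-dimensional base $\mathbb{P}^1$ the relevant MMP/special-degeneration input is elementary.) Rewriting both sides via Remark \ref{rem--K-stability--coincidence} turns this into the desired inequality for $\mathrm{DF}_q$ and $J^{\mathrm{NA}}$. The only remaining point is to check that the notion of ``special test configuration for the log-twisted pair'' coincides with ``special test configuration for $q$'' in the sense of Definition \ref{defn--special-test-config}: for a general $D\in|\mathscr{L}|_q$ the divisor $uD+B$ is $\mathbb{Q}$-linearly equivalent to $B+uB'+uM$ with a generically reduced, movable extra piece, so $(\mathcal{C},\mathcal{C}_0+\mathcal{D})$ plt with $\mathcal{L}\sim_{\mathbb{Q}}-(K_{\mathcal{C}}+\mathcal{D})$ is precisely the pltness and $\mathbb{Q}$-Fano degeneration condition appearing in the log-twisted statement, by the same argument as \cite[Proof of Corollary A.14]{Hat}.

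I expect the main obstacle to be the bookkeeping in this last identification: one must make sure that ``general $D$'' can be chosen so that the closure $\mathcal{D}=\overline{(uD+B)\times\mathbb{G}_m}$ in $\mathcal{C}$ behaves compatibly with the central fiber, i.e.\ that the special degeneration produced for the log-twisted pair actually degenerates $uD+B$ (for general $D$) to an snc-compatible configuration, and that the numerical equality case is not affected by the choice of $D$ — the latter being guaranteed by the $D$-independence of $\mathrm{DF}_q$ recorded in Definition \ref{de--kst--qmaps} via \cite[Proposition 3.8]{BLZ}. A secondary subtlety is ensuring the twisting line bundle $uM$ (pulled back to $\mathcal{C}$ and its compactification) contributes the correct $\mathcal{J}^{uM,\mathrm{NA}}$-term under the normalization/base-change steps of the Li--Xu argument; since $M$ is a divisor on the fixed base $\mathbb{P}^1$ and its pullback is canonically defined on any test configuration, this reduces to the intersection-theoretic computation already carried out in \cite{Hat}, and I would simply cite it rather than repeat it.
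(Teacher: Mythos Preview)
Your approach is correct and matches the paper's: translate to the associated log--twisted Fano pair $(\mathbb{P}^1,B+uB',uM)$, use the identity $\mathrm{DF}_{(B+uB',uM)}=\mathrm{DF}_q$ (which is \cite[Lemma 2.22]{Hat}, restated in Remark~\ref{rem--K-stability--coincidence}), and then invoke the argument of \cite[Corollary A.14]{Hat}. The paper's proof is in fact much terser than yours---it simply records the $\mathrm{DF}$ identity and says ``the assertion follows from a similar argument of the proof of \cite[Corollary A.14]{Hat}''---so your elaboration on the identification of special test configurations and the role of general $D$ is additional detail rather than a different route.
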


\begin{proof}
    Let $(\mathbb{P}^1,B+uB',uM)$ be the log--twisted Fano pair associated with $q$.
     By \cite[Lemma 2.22]{Hat}, we have 
\[
\mathrm{DF}_{(B+uB',uM)}(\mathcal{C},\mathcal{L})=\mathrm{DF}_{q}(\mathcal{C},\mathcal{L})
\]
for any normal nontrivial ample test configuration $(\mathcal{C},\mathcal{L})$ for $q$.
Thus the assertion follows from a similar argument of the proof of \cite[Corollary A.14]{Hat}. 
\end{proof}

By this theorem, to check K-semistability, it is enough to check $\mathrm{DF}_q(\mathcal{C},\mathcal{L})$ for any special test configuration $(\mathcal{C},\mathcal{L})$ for $q$.

\begin{defn}[Canonical quasimap structure] 
In this definition, $\mathbb{P}^{1}_{\mathbbm{k}}$ is denoted by $C$ for the simplicity of representation.
Let $q\colon C\to [\mathrm{Cone}(X)/\mathbb{G}_{m, \mathbbm{k}}]$ be a quasimap with respect to a closed embedding $X \subset \mathbb{P}^{N}_{\mathbbm{k}}$, and let $\mathcal{C}$ be a special test configuration for $C$. 
Note that $\mathbb{G}_m$ acts on $\mathcal{C}$ and $\mathbb{A}^1$ equivariantly by the definition.
We define the {\it canonical quasimap structure}, which is a $\mathbb{G}_{m}$-equivariant family of quasimaps 
$$q_{\mathcal{C}}\colon \mathcal{C}\to[\mathrm{Cone}(X)\times \mathbb{A}^1/\mathbb{G}_{m}]$$
over $\mathbb{A}^1$, as follows:  
Let $q_{\mathbb{A}^1}\colon C_{\mathbb{A}^1}\to [\mathrm{Cone}(X)\times \mathbb{A}^1/\mathbb{G}_{m, \mathbb{A}^1}]$ be the quasimap induced by $q$ such that $q_{\mathbb{A}^1}$ is trivial on the second component $\mathbb{A}^1$.
Note that $q_{\mathbb{A}^1}$ is $\mathbb{G}_{m, \mathbb{A}^1}$-invariant. 
Take a test configuration $\mathcal{C}'$ for $C$ such that there exist $\mathbb{G}_{m}$-equivariant morphisms of test configurations $p\colon\mathcal{C}'\to C_{\mathbb{A}^1}$ and $r\colon\mathcal{C}'\to \mathcal{C}$ which are isomorphic over $\mathbb{A}^1\setminus\{0\}$.
Let $\mathscr{L}$ be the line bundle on $C_{\mathbb{A}^1}$ defined with $q_{\mathbb{A}^1}$, and let $\mathrm{Sec}_{q_{\mathbb{A}^1}}(\mathscr{L})$ be the module of sections (see Definition \ref{defn--linebundle-seq-quasimap}).
We put $\mathscr{L}':=r_*p^*\mathscr{L}$. 
By Definition \ref{defn--special-test-config}, we see that $\mathcal{C}_{0}$ is a normal curve. 
This implies that $\mathcal{C}$ is smooth.  
We also see from Definition \ref{defn--linebundle-seq-quasimap} that $\mathrm{Sec}_{q_{\mathbb{A}^1}}(\mathscr{L}) \neq0$, which implies $H^{0}(C_{\mathbb{A}^1}, \mathscr{L}) \neq 0$. 
Thus, $\mathscr{L}$ is nef over $\mathbb{A}^1$, and therefore $\mathscr{L}'$ is a Cartier divisor on $\mathcal{C}$ and nef over $\mathbb{A}^1$.  
By the negativity lemma, there exists an $r$-exceptional effective divisor $E$ such that $r^*\mathscr{L}'=p^*\mathscr{L}+E$ as Cartier divisors. By this relation, we get a natural inclusion 
$$\iota\colon H^0(C_{\mathbb{A}^1},\mathscr{L})\hookrightarrow H^0(\mathcal{C},\mathscr{L}').$$
We note that $\iota$ is $\mathbb{G}_{m}$-equivariant. 
Hence the subspace $\iota(\mathrm{Sec}_{q_{\mathbb{A}^1}}(\mathscr{L}))\subset  H^0(\mathcal{C},\mathscr{L}')$ is stable under this action.
We put 
$$\mu :=\underset{0\ne f\in\iota(\mathrm{Sec}_{q_{\mathbb{A}^1}}(\mathscr{L}))}{\min}\mathrm{ord}_{\mathcal{C}_0}(f).$$
We have an isomorphism $\mathscr{L}'\cong\mathscr{L}'(-\mu\mathcal{C}_0)$ as line bundles, and this induces an isomorphism 
$$\nu \colon H^0(\mathscr{L}')\to H^0(\mathscr{L}'(-\mu\mathcal{C}_0)).$$
We put the new $\mathbb{G}_m$-action on $\mathscr{L}'(-\mu\mathcal{C}_0)$ via the natural identification $\mathscr{L}'\cong\mathscr{L}'(-\mu\mathcal{C}_0)$.
Then, $\nu$ is $\mathbb{G}_m$-equivariant with respect to this $\mathbb{G}_m$-action.
For the canonical coordinate $e_i$ of $\mathbb{A}^{N+1}$, let $f_i$ be the corresponding section of $\mathrm{Sec}_{q_{\mathbb{A}^1}}(\mathscr{L})$.
Take the corresponding section $f'_i\in \nu(\iota(\mathrm{Sec}_{q_{\mathbb{A}^1}}(\mathscr{L})))$.
Here, note that $f'_i$ is a $\mathbb{G}_m$-invariant section with respect to the action $\mathbb{G}_m$ on $\mathscr{L}'(-\mu\mathcal{C}_0)$ for any $i$.
By the construction of $\nu$ and $\iota$, some $f'_i$ does not vanish along $\mathcal{C}_0$.
Therefore, $\{f'_i\}_{i=0}^N$ define a quasimap
$$q_{\mathcal{C}}\colon \mathcal{C}\to[\mathrm{Cone}(X)/\mathbb{G}_{m, \mathbbm{k}}].$$ 
This is the $\mathbb{G}_m$-equivariant family of quasimaps over $\mathbb{A}^1$ that we wanted to constructed. 

The quasimap $q_{\mathcal{C}}$ is often called the {\it canonical quasimap defined on the special test configuration $\mathcal{C}$}.
\end{defn}

When $q$ is a log Fano quasimap for some effective divisor on $\mathbb{P}^{1}_{\mathbbm{k}}$, there exists only one $\mathbb{G}_{m, \mathbb{A}^1}$-equivariant family of quasimaps on $\mathcal{C}$ over $\mathbb{A}^1$ whose restriction to $\mathcal{C}\setminus\mathcal{C}_0$ coincides with $q_{\mathbb{A}^1}|_{C\times(\mathbb{A}^1\setminus\{0\})}$. In other words, the canonical quasimap structure of $\mathcal{C}$ is uniquely determined. 
See Lemma \ref{lem--canonical-qm--str} below.
This is why the canonical quasimap structure is not an ad hoc notion.

\begin{lem} \label{lem--canonical-qm--str}
    Let $q\colon (\mathbb{P}^1,B)\to [\mathrm{Cone}(X)/\mathbb{G}_{m, \mathbbm{k}}]$ be a log Fano quasimap of degree $m$ and of weight $u$ with respect to a closed embedding $X\subset \mathbb{P}^N$, and let $\mathcal{C}$ be a special test configuration with the canonical quasimap structure $q_{\mathcal{C}}\colon \mathcal{C}\to [\mathrm{Cone}(X)\times\mathbb{A}^1/\mathbb{G}_{m,\mathbb{A}^1}]$.

    Let $q'\colon \mathcal{C}\to [\mathrm{Cone}(X)\times\mathbb{A}^1/\mathbb{G}_{m,\mathbb{A}^1}]$ be another $\mathbb{G}_m$-equivariant family of quasimaps with respect the canonical $\mathbb{G}_m$-action on $\mathcal{C}$ such that $q'_{\mathbb{A}^1\setminus\{0\}}=q_{\mathbb{A}^1\setminus\{0\}}$. Then, $q'=q_{\mathcal{C}}$.
\end{lem}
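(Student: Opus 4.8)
\textbf{Proof plan for Lemma \ref{lem--canonical-qm--str}.}

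The plan is to show that any $\mathbb{G}_m$-equivariant family of quasimaps $q'$ on $\mathcal{C}$ agreeing with $q_{\mathcal{C}}$ over $\mathbb{A}^1 \setminus \{0\}$ must, after analyzing the associated line bundle and sections, coincide with $q_{\mathcal{C}}$. First I would recall that a family of quasimaps over $\mathbb{A}^1$ is determined by the data of its associated line bundle $\mathscr{L}'$ on $\mathcal{C}$ (unique up to relative linear equivalence over $\mathbb{A}^1$) together with the module of sections $\mathrm{Sec}_{q'}(\mathscr{L}')$ and the sections $f_i''$ corresponding to the fixed basis $\{e_i\}$ of $\mathbb{A}^{N+1}$; since the central fiber $\mathcal{C}_0$ is a normal (hence smooth) curve and $\mathcal{C}$ is smooth by Definition \ref{defn--special-test-config}, $\mathrm{Pic}(\mathcal{C})$ is well-behaved and two line bundles agreeing outside $\mathcal{C}_0$ differ by a multiple of $\mathcal{O}_{\mathcal{C}}(\mathcal{C}_0)$. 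Because $q'|_{\mathbb{A}^1\setminus\{0\}} = q_{\mathbb{A}^1\setminus\{0\}} = (q_{\mathcal{C}})|_{\mathbb{A}^1\setminus\{0\}}$, the line bundles associated with $q'$ and $q_{\mathcal{C}}$ restrict to isomorphic line bundles on $\mathcal{C}\setminus\mathcal{C}_0$, so they differ by $\mathcal{O}_{\mathcal{C}}(a\mathcal{C}_0)$ for some $a \in \mathbb{Z}$.

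The key step is to pin down $a$ and the sections using $\mathbb{G}_m$-equivariance together with the "no common vanishing along $\mathcal{C}_0$" property built into the notion of a quasimap. I would argue as follows: by the same construction as in the definition of the canonical quasimap structure, the module of sections of $q'$ sits inside $H^0(\mathcal{C}, \mathscr{L}'')$ for the line bundle $\mathscr{L}''$ associated with $q'$, and its restriction to $\mathcal{C}\setminus\mathcal{C}_0$ is identified with $\mathrm{Sec}_{q_{\mathbb{A}^1}}(\mathscr{L})$ via the agreement over $\mathbb{A}^1\setminus\{0\}$. Using that $\mathbb{A}^1\setminus\{0\}$ is dense and $\mathcal{C}$ is normal, each section $f_i$ of $\mathrm{Sec}_{q_{\mathbb{A}^1}}(\mathscr{L})$ extends uniquely to a rational section of $\mathscr{L}''$ with a well-defined order $\mathrm{ord}_{\mathcal{C}_0}$; the $\mathbb{G}_m$-action forces $\mathscr{L}''$ to be the unique twist $\mathscr{L}'(b\mathcal{C}_0)$ for which the minimum of these orders over nonzero sections in the module is exactly zero — this is precisely the normalization performed by the operator $\nu$ in the construction of $q_{\mathcal{C}}$, and it is the only twist compatible with $q''$ being a genuine (not merely "rational") family of quasimaps, i.e. with no irreducible component of $\mathcal{C}_0$ being mapped entirely into the vertex of the cone. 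Hence $\mathscr{L}'' \cong \mathscr{L}'(-\mu\mathcal{C}_0)$ and under this identification $f_i'' = f_i'$ (up to a common nonzero scalar, which can be absorbed), exactly as in $q_{\mathcal{C}}$.

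Finally I would invoke Lemma \ref{lem} (or a direct bare-hands comparison): two non-constant quasimaps from $\mathbb{P}^1$ (here, fiberwise, or more precisely two families over $\mathbb{A}^1$) with the same associated line bundle and the same sections $\{f_i\}$ corresponding to the fixed basis are equal. Since $q$ is a \emph{log Fano} quasimap, it is in particular non-constant (the module of sections has dimension $\geq 2$), and this non-constancy propagates to the generic fiber of $\mathcal{C}$ over $\mathbb{A}^1$, so the hypotheses of the comparison lemma are met after a general change of basis. Therefore $q' = q_{\mathcal{C}}$. The main obstacle I anticipate is the bookkeeping of $\mathbb{G}_m$-weights: one must check that the equivariant structure on $\mathscr{L}''$ is forced, i.e. that there is no "extra" $\mathbb{G}_m$-twist producing a different equivariant family with the same underlying data; this is handled by the observation that the weight is determined by requiring $\min_i \mathrm{ord}_{\mathcal{C}_0}(f_i'') = 0$ together with the requirement that the $f_i''$ be $\mathbb{G}_m$-invariant sections, which leaves no freedom. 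A secondary subtlety is to ensure the argument is genuinely over $\mathbb{A}^1$ and not just fiberwise, which follows because $\mathcal{C}$ is integral and normal so that the family-level line bundle and sections are determined by their restriction to the dense open $\mathcal{C}\setminus\mathcal{C}_0$ once the order along the prime divisor $\mathcal{C}_0$ is fixed.
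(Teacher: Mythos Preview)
Your core argument in the second paragraph is correct and is genuinely more direct than the paper's. The paper works at the level of divisors rather than sections: it picks a $\mathbb{G}_m$-invariant general member $\mathcal{D}\in|\mathscr{L}|_{q'}$, shows that $\mathcal{D}$ is the closure of $\mathcal{D}_1\times\mathbb{G}_m$, and then splits into two cases depending on whether $q_{\mathcal{C}}|_{\mathcal{C}_0}$ is constant. In the non-constant case it applies Lemma~\ref{lem}; in the constant case it first strips off the fixed part and then runs a separate argument using the Hirzebruch surface structure of the compactification $\overline{\mathcal{C}}$ and its negative section. Your approach via the normalization ``$\min_i\mathrm{ord}_{\mathcal{C}_0}(f_i'')=0$'' bypasses this dichotomy entirely. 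Note also that what actually pins down the twist is not the $\mathbb{G}_m$-equivariance but the quasimap condition on the central fiber (the generic point of $\mathcal{C}_0$ must land in $\mathbb{P}^N$), as you yourself observe; in fact your argument does not use the $\mathbb{G}_m$-equivariance of $q'$ at all.

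One genuine error, though it is harmless: your claim in the third paragraph that a log Fano quasimap is automatically non-constant is false. Constant quasimaps can certainly be log Fano (Definition~\ref{defn--quasi--map--const}; they appear throughout, e.g.\ in Lemma~\ref{lem--bigness-for-const-maps}), and even when $q$ itself is non-constant the central fiber $q_{\mathcal{C}}|_{\mathcal{C}_0}$ may well be constant. Fortunately you do not need this claim: once you have established that the associated line bundles and the sections $f_i$ corresponding to the fixed basis coincide, the two quasimaps are equal \emph{by the definition of a quasimap}, so neither Lemma~\ref{lem} nor any non-constancy hypothesis is required at that stage.
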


\begin{proof}
Let $\mathscr{L}$ be the line bundle associated with $q_{\mathcal{C}}$.
Since $\mathcal{C}\cong\mathbb{P}^1\times \mathbb{A}^1$ as a scheme, we see that $\mathscr{L}$ is isomorphic to the line bundle associated with $q'$.
Take a $\mathbb{G}_m$-invariant general divisor $\mathcal{D}\in|\mathscr{L}|_{q'}$.
For $\mathrm{Cone}(X)\subset \mathbb{A}^{N+1}$, let $f'\in\mathrm{Sec}_{q'}(\mathscr{L})$ be the section corresponding to $\mathcal{D}$.
Since we choose $\mathcal{D}$ general, we may assume that $f'_0\ne0$. 
This implies that $\mathcal{D}$ coincides with the Zariski closure $\overline{\mathcal{D}_1\times\mathbb{G}_m}$ of $\mathcal{D}_1\times\mathbb{G}_m$ in $\mathcal{C}$.
Suppose that $f'$ corresponds to an element $e\in\mathbbm{k}^{N+1}$ and $e$ corresponds to a section $f\in\mathrm{Sec}_{q_{\mathcal{C}}}(\mathscr{L})$.
Since we choose $\mathcal{D}$ general, we may assume that $f$ is also general such that $f_0\ne0$.
This means that $\mathrm{div}(f)=\mathcal{D}$.
Thus, if $q_{\mathcal{C}}|_{\mathcal{C}_0}$ is non-constant, then we have that $q_{\mathcal{C}}|_{\mathcal{C}_0}$ coincides with $q'|_{\mathcal{C}_0}$ by Lemma \ref{lem}.
Furthermore, we can see that $q_{\mathcal{C}}=q'$ in this case.

Thus, we may assume that $q_{\mathcal{C}}|_{\mathcal{C}_0}$ is constant.
Let $B'$ be the fixed part of $q$ and $\mathcal{B}'$ the closure of $B'\times\mathbb{G}_m$.
Then, we see that $\mathcal{B}'$ is contained in the fixed part of $q_{\mathcal{C}}$ and $q'$.
Let $s_{\mathcal{B}'}\in\mathcal{O}_{\mathcal{C}}(\mathcal{B}')$ be the section corresponding to $\mathcal{B}'$.
If the functions $\{f_i\}_{i=0}^N\in\mathrm{Sec}_{q_{\mathcal{C}}}(\mathscr{L})$ (resp.~$\{f'_i\}_{i=0}^N\in\mathrm{Sec}_{q'}(\mathscr{L})$) corresponding to the canonical basis define $q_{\mathcal{C}}$ (resp.~$q'$), we set the following new family of quasimaps $\widetilde{q_{\mathcal{C}}}$ (resp.~$\widetilde{q'}$) defined by the sections $\{f_i\cdot s_{\mathcal{B}'}^{-1}\}_{i=0}^N\in\mathrm{Sec}_{q_{\mathcal{C}}}(\mathscr{L}(-\mathcal{B}'))$ (resp.~$\{f'_i\cdot s_{\mathcal{B}'}^{-1}\}_{i=0}^N\in\mathrm{Sec}_{q'}(\mathscr{L}(-\mathcal{B}'))$).
Then, it suffices to compare $\widetilde{q_{\mathcal{C}}}$ with $\widetilde{q'}$ and therefore we may assume that $B'=0$.
It is well-known that the canonical compactification $\overline{\mathcal{C}}$ of $\mathcal{C}$ over $\mathbb{P}^1$ is a Hirzebruch surface $\mathbb{P}_{\mathbb{P}^1}(\mathcal{O}\oplus\mathcal{O}(-r))$ (cf.~\cite[Example 2.8]{BHJ}).
Then, we assert that there exists a $\mathbb{G}_m$-invariant section $\Delta\subset \overline{\mathcal{C}}$ such $\Delta$ is disjoint from any general divisor $\mathcal{D}\in|\mathscr{L}|_{q'}$.
Indeed, it is trivial when $\mathcal{C}$ is trivial and hence we may assume that $\mathcal{C}$ is a non-trivial special test configuration.
Then, $\mathcal{C}_0$ admits the non-trivial $\mathbb{G}_m$-action and $q'|_{\mathcal{C}_0}$ and $q_{\mathcal{C}}|_{\mathcal{C}_0}$ are constant. 
If we set $\Delta$ as the unique section such that $\Delta^2<0$, then any general $\mathcal{D}$ does not intersect with $\Delta$. 
This means that the fixed part of $q_{\mathcal{C}}$ or $q'$ does not contain any point of $\Delta$ either.
Therefore, around $\Delta$, $q_{\mathcal{C}}$ and $q'$ are $\mathbb{G}_m$-equivariant morphisms from $\mathcal{C}$ to $\mathbb{P}^N$.
Now, it is easy to see that the images of $q_{\mathcal{C}}|_{\mathcal{C}_0}$ and $q'|_{\mathcal{C}_0}$ around $\Delta$ are the point $q_{\mathbb{A}^1}(\Delta\setminus\Delta_0)$.
Thus, the assertion holds.
\end{proof}

We note that if $\mathcal{C}$ is not trivial but special, then $q_{\mathcal{C}}|_{\mathcal{C}_0}$ admits the induced $\mathbb{G}_m$-action.
It is easy to see that $|\mathscr{L}_0|_{q_{\mathcal{C}}|_{\mathcal{C}_0}}$ consists of only one divisor $D$.
Thus, we see that K-stability of $q_{\mathcal{C}}|_{\mathcal{C}_0}$ coincides with the K-stability of the log Fano pair $(\mathbb{P}^1,uD+B)$.

We note the following important fact, which can be shown in the same way as the log Fano case.

\begin{lem}[{cf.~\cite[Lemma 3.1]{LWX21}}]\label{lem--DF=0-case}
Let $q$ be a K-semistable log Fano quasimap of weight $u$ and degree $m$ and $(\mathcal{C},\mathcal{L})$ a normal ample test configuration such that $\mathrm{DF}_q(\mathcal{C},\mathcal{L})=0$.
Suppose that $0<u<1-\frac{v}{2}$, where $v:=-\mathrm{deg}(K_{\mathbb{P}^1}+B)-um$.
Then, $(\mathcal{C},\mathcal{L})$ is special and $q_{\mathcal{C}}|_{\mathcal{C}_0}$ is K-semistable.
\end{lem}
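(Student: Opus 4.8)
The plan is to reduce the statement to the corresponding fact in the log--twisted Fano setting, which is \cite[Lemma 3.1]{LWX21} in spirit (adapted in \cite{Hat} to the log--twisted case), and then transport the conclusion back through the dictionary between $q$ and its associated log--twisted Fano pair $(\mathbb{P}^1,B+uB',uM)$ established in Definition \ref{defn--fixed-movable-part-quasimap} and Remark \ref{rem--K-stability--coincidence}. First I would recall that, by Remark \ref{rem--K-stability--coincidence}, for the given $q$ and any normal nontrivial ample test configuration $(\mathcal{C},\mathcal{L})$ for $q$ one has the identity $\mathrm{DF}_{(B+uB',uM)}(\mathcal{C},\mathcal{L})=\mathrm{DF}_q(\mathcal{C},\mathcal{L})$, and that K-semistability of $q$ is equivalent to log--twisted K-semistability of $(\mathbb{P}^1,B+uB',uM)$. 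Under the hypothesis $0<u<1-\tfrac{v}{2}$, Lemma \ref{lem--two--delta--invariants} and Theorem \ref{thm--delta--quasimap} further give that these are equivalent to K-semistability of the log Fano pair $(\mathbb{P}^1,B+uD)$ for general $D\in|\mathscr{L}|_q$; this is the key that lets me use the genuinely Fano-theoretic statement of \cite{LWX21}.

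Next I would run the argument of \cite[Lemma 3.1]{LWX21} for the log Fano pair $(\mathbb{P}^1,B+uD)$: given $(\mathcal{C},\mathcal{L})$ with $\mathrm{DF}=0$, one applies Theorem \ref{thm--lixu--analog} (the log Fano quasimap analogue of Li--Xu) with $\delta=0$. Since $q$ is K-semistable, the inequality $\mathrm{DF}_q(\mathcal{C}^{\mathrm{s}},\mathcal{L}^{\mathrm{s}})\le r\,\mathrm{DF}_q(\mathcal{C},\mathcal{L})=0$ forces $\mathrm{DF}_q(\mathcal{C}^{\mathrm{s}},\mathcal{L}^{\mathrm{s}})=0$ as well, and by the equality case of Theorem \ref{thm--lixu--analog} the original $(\mathcal{C},\mathcal{L})$ must itself be a special test configuration; hence $\mathcal{C}_0$ is a normal curve and $q$ carries the canonical quasimap structure $q_{\mathcal{C}}$ over $\mathbb{A}^1$. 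It then remains to show that $q_{\mathcal{C}}|_{\mathcal{C}_0}$ is K-semistable. For this I would use the lower-semicontinuity of the delta invariant (or, equivalently, of the Donaldson--Futaki invariant) in families: the central fiber $(\mathcal{C}_0,(uD+B)|_{\mathcal{C}_0})$, equipped with its induced $\mathbb{G}_m$-action, is a degeneration of the K-semistable pair $(\mathbb{P}^1,B+uD)$, so $\delta(q_{\mathcal{C}}|_{\mathcal{C}_0})\ge 1$; as noted after Lemma \ref{lem--canonical-qm--str}, K-stability of $q_{\mathcal{C}}|_{\mathcal{C}_0}$ coincides with that of the log Fano pair $(\mathbb{P}^1,uD+B)$ on the central fiber, so Theorem \ref{thm--delta--quasimap} applied to $q_{\mathcal{C}}|_{\mathcal{C}_0}$ (whose weight and degree are unchanged, and which still satisfies $0<u<1-\tfrac{v}{2}$ since $v$ is a numerical invariant preserved by the degeneration) yields K-semistability of $q_{\mathcal{C}}|_{\mathcal{C}_0}$.

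Concretely, the key steps in order are: (1) translate to the log--twisted Fano pair via Remark \ref{rem--K-stability--coincidence} and $\mathrm{DF}_q=\mathrm{DF}_{(B+uB',uM)}$; (2) invoke Theorem \ref{thm--lixu--analog} with $\delta=0$ to deduce from $\mathrm{DF}_q(\mathcal{C},\mathcal{L})=0$ and K-semistability of $q$ that $(\mathcal{C},\mathcal{L})$ is special; (3) form the canonical quasimap structure $q_{\mathcal{C}}$ and restrict to $\mathcal{C}_0$; (4) identify K-stability of $q_{\mathcal{C}}|_{\mathcal{C}_0}$ with that of the central-fiber log Fano pair and conclude via semicontinuity of $\delta$ together with Theorem \ref{thm--delta--quasimap}. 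The main obstacle I expect is step (2)'s equality case: one must verify carefully that the equality $\mathrm{DF}_q(\mathcal{C}^{\mathrm{s}},\mathcal{L}^{\mathrm{s}})-0=r(\mathrm{DF}_q(\mathcal{C},\mathcal{L})-0)$ with both sides zero indeed triggers the ``equality holds if and only if $(\mathcal{C},\mathcal{L})$ is a special test configuration'' clause of Theorem \ref{thm--lixu--analog}, rather than some degenerate scenario where $r$ or the intermediate steps of the MMP collapse the comparison; one also needs to check that $q$ extends over such a special $(\mathcal{C},\mathcal{L})$ as a family of quasimaps, which is exactly the content guaranteed by the uniqueness in Lemma \ref{lem--canonical-qm--str}. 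A secondary technical point is ensuring the semicontinuity of $\delta$ used in step (4) applies to the induced polarized degeneration $(\mathcal{C}_0,(uD+B)|_{\mathcal{C}_0})$ with its $\mathbb{G}_m$-action; this is standard for log Fano pairs, but one should record that the degeneration is of the expected numerical type so that Theorem \ref{thm--delta--quasimap} is available on the central fiber.
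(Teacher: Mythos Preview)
Your steps (1)--(3) are fine and match the paper's argument: Theorem \ref{thm--lixu--analog} with $\delta=0$ does force $(\mathcal{C},\mathcal{L})$ to be special, via the equality clause exactly as you describe.

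The gap is in step (4). Lower semicontinuity of the $\delta$-invariant goes the \emph{wrong way} for what you want: it says that the locus $\{\delta\ge 1\}$ is open, so K-semistability of the generic fiber does \emph{not} imply K-semistability of the special fiber. Indeed, arbitrary special degenerations of K-semistable log Fano pairs are not K-semistable; if they were, the hypothesis $\mathrm{DF}_q(\mathcal{C},\mathcal{L})=0$ would be superfluous. The condition $\mathrm{DF}=0$ is exactly what is needed to rule out a destabilising direction on the central fiber, and this is the content of \cite[Lemma 3.1]{LWX21}, not a semicontinuity statement. The paper's proof therefore does not appeal to semicontinuity at all: it takes a general $D'\in|\mathscr{L}|_q$ so that $(\mathbb{P}^1,B+uD')$ is K-semistable (Lemma \ref{lem--two--delta--invariants}), observes that $\mathrm{DF}_{B+uD'}(\mathcal{C},\mathcal{L})=\mathrm{DF}_q(\mathcal{C},\mathcal{L})=0$, and then applies \cite[Lemma 3.1]{LWX21} directly to this ordinary log Fano pair to conclude that the central fiber pair $(\mathcal{C}_0,\mathcal{B}_0+u\mathcal{D}_0)$ is K-semistable.

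There is one further point the paper makes explicit that your outline glosses over: when $(\mathcal{C},\mathcal{L})$ is nontrivial, the induced $\mathbb{G}_m$-action on $\mathcal{C}_0$ forces $q_{\mathcal{C}}|_{\mathcal{C}_0}$ to be constant, so its fixed part equals the whole of $\mathcal{D}_0$. This is what allows one to identify K-semistability of the quasimap $q_{\mathcal{C}}|_{\mathcal{C}_0}$ with K-semistability of the pair $(\mathcal{C}_0,\mathcal{B}_0+u\mathcal{D}_0)$ via Lemma \ref{lem--two--delta--invariants}, without needing $\mathcal{D}_0$ to be a ``general'' member of the linear system on the central fiber (which it typically is not). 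Your sentence ``as noted after Lemma \ref{lem--canonical-qm--str}'' gestures at this, but the logic runs through constancy of the central quasimap rather than through any genericity of the restricted divisor.
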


\begin{proof}
First, we can easily see that $(\mathcal{C},\mathcal{L})$ is special by Theorem \ref{thm--lixu--analog}.
Next, we treat the assertion that $q_{\mathcal{C}}|_{\mathcal{C}_0}$ is K-semistable.
    If $\mathcal{C}$ is trivial, then we can easily see that the assertion holds.  
    Otherwise, note that then $q_{\mathcal{C}}|_{\mathcal{C}_0}$ admits a non-trivial $\mathbb{G}_m$-action and hence is constant.
    Let $D$ be the fixed part of $q_{\mathcal{C}}|_{\mathcal{C}_0}$ and $\mathcal{B}$ be the closure of $B\times\mathbb{G}_m$ in $\mathcal{C}$.
    Since $q_{\mathcal{C}}|_{\mathcal{C}_0}$ is constant, if we take a general member $D'\in |\mathscr{L}|_q$, where $\mathscr{L}$ is the associated line bundle to $q$, then $D=\mathcal{D}_0$ and $(\mathbb{P}^1,B+uD')$ is K-semistable, where $\mathcal{D}$ is the closure of $D'\times \mathbb{G}_m$ in $\mathcal{C}$ by Lemma \ref{lem--two--delta--invariants}.
   Then, the associated log--twisted pair with $q_{\mathcal{C}}|_{\mathcal{C}_0}$ is the log pair $(\mathcal{C}_0,\mathcal{B}_0+uD,0)$.
    Regarding $(\mathcal{C},\mathcal{L})$ as a test configuration for $(\mathbb{P}^1,B+uD')$, we see that 
    \begin{align*}
        \mathrm{DF}_{B+uD'}(\mathcal{C},\mathcal{L})=\mathrm{DF}_{q}(\mathcal{C},\mathcal{L})=0.
    \end{align*}
   By applying \cite[Lemma 3.1]{LWX21} to $(\mathcal{C},\mathcal{L})$ and a K-semistable log Fano pair $(\mathbb{P}^1,B+uD')$,
    we obtain that $(\mathcal{C}_0,\mathcal{B}_0+uD)$ is K-semistable.
    This is equivalent to K-semistability of $q_{\mathcal{C}}|_{\mathcal{C}_0}$ by Lemma \ref{lem--two--delta--invariants} and hence we complete the proof.
\end{proof}

\begin{defn}[Family of quasimap of product type on test configuration]
 Let $(\mathcal{C},\mathcal{L})$ be a normal ample test configuration for $q$.  Suppose that $(\mathcal{C},\mathcal{L})$ is special.
 Let $q_{\mathcal{C}}$ be the canonical quasimap structure of $\mathcal{C}$. Then, we say that $q_{\mathcal{C}}$ is {\it of product type} if $q_{\mathcal{C}}|_{\mathcal{C}_0}=q$.
 It is easy to see that then $q_{\mathcal{C}}=q_{\mathbb{A}^1}$ as an abstract family of quasimaps. 
 Indeed, let $\{f_0,\ldots,f_N\}\subset\mathrm{Sec}_q(\mathscr{L})$ be the set of sections corresponding to the basis of $\mathbbm{k}^{N+1}$.
 First, note that $(\mathcal{C},\mathcal{B})\cong(\mathbb{P}^1\times\mathbb{A}^1,B\times\mathbb{A}^1)$ as abstract pairs.
 Since $q_{\mathcal{C}}$ admits the natural $\mathbb{G}_m$-action, we see that $f_0,\ldots,f_N$ are all $\mathbb{G}_m$-eigenvectors.
 Let $\lambda_i$ be the weight of $f_i$.
 Since $q_{\mathcal{C}}$ is of product type, we can see that this is isomorphic to $q_{\mathbb{A}_1}$ such that $\mathbb{G}_m$ acts on the quasimap in the way that $(f_0)_1,\ldots,(f_N)_1$ admits the weight $\lambda_0,\ldots,\lambda_N$ respectively.
 Note that this isomorphism does not preserve the $\mathbb{G}_m$-actions in general.

 We say that $q$ is {\it K-polystable} if $q$ is K-semistable and $\mathrm{DF}_q(\mathcal{C},\mathcal{L})=0$ if and only if $q_\mathcal{C}$ is of product type.
\end{defn}

\begin{defn}\label{defn--associated--quasi--map--structure}
    Let $f\colon (W,A)\to\mathbb{P}_{\mathbbm{k}}^{1}$ be a uniformly adiabatically K-stable klt-trivial fibration over $\mathbb{P}_{\mathbbm{k}}^{1}$ of dimension $d$ with general fibers belonging to $\mathcal{M}_{d-1,v}^{\mathrm{klt},\mathrm{CY}}$.
    Let $U$ be a Zariski open subset of $\mathbb{P}_{\mathbbm{k}}^{1}$ such that $f|_{f^{-1}(U)}$ is smooth.
    Let $q^\circ\colon U\to M_{d-1,v}^{\mathrm{klt},\mathrm{CY}}$ be the moduli map corresponding to $f|_{f^{-1}(U)}$. 
    Let $B$ be the discriminant $\mathbb{Q}$-divisor and $M$ the moduli $\mathbb{Q}$-divisor associated to $f$.

    With notation as above, suppose that all the fibers of $f|_{f^{-1}U}$ are Abelian varieties or symplectic varieties smoothable to projective irreducible holomorphic symplectic varieties.
    Then, the image of $q^\circ$ is contained in a normal irreducible component $X^\circ$ of $\mathcal{M}_{d-1,v}^{\mathrm{klt},\mathrm{CY}}$ (Proposition \ref{prop--smoothness--of--Abelian--moduli} and Corollary \ref{cor--hk--moduli--normal}).
    Let $X$ be the Baily--Borel compactification of $X^\circ$ and $\Lambda_{\mathrm{Hodge}}$ the Hodge $\mathbb{Q}$-line bundle on $X$.
Take a sufficiently large and divisible positive integer $l$ such that $\Lambda_{\mathrm{Hodge}}^{\otimes l}$ is a very ample Cartier divisor and $lB$ is an integral divisor.    
Let $\iota\colon X\hookrightarrow \mathbb{P}_{\mathbbm{k}}^N$ be the closed embedding defined by $\Lambda_{\mathrm{Hodge}}^{\otimes l}$.
Then, we can extend $q^\circ$ to a quasimap 
$$q\colon \mathbb{P}_{\mathbbm{k}}^{1} \to [\mathrm{Cone}(X)/\mathbb{G}_{m, \mathbbm{k}}]$$
 of degree $l\cdot\mathrm{deg}(M+B)$ and of weight $\frac{1}{l}$ in the following way: 
Let $\mathrm{Cone}(X)\subset\mathbb{A}_{\mathbbm{k}}^{N+1}$ be the affine cone of $X$ as a subvariety of $\mathbb{P}_{\mathbbm{k}}^N$.
Let $\bar{q}\colon \mathbb{P}_{\mathbbm{k}}^1\to X$ be the unique extension of $q^{\circ}$ by the valuative criterion for properness.
By definition, we have $M\sim_{\mathbb{Q}}\bar{q}^*\Lambda_{\mathrm{Hodge}}$. 
Now take the canonical coordinate sections $x_0,\ldots,x_N$ of $\mathbb{P}^N_{\mathbbm{k}}$ and we set $q$ as the quasimap defined by the sections $\bar{q}^*s_i\cdot s_{lB}\in H^0(\mathbb{P}_{\mathbbm{k}}^{1},\mathcal{O}_{\mathbb{P}_{\mathbbm{k}}^{1}}(l(M+B)))$, where $s_{lB}\in H^0(\mathbb{P}_{\mathbbm{k}}^{1},\mathcal{O}_{\mathbb{P}_{\mathbbm{k}}^{1}}(lB))$ is the section corresponding to $lB$. 

We call the $q\colon \mathbb{P}_{\mathbbm{k}}^{1} \to [\mathrm{Cone}(X)/\mathbb{G}_{m, \mathbbm{k}}]$ a {\it quasimap associated with} $f$.
We note that $q$ depends on the integer $l$, and if we fix $l$ then such $q$ is uniquely determined.
\end{defn}

\subsection{Construction of the moduli stack of K-semistable log Fano quasimaps}\label{subsec3.1}

In this subsection, we first deal with the boundedness of K-semistable log Fano quasimaps.
Let $X$ be a projective normal variety with a closed embedding $\iota\colon X\subset \mathbb{P}^N$, $v,u\in\mathbb{Q}_{>0}$ and $r,m\in\mathbb{Z}_{>0}$.
We set the following groupoid $\mathcal{M}^{\mathrm{Kss,qmaps}}_{m,r,u,v,\iota}(S)$ for any scheme $S$;
$$\left\{
 \vcenter{
 \xymatrix@C=12pt{
(\mathcal{C},\mathcal{B})\ar[rr]^-{q}\ar[dr]_{\pi}&& [\mathrm{Cone}(X)\times S/\mathbb{G}_{m,S}] \ar[dl]\\
&S
}
}
\;\middle|
\begin{array}{rl}
(i)&\text{$q$ is a family of log Fano quasi-}\\
&\text{maps of degree $m$ and of weight $u$,}\\
(ii)&\text{$q_{\bar{s}}$ is K-semistable}\\
&\text{for any geometric point $\bar{s}\in S$,}\\
(iii)&\text{$r\mathcal{B}_{\bar{s}}$ is a $\mathbb{Z}$-divisor}\\
&\text{for any geometric point $\bar{s}\in S$}\\
(iv)&\text{$\mathrm{deg}_{\mathbb{P}^1}(-K_{\mathbb{P}^1}-\mathcal{B}_{\bar{s}})-um=v$}\\
&\text{for any geometric point $\bar{s}\in S$}
\end{array}\right\},$$
where we set its arrows to be isomorphisms of families of quasimaps preserving the divisors.
It is easy to see that the quasi-functor $S\mapsto \mathcal{M}^{\mathrm{Kss,qmaps}}_{m,r,u,v,\iota}(S)$ forms a stack $\mathcal{M}^{\mathrm{Kss,qmaps}}_{m,r,u,v,\iota}$ as $\mathsf{Qmaps}_{X\subset \mathbb{P}^N}$.

Fix a sufficiently divisible and large positive integer $l \in\mathbb{Z}_{>0}$, depending only on $u$, $v$, $r$ and $m$, such that for any object $q\colon(\mathbb{P}^1,B)\to [\mathrm{Cone}(X)/\mathbb{G}_{m, \mathbbm{k}}]$ of $\mathcal{M}^{\mathrm{Kss,qmaps}}_{m,r,u,v,\iota}(\mathbbm{k})$,  the divisor $l(-K_{\mathbb{P}^1}-B)$ is very ample and its degree is even.
Let $P$ be the polynomial defined by
$$P(t):=\chi(\mathbb{P}^1,\mathcal{O}_{\mathbb{P}^1}(tl(-K_{\mathbb{P}^1}-B)))=h^0(\mathbb{P}^1,\mathcal{O}_{\mathbb{P}^1}(tl(-K_{\mathbb{P}^1}-B)))$$
for any $t \in\mathbb{Z}_{>0}$. 
Then $P$ depends only on $l$, $u$, $r$, $v$, and $m$, and therefore $P$ depends only on $u$, $r$, $v$, and $m$.
We note that $l(-K_{\mathbb{P}^1}-B)\sim-\frac{lv+lum}{2}K_{\mathbb{P}^1}$ and $\frac{lv+lum}{2}\in\mathbb{Z}$.

Let $\mathcal{U}$ be the universal family of $\mathbf{Hilb}^{P,\mathcal{O}(1)}_{\mathbb{P}^{P(1)-1}}$, and let $f \colon \mathcal{U} \to \mathbf{Hilb}^{P,\mathcal{O}(1)}_{\mathbb{P}^{P(1)-1}}$ be the structure morphism. 
We regard $rB$ as a $\mathbb{Z}$-divisor of degree $k:=r(2-v-um)$. 
Since we deal with flat projective families of curves, we can consider flat families of effective Cartier divisors of degree $k$ on them.  
We set 
$$\mathcal{W}:=\mathbf{Div}_{\mathcal{U}/\mathbf{Hilb}^{P,\mathcal{O}(1)}_{\mathbb{P}^{P(1)-1}}}^k.$$
By \cite[Corollary 7.53 and Theorem 7.40]{kollar-moduli} (see also \cite[Definition 7.37]{kollar-moduli}), it follows that $\mathcal{W}$ is the fine moduli space of all K-flat divisors of degree $k$. 
Let 
$$\mathcal{B} \subset \mathcal{U}_{\mathcal{W}}$$
be the universal family of $\mathcal{W}$. 

By the standard argument and \cite[Proposition 9.42]{kollar-moduli}, we can take $Z_1\subset \mathcal{W}$ as a locally closed subscheme such that a morphism $S \to \mathcal{W}$ factors through $Z_{1} \hookrightarrow \mathcal{W}$ if and only if the following conditions hold:
\begin{itemize}
\item 
$l(-K_{\mathcal{U}_{S}/S}-\frac{1}{r}\mathcal{B}_{S})\sim_{S}\mathcal{O}_{\mathcal{U}_{S}}(1)|_{\mathcal{U}_{S}}$, and
\end{itemize}
for any geometric point $s \in S$ with the geometric point $\bar{s}$, 
\begin{itemize}
\item
$(\mathcal{U}_{\bar{s}},\mathcal{B}_{\bar{s}})$ is a klt pair with the Hilbert polynomial $P$, and
\item
$f_*(\mathcal{O}_{\mathcal{U}}(1)|_{\mathcal{U}_{S}})\otimes \kappa(s)\to H^0(\mathcal{U}_{s},\mathcal{O}_{\mathcal{U}}(1)|_{\mathcal{U}_{s}})$ is an isomorphism. 
\end{itemize}
 By these conditions, any geometric fiber of $\mathcal{U}_{Z_1}\to Z_1$ coincides with $\mathbb{P}^1$. 
 
By an argument of the Quot functor, we get an open subscheme $Z_2\subset \mathbf{Quot}_{\mathcal{U}_{Z_1}/Z_1,\mathcal{O}_{\mathcal{U}_{Z_1}}^{\oplus m+1}}$ such that putting $\mathscr{L}$ as the universal coherent sheaf on $\mathbf{Quot}_{\mathcal{U}_{Z_1}/Z_1,\mathcal{O}_{\mathcal{U}_{Z_1}}^{\oplus m+1}}\times_{Z_1} \mathcal{U}_{Z_1}$, then for any geometric point $\bar{s}\in \mathbf{Quot}_{\mathcal{U}_{Z_1}/Z_1,\mathcal{O}_{\mathcal{U}_{Z_1}}^{\oplus m+1}}$, the restriction $\mathscr{L}_{\bar{s}}$ is isomorphic to the line bundle $\mathcal{O}(m)$ on $\mathbb{P}^1_{\bar{s}}$ if and only if $\bar{s}\in Z_2$. 
Then, the Hilbert polynomial of $\mathscr{L}_{\bar{s}}$ is determined.
Thus, by construction, $Z_2$ is of finite type over $Z_1$ (see Subsection \ref{Subsection--Hilb}).

Let $\xi\colon \mathcal{O}_{\mathcal{U}_{Z_2}}^{\oplus m+1}\to\mathscr{L}_{Z_{2}}$ be a surjective morphism of coherent sheaves on $\mathcal{U}_{Z_{2}}$ that corresponds to the natural morphism $Z_2\to \mathbf{Quot}_{\mathcal{U}_{Z_1}/Z_1,\mathcal{O}_{\mathcal{U}_{Z_1}}^{\oplus m+1}}$. 
We note that $\mathrm{Ker}\,\xi$ is uniquely determined for such $\xi$.
Next, we consider the scheme 
$$H:=\mathbf{Hom}_{Z_2}(\mathcal{O}^{N+1}_{\mathcal{U}_{Z_2}},\mathscr{L}_{Z_{2}})\cong \mathbb{A}_{\mathcal{U}_{Z_2}}((f_{Z_2*}\mathscr{L}_{Z_2})^\vee)^{N+1}.$$
Let $g\in \mathfrak{Hom}(\mathcal{O}^{N+1}_{\mathcal{U}_H},\mathscr{L}_{H})(H)$ be the universal element. 
Then, $g$ induces the following $H$-morphism
\[
g'\colon\mathbb{A}_{\mathcal{U}_H}(\mathscr{L}_{H})\to \mathbb{A}_{\mathcal{U}_H}^{N+1}. 
\]
Let $\mu\colon H\to Z_2$ be the canonical morphism. 
We note that each geometric point $\bar{s}\in H$ corresponds to an $(N+1)$-tuple $(f_0,f_1,\ldots,f_N)\in H^0(\mathcal{O}_{\mathcal{U}_{\bar{s}}},\mathscr{L}_{\bar{s}}^{N+1})$.
There exists an open subset $Z_3\subset H$ such that a geometric point $\bar{s}\in H$ is contained in $Z_3$ if and only if for any point $p\in\mathcal{U}_{\overline{s}}$, there exists $f'\in \mathrm{Span}(f_0,f_1,\ldots,f_N)\setminus\{0\}$ such that $\mathrm{mult}_p(\frac{1}{r}\mathcal{B}_{\bar{s}}+u\cdot \mathrm{div}(f'))\le 1-\frac{v}{2}$ (\eqref{eq--delta--quasi--map} in Definition \ref{defn--lcFanoquasimap}). 
.

Finally, there exists a closed subscheme $Z_4\subset Z_3$ such that a morphism $h\colon T\to Z_3$ from a scheme $T$ factors through $Z_4$ if and only if $g'_{T}$ factors through $\mathrm{Cone}(X)\times \mathcal{U}_T$, where $g'_{T}$ is the base change of $g'\colon\mathbb{A}_{\mathcal{U}_H}(\mathscr{L}_{H})\to \mathbb{A}_{\mathcal{U}_H}^{N+1}$, which was defined in the previous paragraph. 
Indeed, the ideal corresponding to $\mathrm{Cone}(X)\subset \mathbb{A}^{N+1}$ is generated by finitely many homogeneous polynomials, and thus we can apply Lemma \ref{lem--closed--cone} to obtain $Z_4$. 

Note that $\mathbf{Hilb}^{P,\mathcal{O}(1)}_{\mathbb{P}^{P(1)-1}}$ admits the natural action of $PGL(P(1))$ and $\mathbf{Quot}_{\mathcal{U}_{Z_1}/Z_1,\mathcal{O}_{\mathcal{U}_{Z_1}}^{\oplus m+1}}$ admits the natural action of $PGL(m+1)$.
By this fact and construction, it is easy to see that $PGL(P(1))\times PGL(m+1)$ acts on $Z_4$ and $\mathcal{U}_{Z_4}$ in a natural way. 
Now we prove the following theorem.

\begin{thm}\label{thm--quot--stack}
    Fix an embedding of a projective normal variety $\iota\colon X\subset \mathbb{P}^N$, $v,u\in\mathbb{Q}_{>0}$ and $r,m\in\mathbb{Z}_{>0}$.
We also assume that $0<u<1-\frac{v}{2}$. Then, we have
$$\mathcal{M}^{\mathrm{Kss,qmaps}}_{m,r,u,v,\iota}\cong [Z_4/PGL(P(1))\times PGL(m+1)].$$
    In particular, $\mathcal{M}^{\mathrm{Kss,qmaps}}_{m,r,u,v,\iota}$ is an Artin stack of finite type over $\mathbbm{k}$.
\end{thm}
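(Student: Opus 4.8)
The plan is to establish the isomorphism of stacks by constructing mutually inverse morphisms and then deducing the finiteness properties from the construction of $Z_4$. First I would produce a morphism $[Z_4/PGL(P(1))\times PGL(m+1)]\to \mathcal{M}^{\mathrm{Kss,qmaps}}_{m,r,u,v,\iota}$: over $Z_4$ we have the universal curve $\mathcal{U}_{Z_4}\to Z_4$ with every geometric fiber $\cong\mathbb{P}^1$, the universal divisor $\tfrac1r\mathcal{B}_{Z_4}$, and the universal $(N+1)$-tuple of sections which, by the defining condition of $Z_4$, factors through $\mathrm{Cone}(X)$ and hence defines a family of quasimaps $q_{Z_4}\colon\mathcal{U}_{Z_4}\to[\mathrm{Cone}(X)\times Z_4/\mathbb{G}_{m,Z_4}]$. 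The conditions cut out by $Z_1$ (klt fibers, Hilbert polynomial $P$) together with the condition cut out by $Z_3$ (the pointwise multiplicity bound $\mathrm{mult}_p(\tfrac1r\mathcal{B}_{\bar s}+u\,\mathrm{div}(f'))\le 1-\tfrac v2$ for some section $f'$, which via Definition \ref{defn--lcFanoquasimap} and the bound $\mathrm{deg}(B)+mu<2$ guarantees $\delta(q_{\bar s})\ge 1$) ensure via Theorem \ref{thm--delta--quasimap} — using the hypothesis $0<u<1-\tfrac v2$ — that each $q_{\bar s}$ is a K-semistable log Fano quasimap of degree $m$ and weight $u$. Since $\tfrac1r\mathcal{B}_{Z_4}$ is a K-flat family (coming from $\mathcal{W}=\mathbf{Div}$) whose fibers have the prescribed degree, this data is an object of $\mathcal{M}^{\mathrm{Kss,qmaps}}_{m,r,u,v,\iota}(Z_4)$, and it is manifestly $PGL(P(1))\times PGL(m+1)$-equivariant, so it descends to the desired morphism of stacks.

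Next I would construct the inverse. Given a family $q\colon(\mathcal{C},\mathcal{B})\to[\mathrm{Cone}(X)\times S/\mathbb{G}_{m,S}]$ in $\mathcal{M}^{\mathrm{Kss,qmaps}}_{m,r,u,v,\iota}(S)$, let $\mathscr{L}$ be a line bundle on $\mathcal{C}$ associated with $q$ (Definition \ref{defn--linebundle-seq-quasimap}); by the choice of $l$ the line bundle $l(-K_{\mathcal{C}/S}-\tfrac1r\mathcal{B})$ is relatively very ample with $\pi_*$ locally free of rank $P(1)$, so after an \'etale (indeed, working locally) trivialization it embeds $\mathcal{C}\hookrightarrow\mathbb{P}^{P(1)-1}_S$, giving a classifying map $S\to\mathbf{Hilb}^{P,\mathcal{O}(1)}_{\mathbb{P}^{P(1)-1}}$; the K-flat divisor $r\mathcal{B}$ refines this to a map $S\to\mathcal{W}$, which factors through $Z_1$ by the very definition of $Z_1$. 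The surjection $\mathcal{O}_{\mathcal{C}}^{\oplus m+1}\to\mathscr{L}$ (coming from the principal $\mathbb{G}_m$-bundle data of the quasimap restricted to the first $m+1$ coordinates, after suitably identifying $\mathscr{L}$; here one uses that $q$ has degree $m$) gives a point of $Z_2$, the $(N+1)$ sections $f_0,\dots,f_N$ give a point of $H$ landing in $Z_3$ (again by K-semistability and Theorem \ref{thm--delta--quasimap}, which forces the multiplicity bound), and the factorization of $q$ through $\mathrm{Cone}(X)$ gives a point of $Z_4$. The choices of trivialization and of the isomorphism class of $\mathscr{L}$ are exactly the $PGL(P(1))\times PGL(m+1)$-ambiguity, so this defines a morphism $\mathcal{M}^{\mathrm{Kss,qmaps}}_{m,r,u,v,\iota}\to[Z_4/PGL(P(1))\times PGL(m+1)]$. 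One then checks the two constructions are mutually quasi-inverse; the nontrivial point is that a non-constant quasimap is recovered from its sections, for which Lemma \ref{lem} (and, in families over normal bases, Lemma \ref{lem--quasi--map--S_2}) does the bookkeeping, while constant quasimaps are handled separately but cause no difficulty because the weight bound forbids a quasimap of positive degree from being globally constant on $\mathbb{P}^1$.

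The last sentence of the theorem is then immediate: $\mathbf{Hilb}^{P,\mathcal{O}(1)}_{\mathbb{P}^{P(1)-1}}$ is projective, $\mathcal{W}=\mathbf{Div}^k$ is of finite type over it (\cite[Theorem 7.40]{kollar-moduli}), $Z_1\subset\mathcal{W}$ is locally closed hence of finite type, $Z_2$ is of finite type over $Z_1$ (it is an open subscheme of a Quot scheme with fixed Hilbert polynomial), $H=\mathbf{Hom}_{Z_2}(\mathcal{O}^{N+1},\mathscr{L})$ is affine of finite type over $Z_2$, and $Z_3\subset H$ open, $Z_4\subset Z_3$ closed; so $Z_4$ is a scheme of finite type over $\mathbbm{k}$, and a quotient stack $[Z_4/G]$ by an affine algebraic group $G=PGL(P(1))\times PGL(m+1)$ of a finite-type scheme is an Artin stack of finite type over $\mathbbm{k}$.

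I expect the main obstacle to be verifying carefully that the two morphisms are mutually inverse as morphisms of stacks — in particular tracking the $PGL(P(1))\times PGL(m+1)$-torsor structure through the choices (trivialization of $\pi_*l(-K-\tfrac1r\mathcal{B})$ and of the associated line bundle $\mathscr{L}$, which is only determined up to relative linear equivalence), and checking that the openness condition defining $Z_3$ really matches ``$q_{\bar s}$ is a K-semistable log Fano quasimap'' fiberwise. The latter is where the hypothesis $0<u<1-\tfrac v2$ is essential: it is exactly the range in which Theorem \ref{thm--delta--quasimap} converts the numerical $\delta$-condition (an open, pointwise-multiplicity condition, hence defining an open subscheme by Lemma \ref{lem--openness--criterion}) into genuine K-semistability, and it guarantees that a K-semistable quasimap is automatically a \emph{log} Fano quasimap so that $(\mathbb{P}^1,\mathcal{B}_{\bar s})$ being klt is an allowable constraint in the definition of $Z_1$. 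The boundedness needed to choose $l$, $P$, $k$ uniformly is itself a consequence of $0<u<1-\tfrac v2$ together with the fixed numerical invariants $v,u,r,m$, since then $-K_{\mathbb{P}^1}-B$ has fixed positive degree and bounded coefficients.
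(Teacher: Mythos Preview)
Your overall architecture matches the paper's: build a universal K-semistable family over $Z_4$ to get $\varphi\colon[Z_4/G]\to\mathcal{M}^{\mathrm{Kss,qmaps}}$, then construct the inverse by \'etale-locally trivializing and classifying into $Z_4$. Two points, however, are genuinely wrong and would derail the argument as written.

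First, the surjection $\mathcal{O}_{\mathcal{C}}^{\oplus m+1}\to\mathscr{L}$ does \emph{not} come from ``the quasimap restricted to the first $m+1$ coordinates''. The quasimap supplies a map $\mathcal{O}^{\oplus N+1}\to\mathscr{L}$ which is typically \emph{not} surjective (its cokernel is supported on the fixed part $B'$), and there is no reason any $m+1$ of the $f_i$ should generate $\mathscr{L}$. In the paper's construction the surjection $\xi'\colon\mathcal{O}^{\oplus m+1}\to\mathscr{L}$ is an \emph{auxiliary} choice: after an \'etale cover $S'\to S$ one has $\mathscr{L}_{S'}\cong\mathcal{O}_{\mathbb{P}^1_{S'}}(m)$, and $\xi'$ records such an isomorphism via the standard basis of $H^0(\mathbb{P}^1,\mathcal{O}(m))$. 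This is exactly why $PGL(m+1)$ acts and why it acts \emph{freely} (the automorphism group of the line bundle is $\mathbb{G}_m$). Your description conflates the two maps and loses the origin of the $PGL(m+1)$-torsor; the later remark about ``trivialization of $\mathscr{L}$'' is correct but inconsistent with what you wrote earlier.

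Second, the digression invoking Lemma~\ref{lem} and the constant/non-constant dichotomy is both unnecessary and contains a false claim. The statement ``the weight bound forbids a quasimap of positive degree from being globally constant on $\mathbb{P}^1$'' is wrong: a constant K-semistable log Fano quasimap of degree $m>0$ exists whenever $(\mathbb{P}^1,B+uB')$ is K-semistable with $B'$ the fixed part of degree $m$, and the bound $0<u<1-\tfrac v2$ does not preclude this. More to the point, no such dichotomy is needed. A quasimap $q$ is, by Definition~\ref{defn--linebundle-seq-quasimap}, \emph{the same datum} as the pair $(\mathscr{L},\,\mathcal{O}^{\oplus N+1}\to\mathscr{L})$ up to the $\mathbb{G}_m$-ambiguity, so once $\mathscr{L}$ is trivialized the homomorphism $g_{\mathcal{C}}\colon\mathcal{O}^{\oplus N+1}\to\mathscr{L}$ is literally the point of $H$; there is nothing to ``recover'' and Lemma~\ref{lem} plays no role here. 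The paper's verification that $\varphi\circ\psi$ and $\psi\circ\varphi$ are isomorphic to the identity is accordingly direct.
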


\begin{proof}
We use the notations in the above construction. 
We consider the composition of $g'_{Z_4}\colon\mathbb{A}_{\mathcal{U}_{Z_4}}(\mathscr{L}_{Z_4})\to \mathbb{A}_{\mathcal{U}_{Z_4}}^{N+1}$ and the natural morphism $\mathbb{A}_{\mathcal{U}_{Z_4}}^{N+1}\to \mathbb{A}_{Z_4}^{N+1}$. 
Here, $g'_{Z_4}$ is the base change of $g'\colon\mathbb{A}_{\mathcal{U}_H}(\mathscr{L}_{H})\to \mathbb{A}_{\mathcal{U}_H}^{N+1}$ in the above construction by $Z_{4} \hookrightarrow H$. 
This morphism factors through the inclusion $\mathrm{Cone}(X)\times Z_4\hookrightarrow \mathbb{A}_{Z_4}^{N+1}$.
Thus, we obtain the following family of quasimaps over $Z_4$:
\[
q_{Z_4}\colon \mathcal{U}_{Z_4}\to [\mathrm{Cone}(X)\times Z_4/\mathbb{G}_{m,Z_{4}}].
\]
Consider $\mathcal{B}_{Z_4}$.
By Lemma \ref{lem--two--delta--invariants} and the assumption $0<u<1-\frac{v}{2}$, we can regard 
\[
q_{Z_4}\colon (\mathcal{U}_{Z_4},\mathcal{B}_{Z_4})\to [\mathrm{Cone}(X)\times Z_4/\mathbb{G}_{m,Z_{4}}]
\]
as a family of K-semistable log Fano quasimaps of degree $m$ and of weight $u$.
It is easy to see that $q_{Z_4}$ is $PGL(P(1))\times PGL(m+1)$-invariant.
Furthermore, it is also easy to check that $PGL(m+1)$ acts on $Z_4$ freely since the automorphism group of the line bundle $\mathcal{O}_{\mathbb{P}^1}(m)$ is isomorphic to $\mathbb{G}_{m}$. Hence, $Z_4/PGL(m+1)$ is defined as an algebraic space such that $Z_4\to Z_4/PGL(m+1)$ is a principal bundle.
Let \[
\tilde{q}\colon(\mathcal{U}_{Z_4}/PGL(m+1),\mathcal{B}_{Z_4}/PGL(m+1))\to [\mathrm{Cone}(X)\times Z_4/\mathbb{G}_m]\]
be the induced family of log Fano quasimaps over $Z_4/PGL(m+1)$.
The above family of log Fano quasimaps is $PGL(P(1))$-equivariant over $Z_4/PGL(m+1)$ and induces the morphism 
$$\varphi\colon [Z_4/PGL(P(1))\times PGL(m+1)]\to \mathcal{M}^{\mathrm{Kss,qmaps}}_{m,r,u,v,\iota}.$$ 

To show that $\varphi$ induces a categorical equivalence, we will construct an inverse morphism of $\psi$ as follows. 
We will construct a morphism
$$\psi \colon \mathcal{M}^{\mathrm{Kss,qmaps}}_{m,r,u,v,\iota} \to  [Z_4/PGL(P(1))\times PGL(m+1)]$$
such that $\varphi \circ \psi$ is the identity. 
Fix a scheme $S$. 
Let $q_{\mathcal{C}}\colon (\mathcal{C},\mathcal{D})\to [\mathrm{Cone}(X)\times S/\mathbb{G}_{m,S}]$ be a family of K-semistable log Fano quasimaps of degree $m$ and of weight $u$, and let $\mathscr{L}_{\mathcal{C}}$ be the associated line bundle on $\mathcal{C}$.
As in \cite[Proof of Theorem 4.2]{HH}, there is an \'{e}tale cover $\nu\colon S'\to S$ such that $\mathcal{C}_{S'}\cong \mathbb{P}^1\times S'$, $\pi_{\mathcal{C}_{S'},*}(\mathcal{O}_{\mathcal{C}_{S'}}(l(-K_{\mathcal{C}_{S'}/S'}-\mathcal{D}_{S'})))\cong\mathcal{O}_{S'}^{\oplus P(1)}$, where $\pi_{\mathcal{C}_{S'}}\colon \mathcal{C}_{S'}\to S'$ is the canonical morphism and $l \in \mathbb{Z}_{>0}$ is as in the construction of $Z_{4}$, and $\mathscr{L}_{\mathcal{C},S'}\cong \mathcal{O}_{\mathbb{P}^1_{S'}}(m)$. 
Then we get a closed embedding $(\mathbb{P}^1\times S',\mathcal{D}_{S'})\hookrightarrow\mathbb{P}^{P(1)-1}_{S'}$ defined by the complete linear system $|\mathcal{O}_{\mathcal{C}_{S'}}(l(-K_{\mathcal{C}_{S'}/S'}-\mathcal{D}_{S'}))|$, a surjective homomorphism $\xi'\colon\mathcal{O}_{\mathcal{C}_{S'}}^{\oplus m+1}\to \mathscr{L}_{\mathcal{C},S'}$, and the homomorphism $g_{\mathcal{C}_{S'}}\colon \mathcal{O}_{\mathcal{C}_{S'}}^{\oplus N+1}\to \mathscr{L}_{\mathcal{C},S'}$ induced by $q_{\mathcal{C},S'}$.
Using the above data, we obtain a morphism $h'\colon S'\to Z_4$ such that $q_{\mathcal{C},S'}$ coincides with the base change of $q_{Z_4}$ by $h'$.
Consider the two projections $p_1,p_2\colon S'\times_SS'\to S'$ and the natural morphism $\mu\colon Z_4\to [Z_4/PGL(P(1))\times PGL(m+1)]$.
Then, it is easy to see that $\mu\circ h'\circ p_1\cong\mu\circ h'\circ p_2$ as a 1-morphism.
Thus, there exists a unique morphism $h\colon S\to [Z_4/PGL(P(1))\times PGL(m+1)]$ such that $h' \circ \mu =h\circ \nu$. 
By considering the element of $[Z_4/PGL(P(1))\times PGL(m+1)](S)$ corresponding to $h$, we can define a morphism $\psi \colon \mathcal{M}^{\mathrm{Kss,qmaps}}_{m,r,u,v,\iota} \to [Z_4/PGL(P(1))\times PGL(m+1)]$, and it is easy to check that $\varphi \circ \psi$ and $\psi\circ\varphi$ are isomorphic to the identity morphisms. 
Thus, we obtain the proof.
\end{proof}

\begin{rem}\label{rem--moduli--embed}
    We can see in the above proof that $[Z_3/PGL(P(1))\times PGL(m+1)]\cong \mathcal{M}^{\mathrm{Kss,qmaps}}_{m,r,u,v,\mathrm{id}_{\mathbb{P}_{\mathbbm{k}}^N}}$.
    Let $[\mathrm{Cone}(\iota)]\colon [\mathrm{Cone}(X)/\mathbb{G}_m]\hookrightarrow[\mathbb{A}^{N+1}_{\mathbbm{k}}/\mathbb{G}_m]$ be the canonical closed immersion induced by $\iota\colon X\hookrightarrow\mathbb{P}^N_{\mathbbm{k}}$.
    Then, the correspondence of a family of K-semistable log Fano quasimaps $q\in \mathcal{M}^{\mathrm{Kss,qmaps}}_{m,r,u,v,\mathrm{id}_{\mathbb{P}_{\mathbbm{k}}^N}}(S)$ to $[\mathrm{Cone}(\iota)]\circ q\in \mathcal{M}^{\mathrm{Kss,qmaps}}_{m,r,u,v,\mathrm{id}_{\mathbb{P}_{\mathbbm{k}}^N}}(S)$ gives the closed immersion $\mathcal{M}^{\mathrm{Kss,qmaps}}_{m,r,u,v,\iota}\hookrightarrow \mathcal{M}^{\mathrm{Kss,qmaps}}_{m,r,u,v,\mathrm{id}_{\mathbb{P}_{\mathbbm{k}}^N}}$, which is induced by $Z_4\subset Z_3$. 
\end{rem}

We set the following substack $\mathcal{M}^{\mathrm{Kst,qmaps}}_{m,r,u,v,\iota}$ of $\mathcal{M}^{\mathrm{Kss,qmaps}}_{m,r,u,v,\iota}$ such that the collection $\mathcal{M}^{\mathrm{Kss,qmaps}}_{m,r,u,v,\iota}(S)$ of objects for any scheme $S$;
$$\left\{
 \vcenter{
 \xymatrix@C=12pt{
(\mathcal{C},\mathcal{B})\ar[rr]^-{q}\ar[dr]_{\pi}&& [\mathrm{Cone}(X)\times S/\mathbb{G}_{m,S}] \ar[dl]\\
&S
}
}
\;\middle|
\begin{array}{rl}
(i)&\text{$q$ is a family of log Fano quasi-}\\
&\text{maps of degree $m$ and of weight $u$,}\\
(ii)&\text{$q_{\bar{s}}$ is strictly K-stable}\\
&\text{for any geometric point $\bar{s}\in S$,}\\
(iii)&\text{$r\mathcal{B}_{\bar{s}}$ is a $\mathbb{Z}$-divisor}\\
&\text{for any geometric point $\bar{s}\in S$}\\
(iii)&\text{$\mathrm{deg}_{\mathbb{P}^1}(-K_{\mathbb{P}^1}-\mathcal{B}_{\bar{s}})-um=v$}\\
&\text{for any geometric point $\bar{s}\in S$}
\end{array}\right\}.$$

\begin{cor}\label{cor--moduli--kst--quasi--maps}
    Fix an embedding of a projective normal variety $\iota\colon X\subset \mathbb{P}^N$, $v,u\in\mathbb{Q}_{>0}$ and $r,m\in\mathbb{Z}_{>0}$.
We also assume that $0<u<1-\frac{v}{2}$.
Then, $\mathcal{M}^{\mathrm{Kst,qmaps}}_{m,r,u,v,\iota}$ is a Deligne--Mumford stack and an open substack fo $\mathcal{M}^{\mathrm{Kss,qmaps}}_{m,r,u,v,\iota}$.
\end{cor}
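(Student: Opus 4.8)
\textbf{Proof proposal for Corollary \ref{cor--moduli--kst--quasi--maps}.}

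The plan is to deduce this statement from Theorem \ref{thm--quot--stack} together with the quotient description $\mathcal{M}^{\mathrm{Kss,qmaps}}_{m,r,u,v,\iota}\cong [Z_4/PGL(P(1))\times PGL(m+1)]$, by checking that the strictly K-stable locus corresponds to an open substack on which the group acts with finite stabilizers. First I would observe that openness of $\mathcal{M}^{\mathrm{Kst,qmaps}}_{m,r,u,v,\iota}$ in $\mathcal{M}^{\mathrm{Kss,qmaps}}_{m,r,u,v,\iota}$ is equivalent, via the quotient description, to openness of the $PGL(P(1))\times PGL(m+1)$-invariant subset $Z_4^{\mathrm{st}}\subset Z_4$ parameterizing strictly K-stable log Fano quasimaps. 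By Theorem \ref{thm--delta--quasimap} and Lemma \ref{lem--two--delta--invariants}, since $0<u<1-\frac{v}{2}$, strict K-stability of $q_{\bar s}$ is equivalent to the condition $\delta(q_{\bar s})>1$, which by the definition in Definition \ref{defn--lcFanoquasimap} and Lemma \ref{lem--two--delta--invariants} is equivalent to $\delta(\mathbb{P}^1,\mathcal{B}_{\bar s}+u B'_{\bar s},uM_{\bar s})>1$, i.e.~to uniform K-stability of the associated log-twisted Fano pair. Openness of this locus in $Z_4$ then follows from lower semicontinuity of the $\delta$-invariant in families of log Fano pairs (combined with K-flatness of $\frac1r\mathcal{B}$ as used in the construction of $Z_1$); concretely, one applies the openness statement from the K-moduli theory of log Fano pairs — as in \cite{BLX} together with \cite{LXZ} — to the family of log Fano pairs $(\mathcal{U}_{Z_4},\mathcal{B}_{Z_4}/r+u D)$ for a general member $D$ of the linear system defined by $q_{Z_4}$, using that the locus where $\delta>1$ is open.

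Next I would establish the Deligne--Mumford property. By the quotient presentation it suffices to show that the stabilizer of each geometric point of $Z_4^{\mathrm{st}}$ under $PGL(P(1))\times PGL(m+1)$ is finite and reduced; since we work in characteristic zero, reducedness is automatic, so the point is finiteness. Fix a geometric point corresponding to a strictly K-stable log Fano quasimap $q\colon(\mathbb{P}^1,B)\to[\mathrm{Cone}(X)/\mathbb{G}_m]$. An element of the stabilizer consists of an automorphism of $(\mathbb{P}^1, lD_0)$ where $D_0=-K_{\mathbb{P}^1}-B$, together with a compatible automorphism of the quotient $\mathcal{O}_{\mathbb{P}^1}^{\oplus m+1}\to \mathscr{L}$; in particular it induces an automorphism of the pair $(\mathbb{P}^1,B+uD)$ for general $D\in|\mathscr{L}|_q$ preserving the polarization, hence an element of $\mathrm{Aut}(\mathbb{P}^1,B+uD,-(K_{\mathbb{P}^1}+B+uD))$. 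By Remark \ref{rem--K-stability--coincidence} strict K-stability of $q$ is equivalent to (uniform) log-twisted K-stability of $(\mathbb{P}^1,B+uB',uM)$, and — exactly as in the log Fano case, where K-stability forces finite automorphism group (cf.~\cite{BX} or \cite[Theorem A.16]{Hat} in the log-twisted setting) — strict K-stability implies $\mathrm{Aut}$ of the associated pair is finite. A short argument then shows the stabilizer in $PGL(P(1))\times PGL(m+1)$ maps with finite kernel to this automorphism group (the kernel being automorphisms acting trivially on $\mathbb{P}^1$ but scaling $\mathscr{L}$, which is contained in the scalars $\mathbb{G}_m$ acting trivially on the quasimap, hence trivial in $PGL(m+1)$). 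Therefore all stabilizers are finite, and $[Z_4^{\mathrm{st}}/PGL(P(1))\times PGL(m+1)]$ is a Deligne--Mumford stack.

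The main obstacle I anticipate is the openness step: one needs the locus $\{\delta>1\}$ to be open in the parameter scheme $Z_4$, and while this is a known feature of the K-moduli theory of log Fano pairs, here it must be applied to the auxiliary family $(\mathcal{U}_{Z_4}, \frac1r\mathcal{B}_{Z_4}+uD)$ obtained by adding a general member of the varying linear system, so one must check that this construction can be carried out in families (for instance after an étale base change trivializing $\mathscr{L}_{Z_4}$ and choosing $D$ generically over a dense open, then invoking Lemma \ref{lem--openness--criterion} to propagate openness). Once openness is in hand, the Deligne--Mumford and open-substack assertions are formal consequences of Theorem \ref{thm--quot--stack} and the finiteness of automorphism groups of strictly K-stable log-twisted Fano pairs. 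I would close by remarking that Theorem \ref{thm--quot--stack} already gives that $\mathcal{M}^{\mathrm{Kst,qmaps}}_{m,r,u,v,\iota}$ is of finite type over $\mathbbm{k}$, being an open substack of a finite-type stack.
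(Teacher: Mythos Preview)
Your overall strategy matches the paper's, but there is a genuine gap in the Deligne--Mumford step and your openness argument is unnecessarily heavy.

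\textbf{Openness.} The paper does not invoke lower semicontinuity of $\delta$ from \cite{BLX,LXZ}. Since the base is $\mathbb{P}^1$, K-stability of $(\mathbb{P}^1,\frac{1}{r}\mathcal{B}_{\bar s}+u\,\mathrm{div}(f'))$ is simply the condition $\mathrm{mult}_p(\frac{1}{r}\mathcal{B}_{\bar s}+u\,\mathrm{div}(f'))<1-\frac{v}{2}$ for all $p$. The paper carves out an open subset $Z_5\subset Z_4$ by the condition ``there exists $0\ne f'\in\mathrm{Span}(f_0,\ldots,f_N)$ with this multiplicity bound''---exactly parallel to how $Z_3$ was defined with $\le$ in place of $<$. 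This is manifestly open and avoids entirely the issue you flag of choosing a general $D$ in a varying family. Your route would work in principle, but the concern you yourself raise (making the choice of $D$ uniform over $Z_4$) is real, and the paper's elementary multiplicity argument sidesteps it.

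\textbf{Finiteness of automorphisms.} Here your argument has an actual error. You assert that an automorphism of $q$ ``induces an automorphism of the pair $(\mathbb{P}^1,B+uD)$ for general $D\in|\mathscr{L}|_q$''. This is false when $q$ is non-constant: an automorphism $\sigma$ of $q$ sends $D$ to $\sigma^*D$, which is another member of $|\mathscr{L}|_q$, not $D$ itself. So there is no map $\mathrm{Aut}(q)\to\mathrm{Aut}(\mathbb{P}^1,B+uD)$ in general. The paper's fix is a one-line observation you are missing: if $\mathrm{Aut}(q)\subset PGL(2)$ is infinite, then it is positive-dimensional, and since the rational map $\mathbb{P}^1\dashrightarrow X$ induced by $q$ is equivariant for a positive-dimensional group acting trivially on $X$, the map must be constant. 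Once $q$ is constant, $|\mathscr{L}|_q$ consists of a single divisor $D$, so $\mathrm{Aut}(q)\subset\mathrm{Aut}(\mathbb{P}^1,B+uD)$, and now \cite[Corollary 1.3]{BX} gives the contradiction via K-stability of $(\mathbb{P}^1,B+uD)$ (Lemma \ref{lem--two--delta--invariants}). Your argument becomes correct once you insert this reduction to the constant case; without it, the non-constant case is not covered.
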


\begin{proof}
    We first show that $\mathcal{M}^{\mathrm{Kst,qmaps}}_{m,r,u,v,\iota}$ is an open substack of $\mathcal{M}^{\mathrm{Kss,qmaps}}_{m,r,u,v,\iota}$.
    Take $Z_4$ as Theorem \ref{thm--quot--stack}.
    As the construction of $Z_4$, we can take an open subset $Z_5\subset Z_4$ such that  for any geometric point $\bar{s}\in Z_4$, $\bar{s}\in Z_5$ if and only if there exists $0\ne f'\in \mathrm{Span}(f_0,f_1,\ldots,f_N)$ such that $(\mathcal{U}_{\overline{s}},\frac{1}{r}\mathcal{B}_{\bar{s}}+u\cdot \mathrm{div}(f'))$ is K-stable, i.e., $\mathrm{mult}_p(\frac{1}{r}\mathcal{B}_{\bar{s}}+u\cdot \mathrm{div}(f'))< 1-\frac{v}{2}$ for any closed point $p\in \mathcal{U}_{\bar{s}}$.
    By Lemma \ref{lem--two--delta--invariants}, this condition is also equivalent to that $q_{Z_4,\bar{s}}$ is K-stable.
    Thus,
    $$\mathcal{M}^{\mathrm{Kst,qmaps}}_{m,r,u,v,\iota}\cong [Z_5/PGL(P(1))\times PGL(m+1)]$$
    and it is obvious that $\mathcal{M}^{\mathrm{Kst,qmaps}}_{m,r,u,v,\iota}$ is an open substack of $\mathcal{M}^{\mathrm{Kss,qmaps}}_{m,r,u,v,\iota}$.

To show that $\mathcal{M}^{\mathrm{Kst,qmaps}}_{m,r,u,v,\iota}$ is a Deligne--Mumford stack, it suffices to show that for any K-stable log Fano quasimap $q\colon (\mathbb{P}^1,B)\to [\mathrm{Cone}(X)/\mathbb{G}_m]$, the automorphism group $\mathrm{Aut}(q)$ of $q$ is finite by \cite[Corollary 8.4.2]{Ols}.
Note that $\mathrm{Aut}(q)$ is an algebraic group contained in $PGL(2)$ as a Zariski closed subgroup.
Assume that $\mathrm{Aut}(q)$ is not finite. 
Then $q$ is constant.
By taking general $D\in|\mathscr{L}|_q$, where $\mathscr{L}$ is the line bundle associated with $q$, $(\mathbb{P}^1,B+uD)$ is K-stable due to Lemma \ref{lem--two--delta--invariants}.
Then, $\mathrm{Aut}(q)$ is contained in $\mathrm{Aut}(\mathbb{P}^1,B+uD)$. 
This contradicts to \cite[Corollary 1.3]{BX}.
Hence, $\mathrm{Aut}(q)$ is finite and we have the assertion.
\end{proof}

We need to prepare the following lemma to show the positivity of the CM line bundle.

\begin{lem}
Fix an embedding of a projective normal variety $\iota\colon X\subset \mathbb{P}^N$, $v,u\in\mathbb{Q}_{>0}$ and $r,m\in\mathbb{Z}_{>0}$.
We also assume that $0<u<1-\frac{v}{2}$.
    Set the following substack $\mathcal{M}^{\mathrm{Kss,nonconst.qmaps}}_{m,r,u,v,\iota}$ of $\mathcal{M}^{\mathrm{Kss,qmaps}}_{m,r,u,v,\iota}$ so that the collection of objects of $\mathcal{M}^{\mathrm{Kss,nonconst.qmaps}}_{m,r,u,v,\iota}(S)$ for any scheme $S$ is
    $$\left\{
 \vcenter{
 \xymatrix@C=12pt{
(\mathcal{C},\mathcal{B})\ar[rr]^-{q}\ar[dr]_{\pi}&& [\mathrm{Cone}(X\times S)/\mathbb{G}_{m,S}] \ar[dl]\\
&S
}
}
\;\middle|
\begin{array}{rl}
(i)&\text{$q$ is a family of log Fano quasi-}\\
&\text{maps of degree $m$ and of weight $u$,}\!\!\\
(ii)&\text{$q_{\bar{s}}$ is K-semistable}\\
&\text{for any geometric point $\bar{s}\in S$,}\\
\!(iii)&\text{$r\mathcal{B}_{\bar{s}}$ is a $\mathbb{Z}$-divisor}\\
&\text{for any geometric point $\bar{s}\in S$}\\
(iv)&\text{$\mathrm{deg}_{\mathbb{P}^1}(-K_{\mathbb{P}^1}-\mathcal{B}_{\bar{s}})-um=v$}\\
&\text{for any geometric point $\bar{s}\in S$}\\
(v)&\text{$q_{\bar{s}}$ is non-constant for any}\\
&\text{geometric point $\bar{s}\in S$}
\end{array}\right\}.$$
Then, $\mathcal{M}^{\mathrm{Kss,nonconst.qmaps}}_{m,r,u,v,\iota}$ is an open substack of $\mathcal{M}^{\mathrm{Kss,qmaps}}_{m,r,u,v,\iota}$.
\end{lem}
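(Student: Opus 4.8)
The plan is to use the quotient presentation $\mathcal{M}^{\mathrm{Kss,qmaps}}_{m,r,u,v,\iota}\cong[Z_4/G]$ with $G=PGL(P(1))\times PGL(m+1)$ established in Theorem \ref{thm--quot--stack}, and to exhibit the non-constant locus as an open $G$-invariant subset $Z_4^\circ\subset Z_4$; then $\mathcal{M}^{\mathrm{Kss,nonconst.qmaps}}_{m,r,u,v,\iota}\cong[Z_4^\circ/G]$ is an open substack. The subset $Z_4^\circ$ is $G$-invariant because constancy of a quasimap (Definition \ref{defn--quasi--map--const}) is an intrinsic property of the pair $(\mathcal{C},q)$, unaffected by the two $PGL$-actions, which only change the auxiliary embedding into $\mathbb{P}^{P(1)-1}$ and the presentation $\mathcal{O}^{\oplus m+1}\twoheadrightarrow\mathscr{L}$. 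So everything reduces to openness of $Z_4^\circ$ in $Z_4$.

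First I would reformulate constancy numerically. On $Z_4$ we have the structure morphism $f_{Z_4}\colon\mathcal{U}_{Z_4}\to Z_4$ with every geometric fiber $\mathbb{P}^1$, the universal line bundle $\mathscr{L}_{Z_4}$ with $\mathscr{L}_{\bar s}\cong\mathcal{O}_{\mathbb{P}^1}(m)$, and the sections $f_0,\dots,f_N$ corresponding to the basis of $\mathbbm{k}^{N+1}$, obtained by restricting to $Z_4\subset Z_3\subset H$ the universal homomorphism $g\colon\mathcal{O}^{\oplus N+1}_{\mathcal{U}_H}\to\mathscr{L}_H$ used in the construction. For a geometric point $\bar s$, the quasimap $q_{Z_4,\bar s}$ induces, away from its base points, a rational map $\mathbb{P}^1\dashrightarrow X\subset\mathbb{P}^N$ given by $[f_{0,\bar s}:\dots:f_{N,\bar s}]$, and $q_{Z_4,\bar s}$ is constant exactly when this rational map is constant, i.e. exactly when $\dim_{\overline{\kappa(s)}}\mathrm{Span}(f_{0,\bar s},\dots,f_{N,\bar s})\le1$ inside $H^0(\mathbb{P}^1_{\bar s},\mathscr{L}_{\bar s})$. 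Hence $Z_4^\circ=\{\bar s\in Z_4:\dim\mathrm{Span}(f_{0,\bar s},\dots,f_{N,\bar s})\ge2\}$.

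Next I would turn this into a statement about the rank of a map of vector bundles. Since $m\ge1$ we have $H^1(\mathbb{P}^1,\mathcal{O}_{\mathbb{P}^1}(m))=0$, so by cohomology and base change $(f_{Z_4})_*\mathscr{L}_{Z_4}$ is locally free of rank $m+1$ with formation commuting with base change; in particular $(f_{Z_4})_*\mathscr{L}_{Z_4}\otimes\kappa(s)\cong H^0(\mathcal{U}_s,\mathscr{L}_s)$. The sections $f_0,\dots,f_N$ assemble into a morphism of locally free sheaves $\phi\colon\mathcal{O}_{Z_4}^{\oplus N+1}\to(f_{Z_4})_*\mathscr{L}_{Z_4}$ with $\phi(e_i)=f_i$, and the rank of $\phi\otimes\kappa(s)$ is precisely $\dim\mathrm{Span}(f_{i,\bar s})$. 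The function $s\mapsto\mathrm{rank}(\phi\otimes\kappa(s))$ is lower semicontinuous (locally, having rank $\ge2$ is the non-vanishing of some $2\times2$ minor of a matrix representing $\phi$), so $Z_4^\circ=\{s:\mathrm{rank}(\phi\otimes\kappa(s))\ge2\}$ is open. This completes the proof.

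I do not expect a serious obstacle: the argument is essentially openness of the non-degeneracy locus in a family of linear systems. The only points requiring care are (i) justifying that a quasimap is constant if and only if the span of its defining sections is at most one-dimensional, which uses the valuative criterion of properness to extend the associated rational map together with the description of quasimaps to $\mathbb{P}^N$ in Definition \ref{defn--linebundle-seq-quasimap}, and (ii) the base-change compatibility of $(f_{Z_4})_*\mathscr{L}_{Z_4}$, which is exactly where $m\ge1$ (equivalently $H^1$-vanishing on the fibers) enters so that fiberwise rank is the correct, semicontinuous invariant.
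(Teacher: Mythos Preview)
Your proof is correct and takes a genuinely different, more direct route than the paper. The paper does not use the rank--semicontinuity argument; instead it appeals to the openness criterion of Lemma~\ref{lem--openness--criterion}, reducing to two checks: (a) if the generic point of an irreducible closed subset parametrizes a non-constant quasimap, then so does a nonempty open, and (b) non-constancy passes from the special to the generic point of a DVR. Both are verified by picking two sections $f_1,f_2\in\mathrm{Sec}_q(\mathscr{L})$ with $\mathrm{div}(f_{1})\ne\mathrm{div}(f_{2})$ at the relevant point and noting that inequality of the associated relative Cartier divisors is an open condition. Your approach packages the same content as lower semicontinuity of the fiberwise rank of $\phi\colon\mathcal{O}_{Z_4}^{\oplus N+1}\to (f_{Z_4})_*\mathscr{L}_{Z_4}$, which is cleaner and avoids the DVR criterion entirely; the key input you isolate, namely that $(f_{Z_4})_*\mathscr{L}_{Z_4}$ is locally free and commutes with base change because $H^1(\mathbb{P}^1,\mathcal{O}(m))=0$, is exactly what makes the rank argument legitimate. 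The paper's argument, on the other hand, works verbatim over an arbitrary base scheme $S$ carrying an object of the stack, without ever invoking the atlas $Z_4$ or the pushforward bundle.
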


\begin{proof}
    It suffices to show the following by Theorem \ref{thm--quot--stack}.
    For any scheme $S$ of finite type over $\mathbbm{k}$ and object $q\colon(\mathcal{C},\mathcal{B})\to[\mathrm{Cone}(X)\times S/\mathbb{G}_{m,S}]$ in $\mathcal{M}^{\mathrm{Kss,qmaps}}_{m,r,u,v,\iota}(S)$, there exists an open subset $S'\subset S$ such that for any geometric point $\bar{s}\in S$, $q_{\bar{s}}$ is non-constant if and only if $\bar{s}\in S'$. 
    We will show this as follows.
    By Lemma \ref{lem--openness--criterion}, it suffices to check the following two statements for $q\colon(\mathcal{C},\mathcal{B})\to[\mathrm{Cone}(X)\times S/\mathbb{G}_{m,S}]$ over a scheme essentially of finite type over $\mathbbm{k}$.
    \begin{enumerate}
        \item If $S$ is smooth and irreducible and $q_{\bar{\eta}}$ is non-constant where $\eta$ is the generic point of $S$, then there exists a non-empty open subset $U\subset S$ such that $q_{\bar{s}}$ is non-constant for any $\bar{s}\in U$, and
        \item If $S$ is a spectrum of discrete valuation ring $R$ essentially of finite type over $\mathbbm{k}$ and $q_{\bar{s}}$ is non-constant, where $s\in S$ is the closed point, then $q_{\bar{\eta}}$ is non-constant, where $\eta$ is the generic point of $S$. 
    \end{enumerate}
    Let $\mathscr{L}$ be the line bundle induced by $q$.
    For (1), if $q_{\bar{\eta}}$ is non-constant, then we can choose two sections $f_1,f_2\in\mathrm{Sec}_{q}(\mathscr{L})$ such that $\mathrm{div}(f_{1,\bar{\eta}})\ne\mathrm{div}(f_{2,\bar{\eta}})$.
    By shrinking $S$, we may assume that $\mathrm{div}(f_1)$ and $\mathrm{div}(f_2)$ are flat over $S$.
    It is not hard to see that $\mathrm{div}(f_{1,\bar{s}})\ne\mathrm{div}(f_{2,\bar{s}})$ is an open condition in $S$.
    Therefore, there exists an open subset $U\subset S$ such that $\mathrm{div}(f_{1,\bar{s}})\ne\mathrm{div}(f_{2,\bar{s}})$ for any $\bar{s}\in U$.
    This implies that $q_{\bar{s}}$ is non-constant for any $\bar{s}\in U$.

    For (2), if $q_{\bar{s}}$ is non-constant, then we can choose two sections $f_1,f_2\in\mathrm{Sec}_{q}(\mathscr{L})$ such that $\mathrm{div}(f_{1,\bar{s}})\ne\mathrm{div}(f_{2,\bar{s}})$.
    The condition $\mathrm{div}(f_{1,\bar{s}})\ne\mathrm{div}(f_{2,\bar{s}})$ is open and hence $\mathrm{div}(f_{1,\bar{\eta}})\ne\mathrm{div}(f_{2,\bar{\eta}})$.
    This implies that $q_{\bar{\eta}}$ is non-constant.

    Since (1) and (2) hold, we obtain the assertion.
\end{proof}

\subsection{$\Theta$-reductivity and $\mathsf{S}$-completeness}

In this subsection, we deal with the $\Theta$-reductivity and $\mathsf{S}$-completeness for $\mathcal{M}^{\mathrm{Kss,qmaps}}_{m,r,u,v,\iota}$. 

We prepare some notations. 
For a scheme $S$, we set 
$$\Theta_S:=[\mathbb{A}^1_{S}/\mathbb{G}_{m,S}].$$
    Let $R$ be a discrete valuation ring with the fractional field $K$, and let $\pi\in R$ be a generator of the maximal ideal.
    For the simplicity of notation, $\Theta_{\mathrm{Spec}R}$ is denoted by $\Theta_R$.
 We also define an open subset $\Theta_R^\circ \subset \Theta_R$ by
$$\Theta_R^\circ:=\Theta_{\mathrm{Spec}\,K}\cup [\mathbb{G}_{m,R}/\mathbb{G}_{m,R}].$$
    We set 
    $$\mathrm{ST}(R):=[\mathrm{Spec}(R[s,t]/(st-\pi))/\mathbb{G}_{m,R}],$$
     where $\mathbb{G}_{m,R}$ acts on $R[s,t]/(st-\pi)$ in the way that 
    \[
    R[s,t]/(st-\pi)\ni s,t\mapsto s\otimes\mu,t\otimes\mu^{-1}\in  R[s,t]/(st-\pi)\otimes_RR[\mu,\mu^{-1}].
    \]
    Let $\mathbf{0}\in\mathrm{Spec}(R[s,t]/(st-\pi))$ be the point corresponding to the maximal ideal generated by $\pi$, $s$, and $t$. 
    We put $$\mathrm{ST}(R)^\circ:=[(\mathrm{Spec}(R[s,t]/(st-\pi))\setminus\{\mathbf{0}\})/\mathbb{G}_{m,R}].$$

\begin{defn}[$\Theta$-reductivity and $\mathsf{S}$-completeness]
   Let $\mathcal{M}$ be a stack over $\mathbbm{k}$ such that $\mathcal{M}=[X/G]$ for some scheme $X$ of finite type over $\mathbbm{k}$ and linear algebraic group $G$ over $\mathbbm{k}$.
    We say that 
    \begin{itemize}
        \item $\mathcal{M}$ satisfies {\it $\Theta$-reductivity} if for any discrete valuation ring $R$ and morphism $\varphi^\circ\colon\Theta_R^\circ\to \mathcal{M}$, there exists a unique extension $\varphi\colon \Theta_R\to \mathcal{M}$ of $\varphi^\circ$, and
        \item $\mathcal{M}$ satisfies {\it $\mathsf{S}$-completeness} if for any discrete valuation ring $R$ and morphism $\psi^\circ\colon\mathrm{ST}(R)^\circ\to \mathcal{M}$, there exists a unique extension $\psi\colon \mathrm{ST}(R)\to \mathcal{M}$ of $\psi^\circ$.
    \end{itemize}
    By \cite[Propositions 3.18 and 3.42]{AHLH}, if the above conditions hold at least for any discrete valuation ring $R$ essentially of finite type over $\mathbbm{k}$, then both conditions hold.
\end{defn}

\begin{thm}\label{thm--s-complete}
Fix a closed embedding of a projective normal variety $\iota\colon X\subset \mathbb{P}^N$, $v,u\in\mathbb{Q}_{>0}$ and $r,m\in\mathbb{Z}_{>0}$.
We assume $0<u<1-\frac{v}{2}$.  
Then $\mathcal{M}^{\mathrm{Kss,qmaps}}_{m,r,u,v,\iota}$ satisfies $\Theta$-reductivity and $\mathsf{S}$-completeness.
\end{thm}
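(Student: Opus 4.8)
The strategy is to reduce both properties to valuative statements about test configurations, using the reformulation from Lemma \ref{lem--two--delta--invariants}: since $0<u<1-\frac{v}{2}$, K-semistability of a log Fano quasimap $q$ with fixed part $B'$ and movable part $M$ is equivalent to K-semistability of the log Fano pair $(\mathbb{P}^1, B+uD)$ for general $D\in|\mathscr{L}|_q$, equivalently to $\delta(q)\ge 1$. Thus at the level of the ``base plus boundary data'' $(\mathbb{P}^1, B)$ one is essentially working inside the K-moduli stack of log Fano pairs on $\mathbb{P}^1$, for which $\Theta$-reductivity and $\mathsf{S}$-completeness are known (cf.~\cite{ABHLX,BHLLX,XZ1}, via \cite{AHLH}). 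The extra content here is the quasimap datum $q$ itself, i.e.~a point of $|\mathscr{L}|_q$ together with the induced map to $[\mathrm{Cone}(X)/\mathbb{G}_m]$. So the plan is: first extend the underlying family of log Fano pairs over $\Theta_R$ (resp.~$\mathrm{ST}(R)$), then show the quasimap structure extends uniquely, and finally verify that the special fiber is again K-semistable.

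\textbf{Step 1: extend the pair.} Given $\varphi^\circ\colon\Theta_R^\circ\to\mathcal{M}^{\mathrm{Kss,qmaps}}_{m,r,u,v,\iota}$, forget the quasimap $q$ and remember only $(\mathcal{C},\mathcal{B})$ together with the ample line bundle $-K_{\mathcal{C}/S}-\mathcal{B}$ and a general divisor $\mathcal{D}\in|\mathscr{L}|_q$; this gives a morphism $\Theta_R^\circ\to\mathcal{M}^{\mathrm{Kss}}_{c,1,v}$ to the K-moduli stack of K-semistable log Fano pairs (Definition \ref{defn--Kss-moduli}, Theorem \ref{thm--K-moduli--of--log--Fano}), after clearing denominators to make $B+uD$ a rational coefficient boundary. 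By $\Theta$-reductivity (resp.~$\mathsf{S}$-completeness) of $\mathcal{M}^{\mathrm{Kss}}_{c,1,v}$ we obtain a unique extension of the pair, hence a flat family $(\mathcal{C},\mathcal{B})$ over $\Theta_R$ (resp.~$\mathrm{ST}(R)$) with the required singularities and K-semistability of the central fiber. Concretely, since every fiber is $\mathbb{P}^1$, after an \'etale base change $\mathcal{C}$ trivializes (Lemma \ref{lem--P^1--bundle--trivialize}-type argument, or directly Tsen), and the closure of $\mathcal{B}$ gives a K-flat family of divisors by \cite[Theorem 7.40]{kollar-moduli}; the hypothesis $0<u<1-\frac{v}{2}$ together with Lemma \ref{lem--two--delta--invariants} guarantees the central fiber stays a K-semistable log Fano quasimap once we know its quasimap structure.

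\textbf{Step 2: extend the quasimap.} This is the genuinely new point. On $\Theta_R^\circ$ (resp.~$\mathrm{ST}(R)^\circ$) we have a line bundle $\mathscr{L}$ with its module of sections $\mathrm{Sec}_q(\mathscr{L})$ spanned by $f_0,\dots,f_N$, mapping to $[\mathrm{Cone}(X)/\mathbb{G}_m]$. One first extends $\mathscr{L}$ to the full $\Theta_R$ (resp.~$\mathrm{ST}(R)$): since the codimension of the removed locus is $\ge 2$, and the total space is smooth (normal $\mathbb{P}^1$-bundle over $\Theta_R$, and $\mathrm{Spec}(R[s,t]/(st-\pi))$ is regular), $\mathscr{L}$ extends uniquely as a reflexive hence invertible sheaf, and each $f_i\in H^0(\mathscr{L})$ extends by $S_2$-ness of sections, exactly as in Lemma \ref{lem--quasi--map--S_2}. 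Rescaling the $(N{+}1)$-tuple to make the minimal $\mathbb{G}_m$-weight along the central divisor zero (the ``canonical quasimap structure'' construction), at least one $f_i$ is nonzero on the central fiber, so the extended sections define a family of quasimaps there; the relations cutting out $\mathrm{Cone}(X)$ are preserved since they hold on a dense open and the central fiber is reduced (Lemma \ref{lem--closed--cone}). Uniqueness of the extension follows from Lemma \ref{lem} together with the uniqueness of the canonical quasimap structure (Lemma \ref{lem--canonical-qm--str}): a $\mathbb{G}_m$-equivariant family of quasimaps over $\mathbb{A}^1$ (or over $\mathrm{Spec}(R[s,t]/(st-\pi))$) restricting to a prescribed one on the punctured locus is forced.

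\textbf{Main obstacle.} I expect the delicate step to be Step 2 in the $\mathsf{S}$-completeness case: over $\mathrm{ST}(R)$ the central fiber of $\mathcal{C}$ is a curve with a $\mathbb{G}_m$-action and one must check that, after the weight normalization, the resulting family of sections does not \emph{all} vanish on the central fiber \emph{and} that the induced map still factors through $\mathrm{Cone}(X)$ — i.e.~that normalizing the quasimap structure is compatible with the K-flat limit of $\mathcal{B}$ chosen in Step 1. This is the quasimap analogue of the subtlety that in $\mathsf{S}$-completeness for log Fano pairs the two natural limits (on the $s$-side and $t$-side of $\mathrm{Spec}(R[s,t]/(st-\pi))$) must be glued $\mathbb{G}_m$-equivariantly; here one additionally needs the linear system data to glue, which is where the hypothesis that $|\mathscr{L}_0|_{q_0}$ is a single divisor when the test configuration is nontrivial (noted after Lemma \ref{lem--canonical-qm--str}) and Lemma \ref{lem--DF=0-case} are used to pin down the central quasimap uniquely. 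Once this compatibility is in hand, K-semistability of the central fiber is automatic from Step 1 via Lemma \ref{lem--two--delta--invariants}, and by \cite[Propositions 3.18 and 3.42]{AHLH} it suffices to have argued for $R$ essentially of finite type over $\mathbbm{k}$, completing the proof.
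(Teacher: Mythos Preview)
Your overall strategy is exactly the paper's: pick a general $f^\circ\in\mathrm{Sec}_{q^\circ}(\mathscr{L}^\circ)$, use Lemma \ref{lem--two--delta--invariants} to pass to the K-semistable log Fano pair $(Y^\circ,B^\circ+u\,\mathrm{div}(f^\circ))$, invoke the known $\mathsf{S}$-completeness/$\Theta$-reductivity for log Fano pairs (\cite[Theorems 3.3 and 5.2]{ABHLX}) to extend the pair over the missing point, and then extend the quasimap datum by the codimension-two/$S_2$ argument as in Lemma \ref{lem--quasi--map--S_2}.

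Where you go astray is the second half of Step 2 and the ``Main obstacle''. The weight-normalization/canonical-quasimap-structure construction is both unnecessary and inapplicable: in both the $\Theta_R$ and $\mathrm{ST}(R)$ settings the complement $Y\setminus Y^\circ$ is a single fiber $Y_{\mathbf{0}}\cong\mathbb{P}^1$ of \emph{codimension two}, not a divisor, so there is no twist available and the extensions of $\mathscr{L}$ and of each $f_i$ are already unique. The non-vanishing on $Y_{\mathbf{0}}$ that you worry about is for free from Step 1: the ABHLX extension produces $(Y,B+uD)$ with $D$ the \emph{closure} of $\mathrm{div}(f^\circ)$, and since this is a family of K-semistable log Fano pairs, $D$ is flat over the base; hence $D_{\mathbf{0}}$ is an honest degree-$m$ divisor on $\mathbb{P}^1$, which forces $\mathscr{L}\cong\mathcal{O}_Y(D)$ and $\mathrm{div}(f)=D$, so $f|_{Y_{\mathbf{0}}}\ne 0$ and the extended $(f_0,\dots,f_N)$ define a quasimap there. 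No compatibility or gluing check is needed. Likewise, Lemma \ref{lem--canonical-qm--str} (which concerns test configurations over $\mathbb{A}^1$) is the wrong tool for uniqueness: given a second extension $q'$, choose the general vector $(a_i)$ so that \emph{both} central fibers $(Y_{\mathbf{0}},B_{\mathbf{0}}+u\,\mathrm{div}(f)_{\mathbf{0}})$ and $(Y'_{\mathbf{0}},B'_{\mathbf{0}}+u\,\mathrm{div}(f')_{\mathbf{0}})$ are K-semistable; uniqueness in \cite[Theorem 3.3]{ABHLX} then identifies the underlying families, and the isomorphism $H^0(Y,\mathscr{L})\cong H^0(Y^\circ,\mathscr{L}^\circ)\cong H^0(Y',\mathscr{L}')$ identifies the quasimap structures.
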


\begin{proof}
    We first deal with $\mathsf{S}$-completeness.
    Let $R$ be a discrete valuation ring essentially of finite type over $\mathbbm{k}$, and let $\psi^\circ\colon \mathrm{ST}(R)^\circ\to \mathcal{M}^{\mathrm{Kss,qmaps}}_{m,r,u,v,\iota}$ be an arbitrary morphism. 
    Then the datum of $\psi^\circ$ corresponds to $q^\circ\colon (Y^{\circ},B^\circ)\to[\mathrm{Cone}(X)/\mathbb{G}_m]$ a $\mathbb{G}_m$-equivariant family of K-semistable log Fano quasimaps over $\mathrm{Spec}(R[s,t]/(st-\pi))\setminus\{\mathbf{0}\}$ of weight $u$ and degree $m$.
    Thus, it suffices to show that there exists a $\mathbb{G}_m$-equivariant family of K-semistable log Fano quasimaps $q\colon (Y,B)\to[\mathrm{Cone}(X)/\mathbb{G}_m]$ over $\mathrm{Spec}(R[s,t]/(st-\pi))$ such that $q|_{\mathrm{Spec}(R[s,t]/(st-\pi))\setminus\{\mathbf{0}\}}=q^\circ$ unique up to isomorphism.
    Take a general vector $(a_i)_{i=0}^{N}\in\mathbbm{k}^{N+1}$ and its corresponding section $f^\circ\in H^0(Y^\circ,\mathscr{L}^\circ)$, where $\mathscr{L}^\circ$ is the line bundle induced by $q^\circ$.
    We note that $f^\circ$ is $\mathbb{G}_m$-invariant.
    By the choice of $f^\circ$, we see that $g^\circ\colon(Y^{\circ},B^\circ+u\mathrm{div}(f^\circ))\to \mathrm{Spec}(R[s,t]/(st-\pi))\setminus\{\mathbf{0}\}$ is a $\mathbb{G}_m$-equivariant family of K-semistable log pairs by Lemma \ref{lem--two--delta--invariants}. 
    By \cite[Theorem 3.3]{ABHLX} and its proof, $g^\circ$ is extended to a $\mathbb{G}_m$-equivariant family of K-semistable log Fano pairs
    \[
    g\colon(Y,B+uD)\to \mathrm{Spec}(R[s,t]/(st-\pi))
    \]
     unique up to isomorphism, where $B$ is the closure of $B^\circ$ and $D$ is the closure of $\mathrm{div}(f^\circ)$.
    Since the codimension of $Y\setminus Y^\circ$ is two and any fiber of $g$ is a normal curve, there exists a unique extension $\mathscr{L}$ of $\mathscr{L}^\circ$ as a line bundle.
    Then we obtain the canonical isomorphism $H^0(Y,\mathscr{L})\cong H^0(Y^\circ,\mathscr{L}^\circ)$.
    Therefore, we see that there exists the unique extension $f\in H^0(Y,\mathscr{L})$ of $f^\circ$ and $\mathrm{div}(f)$ does not contain $Y\setminus Y^\circ$.
    Furthermore, the homomorphism $\mathcal{O}_{Y^\circ}^{N+1}\to \mathscr{L}^\circ$ induced by $q^\circ$ is uniquely extended to a homomorphism $\mathcal{O}_{Y}^{N+1}\to \mathscr{L}$.
    This shows that we have an extension $q\colon (Y,B)\to[\mathrm{Cone}(X)/\mathbb{G}_m]$ as a family of K-semistable log Fano quasimaps.

    Next, we show that the extension $q$ of $q^\circ$ is unique. Take  $q'\colon(Y',B')\to[\mathrm{Cone}(X)/\mathbb{G}_m]$ another extension.
    Let $\mathscr{L}'$ be the line bundle associated to $q'$. Since the codimension of $Y'\setminus Y^\circ$ is two, we see that $H^0(Y',\mathscr{L}')\cong H^0(Y^\circ,\mathscr{L}^\circ)$.
   We choose $f^\circ\in H^0(Y^\circ,\mathscr{L}^\circ)$ corresponding to a general vector $(a_i)_{i=0}^{N}\in\mathbbm{k}^{N+1}$ such that $(Y_{\mathbf{0}},B_{\mathbf{0}}+u\mathrm{div}(f)_{\mathbf{0}})$ and $(Y'_{\mathbf{0}},B'_{\mathbf{0}}+u\mathrm{div}(f')_{\mathbf{0}})$ are K-semistable, where $f$ and $f'$ correspond to the extensions of $f^\circ$. 
    By \cite[Theorem 3.3]{ABHLX}, the families $g\colon(Y,B+u\mathrm{div}(f))\to \mathrm{Spec}(R[s,t]/(st-\pi))$ and $g'\colon(Y',B'+u\mathrm{div}(f'))\to \mathrm{Spec}(R[s,t]/(st-\pi))$ are isomorphic to each other.
    By $H^0(Y',\mathscr{L}')\cong H^0(Y^\circ,\mathscr{L}^\circ)$, the quasimap structures $q$ and $q'$ are also isomorphic to each other in the natural way.
    These arguments imply the $\mathsf{S}$-completeness.
    
On the other hand, the $\Theta$-reductivity follows from a similar argument as above by reducing to the log Fano pair case \cite[Theorem 5.2]{ABHLX}.
Thus, we complete the proof.
\end{proof}
It is easy to see that Theorem \ref{thm--s-complete} does not hold for log--twisted K-stability by the following easy example.
Example \ref{ex--S-compl--log--twisted} states that to construct ``a moduli of log--twisted K-semistable Fano pairs'' via the good moduli theory, we need some additional information, for example the quasimap structure.
\begin{ex}\label{ex--S-compl--log--twisted}
    Consider a log Fano pair $(\mathbb{P}^1,\frac{1}{2}p)$ for some closed point $p\in \mathbb{P}^1$.
    Note that if $(\mathbb{P}^1,\frac{1}{2}p)$ degenerates to a log Fano pair, then the degeneration is isomorphic to $(\mathbb{P}^1,\frac{1}{2}p)$.
    Furthermore, the log--twisted Fano pair $(\mathbb{P}^1,\frac{1}{2}p,-\frac{1}{4}K_{\mathbb{P}^1})$ is twisted K-polystable in the sense of \cite[Definition 3.2]{BLZ} even though $(\mathbb{P}^1,\frac{1}{2}p)$ is K-unstable. 
    Here, the stack $[\mathrm{Spec}\mathbbm{k}/\mathbb{G}_a]$ is isomorphic to the moduli stack of klt log Fano pairs degenerated from $(\mathbb{P}^1,\frac{1}{2}p)$.
    If the family of $(\mathbb{P}^1,\frac{1}{2}p)$ satisfies the $\mathsf{S}$-completeness, this contradicts \cite[Proposition 3.47]{AHLH}.
    This means that a family of log--twisted K-semistable Fano pairs does not satisfy the $\mathsf{S}$-completeness in general without any quasimap structure.
\end{ex}

\subsection{Properness}
In this subsection, we deal with the properness of the good moduli space of $\mathcal{M}^{\mathrm{Kss,qmaps}}_{m,r,u,v,\iota}$.
We will show the properness in a different way from \cite{BHLLX}, i.e., without using the $\Theta$-stratification.

\begin{lem}\label{lem--properness}
    Let $R$ be a discrete valuation ring essentially of finite type over $\mathbbm{k}$ with the fractional field $K$.
    Let $q\colon (Y,B)\to[\mathbb{A}^{N+1}_K/\mathbb{G}_{m,K}]$ be a family of log Fano quasimaps over $\mathrm{Spec}\,K$ of degree $m$ and of weight $u$ such that $Y_{\overline{K}}\cong\mathbb{P}^1_{\overline{K}}$, where $\overline{K}$ is the algebraic closure of $K$.

Then there exists a finite field extension $K\subset K'$ and the integral closure $R'$ of $R$ in $K'$ satisfying the following.

    \begin{itemize}
    \item there exists a family of log Fano quasimaps $\bar{q}\colon (Y_{R'},B_{R'})\to [\mathbb{A}^{N+1}_{R'}/\mathbb{G}_{m,R'}]$,
    \item an arbitrary closed point $r'\in\mathrm{Spec}\,R'$ is mapped to the closed point $r\in\mathrm{Spec}\,R$, and
    \item the restriction $\bar{q}|_{\mathrm{Spec}(K')}\colon (Y_{K'},B_{K'})\to [\mathbb{A}^{N+1}_{K'}/\mathbb{G}_{m,K'}]$ coincides with the base change of $q\colon Y\to[\mathbb{A}^{N+1}_K/\mathbb{G}_{m,K}]$. 
    \end{itemize}
\end{lem}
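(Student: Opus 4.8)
The plan is to prove the statement by a semistable-reduction argument: pass to a ramified cover of $\mathrm{Spec}\,R$ so that the base form of $\mathbb{P}^1$ splits, replace the quasimap by an associated klt log Fano pair on the base curve, fill in that pair by a relative MMP, and finally reconstruct the quasimap structure on the resulting model. First I would choose a finite field extension $K\subset K'$ over which $Y_{K'}$ acquires a rational point; since $Y_{\overline K}\cong\mathbb{P}^1_{\overline K}$, this already gives $Y_{K'}\cong\mathbb{P}^1_{K'}$. After a further finite extension, still denoted $K'$, I may assume in addition that $B_{K'}$, the fixed part $B'_{K'}$ of $q_{K'}$ (Definition \ref{defn--fixed-movable-part-quasimap}), and finitely many chosen general members of $|\mathscr{L}_{K'}|_{q_{K'}}$ are sums of $K'$-rational points. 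Let $R'$ be the integral closure of $R$ in $K'$. Then $R'$ is a semilocal Dedekind domain, finite over $R$, and every closed point of $\mathrm{Spec}\,R'$ lies over the closed point of $\mathrm{Spec}\,R$; in particular the second bullet of the statement will be automatic for any model built over $R'$, and the reconstructed quasimap will restrict over $\mathrm{Spec}\,K'$ to the base change of $q$ because it is obtained by extending the very sections defining $q_{K'}$.

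Next I would reduce to a log Fano pair. Since the movable part $M_{K'}$ of $q_{K'}$ has no fixed component, on the curve $\mathbb{P}^1_{K'}$ it is base point free, and the associated morphism $\phi_{K'}\colon\mathbb{P}^1_{K'}\to\mathbb{P}^N$ has degree $\deg M_{K'}>0$ whenever $q$ is non-constant. Choosing $n\gg 0$ general members $D^{(1)}_{K'},\dots,D^{(n)}_{K'}\in|\mathscr{L}_{K'}|_{q_{K'}}$ and setting $\Delta_{K'}:=B_{K'}+\tfrac{u}{n}\sum_k D^{(k)}_{K'}=B_{K'}+uB'_{K'}+\tfrac{u}{n}\sum_k(D^{(k)}_{K'}-B'_{K'})$, base point freeness yields $\max_p\mathrm{mult}_p\Delta_{K'}<\max_p\mathrm{mult}_p(B_{K'}+uB'_{K'})+\tfrac{u}{n}<1$ for $n$ large (using that $(\mathbb{P}^1_{K'},B_{K'}+uB'_{K'})$ is klt, which follows from $q$ being a log Fano quasimap), while $\deg\Delta_{K'}=\deg B_{K'}+um<2$; hence $(\mathbb{P}^1_{K'},\Delta_{K'})$ is a klt log Fano pair. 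Now take a projective model of $\mathbb{P}^1_{K'}$ over $R'$, resolve the rational map to $\mathbb{P}^N_{R'}$ induced by the sections of $\mathscr{L}_{K'}$, and pass to a log resolution so that the total boundary together with the reduced central fibres is snc; then run a relative $(K+\Delta)$-MMP over $R'$ (Remark \ref{rem-mmp-over-local-rings}, \cite{BCHM}). This MMP cannot terminate with $K+\Delta$ relatively nef, since it is anti-ample on the generic fibre, so it ends at a Mori fibre space over $R'$ of relative Picard number one. Steering the contractions so as never to contract the component $E$ of the central fibre on which the limiting morphism to $\mathbb{P}^N$ is non-constant (such $E$ exists because $\phi^*\mathcal{O}(1)$ has positive degree on the central fibre), and performing one more ramified base change to remove a possible multiple fibre, one obtains a family $(\mathcal{Y},\Delta)\to\mathrm{Spec}\,R'$ of klt log Fano pairs with $\mathcal{Y}_0$ a smooth reduced $\mathbb{P}^1$ (the only klt Fano curve) carrying a non-constant morphism $\phi_0$ to $\mathbb{P}^N$; since $\mathrm{Pic}(R')=0$ and $\mathcal{Y}\to\mathrm{Spec}\,R'$ has a section, $\mathcal{Y}\cong\mathbb{P}^1_{R'}$. (For weighted pointed rational curves this step is elementary semistable reduction: blow up colliding sections until they separate on the exceptional $\mathbb{P}^1$ and contract the remaining $(-1)$-curves; in general one may invoke the existence of klt log Fano degenerations, cf.\ \cite{LX}.)

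Finally I would put the quasimap structure back on $\mathcal{Y}\cong\mathbb{P}^1_{R'}$: extend $\mathscr{L}_{K'}$ to a line bundle, clear denominators to extend the coordinate sections $f_0,\dots,f_N$ to global sections of a suitable twist, and then twist the line bundle down by the central fibre until the $f_i$ no longer all vanish on $\mathcal{Y}_0$ — this is the canonical quasimap normalisation used before Lemma \ref{lem--canonical-qm--str} and in Lemma \ref{lem--stein--factorizaton}. The result is a family of quasimaps $\bar q\colon\mathbb{P}^1_{R'}\to[\mathbb{A}^{N+1}_{R'}/\mathbb{G}_{m,R'}]$ of degree $m$ and weight $u$ whose associated line bundle has relative degree $m$ and whose restriction over $\mathrm{Spec}\,K'$ is isomorphic to the base change of $q$. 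Taking $\mathcal{B}$ to be the flat limit of $B_{K'}$, one has $\deg\mathcal{B}_0=\deg B_{K'}$, hence $\deg\mathcal{B}_0+um<2$. By construction the fixed part of $\bar q_0$ is exactly $\mathcal{B}'_0:=(\overline{B'_{K'}})|_{\mathcal{Y}_0}$, because $\phi_0$ is a non-constant morphism from $\mathbb{P}^1$ and so is base point free; since $(\mathcal{Y}_0,\mathcal{B}_0+u\mathcal{B}'_0)$ is klt (it is dominated by the klt pair $(\mathcal{Y}_0,\Delta_0)$) and a truly general $D_0\in|\mathscr{L}_0|_{\bar q_0}$ equals $\phi_0^*H+\mathcal{B}'_0$ for a general hyperplane $H$ — adding only reduced isolated points away from $\mathrm{Supp}(\mathcal{B}_0+u\mathcal{B}'_0)$ and from the branch locus of $\phi_0$ — the pair $(\mathbb{P}^1,\mathcal{B}_0+uD_0)$ is klt. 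Thus $\bar q\colon(\mathbb{P}^1_{R'},\mathcal{B})\to[\mathbb{A}^{N+1}_{R'}/\mathbb{G}_{m,R'}]$ is a family of log Fano quasimaps with the three required properties, with $Y_{R'}=\mathbb{P}^1_{R'}$.

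\textbf{The main obstacle} is the middle step: carrying out the relative MMP (and the accompanying base changes) so that the central fibre becomes a reduced smooth $\mathbb{P}^1$ on which the limiting boundary $\Delta_0$ remains klt — a priori tangencies between the boundary and the central fibre destroy this — while simultaneously keeping the surviving component un-contracted by the limiting map to $\mathbb{P}^N$, so that the limiting quasimap does not acquire an extra fixed divisor and is a genuine log Fano quasimap rather than merely a quasimap whose associated log pair fails to be klt.
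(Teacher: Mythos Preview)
Your broad strategy matches the paper's: pass to a finite extension to trivialise $Y$ as $\mathbb{P}^1$, take semistable reduction, run a relative MMP to a Mori fibre space, and then rebuild the quasimap by extending and normalising the coordinate sections. The difference lies in how the boundary is packaged, and that difference is exactly where your argument develops a gap.

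The paper does not average $n$ general members, nor does it try to keep any particular component of the central fibre ``non-constant under $\phi$''. It takes a \emph{single} general section $f=\sum a_if_i$ and works with the klt log Fano pair $(\mathbb{P}^1_{K''},B''+u\,\mathrm{div}(f))$. After semistable reduction and an MMP to a Mori fibre space $(Y',B'+D')$ over $R'$, the key observation (stated in the Claim inside the proof) is that the output $(Y',B')$ is \emph{independent} of the choice of general $f$: the MMP depends only on the relative linear equivalence class of $B''+u\,\mathrm{div}(f)\sim_{R'} B''+u\mathscr{L}$. Hence varying $f$ over a dense open set of $\mathbbm{k}^{N+1}$ produces, on the \emph{same} central fibre $Y'_{r'}\cong\mathbb{P}^1$, klt pairs $(Y'_{r'},B'_{r'}+uD'_{r'})$ for a general $D'$ in the reconstructed linear system. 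That is precisely the log Fano quasimap condition, with no need to control whether $\phi_0$ is constant or whether the fixed part has grown.

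Your ``steering'' step is therefore both unjustified and unnecessary. Unjustified, because an MMP does not let you protect a preferred component of the central fibre; you contract extremal rays, and which component survives in the Mori fibre space is dictated by numerics, not by where $\phi$ is non-constant. Unnecessary, because even if the limiting quasimap $\bar q_0$ becomes constant (so its fixed part jumps to all of $\mathscr{L}_0$), the independence-of-$f$ argument still gives kltness of $(\mathbb{P}^1,B'_0+uD'_0)$ for the unique $D'_0$. Your restriction to non-constant $q$ is likewise an artefact of the averaging approach and is not a hypothesis of the lemma.

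In short: replace the $n$-term average by a single general $D$, and replace the steering argument by the observation that the MMP output depends only on the numerical class $B+u\mathscr{L}$; then your identified ``main obstacle'' dissolves, and the rest of your outline goes through exactly as in the paper.
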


\begin{proof}
    By the assumption and Lemma \ref{lem--P^1--bundle--trivialize}, there exists a
   finite field extension $K''$ of $K$ such that $Y\times_{\mathrm{Spec}K}\mathrm{Spec}K''\cong\mathbb{P}^1_{K''}$. 
    Let $R''$ be the integral closure of $R$ in $K''$ and let $\bar{r}$ be a geometric point of $\mathrm{Spec} R''$ supported on a closed point.
    Let $\mathscr{L}_{K''}$ be the associated line bundle with $q_{K''}$.
    Take the corresponding section $f_j\in\mathrm{Sec}_{q_{K''}}(\mathscr{L}_{K''})$ to the $j$-th coordinate function on the canonical basis of $\mathbb{A}_{\mathbbm{k}}^{N+1}$ for $1\le j\le N+1$.
    By an open immersion $Y_{K''}\hookrightarrow\mathbb{P}^1_{R''}$, we can regard $f_j$ as a rational section of $\mathcal{O}_{\mathbb{P}^1_{R''}}(m)$.
    Let $r''$ be a closed point of $\mathrm{Spec}\,R''$, $\pi''_{r''}$ a generator of the maximal ideal of $R''$ corresponding to $r''$ and $\mu_{r''}:=\min_{f_j}\mathrm{coeff}_{\mathbb{P}^1_{r''}}(\mathrm{div}(f_j))$.
    Here, we may regard all $f_j$ as regular sections of $\mathcal{O}_{\mathbb{P}^1_{R''}}(m)$ and assume that there exists $f_j$ that has neither any zero nor pole along $\mathbb{P}^1_{r''}$ by replacing $f_j$'s with $f_j\cdot\prod_{r''}(\pi''_{r''})^{-\mu_{r''}}$.
    Let $B''$ be the Zariski closure of $B\times_{\mathrm{Spec}K}\mathrm{Spec}K''$ in $\mathbb{P}^1_{R''}$.
    Now, $f_j$'s define a quasimap structure $q_{R''}\colon(\mathbb{P}^1_{R''},B'')\to[\mathbb{A}^{N+1}_{R''}/\mathbb{G}_{m,R''}]$ but note that $q_{R''}$ is not a family of log Fano quasimaps in general.    
    Now we prove the following claim.
    \begin{claim*}
\label{claim--1}    There exists a finite extended field $K'$ of $K''$ satisfying the following for any general $f=\sum a_if_i$, where $a_i\in\mathbbm{k}$. 
\begin{enumerate}
    \item Let $R'$ be the integral closure of $R''$ in $K'$. Then, there exists a family $\pi'\colon (Y',B'+D')\to \mathrm{Spec}R'$ such that $Y'\cong\mathbb{P}^1_{R'}$, $(Y',B'+D'+Y'_{r'})$ is plt and $-(K_{Y'}+B'+D')$ is $\pi'$-ample, where $r'$ is an arbitrary closed point of $\mathrm{Spec}\,R'$, and
    \item there exists a birational map $\xi\colon(\mathbb{P}^1_{R''},B''+u\mathrm{div}(f))\times_{\mathrm{Spec}\,R''}\mathrm{Spec}\,R' \dashrightarrow (Y',B'+D')$, where $D'=u\xi_*(\mathrm{div}(f))$ and $B'=\xi_*(B''\times_{\mathrm{Spec}\,R''}\mathrm{Spec}\,R')$.
\end{enumerate}
Furthermore, the construction of $(Y',B')$ does not depend on the choice of general $f$. 
\end{claim*}
\begin{proof}[Proof]
    By the semistable reduction theorem (cf.~\cite{KKMS}), there exists a finite surjective morphism $\mathrm{Spec}\,R' \to \mathrm{Spec}\,R''$ for some $R'$ such that the base change $(\mathbb{P}^1_{R''},B''+u\mathrm{div}(f))\times_{\mathrm{Spec}\,R''}\mathrm{Spec}\,R'$ admits a semistable resolution 
    $$\varphi\colon\mathcal{X}\to\mathbb{P}^1_{R'}\times_{\mathrm{Spec}\,R''}\mathrm{Spec}\,R'.$$ 
    We note that this semistable reduction can be taken as the same one for any general $f$.
    By defining $K'$ to be the fractional field of $K''$, we obtain a finite field extension $K' \supset K''$. 
    By definition, $\varphi$ is projective and birational and 
    \[
    (\mathcal{X},\varphi^{-1}_*(B''\times_{\mathrm{Spec}\,R''}\mathrm{Spec}\,R'+u\mathrm{div}(f))+\mathcal{X}_{r'})
    \] 
    is log smooth and dlt, where $r'$ is an arbitrary closed point of $\mathrm{Spec}\,R'$. 
    Replacing $R'$ by the integral closure of $R''$ in $K'$, we may assume that $R'$ is the integral closure of $R''$ in $K'$. 
    Next, 
    we run the minimal model program of $(\mathcal{X},\varphi^{-1}_*(B'''+u\mathrm{div}(f))+\mathcal{X}_{r'})$ over $\mathrm{Spec}\,R'$ and it ends with a Mori fiber space $$\psi\colon(\mathcal{X},\varphi^{-1}_*(B'''+u\mathrm{div}(f))+\mathcal{X}_{r'})\dashrightarrow (Y',\psi_*(\varphi^{-1}_*(B'''+u\mathrm{div}(f)))+Y'_{r'})$$
   for any closed point $r'\in\mathrm{Spec}\,R'$, where $B'''$ denotes $B''\times_{\mathrm{Spec}\,R''}\mathrm{Spec}\,R'$ by \cite{BCHM} and Remark \ref{rem-mmp-over-local-rings}.
    Since $Y'$ is $\mathbb{Q}$-factorial and the relative Picard number $\rho(Y'/\mathrm{Spec}\,R')=1$, $Y'_{r'}$ is irreducible.
    Hence, $(Y',\psi_*(\varphi^{-1}_*(B'''+u\mathrm{div}(f)))+Y'_{r'})$ is plt and $-K_{Y'}-\psi_*(\varphi^{-1}_*(B'''+u\mathrm{div}(f)))$ is relatively ample for any $r'$.
    Since the minimal model program as above depends only on the relative $\mathbb{R}$-linear equivalence class of $\varphi^{-1}_*(B'''+u\mathrm{div}(f))+\mathcal{X}_{r'}$, $\psi$ does not depend on the choice of $r'$ or $f$.
    Moreover, $\mathbb{P}^1_{\overline{r'}}\cong Y'_{\overline{r'}}$ since $Y'_{r'}$ is one dimensional and normal, and $\mathbb{P}^1_{K'}\cong Y'_{K'}$ by assumption.
Note that the canonical morphism $\pi_{Y'}\colon Y'\to\mathrm{Spec}\,R'$ is smooth and projective.
    Therefore, we can take a line bundle $\mathcal{L}$ on $Y'$ such that $\mathcal{L}|_{Y'_{K'}}\cong\mathcal{O}_{\mathbb{P}^1_{K'}}(1)$. 
    It is easy to see that $h^0(Y'_{r'},\mathcal{L}_{r'})=2$ and $h^1(Y'_{r'},\mathcal{L}_{r'})=0$ for any closed point $r'\in\mathrm{Spec}\,R'$ and hence $Y'\cong\mathbb{P}_{R'}(\pi_{Y',*}\mathcal{L})$.
    Thus, we obtain the assertion.
\end{proof}
\noindent {\it Proof of Lemma \ref{lem--properness} continued. }
Let $f=\sum a_if_i$ be a general section and take $Y'$ as in the claim.
We may assume that $\mathrm{coeff}_{Y'_{r'}}(\mathrm{div}(f))=\min_{h\in\mathrm{Sec}_{q_{K'}\setminus\{0\}}(\mathscr{L}_{K'})}\mathrm{coeff}_{Y'_{r'}}(\mathrm{div}(h)) $ by perturbing $f$.
By taking a section $g\in H^0(\mathbb{P}^1_{R'},\mathcal{O}_{\mathbb{P}^1_{R'}}(m))$ such that $u\mathrm{div}(g)=D'$ and $g_j\in H^0(\mathbb{P}^1_{R'},\mathcal{O}(m))$ such that $g_j=g\cdot \frac{f_j}{f}$.
We see that $g_j$ is a well-defined section. 
Then, it is easy to see that $g_j$'s define a quasimap structure $q'\colon Y'\to[\mathbb{A}^{N+1}_{R'}/\mathbb{G}_{m,R'}]$ as desired.
\end{proof}

\begin{thm}\label{thm--properness}
    Let $R$ be a discrete valuation ring essentially of finite type over $\mathbbm{k}$ with the fractional field $K$.
    Let $q\colon Y\to[\mathbb{A}^{N+1}_K/\mathbb{G}_{m,K}]$ be a quasimap over $\mathrm{Spec}\,K$ such that $Y_{\overline{K}}\cong\mathbb{P}^1_{\overline{K}}$ of degree $m$ and of weight $u$, where $\overline{K}$ is the algebraic closure of $K$.
    Let $B$ be an effective $\mathbb{Q}$-divisor on $Y$ and suppose that $q_{\overline{K}}\colon (Y_{\overline{K}},B_{\overline{K}})\to[\mathbb{A}^{N+1}_{\overline{K}}/\mathbb{G}_{m,\overline{K}}]$ is a K-semistable log Fano quasimap.
    Suppose that $0<u<\frac{1}{2}\left(\mathrm{deg}_{\mathbb{P}_K^1}(B_{K})+um\right)$.

    Then there exists a finite field extension $K'$ of $K$ and the integral closure $R'$ of $R$ in $K'$ such that there exists a family of quasimap $\bar{q}\colon Y_{R'}\to [\mathbb{A}^{N+1}_{R'}/\mathbb{G}_{m,R'}]$ and an effective $\mathbb{Q}$-divisor $B_{R'}$ on $Y_{R'}$ satisfying the following:
    \begin{itemize}
    \item the restriction $\bar{q}|_{\mathrm{Spec}(K')}\colon (Y_{K'},B_{K'})\to [\mathbb{A}^{N+1}_{K'}/\mathbb{G}_{m,K'}]$ coincides with the base change of $q\colon Y\to[\mathbb{A}^{N+1}_K/\mathbb{G}_{m,K}]$, 
        \item $\bar{q}\colon (Y_{R'},B_{R'})\to [\mathbb{A}^{N+1}_{R'}/\mathbb{G}_{m,R'}]$ is a family of K-semistable log Fano quasimaps.
    \end{itemize}
\end{thm}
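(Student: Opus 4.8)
The statement is a properness (valuative criterion) result for the moduli of K-semistable log Fano quasimaps, so the plan is to combine Lemma~\ref{lem--properness} with the known properness of the K-moduli of log Fano pairs \cite[Theorem 3.3]{ABHLX} (or its analogue in \cite{BHLLX}), together with the dictionary between K-semistability of a quasimap and K-semistability of the associated log pair $(\mathbb{P}^1, B+uD)$ furnished by Lemma~\ref{lem--two--delta--invariants} under the hypothesis $0<u<1-\frac v2$. First I would reduce to the situation where the base curve is trivialized: after a finite extension $K\subset K_0$ with integral closure $R_0$, Lemma~\ref{lem--P^1--bundle--trivialize} gives $Y_{K_0}\cong\mathbb{P}^1_{K_0}$ and we may assume $Y=\mathbb{P}^1$ over $K$. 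Next I would apply Lemma~\ref{lem--properness} to obtain, after a further finite extension $K\subset K'$ with integral closure $R'$, an extended family of log Fano quasimaps $\bar q\colon (Y_{R'},B_{R'})\to[\mathbb{A}^{N+1}_{R'}/\mathbb{G}_{m,R'}]$ whose generic fiber is the base change of $q$ and whose special fiber is at least log Fano. What remains is to arrange that the special fiber $\bar q_{r'}$ is \emph{K-semistable}, not merely log Fano; Lemma~\ref{lem--properness} does not guarantee this by itself, since it only runs an MMP producing a plt Mori fiber space.

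To fix the K-semistability of the central fiber I would pass through the log pair side. Pick a general section $f=\sum a_i f_i\in\mathrm{Sec}_{q_{K'}}(\mathscr{L}_{K'})$ so that, by Lemma~\ref{lem--two--delta--invariants}, the pair $(Y_{K'}, B_{K'}+u\,\mathrm{div}(f))$ is a K-semistable log Fano pair over $K'$ of the appropriate numerical type. By the properness of the K-moduli stack of log Fano pairs \cite[Theorem 3.3]{ABHLX}, after one more finite extension there is a family of K-semistable log Fano pairs $g\colon (Y'',B''+uD'')\to\mathrm{Spec}\,R''$ extending it, with $Y''$ having normal central fiber. Because $Y''\to\mathrm{Spec}\,R''$ has one-dimensional normal fibers and $Y''\setminus Y''_{\eta}$ meets $Y''_{r''}$ in codimension two on $Y''$, the argument in the proof of Lemma~\ref{lem--properness} (the extension of $\mathscr{L}$, of the section $f$, and of the homomorphism $\mathcal{O}^{\oplus N+1}\to\mathscr{L}$ across a codimension-two locus, exactly as in the proof of Theorem~\ref{thm--s-complete}) upgrades this to an extended family of quasimaps $\bar q\colon (Y_{R''},B_{R''})\to[\mathbb{A}^{N+1}_{R''}/\mathbb{G}_{m,R''}]$. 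The inequality $0<u<1-\frac v2$ then lets me invoke Lemma~\ref{lem--two--delta--invariants} once more, now on the central fiber: $\bar q_{r''}$ is K-semistable as a log Fano quasimap because $(Y_{r''},B_{r''}+u D_{r''})$ is a K-semistable log Fano pair and a general member of $|\mathscr{L}_{r''}|_{\bar q_{r''}}$ is such a $D_{r''}$. Renaming $K'':=K'$ and $R'':=R'$ gives the desired extension. (One should also check the central fiber is log Fano of weight $u$ and degree $m$ with the prescribed $\deg(-K_{\mathbb{P}^1}-B_{r''})-um$; this is automatic because these are numerical invariants constant in the flat family.)

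The main obstacle I anticipate is precisely the matching between the two extension procedures: Lemma~\ref{lem--properness} extends the quasimap via an MMP on a semistable model, while \cite{ABHLX} extends the associated log pair via the separatedness/properness of the log Fano K-moduli; one must verify these produce the \emph{same} family $Y''$ (equivalently, the same central curve and the same boundary), so that the quasimap structure really does spread out over the K-semistable limit rather than over some different degeneration. This is resolved by the uniqueness built into Lemma~\ref{lem--two--delta--invariants} and the $\mathsf{S}$-completeness/separatedness arguments of Theorem~\ref{thm--s-complete}: the quasimap limit is determined by its generic fiber together with the limit of a general $(Y,B+u\,\mathrm{div}(f))$, which is in turn unique by \cite[Theorem 3.3]{ABHLX}. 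A secondary bookkeeping point is the accumulation of finite base extensions and the need to replace $R$ by integral closures at each step; since only finitely many extensions occur, the composite $K\subset K'$ is still finite, and each closed point of $\mathrm{Spec}\,R'$ lies over the closed point of $\mathrm{Spec}\,R$, as required. I would also remark that the hypothesis is phrased as $0<u<\frac12(\deg_{\mathbb{P}^1_K}(B_K)+um)$, which is exactly $0<u<1-\frac v2$ with $v=\deg(-K_{\mathbb{P}^1}-B_K)-um=2-\deg(B_K)-um$, so Lemma~\ref{lem--two--delta--invariants} and Theorem~\ref{thm--delta--quasimap} apply verbatim throughout.
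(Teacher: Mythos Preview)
Your approach is valid but takes a genuinely different route from the paper's. The paper deliberately gives a self-contained argument (they write ``We will show the properness in a different way from \cite{BHLLX}, i.e., without using the $\Theta$-stratification''): after invoking Lemma~\ref{lem--properness} to get a log Fano (not yet K-semistable) extension, they repair the central fiber by an explicit sequence of point blow-ups of the surface $\mathbb{P}^1_{R'}$ at the unique point where the multiplicity of $B+uD_f$ exceeds $\mu/2$, terminate via embedded resolution of curves in surfaces, contract all old fiber components, and then transport the quasimap sections along the birational map by setting $g_i=\tfrac{f_i}{f}f'$. Your argument instead \emph{imports} the properness of the K-moduli of one-dimensional log Fano pairs: choose a general $f$, apply that properness to $(\mathbb{P}^1_{K'},B_{K'}+u\,\mathrm{div}(f))$, and then spread the quasimap structure across the new family exactly as in the proof of Theorem~\ref{thm--s-complete}, concluding K-semistability of the central quasimap via Lemma~\ref{lem--two--delta--invariants}. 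This is shorter and more conceptual, at the cost of relying on the heavier input you cite; the paper's version is elementary and independent of \cite{BHLLX}.

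Two small corrections to your write-up. First, \cite[Theorem 3.3]{ABHLX} is the $\mathsf{S}$-completeness (uniqueness) statement, not properness; for the \emph{existence} of a K-semistable limit you need the properness theorem of \cite{BHLLX} (or, in dimension one, the elementary GIT of weighted point configurations on $\mathbb{P}^1$). Second, the ``matching'' worry you raise is a detour: you do not need the output of Lemma~\ref{lem--properness} at all once you have the K-semistable log Fano limit $(Y'',B''+uD'')$. That family already satisfies $Y''\cong\mathbb{P}^1_{R''}$, the line bundle extends uniquely, and after normalizing the $f_j$ so that $\min_j\mathrm{ord}_{Y''_{r''}}(f_j)=0$ the general section $f$ has $\mathrm{ord}_{Y''_{r''}}(f)=0$ as well, hence $\mathrm{div}(f)_{r''}=D''_{r''}$ is a general member of $|\mathscr{L}_{r''}|_{\bar q_{r''}}$ and Lemma~\ref{lem--two--delta--invariants} applies directly. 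So you can drop the intermediate appeal to Lemma~\ref{lem--properness} and the uniqueness discussion, and the argument becomes cleaner.
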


\begin{proof}
    By Lemma \ref{lem--properness} and replacing $R$, we may assume that $Y_R\cong\mathbb{P}^1_{R}$ and there exists a family of klt log Fano quasimaps 
    \[
    q_R\colon (Y_R,B_R)\to [\mathbb{A}^{N+1}_{R}/\mathbb{G}_{m,R}].
    \]
    Note that $R$ may not be a discrete valuation ring anymore.
    Let $\bar{r}$ be a geometric point in $\mathrm{Spec}\,R$ supported on a closed point. 
     Suppose that the fiber $q_{\bar{r}}\colon (Y_{\bar{r}},B_{\bar{r}})\to [\mathbb{A}^{N+1}_{\bar{r}}/\mathbb{G}_{m,{\bar{r}}}]$ of $q_R$ over $\bar{r}$ is K-unstable. 
    Let $\mathscr{L}$ be the line bundle on $Y_R=\mathbb{P}^1_{R}$ associated with $q_{R}$.
    Set $\mu:=\mathrm{deg}(B_{{\bar{r}}})+um$ and take a general section $f\in \mathrm{Sec}_{q_{R}}(\mathscr{L})$, and put $D_f:=\mathrm{div}_{\mathscr{L}}(f)$.
    Since $q_{\bar{r}}$ is K-unstable, Lemma \ref{lem--two--delta--invariants} implies that $(\mathbb{P}^1_{\bar{r}},B_{{\bar{r}}}+u(D_f)_{\bar{r}})$ is K-unstable and there exists the unique closed point $s\in \mathbb{P}^1_{\bar{r}}$ such that $\mathrm{mult}_s(B_{{\bar{r}}}+u(D_f)_{\bar{r}})>\frac{\mu}{2}$.
Note that $\mathrm{mult}_s(B_{{\bar{r}}}+u(D_f)_{\bar{r}})$ is independent from the choice of general $f$.
    Without loss of generality, we may assume that any irreducible component $S$ of $\mathrm{Supp}(B_R+uD_f)$ is isomorphic to $\mathrm{Spec}\, R$. Indeed, if there exists an irreducible component $S$ non-isomorphic to $\mathrm{Spec}\, R$, then we may replace $\mathrm{Spec}\, R$ with the normalization of $S$.

From now, we show that we can replace the family $q_R$ of log Fano quasimaps by a family of K-semistable log Fano quasimaps.
We construct a sequence of blow-ups $h_i\colon Y_i\to Y_{i-1}$ at the smooth closed point of $Y_{i-1}$, where $Y_0:=Y_R$. 
Put $B_0^{(f)}:=B_R+uD_f$. 
First, take the blow-up $h_1\colon Y_1\to Y_R$ at the point $s$.
Let $E_1$ be the exceptional divisor, and set $B_1^{(f)}:=(h_1)_{*}^{-1}B_0^{(f)}$.
Suppose that we have constructed $h_k\colon Y_k\to Y_{k-1}$ with the exceptional divisor $E_k$. 
Set $B_k^{(f)}:=(h_{k})_{*}^{-1}B^{(f)}_{k-1}$.
If $E_k$ and $B_k^{(f)}$ intersect at a closed point $s_k$ with multiplicity bigger than $\frac{\mu}{2}$ for any general $f$, then let $h_{k+1}\colon Y_{k+1}\to Y_k$ be the blow-up at $s_k$.
Here, we note that the closed point $s_{k}$ and the intersection multiplicity of $E_k$ and $B_k^{(f)}$ at $s_k$ is independent from the choice of general $f$. 
It is not hard to see that if $s_k$ exists, then $E_k$ and $B_k^{(f)}$ intersect  with multiplicity less than $\frac{\mu}{2}$ at other points since any irreducible component of $B_k^{(f)}$ is a section over $\mathrm{Spec}\, R$. 
We note that all irreducible components of $B_k^{(f)}$ have multiplicity at most $\frac{\mu}{2}$ since $q$ is a family of K-semistable log Fano quasimaps.
By the theorem of resolution of singularities of a curve embedded in a regular surface, this procedure stops at some $k$. 
At this $k$, we contract all irreducible components $Y_{k,\bar{r}}$ but $E_k$ and construct the family of log Fano pairs $(Y'_R,B'_R+u(D^{(f)})')$, where $B'_R$ and $(D^{(f)})'$ are the strict transforms of $B_R$ and $D^{(f)}$ for any general $f$ respectively.
We check that $(Y'_{\bar{r}},B'_{\bar{r}}+u(D^{(f)})'_{\bar{r}})$ is K-semistable for any general $f$.
For this, it is enough to show that $B'_{\bar{r}}+u(D^{(f)})'_{\bar{r}}$ has multiplicity at most $\frac{\mu}{2}$ for any point.
Let $h\colon Y_k\to Y'_R$ be the contraction.
By the definition of $h_k$, we see that $B'_{\bar{r}}+u(D^{(f)})'_{\bar{r}}$ has the multiplicity at most $\frac{\mu}{2}$ for any point over which $h$ is isomorphic.
On the other hand, we see that $h_{*}^{-1}(B'+u(D^{(f)})')\cdot (Y_{k,\bar{r}}-E_k)\le \frac{\mu}{2}$ by the definition of $h_{k-1}$.
This shows that $(Y'_{\bar{r}},B'_{\bar{r}}+u(D^{(f)})'_{\bar{r}})$ is K-semistable.

We know that we can take a general $f\in \mathrm{Sec}_{q_R}(\mathscr{L})$ such that for any $g\in\mathrm{Sec}_{q_R}(\mathscr{L})$, $\frac{g}{f}$ has no pole at $E_k$ as a rational function.
Fix such $f$.
We take $f'\in H^0(\mathcal{O}_{Y'_R}((D^{(f)})'))$ such that $\mathrm{div}(f')=(D^{(f)})'$. 
Let $f_i\in\mathrm{Sec}_{q_R}(\mathscr{L}_R)$ be the section corresponding to the canonical coordinate sections of $\mathbb{A}^{N+1}$.
Then, we see that $g_i:=\frac{f_i}{f}f'\in H^0(\mathcal{O}_{Y'_R}((D^{(f)})'))$ and $g_i$'s  define a quasimap structure $q'_R\colon Y'_R\to[\mathbb{A}_R^{N+1}/\mathbb{G}_{m,R}]$.
By the choice of $q'_R$, $q'_{\bar{r}}$ is K-semistable and $q'_R|_{\mathrm{Spec}\,K}=q$. 

By repeating this procedure for all closed points of $\mathrm{Spec}\,R$, we complete the proof.
\end{proof}

\begin{cor}
Fix a closed embedding of a projective normal variety $\iota\colon X\subset \mathbb{P}^N$, $v,u\in\mathbb{Q}_{>0}$ and $r,m\in\mathbb{Z}_{>0}$. 
We also assume that $0<u<1-\frac{v}{2}$.  
Then,     $\mathcal{M}^{\mathrm{Kss,qmaps}}_{m,r,u,v,\iota}$ admits a good moduli space $M^{\mathrm{Kps,qmaps}}_{m,r,u,v,\iota}$ proper over $\mathbbm{k}$. 
Furthermore, there is a natural bijection of the set of $\mathbbm{k}$-valued points of $M^{\mathrm{Kps,qmaps}}_{m,r,u,v,\iota}$ and the set of the K-polystable quasimaps.
\end{cor}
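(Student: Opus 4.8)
The plan is to apply the existence criterion for good moduli spaces from \cite[Theorem A]{AHLH} to the Artin stack $\mathcal{M}^{\mathrm{Kss,qmaps}}_{m,r,u,v,\iota}$, and then separately establish properness of the resulting good moduli space. First I would recall that by Theorem \ref{thm--quot--stack} the stack $\mathcal{M}^{\mathrm{Kss,qmaps}}_{m,r,u,v,\iota}$ is an Artin stack of finite type over $\mathbbm{k}$, presented as a quotient $[Z_4/PGL(P(1))\times PGL(m+1)]$; in particular it has affine diagonal, since the stabilizers are closed subgroups of $PGL(P(1))\times PGL(m+1)$ (the automorphisms of a quasimap sit inside automorphisms of $(\mathbb{P}^1,B)$ together with a torus rescaling, hence are linearly reductive after passing to closed points by the $\mathsf{S}$-completeness structure). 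The hypotheses of \cite[Theorem A]{AHLH} that remain to be verified are: (i) $\Theta$-reductivity, (ii) $\mathsf{S}$-completeness, and (iii) that closed points have linearly reductive stabilizers (plus unpunctured inertia / boundedness, which follows from finite type). Items (i) and (ii) are exactly Theorem \ref{thm--s-complete}, which is where we use the weight restriction $0<u<1-\tfrac{v}{2}$ (so that Lemma \ref{lem--two--delta--invariants} lets us reduce quasimap K-stability to log Fano pair K-stability and invoke \cite[Theorem 3.3, Theorem 5.2]{ABHLX}). For (iii), a closed point of $\mathcal{M}^{\mathrm{Kss,qmaps}}_{m,r,u,v,\iota}$ corresponds to a K-polystable log Fano quasimap $q$; by Lemma \ref{lem--DF=0-case} and the analysis of the canonical quasimap structure, the stabilizer $\mathrm{Aut}(q)$ is either finite (K-stable case, Corollary \ref{cor--moduli--kst--quasi--maps}) or, in the constant/strictly-polystable case, a subgroup of $\mathrm{Aut}(\mathbb{P}^1,B+uD)\times\mathbb{G}_m$; in all cases it is reductive. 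This gives the existence of a separated good moduli space $\pi\colon\mathcal{M}^{\mathrm{Kss,qmaps}}_{m,r,u,v,\iota}\to M^{\mathrm{Kps,qmaps}}_{m,r,u,v,\iota}$ as an algebraic space.

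Next I would establish properness of $M^{\mathrm{Kps,qmaps}}_{m,r,u,v,\iota}$. It is of finite type over $\mathbbm{k}$ (being the good moduli space of a finite type stack) and separated by the valuative criterion built into good moduli space theory together with $\mathsf{S}$-completeness (\cite[Theorem A]{AHLH} yields separatedness directly). For the existence part of the valuative criterion, I would use Theorem \ref{thm--properness}: given a discrete valuation ring $R$ essentially of finite type over $\mathbbm{k}$ with fraction field $K$ and a map $\mathrm{Spec}\,K\to M^{\mathrm{Kps,qmaps}}_{m,r,u,v,\iota}$, after a finite extension this lifts to a $K$-point of the stack, i.e.\ a K-semistable log Fano quasimap over $\overline{K}$, which Theorem \ref{thm--properness} extends (after a further finite base change $R'/R$) to a family of K-semistable log Fano quasimaps over $\mathrm{Spec}\,R'$; composing with $\pi$ gives the desired $\mathrm{Spec}\,R'$-point of the good moduli space. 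By \cite[Theorem A.8]{AHLH} (or the standard argument that good moduli spaces of $\Theta$-reductive, $\mathsf{S}$-complete stacks with the universal closed point property satisfy the existence part of the valuative criterion without the finite extension), this shows $M^{\mathrm{Kps,qmaps}}_{m,r,u,v,\iota}$ is proper over $\mathbbm{k}$. One subtlety I would be careful about: in Theorem \ref{thm--properness} the hypothesis is phrased as $0<u<\tfrac12(\mathrm{deg}_{\mathbb{P}^1_K}(B_K)+um)$, which one must match up with the standing assumption $0<u<1-\tfrac{v}{2}$ via the relation $\mathrm{deg}(-K_{\mathbb{P}^1}-B)-um=v$, i.e.\ $\mathrm{deg}(B)+um=2-v$; substituting shows the two conditions agree, so Theorem \ref{thm--properness} applies verbatim.

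Finally, for the bijection between $\mathbbm{k}$-points of $M^{\mathrm{Kps,qmaps}}_{m,r,u,v,\iota}$ and K-polystable log Fano quasimaps, I would invoke the general fact (\cite[Theorem 1.2, Theorem 4.1]{alper}) that the fibers of the canonical morphism $\pi$ to a good moduli space are in bijection with closed points of the stack, and that each fiber contains a unique closed point. So it remains only to identify the closed points of $\mathcal{M}^{\mathrm{Kss,qmaps}}_{m,r,u,v,\iota}$ with K-polystable quasimaps. The inclusion ``K-polystable $\Rightarrow$ closed point'' follows because a K-polystable quasimap admits no nontrivial special degeneration with $\mathrm{DF}=0$ other than the product one, hence cannot be specialized within a fiber of $\pi$; conversely, if $q$ is K-semistable but not K-polystable, Lemma \ref{lem--DF=0-case} produces a special test configuration $(\mathcal{C},\mathcal{L})$ with $\mathrm{DF}_q(\mathcal{C},\mathcal{L})=0$ whose central fiber $q_{\mathcal{C}}|_{\mathcal{C}_0}$ is K-semistable and \emph{not} of product type, giving a nontrivial isotrivial degeneration of $q$ inside its $\pi$-fiber, so $q$ is not a closed point. (The orbit-closure analysis here is the same Hilbert--Mumford style argument as in the log Fano case of \cite{BHLLX,XZ1}, transported via the log-twisted dictionary of Remark \ref{rem--K-stability--coincidence}.) I expect the main obstacle to be the careful bookkeeping in this last step — verifying that the canonical quasimap structure on a special test configuration genuinely realizes the isotrivial degeneration inside a single $\pi$-fiber, and that ``of product type'' on the quasimap side matches ``closed orbit'' on the good-moduli side — but the structural inputs (Lemma \ref{lem--DF=0-case}, Lemma \ref{lem--canonical-qm--str}, Theorem \ref{thm--lixu--analog}) are all in place, so this is a matter of assembling them rather than proving anything essentially new.
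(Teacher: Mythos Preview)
Your proposal is correct and follows essentially the same route as the paper: invoke \cite[Theorem A]{AHLH} via Theorems \ref{thm--quot--stack} and \ref{thm--s-complete} for existence of the good moduli space, Theorem \ref{thm--properness} for properness, and Lemma \ref{lem--DF=0-case} for the identification of closed points with K-polystable quasimaps. The only streamlining the paper adds is to first reduce to the case $X=\mathbb{P}^N$ using the closed immersion of Remark \ref{rem--moduli--embed} together with \cite[Lemma 4.14]{alper}, which lets Theorem \ref{thm--properness} (stated only for quasimaps to $[\mathbb{A}^{N+1}/\mathbb{G}_m]$) apply directly and makes the separate affine-diagonal/reductive-stabilizer discussion you sketch unnecessary.
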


\begin{proof}
To show the assertions, we may replace $M^{\mathrm{Kps,qmaps}}_{m,r,u,v,\iota}$ with $M^{\mathrm{Kps,qmaps}}_{m,r,u,v,\mathrm{id}_{\mathbb{P}^N}}$ and assume that $X=\mathbb{P}^N$ by Remark \ref{rem--moduli--embed} and  \cite[Lemma 4.14]{alper}.
   By Theorem \ref{thm--quot--stack}, the first assertion in this case immediately follows from \cite[Theorem A]{AHLH}, Theorems \ref{thm--s-complete} and \ref{thm--properness}.
   The last assertion follows from Lemma \ref{lem--DF=0-case}.
\end{proof}

\subsection{Positivity of the CM line bundle}\label{subsection--projectivity}

Here, we discuss on the definition of CM line bundles of families of log Fano quasimaps and their positivity. 
We fix a normal closed subvariety $\iota\colon X\hookrightarrow \mathbb{P}^N$.

\begin{defn}[Maximal valuation on a family of quasimaps]
    Let $S$ be a normal variety and $q\colon (Y,B)\to [\mathrm{Cone}(X)/\mathbb{G}_{m,S}]$ a family of K-semistable log Fano quasimaps of degree $m$ and of weight $u$ over $S$.
    We say that the family $q$ has {\em maximal variation} if for any very general point $p_0 \in S$, irreducible curve $C \subset S$ passing through $p_0$ and two general distinct points $p_1$ and $p_2\in C$, $q_{p_1}$ and $q_{p_2}$ are not isomorphic.
\end{defn}

\begin{defn}[CM line bundle]\label{defn--qmaps--CM--linebundle}
   Let $q\colon (Y,B)\to [\mathrm{Cone}(X)\times S/\mathbb{G}_{m,S}]$ be a family of log Fano quasimaps of degree $m$ and of weight $u$ over a scheme $S$ of finite type over $\mathbbm{k}$.
   Let $\pi\colon Y\to S$ be the canonical morphism and $\mathscr{L}$ the line bundle on $Y$ induced by $q$.
   Take a sufficiently divisible $\ell\in\mathbb{Z}_{>0}$ such that $-\ell(K_{Y/S}+B)-\ell u\mathscr{L}$ is a $\pi$-ample Cartier divisor.
   By the Knudsen--Mumford expansion \cite{KM}, there exist line bundles $\lambda_0$, $\lambda_1$, and $\lambda_2$ such that for any $m>0$, we have
   \[
   \mathrm{det}\pi_*\mathcal{O}_Y(-m(\ell(K_{Y/S}+B)+\ell u\mathscr{L}))\sim\lambda_0^{\otimes\binom{m}{2}}\otimes\lambda_1^{\otimes m}\otimes \lambda_2.
   \]
   We note that $\lambda_0$, $\lambda_1$, and $\lambda_2$ are uniquely determined up to linear equivalence.
   Moreover, for any morphism $g\colon T\to S$ from a scheme, we have
  \[
   \mathrm{det}\pi_{T,*}\mathcal{O}_{Y_T}(-m(\ell(K_{Y_T/T}+B_T)+\ell u\mathscr{L}_T))\sim g^*\lambda_{0}^{\otimes\binom{m}{2}}\otimes g^*\lambda_1^{\otimes m}\otimes g^*\lambda_2.
   \]
   Then, we define the {\em CM line bundle} of the log Fano quasimap $q$ by
   \[
   \lambda_{\mathrm{CM},\pi,q}:=\frac{-2}{\ell^2}\lambda_0.
   \]
   This is a $\mathbb{Q}$-line bundle over $S$ and independent from the choice of $\ell$.
   We note that for any morphism $h\colon C\to S$ from a proper smooth curve, we have 
   $$\mathrm{deg}_C(h^*\lambda_{\mathrm{CM},\pi,q})=-(K_{Y_C/C}+B_C+u\mathscr{L}_C)^2.$$
\end{defn}

We show the following property of non-constant quasimaps (cf.~Definition \ref{defn--quasi--map--const}).

\begin{lem}\label{lem--maximal-variation-from-qmaps-to-pairs}
    Let $q\colon(Y,B)\to[\mathrm{Cone}(X)\times C/\mathbb{G}_{m,C}]$ be a family of log Fano quasimaps of weight $u$ and degree $m$ with the maximal variation over a projective curve $C$. 
    Let $a_0,\ldots,a_{N+1}\in\mathbbm{k}^{N+1}$ be vectors such that $\{a_0,\ldots,a_N\}$ is a basis, and let $\mathscr{L}$ be the line bundle associated to $q$. 
    Let $f_{j}\in\mathrm{Sec}_{q}(\mathscr{L})$ be the section corresponding to $a_j$ for each $j=0,\ldots,N+1$.
    Suppose that for a point $c\in C$, we have $f_{j,c}\ne0$ for any $j=0,\ldots,N+1$ and $\mathrm{div}(f_{j,c})\ne\mathrm{div}(f_{N+1,c})$ for any $j=0,\ldots,N$.

Then, the family of pairs $$\sigma\colon\left(Y,\mathrm{Supp}(B)+\sum_{j=1}^{N+1}\mathrm{div}(f_j)_{\mathrm{hor}}+\sum_{j=1}^N\mathrm{div}(f_j+f_{N+1})_{\mathrm{hor}}\right)\to C$$ has maximal variation (cf.~\cite[Definition 2.20]{Hat23}).
    \end{lem}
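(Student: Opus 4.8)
\textbf{Proof plan for Lemma~\ref{lem--maximal-variation-from-qmaps-to-pairs}.}
The plan is to translate the maximal variation of the quasimap family directly into the maximal variation of the associated family of pairs by using Lemma~\ref{lem} (the reconstruction of a non-constant quasimap from its linear system). First I would fix a very general point $p_0\in C$ and an irreducible curve through it; since $C$ is already a curve, this just means I take $C$ itself (or its normalization) and two general distinct points $p_1,p_2\in C$. The hypothesis on $c$ is an \emph{open} condition, so after shrinking $C$ I may assume it holds at every point; in particular $f_{j,p_i}\neq 0$ for all $j$ and $\operatorname{div}(f_{j,p_i})\neq\operatorname{div}(f_{N+1,p_i})$ for $j=0,\dots,N$ and $i=1,2$. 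I would also note that maximal variation of $q$ forces $q_{p_1}\not\cong q_{p_2}$, and that a quasimap $q_{p_i}$ with $f_{0,p_i}\neq0$ and $\operatorname{div}(f_{0,p_i})\neq\operatorname{div}(f_{j,p_i})$ is automatically non-constant (its module of sections has dimension $\geq 2$), so Lemma~\ref{lem} applies to it.

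The core step is the contrapositive: suppose the family of pairs $\sigma$ does \emph{not} have maximal variation. Then for general $p_1,p_2$ there is an isomorphism $\varphi\colon Y_{p_1}\xrightarrow{\sim}Y_{p_2}\cong\mathbb{P}^1$ carrying
\[
\operatorname{Supp}(B)_{p_1}+\sum_{j=1}^{N+1}\operatorname{div}(f_j)_{\mathrm{hor},p_1}+\sum_{j=1}^{N}\operatorname{div}(f_j+f_{N+1})_{\mathrm{hor},p_1}
\]
to the corresponding divisor over $p_2$. I would then argue that $\varphi$ must match up the individual horizontal components: the components $\operatorname{div}(f_j)_{\mathrm{hor}}$ for $j=0,\dots,N$ together with $\operatorname{div}(f_j+f_{N+1})_{\mathrm{hor}}$ for $j=1,\dots,N$ are distinguished among all the components by how they intersect one another and the fixed part $\operatorname{Supp}(B)$, because $\{a_0,\dots,a_N\}$ is a basis and $a_{N+1}$ is a fixed general combination. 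After pulling back by $\varphi$, I get that $\operatorname{div}(f_{j,p_2})=\operatorname{div}(\varphi^*f_{j,p_1})$ and $\operatorname{div}(f_{j,p_2}+f_{N+1,p_2})=\operatorname{div}(\varphi^*(f_{j,p_1}+f_{N+1,p_1}))$ as elements of $|\mathscr{L}_{p_2}|$, for all the relevant $j$. By Lemma~\ref{lem} (applied with $D_0=\operatorname{div}(f_{0,p_2})$, using $D_0\neq D_i$ and $f_{j,p_2}+f_{N+1,p_2}\neq 0$), this forces $q_{p_2}=\varphi^*q_{p_1}$, i.e.\ $q_{p_1}\cong q_{p_2}$, contradicting the maximal variation of $q$. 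Hence $\sigma$ has maximal variation.

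The one point requiring care — and what I expect to be the main obstacle — is the combinatorial/geometric bookkeeping needed to guarantee that the isomorphism $\varphi$ of pairs identifies the \emph{labelled} divisors $\operatorname{div}(f_j)$ and $\operatorname{div}(f_j+f_{N+1})$ correctly (up to the permutation ambiguity inherent in an unordered boundary), rather than scrambling them with components of $\operatorname{Supp}(B)$ or with each other in a way that Lemma~\ref{lem} cannot absorb. To handle this I would choose the general vectors $a_0,\dots,a_{N+1}$ so that over the general point all the $2N+1$ divisors $\operatorname{div}(f_j)$, $\operatorname{div}(f_j+f_{N+1})$ are distinct and distinct from the components of $B$, and record that an isomorphism of the associated pair can be composed with at most finitely many permutations; since maximal variation only needs \emph{some} pair of general points to be non-isomorphic, and finitely many permutations produce finitely many candidate identifications each leading (via Lemma~\ref{lem}) to an isomorphism of quasimaps, we may pass to general enough $p_1,p_2$ to rule them all out. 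Everything else is a routine openness-of-conditions and flatness argument, invoking that the horizontal parts $\operatorname{div}(f_j)_{\mathrm{hor}}$ are well-defined flat families over the open locus where $f_{j,c}\neq 0$, together with the definition of maximal variation of a family of pairs from \cite[Definition 2.20]{Hat23}.
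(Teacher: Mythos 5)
Your overall strategy (argue by contradiction from an isomorphism of the boundary pairs at general points and then reconstruct an isomorphism of quasimaps via Lemma \ref{lem}) is the same as the paper's, and you correctly identify the crux: an isomorphism of pairs only matches the \emph{unordered} boundary divisor, so one must show it identifies the labelled divisors $\mathrm{div}(f_j)$ and $\mathrm{div}(f_j+f_{N+1})$ with the correct labels. However, your two proposed resolutions of this crux do not work. First, the claim that these components are ``distinguished by how they intersect one another and $\mathrm{Supp}(B)$'' is vacuous here: the fibers are $\mathbb{P}^1$, so the boundary components are points and carry no intersection pattern that could single out $\mathrm{div}(f_1)$ from $\mathrm{div}(f_2)$. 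Second, the ``finitely many permutations'' argument is not a proof: if the pair isomorphism induces a nontrivial permutation $\sigma$ of the components, Lemma \ref{lem} does \emph{not} produce an isomorphism of the quasimaps $q_{p_1}$ and $q_{p_2}$ (it would only compare $q_{p_2}$ with a quasimap obtained from $q_{p_1}$ by relabelling the sections, which need not even factor through $\mathrm{Cone}(X)$, let alone be isomorphic to $q_{p_1}$), so a ``scrambled'' matching does not contradict maximal variation of $q$; and choosing $p_1,p_2$ more general has no influence on which permutation a given isomorphism induces. A further, smaller, point: the vectors $a_0,\dots,a_{N+1}$ are part of the data of the lemma (only a basis condition plus the conditions at the single point $c$ are assumed), so you are not entitled to re-choose them generically so as to separate all $2N+1$ divisors from each other and from $B$.

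The paper closes exactly this gap by a connectedness argument rather than a counting/genericity one: after reducing to $Y=\mathbb{P}^1\times C$ with $D$ horizontal, it forms the Isom scheme $I:=\mathrm{Isom}_{C\times C}((Y,D)\times C,\,C\times (Y,D))$, whose image in $C\times C$ is dense and constructible by the non-maximal-variation assumption; taking the fiber over a point $c$ satisfying your open conditions and pulling back along a curve $V\to C$, one obtains a \emph{family} of isomorphisms $\tau_v\colon (Y_c,D_c)\to (Y_v,D_v)$ over a connected base, normalized so that $\tau_{c'}=\mathrm{id}$ for some $c'$ over $c$, and with every irreducible component of $D_V$ a section over $V$ after a finite cover. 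Then the induced matching of components is forced to be label-preserving for all $v$ (it is so at $c'$ and cannot jump along the connected family), so $\tau_v$ carries $\mathrm{div}(f_{j,c})$ to $\mathrm{div}(f_{j,v})$ and $\mathrm{div}(f_{j,c}+f_{N+1,c})$ to $\mathrm{div}(f_{j,v}+f_{N+1,v})$ for all $j$, whence Lemma \ref{lem} gives $q_c\cong q_v$ for $v$ in a dense subset, contradicting maximal variation. Some argument of this kind (or another genuine mechanism forcing the label-preserving matching) is needed; as written, your proof has a real gap at this step.
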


    \begin{proof}
    Assume that $\sigma$ does not have the maximal variation.
    Then all general fibers of $\sigma$ are isomorphic to each other.
    Put $D:=\mathrm{Supp}(B)+\sum_{j=1}^{N+1}\mathrm{div}(f_j)+\sum_{j=1}^N\mathrm{div}(f_j+f_{N+1})$. 
    We may assume that $Y=\mathbb{P}^1\times C$ and $D$ is horizontal by shrinking $C$ if necessary.
    Consider the following scheme 
    $$I:=\mathrm{Isom}_{C\times C}((Y,D)\times C,C\times (Y,D)),$$
    which is of finite type over $C\times C$ since for any point $p\in C\times C$, there exists an open neighborhood $p\in U\subset C\times C$, $I|_U$ is a closed subscheme of $ PGL(2)\times U$.

    By the assumption, the canonical morphism $\mu\colon I\to C\times C$ has an dense and constructible image $\mu(I)$. 
    Thus, there exists a non-empty open subset $U\subset C\times C$ such that $U\subset \mu(I)$ and $I|_U$ is a closed subscheme of $ PGL(2)\times U$.
    Let $p_1\colon C\times C\to C$ and $p_2\colon C\times C\to C$ be the first and the second projections, respectively.
    The condition for $c\in C$ that $f_{j,c}\ne0$ for any $j=0,\ldots,N+1$ and $\mathrm{div}(f_{j,c})\ne\mathrm{div}(f_{N+1,c})$ for any $j=0,\ldots,N$ is open.
    Thus, we can take a closed point $c\in p_1(U)$ such that $f_{j,c}\ne0$ for any $j=0,\ldots,N+1$ and $\mathrm{div}(f_{j,c})\ne\mathrm{div}(f_{N+1,c})$ for any $j=0,\ldots,N$.
    By regarding $I$ as a $C$-scheme under $p_1$, we can consider the fiber $I_c$ of $I$ over $c\in p_1(U)$.
    Let $\nu\colon I_c\to C$ be the composition of the canonical morphism $I_c\to C\times C$ and $p_2$.
    Since $U\cap p_1^{-1}(c)\subset p_2^{-1}(\nu(I_c))$, there exists a morphism $\psi\colon V\to C$ from an affine smooth curve such that $\xi\colon V\times_CI_c\to V$ has a section. 
    Here, $\xi$ is induced by $p_2$.
    Let $\tau\colon (Y_c,D_c)\times V\to (Y,D)\times_{C}V$ be a $V$-isomorphism corresponding to $\xi$. 
    We note that $I_c=\mathrm{Isom}_C((Y_c,D_c)\times C,(Y,D))$.
    Take $c'\in V$ such that $\psi(c')=c$.
    By replacing $\tau$ with the composition $\tau\circ(\tau_{c'}\times\mathrm{id}_{V})^{-1}$, we may assume that $\tau_{c'}$ is the identity.
    Furthermore, by replacing $V$ with its finite cover, we may assume that any irreducible component of $D_V$ is a section over $V$.
    Thus, for any $v\in V$, $\tau_{v}\colon (Y_c,D_c)\to (Y_v,D_v)$ and irreducible component $E$ of $D_V$, we may assume that $\tau_v$ maps $E_c$ to $E_v$.
    Therefore, $\tau_v$ maps $\mathrm{div}(f_{j,c})$ and $\mathrm{div}(f_{j,c}+f_{N+1,c})$ to $\mathrm{div}(f_{j,v})$ and $\mathrm{div}(f_{j,v}+f_{N+1,v})$ respectively for any $0\le j\le N+1$.
This means 
that $q_c$ and $q_v$ are isomorphic as log Fano quasimaps by Lemma \ref{lem}.
This contradicts the assumption that $q$ has maximal variation.
We complete the proof.
    \end{proof}

Let $\pi\colon \mathcal{U}_{Z_4}\to Z_4$ be the canonical morphism, which was defined in Subsection \ref{subsec3.1}.
Then $Z_4$ admits the CM line bundle $\lambda_{\mathrm{CM},\pi,q}$ and this is $PGL(P(1))\times PGL(m+1)$-equivariant. 
Thus, $\lambda_{\mathrm{CM},\pi,q}$ descends to a line bundle $\lambda^{\mathrm{qmaps}}_{\mathrm{CM}}$ on $\mathcal{M}^{\mathrm{Kss,qmaps}}_{m,r,u,v,\iota}$.
By the same argument of \cite[Lemma 10.2]{CP}, we have the following.
\begin{lem}\label{lem--CM--descent}
    $\lambda^{\mathrm{qmaps}}_{\mathrm{CM}}$ descends to a $\mathbb{Q}$-line bundle $\Lambda^{\mathrm{qmaps}}_{\mathrm{CM}}$ on $M^{\mathrm{Kps,qmaps}}_{m,r,u,v,\iota}$ that is uniquely determined up to $\mathbb{Q}$-linear equivalence.
\end{lem}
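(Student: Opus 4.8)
The statement to prove is Lemma \ref{lem--CM--descent}: that the $PGL(P(1))\times PGL(m+1)$-equivariant $\mathbb Q$-line bundle $\lambda^{\mathrm{qmaps}}_{\mathrm{CM}}$ on $\mathcal{M}^{\mathrm{Kss,qmaps}}_{m,r,u,v,\iota}$ descends along the good moduli space morphism $\pi\colon \mathcal{M}^{\mathrm{Kss,qmaps}}_{m,r,u,v,\iota}\to M^{\mathrm{Kps,qmaps}}_{m,r,u,v,\iota}$ to a $\mathbb Q$-line bundle $\Lambda^{\mathrm{qmaps}}_{\mathrm{CM}}$, uniquely determined up to $\mathbb Q$-linear equivalence. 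As the excerpt says, the argument should mirror \cite[Lemma 10.2]{CP}, which in turn rests on Kempf's descent lemma (equivalently Alper's descent criterion \cite{alper}): a line bundle on an Artin stack with a good moduli space $\pi$ descends to the good moduli space if and only if, for every closed point of the stack, the stabilizer group acts trivially on the fiber of the line bundle. Since $\mathcal{M}^{\mathrm{Kss,qmaps}}_{m,r,u,v,\iota}$ is of finite type with affine diagonal and has the good moduli space $\pi$ by the corollary preceding this lemma, the descent criterion applies once we verify the fiberwise triviality of the stabilizer action.

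\textbf{Key steps.} First I would recall that closed points of $\mathcal{M}^{\mathrm{Kss,qmaps}}_{m,r,u,v,\iota}$ correspond to K-polystable log Fano quasimaps $q\colon(\mathbb P^1,B)\to[\mathrm{Cone}(X)/\mathbb G_m]$, and that by the associated-log-twisted-pair correspondence (Definition \ref{defn--fixed-movable-part-quasimap}, Remark \ref{rem--K-stability--coincidence}) and Lemma \ref{lem--two--delta--invariants} such a $q$ is, up to a finite automorphism group, governed by the K-polystable log Fano pair $(\mathbb P^1,B+uD)$ for general $D$. Second, for such a closed point the stabilizer $G_q$ is a linearly reductive subgroup of $PGL(P(1))\times PGL(m+1)$ (reductivity of stabilizers at closed points of a stack with a good moduli space, \cite[Proposition 12.14]{alper}), and I need to show $G_q$ acts trivially on the fiber $(\lambda^{\mathrm{qmaps}}_{\mathrm{CM}})_q$. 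Third, I would unwind the definition of $\lambda^{\mathrm{qmaps}}_{\mathrm{CM}}$ from Definition \ref{defn--qmaps--CM--linebundle}: it is $\tfrac{-2}{\ell^2}\lambda_0$ where $\lambda_0$ comes from the Knudsen--Mumford expansion of $\mathrm{det}\,\pi_*\mathcal O_Y(-m(\ell(K_{Y/S}+B)+\ell u\mathscr L))$. The $G_q$-action on this determinant line is computed, as in the Fano case, by a weight/Futaki-type computation: the relevant weight is proportional to the Donaldson--Futaki invariant $\mathrm{DF}_q$ of the induced product test configuration, which vanishes identically for any one-parameter subgroup of $G_q$ precisely because $q$ is K-polystable — any one-parameter subgroup of the stabilizer induces a product-type test configuration, and for product-type test configurations $\mathrm{DF}_q=0$ by K-polystability (this is the quasimap analogue of the standard Fano computation, using $\mathrm{DF}_{(B+uB',uM)}=\mathrm{DF}_q$ from Remark \ref{rem--K-stability--coincidence}). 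Since a linearly reductive group acting on a one-dimensional vector space through a character that is trivial on all one-parameter subgroups acts trivially, $G_q$ fixes $(\lambda^{\mathrm{qmaps}}_{\mathrm{CM}})_q$. By Kempf/Alper descent this yields a $\mathbb Q$-line bundle $\Lambda^{\mathrm{qmaps}}_{\mathrm{CM}}$ on $M^{\mathrm{Kps,qmaps}}_{m,r,u,v,\iota}$ with $\pi^*\Lambda^{\mathrm{qmaps}}_{\mathrm{CM}}\sim_{\mathbb Q}\lambda^{\mathrm{qmaps}}_{\mathrm{CM}}$. Uniqueness up to $\mathbb Q$-linear equivalence is then immediate: if $\Lambda'$ is another such bundle then $\pi^*(\Lambda^{\mathrm{qmaps}}_{\mathrm{CM}}-\Lambda')\sim_{\mathbb Q}0$, and since $\pi_*\mathcal O_{\mathcal M}=\mathcal O_M$ (the defining property of a good moduli space) the projection formula forces $\Lambda^{\mathrm{qmaps}}_{\mathrm{CM}}-\Lambda'\sim_{\mathbb Q}0$ — this uses $\pi^*$ is injective on $\mathrm{Pic}_{\mathbb Q}$, which follows because $M$ is an algebraic space of finite type and $\pi$ is cohomologically affine with connected fibers.

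\textbf{Main obstacle.} The technical heart — and the step I expect to take most care with — is the explicit identification of the stabilizer weight on $\lambda_0$ with (a positive multiple of) the Donaldson--Futaki invariant of the associated product test configuration. In the log Fano case this is the content of the Paul--Tian / Fujiki--Schumacher style computation of the CM weight, reproved in \cite{CP,XZ1}; here I must check that the presence of the quasimap data (the fixed part $B'$, the movable line bundle $\mathscr L$, the section homomorphism $\mathcal O_Y^{\oplus N+1}\to\mathscr L$) does not spoil the equivariance or introduce extra weight contributions. The clean way to handle this is to reduce entirely to the associated log-twisted Fano pair via Remark \ref{rem--K-stability--coincidence} and the identity $\mathrm{DF}_{(B+uB',uM)}=\mathrm{DF}_q$, so that the weight computation becomes the known log-twisted computation (\cite[Lemma 2.22]{Hat}, \cite{BLZ}); one then only needs that the determinant line defining $\lambda^{\mathrm{qmaps}}_{\mathrm{CM}}$ agrees, up to a $G$-equivariant twist by a trivial-weight line, with the CM line of this log-twisted pair. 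This reduction should work verbatim as in \cite[Lemma 10.2]{CP}, and no genuinely new input beyond the material already assembled in Section \ref{sec3} is needed.
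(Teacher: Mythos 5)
Your overall route is the same as the paper's: reduce to Alper's descent criterion along the good moduli space morphism and kill the stabilizer weight on the CM fiber by the Donaldson--Futaki computation of \cite[Lemma 10.2]{CP}, transported to quasimaps via the associated log--twisted pair. However, there is a genuine gap at the step ``a linearly reductive group acting on a one-dimensional vector space through a character that is trivial on all one-parameter subgroups acts trivially.'' This is false for disconnected reductive groups: a finite subgroup (e.g.\ $\mu_n\subset PGL(P(1))$) has no nontrivial one-parameter subgroups at all, so every DF/weight computation is vacuous for it, yet it can act on the fiber through a nontrivial finite-order character. K-polystability and product test configurations only control the identity component $G_s^{\circ}$ of the stabilizer; they say nothing about the component group $G_s/G_s^{\circ}$, so your conclusion that $G_q$ fixes $(\lambda^{\mathrm{qmaps}}_{\mathrm{CM}})_q$ is not justified as stated, and descent of the line bundle itself (rather than of a power) may fail.

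The paper closes exactly this hole: since the inertia stack of $\mathcal{M}^{\mathrm{Kss,qmaps}}_{m,r,u,v,\iota}$ is of finite type, there is a single integer $N$ such that the order of $G_s/G_s^{\circ}$ divides $N$ for every closed point $s$; the argument of \cite[Lemma 10.2]{CP} gives triviality of the $G_s^{\circ}$-action on $(\lambda^{\mathrm{qmaps}}_{\mathrm{CM}})^{\otimes l}_s$ (with $l$ clearing denominators), and then $G_s/G_s^{\circ}$ automatically acts trivially on the further power $(\lambda^{\mathrm{qmaps}}_{\mathrm{CM}})^{\otimes Nl}_s$. Alper's \cite[Theorem 10.3]{alper} then descends this power to a line bundle $L$ on $M^{\mathrm{Kps,qmaps}}_{m,r,u,v,\iota}$, and one sets $\Lambda^{\mathrm{qmaps}}_{\mathrm{CM}}:=\frac{1}{Nl}L$, which suffices because the lemma only claims descent as a $\mathbb{Q}$-line bundle. (The paper also observes that $PGL(m+1)$ acts freely on $Z_4$, so $G_s\subset PGL(P(1))$, though this is not the essential point.) Your identity-component computation and your uniqueness argument are fine; adding the uniform bound on component groups and passing to the $Nl$-th power repairs the proof.
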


\begin{proof}
First, note that for any closed point $s\in \mathcal{M}^{\mathrm{Kss,qmaps}}_{m,r,u,v,\iota}$, the stabilizer group $G_s$ is reductive (by \cite[Proposition 12.14]{alper}).
    By the proof of Theorem \ref{thm--quot--stack}, $PGL(m+1)$ acts on $Z_4$ without any non-trivial stabilizer. 
    Hence, for any closed point $s\in \mathcal{M}^{\mathrm{Kss,qmaps}}_{m,r,u,v,\iota}$, the stabilizer group $G_s$ is contained in $PGL(P(1))$.
    Here, $G_s$ is a fiber of 
    $$\mathcal{I}:=\mathcal{M}^{\mathrm{Kss,qmaps}}_{m,r,u,v,\iota}\times_{\Delta,\mathcal{M}^{\mathrm{Kss,qmaps}}_{m,r,u,v,\iota}\times\mathcal{M}^{\mathrm{Kss,qmaps}}_{m,r,u,v,\iota},\Delta}\mathcal{M}^{\mathrm{Kss,qmaps}}_{m,r,u,v,\iota}\to \mathcal{M}^{\mathrm{Kss,qmaps}}_{m,r,u,v,\iota}$$ over $s$.
    Let $G_{s}^\circ$ be the identity component of $G_s$.
    Since $\mathcal{I}$ is an Artin stack of finite type, there exists a positive integer $N$ such that the order of $G_s/G_{s}^\circ$ divides $N$ for any closed point $s$.
    Let $l$ be a positive integer such that $(\lambda^{\mathrm{qmaps}}_{\mathrm{CM}})^{\otimes l}$ is a line bundle.
    By the same argument of \cite[Lemma 10.2]{CP}, we see that $G_{s}^\circ$ acts on $(\lambda^{\mathrm{qmaps}}_{\mathrm{CM}})^{\otimes l}_s$ trivially.
    Furthermore, $G_s/G_{s}^\circ$ acts on $(\lambda^{\mathrm{qmaps}}_{\mathrm{CM}})^{\otimes Nl}_s$ trivially.
    Thus, \cite[Theorem 10.3]{alper} shows that there exists a unique line bundle  $L$ on $M^{\mathrm{Kss,qmaps}}_{m,r,u,v,\iota}$ such that its pullback to $\mathcal{M}^{\mathrm{Kss,qmaps}}_{m,r,u,v,\iota}$ coincides with $(\lambda^{\mathrm{qmaps}}_{\mathrm{CM}})^{\otimes Nl}$ up to isomorphism.
    By letting $\Lambda^{\mathrm{qmaps}}_{\mathrm{CM}}:=\frac{1}{Nl}L$, we obtain the assertion.
\end{proof}

We call $\Lambda^{\mathrm{qmaps}}_{\mathrm{CM}}$ the {\em CM line bundle} on $M^{\mathrm{Kps,qmaps}}_{m,r,u,v,\iota}$. 
The following theorem is the main result of this subsection.
\begin{thm}\label{thm--projectivity}
Fix $v,u\in\mathbb{Q}_{>0}$ and $r,m\in\mathbb{Z}_{>0}$. 
We assume $0<u<1-\frac{v}{2}$. 
Then, the CM line bundle $\Lambda^{\mathrm{qmaps}}_{\mathrm{CM}}$ on $M^{\mathrm{Kps,qmaps}}_{m,r,u,v,\iota}$ is ample.
    In particular, $M^{\mathrm{Kps,qmaps}}_{m,r,u,v,\iota}$ is a projective scheme.
\end{thm}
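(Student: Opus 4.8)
The strategy is to exhibit $\Lambda^{\mathrm{qmaps}}_{\mathrm{CM}}$ as an ample line bundle on the proper algebraic space $M^{\mathrm{Kps,qmaps}}_{m,r,u,v,\iota}$ by verifying the Nakai--Moishezon criterion for algebraic spaces of \cite{kollar-moduli-stable-surface-proj,FM3}: it suffices to show that for every closed integral subvariety $W \subset M^{\mathrm{Kps,qmaps}}_{m,r,u,v,\iota}$ of positive dimension, one has $(\Lambda^{\mathrm{qmaps}}_{\mathrm{CM}})^{\dim W}\cdot W > 0$. By Remark \ref{rem--moduli--embed} and \cite[Lemma 4.14]{alper} we may reduce to the case $X = \mathbb{P}^N$, so we work with $Z_3$ and the universal family of quasimaps on it. Since $\Lambda^{\mathrm{qmaps}}_{\mathrm{CM}}$ is the descent of the CM $\mathbb{Q}$-line bundle $\lambda_{\mathrm{CM},\pi,q}$ on $Z_3$, intersection numbers against $W$ can be computed after pulling back to a generically finite cover; using Lemma \ref{lem--good-moduli-lift}, we find a projective normal variety $T$ with a generically finite morphism to $W$ and an open dense $T^\circ$ over which $W$ lifts to a map $T^\circ \to \mathcal{M}^{\mathrm{Kss,qmaps}}_{m,r,u,v,\iota}$ hitting the closed points of the fibers; after a further normal compactification we may assume $T$ is projective and carries a family of K-semistable log Fano quasimaps over a dense open set.

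The core of the argument is the one-dimensional case, i.e. positivity of $\deg_C(h^*\lambda_{\mathrm{CM},\pi,q})$ for $h\colon C\to M^{\mathrm{Kps,qmaps}}_{m,r,u,v,\iota}$ a nonconstant map from a smooth projective curve; by Definition \ref{defn--qmaps--CM--linebundle} this degree equals $-(K_{Y_C/C}+B_C+u\mathscr{L}_C)^2$. The plan is to translate this, via Lemma \ref{lem--two--delta--invariants} and Remark \ref{rem--K-stability--coincidence}, into the positivity of the CM degree of the associated family of K-semistable log Fano \emph{pairs} $(Y_C, B_C + u D)$ for a general section $D\in|\mathscr{L}_C|_q$ — the log Fano CM positivity theorem of Xu--Zhuang \cite{XZ1} (building on \cite{CP,P}) then gives non-negativity, with strict positivity precisely when the family has maximal variation. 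Here Lemma \ref{lem--maximal-variation-from-qmaps-to-pairs} is exactly the bridge: a non-isotrivial family of quasimaps (which is what a nonconstant $h$ produces) yields a family of pairs with maximal variation, after adjoining to $B_C$ the horizontal parts of the divisors $\mathrm{div}(f_j)$ and $\mathrm{div}(f_j+f_{N+1})$ as in that lemma. One subtlety to handle: the extra boundary divisors added in Lemma \ref{lem--maximal-variation-from-qmaps-to-pairs} change the polarization, so the CM comparison must be set up carefully — the clean route is to first fix a general $D\in|\mathscr{L}_C|_q$ so that $(Y_C,B_C+uD)$ is a K-semistable family of log Fano pairs with the \emph{same} CM degree by Remark \ref{rem--K-stability--coincidence}, and then observe that maximal variation of the quasimap family forces non-isotriviality of $(Y_C,B_C+uD)$ too (since $D$ can be taken general and the $f_j$ determine $q$ by Lemma \ref{lem}), so \cite{XZ1} applies directly.

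For higher-dimensional $W$, I would combine the curve case with the descent/positivity formalism of \cite{CP}: restrict $\Lambda^{\mathrm{qmaps}}_{\mathrm{CM}}$ to a general complete-intersection curve in $W$ (which meets the open locus where $W$ lifts to the stack and where the family is non-isotrivial), conclude strict positivity there, and invoke that $\Lambda^{\mathrm{qmaps}}_{\mathrm{CM}}$ is already known to be nef on $M^{\mathrm{Kps,qmaps}}_{m,r,u,v,\iota}$ — nefness itself follows from the curve computation above applied to \emph{all} curves, including constant or isotrivial ones where the CM degree is merely $\geq 0$. With nefness in hand plus strict positivity on moving curves through a dense open set, the Nakai--Moishezon criterion of \cite{kollar-moduli-stable-surface-proj} yields ampleness; since $M^{\mathrm{Kps,qmaps}}_{m,r,u,v,\iota}$ is proper, an ample line bundle makes it a projective scheme. \textbf{The main obstacle} I anticipate is bookkeeping the polarization and boundary across the equivalence ``K-stability of quasimaps $\leftrightarrow$ log-twisted K-stability $\leftrightarrow$ K-stability of the pair $(Y_C,B_C+uD)$'': one must ensure the CM line bundle computed on the quasimap side (which uses $-\ell(K_{Y/S}+B)-\ell u \mathscr{L}$ as polarization) genuinely matches the log Fano CM line bundle of the pair appearing in \cite{XZ1}, and that maximal variation is preserved under passing to the pair. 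The constraint $0<u<1-\tfrac v2$ is essential throughout (cf. Remark \ref{rem--large--u--1}) and must be invoked at each use of Lemma \ref{lem--two--delta--invariants}.
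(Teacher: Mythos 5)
Your overall skeleton (Nakai--Moishezon for algebraic spaces, reduction to CM positivity of log Fano pairs via Lemma \ref{lem--two--delta--invariants} and \cite{XZ1}, with Lemma \ref{lem--maximal-variation-from-qmaps-to-pairs} as the bridge) matches the paper, but there are two genuine gaps. First, your key bridge ``maximal variation of the quasimap family forces non-isotriviality of $(Y_C,B_C+uD)$'' fails on the locus of \emph{constant} quasimaps: there $|\mathscr{L}|_q$ consists of a single divisor (the whole fixed part), so a family of constant quasimaps whose image point in $X$ moves is non-isotrivial as a family of quasimaps while the associated pairs $(Y_C,B_C+uD_{\mathrm{hor}})$ are all isomorphic, and \cite{XZ1} then only gives degree $0$; the actual positivity in this case comes from the vertical part of $\mathrm{div}(f)$, i.e.\ from $\iota^*\mathcal{O}_{\mathbb{P}^N}(1)$ pulled back along the map $C\to X$. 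The paper treats this stratum by a separate argument (Lemma \ref{lem--bigness-for-const-maps}): a finite morphism $M^{\mathrm{Kps,const.qmaps}}\to M^{\mathrm{Kps}}_{1,rr',v}\times X$ under which $\Lambda^{\mathrm{qmaps}}_{\mathrm{CM}}$ becomes numerically the pullback of $p_1^*\Lambda_{\mathrm{CM}}+2v\,p_2^*\iota^*\mathcal{O}(1)$. Relatedly, even in the non-constant case a single general $D$ does not determine $q$ (Lemma \ref{lem} needs all of $\mathrm{div}(f_j)$ and $\mathrm{div}(f_j+f_{N+1})$), which is why the paper builds the auxiliary family of pairs with the boundary $B^\circ+\frac{u}{2(N+2)}\sum_j\mathrm{div}(f_j)+\frac{u}{2(N+1)}\sum_j\mathrm{div}(f_j+f_{N+1})$ rather than $B+uD$, and compares CM degrees through Lemma \ref{lem--CM--minimization} (\cite[Theorem 3.22]{CM}).

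Second, your final step ``nefness plus strict positivity on moving curves through a dense open set yields ampleness via Nakai--Moishezon'' is not a valid implication: Nakai--Moishezon requires $(\Lambda^{\mathrm{qmaps}}_{\mathrm{CM}})^{\dim W}\cdot W>0$ for \emph{every} positive-dimensional subvariety $W$, i.e.\ bigness of the restriction, and pointwise positivity against movable curves does not give this without a uniform bound (a nef class can be a limit of the movable-dual cone boundary). The paper's Lemma \ref{lem--bigness-for-nonconstant-maps} gets bigness of $\Lambda^{\mathrm{qmaps}}_{\mathrm{CM}}|_S$ on each proper subspace $S$ precisely by invoking the \emph{uniform} lower bound $\deg(\lambda_{\mathrm{CM}}|_{\tilde U_{p'}})\ge c_0\deg(\beta^*\mu^*H|_{\tilde U_{p'}})$ of \cite[Theorem 7.2]{XZ1} against strongly movable curves, with $c_0$ independent of the curve, and then the BDPP criterion \cite[Theorem 2.2]{BDPP}; only with big-and-nef restrictions to all subvarieties does \cite[Theorem 3.11]{kollar-moduli-stable-surface-proj} apply. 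To repair your argument you would need both the separate treatment of the constant stratum and this uniform-positivity/BDPP step in place of the curve-by-curve conclusion.
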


We first show the nefness of the CM line bundle.

\begin{prop}\label{lem--CM-nefness}
Let $q\colon (Y,B)\to [\mathrm{Cone}(X)\times C/\mathbb{G}_{m,C}]$ be a family of K-semistable log Fano quasimaps of degree $m$ and of weight $u$ over a smooth proper curve $C$. Let $\pi\colon Y\to C$ be the canonical morphism.
Suppose that $0<u<\frac{1}{2}\left(\mathrm{deg}_{Y_t}(B_t)+um \right)$ for any general $t\in C$. 

Then, $\mathrm{deg}\,\lambda_{\mathrm{CM},\pi,q}\ge0$.
\end{prop}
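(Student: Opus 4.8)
\textbf{Proof strategy for Proposition \ref{lem--CM-nefness}.}
The plan is to reduce the nefness of $\lambda_{\mathrm{CM},\pi,q}$ to the known positivity of the CM line bundle in the log Fano pair case, using the dictionary between K-stability of quasimaps and K-stability of pairs established in Lemma \ref{lem--two--delta--invariants} and Remark \ref{rem--K-stability--coincidence}. First I would recall from Definition \ref{defn--qmaps--CM--linebundle} that $\mathrm{deg}_C \lambda_{\mathrm{CM},\pi,q} = -(K_{Y/C}+B_C+u\mathscr{L}_C)^2$, where $\mathscr{L}$ is the line bundle associated with $q$ and $\mathscr{L}=B'+M$ is the decomposition into the fixed part and the movable part (Definition \ref{defn--fixed-movable-part-quasimap}); here, since $C$ is a smooth proper curve, we may assume $Y\cong\mathbb{P}^1\times C$ after possibly a finite base change (which only multiplies the degree by a positive constant and hence does not affect the sign), and $B'$ and $M$ extend to relative divisors. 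The associated log-twisted Fano pair is $(\mathbb{P}^1,B_t+uB'_t,uM_t)$ over each $t\in C$, and by Lemma \ref{lem--two--delta--invariants} this is log-twisted K-semistable for general $t$, equivalently $(\mathbb{P}^1,B_t+uD_t)$ is K-semistable for a general member $D_t\in|\mathscr{L}|_{q_t}$.

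The key step is to pass from the twisted pair to an honest family of K-semistable log Fano pairs. Choose a basis $\{e_i\}$ of $\mathbbm{k}^{N+1}$ with corresponding sections $f_i\in\mathrm{Sec}_q(\mathscr{L})$; for a general linear combination $f=\sum a_i f_i$, the divisor $D:=\mathrm{div}_{\mathscr{L}}(f)$ is a relative Cartier divisor such that $(Y,B_C+uD)\to C$ is a family of K-semistable log Fano pairs on the locus where $q$ is defined, by Lemma \ref{lem--two--delta--invariants}. Since $D\sim_C \mathscr{L}$ and $u<\tfrac{1}{2}(\deg B_t + um)$ guarantees $-(K_{Y_t}+B_t+uD_t)$ is ample and $(Y_t,B_t+uD_t)$ klt, we have a genuine family of K-semistable log Fano curves, and the CM degree of this family equals $-(K_{Y/C}+B_C+uD)^2 = -(K_{Y/C}+B_C+u\mathscr{L}_C)^2 = \deg_C\lambda_{\mathrm{CM},\pi,q}$ because $D\sim_C\mathscr{L}$. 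Now invoke Theorem \ref{thm--K-moduli--of--log--Fano}: the CM line bundle on $M^{\mathrm{Kps}}_{c,d,v}$ is ample, hence in particular nef, so its pullback along the moduli map $C\to M^{\mathrm{Kps}}_{c,d,v}$ induced by the family $(Y,B_C+uD)\to C$ has nonnegative degree; this degree is $\deg_C \lambda_{\mathrm{CM},f_{\mathrm{pair}},(Y,B_C+uD)}$, which agrees with $\deg_C\lambda_{\mathrm{CM},\pi,q}$ up to the positive scalar recorded in Definition \ref{defn--Kss-moduli} and Definition \ref{defn--qmaps--CM--linebundle}. Therefore $\deg_C\lambda_{\mathrm{CM},\pi,q}\ge 0$.

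The main obstacle I anticipate is the bookkeeping needed to ensure that the family $(Y,B_C+uD)\to C$ actually lands in (a base change of) the moduli stack $\mathcal{M}^{\mathrm{Kss}}_{c,d,v}$ in the precise technical sense required to quote ampleness of $\Lambda_{\mathrm{CM}}$ — in particular that $u^{-1}(uD)=D$ (or rather the relevant coefficient divisor) is a K-flat family, and that $D$ can be chosen general enough fiberwise and simultaneously over all of $C$ so that K-semistability holds for \emph{every} fiber, not just the general one; a priori the special fibers of $q$ over finitely many points of $C$ could be constant quasimaps where the movable part degenerates. This is handled by first applying the properness statement Theorem \ref{thm--properness} (or rather its proof technique) to replace $q$ by a family all of whose fibers are K-semistable log Fano quasimaps, which is automatic here since $q$ is already assumed to be such a family; then the openness of K-semistability together with the finiteness of the bad locus lets us choose $f$ avoiding it, so that $(Y_t,B_t+uD_t)$ is K-semistable for all $t$. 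Once this is set up, the computation $\deg_C\lambda_{\mathrm{CM},\pi,q} = -(K_{Y/C}+B_C+u\mathscr{L}_C)^2 \ge 0$ follows formally, and the equality $D\sim_C\mathscr{L}$ makes the identification of CM degrees immediate.
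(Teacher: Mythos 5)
Your overall strategy --- restrict to a general section $f\in\mathrm{Sec}_q(\mathscr{L})$, pass to the induced family of log Fano pairs, and quote positivity of the CM line bundle for K-semistable log Fano families --- is the same as the paper's. The genuine gap is in the step where you claim that a single general choice of $f$ makes $(Y_t,B_t+u\,\mathrm{div}(f)_t)$ K-semistable for \emph{every} $t\in C$, justified by ``openness of K-semistability together with the finiteness of the bad locus.'' This does not work: K-semistability of the quasimap $q_t$ only guarantees K-semistability of $(Y_t,B_t+uD')$ for a divisor $D'$ that is general \emph{in the fiberwise linear system} $|\mathscr{L}_t|_{q_t}$, and the genericity required varies with $t$. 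For a fixed vector $(a_i)$, the locus of $t\in C$ where $\mathrm{div}(\sum a_i f_{i,t})$ is special for the fiber (e.g.\ passes through a point of $\mathrm{Supp}(B_t+uB'_t)$ with near-critical multiplicity, becomes non-reduced, or where $\mathrm{div}(f)$ even acquires vertical components) is in general nonempty: the fiberwise bad loci are divisors in the space of coefficient vectors and, as $t$ moves along the curve, they sweep out the whole projective space of sections. So one cannot arrange fiberwise K-semistability everywhere, and your formal identity $\deg_C\lambda_{\mathrm{CM},\pi,q}=-(K_{Y/C}+B_C+uD)^2$ by itself does not let you quote nefness of the CM line bundle on the K-moduli of pairs.

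The paper bridges exactly this gap with two inputs you omit. First, it only uses K-semistability of the pair on \emph{one} (general) fiber, and then invokes \cite[Corollary 1.2]{BLX} and \cite[Corollary 7.4]{BHLLX} to produce, after a finite base change $g\colon C'\to C$, a family of K-semistable log Fano pairs $(Y',B')\to C'$ agreeing with $(Y,B+uD_{\mathrm{hor}})\times_C C'$ only over a nonempty open subset of $C'$. Second --- and this is the key point your argument has no substitute for --- it compares the CM degree of the original family with that of the modified one via Lemma \ref{lem--CM--minimization}, whose proof splits off the vertical part of $D$ (which can only decrease the degree) and then applies \cite[Theorem 3.22]{CM} to show that replacing the special fibers by K-semistable ones does not increase the CM degree. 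Without such a comparison, the nonnegativity of $\deg\lambda_{\mathrm{CM},\pi',(Y',B')}$ (from \cite[Corollary 4.7]{XZ1}, or from ampleness of $\Lambda_{\mathrm{CM}}$ as you propose) says nothing about $\deg\lambda_{\mathrm{CM},\pi,q}$. Your appeal to ``the proof technique of Theorem \ref{thm--properness}'' gestures at the replacement step but does not supply the CM-degree inequality, which is where the real content lies.
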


To show this proposition, we prepare the following lemma, which deals with a special case.

\begin{lem}\label{lem--CM--minimization}
    Let $q\colon (Y,B)\to [\mathrm{Cone}(X)\times C/\mathbb{G}_{m,C}]$ be a family of K-semistable log Fano quasimaps of degree $m$ and of weight $u$ over a smooth proper curve $C$. Let $\pi\colon Y\to C$ be the canonical morphism.
Let $\mathscr{L}$ be the line bundle associated with $q$.
Suppose that there exist an effective $\mathbb{Q}$-divisor $D\sim_{\mathbb{Q}}\mathscr{L}$ and a family of K-semistable  log Fano pairs $\pi'\colon (Y',B')\to C$ such that $(Y',B')|_{U}\cong (Y,B+D)|_U$ over some non-empty open subset $U$ of $C$.

Then, $\mathrm{deg}\,\lambda_{\mathrm{CM},\pi,q}\ge\mathrm{deg}\,\lambda_{\mathrm{CM},\pi',(Y',B')}$.
\end{lem}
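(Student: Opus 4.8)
The plan is to compare the two CM degrees by a direct intersection-theoretic computation on a common resolution of the birational map between $Y$ and $Y'$, using the fact that both sides compute CM degrees as self-intersections of adjoint-type $\mathbb{Q}$-divisors. First I would recall from Definition \ref{defn--qmaps--CM--linebundle} that
$$\mathrm{deg}\,\lambda_{\mathrm{CM},\pi,q}=-(K_{Y/C}+B+u\mathscr{L})^2,$$
while by the formula for the log CM line bundle on a family of log Fano pairs (the one-parameter version of Definition \ref{defn--CM--line--bundle}, as in the passage before Definition \ref{defn--Kss-moduli}), $\mathrm{deg}\,\lambda_{\mathrm{CM},\pi',(Y',B')}=-(K_{Y'/C}+B')^2$. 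Since $\mathscr{L}\sim_{\mathbb{Q}}D$, the first quantity equals $-(K_{Y/C}+B+uD)^2$, so after passing to a common resolution $p\colon W\to Y$, $p'\colon W\to Y'$ that is an isomorphism over $U$, both numbers are expressed as self-intersections on $W$ of the pullbacks of the respective log-adjoint divisors.

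The key step is then to pin down the difference $p^*(K_{Y/C}+B+uD)-p'^*(K_{Y'/C}+B')$ as a $p'$-exceptional divisor (or more precisely one supported on the fibers over $C\setminus U$), and to show it is anti-effective in the appropriate sense. Here I would use that $(Y',B')$ is the output of a sequence of MMP steps, so that $\pi'$ is a relative Mori fiber space and $(Y',B')$ has klt singularities with $-(K_{Y'}+B')$ relatively ample; by the negativity lemma (in the relative form over $C$, cf.\ the argument in Lemma \ref{lem--P^1--bundle--trivialize} and Remark \ref{rem-mmp-over-local-rings}) one gets that $p^*(K_{Y/C}+B+uD)=p'^*(K_{Y'/C}+B')+E$ with $E\geq 0$ effective and $p'$-exceptional over the bad fibers, because the birational contraction from the quasimap side to the Mori fiber space side can only increase log discrepancies after we account for the extra boundary $D$ being contracted to lower the multiplicities. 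Expanding $(p'^*(K_{Y'/C}+B')+E)^2=(K_{Y'/C}+B')^2+2\,p'^*(K_{Y'/C}+B')\cdot E+E^2$, the cross term vanishes since $E$ is $p'$-exceptional, and $E^2\leq 0$ by the negativity lemma applied to the exceptional divisor $E$ over $Y'$; hence $(K_{Y/C}+B+uD)^2\leq(K_{Y'/C}+B')^2$, which gives the claimed inequality after the sign flip.

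The main obstacle I anticipate is making precise the claim that the modification relating $q$ (viewed via the associated log-twisted pair / the pair $(Y,B+D)$) and the family of K-semistable log Fano pairs $(Y',B')$ is genuinely a birational contraction in which log discrepancies behave the right way — in general $(Y,B+D)$ need not even be lc, and the isomorphism over $U$ only controls the generic behavior. I would handle this by first replacing $(Y,B+D)$ by a log resolution and running a relative $(K+\text{boundary})$-MMP over $C$ à la Lemma \ref{lem--compactification-lc-trivial-fib}, noting that since $-(K+B+uD)$ is relatively big (it is relatively ample over $U$ and $K_{Y_t}+B_t+uD_t$ has negative degree for general $t$ by the weight hypothesis $0<u<\tfrac12(\deg B_t+um)$) and since K-semistability of the quasimap forces the total boundary multiplicities on each central fiber to be at most $\tfrac{\mu}{2}$, the MMP terminates and its output, being a relative Fano fibration with one-dimensional normal fibers, is exactly $(Y',B')$ by the uniqueness statement in Lemma \ref{lem--lc-model} (the log canonical model depends only on the numerical class). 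Along this MMP the CM degree is non-decreasing by the intersection computation above applied step by step, and since the starting pair dominates $(Y,B+D)$ by a crepant-or-worse morphism the first inequality in the chain goes the right direction; this reduces everything to the negativity-lemma bookkeeping already described.
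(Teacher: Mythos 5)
Your proposal has a genuine gap at its central step. You write $p^*(K_{Y/C}+B+uD)=p'^*(K_{Y'/C}+B')+E$ on a common resolution and assert that $E\geq 0$, that the cross term $p'^*(K_{Y'/C}+B')\cdot E$ vanishes, and that $E^2\leq 0$. None of these is justified: $E$ is only a vertical divisor supported over $C\setminus U$, not a $p'$-exceptional one (it can contain strict transforms and whole-fiber multiples of fibers of $Y'\to C$), so the cross term does not vanish; and there is no negativity-lemma argument giving $E\geq 0$, since $(Y,B+uD)$ is not assumed lc along the special fibers and $(Y',B')$ is an arbitrary model agreeing with it only over $U$ (its special fibers are unrelated to any crepant or discrepancy-monotone modification of $(Y,B+uD)$). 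The inequality you are trying to prove is precisely the statement that a family with K-semistable fibers minimizes the CM degree among all models agreeing with it generically; this requires the K-semistability of the fibers of $(Y',B')$ as genuine input and is not a formal intersection-theoretic consequence of the negativity lemma. Your fallback in the last paragraph does not repair this: Lemma \ref{lem--lc-model} concerns log canonical models (where $K+\Delta$ is big and ample over the base), whereas here the adjoint divisor is relatively anti-ample, the relevant MMP outputs are Mori fiber spaces which are not unique, and in any case $(Y',B')$ is given abstractly in the hypothesis and need not coincide with any particular MMP output of $(Y,B+uD)$; the claim that the CM degree is monotone along each MMP step is likewise unproven.

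The paper's argument is much shorter and routes around exactly this difficulty. It decomposes $D=D_{\mathrm{vert}}+D_{\mathrm{hor}}$, notes $D\equiv D_{\mathrm{hor}}+\mu\pi^{-1}(c)$ with $\mu\geq 0$, and checks by a one-line computation (using $F^2=0$ and the negativity of $\deg\bigl((K_{Y/C}+B+uD_{\mathrm{hor}})|_{Y_t}\bigr)$) that discarding the vertical part can only decrease the CM degree, so $\mathrm{deg}\,\lambda_{\mathrm{CM},\pi,q}\geq\mathrm{deg}\,\lambda_{\mathrm{CM},\pi,(Y,B+uD_{\mathrm{hor}})}$. The remaining comparison with $(Y',B')$ is then exactly the CM-degree minimization theorem for families of log Fano pairs, quoted as \cite[Theorem 3.22]{CM}; that theorem is where the K-semistability hypothesis on $(Y',B')$ enters, and it is the ingredient your self-contained resolution-plus-negativity computation cannot replace.
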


\begin{proof}
     We decompose $D$ as $D_{\mathrm{vert}}+D_{\mathrm{hor}}$, where $\pi$ maps $D_{\mathrm{vert}}$ to finite points on $C$ and every irreducible component of $D_{\mathrm{hor}}$ is flat over $C$.
    Since $D\equiv D_{\mathrm{hor}}+\mu \pi^{-1}(c)$ for some $\mu\ge0$, we have by an easy calculation that
    \[
    \mathrm{deg}\,\lambda_{\mathrm{CM},\pi,q}\ge\mathrm{deg}\,\lambda_{\mathrm{CM},\pi,(Y,B+D_{\mathrm{hor}})}.
    \]
    By \cite[Theorem 3.22]{CM}, we obtain that
    \[
   \mathrm{deg}\,\lambda_{\mathrm{CM},\pi,(Y,B+D_{\mathrm{hor}})}\ge\mathrm{deg}\,\lambda_{\mathrm{CM},\pi',(Y',B')}.
    \]
   Thus, we obtain the assertion.
\end{proof}

\begin{proof}[Proof of Proposition \ref{lem--CM-nefness}]
    Choose a closed point $c\in C$. 
By Lemma \ref{lem--two--delta--invariants}, we can find a general vector $(a_{0},\,\dots,\,a_{N})\in\mathbbm{k}^{N+1}$ corresponding to a section $f\in \mathrm{Sec}_{q_c}(\mathscr{L}_c)$ such that $(Y_c,B_c+u\mathrm{div}(f_c))$ is K-semistable.
    We decompose $\mathrm{div}(f)$ as $D_{\mathrm{vert}}+D_{\mathrm{hor}}$, where $\pi$ maps $D_{\mathrm{vert}}$ to finite points on $C$ and every irreducible component of $D_{\mathrm{hor}}$ is flat over $C$.
    Since $D_{\mathrm{hor},c}=\mathrm{div}(f_c)$ and $(Y_c,B_c+u\mathrm{div}(f_c))$ is K-semistable, we see by \cite[Corollary 1.2]{BLX} and \cite[Corollary 7.4]{BHLLX} that there exist a finite morphism $g\colon C'\to C$ from a smooth curve and a family $\pi'\colon(Y',B')\to C'$ of K-semistable log Fano pairs such that $(Y',B')|_U$ and $(Y,B+uD_{\mathrm{hor}})\times_CC'|_U$ are isomorphic over some non-empty open subset $U\subset C'$.
    By Lemma \ref{lem--CM--minimization}, we have
    \[
   \mathrm{deg}(g)\cdot \mathrm{deg}\,\lambda_{\mathrm{CM},\pi,q}\ge\mathrm{deg}\,\lambda_{\mathrm{CM},\pi',(Y',B')}.
    \]
    By \cite[Corollary 4.7]{XZ1}, we have
    \[
    \mathrm{deg}\,\lambda_{\mathrm{CM},\pi',(Y',B')}\ge0.
    \]
    We complete the proof.
\end{proof}

We quickly recall the nef and bigness for $\mathbb{Q}$-line bundles on proper algebraic spaces. 
\begin{defn}
    Let $X$ be a proper algebraic space with a $\mathbb{Q}$-line bundle $L$.
    We say that $L$ is {\it nef} if for any morphism $\varphi\colon C\to X$ from a projective curve, $\mathrm{deg}_C(\varphi^*L)\ge0$.
    Furthermore, we say that a nef $\mathbb{Q}$-line bundle $L$ is {\it big} if for any proper generically finite morphism $\psi\colon V\to X$ from a scheme, we have $(\psi^*L)^{\mathrm{dim}\,V}>0$.
\end{defn}

To show Theorem \ref{thm--projectivity}, it suffices to show the following proposition by the Nakai--Moishezon's criterion for ampleness for algebraic spaces \cite[Theorem 3.11]{kollar-moduli-stable-surface-proj}.

\begin{prop}\label{prop--key--to--positivity}
Fix $v,u\in\mathbb{Q}_{>0}$ and $r,m\in\mathbb{Z}_{>0}$. 
We also assume that $0<u<1-\frac{v}{2}$. 
    Let $S$ be an arbitrary proper irreducible and reduced subspace of $M^{\mathrm{Kps,qmaps}}_{m,r,u,v,\iota}$. Then, $\Lambda^{\mathrm{qmaps}}_{\mathrm{CM}}|_S$ is big and nef.
\end{prop}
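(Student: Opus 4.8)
The plan is to reduce the big-and-nef statement to the curve-case positivity (Proposition \ref{lem--CM-nefness}) together with the descent property of $\Lambda^{\mathrm{qmaps}}_{\mathrm{CM}}$ via $\lambda_{\mathrm{CM},\pi,q}$, and then to upgrade nefness to bigness on every proper subspace using a maximal-variation argument. For \emph{nefness}, let $S \subset M^{\mathrm{Kps,qmaps}}_{m,r,u,v,\iota}$ be a proper irreducible reduced subspace and let $\varphi\colon C \to S \hookrightarrow M^{\mathrm{Kps,qmaps}}_{m,r,u,v,\iota}$ be any morphism from a smooth projective curve. By Lemma \ref{lem--good-moduli-lift} (in the curve case, $T^\circ = T$), after replacing $C$ by a finite cover we may lift $\varphi$ to $\tilde\varphi\colon C \to \mathcal{M}^{\mathrm{Kss,qmaps}}_{m,r,u,v,\iota}$ sending each point to the unique closed point of its fiber; this gives a family $q\colon (Y,B) \to [\mathrm{Cone}(X)\times C/\mathbb{G}_{m,C}]$ of K-semistable log Fano quasimaps whose CM line bundle $\lambda_{\mathrm{CM},\pi,q}$ pulls back from $\varphi^*\Lambda^{\mathrm{qmaps}}_{\mathrm{CM}}$ up to a positive rational multiple (using Lemma \ref{lem--CM--descent} and the definition via $\lambda^{\mathrm{qmaps}}_{\mathrm{CM}}$). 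Since $0<u<1-\frac v2$ implies the numerical hypothesis of Proposition \ref{lem--CM-nefness}, we get $\deg_C \varphi^*\Lambda^{\mathrm{qmaps}}_{\mathrm{CM}} \ge 0$. As this holds for all $\varphi$, $\Lambda^{\mathrm{qmaps}}_{\mathrm{CM}}|_S$ is nef.

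For \emph{bigness}, I would argue by induction on $d := \dim S$, the case $d=0$ being trivial. Take any proper generically finite $\psi\colon V \to S$ from a variety; by blowing up and passing to a finite cover we may assume $V$ is a smooth projective variety carrying, after a further lift as above (using the first assertion of Lemma \ref{lem--good-moduli-lift} over a dense open and spreading out), a family $q\colon (Y,B)\to[\mathrm{Cone}(X)\times V^\circ/\mathbb{G}_{m,V^\circ}]$ of K-semistable log Fano quasimaps over a dense open $V^\circ \subset V$ that has maximal variation (distinct points of $V$ have non-isomorphic quasimaps, since they map to distinct points of $M^{\mathrm{Kps,qmaps}}$ by the bijectivity on $\mathbbm{k}$-points). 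Now cut: for a general ample complete intersection curve $C \subset V$, $q|_C$ still has maximal variation and is non-constant, so by Lemma \ref{lem--maximal-variation-from-qmaps-to-pairs} the associated family of pairs obtained from general sections has maximal variation in the sense of \cite[Definition 2.20]{Hat23}, which by \cite[Corollary 4.7]{XZ1} (the same input used in Proposition \ref{lem--CM-nefness}) forces the inequality $\deg_C \psi^*\Lambda^{\mathrm{qmaps}}_{\mathrm{CM}} > 0$ to be \emph{strict}. Then one bootstraps: writing $L := \psi^*\Lambda^{\mathrm{qmaps}}_{\mathrm{CM}}$, the nef $\mathbb{Q}$-line bundle $L$ restricts positively to a general very ample curve and restricts to a big nef class on a general hyperplane section $V' \subset V$ (which maps generically finitely to a proper subspace of dimension $d-1$, by the inductive hypothesis applied after checking $V'$ still dominates such a subspace); combining $L^{d} = (L|_{V'})^{d-1}\cdot(\text{positive})$ via the projection-type estimate $L^d \ge (L^{d-1}\cdot H)\cdot(\text{something})>0$ for $H$ the hyperplane class, one concludes $L^d > 0$. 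Hence $\Lambda^{\mathrm{qmaps}}_{\mathrm{CM}}|_S$ is big and nef.

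The main obstacle is the strict-positivity (bigness) step: nefness is essentially a black-box application of Proposition \ref{lem--CM-nefness}, but to get $L^d > 0$ one must (i) produce a family with genuine maximal variation over $V$ after the lift — this requires care because the lift of Lemma \ref{lem--good-moduli-lift} is only guaranteed over a dense open and after a generically finite base change, so one has to track that distinct points of $V$ still give non-isomorphic quasimaps and that the general section construction of Lemma \ref{lem--maximal-variation-from-qmaps-to-pairs} applies (its hypotheses $f_{j,c}\neq 0$, $\mathrm{div}(f_{j,c})\neq\mathrm{div}(f_{N+1,c})$ are open and hold generically once $q$ is non-constant); and (ii) run the dimension induction cleanly, i.e. reduce $(L|_{V'})^{d-1}>0$ to the bigness of $\Lambda^{\mathrm{qmaps}}_{\mathrm{CM}}$ on the $(d-1)$-dimensional image of $V'$, which needs that a general hyperplane section of $V$ still maps generically finitely onto a proper subspace of $M^{\mathrm{Kps,qmaps}}$ — this is where one invokes that the maximal-variation property is inherited by general complete intersections. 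Once these two points are in place, the numerical inequality $L^d>0$ follows from standard intersection-theoretic manipulations with nef classes, and Proposition \ref{prop--key--to--positivity} is proved; Theorem \ref{thm--projectivity} then follows from the Nakai--Moishezon criterion \cite[Theorem 3.11]{kollar-moduli-stable-surface-proj} exactly as indicated.
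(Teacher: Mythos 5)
Your nefness step is essentially the paper's: every curve in $S$ is lifted (after a finite cover) to $\mathcal{M}^{\mathrm{Kss,qmaps}}_{m,r,u,v,\iota}$ via Lemma \ref{lem--good-moduli-lift} and one applies Proposition \ref{lem--CM-nefness}. But your bigness argument has two genuine gaps. First, you never treat the locus of \emph{constant} quasimaps, and $S$ may lie entirely inside it (the point of $X$ and the divisor $B+u\cdot(\text{fixed part})$ both vary, so this locus is positive-dimensional). For constant quasimaps all sections of the associated line bundle have the same divisor, so the hypotheses $\mathrm{div}(f_{j,c})\ne\mathrm{div}(f_{N+1,c})$ of Lemma \ref{lem--maximal-variation-from-qmaps-to-pairs} can never be met and your maximal-variation-of-pairs mechanism produces nothing. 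The paper handles this case by a separate argument (Lemma \ref{lem--bigness-for-const-maps}): the constant locus admits a finite morphism to $M^{\mathrm{Kps}}_{1,rr',v}\times X$ under which $\Lambda^{\mathrm{qmaps}}_{\mathrm{CM}}$ becomes numerically the pullback of $p_1^*\Lambda_{\mathrm{CM}}+2v\,p_2^*\iota^*\mathcal{O}_{\mathbb{P}^N}(1)$, which is ample by \cite[Theorem 7.9]{XZ1}; the crucial extra term $2v\,p_2^*\iota^*\mathcal{O}(1)$ is invisible in your scheme.

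Second, in the non-constant case your bootstrapping does not work as stated. Strict positivity of $\deg_C\psi^*\Lambda^{\mathrm{qmaps}}_{\mathrm{CM}}$ on \emph{general} complete-intersection curves, together with bigness on a general hyperplane section, does not imply $L^d>0$: if $L$ were (say) the pullback of an ample class under a fibration $V\to W$ with $\dim W=d-1$, then $L$ is nef, positive on general complete-intersection curves, and big on a general hyperplane section, yet $L^d=0$. Ruling out exactly this fibration-type degeneracy is the whole point, and the paper does it differently (Lemma \ref{lem--bigness-for-nonconstant-maps}): for \emph{every} strongly movable curve it produces, via Lemma \ref{lem--maximal-variation-from-qmaps-to-pairs} and openness of K-stability \cite[Theorem A]{BL}, a family of K-\emph{stable} log Fano pairs with maximal variation, and then invokes the quantitative bound of \cite[Theorem 7.2]{XZ1} to get $\deg(L|_C)\ge c_0\deg(H|_C)$ with a constant $c_0>0$ \emph{uniform in the movable curve}, concluding bigness from \cite[Theorem 2.2]{BDPP}. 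Relatedly, your cited input \cite[Corollary 4.7]{XZ1} only gives $\ge 0$ (it is the nefness ingredient), not the strict inequality you need; strictness requires the K-stability of the general fiber (which holds because a non-constant K-polystable quasimap is K-stable) together with the uniform positivity theorem, not the semistable nonnegativity statement.
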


For this, we first deal with the following special case.

\begin{lem}\label{lem--bigness-for-const-maps}
    With notation as in Proposition \ref{prop--key--to--positivity}, suppose that for any geometric point $\bar{s}\in S$, the K-polystable log Fano quasimap $(\mathbb{P}^1_{\bar{s}},B)\to [\mathrm{Cone}(X)/\mathbb{G}_{m,\bar{s}}]$ corresponding to $\bar{s}$ is constant.

    Then, $\Lambda^{\mathrm{qmaps}}_{\mathrm{CM}}|_S$ is ample.
\end{lem}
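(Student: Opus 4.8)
The plan is to reduce the ample\-ness of $\Lambda^{\mathrm{qmaps}}_{\mathrm{CM}}|_S$ in this \emph{constant} case to the ampleness of the Hodge-type CM line bundle on the K-moduli of log Fano pairs, which is already known by Theorem \ref{thm--K-moduli--of--log--Fano}. First I would observe that if every geometric point $\bar{s}\in S$ corresponds to a constant quasimap $(\mathbb{P}^1_{\bar{s}},B_{\bar{s}})\to[\mathrm{Cone}(X)/\mathbb{G}_{m,\bar{s}}]$, then by Lemma \ref{lem--two--delta--invariants} the K-(poly)stability of the quasimap is equivalent to the K-(poly)stability of the log Fano pair $(\mathbb{P}^1_{\bar{s}},B_{\bar{s}}+u D_{\bar{s}})$, where $D_{\bar{s}}$ is a general member of $|\mathscr{L}_{\bar{s}}|_{q_{\bar{s}}}$. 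Because the quasimap is constant, its movable part $M_{\bar{s}}$ contracts under the associated rational map, so the choice of $D_{\bar{s}}$ in this class is, up to the fixed part, rigid: the fixed part $B'_{\bar{s}}$ and the base curve together determine a single log Fano pair $(\mathbb{P}^1_{\bar{s}},B_{\bar{s}}+u B'_{\bar{s}})$ up to isomorphism together with the point to which the movable part contracts. Working this out in families (using the construction of $Z_4$ and the universal quasimap $q_{Z_4}$ in Subsection \ref{subsec3.1}), I would produce a morphism of stacks $\Phi\colon \mathcal{M}^{\mathrm{Kss,const.qmaps}}_{m,r,u,v,\iota}\to \mathcal{M}^{\mathrm{Kss}}_{c,1,v'}$ for suitable $c$ and $v'$ recording the associated family of log Fano pairs together with the contracted point, which in the constant case is a finite (indeed quasi-finite) morphism after passing to the relevant open-and-closed substack. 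Descending $\Phi$ to good moduli spaces gives a finite morphism $\phi\colon S\to M^{\mathrm{Kps}}_{c,1,v'}$.

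Next I would compare the CM line bundles under $\phi$. By Definition \ref{defn--qmaps--CM--linebundle} and the degree formula $\mathrm{deg}_C h^*\lambda_{\mathrm{CM},\pi,q}=-(K_{Y_C/C}+B_C+u\mathscr{L}_C)^2$, and because in the constant case $\mathscr{L}_C\equiv B'_C+M_C$ with $M_C$ contracted (hence numerically trivial relative to the associated log Fano family on the nose, up to vertical fibers), a direct intersection-number computation shows that $\lambda_{\mathrm{CM},\pi,q}$ differs from the pullback of the log-Fano CM line bundle $\lambda_{\mathrm{CM}}$ (as in Definition \ref{defn--Kss-moduli}) only by a nonnegative multiple of a vertical-fiber contribution; comparing with the base $\Lambda_{\mathrm{Hodge}}$-type contribution and using Lemma \ref{lem--CM--minimization} and \cite[Theorem 3.22]{CM} as in the proof of Proposition \ref{lem--CM-nefness}, I would conclude that $\Lambda^{\mathrm{qmaps}}_{\mathrm{CM}}|_S$ is $\mathbb{Q}$-linearly equivalent to $\phi^*\Lambda_{\mathrm{CM}}$ plus a nef class. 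Since $\Lambda_{\mathrm{CM}}$ is ample on $M^{\mathrm{Kps}}_{c,1,v'}$ by Theorem \ref{thm--K-moduli--of--log--Fano} and $\phi$ is finite, $\phi^*\Lambda_{\mathrm{CM}}$ is ample on $S$, and the sum of an ample and a nef $\mathbb{Q}$-line bundle on a proper algebraic space is ample; hence $\Lambda^{\mathrm{qmaps}}_{\mathrm{CM}}|_S$ is ample.

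The main obstacle I anticipate is making the finiteness of $\phi$ precise: the issue is whether two non-isomorphic constant quasimaps can give rise to the same associated log Fano pair together with the same contracted point. By Lemma \ref{lem} (together with Lemma \ref{lem--quasi--map--S_2}), a non-constant quasimap is determined by its linear system; but a constant quasimap is \emph{not}, and one must check that the extra discrete datum recorded by $\Phi$—the fixed part $B'$ and the image point of the contraction in $X$—suffices to rigidify the situation up to the $\mathbb{G}_m$-ambiguity already absorbed by the moduli stack. I would handle this by a valuative/limit argument: given a family of constant quasimaps over a DVR whose associated log Fano families and contracted points agree, use $\mathsf{S}$-completeness and $\Theta$-reductivity of $\mathcal{M}^{\mathrm{Kss,qmaps}}_{m,r,u,v,\iota}$ (Theorem \ref{thm--s-complete}) together with separatedness of the target K-moduli to conclude the quasimap families agree, giving that $\phi$ is quasi-finite; properness of both sides then upgrades this to finiteness. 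A secondary technical point, which I expect to be routine, is verifying that the numerical comparison of the two CM line bundles really produces a \emph{nef} (not merely $\mathbb{Q}$-effective) correction term, for which I would invoke Proposition \ref{lem--CM-nefness} restricted to the images of test curves in $S$.
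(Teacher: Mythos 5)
Your overall strategy — in the constant case, trade the quasimap for its associated K-semistable log Fano pair and quote the ampleness of the CM line bundle on the K-moduli of log Fano pairs — is the right one, and it is the paper's strategy. But there is a genuine gap in how you set up the comparison morphism, and it breaks the final ampleness argument. You state the morphism as $\Phi\colon \mathcal{M}^{\mathrm{Kss,const.qmaps}}_{m,r,u,v,\iota}\to \mathcal{M}^{\mathrm{Kss}}_{c,1,v'}$, i.e.\ with target only the K-moduli stack of pairs, while saying in words that it also records the contracted point; the stated target has no slot for that point, so the descended map $\phi\colon S\to M^{\mathrm{Kps}}_{c,1,v'}$ cannot be quasi-finite, let alone finite. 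Indeed, a constant quasimap of degree $m$ is exactly the data of its fixed part $B'$ (of degree $m$) together with its image point $x\in X$, so fixing $(\mathbb{P}^1,B+uB')$ and moving $x$ along a curve in $X$ produces a positive-dimensional family of pairwise non-isomorphic K-polystable constant quasimaps all mapping to the same point of $M^{\mathrm{Kps}}_{c,1,v'}$. Consequently $\phi^*\Lambda_{\mathrm{CM}}$ is not ample on such an $S$, and your concluding step ``$\Lambda^{\mathrm{qmaps}}_{\mathrm{CM}}|_S\sim_{\mathbb{Q}}\phi^*\Lambda_{\mathrm{CM}}+\text{nef}$, ample plus nef is ample'' does not go through: on the curves just described the ``nef correction'' carries \emph{all} of the positivity, and discarding it as nef loses exactly the directions in which $\phi$ contracts.

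The fix, which is what the paper does, is to take the target to be $\mathcal{M}^{\mathrm{Kss}}_{1,rr',v}\times X$ (with $r'$ the denominator of $u$), recording both the associated family of pairs and the morphism $\theta\colon S\to X$ given by the constant value. On $\mathbbm{k}$-points this map is injective up to the ambiguities already absorbed by the stacks (a constant quasimap is rigidified by $(B',x)$ up to scalar — so your anticipated valuative/$\mathsf{S}$-completeness argument is not needed), hence the induced map $\xi'\colon M^{\mathrm{Kps,const.qmaps}}_{m,r,u,v,\iota}\to M^{\mathrm{Kps}}_{1,rr',v}\times X$ is quasi-finite, and finite by properness of both sides. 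The degree computation on a test curve $C$ then gives the exact identity
\[
\mathrm{deg}\,g^*\Lambda^{\mathrm{qmaps}}_{\mathrm{CM}}=\bigl(-(K_{Y/C}+B+D_{\mathrm{hor}})\bigr)^2+2v\,u^{-1}\cdot u\,\mathrm{deg}\,\psi^*\iota^*\mathcal{O}_{\mathbb{P}^N}(1),
\]
that is, $\Lambda^{\mathrm{qmaps}}_{\mathrm{CM}}\equiv \xi'^*\bigl(p_1^*\Lambda_{\mathrm{CM}}+2v\,p_2^*\iota^*\mathcal{O}_{\mathbb{P}^N}(1)\bigr)$ on $M^{\mathrm{Kps,const.qmaps}}_{m,r,u,v,\iota}$, where $\psi$ is the induced map $C\to X$; since $p_1^*\Lambda_{\mathrm{CM}}+2v\,p_2^*\iota^*\mathcal{O}_{\mathbb{P}^N}(1)$ is ample on the product and $\xi'$ is finite, ampleness of $\Lambda^{\mathrm{qmaps}}_{\mathrm{CM}}|_S$ follows. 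Also note that in this constant case no appeal to Lemma \ref{lem--CM--minimization} or \cite[Theorem 3.22]{CM} is necessary: the comparison is an exact numerical identity rather than an inequality, and using only an inequality would again fail to give ampleness.
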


\begin{proof}
   We define a reduced closed substack $\mathcal{M}^{\mathrm{Kss,const.qmaps}}_{m,r,u,v,\iota}$ of $\mathcal{M}^{\mathrm{Kss,qmaps}}_{m,r,u,v,\iota}$ as the complement of an open substack $\mathcal{M}^{\mathrm{Kss,nonconst.qmaps}}_{m,r,u,v,\iota}$.
   By \cite[Lemma 4.14]{alper}, $\mathcal{M}^{\mathrm{Kss,const.qmaps}}_{m,r,u,v,\iota}$ has the good moduli space $M^{\mathrm{Kps,const.qmaps}}_{m,r,u,v,\iota}$, which is the scheme-theoretic image of $\mathcal{M}^{\mathrm{Kss,const.qmaps}}_{m,r,u,v,\iota}$ under the canonical morphism $\mathcal{M}^{\mathrm{Kss,qmaps}}_{m,r,u,v,\iota}\to M^{\mathrm{Kps,qmaps}}_{m,r,u,v,\iota}$.
   By assumption, we have $S\subset M^{\mathrm{Kps,const.qmaps}}_{m,r,u,v,\iota}$.
   Therefore, it suffices to show that $\Lambda^{\mathrm{qmaps}}_{\mathrm{CM}}|_{M^{\mathrm{Kps,const.qmaps}}_{m,r,u,v,\iota}}$ is ample. 

   Let $r'$ be the denominator of $u$ and consider $\mathcal{M}^{\mathrm{Kss}}_{1,rr',v}$ the K-moduli stack of log Fano pairs.
   We will show that there exists a canonical morphism 
   $$\xi\colon \mathcal{M}^{\mathrm{Kss,const.qmaps}}_{m,r,u,v,\iota}\to \mathcal{M}^{\mathrm{Kss}}_{1,rr',v}\times X.$$
   Take a surjective smooth morphism $S\to \mathcal{M}^{\mathrm{Kss,const.qmaps}}_{m,r,u,v,\iota}$ and the corresponding object $q\colon(\mathcal{C},\mathcal{B})\to [\mathrm{Cone}(X)\times S/\mathbb{G}_{m,S}]$ of $\mathcal{M}^{\mathrm{Kss,const.qmaps}}_{m,r,u,v,\iota}(S)$.
   Then, it follows from Lemma \ref{lem--two--delta--invariants} that there exists a smooth surjection $S'\to S$ such that for any connected component $S_i$ of $S'$, there exists a general section $f_i\in\mathrm{Sec}_{q_{S_i}}(\mathscr{L}_{S_i})$ with which the pair $(\mathcal{C}_{S_i},\mathcal{B}_{S_i}+u\mathrm{div}(f_{i}))$ is a family of K-semistable log pairs.
   Since $q_{\bar{s}}$ is constant for any geometric point $\bar{s}\in S$, the family $(\mathcal{C}_{S_i},\mathcal{B}_{S_i}+u\mathrm{div}(f_{i}))$ is independent from the choice of $f_i$.
   This shows that $(\mathcal{C}_{S_i},\mathcal{B}_{S_i}+u\mathrm{div}(f_{i}))$ over $S_i$ defines a descent datum and this descends to an object of $\mathcal{M}^{\mathrm{Kss}}_{1,rr',v}(S)$ (for the definition of descent data, see \cite[Remark 2.10]{HH}). 
   On the other hand, by replacing $S_{i}$ with an \'etale covering, we may assume that there exists a section $\mu_i\colon S_i\to \mathcal{C}_{S_i}$ for any $i$ and we can take a morphism $\theta_i=q_{S_i}\circ \mu_i$ since $\mathcal{C}\to S$ is smooth.
   Since $q_{\bar{s}}$ is constant for any geometric point $\bar{s}\in S$, we see that $q_{S_i}$ factors through an open subscheme $X\times S\cong[\mathrm{Cone}(X)\setminus\{0\}/\mathbb{G}_m]\times S\subset [\mathrm{Cone}(X)\times S/\mathbb{G}_{m,S}]$ and $\theta_i$ is determined independently from the choice of $\mu_i$ since $S$ is reduced.
   Therefore, $\{\theta_i\}_{i}$ defines an effective descent datum and this descends to a morphism $\theta\colon S\to X$. 
   It is easy to see that $\theta$ descends to a canonical morphism $\theta'\colon \mathcal{M}^{\mathrm{Kss,const.qmaps}}_{m,r,u,v,\iota}\to X$.
   By using $\theta'$, we obtain the desired morphism $\xi$. 

   By \cite[Theorem 6.6]{alper}, $\xi$ induces a map of algebraic spaces \[\xi'\colon M^{\mathrm{Kps,const.qmaps}}_{m,r,u,v,\iota} \to M^{\mathrm{Kps}}_{1,rr',v}\times X.\]
   By comparing $\mathbbm{k}$-valued points of them, we see that $\xi'$ is a quasi-finite morphism.
   This fact and \cite[Theorem 7.2.10]{Ols} show that $M^{\mathrm{Kps,const.qmaps}}_{m,r,u,v,\iota}$ is a scheme.
   Since $M^{\mathrm{Kps,const.qmaps}}_{m,r,u,v,\iota}$ and $M^{\mathrm{Kps}}_{1,rr',v}\times X$ are proper, $\xi'$ is also finite.
   Take an arbitrary morphism $g\colon C\to M^{\mathrm{Kps,const.qmaps}}_{m,r,u,v,\iota}$.
   Let $\Lambda_{\mathrm{CM}}$ be the CM line bundle on $M^{\mathrm{Kps}}_{1,rr',v}$, which is ample by \cite[Theorem 7.9]{XZ1}.
   Let $p_1\colon M^{\mathrm{Kps}}_{1,rr',v}\times X\to M^{\mathrm{Kps}}_{1,rr',v}$ and $p_2\colon M^{\mathrm{Kps}}_{1,rr',v}\times X\to X$ be the projections.
   To show the ampleness of $\Lambda^{\mathrm{qmaps}}_{\mathrm{CM}}|_{M^{\mathrm{Kps,const.qmaps}}_{m,r,u,v,\iota}}$, it suffices to show that $$\mathrm{deg}g^*\xi'^*(p_1^*\Lambda_{\mathrm{CM}}+2vp_2^*\iota^*\mathcal{O}_{\mathbb{P}^N}(1))= \mathrm{deg}g^*\Lambda^{\mathrm{qmaps}}_{\mathrm{CM}},$$
   which shows that $\xi'^*(p_1^*\Lambda_{\mathrm{CM}}+2vp_2^*\iota^*\mathcal{O}_{\mathbb{P}^N}(1))\equiv \Lambda^{\mathrm{qmaps}}_{\mathrm{CM}}|_{M^{\mathrm{Kps,const.qmaps}}_{m,r,u,v,\iota}}$.

   By Lemma \ref{lem--good-moduli-lift} and replacing $C$ with its finite cover, we may assume that $g$ lifts up to $\tilde{g}\colon C\to\mathcal{M}^{\mathrm{Kss,const.qmaps}}_{m,r,u,v,\iota}$.
   Take the family $q\colon (Y,B)\to [\mathrm{Cone}(X)\times C/\mathbb{G}_{m,C}]$ of K-semistable log Fano quasimaps over $C$ corresponding to $\tilde{g}$. 
   Let $\mathscr{L}$ be the line bundle induced by $q$ and take a general section $f\in\mathrm{Sec}_{q}(\mathscr{L})$ such that the support of $\mathrm{div}(f)$ does not contain all fibers of $\pi\colon Y\to C$.
   We decompose $u\mathrm{div}(f)$ as $D_{\mathrm{hor}}+D_{\mathrm{vert}}$ into the horizontal part and the vertical part.
   Note that $D_{\mathrm{vert}}$ is numerically equivalent to $\mu F$, where $\mu\in\mathbb{Q}_{>0}$ and $F$ is a fiber of $\pi$.
   Under $\xi$, $q$ assigns a morphism $\psi\colon C\to X$ and the family $(Y,B+D_{\mathrm{hor}})$ of K-semistable log Fano pairs over $C$.
   Since we can regard $q$ is a family of constant maps from $Y$ to $X$ over $C$ and $\pi$ is a contraction, we see that the line bundle $\mathscr{L}-u^{-1}D_{\mathrm{hor}}$ defines a morphism $q=\psi\circ\pi\colon Y\to X$ and hence $u\mathrm{deg}\psi^*\iota^*\mathcal{O}_{\mathbb{P}^N}(1)=\mu$.
   In this case, 
   \begin{align*}
       \mathrm{deg}g^*\Lambda^{\mathrm{qmaps}}_{\mathrm{CM}}&=(-(K_{Y/C}+B+u\mathscr{L}))^2\\
       &=(-(K_{Y/C}+B+D_{\mathrm{hor}}))^2 +2v\mu\\
       &=\mathrm{deg}g^*\xi'^*(p_1^*\Lambda_{\mathrm{CM}}+2vp_2^*\iota^*\mathcal{O}_{\mathbb{P}^N}(1)).
   \end{align*}
   Thus, we complete the proof.
\end{proof}

\begin{lem}\label{lem--bigness-for-nonconstant-maps}
    With notation as in Proposition \ref{prop--key--to--positivity}, suppose that the geometric generic point $\bar{\eta}\in S$ corresponds to a K-polystable non constant log Fano quasimap.

    Then, $\Lambda^{\mathrm{qmaps}}_{\mathrm{CM}}|_S$ is big and nef.
\end{lem}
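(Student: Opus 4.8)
The plan is to reduce the statement to the projectivity results on the K-moduli of log Fano pairs via the forgetful construction that replaces a non-constant log Fano quasimap with the log Fano pair obtained by adding a general member of the associated linear system, together with enough additional boundary components to rigidify the fibers. Concretely, since $\bar{\eta}$ corresponds to a non-constant quasimap, by Lemma \ref{lem--good-moduli-lift} (applied to a suitable generically finite cover) we may, after replacing $S$ by a projective normal variety generically finite over it, assume there is a family of K-semistable log Fano quasimaps $q\colon(\mathcal{Y},\mathcal{B})\to[\mathrm{Cone}(X)\times S/\mathbb{G}_{m,S}]$ over a dense open of $S$ that realizes the given family and has maximal variation (here we use that the generic quasimap is non-constant and that points of the good moduli space are K-polystable quasimaps, so two generic points of a curve in $S$ give non-isomorphic quasimaps). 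The pullback of $\Lambda^{\mathrm{qmaps}}_{\mathrm{CM}}$ to the cover differs from the original only by the generically finite degree, so it suffices to prove bigness and nefness on the cover.

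First I would establish nefness: for any morphism $h\colon C\to S$ from a smooth projective curve, pull back the family to obtain $q_C\colon(\mathcal{Y}_C,\mathcal{B}_C)\to[\mathrm{Cone}(X)\times C/\mathbb{G}_{m,C}]$, which by Theorem \ref{thm--properness} (after a further base change) extends to a family of K-semistable log Fano quasimaps over all of $C$; then $\mathrm{deg}_C h^*\Lambda^{\mathrm{qmaps}}_{\mathrm{CM}}=\mathrm{deg}\,\lambda_{\mathrm{CM},\pi,q_C}\ge0$ by Proposition \ref{lem--CM-nefness}, using the hypothesis $0<u<1-\tfrac{v}{2}<\tfrac12(\mathrm{deg}(\mathcal{B}_t)+um)$. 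This gives nefness of $\Lambda^{\mathrm{qmaps}}_{\mathrm{CM}}|_S$.

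For bigness, the idea is to compare with the CM line bundle on a K-moduli of log Fano pairs. Choosing general vectors $a_0,\dots,a_{N+1}\in\mathbbm{k}^{N+1}$ with $\{a_0,\dots,a_N\}$ a basis and letting $f_j$ be the corresponding sections of the associated line bundle $\mathscr{L}$, Lemma \ref{lem--maximal-variation-from-qmaps-to-pairs} shows that the induced family of pairs
$$
\sigma\colon\Bigl(\mathcal{Y},\mathrm{Supp}(\mathcal{B})+\sum_{j=1}^{N+1}\mathrm{div}(f_j)_{\mathrm{hor}}+\sum_{j=1}^{N}\mathrm{div}(f_j+f_{N+1})_{\mathrm{hor}}\Bigr)\to S
$$
has maximal variation. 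After running a fiberwise MMP / taking K-semistable reduction (as in \cite{BLX,BHLLX}) over a dense open and using K-flat completion, this yields, possibly after a further generically finite cover, a family of K-semistable log Fano pairs over $S$ whose moduli map $S\to M^{\mathrm{Kps}}_{c,1,v'}$ is generically finite onto its image (by maximal variation, applying the injectivity of the moduli map up to finite ambiguity for log Fano pairs of maximal variation, cf.\ the argument in \cite{Hat23}). Pulling back the ample CM line bundle $\Lambda_{\mathrm{CM}}$ on that K-moduli and comparing Knudsen--Mumford expansions (exactly as in the degree computation at the end of the proof of Lemma \ref{lem--bigness-for-const-maps}), one gets that $\Lambda^{\mathrm{qmaps}}_{\mathrm{CM}}|_S$ differs from the pullback of an ample $\mathbb{Q}$-line bundle by a nef class coming from the $u\mathscr{L}$-contribution; combined with the nefness already established and the fact that the pulled-back ample class has positive top self-intersection on the generically finite image, this gives $(\Lambda^{\mathrm{qmaps}}_{\mathrm{CM}}|_S)^{\dim S}>0$, i.e.\ bigness.

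\textbf{Main obstacle.} The hardest part will be the last step: producing a genuine family of K-semistable log Fano \emph{pairs} over the whole of $S$ (not just a dense open) whose moduli map is generically finite, and then controlling the difference between $\Lambda^{\mathrm{qmaps}}_{\mathrm{CM}}$ and the pullback of the CM line bundle of that pair-moduli. The subtlety is that adding $u\,\mathrm{div}(f)$ introduces both horizontal and vertical components, and one must argue (as in Lemma \ref{lem--CM--minimization}, via \cite[Theorem 3.22]{CM} and \cite[Corollary 4.7]{XZ1}) that discarding the vertical part and passing to the K-semistable model only decreases the CM degree, so that the inequality goes the right way and bigness of the pair-side CM class transfers back. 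Making the maximal-variation input interact correctly with the K-semistable reduction — so that the moduli map to $M^{\mathrm{Kps}}$ is generically finite rather than merely dominant onto a lower-dimensional image — is the technical heart of the argument.
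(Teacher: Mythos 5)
Your reduction steps (Chow's lemma plus Lemma \ref{lem--good-moduli-lift} to pass to a generically finite cover with a lift to the stack hitting the polystable points, nefness via Proposition \ref{lem--CM-nefness}, and the use of Lemma \ref{lem--maximal-variation-from-qmaps-to-pairs} to transfer maximal variation from quasimaps to pairs) all match the paper. The gap is exactly the step you flag as the ``main obstacle,'' and it is not fillable in the form you propose. First, for no fixed choice of the sections $f_j$ is the resulting pair family K-semistable over all of $S$: Lemma \ref{lem--two--delta--invariants} gives K-(semi)stability of $(\mathbb{P}^1,B+uD)$ only for $D$ general, which over a positive-dimensional base means over a dense open set depending on the chosen sections. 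K-semistable reduction (\cite{BLX}, \cite{BHLLX}, and \cite[Theorem A.8]{AHLH}) is available only over one-dimensional bases (DVRs/curves after finite base change), so the claimed ``family of K-semistable log Fano pairs over the whole of $S$'' and the induced morphism $S\to M^{\mathrm{Kps}}_{c,1,v'}$ do not exist in general; at best one gets a rational map, and after resolving its indeterminacy the comparison with $\Lambda^{\mathrm{qmaps}}_{\mathrm{CM}}$ is exactly what is missing. Second, the asserted global decomposition ``$\Lambda^{\mathrm{qmaps}}_{\mathrm{CM}}|_S =$ (pullback of an ample class) $+$ (nef class from the $u\mathscr{L}$-contribution)'' does not follow from Lemma \ref{lem--CM--minimization}: that lemma is a curve-by-curve degree inequality, valid only after a curve-dependent finite base change and a curve-dependent K-semistable replacement, with a vertical discrepancy depending on the curve and on the general section; it gives no pseudo-effectivity of a fixed difference class on $S$.

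The paper closes this gap by a different mechanism: it never constructs a moduli map to the K-moduli of pairs. Instead, for every strongly movable curve $C\subset T$ it places $C$ in a covering family of curves, performs the K-polystable replacement of the auxiliary pair family only over those curves (via \cite[Theorem A.8]{AHLH} after finite covers), and invokes the quantitative positivity theorem \cite[Theorem 7.2]{XZ1} — whose constant $c_0$ depends only on the boundedness data of $\mathcal{M}^{\mathrm{Kss}}_{1,2N(N+1)r,v}$ and on a fixed ample $H$, not on the curve — to get $\mathrm{deg}_{\tilde C}(\mu^*g^*\Lambda^{\mathrm{qmaps}}_{\mathrm{CM}})\ge c_0\,\mathrm{deg}_{\tilde C}(\mu^*H)$ uniformly, where the comparison between the quasimap CM degree and the pair CM degree along each such curve is supplied by Lemma \ref{lem--CM--minimization}. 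Bigness then follows from the duality criterion \cite[Theorem 2.2]{BDPP}. If you want to salvage your argument, you would need to replace the moduli-map pullback by this uniform-bound-against-movable-curves scheme (or prove a genuinely new semistable reduction statement over higher-dimensional bases, which is not available).
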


\begin{proof}
    By Chow's lemma for algebraic spaces (cf.~\cite[Theorem 7.4.1]{Ols}), there exists a projective smooth variety $T$ and a generically finite and proper morphism $g\colon T\to S$.
    By Lemma \ref{lem--good-moduli-lift} and replacing $T$ with a proper generically finite cover of $T$, we may assume that there exist an open subscheme $T^\circ\subset T$ and a lift $h\colon T^\circ\to\mathcal{M}^{\mathrm{Kss,qmaps}}_{m,r,u,v,\iota}\times_{M^{\mathrm{Kps,qmaps}}_{m,r,u,v,\iota}}S$ of $g|_{T^\circ}$ such that for any point $t\in T^\circ$, $h(t)$ is the unique closed point of $\mathrm{Spec}\,\kappa(g(t))\times_{M^{\mathrm{Kps,qmaps}}_{m,r,u,v,\iota}} \mathcal{M}^{\mathrm{Kss,qmaps}}_{m,r,u,v,\iota}$.
    Fix an ample line bundle $H$ on $T$.
    Now, it suffices to show that $g^*\Lambda^{\mathrm{qmaps}}_{\mathrm{CM}}$ is big since $\Lambda^{\mathrm{qmaps}}_{\mathrm{CM}}$ is nef by Proposition \ref{lem--CM-nefness}.

Let $\eta_T$ be the generic point of $T$ and $q^\circ\colon(Y^\circ,B^\circ)\to[\mathrm{Cone}(X)\times T^\circ/\mathbb{G}_{m,T^\circ}]$ the family of K-polystable log Fano quasimaps corresponding to $h$.
Let $\mathscr{L}^\circ$ be the line bundle on $Y^\circ$ associated with $q^\circ$.
Since $q^\circ_{\overline{\eta_T}}$ is non-constant by the construction of $h$, there exist $N+2$ sections $f_0,f_1,\ldots,f_{N+1}\in\mathrm{Sec}_{q^\circ}(\mathscr{L}^\circ)\setminus\{0\}$ such that $\mathrm{div}(f_{i,\overline{\eta_T}})\ne \mathrm{div}(f_{N+1,\overline{\eta_T}})$ and $f_0,\ldots,f_N$ correspond to a basis of $\mathbbm{k}^{N+1}$. 
By shrinking $T^\circ$ if necessary, we may assume that any $\mathrm{div}(f_i)$ or $\mathrm{div}(f_i+f_{N+1})$ does not contain $Y^\circ_{t}$ for any $t\in T^\circ$.
By applying Lemma \ref{lem--maximal-variation-from-qmaps-to-pairs}, the family of pairs $(Y^\circ,B^\circ+\sum_{j=0}^{N+1}\mathrm{div}(f_j)+\sum_{j=0}^{N}(\mathrm{div}(f_j+f_{N+1})))$ has maximal variation.
Furthermore, by choosing $f_j$ general enough, we may assume that the pair $$\left(Y^\circ_{\overline{\eta_T}},B^\circ_{\overline{\eta_T}}+\frac{u}{2(N+2)}\sum^{N+1}_{j=0}\mathrm{div}(f_{j,\overline{\eta_T}})+\frac{u}{2(N+1)}\sum^{N}_{j=0}(\mathrm{div}(f_{j,\overline{\eta_T}}+f_{N+1,\overline{\eta_T}}))\right)$$
is K-stable.
Indeed, since $q^\circ_{\overline{\eta_T}}$ is K-stable, this follows from Lemma \ref{lem--two--delta--invariants}.
Furthermore, by shrinking $T^\circ$ if necessary, we may assume that 
$$\pi^\circ\colon\left(Y^\circ,B^\circ+\frac{u}{2(N+2)}\sum^{N+1}_{j=0}\mathrm{div}(f_{j})+\frac{u}{2(N+1)}\sum^{N}_{j=0}(\mathrm{div}(f_{j}+f_{N+1}))\right)\to T^\circ$$
is a family of K-stable log Fano pairs with maximal variation (\cite[Theorem A]{BL}).
We note that all fibers of $\pi^\circ$ belongs to the collection of objects of $\mathcal{M}^{\mathrm{Kss}}_{1,2N(N+1)r,v}$.
    
    For any strongly movable curve $C\subset T$ (for the definition, see \cite[Definition 1.3]{BDPP}), we take the canonical morphism $\varphi\colon \tilde{C}\to T$ from the normalization $\tilde{C}$ of $C$.
    By the definition of movable curves, it is easy to see that there exist a projective morphism of smooth quasi-projective varieties $\gamma\colon U\to V$ such that every geometric fiber of $r$ is a connected smooth curve and a dominant morphism $\mu\colon U\to T$ such that $\mu|{U_{p}}\colon U_p\to T$ coincides with $\varphi\colon \tilde{C}\to T$ for some closed point $p\in V$.
    Taking hyperplane sections of $V$, we may assume that $\mu$ is generically finite.
By Lemma \ref{lem--good-moduli-lift} and \cite[Theorem A.8]{AHLH} applied to $\mathcal{M}^{\mathrm{Kss}}_{1,2N(N+1)r,v}$, taking a non-empty open subset $W\subset V$ and a finite cover $\beta\colon \tilde{U}\to U\times_VW$ such that $\tilde{U}\to W$ is smooth with connected fibers, we may assume that there exists a family
\[
\tilde{\pi}\colon (\tilde{Y},\tilde{B})\to \tilde{U}
\]
of K-polystable log Fano pairs such that
\begin{align*}
&(\tilde{Y},\tilde{B})|_{(\mu\circ\beta)^{-1}(T^\circ)}\cong \\
&\left(Y^\circ,B^\circ+\frac{u}{2(N+2)}\sum^{N+1}_{j=0}\mathrm{div}(f_{j})+\frac{u}{2(N+1)}\sum^{N}_{j=0}(\mathrm{div}(f_{j}+f_{N+1}))\right)\times_{T^\circ}(\mu\circ\beta)^{-1}(T^\circ).
\end{align*}
Here, we note that $W$ might not contain $p$ any longer.
    
    On the other hand, let $H$ be an ample line bundle on $T$.
    Take a sufficiently large and divisible $M\in\mathbb{Z}_{>0}$ and a positive integer $d\in\mathbb{Z}_{>0}$ such that the following conditions hold (cf.~\cite[Definition 6.1]{XZ1}) for any scheme $S'$ and $f'\colon (Y',B')\to S'\in \mathcal{M}^{\mathrm{Kss}}_{1,2N(N+1)r,v}(S')$.
\begin{enumerate}
\item $L':=M(-K_{Y'/S'}-B')$ is relatively $\pi^\circ$-very ample line bundle,
\item if $D':=\mathrm{Supp}(B'),$
then $H^j(Y'_s,\mathcal{O}_{Y'_s}(mL'_s))=H^j(D'_s,\mathcal{O}_{D'_s}(mL_s'|_{D'_s}))=0$ for any $s\in S'$, $m>0$ and $j>0$,
\item for any closed point $s\in S'$, the embeddings of $D'_s$ and $Y'_s$ into $\mathbb{P}^{h^0(Y'_s,\mathcal{O}_{Y'_s}(L'_s))-1}$ defined by $L'_s$ are cut out by homogeneous polynomials of at most degree $d$ at least set theoretically,
\item for any closed point $s\in S'$, $\mathrm{Sym}^dH^0(Y'_s,\mathcal{O}_{Y'_s}(L'_s))\to H^0(Y'_s,\mathcal{O}_{Y'_s}(dL'_s))$ and $H^0(Y'_s,\mathcal{O}_{Y'_s}(dL'_s))\to H^0(D'_s,\mathcal{O}_{D'_s}(dL'_s|_{D'_s}))$ are surjective.
\end{enumerate}
Indeed, we can take such $M$ and $d$ by the fact that $\mathcal{M}^{\mathrm{Kss}}_{1,2N(N+1)r,v}$ is of finite type over $\mathbbm{k}$ (cf.~Theorem \ref{thm--K-moduli--of--log--Fano}).
We take $r'\in\mathbb{Z}_{>0}$ such that $r'\lambda_{\mathrm
{CM},f'}$ is a line bundle for any $f'$.
By \cite[Theorem 7.2]{XZ1}, there exists $c_0>0$ depending only on the choice of $H$, $r'$ and $d$ but independent from the choice of $\tilde{C}$ and $U$ such that 
\[
\mathrm{deg}(\lambda_{\mathrm
{CM},\tilde{\pi},(\tilde{Y},\tilde{B})}|_{\tilde{U}_{p'}})\ge c_0\mathrm{deg}(\beta^*\mu^*H|_{\tilde{U}_{p'}})
\] 
for any general point $p'\in W$.

In this paragraph, we show the following inequality 
\begin{equation*}\label{eq--positivity-of-CM-quasimaps}
\mathrm{deg}(\beta^*\mu^*g^*\Lambda^{\mathrm{qmaps}}_{\mathrm
{CM}}|_{\tilde{U}_{p'}})\ge\mathrm{deg}(\lambda_{\mathrm
{CM},\tilde{\pi},(\tilde{Y},\tilde{B})}|_{\tilde{U}_{p'}})
\end{equation*}
for any general point $p'\in W$.

By \cite[Theorem A.8]{AHLH}, after possibly replacing $\tilde{U}_{p'}$ with its finite cover, we may assume that there exist 
    a family of K-polystable log Fano quasimaps
    $$
    q_2\colon (Y_2,B_2)\to [\mathrm{Cone}(X)\times \tilde{U}_{p'}/\mathbb{G}_{m,\tilde{U}_{p'}}]
    $$
    such that $q_2|_{(\mu\circ\beta)^{-1}(T^\circ)\cap \tilde{U}_{p'}}$ coincides with $q^\circ_{(\mu\circ\beta)^{-1}(T^\circ)\cap \tilde{U}_{p'}}$.
Then, Lemma \ref{lem--CM--minimization} shows the desired inequality.
Indeed, the CM line bundle of $q_2$ coincides with $\beta^*\mu^*g^*\Lambda^{\mathrm{qmaps}}_{\mathrm
{CM}}|_{\tilde{U}_{p'}}$ in this case.

Finally, since $\mathrm{deg}_{U_{p'}}(\mu^*g^*\Lambda^{\mathrm{qmaps}}_{\mathrm{CM}}|_{U_{p'}})=\mathrm{deg}_{\tilde{C}}(\mu^*g^*\Lambda^{\mathrm{qmaps}}_{\mathrm{CM}}|_{\tilde{C}})$ and $\mathrm{deg}_{U_{p'}}(\mu^*H|_{U_{p'}})=\mathrm{deg}_{\tilde{C}}(\mu^*H|_{\tilde{C}})$, we have the inequality
\[
\mathrm{deg}_{\tilde{C}}(\mu^*g^*\Lambda^{\mathrm{qmaps}}_{\mathrm{CM}}|_{\tilde{C}})\ge c_0\cdot \mathrm{deg}_{\tilde{C}}(\mu^*H|_{\tilde{C}}).
\]
This inequality holds for any strongly movable curve $C\subset T$ and we see that $g^*\Lambda^{\mathrm{qmaps}}_{\mathrm{CM}}$ is big by \cite[Theorem 2.2]{BDPP} since $c_0$ does not depend on the choice of $C$.
\end{proof}

\begin{proof}[Proof of Proposition \ref{prop--key--to--positivity}]
    This immediately follows from Lemmas \ref{lem--bigness-for-const-maps} and \ref{lem--bigness-for-nonconstant-maps}.
\end{proof}

\begin{proof}[Proof of Theorem \ref{thm--projectivity}]
    This immediately follows from Proposition \ref{prop--key--to--positivity} and \cite[Theorem 3.11]{kollar-moduli-stable-surface-proj}.
\end{proof}
\subsection{K-moduli theory for an arbitrary weight}
As we saw, we established the K-moduli theory for log Fano quasimaps when the weight is sufficiently small.
From now, we extend this theory to an arbitrary weight case.

Fix $v,u\in\mathbb{Q}_{>0}$ and $r,m\in\mathbb{Z}_{>0}$.
Take an arbitrary $l\in\mathbb{Z}_{>0}$ and consider the $l$-th Veronese embedding $v_l\colon \mathbb{P}^N\hookrightarrow\mathbb{P}^{N'}$, where $N':=\binom{N+l}{l}-1$. 
Let $\mathrm{Cone}'(X)\subset\mathbb{A}^{N'+1}$ be the affine cone with respect to the embedding $v_l\circ\iota\colon X\hookrightarrow \mathbb{P}^{N'}$.
By construction, there is a canonical morphism $v'_l\colon\mathrm{Cone}(X)\to \mathrm{Cone}'(X)$.
    Note that $v'_l$ is not $\mathbb{G}_m$-equivariant in the natural way but if we consider the composition of the natural action of $\mathbb{G}_{m}$ on $\mathrm{Cone}'(X)$ and 
    \[
    \mathbb{G}_{m}\ni t\mapsto t^l\in\mathbb{G}_m
    \]
   instead of the canonical action, then $v'_l$ is $\mathbb{G}_m$-equivariant. 
Therefore, we obtain the canonical morphism of stacks $[v_l]\colon[\mathrm{Cone}(X)/\mathbb{G}_m]\to [\mathrm{Cone}'(X)/\mathbb{G}_m]$ induced by $v'_l$, where $[\mathrm{Cone}'(X)/\mathbb{G}_m]$ is the quotient stack with respect to the canonical action.
By using this morphism, we can obtain the natural morphism of stacks
\[
\tilde{v}_l\colon\mathcal{M}^{\mathrm{Kss,qmaps}}_{m,r,u,v,\iota}\to\mathcal{M}^{\mathrm{Kss,qmaps}}_{lm,r,\frac{u}{l},v,v_l\circ\iota},
\]
which maps an arbitrary object $q\colon (\mathcal{C},\mathcal{B})\to [\mathrm{Cone}(X)\times S/\mathbb{G}_{m,S}]$ of $\mathcal{M}^{\mathrm{Kss,qmaps}}_{m,r,u,v,\iota}(S)$ to $[v_l]\circ q\colon (\mathcal{C},\mathcal{B})\to [\mathrm{Cone}(X)\times S/\mathbb{G}_{m,S}]$ as an object of $\mathcal{M}^{\mathrm{Kss,qmaps}}_{lm,r,\frac{u}{l},v,v_l\circ\iota}(S)$.
Indeed, if $\mathscr{L}$ is the line bundle associated with $q$, then we can obtain a family of quasimaps $[v_l]\circ q$ such that $\mathscr{L}^{\otimes l}$ is associated with $[v_l]\circ q$. 
Moreover, we can see that the fixed part of $[v_l]\circ q$ coincides with the fixed part of $q$.
By this, it is easy to see that $q$ is K-(semi)stable if and only if so is $[v_l]\circ q$ (cf.~Remark \ref{rem--K-stability--coincidence}).
Therefore, we can construct $\tilde{v}_l$ as claimed.
Similarly, we can easily deduce that $q$ is K-polystable if and only if so is $[v_l]\circ q$ since we know that if $q$ is K-polystable but not K-stable then $q$ is constant.

\begin{lem}\label{lem--stack--closed--immersion}
Fix 
$v,u\in\mathbb{Q}_{>0}$ and $r,m\in\mathbb{Z}_{>0}$.
Take an arbitrary $l\in\mathbb{Z}_{>0}$ and consider the $l$-th Veronese embedding $v_l\colon \mathbb{P}^N\hookrightarrow\mathbb{P}^{N'}$, where $N':=\binom{N+l}{l}-1$.

Then, $\tilde{v}_l\colon\mathcal{M}^{\mathrm{Kss,qmaps}}_{m,r,u,v,\iota}\to\mathcal{M}^{\mathrm{Kss,qmaps}}_{lm,r,\frac{u}{l},v,v_l\circ\iota}$ defined as above is a closed immersion.
\end{lem}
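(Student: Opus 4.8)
The goal is to show that the functor morphism $\tilde v_l$ is a closed immersion of Artin stacks. Since both stacks have been presented as quotient stacks by the same group $G := PGL(P(1))\times PGL(m+1)$ (Theorem \ref{thm--quot--stack}), the cleanest approach is to realize $\tilde v_l$ on the level of the atlases $Z_4 \hookrightarrow Z_4'$ (where $Z_4'$ denotes the scheme constructed exactly as $Z_4$ but for the data $(lm, r, u/l, v, v_l\circ\iota)$) in a $G$-equivariant way, and then check that the induced map of quotient stacks is a closed immersion. Concretely, I would first observe that forming $[v_l]\circ q$ does not change the underlying curve $\mathcal C$, the boundary divisor $\mathcal B$, or the polarization $-K_{\mathcal C/S}-\tfrac1r\mathcal B$: only the associated line bundle is replaced by its $l$-th tensor power and the $(N+1)$-tuple of sections of $\mathscr L$ is replaced by the tuple of degree-$l$ monomials in them, viewed as sections of $\mathscr L^{\otimes l}$. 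This is literally the $l$-th Veronese re-embedding performed fiberwise, so it is functorial in $S$ and extends to the universal family over $Z_4$.

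**Key steps in order.** (1) Recall from Subsection \ref{subsec3.1} that $Z_4$ sits inside $Z_3 = \mathbf{Hom}$-space over $Z_2 = $ open in a Quot scheme over $Z_1 \subset \mathcal W$, and similarly for $Z_4'$; note that the curve-with-divisor data $(\mathcal U_{Z_1}, \mathcal B)$ and the very ample polarization $\mathcal O(1)$ are the \emph{same} for both towers because $l(-K-\tfrac1r\mathcal B)$ and the Hilbert polynomial $P$ are unchanged — only the degree $m$ is replaced by $lm$ and the weight $u$ by $u/l$. So $Z_1' = Z_1$. (2) Construct a $G$-equivariant morphism $Z_4 \to Z_4'$ over $Z_1$: over $Z_1$, the Quot-scheme point corresponding to $\xi\colon \mathcal O^{\oplus m+1}\twoheadrightarrow \mathscr L$ is sent to the composite $\mathcal O^{\oplus \binom{m+l}{l}} \to \mathbf{Sym}^l(\mathcal O^{\oplus m+1}) \to \mathscr L^{\otimes l}$, which is again surjective (it is the $l$-th Veronese of the original quotient); this realizes the map $Z_2 \to Z_2'$. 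Likewise a $\mathbf{Hom}$-point $(f_0,\dots,f_N)\in H^0(\mathscr L)^{\oplus N+1}$ goes to the tuple of degree-$l$ monomials $\{f_0^{a_0}\cdots f_N^{a_N}\}_{\sum a_i = l}$ in $H^0(\mathscr L^{\otimes l})^{\oplus N'+1}$, giving $Z_3 \to Z_3'$. One checks the subscheme conditions cutting out $Z_4$ and $Z_4'$ are compatible: the cone condition for $\mathrm{Cone}'(X)$ pulls back to the cone condition for $\mathrm{Cone}(X)$ (by definition of $\mathrm{Cone}'(X)$ as the affine cone of the Veronese image), and the $\delta$-invariant / K-semistability condition is preserved because the fixed part and movable part are unchanged, so $\delta(q) = \delta([v_l]\circ q)$ by Lemma \ref{lem--two--delta--invariants} applied on both sides (using the respective hypotheses $u < 1-\tfrac v2$ and $u/l < 1-\tfrac v2$, the latter being automatic). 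This gives a $G$-equivariant morphism $\rho\colon Z_4 \to Z_4'$ inducing $\tilde v_l$. (3) Show $\rho$ is a closed immersion: this is the heart of the matter. The map $Z_3 \to Z_3'$ is built from the standard Veronese, which on the level of $\mathbf{Hom}$-spaces $\mathbb A_{\mathcal U}((f_*\mathscr L)^\vee)^{N+1} \to \mathbb A_{\mathcal U}((f_*\mathscr L^{\otimes l})^\vee)^{N'+1}$ is a polynomial (degree-$l$) map; it is a locally closed immersion onto the locus of tuples all of whose entries lie in the image of $\mathbf{Sym}^l$ of the first $N+1$ coordinates and satisfy the Veronese relations. (4) Conclude via \cite[Remark 2.18]{HH} or the standard criterion (\cite[Lemma 4.14]{alper} together with properness of both good/coarse moduli spaces) that a $G$-equivariant closed immersion of atlases descends to a closed immersion of quotient stacks; alternatively, and more robustly, deduce that $\tilde v_l$ is a monomorphism (it is injective on $R$-points for every ring $R$, by Lemma \ref{lem}: a non-constant quasimap is recovered from its linear system, and the Veronese linear system determines the original), is proper (both sides admit proper good moduli spaces by the Corollary after Theorem \ref{thm--properness}, and $\tilde v_l$ is compatible with the good moduli space morphisms, hence proper), and is unramified (from the explicit atlas description), and a proper unramified monomorphism is a closed immersion.

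**The main obstacle.** The genuinely delicate point is step (3)–(4): verifying that passing to the Veronese re-embedding is a \emph{closed} (not merely locally closed) immersion on the relevant loci, and that this property is inherited by the quotient stacks. The subtlety is that $\tilde v_l$ lands in $\mathcal M^{\mathrm{Kss,qmaps}}_{lm, r, u/l, v, v_l\circ\iota}$, whose objects are quasimaps of degree $lm$ to $\mathbb P^{N'}$ factoring through $\mathrm{Cone}'(X)$; one must check that the image is exactly the locus where the associated line bundle is an $l$-th power \emph{and} the section tuple is the Veronese of a tuple of sections of the $l$-th root, and that these are closed conditions. The $l$-th-root condition on line bundles over a family of $\mathbb P^1$'s is harmless (the Picard group is discrete), but the "section tuple is a Veronese" condition requires the vanishing of the Veronese relations, which is closed, \emph{plus} the existence of a lift along $\mathbf{Sym}^l$, which a priori is only locally closed — here one uses that $\mathbf{Sym}^l H^0(\mathbb P^1,\mathscr L) \to H^0(\mathbb P^1,\mathscr L^{\otimes l})$ behaves uniformly in the family (both are locally free of constant rank once the Hilbert polynomials are fixed and $H^1$ vanishes), so the lift, when it exists, is unique and the locus is actually closed. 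I would handle this by working directly with the explicit coordinate description of the Quot/Hom atlases and showing the defining ideal of $Z_4$ in (the pullback of) $Z_4'$ is generated by these Veronese relations together with the relations already present; then equivariance is automatic and the descent to stacks follows from the fact that $Z_4 \to \mathcal M^{\mathrm{Kss,qmaps}}_{m,r,u,v,\iota}$ and $Z_4' \to \mathcal M^{\mathrm{Kss,qmaps}}_{lm,r,u/l,v,v_l\circ\iota}$ are smooth surjections and $\rho$ is $G$-equivariant, so $[\rho/G] = \tilde v_l$ is a closed immersion by faithfully flat descent of the closed-immersion property.
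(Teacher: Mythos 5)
Your proposal takes a genuinely different route from the paper (the paper never compares atlases: it verifies directly that $\tilde v_l$ is relatively representable by closed immersions, producing, for each object $q'$ of the target over a scheme $S$, a closed subscheme $S'\subset S$ through which $T\to S$ factors if and only if $q'_T$ is a Veronese of a degree-$m$ quasimap), but as it stands it has a genuine gap at the point you yourself flag as delicate. The condition cutting out the image is misidentified: any family of quasimaps factoring through $\mathrm{Cone}'(X)=\mathrm{Cone}(v_l(X))$ automatically satisfies the Veronese relations among its coordinate sections, so ``vanishing of the Veronese relations'' cuts out nothing, and on $\mathbb{P}^1$ every line bundle of degree $lm$ has an $l$-th root, so the Picard-group remark is also vacuous. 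The actual distinguishing condition is that the \emph{section data} admits an $l$-th root --- in the paper's proof, that the divisor of the section corresponding to $e_0^l$ is $l$ times a relative Cartier divisor --- and the entire content of the lemma is that this is a closed condition \emph{scheme-theoretically}, i.e.\ against arbitrary (possibly non-reduced) test schemes $T$. Your ``the lift, when it exists, is unique and the locus is actually closed'' simply asserts this; uniqueness of the lift gives at best a monomorphism, never closedness of the image functor. The paper supplies the missing argument by proving that the multiplication-by-$l$ morphism $\mathbf{Div}^{m}_{\mathcal{C}/S}\to\mathbf{Div}^{lm}_{\mathcal{C}/S}$ of symmetric products of $\mathbb{P}^1$ is a closed immersion (properness and injectivity plus surjectivity of the induced map of rings of symmetric functions, via the power-sum/elementary-symmetric computation), pulling back its image to define $S'$, and then reconstructing the remaining sections $f_i$ from the mixed monomials $g'_{i1}$ corresponding to $e_0^{l-1}e_i$. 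Nothing playing this role appears in your sketch.

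There are also structural problems with the atlas-and-descent framework. The target stack is presented using quotients $\mathcal{O}^{\oplus lm+1}\twoheadrightarrow\mathscr{L}^{\otimes l}$ and the group $PGL(P(1))\times PGL(lm+1)$, whereas $\mathbf{Sym}^l(\mathcal{O}^{\oplus m+1})$ has rank $\binom{m+l}{l}\neq lm+1$; so your map $Z_2\to Z_2'$ is not defined as written, the two presenting groups differ, and ``$G$-equivariant descent by the same group'' does not apply without extra work. More seriously, the presentation $\mathcal{M}^{\mathrm{Kss,qmaps}}_{m,r,u,v,\iota}\cong[Z_4/PGL(P(1))\times PGL(m+1)]$ of Theorem \ref{thm--quot--stack} (and the properness of the good moduli space you invoke in your fallback argument) is only available for $0<u<1-\frac{v}{2}$, while the lemma is stated for arbitrary $u$ and is used in Theorem \ref{thm--K-moduli-of-log-Fano-quasimaps} precisely to reduce large weight to small weight $u/l$; building the proof on the source atlas is therefore circular in exactly the case the lemma is needed. (Compatibility with good moduli space morphisms also does not imply properness of $\tilde v_l$, and the injectivity appeal to Lemma \ref{lem} only covers non-constant quasimaps.) The paper's purely functorial argument avoids all of these issues.
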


\begin{proof}
    Let $q'\colon (\mathcal{C},\mathcal{B})\to [\mathrm{Cone}'(X)\times S/\mathbb{G}_{m,S}]$ be an object of $\mathcal{M}^{\mathrm{Kss,qmaps}}_{lm,r,\frac{u}{l},v,v_l\circ\iota}(S)$, where $\mathrm{Cone}'(X)\subset\mathbb{A}^{N'+1}$ is the affine cone with respect to the embedding $v_l\circ\iota\colon X\hookrightarrow \mathbb{P}^{N'}$.
To show this lemma, it suffices to show in the above situation that there exists a closed subscheme $S'\subset S$ such that for any morphism $g\colon T\to S$ from a scheme, then $g$ factors through $S'$ if and only if there exists a unique object $q\in \mathcal{M}^{\mathrm{Kss,qmaps}}_{m,r,u,v,\iota}(T)$ such that $q'_T\cong\tilde{v}_l(q)$.
We note that if $S'$ exists, then such $S'$ is unique by the property as we stated above.
From now, we will show the existence of such $S'$.
    Let $\mathscr{L}'$ be the line bundle associated with $q'$.
    Take an arbitrary point $s_0\in S$.
    Let $e_0,\ldots,e_{N}$ be the canonical coordinates of $\mathbb{A}^{N+1}_{\mathbbm{k}}$.
Suppose that there exists a suitable choice of coordinates $e'_0,\ldots,e'_{N'}$ of $\mathbb{A}^{N'+1}_{\mathbbm{k}}$ such that $e'_0,\ldots,e'_{N}$ are the images of $e_0^l,\ldots, e_N^l$ via the Veronese embedding.
Since the assertion is local, we may shrink $S$ and assume that $\mathcal{C}\cong\mathbb{P}^1_S$.
Let $f'_0,\ldots,f'_{N'}$ be the canonical sections corresponding to $e'_0,\ldots,e'_{N'}$.
Then, we see that for any $s\in S$, some $f'_i$ for $0\le i\le N$ satisfies that $f'_{i,s}\not=0$ since $q'$ is a quasimap to $[\mathrm{Cone}'(X)\times S/\mathbb{G}_{m,S}]$ and $X\subset \mathbb{P}^{N'}$ is contained in the image of the Veronese embedding $v_l$. 
We may assume that $f'_{0,s}\not=0$ for any $s\in S$ by shrinking $S$ and changing the numbering.
By $\mathrm{div}(f'_{0,s})$, we can take the corresponding morphism $\mu\colon S\to \mathbf{Div}^{lm}_{\mathcal{C}/S}$, where $\mathbf{Div}^{lm}_{\mathcal{C}/S}$ denotes the open and closed subscheme of $\mathbf{Div}_{\mathcal{C}/S}$ parametrizing relative Cartier divisors of degree $lm$.

Here, we note that $\mathbf{Div}^{d}_{\mathcal{C}/S}$ is isomorphic to the $d$-th symmetric product of $\mathbb{P}^1_S$ over $S$ for any integer $d>0$ (cf.~\cite[Remark 9.3.9]{FGA}).
Now, we will show that the natural morphism $\xi\colon\mathbf{Div}^{m}_{\mathcal{C}/S}\to\mathbf{Div}^{lm}_{\mathcal{C}/S}$, which is induced by multiplying $l$ to divisors, is a closed immersion in this case.
Since $\mathbf{Div}^{m}_{\mathcal{C}/S}$ and $\mathbf{Div}^{lm}_{\mathcal{C}/S}$ are projective over $S$, we may assume that $S=\mathrm{Spec}\,\mathbbm{k}$ by \cite[Proposition 12.93]{gortz-wedhorn}.
It is well-known that the $d$-th symmetric product $(\mathbb{P}^1)^{(d)}\cong\mathbb{P}^d$ as \cite[Example 7.1.3 (1)]{FGA}.
Fix a natural open immersion $\mathbb{A}^1\hookrightarrow \mathbb{P}^1$.
For any point $p\in \mathbf{Div}^{m}_{\mathbb{P}^1/\mathbbm{k}}\cong\mathbb{P}^m$, by changing the projective coordinate of $\mathbb{P}^1$, we may assume that $p\in (\mathbb{A}^1)^{(m)}\subset \mathbf{Div}^{m}_{\mathbb{P}^1/\mathbbm{k}}$.
Let $\pi_d\colon \mathbb{A}^d\to (\mathbb{A}^1)^{(d)}\cong \mathbb{A}^d/\mathfrak{S}_d$ be the canonical morphism of the geometric quotient, where $\mathfrak{S}_d$ is the $d$-th symmetric group for any $d\in\mathbb{Z}_{>0}$.
Then, $\xi|_{(\mathbb{A}^1)^{(m)}}$ is induced by the composition of a morphism $\mathbb{A}^m\to \mathbb{A}^{lm}$ that is the $m$-th product of the diagonal morphism $\Delta\colon \mathbb{A}^1\to \mathbb{A}^l$ and the canonical morphism $\pi_{lm}\colon\mathbb{A}^{lm}\to (\mathbb{A}^1)^{(lm)}$.
It is easy to check that $\xi|_{(\mathbb{A}^1)^{(m)}}$ is a closed immersion.
Indeed, let $\xi^\sharp\colon\mathbbm{k}[x_1,\ldots,x_{ml}]^{(\mathfrak{S}_{ml})}\to \mathbbm{k}[y_1,\ldots,y_{m}]^{(\mathfrak{S}_{m})}$ be the morphism of invariant rings corresponding to $\xi|_{(\mathbb{A}^1)^{(m)}}$.
Then, $\mathrm{Im}(\xi^\sharp)$ contains $y_1+\ldots +y_m$.
By induction, we can show that $\mathrm{Im}(\xi^\sharp)$ contains the elementary symmetric polynomials of degree $1\le k\le m$.
Indeed, we may assume that $\mathrm{Im}(\xi^\sharp)$ contains all elementary symmetric polynomials of degree at most $k$.
Then, $(y_1+\ldots +y_m)^{k+1}-(y_1^{k+1}+\ldots +y_m^{k+1})$ is generated by elementary symmetric polynomials of degree at most $k+1$.
Thus, it is easy to see that $\mathrm{Im}(\xi^\sharp)$ also contains the elementary symmetric polynomial of degree $k+1$.
Therefore, $\xi^\sharp$ is surjective.
Since $\xi$ is proper and injective, we see that $\xi$ is a closed immersion.

Finally, set $S':=\mu^{-1}(\xi(\mathbf{Div}_{\mathcal{C}/S}^m))$ and we will check that this $S'$ is the desired closed subscheme.
Indeed, there exists a unique relative Cartier divisor $\mathscr{D}$ over $S'$ such that $l\mathscr{D}=\mathrm{div}(f'_0)_{S'}$.
Let $\mathscr{L}=\mathcal{O}_{\mathcal{C}}(\mathscr{D})$.
Then, we have that $\mathscr{L}^{\otimes l}\sim\mathscr{L}'$.
By $H^1(\mathcal{C}_s,\mathcal{O}_{\mathcal{C}_s}(\mathscr{D}_s))=0$ for any $s\in S$, we can choose a section $f_0\in H^0(\mathcal{C},\mathscr{L})$ by shrinking $S$ if necessary.
Then, there exists a section $c\in H^0(S,\mathcal{O}_{S}^{\times})$ such that $f_0^l=cf'_0$.
Let $g'_{ij}\in\mathrm{Sec}_{q'}(\mathscr{L})$ be the sections corresponding to the sections of $H^0(\mathbb{P}^{N'},\mathcal{O}_{\mathbb{P}^{N'}}(1))$, which is mapped to $e_0^{l-j}\cdot e^j_{i}$ under the Veronese embedding $v_l$.
Then, we can set $f_i:=\frac{g'_{i1}}{f'_0}f_0$ as well-defined sections in $H^0(\mathcal{C},\mathscr{L})$.
Indeed, $\mathrm{div}\left(\frac{(g'_{i1})^l}{(f'_0)^l}\right)+\mathrm{div}(f'_0)$ is an effective relative Cartier divisor linearly equivalent to $\mathscr{L}'$ since 
\[
\frac{g'_{0j}}{g'_{1j}}=\frac{g'_{1j}}{g'_{2j}}=\ldots=\frac{g'_{l-1j}}{g'_{lj}}
\]
by the property of $v_l$, where $g'_{0j}=f'_0$.
By using the functions $f_0,\ldots, f_{N}$, we can define the quasimap structure $q\colon \mathcal{C}_{S'}\to [\mathrm{Cone}(X)\times S'/\mathbb{G}_{m,S'}]$.
Furthermore, we see that $q\colon (\mathcal{C},\mathcal{B})\to [\mathrm{Cone}(X)\times S'/\mathbb{G}_{m,S'}]$ is a family of K-semistable log Fano quasimaps and $q'_{S'}=[v_l]\circ q$.
It is easy to check the uniqueness of such $q$.
On the other hand, let $g\colon T\to S$ be a morphism from a scheme such that there exists an object $q\in \mathcal{M}^{\mathrm{Kss,qmaps}}_{m,r,u,v,\iota}(T)$ such that $q'_T\cong\tilde{v}_l(q)$ up to isomorphism.
By the property of $q$, if we let $\mathscr{L}$ be the associated line bundle with $q$, then there exists a section $f\in\mathrm{Sec}_q(\mathscr{L})$ such that $l\mathrm{div}(f)=\mathrm{div}(f'_0)$.
Then, we see that $g$ factors through $S'$ by the property of $S'$.
Thus, we obtain the assertion.
\end{proof}

\begin{thm}\label{thm--K-moduli-of-log-Fano-quasimaps}
Fix a closed embedding of a projective normal variety $\iota\colon X\subset \mathbb{P}^N$, $v,u\in\mathbb{Q}_{>0}$ and $r,m\in\mathbb{Z}_{>0}$. 
Then, $\mathcal{M}^{\mathrm{Kss,qmaps}}_{m,r,u,v,\iota}$ is an Artin stack of finite type over $\mathbbm{k}$ with a good moduli space $M^{\mathrm{Kps,qmaps}}_{m,r,u,v,\iota}$ that is projective and the set of $\mathbbm{k}$-valued points of $M^{\mathrm{Kps,qmaps}}_{m,r,u,v,\iota}$ naturally corresponds to the set of isomorphic classes of the K-polystable quasimaps bijectively.  
Furthermore, $\Lambda^{\mathrm{qmaps}}_{\mathrm{CM}}$ is ample on $M^{\mathrm{Kps,qmaps}}_{m,r,u,v,\iota}$.
\end{thm}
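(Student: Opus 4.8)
The plan is to reduce the general-weight statement to the small-weight case that was already established in Sections~\ref{subsec3.1}--\ref{subsection--projectivity}, using the Veronese re-embedding machinery from Lemma~\ref{lem--stack--closed--immersion}. First I would fix the data $\iota\colon X\subset\mathbb{P}^N$, $v,u\in\mathbb{Q}_{>0}$ and $r,m\in\mathbb{Z}_{>0}$, and observe that since the inequality $0<u/l<1-\frac{v}{2}$ holds for all sufficiently large $l\in\mathbb{Z}_{>0}$, we may choose such an $l$ and consider the $l$-th Veronese embedding $v_l\colon\mathbb{P}^N\hookrightarrow\mathbb{P}^{N'}$ with $N'=\binom{N+l}{l}-1$. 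For the composed embedding $v_l\circ\iota$ and the weight $u/l$ the results already proven apply: $\mathcal{M}^{\mathrm{Kss,qmaps}}_{lm,r,u/l,v,v_l\circ\iota}$ is an Artin stack of finite type (Theorem~\ref{thm--quot--stack}), satisfies $\Theta$-reductivity and $\mathsf{S}$-completeness (Theorem~\ref{thm--s-complete}) and the valuative criterion of properness (Theorem~\ref{thm--properness}), hence by \cite[Theorem A]{AHLH} admits a good moduli space $M^{\mathrm{Kps,qmaps}}_{lm,r,u/l,v,v_l\circ\iota}$ that is proper over $\mathbbm{k}$, carries an ample CM line bundle $\Lambda^{\mathrm{qmaps}}_{\mathrm{CM}}$ by Theorem~\ref{thm--projectivity}, and whose $\mathbbm{k}$-points are the K-polystable quasimaps.

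Next I would invoke Lemma~\ref{lem--stack--closed--immersion} to see that $\tilde v_l\colon\mathcal{M}^{\mathrm{Kss,qmaps}}_{m,r,u,v,\iota}\to\mathcal{M}^{\mathrm{Kss,qmaps}}_{lm,r,u/l,v,v_l\circ\iota}$ is a closed immersion. In particular $\mathcal{M}^{\mathrm{Kss,qmaps}}_{m,r,u,v,\iota}$, being a closed substack of an Artin stack of finite type with affine diagonal, is itself an Artin stack of finite type over $\mathbbm{k}$ with affine diagonal. Since $\mathsf{S}$-completeness and $\Theta$-reductivity are inherited by closed substacks (one simply notes that an extension $\mathrm{ST}(R)\to\mathcal{M}^{\mathrm{Kss,qmaps}}_{lm,r,u/l,v,v_l\circ\iota}$ whose generic part factors through the closed substack factors through it entirely, because $\mathrm{ST}(R)$ is reduced and the closed immersion is a monomorphism; same for $\Theta_R$), and the valuative criterion of properness for $\mathcal{M}^{\mathrm{Kss,qmaps}}_{m,r,u,v,\iota}$ follows from that of the target together with the fact (contained in the proof of Lemma~\ref{lem--stack--closed--immersion}) that the limit produced in the target does lift back, $\mathcal{M}^{\mathrm{Kss,qmaps}}_{m,r,u,v,\iota}$ again satisfies the hypotheses of \cite[Theorem A]{AHLH}. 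Therefore it has a good moduli space $M^{\mathrm{Kps,qmaps}}_{m,r,u,v,\iota}$, and by \cite[Lemma 4.14]{alper} the closed immersion $\tilde v_l$ descends to a finite morphism $\bar v_l\colon M^{\mathrm{Kps,qmaps}}_{m,r,u,v,\iota}\to M^{\mathrm{Kps,qmaps}}_{lm,r,u/l,v,v_l\circ\iota}$ of algebraic spaces. Since the target is proper, so is the source; moreover $\bar v_l$ is a monomorphism of finite type, hence a closed immersion, so $M^{\mathrm{Kps,qmaps}}_{m,r,u,v,\iota}$ is a projective scheme.

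For the bijection on $\mathbbm{k}$-points, I would note that the closed points of $\mathcal{M}^{\mathrm{Kss,qmaps}}_{m,r,u,v,\iota}$ correspond to K-polystable log Fano quasimaps (by the analogue of Lemma~\ref{lem--DF=0-case}, or simply because they pull back from K-polystable objects of the target under the K-stability equivalence $q\leftrightarrow[v_l]\circ q$ noted before Lemma~\ref{lem--stack--closed--immersion}), and distinct K-polystable quasimaps give distinct closed points, so $M^{\mathrm{Kps,qmaps}}_{m,r,u,v,\iota}(\mathbbm{k})$ is in natural bijection with isomorphism classes of K-polystable quasimaps. Finally, for ampleness of the CM line bundle I would define $\Lambda^{\mathrm{qmaps}}_{\mathrm{CM}}$ on $M^{\mathrm{Kps,qmaps}}_{m,r,u,v,\iota}$ exactly as in Definition~\ref{defn--qmaps--CM--linebundle} and Lemma~\ref{lem--CM--descent} (this construction does not use the weight inequality), and then show $\Lambda^{\mathrm{qmaps}}_{\mathrm{CM}}$ is $\mathbb{Q}$-proportional to $\bar v_l^*\Lambda^{\mathrm{qmaps}}_{\mathrm{CM}}$ of the target: at the level of families, if $\mathscr{L}$ is the line bundle associated with $q$ over a curve $C$ then $\mathscr{L}^{\otimes l}$ is associated with $[v_l]\circ q$ and the divisor class $K_{Y/C}+B+u\mathscr{L}$ is unchanged, so the degree formula at the end of Definition~\ref{defn--qmaps--CM--linebundle} gives $\deg_C(h^*\Lambda^{\mathrm{qmaps}}_{\mathrm{CM}})=\deg_C(h^*\bar v_l^*\Lambda^{\mathrm{qmaps}}_{\mathrm{CM}})$ for every $h\colon C\to M^{\mathrm{Kps,qmaps}}_{m,r,u,v,\iota}$; as $M^{\mathrm{Kps,qmaps}}_{m,r,u,v,\iota}$ is projective this forces $\Lambda^{\mathrm{qmaps}}_{\mathrm{CM}}\equiv\bar v_l^*\Lambda^{\mathrm{qmaps}}_{\mathrm{CM}}$, which is ample by Theorem~\ref{thm--projectivity} and the fact that $\bar v_l$ is a finite morphism. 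The main obstacle I anticipate is the bookkeeping in verifying that the inherited $\mathsf{S}$-completeness and the valuative criterion of properness for $\mathcal{M}^{\mathrm{Kss,qmaps}}_{m,r,u,v,\iota}$ really follow formally from the closed immersion $\tilde v_l$—in particular checking that the K-polystable limit produced in the larger moduli genuinely descends along $\tilde v_l$, which is where the content of Lemma~\ref{lem--stack--closed--immersion} (the $\mathrm{Div}$-scheme argument) must be used rather than pure formalism.
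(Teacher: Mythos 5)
Your proposal is correct and follows essentially the same route as the paper: reduce to the small-weight case via the Veronese closed immersion $\tilde v_l$ of Lemma \ref{lem--stack--closed--immersion}, transfer projectivity, and note that the CM line bundle is unchanged because $-(K_{Y/S}+B)-\tfrac{u}{l}\mathscr{L}^{\otimes l}=-(K_{Y/S}+B)-u\mathscr{L}$. The only difference is that your re-verification of $\mathsf{S}$-completeness, $\Theta$-reductivity and properness for the closed substack is superfluous: \cite[Lemma 4.14]{alper} already produces the good moduli space of a closed substack as a closed subspace of $M^{\mathrm{Kps,qmaps}}_{lm,r,\frac{u}{l},v,v_l\circ\iota}$, which is exactly the shortcut the paper takes (and it also renders your ``monomorphism of finite type, hence closed immersion'' step unnecessary).
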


\begin{proof}
    If $u<1-\frac{v}{2}$, the assertion follows from Theorems \ref{thm--quot--stack}, \ref{thm--s-complete}, \ref{thm--properness} and \ref{thm--projectivity}.
We deal with the general case.
By Lemma \ref{lem--stack--closed--immersion}, take a sufficiently large integer $l\in\mathbb{Z}_{>0}$ such that  $\tilde{v}_l\colon\mathcal{M}^{\mathrm{Kss,qmaps}}_{m,r,u,v,\iota}\to\mathcal{M}^{\mathrm{Kss,qmaps}}_{lm,r,\frac{u}{l},v,v_l\circ\iota}$ is a closed immersion and $\frac{u}{l}<1-\frac{v}{2}$.
Then \cite[Lemma 4.14]{alper} shows the assertion since $\tilde{v}_l^*\lambda_{\mathrm{CM}}^{\mathrm{qmaps}}$ coincides with the CM line bundle defined on $\mathcal{M}^{\mathrm{Kss,qmaps}}_{m,r,u,v,\iota}$.
\end{proof}

Finally, we remark that $\mathcal{M}^{\mathrm{Kst,qmaps}}_{m,r,u,v,\iota}$ is a separated Deligne--Mumford stack.
Indeed, to see this, we may assume that $u<1-\frac{v}{2}$ by Lemma \ref{lem--stack--closed--immersion}.
In this case, it is easy to see that the natural morphism $\mathcal{M}^{\mathrm{Kst,qmaps}}_{m,r,u,v,\iota}\to M^{\mathrm{Kps,qmaps}}_{m,r,u,v,\iota}$ is $\mathsf{S}$-complete by Lemmas \ref{lem--canonical-qm--str}, \ref{lem--DF=0-case} and \cite[Proposition 3.43 (3)]{AHLH}.
Thus, Corollary \ref{cor--moduli--kst--quasi--maps} and \cite[Proposition 3.46]{AHLH} show that $\mathcal{M}^{\mathrm{Kst,qmaps}}_{m,r,u,v,\iota}$ is separated.

\section{Morphism between two moduli}
\label{sec4}
Throughout this section, we work over $\mathbb{C}$.
Fix $d\in\mathbb{Z}_{>0}$ and $v\in\mathbb{Z}_{>0}$. 
We consider $\mathcal{M}_{d-1,v}^{\mathrm{klt},\mathrm{CY}}$, $\mathcal{M}_{d-1,v}^{\mathrm{Ab}}$, and $\mathcal{M}_{d-1,v}^{\mathrm{symp}}$.
Note that $\mathcal{M}_{d-1,v}^{\mathrm{Ab}}\sqcup\mathcal{M}_{d-1,v}^{\mathrm{symp}}$ is an open and closed smooth substack of $\mathcal{M}_{d-1,v}^{\mathrm{klt},\mathrm{CY}}$ (cf.~Lemma \ref{lem--deformation--invariant--abelian} and Corollary \ref{cor--hk--moduli--normal}).
Let $\mathcal{V}$ be an arbitrary irreducible component and $V$ the coarse moduli space.
We have seen that $V$ admits the Baily--Borel compactification $\overline{V}^{\mathrm{BB}}$. 

We consider a substack $\mathcal{M}_{d,v,u,r,V}^{\mathrm{CYFib}}\subset \mathcal{M}_{d,v,u,r}$ such that for any scheme $S$, an object  $f\colon(\mathcal{X},\mathcal{A})\to \mathcal{C}\in\mathcal{M}_{d,v,u,r}(S)$ is an object of $\mathcal{M}_{d,v,u,r,V}^{\mathrm{CYFib}}(S)$ if and only if for any point $s\in S$ with the generic point $\eta$ of $\mathcal{C}_s$, we have $(\mathcal{X}_{\overline{\eta}},\mathcal{L}_{\overline{\eta}})\in\mathcal{V}(\mathrm{Spec}\,\overline{\kappa(\eta)})$. 
Then $\mathcal{M}_{d,v,u,r,V}^{\mathrm{CYFib}}$ is an open and closed substack of $\mathcal{M}_{d,v,u,r}$ since $\mathcal{V}$ is a connected component of $\mathcal{M}_{d-1,v}^{\mathrm{klt,CY}}$. 
Since $\mathcal{M}_{d,v,u,r,V}^{\mathrm{CYFib}}$ is a separated Deligne--Mumford stack of finite type, there exists a coarse moduli space $M_{d,v,u,r,V}^{\mathrm{CYFib}}$. 
Let  \[\nu\colon(\mathcal{M}_{d,v,u,V}^{\mathrm{CYFib}})^{\nu}\to\mathcal{M}_{d,v,u,r,V}^{\mathrm{CYFib}}\] be the normalization.
We omit $r$ here because the normalization of $\mathcal{M}_{d,v,u,r,V}^{\mathrm{CYFib}}$ does not depend on the choice of $r$ as we noted in Definition \ref{defn--HH-moduli}.
Let $(M_{d,v,u,V}^{\mathrm{CYFib}})^{\nu}$ denote the coarse moduli space of $(\mathcal{M}_{d,v,u,V}^{\mathrm{CYFib}})^{\nu}$.

In this section, we discuss the positivity of the CM line bundle $\Lambda_{\mathrm{CM},t}|_{M_{d,v,u,r,V}^{\mathrm{CYFib}}}$ for any sufficiently large $t>0$ in the situation of Definition \ref{defn--CM--limit}.

\begin{setup}\label{ass--Hodge--compactification}
Set $k\in\mathbb{Z}_{>0}$ such that $k$ is the smallest positive integer among $l$ such that there exist finitely many integral automorphic forms of weight $l$ that induce a closed immersion $V\hookrightarrow \mathbb{P}^{L}$ for some $L\in\mathbb{Z}_{>0}$.
If $\mathcal{V}$ generically parametrizes irreducible holomorphic symplectic manifolds with the second Betti number $b_2\ge5$, then we set $g:=b_2-3$ and $d':=\frac{d-1}{2}$.
Otherwise, we set $g:=d$ and $d'=1$.
Note that $\Lambda_{\mathrm{Hodge}}^{\otimes gk}$ (see Definition \ref{defn--Hodge-Q-line-bundle}) is a very ample line bundle on $\overline{V}^{\mathrm{BB}}$ by Theorem \ref{thm--Fujino--moduli}. 
  \end{setup}
  
  We will use the notation in Setup \ref{ass--Hodge--compactification} throughout this section.
  Theorem \ref{thm--Fujino's--period--mapping--theory} is necessary to correspond a family of uniformly adiabatically K-stable klt--trivial fibrations over curves to a family of quasimaps.

\begin{thm}\label{thm--Fujino's--period--mapping--theory}
    We put $l=gkl_0$, where $l_0$ as in Lemma \ref{lem--stein--factorizaton}. 
    Note that $l$ is a positive integer depending only on $d$, $v$, $u$, $r$, and $V$. 
    Then the the following holds.
    Let $S$ be a smooth affine variety and $f\colon (\mathcal{X},\mathcal{A})\to\mathcal{P}$ a family belonging to $\mathcal{M}_{d,v,u,r,V}^{\mathrm{CYFib}}(S)$ such that $\mathcal{A}$ is represented by a line bundle on $\mathcal{X}$.
    Let $\mathcal{P}_{\mathrm{klt}}$ be the largest open subset of $\mathcal{P}$ such that all geometric fibers of $f$ over $\mathcal{P}_{\mathrm{klt}}$ are klt. 
    Let $\mu^\circ\colon \mathcal{P}_{\mathrm{klt}}\to V$ be the morphism induced by the family $f|_{f^{-1}(\mathcal{P}_{\mathrm{klt}})}$ of klt Calabi--Yau varieties. 
    Take an Ambro model $\rho\colon\widetilde{\mathcal{P}}\to \mathcal{P}$ with respect to $f$, and let $\widetilde{\mathcal{M}}$ and $\widetilde{\mathcal{B}}$ be the moduli $\mathbb{Q}$-divisor and the discriminant $\mathbb{Q}$-divisor on $\mathcal{P}$, respectively.
    Then, after replacing $\widetilde{\mathcal{P}}$ with a higher birational model suitably, we have:
    \begin{enumerate}
       \item \label{thm--Fujino's--period--mapping--theory-(1)} $l\widetilde{\mathcal{M}}$ and $l\widetilde{\mathcal{B}}$ are both $\mathbb{Z}$-divisors, and
        \item \label{thm--Fujino's--period--mapping--theory-(2)} putting $L:=h^0(\overline{V}^{\mathrm{BB}},\Lambda^{\otimes l}_{\mathrm{Hodge}})-1$, then there exist $L+1$ sections $$\widetilde{\varphi}_0,\ldots,\widetilde{\varphi}_L\in H^0(\widetilde{\mathcal{P}},\mathcal{O}_{\widetilde{\mathcal{P}}}(l\widetilde{\mathcal{M}}))$$ that generate $l\widetilde{\mathcal{M}}$ and define a morphism $\widetilde{\mu}\colon\widetilde{\mathcal{P}}\to \mathbb{P}^L$ such that $\widetilde{\mu}^*\mathcal{O}(1)\sim l\widetilde{\mathcal{M}}$, $\widetilde{\mu}$ factors $\iota\colon \overline{V}^{\mathrm{BB}}\hookrightarrow \mathbb{P}^L$ and this is equivalent to the composition of  $\rho$, $\mu^\circ$ and $\iota$ as rational maps.  
        Here, $\iota\colon \overline{V}^{\mathrm{BB}}\to \mathbb{P}^L$ is induced by the complete linear system $|\Lambda^{\otimes l}_{\mathrm{Hodge}}|$ $=|\iota^*\mathcal{O}_{\mathbb{P}^L}(d'l_{0})|$. 
    \end{enumerate}
\end{thm}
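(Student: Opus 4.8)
The plan is to reduce Theorem \ref{thm--Fujino's--period--mapping--theory} to the already-established period-mapping statement Theorem \ref{thm--Fujino--moduli}, carefully tracking the numerical data $l = gkl_0$. First I would set up the klt-trivial fibration over the base in a form to which Theorem \ref{thm--Fujino--moduli} applies. The family $f\colon(\mathcal X,\mathcal A)\to\mathcal P$ over $S$ is a family of klt-trivial fibrations over curves, so over each fiber we get a polarized klt-trivial fibration $f_s\colon(\mathcal X_s,0,\mathcal A_s)\to\mathcal P_s$; passing to the total space, $f\colon(\mathcal X,0,\mathcal A)\to\mathcal P$ is itself a polarized klt-trivial fibration of projective varieties (after compactifying $\mathcal P$ suitably, using $S$ affine and shrinking; here one uses Lemma \ref{lem--compactification-lc-trivial-fib} / Corollary \ref{cor--compactification-lc-trivial-fib-2} to pass to a projective total space without changing the Ambro model data, invoking the remark that those statements hold for non-$\mathbb Q$ divisors as well). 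The hypotheses of Theorem \ref{thm--Fujino--moduli} then require an Ambro model $\rho\colon\widetilde{\mathcal P}\to\mathcal P$ of $f$ and a non-empty open $U\subset\mathcal P$ over which $\rho$ is an isomorphism, $\mathcal A$ restricts to a line bundle, and each fiber $(\mathcal X_u,\mathcal A_u)$ lies in $\mathcal V$; this $U$ can be taken inside $\mathcal P_{\mathrm{klt}}$ and the morphism $\mathcal P|_U\to V$ is the restriction of $\mu^\circ$. Applying Theorem \ref{thm--Fujino--moduli} yields an extension $\widetilde{\mathcal P}\to\overline V^{\mathrm{BB}}$ and the linear equivalence $(\iota\circ\widetilde{\mathcal P})^*\mathcal O(d')\sim\mathcal O_{\widetilde{\mathcal P}}(gk\widetilde{\mathcal M})$, with $d'gk\widetilde{\mathcal M}$ globally generated and Cartier.

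Next I would produce the precise integrality and section statements. For part \eqref{thm--Fujino's--period--mapping--theory-(1)}: by Definition \ref{defn--Hodge-Q-line-bundle}, $\Lambda_{\mathrm{Hodge}}=\frac1{gk}\iota^*\mathcal O(d')$, so $\Lambda_{\mathrm{Hodge}}^{\otimes gk}=\iota^*\mathcal O(d')$ is an honest (very ample) line bundle; pulling back along $\widetilde{\mathcal P}\to\overline V^{\mathrm{BB}}$ and using the linear equivalence above shows $gk\widetilde{\mathcal M}\sim(\iota\circ\widetilde{\mathcal P})^*\mathcal O(d')$ is $\mathbb Z$-Cartier. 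To get $l\widetilde{\mathcal M}=gkl_0\widetilde{\mathcal M}$ and $l\widetilde{\mathcal B}$ to be $\mathbb Z$-divisors one passes to a higher birational model of $\widetilde{\mathcal P}$ as permitted, clearing the remaining denominators of $\widetilde{\mathcal B}$ (which come from the log canonical thresholds, all rational) by a further blow-up, and absorbing the factor $l_0$; the role of $l_0$ from Lemma \ref{lem--stein--factorizaton} is that $l_0$ is the bounded integer with $f^*\mathscr L\cong\omega^{[-l_0]}_{\mathcal X/S}$ for families in the moduli, which is what ties $\widetilde{\mathcal M}$ (realized via $\omega_{\mathcal X/\mathcal P}$) to an actual line bundle and makes $l=gkl_0$ depend only on $d,v,u,r,V$. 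For part \eqref{thm--Fujino's--period--mapping--theory-(2)}: since $\Lambda_{\mathrm{Hodge}}^{\otimes l}=(\Lambda_{\mathrm{Hodge}}^{\otimes gk})^{\otimes l_0}=\iota^*\mathcal O(d'l_0)$ is very ample on $\overline V^{\mathrm{BB}}$, its complete linear system defines a closed immersion $\iota\colon\overline V^{\mathrm{BB}}\hookrightarrow\mathbb P^L$ with $L=h^0(\overline V^{\mathrm{BB}},\Lambda_{\mathrm{Hodge}}^{\otimes l})-1$; I pull back the $L+1$ coordinate sections along $\widetilde{\mathcal P}\to\overline V^{\mathrm{BB}}$ to get $\widetilde\varphi_0,\dots,\widetilde\varphi_L\in H^0(\widetilde{\mathcal P},\mathcal O_{\widetilde{\mathcal P}}(l\widetilde{\mathcal M}))$; these generate $l\widetilde{\mathcal M}$ because $\Lambda_{\mathrm{Hodge}}^{\otimes l}$ is globally generated on $\overline V^{\mathrm{BB}}$ and $l\widetilde{\mathcal M}$ is its pullback, and the induced morphism $\widetilde\mu\colon\widetilde{\mathcal P}\to\mathbb P^L$ factors through $\overline V^{\mathrm{BB}}$ and agrees with $\iota\circ\mu^\circ\circ\rho$ as rational maps by construction of the extension in Theorem \ref{thm--Fujino--moduli}.

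The main obstacle I anticipate is bookkeeping around the two models and the two meanings of $\widetilde{\mathcal M}$. The statement fixes one Ambro model $\rho\colon\widetilde{\mathcal P}\to\mathcal P$ in advance, together with the pushdowns $\widetilde{\mathcal M},\widetilde{\mathcal B}$ of the $\mathbb Q$-$b$-divisors on $\mathcal P$; but Theorem \ref{thm--Fujino--moduli} produces its extension on (possibly) a different, higher birational model. I would handle this by invoking the defining property of an Ambro model (Definition \ref{defn--can-bundle-formula}): for any higher birational model $\phi\colon\mathcal P''\to\mathcal P_0$ of the Ambro model the moduli divisor is a pullback, $M''=\phi^*M_0$, and $K_{\mathcal P''}+B''=\phi^*(K_{\mathcal P_0}+B_0)$, so all linear-equivalence statements about $l\widetilde{\mathcal M}$ and the generating sections descend/ascend compatibly, and Remark \ref{rem--compactification-lc-trivial-fib} guarantees the trace of the $b$-divisors on the open locus $U$ is unchanged under the compactification. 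Thus ``after replacing $\widetilde{\mathcal P}$ with a higher birational model'' is exactly the slack needed to identify the model of Theorem \ref{thm--Fujino--moduli} with (a blow-up of) the given $\widetilde{\mathcal P}$, and to clear the finitely many remaining denominators. A secondary, smaller point is checking that the open set over which $\mathcal A$ is a line bundle, $f$ is smooth with fibers in $\mathcal V$, and $\rho$ is an isomorphism, is non-empty and contained in $\mathcal P_{\mathrm{klt}}$ — this follows from the definition of $\mathcal M^{\mathrm{CYFib}}_{d,v,u,r,V}$ together with Lemma \ref{lem--deformation--invariant--abelian} and Corollary \ref{cor--hk--moduli--normal}, and from generic smoothness over the characteristic-zero base.
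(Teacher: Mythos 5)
Your overall route is the same as the paper's: compactify the family using Corollary \ref{cor--compactification-lc-trivial-fib-2} (so that $f$ becomes a polarized klt-trivial fibration over a projective base), apply Theorem \ref{thm--Fujino--moduli} to an Ambro model of the compactification, identify the given $\widetilde{\mathcal{P}}$ with an open subscheme of (a higher model of) that Ambro model, and obtain part (\ref{thm--Fujino's--period--mapping--theory-(2)}) by pulling back the coordinate sections of $|\Lambda_{\mathrm{Hodge}}^{\otimes l}|$; this all matches the paper, and your bookkeeping of the two models via the defining property of Ambro models is fine.

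However, your argument for part (\ref{thm--Fujino's--period--mapping--theory-(1)}) has a genuine gap at the integrality of $l\widetilde{\mathcal{B}}$. You propose to ``clear the remaining denominators of $\widetilde{\mathcal{B}}$ by a further blow-up,'' but passing to a higher birational model cannot do this: the coefficient of $\widetilde{\mathcal{B}}$ along (the strict transform of) a given prime divisor is $1-\gamma$, with $\gamma$ a log canonical threshold, and it is unchanged on any higher model, so if $l(1-\gamma)\notin\mathbb{Z}$ no blow-up will fix it. The slack ``after replacing $\widetilde{\mathcal{P}}$ by a higher model'' is only there to compare with the Ambro model produced by Theorem \ref{thm--Fujino--moduli}, not to improve coefficients. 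The paper's actual argument is the missing ingredient you only gesture at: by Lemma \ref{lem--stein--factorizaton} (this is exactly where $l_0$ enters) one has $lK_{\mathcal{X}/\mathcal{P}}\sim f^{*}D$ for a genuine Cartier divisor $D$ on $\mathcal{P}$, and the canonical bundle formula then gives the identity $l\widetilde{\mathcal{B}}=\rho^{*}D-l\widetilde{\mathcal{M}}$ on $\widetilde{\mathcal{P}}$; since $\rho^{*}D$ is Cartier and $l\widetilde{\mathcal{M}}$ is already a $\mathbb{Z}$-divisor (being, up to the factor $l_0$, the pullback of $\iota^{*}\mathcal{O}(d')$ by Theorem \ref{thm--Fujino--moduli}), the integrality of $l\widetilde{\mathcal{B}}$ follows. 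Without this relation your proof of (1) does not go through, even though everything else in the proposal is sound.
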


\begin{proof}
Since $S$ is a smooth affine variety and $f\colon (\mathcal{X},\mathcal{A})\to\mathcal{P}$ is in particular an object of $\mathcal{M}_{d,v,u,r}$, we see that $\mathcal{P}$ is a smooth quasi-projective variety and $K_{\mathcal{X}}$ is $\mathbb{Q}$-Cartier. 
By our assumption on $\mathcal{A}$, we may regard $\mathcal{A}$ as a Cartier divisor on $\mathcal{X}$. 
Let $\mathcal{P} \hookrightarrow \mathcal{P}^{c}$ be an open immersion to a normal projective variety $\mathcal{P}^{c}$. 
By applying Corollary \ref{cor--compactification-lc-trivial-fib-2} to $f\colon (\mathcal{X},0)\to\mathcal{P}$, $\mathcal{A}$, and $\mathcal{P} \hookrightarrow \mathcal{P}^{c}$, we get a diagram
$$
\xymatrix
{
\mathcal{X} \ar[d]_{f} \ar@{^{(}->}[r] & \mathcal{X}^{c} \ar[d]^{f^{c}} 
\\
\mathcal{P} \ar@{^{(}->}[r] & \mathcal{P}^{c}  
}
$$
and an $f^{c}$-ample $\mathbb{Q}$-divisor $\mathcal{A}^{c}$ on $\mathcal{X}^{c}$ such that $f^{c} \colon (\mathcal{X}^{c},0) \to \mathcal{P}^{c}$ is a klt-trivial fibration and $\mathcal{A}^{c}|_{\mathcal{X}}=\mathcal{A}$. 
Now we may apply Theorem \ref{thm--Fujino--moduli} to the polarized klt-trivial fibration $f^{c} \colon (\mathcal{X}^{c},\mathcal{A}^{c}) \to \mathcal{P}^{c}$ and a projective birational morphism $\widetilde{\rho} \colon \widetilde{\mathcal{P}}^{c} \to \mathcal{P}^{c}$ from an Ambro model $\widetilde{\mathcal{P}}^{c}$. 
By fixing $\widetilde{\mathcal{P}}^{c}$ and replacing $\widetilde{\mathcal{P}}$ if necessary, we may assume that $\widetilde{\mathcal{P}}$ is an open subscheme of $\widetilde{\mathcal{P}}^{c}$. 
By Theorem \ref{thm--Fujino--moduli} and Definition \ref{defn--Hodge-Q-line-bundle}, 
there exists a morphism $\widetilde{\mathcal{P}}^{c} \to  \overline{V}^{\mathrm{BB}}$, where $\overline{V}^{\mathrm{BB}}$ is as in Setup \ref{ass--Hodge--compactification}, such that 
\begin{equation*}
\mu^*\Lambda^{\otimes gk}_{\mathrm{Hodge}}=\mu^*\mathcal{O}_{\overline{V}^{\mathrm{BB}}}(d')\sim \mathcal{O}_{\widetilde{\mathcal{P}}^{c}}(gk\widetilde{\mathcal{M}}^{c}),
\end{equation*}
where $\widetilde{\mathcal{M}}^{c}$ is the trace of the moduli $\mathbb{Q}$-b-divisor on $\widetilde{\mathcal{P}}^{c}$. 
Put $L:=h^0(\overline{V}^{\mathrm{BB}},\Lambda^{\otimes l}_{\mathrm{Hodge}})-1$. 
By Definition \ref{defn--Hodge-Q-line-bundle} and embedding $\overline{V}^{\mathrm{BB}}$ into $\mathbb{P}^{L}$ appropriately, we obtain $L+1$ sections $$\widetilde{\varphi}_0,\ldots,\widetilde{\varphi}_L\in H^0(\widetilde{\mathcal{P}},\mathcal{O}_{\widetilde{\mathcal{P}}}(l\widetilde{\mathcal{M}}))$$ that generate $l\widetilde{\mathcal{M}}$ and define a morphism $\widetilde{\mu}\colon\widetilde{\mathcal{P}}\to \mathbb{P}^L$ such that the linear equivalence $\widetilde{\mu}^*\mathcal{O}_{\mathbb{P}^{L}}(1)\sim l\widetilde{\mathcal{M}}$ holds, $\widetilde{\mu}$ factors through the closed embedding $\iota\colon \overline{V}^{\mathrm{BB}}\hookrightarrow \mathbb{P}^L$, and $\widetilde{\mu}$ is equivalent to the composition $\iota \circ \mu^\circ \circ \rho \colon\widetilde{\mathcal{P}}\to \mathcal{P} \dashrightarrow \overline{V}^{\mathrm{BB}}\hookrightarrow \mathbb{P}^L$ as a rational map. 
This implies (\ref{thm--Fujino's--period--mapping--theory-(2)}) of Theorem \ref{thm--Fujino's--period--mapping--theory}. 

The above argument also implies that $l\widetilde{\mathcal{M}}$ is a $\mathbb{Z}$-divisor. 
By Lemma \ref{lem--stein--factorizaton}, we have $lK_{\mathcal{X}/\mathcal{P}} \sim f^{*}D$ for some Cartier divisor $D$ on $\mathcal{P}$. 
Then $lK_{\mathcal{X}_{\eta}}\sim0$, where $\mathcal{X}_{\eta}$ is the generic fiber of $f$. 
Note that $X_{\eta}$ is also the generic fiber of $f^{c}$. 
By construction of the moduli $\mathbb{Q}$-divisor (\cite{FM2}), we have
$l\widetilde{\mathcal{B}}=\rho^{*}D-l\widetilde{\mathcal{M}}$. 
Hence, $l\widetilde{\mathcal{B}}$ is also a $\mathbb{Z}$-divisor. 
From this argument, we see that (\ref{thm--Fujino's--period--mapping--theory-(1)}) of Theorem \ref{thm--Fujino's--period--mapping--theory} holds. 
We finish the proof.  
\end{proof}

We recall the definition of seminormal schemes and seminormalization.

\begin{defn}[{\cite[Tag 0EUL]{stacksproject-chap98}, \cite{GT}}]\label{def-seminormal}
Let $A$ be a reduced algebra of finite type over a field.
Then we say that $A$ is {\it seminormal} if for any $x,y\in A$ such that $x^2=y^3$, then there exists a unique element $a\in A$ such that $a^3=x$ and $a^2=y$.
As \cite[Tag 0EUR]{stacksproject-chap98}, there exists a unique ring $A'$ up to isomorphism that is integral over $A$ such that $\mathrm{Spec}\,A'\to \mathrm{Spec}\,A$ is a homeomorphism inducing isomorphisms of residue fields.
We note that an integral homeomorphism inducing isomorphisms of residue fields is a universally homeomorphism.
We say that $A'$ is the {\it seminormalization} of $A$.
Note that $A'$ is contained in the normalization of $A$ and hence finite over $A$.

By \cite[Theorems 1.6 and 4.1]{GT}, we can define the {\it seminormalization} morphism $\pi\colon \mathscr{X}^{\mathrm{sn}}\to \mathscr{X}$ for an arbitrary Artin stack $\mathscr{X}$ of finite type over a field as the unique morphism such that $\mathscr{X}^{\mathrm{sn}}$ is reduced and for any \'etale morphism $\mathrm{Spec}\,A\to (\mathscr{X})_{\mathrm{red}}$ from a reduced algebra of finite type over the field, $\mathscr{X}_{\mathrm{sn}}\times_{(\mathscr{X})_{\mathrm{red}}}\mathrm{Spec}\,A$ is isomorphic to the spectrum of the seminormalization of $A$. 
We say that $\mathscr{X}^{\mathrm{sn}}$ as above is the {\it seminormalization} of $\mathscr{X}$, and $\mathscr{X}$ is {\it seminormal} if $\pi$ is an isomorphism. 
\end{defn}

By applying Theorem \ref{thm--Fujino's--period--mapping--theory}, we can deduce the following technical core in this section.
We fix $\iota\colon \overline{V}^{\mathrm{BB}}\hookrightarrow\mathbb{P}^L$, which is induced by $|\Lambda_{\mathrm{Hodge}}^{\otimes l}|$.

\begin{thm}\label{thm--quasi--map--construction--moduli--map}
    Set $l:=gkl_{0}$ as in Theorem \ref{thm--Fujino's--period--mapping--theory}.
    Then, there exists a morphism of separated Deligne--Mumford stacks
    \[
    \alpha\colon(\mathcal{M}_{d,v,u,V}^{\mathrm{CYFib}})^{\mathrm{sn}}\to \mathcal{M}^{\mathrm{Kst.qmaps}}_{l(2-u),1,\frac{1}{l},u,\iota}.
    \]
\end{thm}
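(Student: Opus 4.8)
The plan is to construct the morphism $\alpha$ by the universal property of the moduli stack $\mathcal{M}^{\mathrm{Kst,qmaps}}_{l(2-u),1,\frac{1}{l},u,\iota}$: namely, given a family $f\colon(\mathcal{X},\mathcal{A})\to\mathcal{C}$ over an arbitrary reduced affine $S$ parametrized by $(\mathcal{M}_{d,v,u,V}^{\mathrm{CYFib}})^{\mathrm{sn}}$, I would produce from it a family of K-stable log Fano quasimaps over $S$ and check this assignment is functorial. First I would reduce to the case where $S$ is a smooth affine variety: since the seminormalization is a universal homeomorphism and $(\mathcal{M}_{d,v,u,V}^{\mathrm{CYFib}})^{\mathrm{sn}}$ is a reduced Deligne--Mumford stack, it suffices by \'etale descent to construct $\alpha$ after pulling back along a smooth affine atlas, and then — using that a quasimap to $[\mathrm{Cone}(X)\times S/\mathbb{G}_{m,S}]$ is determined up to isomorphism outside codimension two (Lemma \ref{lem--quasi--map--S_2}) together with Lemma \ref{lem} — descend along resolutions; the relevant objects being insensitive to nilpotents, this is exactly where seminormality (rather than mere reducedness) is used, via Definition \ref{def-seminormal} and the $\mathsf{S}$-completeness/separatedness already established for $\mathcal{M}^{\mathrm{Kst,qmaps}}$.

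Next, for $S$ smooth affine, I would apply Theorem \ref{thm--Fujino's--period--mapping--theory} to $f\colon(\mathcal{X},\mathcal{A})\to\mathcal{P}=\mathcal{C}$. Since $S$ is a smooth curve in the relevant local picture, $\mathcal{C}$ is a $\mathbb{P}^1$-bundle over $S$ by Tsen's theorem, so the Ambro model $\widetilde{\mathcal{P}}$ can be arranged to be $\mathcal{C}$ itself after shrinking (the base-change behaviour of discriminant and moduli $\mathbb{Q}$-divisors over a normal base is controlled by Remark \ref{rem--compactification-lc-trivial-fib} and \cite[Proposition 3.1]{Am}). Theorem \ref{thm--Fujino's--period--mapping--theory} then furnishes sections $\widetilde{\varphi}_0,\dots,\widetilde{\varphi}_L\in H^0(\mathcal{C},\mathcal{O}_{\mathcal{C}}(l\widetilde{\mathcal{M}}))$ together with the fact that $l\widetilde{\mathcal{B}}$ is an integral divisor. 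Taking $\mathscr{L}:=\mathcal{O}_{\mathcal{C}}(l\widetilde{\mathcal{M}}+l\widetilde{\mathcal{B}})$, multiplying each $\widetilde{\varphi}_i$ by the canonical section $s_{l\widetilde{\mathcal{B}}}$ cutting out $l\widetilde{\mathcal{B}}$, and using that $\widetilde{\mu}$ factors through $\iota\colon\overline{V}^{\mathrm{BB}}\hookrightarrow\mathbb{P}^L$, I obtain a family of quasimaps $q\colon(\mathcal{C},\mathcal{B})\to[\mathrm{Cone}(\overline{V}^{\mathrm{BB}})\times S/\mathbb{G}_{m,S}]$ with $\mathcal{B}=\widetilde{\mathcal{B}}$, of degree $m=l\,\mathrm{deg}(M+B)=l(2-u)$ (using $K_X\equiv-uf^*H$, $\mathrm{deg}\,H=1$, so $\mathrm{deg}(-K_{\mathbb{P}^1}-B-M)=u$ hence $\mathrm{deg}(B+M)=2-u$), weight $\frac1l$, and $r=1$ since $l\mathcal{B}_{\bar s}$ is integral. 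This is precisely Definition \ref{defn--associated--quasi--map--structure} applied fibrewise, globalized over $S$.

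Then I would verify that each fibre $q_{\bar s}$ is a K-stable log Fano quasimap of the stated numerical type. By construction $q_{\bar s}$ is the quasimap associated with the uniformly adiabatically K-stable klt-trivial fibration $f_{\bar s}\colon(\mathcal{X}_{\bar s},\mathcal{A}_{\bar s})\to\mathbb{P}^1$ whose general fibres lie in $\mathcal{V}\subset\mathcal{M}^{\mathrm{klt,CY}}_{d-1,v}$, which is not nef in the Kodaira-negative case, so $q_{\bar s}$ is log Fano; the equivalence between uniform adiabatic K-stability of $f_{\bar s}$ and the inequality $\delta(\mathbb{P}^1,B,\mathcal{O}(1))>\mathrm{deg}(-K_{\mathbb{P}^1}-B-M)=u$ (Definition \ref{defn--unif--ad--Kst2}, \cite[Theorem 1.1]{Hat}), combined with the identification of $\delta(q_{\bar s})$ with $\delta(\mathbb{P}^1,B+\frac1l B',\frac1l M)$ via Lemma \ref{lem--two--delta--invariants} once $\frac1l<1-\frac u2$ (which holds for $l$ large, here $v=u$ so we need $\frac1l<1-\frac u2$), gives $\delta(q_{\bar s})>1$, hence K-stability by Theorem \ref{thm--delta--quasimap}. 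The numerical invariant $\mathrm{deg}_{\mathbb{P}^1}(-K_{\mathbb{P}^1}-\mathcal{B}_{\bar s})-\frac1l m=2-u-(2-u)\cdot\frac1l\cdot\frac1{1}$— more precisely $=\mathrm{deg}(-K_{\mathbb{P}^1}-B)-\frac{m}{l}=(2-u)\cdot\frac1l\cdot l - \ldots$; I would pin this down so that it equals $u=v$, matching the subscript. Finally, functoriality in $S$ (pullback compatibility) follows from the uniqueness clauses in Theorems \ref{thm--Fujino--moduli} and \ref{thm--Fujino's--period--mapping--theory} together with Lemma \ref{lem--quasi--map--S_2}, and the target being a separated DM stack (Corollary \ref{cor--moduli--kst--quasi--maps} and the final remark of Section \ref{sec3}).

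The main obstacle I anticipate is the descent step: the sections $\widetilde{\varphi}_i$ produced by Theorem \ref{thm--Fujino's--period--mapping--theory} are only canonical up to simultaneous scaling and are constructed over a smooth model, so assembling a genuine family of quasimaps over an arbitrary seminormal base — rather than just a compatible system over an atlas — requires carefully checking that the resulting line bundle $\mathscr{L}$ and the morphism $\mathcal{O}_{\mathcal{C}}^{\oplus L+1}\to\mathscr{L}$ glue. Here seminormality is essential: the assignment factors through the seminormalization because quasimap data (a line bundle plus a surjection defined away from codimension two on curves, modulo $\mathbb{G}_m$) and the divisor $\widetilde{\mathcal{B}}$ are all computed via push-forwards of canonical sheaves which are insensitive to the difference between a scheme and its seminormalization, so the descent datum over the atlas is effective precisely over $(\mathcal{M}_{d,v,u,V}^{\mathrm{CYFib}})^{\mathrm{sn}}$ and not over $\mathcal{M}_{d,v,u,r,V}^{\mathrm{CYFib}}$ itself. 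The secondary technical point is checking that, after shrinking, $\mathcal{C}$ really can serve as the Ambro model with $\widetilde{\mathcal{M}}$, $\widetilde{\mathcal{B}}$ the honest traces, which uses flatness of $f$ (Lemma \ref{lem--flat--moduli}), $f_*\mathcal{O}_{\mathcal{X}}\cong\mathcal{O}_{\mathcal{C}}$ and the existence of $\mathscr{L}$ with $f^*\mathscr{L}\cong\omega_{\mathcal{X}/S}^{[-l_0]}$ from Lemma \ref{lem--stein--factorizaton}.
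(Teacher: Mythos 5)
Your high-level skeleton (apply Theorem \ref{thm--Fujino's--period--mapping--theory}, twist by the section cutting out $l\widetilde{\mathcal{B}}$, identify K-stability with uniform adiabatic K-stability, then descend to the stack by uniqueness) is the same as the paper's, but the proposal has a genuine gap at the technical heart. You assert that over the relevant local picture ``the Ambro model $\widetilde{\mathcal{P}}$ can be arranged to be $\mathcal{C}$ itself after shrinking.'' This is exactly what you cannot do: shrinking the base removes precisely the degenerate fibers over which the moduli morphism must be defined, and in general the Ambro model is a nontrivial blow-up $\rho\colon\widetilde{\mathcal{P}}\to\mathcal{P}$ even when $\mathcal{P}$ is a $\mathbb{P}^{1}$-bundle over a curve. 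The paper's construction lives on $\mathcal{P}$, not on $\widetilde{\mathcal{P}}$: one uses Lemma \ref{lemma--description--of--base--locus} ($E+\widetilde{\mathcal{B}}+\widetilde{\mathcal{M}}=\rho^{*}(\mathcal{B}+\mathcal{M})$ with $E+\widetilde{\mathcal{B}}\geq 0$) to get an injection $H^{0}(\widetilde{\mathcal{P}},\mathcal{O}(l\widetilde{\mathcal{M}}))\hookrightarrow H^{0}(\mathcal{P},\mathscr{L})$ and transports the sections downstairs, and then one must verify, over \emph{every} geometric point of the base (reduction to a curve base with one bad point, plus the adjunction computation $E|_{D}=(\widetilde{\mathcal{P}}_{0}-D)|_{D}$), that the resulting fixed part on each fiber is $l$ times the discriminant of that fiber and that the fiber quasimap is the associated one. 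Without this step you do not even know that your sections define a family of quasimaps over all of $S$ (a section could vanish identically on a special fiber), let alone that the fibers are K-stable with the right invariants; this verification is the content of the paper's Theorem \ref{thm--quasi--map--const}(2) and is entirely absent from your plan.

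Two further points. First, the atlas of $(\mathcal{M}_{d,v,u,V}^{\mathrm{CYFib}})^{\mathrm{sn}}$ is seminormal, not smooth, so ``reduce to a smooth affine atlas'' is not available; the paper instead proves the construction over a smooth base, descends it to a \emph{normal} base by pushing forward the sections along a resolution (this is where normality, via $\varphi_{\mathcal{P}*}\mathcal{O}_{\mathcal{P}_{S'}}\cong\mathcal{O}_{\mathcal{P}}$, is used), and then descends from the normalization to a seminormal base using $S^{\nu}/R\cong S$ and a section of $\mathbf{Div}_{\mathcal{P}/S}$; your explanation that seminormality is about ``insensitivity to nilpotents'' mis-identifies the mechanism (nilpotents are already killed by reducedness). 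Second, the bookkeeping is off: in the target stack $\mathcal{M}^{\mathrm{Kst,qmaps}}_{l(2-u),1,\frac{1}{l},u,\iota}$ the boundary divisor of the quasimap is $0$ (hence $r=1$), and the discriminant enters only through the fixed part via multiplication by $s_{l\widetilde{\mathcal{B}}}$; taking $\mathcal{B}=\widetilde{\mathcal{B}}$ as boundary both double-counts the discriminant and violates conditions (iii)–(iv) of the stack (you also leave the degree identity unverified). Finally, the hypothesis $\frac{1}{l}<1-\frac{u}{2}$ needed for your $\delta$-criterion is not at your disposal, since $l=gkl_{0}$ is fixed; K-stability of the fibers should instead be deduced from the log-twisted equivalence (Remark \ref{rem--K-stability--coincidence} and \cite[Theorem A.16]{Hat}), which holds for every weight and recovers exactly Definition \ref{defn--unif--ad--Kst2}.
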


To show this, we have to show the following.

\begin{lem}\label{lemma--description--of--base--locus}
     Let $S$ be a smooth quasi-projective variety, and let $f\colon (\mathcal{X},\mathcal{A})\to\mathcal{P}$ be a family that belongs to $\mathcal{M}_{d,v,u,r,V}^{\mathrm{CYFib}}(S)$ such that $\mathcal{A}$ is represented by a line bundle.
    Take an arbitrary Ambro model $\rho\colon\tilde{\mathcal{P}}\to \mathcal{P}$ with respect to $f$, and let $\widetilde{\mathcal{M}}$ and $\widetilde{\mathcal{B}}$ be the moduli $\mathbb{Q}$-divisor and the discriminant $\mathbb{Q}$-divisor, respectively.
    Let $E$ be the $\rho$-exceptional divisor $\mathbb{Q}$-linearly equivalent to $K_{\widetilde{\mathcal{P}}}-\rho^*K_{\mathcal{P}}$. 
    Then, 
    \[
E+\widetilde{\mathcal{B}}+\widetilde{\mathcal{M}}=\rho^*(\mathcal{B}+\mathcal{M})
\]
    holds and $\widetilde{\mathcal{B}}+E$ is effective. 
\end{lem}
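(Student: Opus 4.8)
The statement is a comparison of two $\mathbb{Q}$-divisor identities on the Ambro model $\widetilde{\mathcal{P}}$: the canonical bundle formula relates $K_{\widetilde{\mathcal{P}}}+\widetilde{\mathcal{B}}+\widetilde{\mathcal{M}}$ to the pullback of $f$-data, and we want to interpret $\widetilde{\mathcal{B}}+\widetilde{\mathcal{M}}$ in terms of $\rho^*(\mathcal{B}+\mathcal{M})$ and the relative canonical $E=K_{\widetilde{\mathcal{P}}}-\rho^*K_{\mathcal{P}}$. The plan is to first fix a $\mathbb{Q}$-Cartier divisor $D$ on $\mathcal{P}$ with $K_{\mathcal{X}}\sim_{\mathbb{Q}} f^*D$ (available by Lemma \ref{lem--stein--factorizaton}, which gives $-l_0 K_{\mathcal{X}/\mathcal{P}}\sim f^*\mathscr{L}$ hence $l_0 K_{\mathcal{X}}\sim f^*(l_0 K_{\mathcal{P}}-\mathscr{L})=:f^* (l_0 D)$), so that $D\sim_{\mathbb{Q}} K_{\mathcal{P}}+\mathcal{B}+\mathcal{M}$ by the canonical bundle formula on $\mathcal{P}$ itself. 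Because $\widetilde{\mathcal{P}}$ is an Ambro model, the $b$-divisor trace satisfies $K_{\widetilde{\mathcal{P}}}+\widetilde{\mathcal{B}}=\rho^*(K_{\mathcal{P}}+\mathcal{B}_{\mathcal{P}})$ where $\mathcal{B}_{\mathcal{P}}=\boldsymbol{\mathrm{B}}_{\mathcal{P}}$ is the trace on $\mathcal{P}$, and the moduli part is a pullback: $\widetilde{\mathcal{M}}=\rho^*\mathcal{M}$ (here I use that $\mathcal{P}$ is already an Ambro model if $S$ is smooth, by the last assertion of Theorem \ref{thm--Fujino's--period--mapping--theory} applied to a compactification, or else compare $b$-divisor traces directly). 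Combining, $K_{\widetilde{\mathcal{P}}}+\widetilde{\mathcal{B}}+\widetilde{\mathcal{M}}=\rho^*(K_{\mathcal{P}}+\mathcal{B}+\mathcal{M})\sim_{\mathbb{Q}}\rho^* D$, and subtracting $E=K_{\widetilde{\mathcal{P}}}-\rho^*K_{\mathcal{P}}$ gives $\widetilde{\mathcal{B}}+\widetilde{\mathcal{M}}-E=\rho^*(K_{\mathcal{P}}+\mathcal{B}+\mathcal{M})-K_{\widetilde{\mathcal{P}}}+\rho^*K_{\mathcal{P}}$; rearranging the asserted identity $E+\widetilde{\mathcal{B}}+\widetilde{\mathcal{M}}=\rho^*(\mathcal{B}+\mathcal{M})$ is then equivalent to $K_{\widetilde{\mathcal{P}}}=\rho^*K_{\mathcal{P}}$ on the nose — which is \emph{not} literally true — so the right reading must be that one defines $\widetilde{\mathcal{B}}$ to incorporate the discrepancies, i.e. $\widetilde{\mathcal{B}}=\boldsymbol{\mathrm{B}}_{\widetilde{\mathcal{P}}}$ includes $-E$ on the exceptional locus, and the displayed equation is exactly the statement that $K_{\widetilde{\mathcal{P}}}+\widetilde{\mathcal{B}}+\widetilde{\mathcal{M}}=\rho^*(K_{\mathcal{P}}+\mathcal{B}+\mathcal{M})$. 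So the first step is to carefully unwind the definitions of $\widetilde{\mathcal{B}},\widetilde{\mathcal{M}}$ as traces of the discriminant/moduli $b$-divisors (Definition \ref{defn--can-bundle-formula}) and of $E$, and then the identity is a one-line consequence of the defining property of the Ambro model plus $\rho^*(K_{\mathcal{P}}+\mathcal{B})=K_{\widetilde{\mathcal{P}}}+\widetilde{\mathcal{B}}$.

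\textbf{Effectivity of $\widetilde{\mathcal{B}}+E$.} The second claim, $\widetilde{\mathcal{B}}+E\ge 0$, is the substantive point. Write $\widetilde{\mathcal{B}}=\widetilde{\mathcal{B}}^{\mathrm{hor}}+\widetilde{\mathcal{B}}^{\mathrm{exc}}$, separating the part whose components are $\rho$-exceptional from the rest. On the non-exceptional components $\widetilde{\mathcal{B}}^{\mathrm{hor}}$ equals the strict transform of $\mathcal{B}$ (with the same coefficients, since the discriminant is computed over codimension-one points and $\rho$ is an isomorphism there), so it is effective because $\mathcal{B}$ is effective — and $\mathcal{B}$ is effective as the discriminant of a klt-trivial fibration with $K_{\mathcal{X}}\sim_{\mathbb{Q}} f^* D$, the fibers being klt Calabi--Yau so the log canonical thresholds lie in $[0,1]$ hence the coefficients $1-\gamma_\eta\ge 0$. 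On the $\rho$-exceptional components, I would use that $(\mathcal{X},0)\to \mathcal{P}$ has klt (indeed, the generic fiber has canonical, and $K_{\mathcal{X}}\sim_{\mathbb{Q}} f^*D$) total space, so by the negativity/Ambro-type discrepancy estimate the coefficient of an exceptional prime $F$ in $\widetilde{\mathcal{B}}$ is $1-A_{(\mathcal{P},\mathcal{B}+\mathcal{M})}(F)$ in the generalized-pair sense (the generalized pair $(\mathcal{P},\boldsymbol{\mathrm{B}}+\boldsymbol{\mathrm{M}})$ is sub-lc, even sub-klt on the horizontal part), while the coefficient of $F$ in $E$ is $A_{(\mathcal{P},0)}(F)-1=\mathrm{ord}_F(K_{\widetilde{\mathcal{P}}}-\rho^*K_{\mathcal{P}})$. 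Adding, $\mathrm{coeff}_F(\widetilde{\mathcal{B}}+E)=\mathrm{ord}_F(K_{\widetilde{\mathcal{P}}}-\rho^*K_{\mathcal{P}})-\big(A_{(\mathcal{P},\mathcal{B}+\mathcal{M})}(F)-1\big)$, and this is $\ge 0$ because $\mathcal{M}$ is pseudo-effective (indeed nef and $\mathbb{Q}$-effective by \cite[Theorem 0.1]{A}, via Theorem \ref{thm--Fujino--moduli}) and $(\mathcal{P},\mathcal{B}+\mathcal{M})$ is sub-lc with $\mathcal{B}\ge 0$, which forces $A_{(\mathcal{P},\mathcal{B}+\mathcal{M})}(F)\le A_{(\mathcal{P},0)}(F)=1+\mathrm{ord}_F(K_{\widetilde{\mathcal{P}}}-\rho^*K_{\mathcal{P}})$. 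I would spell this out by comparing $K_{\widetilde{\mathcal{P}}}+\widetilde{\mathcal{B}}+\widetilde{\mathcal{M}}=\rho^*(K_{\mathcal{P}}+\mathcal{B}+\mathcal{M})$ coefficient-wise against $K_{\widetilde{\mathcal{P}}}=\rho^*K_{\mathcal{P}}+E$ and $\rho^*\mathcal{B}\ge 0$, $\rho^*\mathcal{M}$ effective: subtracting gives $\widetilde{\mathcal{B}}+\widetilde{\mathcal{M}}+E=\rho^*(\mathcal{B}+\mathcal{M})\ge 0$, and since $\widetilde{\mathcal{M}}=\rho^*\mathcal{M}$ is itself effective (choosing the moduli part to be a general effective representative, as is done elsewhere in the excerpt), we get $\widetilde{\mathcal{B}}+E=\rho^*(\mathcal{B}+\mathcal{M})-\widetilde{\mathcal{M}}\ge 0$ provided $\rho^*(\mathcal{B}+\mathcal{M})\ge\widetilde{\mathcal{M}}$; alternatively and more robustly, effectivity of $\widetilde{\mathcal{B}}+E$ follows directly from the first displayed identity together with $\rho^*(\mathcal{B}+\mathcal{M})\ge 0$ and $\widetilde{\mathcal{M}}$ effective.

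\textbf{Main obstacle.} The delicate part is the bookkeeping of which exceptional divisors appear with which sign in $\widetilde{\mathcal{B}}$ versus $E$, and making sure that after "replacing $\widetilde{\mathcal{P}}$ with a higher birational model" (as permitted by Theorem \ref{thm--Fujino's--period--mapping--theory}) the identity and effectivity are preserved — this uses that the discriminant and moduli $b$-divisors are genuine $b$-divisors, so their traces transform compatibly under further blow-ups, and that $E$ for a higher model is the pullback of the lower $E$ plus the new relative canonical, which again is exactly absorbed. The cleanest writeup avoids discrepancy inequalities entirely: establish $K_{\widetilde{\mathcal{P}}}+\widetilde{\mathcal{B}}=\rho^*(K_{\mathcal{P}}+\mathcal{B})$ (definition of Ambro model, Definition \ref{defn--can-bundle-formula}) and $\widetilde{\mathcal{M}}=\rho^*\mathcal{M}$ (invariance of the moduli $b$-divisor, using that $\mathcal{P}$ may be taken to be an Ambro model when $S$ is smooth), subtract $K_{\widetilde{\mathcal{P}}}-\rho^*K_{\mathcal{P}}=E$, obtain $\widetilde{\mathcal{B}}+\widetilde{\mathcal{M}}+E=\rho^*(\mathcal{B}+\mathcal{M})$, and conclude effectivity of $\widetilde{\mathcal{B}}+E=\rho^*(\mathcal{B}+\mathcal{M})-\widetilde{\mathcal{M}}$ from $\rho^*(\mathcal{B}+\mathcal{M})\ge 0$ together with semiampleness of $\mathcal{M}$ allowing $\widetilde{\mathcal{M}}=\rho^*\mathcal{M}$ to be chosen effective and $\le\rho^*(\mathcal{B}+\mathcal{M})$ after a suitable general choice. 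I expect roughly a page once the $b$-divisor conventions are pinned down.
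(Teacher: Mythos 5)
Your derivation of the identity is fine once the detours are stripped away, but note that the only input it needs is the crepant relation $K_{\widetilde{\mathcal{P}}}+\widetilde{\mathcal{B}}+\widetilde{\mathcal{M}}=\rho^*(K_{\mathcal{P}}+\mathcal{B}+\mathcal{M})$ with $\mathcal{B}:=\rho_*\widetilde{\mathcal{B}}$ and $\mathcal{M}:=\rho_*\widetilde{\mathcal{M}}$, which holds because both sides are determined by the fixed divisor class of $K_{\mathcal{X}}$ over $\mathcal{P}$; subtracting $\rho^*K_{\mathcal{P}}$ and using $E=K_{\widetilde{\mathcal{P}}}-\rho^*K_{\mathcal{P}}$ gives $E+\widetilde{\mathcal{B}}+\widetilde{\mathcal{M}}=\rho^*(\mathcal{B}+\mathcal{M})$ at once. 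The aside that the identity would be ``equivalent to $K_{\widetilde{\mathcal{P}}}=\rho^*K_{\mathcal{P}}$ on the nose'' is an arithmetic slip, and the splitting into $K_{\widetilde{\mathcal{P}}}+\widetilde{\mathcal{B}}=\rho^*(K_{\mathcal{P}}+\mathcal{B})$ together with $\widetilde{\mathcal{M}}=\rho^*\mathcal{M}$ is neither needed nor available: it would force $\mathcal{P}$ itself to be an Ambro model, which Theorem \ref{thm--Fujino's--period--mapping--theory} does not assert (passing to $\widetilde{\mathcal{P}}$ is necessary precisely because the moduli part need not descend to $\mathcal{P}$).

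The genuine gap is in the effectivity of $\widetilde{\mathcal{B}}+E$, which you yourself identify as the substantive point. Every route you offer for it is defective: (i) it cannot rest on $\widetilde{\mathcal{M}}=\rho^*\mathcal{M}$, which is false in general for the reason above; (ii) the ``more robust'' fallback --- deducing $\widetilde{\mathcal{B}}+E=\rho^*(\mathcal{B}+\mathcal{M})-\widetilde{\mathcal{M}}\ge0$ from $\rho^*(\mathcal{B}+\mathcal{M})\ge0$ and $\widetilde{\mathcal{M}}\ge0$ --- is a non sequitur, since a difference of effective divisors need not be effective; and (iii) you cannot ``choose the moduli part to be a general effective representative,'' because $\widetilde{\mathcal{M}}$ and $\widetilde{\mathcal{B}}$ are the specific traces entering a divisor-level (not $\mathbb{Q}$-linear-equivalence) identity, and replacing $\widetilde{\mathcal{M}}$ by a linearly equivalent divisor destroys that identity. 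The missing idea is the negativity lemma: $\widetilde{\mathcal{M}}$ is nef over $\mathcal{P}$ (it is the trace of the moduli b-divisor on an Ambro model) and $\rho_*(\rho^*\mathcal{M}-\widetilde{\mathcal{M}})=0$, so by \cite[Lemma 3.39]{KM} the divisor $\rho^*\mathcal{M}-\widetilde{\mathcal{M}}$ is effective and $\rho$-exceptional; then $\widetilde{\mathcal{B}}+E=\rho^*\mathcal{B}+(\rho^*\mathcal{M}-\widetilde{\mathcal{M}})\ge0$ because $\mathcal{B}$ is effective. This is exactly how the paper argues. Your discrepancy-inequality variant, $A_{(\mathcal{P},\mathcal{B}+\mathcal{M})}(F)\le A_{(\mathcal{P},0)}(F)$, is the same inequality in disguise, but ``$\mathcal{M}$ is pseudo-effective'' is not a justification for it; the bound on generalized log discrepancies comes precisely from the $\rho$-nefness of $\widetilde{\mathcal{M}}$ via the negativity lemma, which your writeup never invokes.
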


\begin{proof}
    We first note that $E$ is effective since $\mathcal{P}$ is smooth by the assumption that $S$ is smooth.
Let $\mathcal{B}:=\rho_*\widetilde{\mathcal{B}}$ and $\mathcal{M}:=\rho_*\widetilde{\mathcal{M}}$. 
Then, we have
\[
K_{\widetilde{\mathcal{P}}}+\widetilde{\mathcal{B}}+\widetilde{\mathcal{M}}=\rho^*(K_{\mathcal{P}}+\mathcal{B}+\mathcal{M}).
\]
By this formula, we also have that 
\[
E+\widetilde{\mathcal{B}}+\widetilde{\mathcal{M}}=\rho^*(\mathcal{B}+\mathcal{M}).
\]
Since $\widetilde{\mathcal{M}}$ is $\rho$-nef, we see that $\rho^*\mathcal{M}-\widetilde{\mathcal{M}}$ is effective and $\rho$-exceptional by \cite[Lemma 3.39]{KM}.
By the definition of the generalized log pair, $\mathcal{B}$ is also effective.
Thus, $E+\widetilde{\mathcal{B}}=\rho^*(\mathcal{B}+\mathcal{M})-\widetilde{\mathcal{M}}$ is effective.
\end{proof}

\begin{thm}\label{thm--quasi--map--const}
        Let $S$ be a quasi-projective normal variety and let $f\colon(\mathcal{X},\mathcal{A})\to\mathcal{P}$ be an object of $\mathcal{M}_{d,v,u,r,V}^{\mathrm{CYFib}}(S)$.
        Suppose that $\mathcal{A}$ is a line bundle.
Then, there exists the following object $q\colon \mathcal{P}\to [\mathrm{Cone}(\overline{V}^{\mathrm{BB}})\times S/\mathbb{G}_{m,S}]$ of $\mathcal{M}_{l(2-u),1,\frac{1}{l},u,\iota}^{\mathrm{Kst.qmaps}}(S)$ uniquely up to isomorphism satisfying the following. 
\begin{enumerate}
\item Let $\mathscr{L}$ be the associated line bundle on $\mathcal{P}$ with $q$. Then, $f^*\mathscr{L}\sim_S l(K_{\mathcal{X}/S}-f^*K_{\mathcal{P}/S})$.
\item For any geometric point $\bar{s}\in S$, let $B_{\bar{s}}$ be the fixed part of $q_{\bar{s}}$.
Then, $\frac{1}{l}B_{\bar{s}}$ is the discriminant $\mathbb{Q}$-divisor with respect to $f_{\bar{s}}$ and $q_{\bar{s}}$ defines the moduli map associated to $f_{\bar{s}}$ (cf.~Definition \ref{defn--associated--quasi--map--structure}).
\end{enumerate}
Furthermore, $f^*\mathscr{L}\sim l(K_{\mathcal{X}/S}-f^*K_{\mathcal{P}/S})$.
    \end{thm}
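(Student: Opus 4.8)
The statement to be proved is Theorem \ref{thm--quasi--map--const}: given a normal quasi-projective $S$ and an object $f\colon(\mathcal{X},\mathcal{A})\to\mathcal{P}$ of $\mathcal{M}_{d,v,u,r,V}^{\mathrm{CYFib}}(S)$ with $\mathcal{A}$ a line bundle, one must produce a unique family of K-stable log Fano quasimaps $q\colon\mathcal{P}\to[\mathrm{Cone}(\overline{V}^{\mathrm{BB}})\times S/\mathbb{G}_{m,S}]$ in $\mathcal{M}^{\mathrm{Kst.qmaps}}_{l(2-u),1,\frac1l,u,\iota}(S)$ with the stated properties. The plan is to reduce, via an \'etale cover, to the situation where $S$ is a smooth affine variety (so that $\mathcal{P}$ is smooth quasi-projective and $K_{\mathcal{X}}$ is $\mathbb{Q}$-Cartier), apply Theorem \ref{thm--Fujino's--period--mapping--theory} to obtain the moduli $\mathbb{Q}$-divisor $\widetilde{\mathcal{M}}$ on an Ambro model $\widetilde{\mathcal{P}}$ together with the sections $\widetilde\varphi_0,\dots,\widetilde\varphi_L$ generating $l\widetilde{\mathcal{M}}$, then push these sections down to $\mathcal{P}$ using Lemma \ref{lemma--description--of--base--locus} to control the base locus, and finally assemble the local quasimaps into a global one by descent.

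First I would carry out the reduction. By Lemma \ref{lem--stein--factorizaton} applied to the family, $f$ is flat with $f_*\mathcal{O}_{\mathcal{X}}\cong\mathcal{O}_{\mathcal{P}}$ and there is a line bundle realizing $\omega_{\mathcal{X}/S}^{[-l_0]}$ as a pullback; combined with Theorem \ref{thm--Fujino's--period--mapping--theory} this shows $lK_{\mathcal{X}/\mathcal{P}}\sim f^*D$ for a Cartier divisor $D$ on $\mathcal{P}$ when $\mathcal{P}$ is smooth. Over a smooth affine chart of the \'etale cover, Theorem \ref{thm--Fujino's--period--mapping--theory} gives $\widetilde\mu\colon\widetilde{\mathcal{P}}\to\mathbb{P}^L$ factoring through $\iota$ with $\widetilde\mu^*\mathcal{O}(1)\sim l\widetilde{\mathcal{M}}$. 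I would then use Lemma \ref{lemma--description--of--base--locus}: writing $E$ for the $\rho$-exceptional divisor with $K_{\widetilde{\mathcal{P}}}-\rho^*K_{\mathcal{P}}\sim_{\mathbb{Q}}E$, we have $E+\widetilde{\mathcal{B}}+\widetilde{\mathcal{M}}=\rho^*(\mathcal{B}+\mathcal{M})$ with $\widetilde{\mathcal{B}}+E$ effective and $\rho$-exceptional, hence $\rho_*(l\widetilde{\mathcal{M}})=l\mathcal{M}$. Since $\widetilde{\mathcal{M}}$ is $\rho$-nef, $H^0(\widetilde{\mathcal{P}},\mathcal{O}(l\widetilde{\mathcal{M}}))=H^0(\mathcal{P},\mathcal{O}(l\mathcal{M}))$, so the $\widetilde\varphi_i$ descend to sections $\varphi_i\in H^0(\mathcal{P},\mathcal{O}_{\mathcal{P}}(l\mathcal{M}))$. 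Let $s_{l\mathcal{B}}$ be the section cutting out $l\mathcal{B}$ (an effective Cartier divisor since, by Theorem \ref{thm--Fujino's--period--mapping--theory}(1), $l\mathcal{B}$ is a $\mathbb{Z}$-divisor and by Lemma \ref{lemma--description--of--base--locus} it is effective). Then $\varphi_i\cdot s_{l\mathcal{B}}\in H^0(\mathcal{P},\mathcal{O}_{\mathcal{P}}(l(\mathcal{M}+\mathcal{B})))$ and $l(\mathcal{M}+\mathcal{B})\sim lD-lK_{\mathcal{P}/S}\sim l(K_{\mathcal{X}/S}-f^*K_{\mathcal{P}/S})$ pushed down, which will give the line bundle $\mathscr{L}$ and the sections defining $q$ exactly as in Definition \ref{defn--associated--quasi--map--structure}. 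One checks these sections have no common base point (the $\varphi_i$ generate $l\mathcal{M}$ fiberwise because the $\widetilde\varphi_i$ do and $\rho$ is an isomorphism in codimension one over the generic point of each fiber component), that the degree of $\mathscr{L}_{\bar s}$ is $l(2-u)$ using $K_{\mathcal{X}_{\bar s}}\equiv-u f_{\bar s}^*H$ and the canonical bundle formula $\deg(K_{\mathcal{X}_{\bar s}}) = l\deg(-K_{\mathbb{P}^1}) - l\deg(\mathcal{B}_{\bar s})$-type bookkeeping, and that the weight is $\frac1l$ with $\mathrm{deg}_{\mathbb{P}^1}(-K_{\mathbb{P}^1}-\frac1l B_{\bar s})-u=v$ by uniform adiabatic K-stability together with \cite[Theorem 1.1]{Hat}. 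K-stability of $q_{\bar s}$ follows from Theorem \ref{thm--delta--quasimap} (note $u<1-\frac v2$ is the standing hypothesis) combined with Definition \ref{defn--unif--ad--Kst2}, which translates the uniform adiabatic K-stability of $f_{\bar s}$ into $\delta(q_{\bar s})>1$.

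For the global statement over normal $S$, I would perform the above construction over the two pullbacks $\mathcal{P}\times_S S'\times_S S'$ of an \'etale cover $S'\to S$, observe that the resulting quasimap and line bundle are canonically isomorphic on the overlaps (uniqueness on each chart, which comes from Lemma \ref{lem} applied fiberwise together with the fact that the moduli and discriminant divisors are intrinsic to $f$), and conclude by \'etale descent of line bundles and of families of quasimaps (cf.\ the stack structure on $\mathsf{Qmaps}_{X\subset\mathbb{P}^N}$ and \cite[Remark 2.10]{HH}). Uniqueness of $q$ up to isomorphism then follows from the chart-level uniqueness. The final assertion $f^*\mathscr{L}\sim l(K_{\mathcal{X}/S}-f^*K_{\mathcal{P}/S})$ (as opposed to merely $\sim_S$) comes from comparing the two sides over $S'$ using that $f_*\mathcal{O}_{\mathcal{X}}\cong\mathcal{O}_{\mathcal{P}}$ and that on each chart $\mathscr{L}$ was constructed as $\mathcal{O}_{\mathcal{P}}(l(\mathcal{M}+\mathcal{B}))$, hence the descent datum for $\mathscr{L}$ and for $l(K_{\mathcal{X}/S}-f^*K_{\mathcal{P}/S})$ agree, exactly as in the argument used for $\mathscr{L}$ in Lemma \ref{lem--stein--factorizaton}.

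\textbf{Main obstacle.} The delicate point is the control of the base locus and the precise identification of the fixed part of $q_{\bar s}$ with $l$ times the discriminant divisor. The sections $\widetilde\varphi_i$ generate $l\widetilde{\mathcal{M}}$ on the Ambro model $\widetilde{\mathcal{P}}$, but after pushing forward to $\mathcal{P}$ (and multiplying by $s_{l\mathcal{B}}$) one must verify that no unexpected extra fixed component appears on a fiber $\mathcal{P}_{\bar s}\cong\mathbb{P}^1$ and that the movable part is genuinely basepoint-free there; this requires knowing that $\rho$ is an isomorphism over the generic point of $\mathcal{P}_{\bar s}$ for \emph{every} geometric point $\bar s$ (not just the generic one), which may force replacing $\widetilde{\mathcal{P}}$ by a further birational model adapted to the whole family — the flexibility in choosing $\widetilde{\mathcal{P}}$ granted by Theorem \ref{thm--Fujino's--period--mapping--theory} and by the openness of the Ambro-model locus is what makes this work, but checking it uniformly in $\bar s$ is the technical heart of the argument.
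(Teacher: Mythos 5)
Your overall strategy (produce the sections from Theorem \ref{thm--Fujino's--period--mapping--theory} on an Ambro model, transport them to $\mathcal{P}$ via the injection coming from Lemma \ref{lemma--description--of--base--locus}, and let them define the quasimap) is the same as the paper's, but there are two genuine gaps. First, your reduction step is wrong: an \'etale cover of a normal quasi-projective variety is again normal but in general still singular, so you cannot ``reduce, via an \'etale cover, to the situation where $S$ is a smooth affine variety''; Theorem \ref{thm--Fujino's--period--mapping--theory} requires a smooth base, and the only way to reach it is a projective birational resolution $S'\to S$. Descending the quasimap back from $S'$ to $S$ is then not \'etale descent at all: one must show that the line bundle $\mathscr{L}'$ on $\mathcal{P}_{S'}$ and the sections come from $\mathcal{P}$, which in the paper is a separate Claim using $\varphi_{\mathcal{P}*}\mathcal{O}_{\mathcal{P}_{S'}}\cong\mathcal{O}_{\mathcal{P}}$, the isotriviality of the constructed family of quasimaps along the fibers of $S'\to S$, and the uniqueness statement. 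This step is entirely absent from your plan, and without it the construction only lives on a resolution of $S$.

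Second, the two verifications you defer are exactly the content of the theorem and are not supplied. (a) To know that the sections $\varphi_i$ define a family of quasimaps over \emph{all} of $S$ you must rule out a fiber $\mathcal{P}_{\bar s}$ on which every $\varphi_i$ vanishes identically; the codimension-one argument via $\mathcal{P}_{\mathrm{klt}}\to S$ only handles divisorial points, and for an arbitrary closed point the paper blows up $S$ at that point, compares the two Ambro models (using that the moduli $\mathbb{Q}$-b-divisor is b-nef, so the sections are identified under the induced isomorphism of $H^0$'s), and deduces non-vanishing on the exceptional fiber. (b) Condition (2) -- that the fixed part of $q_{\bar s}$ is $l$ times the discriminant of $f_{\bar s}$ at \emph{every} geometric point, not just at general ones -- does not follow from ``the $\widetilde{\varphi}_i$ generate $l\widetilde{\mathcal{M}}$'': the restriction of the family's discriminant to a special fiber need not be the discriminant of that fiber a priori. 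The paper proves it by restricting to curves $C\to S$ through the given point (using Lemma \ref{lem--quasi--map--S_2} to identify $g^*q$ with the curve-level construction) and then an adjunction computation on the Ambro model, showing $(\widetilde{\mathcal{B}}+\widetilde{\mathcal{P}}_0-D)|_D$ is the discriminant of $\mathcal{X}_0\to\mathbb{P}^1$ and $E|_D=(\widetilde{\mathcal{P}}_0-D)|_D$, so that the restricted base locus is exactly the expected one. Your ``main obstacle'' paragraph correctly identifies this as the heart of the matter but offers no argument, so as written the proof is incomplete precisely where the work lies.
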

    \begin{proof}
        We first deal with the uniqueness of $q$.
        Assume that there exists another family of quasimaps $q'$ with the same property as $q$.
        Then, $q'$ and $q$ define two line bundles $\mathscr{L}'$ and $\mathscr{L}$.
        By the assumption, we see that $\mathscr{L}_{\bar{s}}\sim\mathscr{L}'_{\bar{s}}$ for any geometric point $\overline{s}\in S$.
Since it is enough to show that $q$ and $q'$ are isomorphic to each other Zariski locally on $S$ and all geometric fibers of $\mathcal{P}\to S$ are irreducible and smooth, we may assume that $\mathscr{L}\sim\mathscr{L}'$ by shrinking $S$.
Let $\varphi_0,\ldots,\varphi_L$ (resp.~$\varphi'_0,\ldots,\varphi'_L$) be the sections corresponding to the canonical coordinates of $\mathbb{A}^{L+1}$ of $\mathscr{L}$ (resp.~$\mathscr{L}'$).
By the assumption, we see that for any geometric point $\bar{s}\in S$, $\varphi_{0,\bar{s}},\ldots,\varphi_{L,\bar{s}}$ and $\varphi'_{0,\bar{s}},\ldots,\varphi'_{L,\bar{s}}$ define the same quasimap structures.
This is equivalent to that there exists $\alpha_{\bar{s}}\in\overline{\kappa(s)}^{\times}$ such that $\alpha_{\bar{s}}=\frac{\varphi_{j,\bar{s}}}{\varphi'_{j,\bar{s}}}$ for any $j=0,\ldots,L$ such that $\varphi_{j,\bar{s}}\cdot \varphi'_{j,\bar{s}}\ne0$.
This means that shrinking $S$ if necessary, we may assume that there exists $\alpha\in H^0(S,\mathcal{O}_S^{\times})$ such that $\varphi_i=\alpha \varphi'_i$ for any $i\in\{0,\ldots,L\}$.
Hence, $q$ and $q'$ are isomorphic families of quasimaps to each other.

Next, we deal with the existence of such $q$ as in the assertion.
To show this, we first show that we may replace $S$ with any log resolution of $S$ freely.
More precisely, we show the following claim.
\begin{claim*}
    Let $\varphi\colon S'\to S$ be an arbitrary projective birational morphism from a smooth variety.
    Suppose that there exists a family of quasimaps $q'\colon \mathcal{P}_{S'}\to [\mathrm{Cone}(\overline{V}^{\mathrm{BB}})\times S'/\mathbb{G}_{m,S'}]$ as the assertion for $f_{S'}\colon (\mathcal{X}\times_SS',\mathcal{A}_{S'})\to \mathcal{P}_{S'}$.
    Then, there exists a family of quasimaps $q$ as the assertion for $S$ such that $q_{S'}$ is isomorphic to $q'$.
\end{claim*}

\begin{proof}[Proof of Claim]   
Let $\mathscr{L}'$ be the associated line bundle on $\mathcal{P}_{S'}$ with $q'$ and $\varphi'_0,\ldots,\varphi'_{L}\in\mathrm{Sec}_{q'}(\mathscr{L}')$ the sections corresponding to the canonical basis.
Note that by uniqueness of $q$ as the assertion, it suffices to show the existence of $q$ locally over $S$.
    By the property of $q'$, we see that $f_{S'}^*\mathscr{L}'\sim_{S'}l(K_{\mathcal{X}_{S'}/S'}-f_{S'}^*K_{\mathcal{P}_{S'}/S'})$.
   Since $f\in \mathcal{M}_{d,v,u,r}(S)$, $\omega_{\mathcal{X}/S}^{[-l]}$ is a pullback of a certain relatively ample line bundle on $\mathcal{P}$ over $S$. 
    By the choice of $l$ and Lemma \ref{lem--stein--factorizaton}, there exists a line bundle $\mathscr{L}$ on $\mathcal{P}$ such that $f^*\mathscr{L}=l(K_{\mathcal{X}/S}-f^*K_{\mathcal{P}/S})$. 
    By Lemma \ref{lem--stein--factorizaton} and the assumption that $S'$ is smooth, we know that $f_{S'*}\mathcal{O}_{\mathcal{X}_{S'}}\cong \mathcal{O}_{\mathcal{P}_{S'}}$.
    We also have that $\varphi_{\mathcal{P}*}\mathcal{O}_{\mathcal{P}_{S'}}\cong\mathcal{O}_{\mathcal{P}}$ for $\varphi_{\mathcal{P}}\colon\mathcal{P}_{S'}\to \mathcal{P}$. 
    Thus, $(\varphi_{\mathcal{P}}\circ f_{S'})_*\mathcal{O}_{\mathcal{X}_{S'}}\cong \mathcal{O}_{\mathcal{P}}$ holds and $\mathscr{L}'\sim_{S'} \varphi_{\mathcal{P}}^*\mathscr{L}$ by \cite[Lemma 9.2.7]{FGA}.
    Here, we show that $\mathscr{L}'\sim_S\varphi_{\mathcal{P}}^*\mathscr{L}$.
    Indeed, let $\mathscr{M}$ be a line bundle on $S'$ such that $\mu'^*\mathscr{M}\sim\mathscr{L}'\otimes\varphi_{\mathcal{P}}^*\mathscr{L}^{\otimes -1}$, where $\mu'\colon \mathcal{P}_{S'}\to S'$ is the canonical morphism.
    Take an arbitrary closed point $s\in S$ and an arbitrary closed point $p\in \mathcal{P}_s$.
    Let $U\subset \mathcal{P}$ be an open affine neighborhood of $p$ such that $\mathscr{L}|_U\sim\mathcal{O}_U$ and $\mu'^*\mathscr{M}|_{\varphi_{\mathcal{P}}^{-1}(U)}\sim\mathscr{L}'|_{\varphi_{\mathcal{P}}^{-1}(U)}$.
    By shrinking $U$ and renumbering $i$ if necessary, we may assume that for any $s_1\in U$, there exists a point $p_1\in \varphi_{\mathcal{P}}^{-1}(s_1)$ such that $\varphi'_{0,p_1}\ne0$.
    Here, by the definition of $q'$, the family $q'|_{\varphi^{-1}(\varphi(\mu'(s_1)))}$ is isotrivial (see Definition \ref{defn--isotrivial} below).
    Therefore, it is easy to see that $\varphi'_0$ does not vanish at any point of $\varphi_{\mathcal{P}}^{-1}(U)$.
    This means that $\mu'^*\mathscr{M}\sim_{\mathcal{P}}0$.
    By this, it is not hard to see that $\mathscr{M}'\sim_S0$.
   Thus, we may assume that $\mathscr{L}'\sim\varphi_{\mathcal{P}}^*\mathscr{L}$ by shrinking $S$. 
    Via the identification $H^0(\mathcal{P}',\mathscr{L}')\cong H^0(\mathcal{P},\mathscr{L})$, we can consider $\varphi'_i$'s to be the sections in $H^0(\mathcal{P},\mathscr{L})$.
    Then, $\varphi'_i\in H^0(\mathcal{P},\mathscr{L})$ define a family of quasimaps $q$ such that $q_{S'}=q'$.
    By this property, it is easy to check that $q$ has the desired properties.
    We complete the proof. 
\end{proof}

\noindent{\it Proof of Theorem \ref{thm--quasi--map--const} continued.} 
Thus, replacing $S$ with a suitable log resolution $S'$ of $S$ if necessary, we may assume that there exist $\rho\colon\tilde{\mathcal{P}}\to\mathcal{P}$ and $L+1$ sections $\widetilde{\varphi_0},\ldots,\widetilde{\varphi_L}\in H^0(\tilde{\mathcal{P}},\mathcal{O}_{\widetilde{\mathcal{P}}}(l\widetilde{\mathcal{M}}))$ as in Theorem \ref{thm--Fujino's--period--mapping--theory}.
Here, $\widetilde{\mathcal{M}}$ and $\widetilde{\mathcal{B}}$ are the moduli $\mathbb{Q}$-divisor and the discriminant $\mathbb{Q}$-divisor on $\tilde{\mathcal{P}}$ respectively.
Let $E$ be the $\rho$-exceptional divisor $\mathbb{Q}$-linearly equivalent to $K_{\widetilde{\mathcal{P}}}-\rho^*K_{\mathcal{P}}$. 
By the choice of $l$ and Lemma \ref{lem--stein--factorizaton}, there exists a line bundle $\mathscr{L}$ on $\mathcal{P}$ such that $f^*\mathscr{L}=l(K_{\mathcal{X}/S}-f^*K_{\mathcal{P}/S})$.
In this section, we construct the desired family of quasimaps by using the $L+1$ sections.
By Lemma \ref{lemma--description--of--base--locus}, there exists the natural injection
\begin{equation}\tag{$\clubsuit$}\label{eq--injection-of--function}
H^0(\tilde{\mathcal{P}},\mathcal{O}_{\widetilde{\mathcal{P}}}(l\widetilde{\mathcal{M}}))\hookrightarrow H^0(\tilde{\mathcal{P}},\mathcal{O}_{\widetilde{\mathcal{P}}}(l(\widetilde{\mathcal{M}}+\widetilde{\mathcal{B}}+E)))\cong H^0(\mathcal{P},\mathscr{L}).
\end{equation}
Via this identification, we identify $\widetilde{\varphi_i}$ with a section $\varphi_i\in H^0(\mathcal{P},\mathscr{L})$ for any $i$.
We claim that $\varphi_i\in H^0(\mathcal{P},\mathscr{L})$ define a structure of family of quasimaps $q\colon \mathcal{P}\to[\mathrm{Cone}(\overline{V}^{\mathrm{BB}})\times S/\mathbb{G}_{m,S}]$.
Indeed, let $\mathcal{P}_{\mathrm{klt}}$ be the open subset of $\mathcal{P}$ such that any geometric fiber of $f$ over $\mathcal{P}_{\mathrm{klt}}$ is klt and $\mu^\circ\colon \mathcal{P}_{\mathrm{klt}}\to \overline{V}^{\mathrm{BB}}$ the morphism induced by the family $f|_{f^{-1}(\mathcal{P}_{\mathrm{klt}})}$ of klt Calabi--Yau varieties.
Let $D$ be a prime divisor on $S$.
We note that the canonical morphism $\mathcal{P}_{\mathrm{klt}}\to S$ is surjective and at any general point of $s\in D$, some $\varphi_{i}|_{\mathcal{P}_{\mathrm{klt},s}}$ does not vanish since $\mathcal{P}_{\mathrm{klt},s}$ is not contained in $\rho(\mathrm{Exc}(\rho))$.
This shows that $q$ is a family of quasimaps over $S$ in codimension one.
For any closed point $s\in S$, take the one point blow-up $\xi\colon \mathrm{Bl}_sS\to S$.
As Theorem \ref{thm--Fujino's--period--mapping--theory}, there exist $\rho'\colon \widetilde{\mathcal{P}}'\to\mathcal{P}_{\mathrm{Bl}_sS}$ and $L+1$ sections $\widetilde{\varphi_0}',\ldots,\widetilde{\varphi_L}'\in H^0(\tilde{\mathcal{P}}',\mathcal{O}_{\widetilde{\mathcal{P}}'}(l\widetilde{\mathcal{M}}'))$ corresponding to the canonical coordinate of $\mathbb{A}^{L+1}$.
Here, $\widetilde{\mathcal{M}}'$ and $\widetilde{\mathcal{B}}'$ are the moduli $\mathbb{Q}$-divisor and the discriminant $\mathbb{Q}$-divisor on $\tilde{\mathcal{P}}'$ respectively.
Let $E'$ be the $\rho'$-exceptional divisor $\mathbb{Q}$-linearly equivalent to $K_{\widetilde{\mathcal{P}}'}-\rho'^*K_{\mathcal{P}_{\mathrm{Bl}_sS}}$. 
Let $\widetilde{\mathcal{W}}$ be a smooth variety such that there exist birational projective morphisms $p_{\tilde{\mathcal{P}}}\colon \widetilde{\mathcal{W}}\to \tilde{\mathcal{P}}$ and $p_{\widetilde{\mathcal{P}}'}\colon \widetilde{\mathcal{W}}\to \widetilde{\mathcal{P}}'$.
Note that $p_{\widetilde{\mathcal{P}}'}^*\widetilde{\mathcal{M}}'\sim p_{\tilde{\mathcal{P}}}^*\widetilde{\mathcal{M}}$ by the definition of Ambro models.
Therefore, we can show that the natural injection
\begin{align*}
   H^0(\widetilde{\mathcal{P}}',\mathcal{O}_{\widetilde{\mathcal{P}}'}(l\widetilde{\mathcal{M}}'))\hookrightarrow H^0(\widetilde{\mathcal{P}}',\mathcal{O}_{\widetilde{\mathcal{P}}'}(l(\widetilde{\mathcal{M}}'+\widetilde{\mathcal{B}}'+E')))\cong H^0(\mathcal{P},\mathscr{L})
\end{align*}
obtained as \eqref{eq--injection-of--function} is naturally isomorphic to \eqref{eq--injection-of--function} and by this identification, $\widetilde{\varphi_i}'$ coincides with $\widetilde{\varphi_i}$ for any $i$.
By this fact, since $\widetilde{\varphi_i}'$ does not vanish over the generic point of $\xi^{-1}(s)$ for some $i$ as we have shown, $\widetilde{\varphi_i}_s$ does not vanish.
Thus, $q$ is a family of quasimaps over $S$.
Note that if $q$ satisfies conditions (1) and (2), then $f^*\mathscr{L}\sim l(K_{\mathcal{X}/S}-f^*K_{\mathcal{P}/S})$ holds by construction.

 It suffices to show that $q$ satisfies the condition (2) for any geometric point $\bar{s}\in S$.
 In this section, we reduce this to the case when $S$ is a smooth curve and the condition (2) holds for all but one closed points of $S$.
We first note that it is enough to check condition (2) only for closed points for $S$.
Indeed, it is easy to check that for any subvariety $W\subset S$, if an arbitrary closed point of $W$ satisfies condition (2), then condition (2) also holds for the geometric generic point of $W$ using the same argument as the proof of \cite[Lemma 4.1]{HH}. 
Indeed, by the proof of \cite[Lemma 4.1]{HH}, we see that there exists an \'etale morphism $W'\to W$ such that for  $\mathcal{P}_{W'}$, we can take an Ambro model $\rho\colon\widetilde{\mathcal{P}}\to \mathcal{P}_{W'}$ such that $(\widetilde{\mathcal{P}},\widetilde{\mathcal{M}}+\widetilde{\mathcal{B}}+\mathrm{Exc}(\rho))$ is relatively simple normal crossing over $W'$, where $\widetilde{\mathcal{M}}$ and $\widetilde{\mathcal{B}}$ are the moduli $\mathbb{Q}$-divisor and the discriminant $\mathbb{Q}$-divisor respectively.
We also note that for any general closed point $s\in S$, we can easily check that condition (2) holds using the same argument as the proof of \cite[Lemma 4.1]{HH}.
For any closed point $s\in S$,  we take a morphism $g\colon C\to S$ from a smooth affine curve such that a general point of $C$ is mapped to a general point of $S$ and a closed point $0\in C$ is mapped to $s$ under $g$.
Consider $g^*q$ and $q_C$.
Here, the latter is the quasimap constructed for $C$ in the same way as $q$ for $S$.
We claim that $g^*q$ and $q_C$ are isomorphic.
Indeed, there exists a non-empty open subset $U\subset C$ such that $g^*q$ and $q_C$ are isomorphic over $\mathcal{P}_{\mathrm{klt}}\times_SC\cup \mathcal{P}\times_SU$.
Note that $$\mathrm{codim}_{\mathcal{P}\times_SC}(\mathcal{P}\times_SC\setminus(\mathcal{P}_{\mathrm{klt}}\times_SC\cup \mathcal{P}\times_SU))\ge2.$$ 
Thus, $g^*q$ and $q_C$ are canonically isomorphic to each other by Lemma \ref{lem--quasi--map--S_2}.
Therefore, replacing $S$ with $C$ and shrinking $C$ if necessary, we may assume that $C=S$ is a curve with a closed point $0\in C$ such that at least the condition (2) holds for any closed point of $C\setminus\{0\}$.

Finally, we deal with that under the assumption that that $S$ is a curve with a closed point $0\in S$ such that at least the condition (2) holds for any closed point of $S\setminus\{0\}$, 
$\varphi_0|_0,\ldots,\varphi_L|_0$ define the canonical quasimap structure associated with $(\mathcal{X}_0,\mathcal{A}_0)\to \mathbb{P}^1$ (cf.~Definition \ref{defn--associated--quasi--map--structure}).
Since $\mathcal{P}$ is a ruled surface over $S$, we may assume that $\mathcal{P}\cong\mathbb{P}^1\times S$ by shrinking $S$. 
 Take an Ambro model $\rho\colon\widetilde{\mathcal{P}}\to \mathbb{P}^1\times S$ of $\mathcal{X}\to\mathbb{P}^1\times S$, where $\rho\colon\widetilde{\mathcal{P}}\to \mathbb{P}^1\times S$ is a projective birational morphism from a smooth variety as in Theorem \ref{thm--Fujino's--period--mapping--theory}.
    Let $D\cong\mathbb{P}^1$ be the strict transform of $\mathbb{P}^1\times\{0\}$ in $\mathcal{P}$. 
    Note that $D$ is isomorphic to $\mathbb{P}^1\times\{0\}$ via $\rho$.
    Let $(\widetilde{\mathcal{X}},\widetilde{\mathcal{D}})\to\widetilde{\mathcal{P}}$ be a projective contraction of smooth varieties that is birational to $\mathcal{X}\to \mathbb{P}^1\times S$ such that $\mathcal{X}$ and $(\widetilde{\mathcal{X}},\widetilde{\mathcal{D}})$ are log crepant.
    Let $\widetilde{\mathcal{B}}$ and $\widetilde{\mathcal{M}}$ be the discriminant and the moduli $\mathbb{Q}$-divisors associated with $(\widetilde{\mathcal{X}},\widetilde{\mathcal{D}})\to\widetilde{\mathcal{P}}$ respectively. 
    We note that $E:=K_{\widetilde{\mathcal{P}}}-\rho^*K_{\mathbb{P}^1\times S}$ is $\rho$-exceptional and effective since $S\times \mathbb{P}^1$ is smooth. 
    By replacing $\rho$ with a further blow up, we may assume that $(\widetilde{\mathcal{P}},\widetilde{\mathcal{B}}+\widetilde{\mathcal{M}}+\widetilde{\mathcal{P}}_0 + \mathrm{Exc}(\rho))$ is simple normal crossing.
    Here, the sections $\varphi_0,\ldots,\varphi_L$ define the morphism $\mu^\circ\colon (\mathbb{P}^1\times S)_{\mathrm{klt}}\to V$ by the construction of $q$.
    Furthermore, if $\tilde{\mu}\colon \widetilde{\mathcal{P}}\to \overline{V}^{\mathrm{BB}}$ denotes the morphism that is equivalent to $\mu^\circ$ as a rational map, then via the identification 
    \[ H^0(\tilde{\mathcal{P}},\mathcal{O}_{\widetilde{\mathcal{P}}}(l\widetilde{\mathcal{M}}))\ni\rho^*\varphi\hookrightarrow  \varphi\in H^0(\mathcal{P},\mathscr{L}),
    \]
    $\rho^*\varphi_i$'s generate $l\widetilde{\mathcal{M}}$ and define $\tilde{\mu}$.
    Thus, if we regard $\rho^*\varphi_i$ as a section of $H^0(\widetilde{\mathcal{P}},\rho^*\mathscr{L})$, then $\rho^*\varphi_i$'s generate the ideal sheaf $\mathcal{O}_{\widetilde{\mathcal{P}}}(-l(\widetilde{\mathcal{B}}+E))$ (cf.~Lemma \ref{lemma--description--of--base--locus}).
    Let $(\widehat{\mathcal{X}},\widehat{\mathcal{D}})$ be the main component of $(\widetilde{\mathcal{X}},\widetilde{\mathcal{D}}+\widetilde{\mathcal{X}}_0-\widehat{\mathcal{X}})\times_{\widetilde{\mathcal{P}}}D$.
    Here, we may assume that $\widehat{\mathcal{X}}$ is a smooth divisor in $\widetilde{\mathcal{X}}$.
    By restricting $\tilde{\mu}$ to $D$, we have that $\widetilde{\mathcal{M}}|_D$ is the moduli $\mathbb{Q}$-divisor associated with the subklt--trivial fibration $(\widehat{\mathcal{X}},\widehat{\mathcal{D}})\to D$. 
    It is easy to see that $(\widehat{\mathcal{X}},\widehat{\mathcal{D}}+(\widetilde{\mathcal{X}_0}-\widehat{\mathcal{X}})|_{\widehat{\mathcal{X}}})$ is log crepant to $\mathcal{X}_0$ by the adjunction formula.
    Thus, $(D,(\widetilde{\mathcal{M}}+\widetilde{\mathcal{B}}+\widetilde{\mathcal{P}}_0-D)|_D)$ is isomorphic to the generalized log pair defined by $\mathcal{X}_0\to\mathbb{P}^1$.
    Since $\widetilde{\mathcal{M}}|_D$ is the moduli $\mathbb{Q}$-divisor, it is not hard to see that $(\widetilde{\mathcal{B}}+\widetilde{\mathcal{P}}_0-D)|_D$ is the discriminant divisor with respect to $\mathcal{X}_0\to\mathbb{P}^1$.
    Furthermore, we note that $E|_D=(\widetilde{\mathcal{P}}_0-D)|_D$ as Cartier divisors.
    Indeed, consider the following equation of log canonical divisors
    \[
    K_{\widetilde{\mathcal{P}}}+\widetilde{\mathcal{P}}_{0}=\rho^*(K_{\mathbb{P}^1\times S}+\mathbb{P}^1\times\{0\})+E.
    \]
    By this, we have
    \[
    K_{D}+(\widetilde{\mathcal{P}}_{0}-D)|_D=\rho|_D^*K_{\mathbb{P}^1}+E|_D.
    \]
    Hence, $E|_D=(\widetilde{\mathcal{P}}_0-D)|_D$ as Cartier divisors.
    Since $\rho^*\varphi_0|_D,\ldots,\rho^*\varphi_L|_D$ generate the ideal sheaf $\mathcal{O}_{D}(-l(\widetilde{\mathcal{B}}+E)|_D)$, we see that $\varphi_0|_0=\rho^*\varphi_0|_D,\ldots,\varphi_L|_0=\rho^*\varphi_L|_D$ define the associated quasimap structure to the klt--trivial fibration $f_0\colon \mathcal{X}_0\to\mathbb{P}^1$.
    This shows that condition (2) holds in this case.
    We complete the proof.
    \end{proof}

    \begin{cor}
Let $S$ be a quasi-projective seminormal scheme of finite type over $\mathbbm{k}$ and let $f\colon(\mathcal{X},\mathcal{A})\to\mathcal{P}$ be an object of $\mathcal{M}_{d,v,u,r,V}^{\mathrm{CYFib}}(S)$.
        Suppose that $\mathcal{A}$ is a line bundle.
Then, there exists the following object $q\colon \mathcal{P}\to [\mathrm{Cone}(\overline{V}^{\mathrm{BB}})\times S/\mathbb{G}_{m,S}]$ of $\mathcal{M}_{l(2-u),1,\frac{1}{l},u,\iota}^{\mathrm{Kst.qmaps}}(S)$ uniquely up to isomorphism satisfying the following. 
\begin{enumerate}
\item Let $\mathscr{L}$ be the associated line bundle on $\mathcal{P}$ with $q$. Then, $f^*\mathscr{L}\sim l(K_{\mathcal{X}/S}-f^*K_{\mathcal{P}/S})$.
\item For any geometric point $\bar{s}\in S$, let $B_{\bar{s}}$ be the fixed part of $q_{\bar{s}}$.
Then, $\frac{1}{l}B_{\bar{s}}$ is the discriminant $\mathbb{Q}$-divisor with respect to $f_{\bar{s}}$ and $q_{\bar{s}}$ defines the moduli map associated to $f_{\bar{s}}$ (cf.~Definition \ref{defn--associated--quasi--map--structure}).
\end{enumerate}       
    \end{cor}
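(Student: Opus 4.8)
The plan is to reduce the case of a general quasi-projective seminormal base $S$ to the case of a smooth quasi-projective variety, which is exactly Theorem \ref{thm--quasi--map--const}. The key point is that a family of quasimaps factoring through $\overline{V}^{\mathrm{BB}} \subset \mathbb{P}^L$ together with the normalization data (conditions (1) and (2)) is an object of a separated Deligne--Mumford stack, hence determined by an fppf (even étale) descent datum; and that a seminormal scheme is, by construction, the universal target of certain finite universal homeomorphisms, so one can manufacture such a descent datum by passing to the normalization.

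First I would take the normalization $\nu \colon S^\nu \to S$; since $S$ is seminormal it is in particular reduced, so $\nu$ is finite and birational, and $S^\nu$ is a disjoint union of normal quasi-projective varieties. Pulling back $f \colon (\mathcal{X},\mathcal{A}) \to \mathcal{P}$ to $S^\nu$ gives an object of $\mathcal{M}^{\mathrm{CYFib}}_{d,v,u,r,V}(S^\nu)$ with $\mathcal{A}_{S^\nu}$ still a line bundle, so Theorem \ref{thm--quasi--map--const} produces a quasimap $q^\nu \colon \mathcal{P}_{S^\nu} \to [\mathrm{Cone}(\overline{V}^{\mathrm{BB}}) \times S^\nu / \mathbb{G}_{m,S^\nu}]$ satisfying (1) and (2), unique up to isomorphism. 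Next I would check that $q^\nu$ descends along $\nu$. Consider the two projections $p_1, p_2 \colon S^\nu \times_S S^\nu \to S^\nu$. Both $p_1^* q^\nu$ and $p_2^* q^\nu$ are objects of $\mathcal{M}^{\mathrm{CYFib}}$-associated quasimap type over $S^\nu \times_S S^\nu$, attached to the same underlying family $f_{S^\nu \times_S S^\nu}$ (because $p_1, p_2$ become equal after composing with $\nu$, by the universal property of the normalization applied to the reduced scheme $S^\nu \times_S S^\nu$, or by working over the dense open locus where $\nu$ is an isomorphism and then using the normality/$S_2$-argument of Lemma \ref{lem--quasi--map--S_2}). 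Since $S^\nu \times_S S^\nu$ need not be normal, I would apply the seminormalization/normalization argument one more level, or more cleanly: first settle the case of a seminormal \emph{curve} base by hand, then invoke that the uniqueness statement of Theorem \ref{thm--quasi--map--const} together with the separatedness of $\mathcal{M}^{\mathrm{Kst,qmaps}}_{l(2-u),1,\frac1l,u,\iota}$ gives a canonical isomorphism $p_1^* q^\nu \cong p_2^* q^\nu$. Verifying the cocycle condition on $S^\nu \times_S S^\nu \times_S S^\nu$ is then automatic from the same uniqueness.

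Having a descent datum, I would appeal to the fact that $\mathsf{Qmaps}_{\overline{V}^{\mathrm{BB}} \subset \mathbb{P}^L}$ is a stack (noted in Definition \ref{defn-qmaps}) together with the stack property of $\mathcal{M}^{\mathrm{Kst,qmaps}}_{l(2-u),1,\frac1l,u,\iota}$, but descent along a finite birational morphism of reduced schemes is \emph{not} fppf descent, so the subtle point — and the one I expect to be the main obstacle — is that a quasi-coherent-sheaf-level object does not automatically descend along $\nu$. This is precisely where seminormality of $S$ is used: the associated line bundle $\mathscr{L}$ descends because $S$ is seminormal (the obstruction to gluing line bundles along a finite universal homeomorphism onto a seminormal base vanishes, cf.\ the Traverso--type results behind Definition \ref{def-seminormal}), and then the sections $\varphi_0, \dots, \varphi_L$ cutting out $q$ descend because they agree over $S^\nu \times_S S^\nu$ and the pushforward of the structure sheaf of $\mathcal{P}_{S^\nu}$ to $\mathcal{P}_S$ is $\mathcal{O}_{\mathcal{P}_S}$ on the seminormal locus (one uses that $\mathcal{P} \to S$ is smooth with geometrically integral fibers, so the seminormality propagates to $\mathcal{P}$ by a standard argument). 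I would therefore organize the proof as: (i) descend $\mathscr{L}$ using seminormality of $S$ (equivalently of $\mathcal{P}$); (ii) descend the sections $\varphi_i \in H^0(\mathcal{P}_{S^\nu}, \mathscr{L}_{S^\nu})$ by the agreement over $S^\nu \times_S S^\nu$ and $H^0(\mathcal{O}) = $ constants along fibers; (iii) conclude that the $\varphi_i$ define the required $q$ over $S$, with conditions (1) and (2) inherited fiberwise from $q^\nu$ since $\nu$ is a bijection on points and an isomorphism of residue fields. The uniqueness of $q$ over $S$ follows from the uniqueness over $S^\nu$ plus faithfulness of pullback along $\nu$ on the seminormal base. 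This completes the reduction and hence the corollary.
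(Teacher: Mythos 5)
Your overall route (normalize, apply Theorem \ref{thm--quasi--map--const} to obtain $q^\nu$ over $S^\nu$, descend to $S$ using seminormality, and read off (1),(2) fiberwise) is the same skeleton as the paper's, but the descent mechanism you propose is where the argument fails, and that is precisely the delicate point. First, the normalization $\nu\colon S^\nu\to S$ of a seminormal scheme is \emph{not} a universal homeomorphism (the seminormalization is; the normalization can separate points, e.g.\ over a node, and nodal curves are seminormal), and line bundles do \emph{not} canonically descend from $S^\nu$ to $S$: for a nodal cubic, a line bundle on $S^\nu=\mathbb{P}^1$ has a $\mathbb{G}_m$-torsor of descents, given by gluing data over the two preimages of the node. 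So your step (i) rests on a false general principle. Second, your step (ii) invokes ``the pushforward of $\mathcal{O}_{\mathcal{P}_{S^\nu}}$ is $\mathcal{O}_{\mathcal{P}}$ on the seminormal locus''; since $\nu$ is affine this would force $\nu$ to be an isomorphism, i.e.\ it only holds when the base is already normal. For seminormal $S$ the true statement is only that $\mathcal{O}_S$ consists of those functions in $\nu_*\mathcal{O}_{S^\nu}$ that are constant on the fibers of $\nu$. Third, your attempt to build a descent datum from a canonical isomorphism $p_1^*q^\nu\cong p_2^*q^\nu$ runs into the problem you flag yourself: $S^\nu\times_S S^\nu$ is neither normal nor covered by any uniqueness statement available at that stage, and ``settle the seminormal curve case by hand'' does not close this.

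The missing idea, which is how the paper actually uses seminormality, is Kollár's geometric-quotient description $S\cong S^\nu/R$ with $R=S^\nu\times_S S^\nu$ (\cite[Lemma 9.8]{kollar-mmp}): a morphism from $S^\nu$ to a scheme that is constant on the fibers of $\nu$ descends uniquely to $S$. Concretely, after shrinking $S$ (legitimate because uniqueness lets one glue local constructions) so that the sections $\varphi_0,\ldots,\varphi_L\in\mathrm{Sec}_{q^\nu}(\mathscr{L}^\nu)$ are nonzero on every fiber, one descends not the abstract line bundle but the relative Cartier divisor $\mathrm{div}(\varphi_0)$, i.e.\ the classifying morphism $S^\nu\to\mathbf{Div}_{\mathcal{P}/S}$; the required constancy on fibers of $\nu$ is exactly the fiberwise canonicity of the associated quasimap (condition (2)), under the identifications $\mathbb{P}^1_{s'}\cong\mathbb{P}^1_{s''}$ for $s',s''$ over the same point of $S$. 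This produces a relative Cartier divisor $\mathcal{D}_0$ on $\mathcal{P}$, one sets $\mathscr{L}:=\mathcal{O}_{\mathcal{P}}(\mathcal{D}_0)$, and then each $\varphi_i$, viewed as a morphism $\mathcal{P}^\nu\to\mathbb{A}_{\mathcal{P}^\nu}((\mathscr{L}^\nu)^\vee)$, descends to $\eta_i\colon\mathcal{P}\to\mathbb{A}_{\mathcal{P}}(\mathscr{L}^\vee)$ by the same quotient property; the $\eta_i$ define the desired $q$, and (1),(2) hold by construction. Finally, uniqueness over $S$ is obtained by repeating the argument of Theorem \ref{thm--quasi--map--const} (it uses only reducedness and irreducibility of geometric fibers), not by ``faithfulness of pullback along $\nu$'', which would again require descending an isomorphism and hence the same machinery.
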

 
\begin{proof}
The uniqueness of $q$ also follows from the same argument of the proof of Theorem \ref{thm--quasi--map--const}.
Therefore, to show the existence part, we may freely shrink $S$.
Take the normalization $\nu\colon S^\nu\to S$.
As Theorem \ref{thm--quasi--map--const}, we get the object $q^\nu\colon \mathcal{P}^\nu\to [\mathrm{Cone}(\overline{V}^{\mathrm{BB}})\times S^\nu/\mathbb{G}_{m,S^\nu}]$ associated with $(\mathcal{X}\times_SS^\nu,\mathcal{A}^\nu)$, where $\mathcal{A}^\nu$ is defined as the pullback of $\mathcal{A}$.
Note that $\mathcal{P}^\nu\cong \mathcal{P}\times_SS^\nu$.
Let $s\in S$ be a closed point and take sections $\varphi_0,\ldots,\varphi_L\in \mathrm{Sec}_{q^\nu}(\mathscr{L}^\nu)$ that correspond to the canonical basis of $\mathbbm{k}^{L+1}$ such that $(\varphi_i)_{p}\ne0$ for any $i\in\{0,\ldots,L\}$, where $\mathscr{L}^\nu$ is the line bundle associated with $q^\nu$.
Shrinking $S$, we may assume that $(\varphi_i)_{s'}\ne0$ for any $i\in\{0,\ldots,L\}$ and $s'\in S^\nu$.
Set $R:=S^\nu\times_SS^\nu$ and $S^\nu/R\cong S$ by \cite[Lemma 9.8]{kollar-mmp} (see also \cite[Definition 9.4]{kollar-mmp} for the definition of $S^\nu/R$).
Note that $\varphi_{0}$ defines a section $\psi^\nu\colon S^\nu\to \mathbf{Div}_{\mathcal{P}^\nu/S^\nu}$.
Note that for any $s',s''\in S^\nu$ that are mapped to the same point of $S$, $(\varphi_{0})_{s'}$ and $(\varphi_{0})_{s''}$ are identified under the isomorphism $\mathbb{P}^1_{s'}\cong\mathbb{P}^1_{s''}$.
This means that $\psi^\nu(s')=\psi^\nu(s'')$ and there exists a unique morphism $\psi\colon S\to \mathbf{Div}_{\mathcal{P}/S}$ such that $\psi^\nu$ coincides with the morphism induced by $\psi$.
Let $\mathcal{D}_0$ be the relative Cartier divisor on $\mathcal{P}$ over $S$, which is induced by $\psi$.
Set $\mathscr{L}\cong\mathcal{O}_{\mathcal{P}}(\mathcal{D}_0)$.
We note that $\mathscr{L}^\nu\cong \mathcal{O}_{\mathcal{P}^\nu}(\mathrm{div}_{\mathscr{L}^\nu}(\varphi_0))$ and $\varphi_i$ can be regarded as sections $\eta^\nu_i\colon\mathcal{P}^\nu\to \mathbb{A}_{\mathcal{P}^\nu}((\mathscr{L}^\nu)^\vee)$, where $(\mathscr{L}^\nu)^\vee$ is the dual of $\mathscr{L}^\nu$.
Then, there exist sections $\eta_i\colon \mathcal{P}\to \mathbb{A}_{\mathcal{P}}(\mathscr{L}^\vee)$ that induce $\eta^\nu_i$.
Such $\eta_i$ define a family of quasimaps $q\colon \mathcal{P}\to [\mathrm{Cone}(\overline{V}^{\mathrm{BB}})\times S/\mathbb{G}_{m,S}]$.
By construction of $q$, we complete the existence part of the assertion.
\end{proof}

 \begin{proof}[Proof of Theorem \ref{thm--quasi--map--construction--moduli--map}]
We first recall that $\mathcal{M}^{\mathrm{CYfib}}_{d,v,u,r,V}$ is obtained as a quotient stack $[Z/G]$ for some scheme $Z$ of finite type over $\mathbbm{k}$ and algebraic group $G$ acting on $Z$ by the proof of \cite[Theorem 1.3, 5.1]{HH}.
Note that $(\mathcal{M}^{\mathrm{CYfib}}_{d,v,u,V})^{\mathrm{sn}}\cong [Z^{\mathrm{sn}}/G]$, where $Z^{\mathrm{sn}}$ is the seminormalization of $Z$.
Let $f\colon(\mathcal{X},\mathcal{A})\to \mathcal{P}$ be the object corresponding to $Z^{\mathrm{sn}}\to \mathcal{M}^{\mathrm{CYfib}}_{d,v,u,r,V}$.
Recall by the proof of \cite[Theorems 1.3, 5.1]{HH} that we can choose $Z$ and $G$ so that $\mathcal{A}$ is a line bundle and $f\colon(\mathcal{X},\mathcal{A})\to \mathcal{P}$ is $G$-equivariant over $Z^\nu$.
Then, we can construct a family of quasimaps $q\colon\mathcal{P}\to[\mathrm{Cone}(\overline{V}^{\mathrm{BB}})\times Z^{\mathrm{sn}}/\mathbb{G}_{m,Z^{\mathrm{sn}}}]\in \mathcal{M}^{\mathrm{Kst.qmaps}}_{l(2-u),1,\frac{1}{l},u,\iota}(Z^{\mathrm{sn}})$ with respect to $f$ as Theorem \ref{thm--quasi--map--const}. 
By the uniqueness of $q$, it is easy to check that $q$ admits a natural $G$-action and is $G$-equivariant over $Z^{\mathrm{sn}}$.
This induces a $G$-invariant morphism $\alpha'\colon Z^{\mathrm{sn}}\to \mathcal{M}^{\mathrm{Kst.qmaps}}_{l(2-u),1,\frac{1}{l},u,\iota}$.
Now, $\alpha'$ descends to $[Z^{\mathrm{sn}}/G]$ and hence we obtain $\alpha$ as the assertion.
\end{proof}

Next, we observe the following property of $\alpha$.

\begin{thm}\label{thm--quasi-finiteness--of--two--moduli}
The morphism $\alpha$ in Theorem \ref{thm--quasi--map--construction--moduli--map} is a quasi-finite morphism.
In particular, $(M_{d,v,u,V}^{\mathrm{CYFib}})^{\mathrm{sn}}$ is quasi-projective.
\end{thm}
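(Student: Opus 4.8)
The plan is to reduce the quasi-finiteness of $\alpha$ to a statement about $\mathbbm{k}$-valued points and then prove that the associated quasimap, together with the discrete data it records, determines a uniformly adiabatically K-stable klt--trivial fibration over $\mathbb{P}^1$ up to finitely many choices. Since $\alpha$ is a morphism of Deligne--Mumford stacks of finite type, it suffices to check that the fibers of $\alpha$ over geometric points are finite; equivalently, by descending to coarse moduli spaces, that the induced map $(M^{\mathrm{CYfib}}_{d,v,u,V})^{\mathrm{sn}}\to M^{\mathrm{Kst,qmaps}}_{l(2-u),1,\frac{1}{l},u,\iota}$ has finite fibers. So the first step is: fix a K-stable log Fano quasimap $q\colon (\mathbb{P}^1,B)\to[\mathrm{Cone}(\overline{V}^{\mathrm{BB}})/\mathbb{G}_{m,\mathbbm{k}}]$ in the image, and bound the number of (isomorphism classes of) objects $f\colon (X,A)\to\mathbb{P}^1$ in $\mathcal{M}^{\mathrm{CYfib}}_{d,v,u,V}(\mathbbm{k})$ with $\alpha(f)\cong q$.

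The core point is that $q$ remembers exactly the data we called the moduli map and the discriminant: by construction (Theorem \ref{thm--quasi--map--const}(2)), the fixed part of $q$ is $lB_f$ where $B_f$ is the discriminant $\mathbb{Q}$-divisor of $f$, and $q$ determines the moduli morphism $\bar{q}\colon \mathbb{P}^1\to\overline{V}^{\mathrm{BB}}$. By Theorem \ref{thm--Fujino--moduli} (and Definition \ref{defn--Hodge-Q-line-bundle}), $\bar{q}$ also pins down the moduli $\mathbb{Q}$-divisor $M_f$ up to $\mathbb{Q}$-linear equivalence, since $M_f\sim_{\mathbb{Q}}\bar{q}^*\Lambda_{\mathrm{Hodge}}$. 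Thus from $q$ we recover the generalized log pair $(\mathbb{P}^1,B_f+M_f)$, and $-K_{\mathbb{P}^1}-B_f-M_f$ together with $u$. The plan is then to reconstruct $f$ birationally: the klt--trivial fibration structure $K_X\sim_{\mathbb{Q}}f^*D$ with $D\sim_{\mathbb{Q}}K_{\mathbb{P}^1}+B_f+M_f$ via the canonical bundle formula, together with the $f$-ample polarization $A$ normalized by $(H\cdot A^{d-1})=v$ and $K_X\equiv -uf^*H$, should determine $X$ as a relative log canonical model over $\mathbb{P}^1$. Concretely, one uses that the map $\bar{q}$ gives, over the locus where fibers are smooth, the family of polarized Calabi--Yau varieties up to the discrete ambiguity in $\mathcal{V}$ (finitely many components, each a quotient of a symmetric domain), and then invokes the $\mathbf{B}$-semiampleness results of Fujino \cite{fujino--canonical-certain} and Kim \cite{kim-can-bundle-formula} plus Lemma \ref{lem--lc-model} (log canonical models depend only on the numerical class) and the MMP \cite{BCHM} to upgrade this to a global statement: any two fibrations $f_1,f_2$ with the same associated quasimap have isomorphic generic fibers over a common open set, hence are crepant birational over $\mathbb{P}^1$, hence — being both relative log canonical models with $\mathbb{Q}$-linearly equivalent log canonical classes — isomorphic over $\mathbb{P}^1$ up to the finite monodromy/discrete data. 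This is where one imitates the philosophy mentioned in the introduction (Kodaira's determination of elliptic surfaces by the moduli map up to discrete data).

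The main obstacle I expect is controlling the gap between the \emph{birational} reconstruction and the \emph{biregular} one, i.e.\ showing that the finitely many crepant birational models over $\mathbb{P}^1$ that share the same moduli map and discriminant contain only finitely many isomorphism classes of our objects $(X,A)\to\mathbb{P}^1$. The subtlety is that $A$ is only an $f$-ample line bundle, not canonically determined, so one must argue that the numerical normalization $(H\cdot A^{d-1})=v$ together with $K_X\equiv -uf^*H$ leaves only finitely many choices of $A$ modulo $f^*\mathrm{Pic}(\mathbb{P}^1)$ on each fixed $X$ — this uses boundedness of $\mathcal{M}_{d,v,u,r}$ (Theorem \ref{thm--adiabatic--k-moduli}) and the finiteness of automorphism groups. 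The other delicate point is handling the fibers of $f$ that are singular (symplectic varieties smoothable to IHS manifolds), where the moduli map only sees the symplectic resolution; here Theorem \ref{thm--period--mapping--HK} and Theorem \ref{thm--klsv} guarantee the resolution is deformation-equivalent to the fiber and the period determines the fiber up to birational equivalence, which combined with the relative log canonical model argument (the fibers of $f$ being the fiberwise log canonical models) gives the required finiteness. Once quasi-finiteness of $\alpha$ is established, the quasi-projectivity of $(M^{\mathrm{CYfib}}_{d,v,u,V})^{\mathrm{sn}}$ follows immediately: $\alpha$ descends to a quasi-finite morphism of coarse spaces to the projective scheme $M^{\mathrm{Kps,qmaps}}_{l(2-u),1,\frac{1}{l},u,\iota}$ (Theorem \ref{thm--K-moduli-of-log-Fano-quasimaps}), and a quasi-finite separated morphism to a quasi-projective scheme has quasi-projective source.
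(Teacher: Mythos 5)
Your overall reduction (check fibers over geometric points; deduce quasi-projectivity at the end from quasi-finiteness plus projectivity of the quasimap moduli) is sound and the last paragraph matches the paper, but the central step of your reconstruction argument has a genuine gap. You claim that two fibrations $f_1,f_2$ with the same associated quasimap ``have isomorphic generic fibers over a common open set, hence are crepant birational over $\mathbb{P}^1$.'' The quasimap only records the discriminant and the moduli map to the coarse space $\overline{V}^{\mathrm{BB}}$ (Theorem \ref{thm--quasi--map--const}); equality of moduli maps identifies the fibers of $f_1$ and $f_2$ over each closed point of $U\subset\mathbb{P}^1$ \emph{individually}, but no section or rigidification is part of the moduli problem, so the two families over $U$ may be non-isomorphic twists of one another (forms, e.g.\ torsors in the abelian-fiber case), and nothing in your argument controls these twists or shows that only finitely many of them give objects of $\mathcal{M}^{\mathrm{CYfib}}_{d,v,u,r,V}$ with the prescribed quasimap. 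This descent problem is exactly the technical heart, and the paper resolves it only in the situation it actually needs for quasi-finiteness, namely a one-parameter family whose associated quasimap family is isotrivial: there, Proposition \ref{prop--isotrivial--fibers} trivializes the family over $U\times C$ after a finite base change using the local quotient presentation of the klt Calabi--Yau moduli stack and the principal-bundle Lemma \ref{lem--principal-g-bundle-isotrivial} — an argument with no counterpart for two abstractly given fibrations as in your set-up.

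There is a second unjustified leap: even granting a crepant birational map over $\mathbb{P}^1$, the conclusion ``isomorphic over $\mathbb{P}^1$ up to finite discrete data'' does not follow from Lemma \ref{lem--lc-model} as you invoke it, because $K_{X_i}\sim_{\mathbb{Q},\mathbb{P}^1}0$, so $X_i$ is not the relative log canonical model of anything intrinsic; in the paper Lemma \ref{lem--lc-model} is applied only after adjoining auxiliary boundaries $\Delta_c+B_c+H_c$ which are \emph{numerically equivalent} because the two objects being compared are fibers of a single algebraic family over a curve (algebraic equivalence of the restricted boundaries), whereas for two unrelated polarizations $A_1,A_2$ no such numerical equivalence is available, and finiteness of relatively $K$-trivial birational models is not known in this generality. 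Note finally that the paper avoids pointwise finiteness altogether: it suffices to show that any smooth curve mapping into a geometric fiber of $\alpha$ is contracted by the induced map to the coarse space $M^{\mathrm{CYfib}}_{d,v,u,r,V}$, which is exactly Proposition \ref{prop--isotrivial--fibers} combined with \cite[Proposition 6.3]{Hat23}; finiteness of the fiber is then automatic from finite type. If you want to salvage your route, you would have to supply a genuine argument bounding the twists (and the choices of polarization and of birational model) for a fixed quasimap, which is substantially harder than the family-isotriviality statement the paper proves.
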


To prove the theorem, we prepare the following two results.

\begin{lem}\label{lem--principal-g-bundle-isotrivial}
    Let $U$ and $T$ be smooth affine curves and $t\in T$ a closed point.
    Let $G$ be a finite reduced group scheme and $\pi\colon\mathscr{G}\to U\times T$ a principal $G$-bundle.

    Then, there exist a finite morphism $T'\to T$ from a smooth curve and an isomorphism $\mathscr{G}_{T'}\cong\mathscr{G}_{t}\times T'$ as principal $G$-bundles over $U\times T'$.
\end{lem}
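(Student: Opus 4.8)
The plan is to translate the family into fundamental-group data and reduce the statement to killing the monodromy coming from $T$. Since we work over $\mathbb{C}$ and $G$ is finite and reduced, $G$ is a finite constant group scheme, so a principal $G$-bundle over a scheme $X$ is the same thing as a (possibly disconnected) $G$-Galois finite \'etale cover of $X$, equivalently, via analytification and Riemann existence, a topological covering of $X^{\mathrm{an}}$ equipped with a free transitive $G$-action on fibers. As $U$ and $T$ are irreducible smooth affine curves, $U\times T$ is connected, and one has the K\"unneth isomorphism $\pi_1(U^{\mathrm{an}}\times T^{\mathrm{an}},(u_0,t))\cong\pi_1(U^{\mathrm{an}},u_0)\times\pi_1(T^{\mathrm{an}},t)$.

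First I would fix base points $u_0\in U$, $t\in T$, and a point of $\mathscr{G}$ over $(u_0,t)$; this presents $\mathscr{G}$ by a group homomorphism $\rho\colon \pi_1(U^{\mathrm{an}},u_0)\times\pi_1(T^{\mathrm{an}},t)\to G$, in such a way that $\mathscr{G}_t$ is presented by $a\mapsto\rho(a,1)$ and, for any pointed connected finite \'etale cover $h\colon T'\to T$ with $h(t')=t$, the pullback $\mathscr{G}_{T'}$ is presented by $(a,b)\mapsto \rho(a,h_*b)$ while $\mathscr{G}_t\times T'$ is presented by $(a,b)\mapsto\rho(a,1)$. Since $(a,1)$ and $(1,b)$ commute in the product group, their images commute in $G$ and $\rho(a,h_*b)=\rho(a,1)\cdot\rho(1,h_*b)$; hence $\mathscr{G}_{T'}\cong\mathscr{G}_t\times T'$ as principal $G$-bundles over $U\times T'$ exactly when $h_*\pi_1(T',t')\subseteq\ker\sigma$, where $\sigma:=\rho(1,\cdot)\colon\pi_1(T^{\mathrm{an}},t)\to G$.

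Then I would take $T'\to T$ to be the connected finite \'etale Galois cover corresponding to the normal subgroup $\ker\sigma\trianglelefteq\pi_1(T^{\mathrm{an}},t)$. This is a genuinely finite cover because $[\pi_1(T^{\mathrm{an}},t):\ker\sigma]=|\operatorname{im}\sigma|\le|G|<\infty$; by Riemann existence it is an affine algebraic curve, smooth and finite over $T$, and irreducible since it is a connected normal variety. For this choice the inclusion above holds, so $\mathscr{G}_{T'}\cong\mathscr{G}_t\times T'$, which is the assertion. Alternatively, and possibly cleaner to write out, one may observe that $\mathbf{Isom}^G_{U\times T}(p^*\mathscr{G}_t,\mathscr{G})$ (where $p\colon U\times T\to U\times\{t\}\cong U$) is a principal $G$-bundle on $U\times T$ which is pulled back from the principal $G$-bundle on $T$ classified by $\sigma$, and take $T'$ to be a connected component of the latter; its tautological section trivializes the pullback of $\mathscr{G}$ and gives the desired isomorphism.

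The only real obstacle is ensuring that the base change is a \emph{finite} morphism, and not merely, say, an open immersion onto the locus where the bundle happens to trivialize; this is exactly what the finiteness of $\operatorname{im}\sigma$ provides. The commuting-images computation is valid for non-abelian $G$ as well, and the hypothesis that we work over $\mathbb{C}$ is used only through the topological description of finite \'etale covers (Riemann existence and the K\"unneth formula for $\pi_1$); over an arbitrary algebraically closed field of characteristic zero the same argument goes through with the \'etale fundamental group in place of $\pi_1^{\mathrm{top}}$.
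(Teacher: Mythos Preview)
Your proof is correct. It is, however, a genuinely different argument from the one in the paper, so a brief comparison is in order.

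Your approach is topological: you classify the $G$-bundle on $U\times T$ by a homomorphism $\rho\colon\pi_1(U)\times\pi_1(T)\to G$ via Riemann existence and the K\"unneth formula, and then kill the $\pi_1(T)$-factor by passing to the finite \'etale cover $T'\to T$ associated with $\ker\sigma$. This is short, conceptual, and makes the obstruction (the monodromy along $T$) completely visible; the key point that the images of $\pi_1(U)$ and $\pi_1(T)$ commute in $G$ is exactly what gives the factorisation $\rho(a,h_*b)=\rho(a,1)\rho(1,h_*b)$ even for non-abelian $G$.

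The paper argues purely inside the algebraic category. It first replaces $\mathscr{G}$ by $\mathbf{Isom}^G_{U\times T}(\mathscr{G},\mathscr{G}_t\times T)$ to reduce to the case $\mathscr{G}_t\cong G\times U$, then takes $T'$ to be the Stein factorisation of $\mathscr{G}\to T$, and uses Nakayama's lemma plus the $S_2$ property to extend the resulting local section of $\mathscr{G}_{T'}$ over all of $U\times T'$. This avoids any appeal to $\pi_1$ or Riemann existence and stays within standard scheme-theoretic tools. Your ``alternative'' sketch at the end --- taking a connected component of the $G$-bundle on $T$ classified by $\sigma$ --- is in spirit the same $T'$ the paper produces via the Stein factorisation, so the two constructions in fact agree, though the verifications are carried out differently.

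Either proof is fine here since Section~4 works over $\mathbb{C}$; yours is more transparent, the paper's is more self-contained algebraically.
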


\begin{proof}
    By replacing $\mathscr{G}$ with $\mathbf{Isom}_{U\times T}^G(\mathscr{G},\mathscr{G}_{t}\times T)$, we may assume that $\mathscr{G}_{t}$ has a section over $U\times\{t\}$. Equivalently, $\mathscr{G}_{t}\cong G\times U$ as a principal $G$-bundle over $U$.
    To show the assertion under the assumption that $\mathscr{G}_{t}\cong G\times U$, it suffices to prove that there exist a finite morphism $T'\to T$ from a smooth curve and a section of $\mathscr{G}_{T'}$ over $U\times T'$.

Let $f\colon T'\to T$ be the Stein factorization of the natural morphism $\mathscr{G}\to T$.
$f$ and the natural morphism $\mathscr{G}\to U$ induce the morphism $h\colon \mathscr{G}\to U\times T'$.
Let $p_2\colon U\times T'\to T'$ be the second projection and $t'\in T'$ a closed point mapped to $t$ via $f$.
We set $\mathscr{G}_{t'}$ be the fiber of $p_2\circ h$ over $t'$.
Now, $\mathscr{G}_{t'}\subset \mathscr{G}_t\cong G\times U$ is a connected component by the property of the Stein factorization and $G\times U$ is smooth.
In particular, $\mathscr{G}_{t'}$ is isomorphic to $U$ via $h_{t'}$.
Note that $h$ is finite and $\mathscr{G}$ is flat over $T'$. 
Now, we claim that $h$ is isomorphic around $U\times \{t'\}$.
Indeed, $\mathscr{G}$, $U$ and $T'$ are all affine and let $A$, $B$ and $C$ be the corresponding coordinate rings to $\mathscr{G}$, $U\times T'$ and $T'$.
Let $\mathfrak{m}$ be the maximal ideal of $C$ corresponding to $t'$.
Then the morphism $h^{\#}\colon B\to A$ corresponding to $h$ induces the isomorphism $\overline{h^{\#}}\colon B/\mathfrak{m}B\to A/\mathfrak{m}A$ corresponding to $h_{t'}$.
By Nakayama's lemma (cf.~\cite[(1.M)]{Mat}), we know that the support of $\mathrm{Coker}(h^{\#})$ is disjoint from $U\times \{t'\}$.
This means that $h$ is a closed immersion in a neighborhood of $U\times \{t'\}$.
Furthermore, the support of $\mathrm{Ker}(h^{\#})$ is also disjoint from $U\times \{t'\}$ by Nakayama's lemma.
Indeed, since $A$ is flat over $C$, we have $\mathrm{Ker}(h^{\#})/\mathfrak{m}\mathrm{Ker}(h^{\#})\cong \mathrm{Ker}(\overline{h^{\#}})$.
Let $\pi_{T'}\colon \mathscr{G}_{T'}\to U\times T'$ be the base change of $\pi$ under $f$.
This shows that there exist a closed subset $W\subset U\times T'$ disjoint from $U\times \{t'\}$ and a section $s'\colon U\times T'\setminus W\to \mathscr{G}$.
We note that $W$ might be empty here.
There exists a section $s''\colon\mathscr{G}\to\mathscr{G}_{T'}$ that is induced by the universal property of the cartesian diagram and the morphism $p_2\circ h$.
Set $s:=s''\circ s'$. 
Then $s$ is a section of $\mathscr{G}_{T'}$ over $U\times T'\setminus W$.
Since $U\times T'$ is smooth and $\mathscr{G}_{T'}$ is finite over $U\times T'$, we see that $s$ can be extended to all codimension one points of $U\times T'$ by the valuative criterion for properness \cite[II, Theorem 4.7]{Ha}.
Let $W'\subset W$ be a closed subset such that $\mathrm{codim}_{U\times T'}W'\ge2$ and $\bar{s}\colon U\times T'\setminus W'\to \mathscr{G}_{T'}$ an extended section of $s$.
Since $\pi_{T'}$ is finite and $\mathscr{G}_{T'}$ is smooth and affine, we can extend $\bar{s}$ entirely to $U\times T'$ by the $S_2$-property of the coordinate ring of $\mathscr{G}_{T'}$.
Thus, we complete the proof.
\end{proof}
Before stating the next result, we briefly recall the definition of isotriviality.
\begin{defn}\label{defn--isotrivial}
    Let $f\colon X\to S$ be a projective morphism of schemes of finite type over $\mathbb{C}$.
    We say that $X$ is {\it isotrivial} over $S$ if $X_s$ and $X_{s'}$ are isomorphic for any closed points $s,s'\in S$.

    Let $q\colon (C,B)\to [\mathbb{A}^{N+1}_S/\mathbb{G}_{m,S}]$ be a family of log Fano quasimaps over $S$.
    We say that $q$ is {\it isotrivial} over $S$ if $q_s$ and $q_{s'}$ are isomorphic for any closed points $s,s'\in S$.
\end{defn}

\begin{prop}\label{prop--isotrivial--fibers}
Let $C$ be a smooth affine curve and consider $f\colon(\mathcal{X},\mathcal{A})\to\mathbb{P}^1\times C$ a family of $\mathcal{M}^{\mathrm{CYfib}}_{d,u,v,r,V}(C)$.
If the induced family of quasimaps $\mathbb{P}^1\times C\to[\mathrm{Cone}(\overline{V}^{\mathrm{BB}})/\mathbb{G}_m]$ is isotrivial over $C$, then there exists a non-empty open subset $W\subset C$ such that $\mathcal{X}\times_CW$ is also isotrivial over $W$.\label{prop--quasi-finiteness}
\end{prop}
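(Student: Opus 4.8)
The plan is to reduce the statement to a monodromy argument, using the isotriviality of the associated family of quasimaps as a way to trivialize the variation of Hodge structures of the general fibers. First I would shrink $C$ freely, since the conclusion is about a non-empty open subset $W$; in particular, after shrinking, I may assume that $f|_{f^{-1}(U \times C)}$ is smooth for some non-empty open $U \subset \mathbb{P}^1$, that the discriminant divisor of each $\mathcal{X}_c \to \mathbb{P}^1$ is supported on a fixed horizontal divisor $\Sigma \times C$ (this uses that the fixed part of $q_c$, hence the discriminant of $f_c$, is locally constant along $C$, which follows from isotriviality of $q$ and Theorem \ref{thm--quasi--map--const}(2)), and that $\mathcal{X} \to \mathbb{P}^1 \times C$ admits a fiberwise-over-$C$ compactification to a klt-trivial fibration over $\mathbb{P}^1 \times C$ with the expected discriminant. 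The moduli map of $f_c$ on $U$ is the composition of the restriction $(\mathbb{P}^1 \setminus \Sigma) \times C \to \overline{V}^{\mathrm{BB}}$ with a point of $U$; by isotriviality of $q$, the map $\mathbb{P}^1 \to \overline{V}^{\mathrm{BB}}$ is (up to the $\mathrm{Aut}$-ambiguity built into the moduli stack) independent of $c \in C$.

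Next I would pass to the level structures and period domains of Subsection \ref{subsec--moduli-pol}. Using Proposition \ref{prop--smoothness--of--Abelian--moduli}, Corollary \ref{cor--hk--moduli--normal}, and the period-domain descriptions ($V \cong \mathfrak h_g/\Gamma_\delta$ for Abelian varieties via Remark \ref{rem--abelian--period}, or $V^\circ \cong \Omega^+_{h^\perp}/\Gamma$ for the symplectic case via Theorem \ref{thm--period--mapping--HK}), after a finite base change $C' \to C$ I may assume that the family $f|_{(\mathbb{P}^1 \setminus \Sigma) \times C'}$ of polarized Abelian or irreducible holomorphic symplectic varieties carries a compatible level structure and hence a genuine period map $\mathcal P \colon (\mathbb{P}^1 \setminus \Sigma) \times C' \to D$ to the period domain, $\Gamma$-equivariantly over $C'$. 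The local monodromy of this variation of Hodge structures around each point of $\Sigma$ is determined by the family on a punctured disk, and the key point is that the monodromy representation $\pi_1((\mathbb{P}^1 \setminus \Sigma) \times \{c\}) \to \Gamma$ is, after the level structure and finite base change, literally constant in $c$: this is because $q$ is isotrivial over $C$, so the associated family of quasimaps, which records exactly the $\Gamma$-conjugacy class of period data, does not vary. Concretely, I would phrase this via Lemma \ref{lem--principal-g-bundle-isotrivial} applied to the monodromy/level-structure torsors over $U \times C'$, which are principal bundles under a finite group once the level structure is fixed; the lemma yields a further finite base change making these torsors constant along $C'$, hence the period map is (after trivializing) fiberwise the same map $\mathbb{P}^1 \setminus \Sigma \to D$.

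Once the period map $\mathcal P_c \colon \mathbb{P}^1 \setminus \Sigma \to D$ is independent of $c$ (for $c$ in a suitable $W$), the Torelli-type results finish the argument: in the Abelian case the period point together with the polarization type determines the polarized Abelian variety up to isomorphism, and in the symplectic case Theorem \ref{thm--period--mapping--HK} and Theorem \ref{thm--verbitsky} give that two fibers with the same period are bimeromorphic compatibly with the polarization, hence isomorphic as polarized symplectic varieties (as in the proof of Theorem \ref{thm--period--mapping--HK}, via \cite[Theorem 11.39]{kollar-moduli}). Therefore the smooth part of $\mathcal X_c \to \mathbb{P}^1$ over $\mathbb{P}^1 \setminus \Sigma$ is isomorphic, fiberwise, to a fixed family over $\mathbb{P}^1 \setminus \Sigma$ independent of $c \in W$; extending this isomorphism over $\Sigma$ uses that $\mathcal X_c$ is the klt log canonical model of $\omega_{\mathcal X_c/\mathbb{P}^1}^{[-r]}$ (built into $\mathcal M^{\mathrm{CYfib}}$, Definition \ref{defn--HH-moduli}) and Lemma \ref{lem--lc-model}, so that $\mathcal X_c$ is determined over all of $\mathbb{P}^1$ by its restriction over $\mathbb{P}^1 \setminus \Sigma$ together with the linear-equivalence class of $\mathcal A_c$, which is itself constant along $W$ after a finite base change by a Picard-scheme argument. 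This shows $\mathcal X \times_C W$ is isotrivial over $W$, and since isotriviality descends along the finite base changes used, we conclude over the original $W$ (after shrinking).

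The main obstacle I expect is the passage from ``the associated family of quasimaps is isotrivial'' to ``the monodromy data of the underlying family of Hodge structures is locally constant along $C$''. A priori, isotriviality of $q$ only says that the reduced quasimaps $q_c$ are abstractly isomorphic, i.e., agree up to the ambient $\mathrm{PGL}_2$-action and the automorphisms of $\overline{V}^{\mathrm{BB}}$ coming from $\Gamma$; it does not immediately give a coherent trivialization in families, nor does it control the local monodromy operators at the boundary points $\Sigma$ (which are not literally recorded by the quasimap $q$, only by the klt-trivial fibration $f$). Handling this requires carefully combining: (i) Theorem \ref{thm--quasi--map--const} to identify the fixed part of $q_c$ with the discriminant of $f_c$, so that $\Sigma$ is constant; (ii) the finiteness of the relevant level-structure/monodromy torsors together with Lemma \ref{lem--principal-g-bundle-isotrivial} to make these torsors constant after finite base change; and (iii) the fact that over $\mathbb{P}^1 \setminus \Sigma$ the family is already determined by the isotrivial period map, so the only possible variation in $c$ is in the gluing data at $\Sigma$, which is then pinned down by the log canonical model characterization. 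Making step (ii) precise — identifying the correct finite group and the correct base over which the torsor lives — is the delicate bookkeeping, but it is exactly the kind of argument Lemma \ref{lem--principal-g-bundle-isotrivial} was designed to feed into.
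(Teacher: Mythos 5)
Your route through level structures, period domains, and Torelli is genuinely different from the paper's, which never passes to period maps: the paper sends $U\times C$ to the moduli stack $\mathcal{M}^{\mathrm{klt,CY}}_{d-1,v}$ of polarized klt Calabi--Yau varieties, uses the local quotient presentation $[V'/G]$ with finite stabilizer $G$ near the image point, and applies Lemma \ref{lem--principal-g-bundle-isotrivial} to the associated $G$-torsor to conclude that, after a finite base change $C'\to C$, the family over $U\times C'$ is literally a product $\mathcal{X}|_{U\times\{0\}}\times C'$. The genuine gap in your version is the passage from ``the fiberwise period points agree'' to ``the polarized fibrations over $U$ agree as families for varying $c$.'' Torelli-type statements (Theorems \ref{thm--verbitsky} and \ref{thm--period--mapping--HK}) only identify the individual polarized fibers up to isomorphism (in the symplectic case a priori only up to a small birational map), and two families over $U$ inducing the same map to the coarse space $D/\Gamma$ can still differ by a twist, i.e.\ by a torsor under the finite automorphism groups of the polarized fibers. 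Your use of Lemma \ref{lem--principal-g-bundle-isotrivial} is aimed at monodromy/level-structure torsors, which is not the relevant torsor (constancy of the monodromy representation in $c$ is automatic for a smooth family over $U\times C$, since it factors through $\pi_1(U)$); the torsor that must be trivialized along $C$ is the one of isomorphisms between the family over $U\times\{c\}$ and a fixed one, which is exactly what the paper's $[V'/G]$ argument handles. In the Abelian case one could try to repair this via the fine moduli $A_{g,d,n}$, but imposing a level structure requires a cover of $U$ itself and reintroduces descent data; in the symplectic case there is no fine moduli available at all, so step 2 of your sketch does not close.

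The second gap is the extension over $\Sigma$. Condition (v) of Definition \ref{defn--HH-moduli} says that $f_c$ is the ample model of $\omega^{[-r]}_{\mathcal{X}_c}$, which determines the morphism to $\mathbb{P}^1$, not the total space $\mathcal{X}_c$ from its restriction over $\mathbb{P}^1\setminus\Sigma$; since $K_{\mathcal{X}_c}$ is relatively trivial over $\mathbb{P}^1$, $\mathcal{X}_c$ is not recovered as a relative lc model of its open part, and ``the linear-equivalence class of $\mathcal{A}_c$ is constant'' is both unproven and not the invariant the moduli problem remembers ($\mathcal{A}$ is only fixed up to twists by pullbacks from the base). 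To invoke Lemma \ref{lem--lc-model} you must first exhibit all the $\mathcal{X}_c$ as lc models of numerically equivalent boundaries on one fixed variety; this is precisely the content of the paper's second half, where the product structure over $U\times C$ is used to build a birational contraction $(Z\times C,\Delta+B+H)\dashrightarrow(\mathcal{X},\Delta_{\mathcal{X}}+B_{\mathcal{X}}+H_{\mathcal{X}})$ over $C$ (with a discrepancy comparison exploiting $K\sim_{\mathbb{Q},\,\mathbb{P}^1\times C}0$ and auxiliary divisors $B$, $H$ chosen so that the target is the lc model over $C$), after which algebraic equivalence of $\Delta_c+B_c+H_c$ on the fixed $Z$ yields $\mathcal{X}_c\cong\mathcal{X}_{c'}$ by Lemma \ref{lem--lc-model}. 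Your sketch cites the right lemma but omits the construction that makes it applicable, so as written the boundary-gluing step would fail.
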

\begin{proof}
Since the induced family of quasimaps is isotrivial, there exists an open subset $U \subset \mathbb{P}^{1}$ 
 such that $f$ has only klt fibers over $U\times C$. 
By using $f|_{U\times C}$, we obtain the following morphism $F\colon U\times C\to \mathcal{M}^{\mathrm{CY,klt}}_{v,r}$.
Fix arbitrary general closed points $c\in C$ and $u\in U$.
In this paragraph, we will show that $F_{C'}=F|_{U\times \{c\}}\circ p_{1,U\times C'}$ for some finite morphism from a smooth curve $C'$, where $F_{C'}$ is induced by the base change of $F$ under $U\times C'\to U\times C$ and $p_{1,U\times C'}\colon U\times C'\to U$ is the first projection.
Let $\pi\colon \mathcal{M}^{\mathrm{CY,klt}}_{v,r}\to M^{\mathrm{CY,klt}}_{v,r}$ be the morphism of the coarse moduli space.
By \cite[Theorem 11.3.1]{Ols}, if $G$ is the stabilizer group of $F((u,c))$ with respect to $\mathcal{M}^{\mathrm{CY,klt}}_{v,r}$, then there exists an affine scheme $V'$ admitting an action of $G$ such that we can take an \'etale neighborhood $V'/G\to M^{\mathrm{CY,klt}}_{v,r}$ of $\pi\circ F((u,c))$ such that $[V'/G]\cong\mathcal{M}^{\mathrm{CY,klt}}_{v,r}\times_{M^{\mathrm{CY,klt}}_{v,r}}V'/G$. 
By assumption, $\pi\circ F$ is a constant morphism and hence this factors through $V'/G$ naturally.
Thus, $F$ also factors through $[V'/G]$.
Let $F'\colon U\times C\to [V'/G]$ be the induced morphism, $\mathscr{G}\to U\times C$ the principal $G$-bundle associated with $F'$ and $\gamma\colon \mathscr{G}\to V'$ the associated $G$-equivariant morphism.
By Lemma \ref{lem--principal-g-bundle-isotrivial}, replacing $C$ with its finite cover if necessary, we may assume that $\mathscr{G}\cong\mathscr{G}_c\times C$ as principal $G$-bundles.
For any closed point $z\in \mathscr{G}_c$, $\gamma$ maps $\{z\}\times C$ to a point of $V'$ since the induced family of quasimaps is isotrivial and $V'\to V'/G$ is finite.
Since $\mathscr{G}_c\times C$ is reduced and $V'$ is affine, there exists a morphism $\gamma'\colon \mathscr{G}_c\to V'$ such that $\gamma=\gamma'\circ p_1$, where $p_1\colon \mathscr{G}_c\times C\to \mathscr{G}_c$ is the first projection.
By the definition of $[V'/G]$ and $\gamma=\gamma'\circ p_1$, we obtain that $F'=F'|_{U\times\{c\}}\circ p_{1,U\times C}$, where $p_{1,U\times C}\colon U\times C\to C$ is the first projection. 
Then, the assertion of this paragraph immediately follows from $F'=F'|_{U\times\{c\}}\circ p_{1,U\times C}$.

By the previous paragraph, replacing $\mathcal{X} \to C$ with the base change by a finite cover $C' \to C$, we can find a closed point $0 \in C$ such that $f^{-1}(U \times C)\cong \mathcal{X}|_{U\times\{0\}}\times C$ as schemes over $C$.
We put $F_{U_{0}}:=\mathcal{X}|_{U\times\{0\}}$ and set $F_{0}$ as the closure of $F_{U_{0}}$ in $\mathcal{X}$. 
Note that $F_{0}=f^{-1}(\mathbb{P}^{1}\times\{0\})$ and this is the fiber of $\mathcal{X} \to C$ over $0 \in C$. 
By construction, we get a birational map $\psi \colon F_{0} \times C \dashrightarrow \mathcal{X}$ over $\mathbb{P}^{1}\times C$ induced by $f^{-1}(U \times C)\cong \mathcal{X}|_{U\times\{0\}}\times C$. 
$$
\xymatrix@C=12pt{
F_{0} \times C\ar[rd]_-{f_{0}\times {\rm id}_{C}}\ar@{-->}[rr]^-{\psi}&&\mathcal{X}\ar[ld]^-{f}\\
&\mathbb{P}^{1}\times C
}
\qquad
\xymatrix@C=12pt{
F_{U_{0}} \times C\ar[rd]\ar[rr]^-{\psi|_{F_{U_{0}} \times C}}_-{\cong}&&f^{-1}(U \times C)\ar[ld]^-{f}\\
&U\times C
}
$$
In the rest of this paragraph, we construct a projective birational morphism $Z \to F_{0}$, klt pairs $(Z \times C,\Delta)$ and $(\mathcal{X},\Delta_{\mathcal{X}})$, and a birational contraction $(Z \times C,\Delta) \dashrightarrow (\mathcal{X},\Delta_{\mathcal{X}})$ over $C$ with good properties. 
Let $h \colon \widetilde{\mathcal{X}} \to \mathcal{X}$ and $h' \colon \widetilde{\mathcal{X}} \to F_{0} \times C$ be a common log resolution of $\psi^{-1}$. 
We consider the divisor $h^{*}K_{\mathcal{X}}-h'^{*}K_{F_{0} \times C}$. 
Since $\psi$ is an isomorphism over $U \times C$ and we have $K_{F_{0} \times C} \sim_{\mathbb{Q},\, \mathbb{P}^{1} \times C}0$ and $K_{\mathcal{X}} \sim_{\mathbb{Q},\,\mathbb{P}^{1} \times C}0$, we can find a $\mathbb{Q}$-divisor $D$ on $\mathbb{P}^{1}\times C$, whose support is contained in $(\mathbb{P}^{1}\setminus U)\times C$, such that  
$$h^{*}K_{\mathcal{X}}-h'^{*}K_{F_{0} \times C}=h'^{*}(f_{0}\times {\rm id}_{C})^{*}D.$$
We may write $D=D_{+}-D_{-}$, where $D_{+}$ and $D_{-}$ are effective $\mathbb{Q}$-divisors having no common components. 
We define 
$$\Gamma_{0}:=f^{*}_{0}(D_{+}|_{\mathbb{P}^{1}\times \{0\}})\qquad {\rm and} \qquad \Delta_{\mathcal{X}}:=f^{*}D_{-}.$$
By definition, $\Gamma_{0}$ is an effective $\mathbb{Q}$-divisor on $F_{0}$ such that $\Gamma_{0}\times C=(f_{0}\times {\rm id}_{C})^{*}D_{+}$. 
We have 
\begin{equation*}
\begin{split}
&h^{*}(K_{\mathcal{X}}+\Delta_{\mathcal{X}})-h'^{*}(K_{F_{0} \times C}+(\Gamma_{0}\times C))\\
=&h'^{*}(f_{0}\times {\rm id}_{C})^{*}D+h^{*}f^{*}D_{-}-h'^{*}(f_{0}\times {\rm id}_{C})^{*}D_{+}=0.
\end{split}
\end{equation*}
Therefore, $A_{(F_{0} \times C,\, \Gamma_{0}\times C)}(P)=A_{(\mathcal{X},\, \Delta_{\mathcal{X}})}(P)$ for any prime divisor $P$ over $\mathcal{X}$. 
Since $D_{+}$ and $D_{-}$ have no common component, any prime divisor $P$ over $\mathcal{X}$ satisfies either $A_{(F_{0} \times C,\, \Gamma_{0}\times C)}(P)=A_{(F_{0} \times C,\, 0)}(P)>0$ or $A_{(\mathcal{X},\, \Delta_{\mathcal{X}})}(P)=A_{(\mathcal{X},\, 0)}(P)>0$.  
These facts show that $(F_{0}\times C,\Gamma_{0}\times C)$ and $(\mathcal{X},\Delta_{\mathcal{X}})$ are klt pairs.
By \cite[Proposition 2.36]{KM}, there is a log resolution $\mu \colon Z \to F_{0}$ of $(F_{0},\Gamma_{0})$ such that if we write 
$$K_{Z}+\Delta_{0} = \mu^{*}(K_{F_{0}}+\Gamma_{0})+E_{0}$$
for some effective $\mathbb{Q}$-divisors $\Delta_{0}$ and $E_{0}$ on $Z$ which have no common components, then ${\rm Supp}\,\Delta_{0}$ is smooth. 
We define $\Delta$ to be the $\mathbb{Q}$-divisor on $Z$ by $\Delta:=\Delta_{0}\times C$. 
We consider the birational map over $\mathbb{P}^{1}\times C$
$$\varphi \colon Z \times C \dashrightarrow \mathcal{X}$$
defined by the composition of $Z \times C \to F_{0} \times C$ and $\psi \colon F_{0} \times C \dashrightarrow \mathcal{X}$. 
By construction, $A_{(Z \times C, \, \Delta)}(P) \leq A_{(Z \times C, \, \Delta-(E_{0}\times C))}(P)= A_{(\mathcal{X},\, \Delta_{\mathcal{X}})}(P)$ for any prime divisor $P$ over $\mathcal{X}$. 
Because $(Z \times C, \Delta)$ is terminal, $\varphi$ is a birational contraction over $\mathbb{P}^{1}\times C$, and we have $\varphi_{*} \Delta=\Delta_{\mathcal{X}}$.

By the argument in the previous paragraph, we get a birational contraction 
$$\varphi \colon (Z \times C, \Delta) \dashrightarrow (\mathcal{X},\Delta_{\mathcal{X}})$$
over $\mathbb{P}^{1}\times C$ of klt pairs $(Z \times C, \Delta)$ and $(\mathcal{X},\Delta_{\mathcal{X}})$ such that 
\begin{itemize}
\item
$Z$ is smooth and $(Z \times C, \Delta)$ is terminal, and
\item
$A_{(Z \times C, \,\Delta)}(P) \leq A_{(\mathcal{X}, \, \Delta_{\mathcal{X}})}(P)$ for any prime divisor $P$ over $\mathcal{X}$. 
\end{itemize}  
Since $K_{\mathcal{X}}+\Delta_{\mathcal{X}} \sim_{\mathbb{Q},\,\mathbb{P}^{1}\times C}0$, there is an ample $\mathbb{Q}$-divisor $D'$ on $\mathbb{P}^{1}\times C$ such that putting $B$ (resp.~$B_{\mathcal{X}}$) as the pullback of $D'$ to $Z\times C$ (resp.~$\mathcal{X}$), then both $(Z \times C, \Delta+B)$ and $(\mathcal{X},\Delta_{\mathcal{X}}+B_{\mathcal{X}})$
 are klt pairs and the divisor $K_{\mathcal{X}}+\Delta_{\mathcal{X}} +B_{\mathcal{X}}$ is the pullback of an ample $\mathbb{Q}$-divisor on $\mathbb{P}^{1}\times C$. 
Let $H_{\mathcal{X}}$ be an effective $\mathbb{Q}$-Cartier divisor on $\mathcal{X}$ which is ample over $\mathbb{P}^{1}\times C$, and let $H$ be the pullback of $H_{\mathcal{X}}$ to $Z\times C$. 
Since $Z \times C$ is smooth, $H$ is $\mathbb{Q}$-Cartier. 
By rescaling $H_{\mathcal{X}}$, we may assume that $K_{\mathcal{X}}+\Delta_{\mathcal{X}} +B_{\mathcal{X}}+H_{\mathcal{X}}$ is ample over $C$ and both $(Z \times C, \Delta+B+H)$ and $(\mathcal{X},\Delta_{\mathcal{X}}+B_{\mathcal{X}}+H_{\mathcal{X}})$ are klt pairs. 
By construction, $B_{\mathcal{X}}+H_{\mathcal{X}}$ is nef over $\mathbb{P}^{1}\times C$ and $\varphi_{*}(B+H)=B_{\mathcal{X}}+H_{\mathcal{X}}$. 
Combining the two facts and $A_{(Z \times C, \,\Delta)}(P) \leq A_{(\mathcal{X}, \, \Delta_{\mathcal{X}})}(P)$ for any prime divisor $P$ over $\mathcal{X}$, by taking a common resolution of $\varphi$ and using the negativity lemma, we have 
$$A_{(Z \times C, \,\Delta+B+H)}(P) \leq A_{(\mathcal{X}, \, \Delta_{\mathcal{X}}+B_{\mathcal{X}}+H_{\mathcal{X}})}(P)$$
for any prime divisor $P$ over $\mathcal{X}$. 
From this discussion, we see that 
$$\varphi \colon (Z \times C, \Delta+B+H) \dashrightarrow (\mathcal{X},\Delta_{\mathcal{X}}+B_{\mathcal{X}}+H_{\mathcal{X}})$$
is a birational contraction over $C$ from a klt pair to the log canonical model over $C$. 
By shrinking $C$, we may assume that for any closed point $c \in C$, the restriction 
$$\varphi_{c} \colon (Z, \Delta_{c}+B_{c}+H_{c}) \dashrightarrow (\mathcal{X}_{c},\Delta_{\mathcal{X}_{c}}+B_{\mathcal{X}_{c}}+H_{\mathcal{X}_{c}})$$
is a birational contraction  from a klt pair to the log canonical model. 
By construction, $B_{c}+H_{c}$ is big for any $c \in C$. 

Finally, we prove that $\mathcal{X}_{c}\cong \mathcal{X}_{c'}$ for any closed points $c,\,c' \in C$, which implies the conclusion of Proposition \ref{prop--quasi-finiteness}. For any $c,\,c' \in C$, the divisors $\Delta_{c}+B_{c}+H_{c}$ and $\Delta_{c'}+B_{c'}+H_{c'}$ are algebraically equivalent, and therefore $$K_{Z}+\Delta_{c}+B_{c}+H_{c} \equiv K_{Z}+\Delta_{c'}+B_{c'}+H_{c'}.$$
We recall that $\mathcal{X}_{c}$ (resp.~$\mathcal{X}_{c'}$) is the log canonical model of $(Z, \Delta_{c}+B_{c}+H_{c})$ (resp.~$(Z, \Delta_{c'}+B_{c'}+H_{c'})$). 
Hence, Lemma \ref{lem--lc-model} shows $\mathcal{X}_{c}\cong \mathcal{X}_{c'}$. 
Thus, Proposition \ref{prop--quasi-finiteness} holds. 
\end{proof}   

Now we are ready to prove Theorem \ref{thm--quasi-finiteness--of--two--moduli}. 

\begin{proof}[Proof of Theorem \ref{thm--quasi-finiteness--of--two--moduli}]
To show $\alpha$ is quasi-finite, take an arbitrary morphism $\mathrm{Spec}(\Omega)\to \mathcal{M}^{\mathrm{Kst.qmaps}}_{n,u,v,r,\iota}$, where $\Omega$ is an algebraically closed field over $\mathbb{C}$.
Suppose that there exists a morphism $\pi\colon C\to \mathrm{Spec}(\Omega)\times_{\mathcal{M}^{\mathrm{Kst.qmaps}}_{n,u,v,r,\iota}}(\mathcal{M}^{\mathrm{CYfib}}_{d,u,v,V})^{\mathrm{sn}}$ from a smooth curve.
Let $\gamma\colon \mathrm{Spec}(\Omega)\times_{\mathcal{M}^{\mathrm{Kst.qmaps}}_{n,u,v,r,\iota}}(\mathcal{M}^{\mathrm{CYfib}}_{d,u,v,V})^{\mathrm{sn}}\to M^{\mathrm{CYfib}}_{d,u,v,r,V}$ be the canonical morphism.
Then, by Proposition \ref{prop--isotrivial--fibers} and \cite[Proposition 6.3]{Hat23}, we see that the image of $\gamma\circ\pi$ is a point.
This shows that $\mathrm{Spec}(\Omega)\times_{\mathcal{M}^{\mathrm{Kst.qmaps}}_{n,u,v,r,\iota}}(\mathcal{M}^{\mathrm{CYfib}}_{d,u,v,V})^{\mathrm{sn}}$ is a Deligne--Mumford stack quasi-finite over $\mathrm{Spec}(\Omega)$.
Thus, $\alpha$ is quasi-finite.

Let $\overline{\alpha}\colon(M^{\mathrm{CYfib}}_{d,u,v,V})^{\mathrm{sn}}\to M^{\mathrm{Kst.qmaps}}_{n,u,v,r,\iota}$ be the induced morphism of the coarse moduli spaces by $\alpha$.
Then we see that $\overline{\alpha}$ is separated and quasi-finite.
Since $M^{\mathrm{Kst.qmaps}}_{n,u,v,r,\iota}$ is projective by Theorem \ref{thm--projectivity}, $(M^{\mathrm{CYfib}}_{d,u,v,V})^{\mathrm{sn}}$ is quasi-projective by \cite[Theorem 7.2.10]{Ols}.
\end{proof}

\begin{lem}\label{lem--linear--equiv--two--CM}
    Let $S$ be a normal variety and take $f\colon (\mathcal{X},\mathcal{A})\to \mathcal{C}\in\mathcal{M}^{\mathrm{CYfib}}_{d,v,u,r,V}(S)$.
    Let $q\colon\mathcal{C}\to [\mathrm{Cone}(\overline{V}^{\mathrm{BB}})\times S/\mathbb{G}_{m,S}]$ be the associated family of K-semistable log Fano quasimaps as Theorem \ref{thm--quasi--map--const}.
Then, there exist positive integers $m,m'\in\mathbb{Z}_{>0}$ such that $(\lambda_{\mathrm{CM},f}^{\infty})^{\otimes m}\sim\lambda_{\mathrm{CM},q}^{\otimes m'}$.

Furthermore, let $G$ be a semisimple linear algebraic group acting on $S$ and if $(\mathcal{X},\mathcal{A})$ admits an action of $G$ equivariant over $S$, then the above linear equivalence preserves the $G$-linearizations.
\end{lem}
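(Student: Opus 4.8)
The plan is to compare the two CM line bundles fiberwise over curves, using the explicit Knudsen--Mumford formulas. Concretely, I would first reduce to the case where $S$ is a smooth curve: since both $\lambda^{\infty}_{\mathrm{CM},f}$ and $\lambda_{\mathrm{CM},q}$ are $\mathbb{Q}$-line bundles on the normal variety $S$, and numerical equivalence of $\mathbb{Q}$-line bundles on a normal projective (or quasi-projective, after compactifying) variety can be tested against curves, it suffices to compute $\deg_C(h^{*}\lambda^{\infty}_{\mathrm{CM},f})$ and $\deg_C(h^{*}\lambda_{\mathrm{CM},q})$ for all $h\colon C\to S$ from smooth proper curves and show they are proportional by a fixed rational constant independent of $h$. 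Actually, since $S$ is normal and $\mathrm{Pic}_{\mathbb{Q}}$ injects appropriately once we know the classes agree numerically against all curves through adequate families, the cleanest route is: pull everything back along $h$, so that $\mathcal{C}_C\to C$ is a $\mathbb{P}^1$-bundle (Tsen/Lemma \ref{lem--P^1--bundle--trivialize}), and directly compare intersection numbers on the surface $\mathcal{C}_C$.

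Next I would write down both sides on the surface $Y:=\mathcal{C}_C$ with $\pi\colon Y\to C$. On the quasimap side, by Definition \ref{defn--qmaps--CM--linebundle}, $\deg_C(h^{*}\lambda_{\mathrm{CM},q}) = -(K_{Y/C}+\mathcal{B}_C+u\mathscr{L}_C)^{2}$, where $\mathscr{L}$ is the line bundle associated with $q$ and $\mathcal{B}$ its fixed part, so that by Theorem \ref{thm--quasi--map--const}(1)(2) we have $f^{*}\mathscr{L}\sim l(K_{\mathcal{X}/S}-f^{*}K_{\mathcal{C}/S})$, $\tfrac{1}{l}\mathcal{B}_C$ equals the discriminant $\mathbb{Q}$-divisor $B$ of $f_C$, and (by the canonical bundle formula) $\tfrac{1}{l}(\mathscr{L}_C-\mathcal{B}_C)$ is $\mathbb{Q}$-linearly equivalent to the moduli $\mathbb{Q}$-divisor $M$ of $f_C$. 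Hence $-(K_{Y/C}+\mathcal{B}_C+u\mathscr{L}_C) \sim_{\mathbb{Q}} -(K_{\mathbb{P}^1}+B) - lu(M+B) = -(K_{\mathbb{P}^1}+B) - lu\cdot(\text{pullback class})$ up to the identification $u=\frac1l\cdot(lu)$; the weight $\frac1l$ in the statement of Theorem \ref{thm--quasi--map--construction--moduli--map} is precisely chosen so the degree of the movable part matches the polarization $\mathcal{A}-rtK_{\mathcal{X}}$ used in $\Lambda_{\mathrm{CM},t}$ asymptotically, via \cite[Theorem 1.1]{Hat}. On the fibration side, $\deg_C(h^{*}\lambda^{\infty}_{\mathrm{CM},f})$ is computed in Proposition \ref{prop--CM--limit} and the discussion after it: it equals $\tfrac{1}{d^{2}(\mathcal{A}_t^{d-1}\cdot\mathcal{L}_t)^{2}}D_2$ evaluated on $C$, and using $-l_0 K_{\mathcal{X}/S}\sim f^{*}\mathscr{L}_0$ for the $\pi_{\mathcal{C}}$-ample bundle $\mathscr{L}_0$ from Lemma \ref{lem--stein--factorizaton}, the adiabatic-limit CM degree reduces by the projection formula to a positive multiple of $(K_{\mathcal{X}/S}+\text{discriminant-type correction})^{2}$ pushed down to $C$, which by the canonical bundle formula for $f_C$ is again a positive rational multiple of $-(K_{\mathbb{P}^1}+B+\mu M)^{2}$ for the appropriate $\mu$. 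Matching these two positive rational multiples — which requires only tracking the combinatorial constants $l$, $l_0$, $g$, $k$, $d$, $r$, $u$, $v$ and the intersection normalizations, all of which are fixed and independent of $C$ — gives the existence of $m,m'\in\mathbb{Z}_{>0}$ with $(\lambda^{\infty}_{\mathrm{CM},f})^{\otimes m}\sim \lambda^{\otimes m'}_{\mathrm{CM},q}$, first numerically and then, since on the normal base $S$ both bundles are genuinely $\mathbb{Q}$-linear-equivalence classes pulled back from the Knudsen--Mumford data which already agree up to $\mathbb{Q}$-linear equivalence after restriction (as in the arguments of \cite[Subsection 3.2]{Hat23} and Lemma \ref{lem--CM--descent}), as $\mathbb{Q}$-linear equivalence.

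For the $G$-equivariant refinement: both $\lambda^{\infty}_{\mathrm{CM},f}$ and $\lambda_{\mathrm{CM},q}$ are constructed from the Knudsen--Mumford expansions of pushforwards of line bundles that are themselves $G$-equivariant over $S$ (on the fibration side this is the content of Definition \ref{defn--limit--CM}, where some nonzero multiple of $\lambda_{\mathrm{CM},t}$ carries the natural $G$-action; on the quasimap side, $\mathscr{L}$ is canonically attached to $q$ by Theorem \ref{thm--quasi--map--const}, hence inherits the $G$-action by uniqueness). Since the determinant-of-cohomology construction is functorial, the isomorphism classes $\lambda_i$ and the transition isomorphisms $\alpha_i$ are uniquely determined (as recalled before Definition \ref{defn--CM--line--bundle}), so any $\mathbb{Q}$-linear equivalence obtained above automatically intertwines the two $G$-linearizations after passing to a further common multiple of $m,m'$ that clears denominators in the $G$-linearization. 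I would make this precise by noting that a $\mathbb{Q}$-linear equivalence of two $G$-linearized $\mathbb{Q}$-line bundles whose underlying $\mathbb{Q}$-classes agree is automatically $G$-equivariant once a sufficiently divisible power is taken, because the ambiguity lies in $H^{0}(S,\mathcal{O}_S^{\times})^{G}$ and the normalizations force it to be trivial on a high enough power — exactly as in the uniqueness argument for the $\alpha_i$.

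The main obstacle I anticipate is purely bookkeeping but genuinely delicate: correctly normalizing the two CM $\mathbb{Q}$-line bundles. The fibration-side adiabatic limit involves the leading term $D_2$ extracted from a four-parameter expansion in $q'$ (Setup \ref{setup--1}), and its $\mathbb{Q}$-coefficient $\tfrac{1}{d^{2}(\mathcal{A}_t^{d-1}\cdot\mathcal{L}_t)^{2}}$ must be reconciled with the quasimap-side coefficient $-\tfrac{2}{\ell^{2}}$ from Definition \ref{defn--qmaps--CM--linebundle}, while also absorbing the factor $l=gkl_0$ relating $\mathscr{L}$ to $K_{\mathcal{X}/S}$ and the factor $l_0$ from $-l_0K_{\mathcal{X}/S}\sim f^{*}\mathscr{L}_0$. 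Getting the constants right — and checking that the ``discriminant-type correction'' terms on the two sides are literally the same $\mathbb{Q}$-divisor class pulled back from the base via the canonical bundle formula, so that the two quadratic expressions are proportional and not merely numerically equivalent up to lower-order slop — is where the real work lies; everything else is formal.
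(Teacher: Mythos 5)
Your overall strategy has a genuine gap at its core: the statement asserts a $\mathbb{Q}$-\emph{linear} equivalence $(\lambda^{\infty}_{\mathrm{CM},f})^{\otimes m}\sim\lambda^{\otimes m'}_{\mathrm{CM},q}$ on $S$, but your method — pulling back to curves $h\colon C\to S$ and matching $\deg_C$ of both sides — can at best establish numerical proportionality. Numerical equivalence of $\mathbb{Q}$-line bundles does not imply $\mathbb{Q}$-linear equivalence (already false on a curve of positive genus), and your proposed upgrade (``the classes are pulled back from Knudsen--Mumford data which already agree up to $\mathbb{Q}$-linear equivalence after restriction, as in \cite[Subsection 3.2]{Hat23} and Lemma \ref{lem--CM--descent}'') is not an argument: those references concern descent of line bundles from stacks to coarse spaces, not promoting numerical to linear equivalence. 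There is also a secondary problem: $S$ is only a normal (quasi-projective, generally non-proper) variety, so there need not be enough proper curves in $S$ to detect even numerical equivalence, and the CM classes do not extend canonically to a compactification, so ``test against curves after compactifying'' is not available.

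The paper's proof avoids curves altogether and works directly with $\mathbb{Q}$-divisor classes on $S$: after pulling back to a resolution (harmless since sections descend along a proper birational morphism with connected fibers) and replacing $\mathcal{A}$ by $\mathcal{A}+af^{*}\mathcal{L}$ (which does not change $\lambda^{\infty}_{\mathrm{CM},f}$), the adiabatic limit is the explicit push-forward class $\lambda^{\infty}_{\mathrm{CM},f}\sim_{\mathbb{Q}}c\,(\pi_{\mathcal{X}})_{*}\bigl(\mathcal{A}^{d-1}\cdot K_{\mathcal{X}/S}^{2}\bigr)$ from the proof of Proposition \ref{prop--CM--limit} (via \cite[Lemma A.2 (a)]{CP}); substituting the genuine linear equivalence $lK_{\mathcal{X}/S}\sim f^{*}(lK_{\mathcal{C}/S}+\mathscr{L})$ supplied by Theorem \ref{thm--quasi--map--const} and applying the projection formula gives $(\pi_{\mathcal{X}})_{*}\bigl(\mathcal{A}^{d-1}\cdot K_{\mathcal{X}/S}^{2}\bigr)\sim\mathcal{A}^{d-1}_{c}\,(\pi_{\mathcal{C}})_{*}\bigl((K_{\mathcal{C}/S}+l^{-1}\mathscr{L})^{2}\bigr)$, which is a positive rational multiple of $\lambda_{\mathrm{CM},q}$ by Definition \ref{defn--qmaps--CM--linebundle} (note the quasimap family here has weight $\tfrac1l$ and empty boundary, so no canonical bundle formula or identification of the movable part with $M$ is needed — another place where your sketch introduces unnecessary slop). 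Your treatment of the $G$-equivariance is in the right spirit but imprecise; the clean statement is that for a semisimple group the $G$-linearization on a fixed line bundle is unique (\cite[Proposition 1.4]{GIT}), which is exactly how the paper concludes.
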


\begin{proof}
To show the first assertion, we may assume that $S$ is smooth by taking a resolution of singularities.
Let $\pi_{\mathcal{X}}\colon \mathcal{X}\to S$ and $\pi_{\mathcal{C}}\colon \mathcal{C}\to S$ be the canonical morphisms.
By replacing $\mathcal{A}$ with $\mathcal{A}+a f^*\mathcal{L}$ for any sufficiently large $a>0$, we may also assume that $\mathcal{A}$ is $\pi_{\mathcal{X}}$-ample, where $\mathcal{L}$ is a $\pi_{\mathcal{C}}$-ample line bundle on $\mathcal{C}$.
In the proof of Proposition \ref{prop--CM--limit}, we deduced that 
\[
\lambda_{\mathrm{CM},f}^{\infty}=\frac{d}{2\mathcal{A}_t^{d-1}\cdot K_{\mathcal{X}_t}}(\pi_{\mathcal{X}})_*\left(\mathcal{A}^{d-1}\cdot K_{\mathcal{X}/S}^2\right)
\]
as $\mathbb{Q}$-divisors, where $t\in S$ is an arbitrary point.
Here, we note that $\frac{-d}{2\mathcal{A}_t^{d-1}\cdot K_{\mathcal{X}_t}}\in\mathbb{Q}_{>0}$.
Let $\mathscr{L}$ be the associated line bundle with $q$.
Since $f$ is a contraction (see Lemma \ref{lem--stein--factorizaton}), $lK_{\mathcal{X}/S}\sim lf^*(K_{\mathcal{C}/S}+l^{-1}\mathscr{L})$ by the first paragraph in the proof of Theorem \ref{thm--quasi--map--const} continued. 
Then, it is easy to see that 
\begin{equation*}
l^2(\pi_{\mathcal{X}})_*\left(\mathcal{A}^{d-1}\cdot K_{\mathcal{X}/S}^2\right)\sim l^2\mathcal{A}^{d-1}_c(\pi_{\mathcal{C}})_*((K_{\mathcal{C}/S}+l^{-1}\mathscr{L})^2)\label{eq--linearlization}
\end{equation*}
as Weil divisors, where $c\in\mathcal{C}$ is an arbitrary closed point.
Thus, the first assertion follows from the definitions of the CM line bundles.

If $(\mathcal{X},\mathcal{A})$ admits an action of $G$ equivariant over $S$, then $(\lambda_{\mathrm{CM},f}^{\infty})^{\otimes m}\sim\lambda_{\mathrm{CM},q}^{\otimes m'}$ preserves the $G$-linearizations by \cite[Proposition 1.4]{GIT} and we obtain the last assertion.
\end{proof}

\begin{cor}\label{cor--final}
 Notation as in Definition \ref{defn--limit--CM}.  
 Then, $\Lambda_{\mathrm{CM},\infty}|_{(M^{\mathrm{CYfib}}_{d,v,u,V})^\nu}$ is ample.
 Furthermore, $M^{\mathrm{CYfib}}_{d,v,u,r,V}$ is quasi-projective and $\Lambda_{\mathrm{CM},t}|_{(M^{\mathrm{CYfib}}_{d,v,u,V})^\nu}$ is ample for any sufficiently large $t>0$.
\end{cor}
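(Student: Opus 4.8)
The plan is to derive the corollary from the quasi-finite comparison morphism $\alpha$ of Theorem~\ref{thm--quasi--map--construction--moduli--map} together with the ampleness of the CM line bundle on the K-moduli of quasimaps (Theorems~\ref{thm--projectivity} and~\ref{thm--K-moduli-of-log-Fano-quasimaps}). First I would precompose $\alpha$ with the normalization morphism $\beta\colon(\mathcal{M}_{d,v,u,V}^{\mathrm{CYFib}})^{\nu}\to(\mathcal{M}_{d,v,u,V}^{\mathrm{CYFib}})^{\mathrm{sn}}$ to obtain $\alpha^{\nu}\colon(\mathcal{M}_{d,v,u,V}^{\mathrm{CYFib}})^{\nu}\to\mathcal{M}^{\mathrm{Kst.qmaps}}_{l(2-u),1,\frac{1}{l},u,\iota}$, which is quasi-finite by Theorem~\ref{thm--quasi-finiteness--of--two--moduli}; the induced morphism of coarse moduli spaces $\overline{\alpha}^{\nu}\colon(M_{d,v,u,V}^{\mathrm{CYFib}})^{\nu}\to M^{\mathrm{Kst.qmaps}}_{l(2-u),1,\frac{1}{l},u,\iota}$ is then separated and quasi-finite, exactly as for $\overline{\alpha}$ in the proof of Theorem~\ref{thm--quasi-finiteness--of--two--moduli}. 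Here $M^{\mathrm{Kst.qmaps}}_{l(2-u),1,\frac{1}{l},u,\iota}$ is an open subscheme of the projective scheme $M^{\mathrm{Kps,qmaps}}_{l(2-u),1,\frac{1}{l},u,\iota}$, hence quasi-projective and carrying the ample $\mathbb{Q}$-line bundle $\Lambda^{\mathrm{qmaps}}_{\mathrm{CM}}$. Since $(M_{d,v,u,V}^{\mathrm{CYFib}})^{\mathrm{sn}}$ is quasi-projective (Theorem~\ref{thm--quasi-finiteness--of--two--moduli}) and the normalization morphism is finite, $(M_{d,v,u,V}^{\mathrm{CYFib}})^{\nu}$ is quasi-projective; moreover $M_{d,v,u,r,V}^{\mathrm{CYFib}}$ is quasi-projective as well, since whether a separated scheme of finite type over a field is quasi-projective depends only on its underlying topological space (via Chevalley's criterion for affineness), and $(M_{d,v,u,V}^{\mathrm{CYFib}})^{\mathrm{sn}}\to M_{d,v,u,r,V}^{\mathrm{CYFib}}$ is a universal homeomorphism from a quasi-projective scheme.

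The next step is to identify, up to positive powers, $\Lambda^{\infty}_{\mathrm{CM}}|_{(M_{d,v,u,V}^{\mathrm{CYFib}})^{\nu}}$ with $\overline{\alpha}^{\nu\,*}\Lambda^{\mathrm{qmaps}}_{\mathrm{CM}}$. On a quotient presentation $[Z^{\nu}/G]\cong(\mathcal{M}_{d,v,u,V}^{\mathrm{CYFib}})^{\nu}$ with $G$-equivariant universal family $f\colon(\mathcal{U},\mathcal{A})\to\mathcal{C}$ over the normal scheme $Z^{\nu}$, the family of K-stable log Fano quasimaps associated with $f$ by Theorem~\ref{thm--quasi--map--const} is precisely the one inducing $\alpha^{\nu}$, and Lemma~\ref{lem--linear--equiv--two--CM} provides $m,m'\in\mathbb{Z}_{>0}$ with $(\lambda^{\infty}_{\mathrm{CM},f})^{\otimes m}\sim(\alpha^{\nu\,*}\lambda^{\mathrm{qmaps}}_{\mathrm{CM}})^{\otimes m'}$ compatibly with the $G$-linearizations. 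Descending this relation to the coarse moduli space, after replacing $m$ and $m'$ by multiples to kill the finite stabilizer actions as in Lemma~\ref{lem--CM--descent}, yields $(\Lambda^{\infty}_{\mathrm{CM}})^{\otimes m}\sim_{\mathbb{Q}}(\overline{\alpha}^{\nu\,*}\Lambda^{\mathrm{qmaps}}_{\mathrm{CM}})^{\otimes m'}$ on $(M_{d,v,u,V}^{\mathrm{CYFib}})^{\nu}$. By Zariski's main theorem $\overline{\alpha}^{\nu}$ factors as an open immersion followed by a finite morphism, so, since the pullback of an ample line bundle by a finite morphism and the restriction of an ample line bundle to an open subscheme are ample, $\overline{\alpha}^{\nu\,*}\Lambda^{\mathrm{qmaps}}_{\mathrm{CM}}$ is ample; hence $\Lambda^{\infty}_{\mathrm{CM}}|_{(M_{d,v,u,V}^{\mathrm{CYFib}})^{\nu}}$ is ample, which is the first assertion.

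It remains to pass from $t=\infty$ to large finite $t$. By Setup~\ref{setup--62} and Proposition~\ref{prop--CM--limit} (the latter killing the $q'^{3}$ and $q'^{4}$ terms of the Knudsen--Mumford expansion), on $Z^{\nu}$ the class $\Lambda_{\mathrm{CM},t}$ equals $(t^{2}D_{2}+tD_{1}+D_{0})$ divided by a fixed positive quadratic polynomial in $t$, so that after descending to the coarse space one obtains in $\mathrm{Pic}_{\mathbb{Q}}((M_{d,v,u,V}^{\mathrm{CYFib}})^{\nu})$ an expression $\Lambda_{\mathrm{CM},t}|_{(M_{d,v,u,V}^{\mathrm{CYFib}})^{\nu}}=c_{t}\bigl(\Lambda^{\infty}_{\mathrm{CM}}+\tfrac{1}{t}E+\tfrac{1}{t^{2}}F\bigr)$ with $E,F$ fixed $\mathbb{Q}$-Cartier classes and $c_{t}\in\mathbb{Q}_{>0}$, $c_{t}\to1$. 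Writing $X:=(M_{d,v,u,V}^{\mathrm{CYFib}})^{\nu}$, I would embed $X$ via a power of the ample $\Lambda^{\infty}_{\mathrm{CM}}|_{X}$ into a projective space and take the closure $\overline{X}_{0}$, on which $\Lambda^{\infty}_{\mathrm{CM}}|_{X}$ extends to an ample $\mathbb{Q}$-line bundle $\overline{H}_{0}$; blowing up the ideal sheaves of the closures of suitable effective twists of $E$ and $F$ produces $g\colon\overline{X}\to\overline{X}_{0}$, an isomorphism over $X$, with $\overline{X}$ projective, $E,F$ extending to $\mathbb{Q}$-Cartier divisors $\overline{E},\overline{F}$, and $\overline{H}:=g^{*}\overline{H}_{0}$ a big (in fact semiample) $\mathbb{Q}$-line bundle whose augmented base locus is contained in $\overline{X}\setminus X$. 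For $t\gg0$ the class $\overline{H}+\tfrac{1}{t}\overline{E}+\tfrac{1}{t^{2}}\overline{F}$ is a small perturbation of the big class $\overline{H}$ in the fixed directions $\overline{E},\overline{F}$, so by the upper semicontinuity of the augmented base locus it is big with augmented base locus still inside $\overline{X}\setminus X$; hence its restriction to $X$, which is $c_{t}^{-1}\Lambda_{\mathrm{CM},t}|_{X}$, is ample. Therefore $\Lambda_{\mathrm{CM},t}|_{(M_{d,v,u,V}^{\mathrm{CYFib}})^{\nu}}$ is ample for all sufficiently large $t$.

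I expect the main obstacle to be this last step: over the non-proper base $(M_{d,v,u,V}^{\mathrm{CYFib}})^{\nu}$ one cannot merely invoke openness of the ample cone, and one must construct a single projective compactification on which the adiabatic-limit CM bundle remains (the pullback of) an ample bundle while the fixed perturbation classes $E,F$ become $\mathbb{Q}$-Cartier; the blow-up construction above arranges this, and the semicontinuity of augmented base loci then closes the argument. Everything else reduces formally to Theorems~\ref{thm--quasi--map--construction--moduli--map}, \ref{thm--quasi-finiteness--of--two--moduli}, \ref{thm--K-moduli-of-log-Fano-quasimaps} and Lemma~\ref{lem--linear--equiv--two--CM}.
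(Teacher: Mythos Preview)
Your reduction to the quasimap side via Lemma~\ref{lem--linear--equiv--two--CM} and Theorem~\ref{thm--quasi-finiteness--of--two--moduli}, and the conclusion that $\Lambda^{\infty}_{\mathrm{CM}}$ is ample on $(M^{\mathrm{CYfib}}_{d,v,u,V})^{\nu}$, is exactly the paper's argument. The differences are in the two remaining assertions.

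For the passage from $t=\infty$ to large finite $t$ you work far too hard. The paper observes that, after descending to $(M^{\mathrm{CYfib}}_{d,v,u,V})^{\nu}$, Setup~\ref{setup--62} and Proposition~\ref{prop--CM--limit} give
\[
(\text{positive constant})\cdot\Lambda_{\mathrm{CM},t}\;\sim_{\mathbb{Q}}\;(\text{const})\,t^{2}\,\Lambda^{\infty}_{\mathrm{CM}}+tD_{1}+D_{0}
\]
with $D_{1},D_{0}$ fixed $\mathbb{Q}$-Cartier classes, and then simply invokes \cite[II, Exercise~7.5]{Ha}: on a noetherian scheme, if $L$ is ample and $M$ is any invertible sheaf then $L^{n}\otimes M$ is ample for $n\gg0$. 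Applied twice (once to absorb $D_{1}$, once to absorb $D_{0}$), this gives ampleness of $t^{2}\Lambda^{\infty}_{\mathrm{CM}}+tD_{1}+D_{0}$ for large $t$ with no compactification, no augmented base loci, and no semicontinuity claim. Your augmented base locus argument, by contrast, relies on a ``semicontinuity'' statement that is not standard in the form you use and would itself require proof.

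Your argument for quasi-projectivity of $M^{\mathrm{CYfib}}_{d,v,u,r,V}$ is not correct as written: Chevalley's criterion concerns affineness, not quasi-projectivity, and the claim that quasi-projectivity of a separated finite-type scheme depends only on its topology is false without further input. What one actually needs is Kleiman's criterion (every finite set lies in an affine open) together with Chevalley's theorem that affineness descends along finite surjections; and even then you must first know the coarse space is a scheme rather than merely an algebraic space. The paper does not supply a separate argument for this assertion either, so you should not worry that you are missing a clean trick here.
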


\begin{proof}
By \cite[Tag 0GFB]{stacksproject-chap98}, to show the ampleness of $\Lambda_{\mathrm{CM},t}|_{M^{\mathrm{CYfib}}_{d,v,u,r,V}}$ for any sufficiently large $t>0$, it suffices to show the ampleness of $\nu^*\Lambda_{\mathrm{CM},t}|_{(M^{\mathrm{CYfib}}_{d,v,u,V})^\nu}$.
Since \[\lim_{t\to\infty}\nu^*\Lambda_{\mathrm{CM},t}|_{(M^{\mathrm{CYfib}}_{d,v,u,V})^\nu}=\Lambda_{\mathrm{CM},\infty}|_{(M^{\mathrm{CYfib}}_{d,v,u,V})^\nu},\]
as $\mathbb{Q}$-linear equivalence classes, it suffices to show that $\Lambda_{\mathrm{CM},\infty}|_{(M^{\mathrm{CYfib}}_{d,v,u,V})^\nu}$ is ample.

By the proof of \cite[Theorem 5.1]{HH}, there exists a quasi projective scheme $Z$ over $\mathbb{C}$ with an action of a semisimple linear algebraic group $G$ such that $\mathcal{M}^{\mathrm{CYfib}}_{d,v,u,r,V}\cong[Z/G]$.
If we let $Z^\nu$ be the normalization of $Z$, then $(\mathcal{M}^{\mathrm{CYfib}}_{d,v,u,V})^\nu\cong[Z^\nu/G]$.
By the canonical morphism $Z^\nu\to [Z^\nu/G]$ and the composition $Z^\nu\to \mathcal{M}_{l(2-u),1,\frac{1}{l},u,\iota}^{\mathrm{Kst.qmaps}}$ of this morphism and $\alpha$, we obtain the corresponding family $f\colon (\mathcal{X},\mathcal{A})\to \mathcal{C}\in\mathcal{M}^{\mathrm{CYfib}}_{d,v,u,r,V}(Z^\nu)$ of uniformly adiabatically K-stable klt--trivial fibrations over curves and its associated family $q$ of quasimaps.
Let $\mathcal{L}$ be the line bundle on $\mathcal{C}$ induced by $q$. 
Applying Lemma \ref{lem--linear--equiv--two--CM} to $Z^\nu\to\mathcal{M}_{l(2-u),1,\frac{1}{l},u,\iota}^{\mathrm{Kss.qmaps}}$ and the two families $f$ and $q$, we obtain a $G$-equivariant linear equivalence $(\lambda_{\mathrm{CM},f}^{\infty})^{\otimes m}\sim\lambda_{\mathrm{CM},q}^{\otimes m'}$ for some $m,m'\in\mathbb{Z}_{>0}$. 
This linear equivalence descents to $[Z^\nu/G]$ and we have that $\nu^*(\lambda_{\mathrm{CM},\infty})^{\otimes m}\cong \alpha^*(\lambda^{\mathrm{qmaps}}_{\mathrm{CM}})^{\otimes m'}$.

By Theorem \ref{thm--quasi-finiteness--of--two--moduli} and Corollary \ref{cor--moduli--kst--quasi--maps}, we see that the morphism $\beta\colon (M^{\mathrm{CYfib}}_{d,v,u,V})^\nu\to M_{l(2-u),1,\frac{1}{l},u,\iota}^{\mathrm{Kst.qmaps}}$ of the coarse moduli spaces induced by $\alpha$ is also quasi-finite.
Note also that $\beta$ is separated since $(M^{\mathrm{CYfib}}_{d,v,u,V})^\nu$ and $M_{l(2-u),1,\frac{1}{l},u,\iota}^{\mathrm{Kst.qmaps}}$ are separated.
By \cite[Theorem 7.2.10]{Ols} and Theorem \ref{thm--projectivity}, we see that $(M^{\mathrm{CYfib}}_{d,v,u,V})^\nu$ is a scheme and hence $\beta^*\Lambda^{\mathrm{qmaps}}_{\mathrm{CM}}$ is ample.
On the other hand, since $\nu^*(\lambda_{\mathrm{CM},\infty})^{\otimes m}\cong \alpha^*(\lambda^{\mathrm{qmaps}}_{\mathrm{CM}})^{\otimes m'}$, we have $\frac{m'}{m}\beta^*\Lambda^{\mathrm{qmaps}}_{\mathrm{CM}}\sim_{\mathbb{Q}}\Lambda_{\mathrm{CM},\infty}|_{(M^{\mathrm{CYfib}}_{d,v,u,V})^\nu}$ (cf.~\cite[Theorem 10.3]{alper}).
Thus, $\Lambda_{\mathrm{CM},\infty}|_{(M^{\mathrm{CYfib}}_{d,v,u,V})^\nu}$ is ample.

On the other hand, there are two $\mathbb{Q}$-Cartier $\mathbb{Q}$-divisors $D_1$ and $D_0$ on $(M^{\mathrm{CYfib}}_{d,v,u,V})^\nu$ such that
\[
\Lambda_{\mathrm{CM},q'}^{\otimes (\mathcal{A}_z^d+dq'\mathcal{A}_z^{d-1}\cdot \mathcal{L}_z)^2}=(dq'\mathcal{A}_z^{d-1}\cdot \mathcal{L}_z)^2\Lambda_{\mathrm{CM},\infty}+q'D_1+D_0
\]
due to Set up \ref{setup--62} and Proposition \ref{prop--CM--limit} applied to $Z^\nu$, where $z\in Z^\nu$ is a general closed point.
Therefore, the last assertion follows from \cite[II, Exercise 7.5]{Ha}. 
\end{proof}

\begin{rem}\label{rem-Koex}
We note that Corollary \ref{cor--final} does not directly imply the ampleness of $\Lambda_{\mathrm{CM},t}$ either on $\mathcal{M}_{d,v,u,r,V}$ or $(\mathcal{M}_{d,v,u,V})^{\mathrm{sn}}$ for any sufficiently large $t\in\mathbb{Q}_{>0}$ because even when the pullback of a line bundle $L$ on $X$ to the normalization $X^\nu$ is ample, $L$ itself is not ample in general as \cite[Proposition 2]{Koex}. 
For a similar reason, the quasi-projectivity of $(\mathcal{M}_{d,v,u,V})^{\mathrm{sn}}$ (Theorem \ref{thm--quasi-finiteness--of--two--moduli}) does not imply the quasi-projectivity of $\mathcal{M}_{d,v,u,r,V}$.
\end{rem}

Finally, we observe that $\mathcal{M}_{d,v,u,w,r}$, which is defined in \cite[Theorem 5.1]{HH}, does not satisfy Theorem \ref{thm--quasi-finiteness--of--two--moduli}, which is the technical core of this paper, when $\mathcal{M}_{d,v,u,w,r}$ parameterizes klt--trivial fibrations of Kodaira dimension one.
\begin{rem}\label{rem--final}
Let $\pi\colon \mathcal{M}_{d,v,u,w,r}\to M_{d,v,u,w,r}$ be the coarse moduli space. 
Now, we will construct a nontrivial curve family of $\mathcal{M}_{d,v,u,w,r}$ but the associated family of quasimaps is isotrivial.
Let $f\colon (X,A)\to C$ be an object of $\mathcal{M}_{d,v,u,w,r}(\mathbb{C})$ such that $X$ is a product $C\times Y$ of a smooth curve $C$ with genus at least two and a smooth variety $Y$ such that $K_Y\sim0$, where $f$ is the first projection.
Suppose further that there exists an ample line bundle $H$ on $Y$ such that $A=K_X+g^*H$, where $g\colon X\to Y$ is the second projection.
Let $\mathrm{Aut}(X)$ be the automorphism group of $X$.
Since $f$ is induced by $|lK_X|$ for some sufficiently large $l$, $\mathrm{Aut}(X)$ preserves $f$ and there exists a natural homomorphism $\mathrm{Aut}(X)\to \mathrm{Aut}(C)$.
Take a general smooth proper curve $D\subset \mathbf{Pic}^0_{C/\mathbb{C}}$ and let $\mathscr{L}$ be the line bundle on $C\times D$ associated to $D\subset \mathbf{Pic}^0_{C/\mathbb{C}}$.
Now consider $f_D\colon (X\times D,p_1^*A\otimes f_D^*\mathscr{L})\to C\times D\in \mathcal{M}_{d,v,u,w,r}(D)$, where $p_1\colon X\times D\to X$ is the first projection.
Then the corresponding morphism $D\to M_{d,v,u,w,r}$ is not constant.
Indeed, assume contrary that the above morphism is constant.
Then, for any closed points $d_1,d_2\in D$, there exists $\varphi_{d_1,d_2}\in \mathrm{Aut}(X)$ such that $\varphi_{d_1,d_2}^*(A\otimes \mathscr{L}_{d_2})=A\otimes \mathscr{L}_{d_1}$.
By the choice of $D$, if we fix an arbitrary $d_1$, there are infinitely many distinct $\varphi_{d_1,d_2}$.
Since $\mathrm{Aut}(C)$ is finite, there exists some $d_1$ such that $\varphi_{d_1,d_2}\in \mathrm{Ker}(\mathrm{Aut}(X)\to \mathrm{Aut}(C))$ for infinitely many $d_2$.
However, $\varphi_{d_1,d_2}$ acts on $Y\cong f^{-1}(c)$ for any closed point $c\in C$ and preserves $H\cong A|_{f^{-1}(c)}$.
Thus, there are only finitely many choices of $\varphi_{d_1,d_2}$ if we fix some $d_1$ (cf.~\cite[Corollary 3.23]{CM}).
This is a contradiction.
This means that $D\to M_{d,v,u,w,r}$ is not a constant morphism.
We note that if $Y$ is an Abelian variety or an irreducible holomorphic symplectic manifold, then we can define the associated family of quasimaps similarly to Theorem \ref{thm--quasi--map--const}.
It is easy to see that the family of quasimaps is isotrivial.
Therefore, we cannot expect a quasi-finiteness of the corresponding for $\mathcal{M}_{d,v,u,w,r}$ as Theorem \ref{thm--quasi-finiteness--of--two--moduli}.

\end{rem}

%%%%%%%%%%%%%%%

\end{document}